\newcommand{\IC}{\mathbb{C}}
\newcommand{\IR}{\mathbb{R}}
\newcommand{\supp}{\mathrm{supp}}
\renewcommand{\H}{ H}
\newcommand{\kat}{\mathcal{K}}
\newcommand{\dyn}{\mathcal{D}}
\newcommand{\IT}{T}
\newcommand{\IX}{\mathbb{X}}
\newcommand{\IP}{\mathbb{P}}
\newcommand{\IMM}{\mathscr{M}}
\newcommand{\IBB}{\mathscr{B}}
\newcommand{\ILL}{\mathscr{L}}
\newcommand{\IJJ}{\mathscr{J}}
\newcommand{\IHH}{\mathscr{H}}
\newcommand{\IFF}{\mathscr{F}}
\newcommand{\IAA}{\mathscr{A}}
\newcommand{\rank}{\mathrm{rank}}
\renewcommand{\c}{ c }
\renewcommand{\symbol}{\mathrm{Symb}}
\newcommand{\IL}{L}
\newcommand{\IB}{B}
\newcommand{\IDD}{\mathscr{D}}
\newcommand{\ICC}{C^{\infty}}
\newcommand{\dom}{\mathrm{Dom}}
\newcommand{\loc}{\mathrm{loc}}
\newcommand{\IN}{\mathbb{N}}
\newcommand{\IZ}{\mathbb{Z}}
\newcommand{\IK}{\mathbb{K}}
\newcommand{\Id}{ d}
\newcommand{\f}{\frac}
\newcommand{\nn}{\nonumber}
\newcommand{\Q}{Q}
\newtheorem{theorem}{Theorem}[chapter]
\newtheorem{theoreme}{Theorem}[chapter]
\newtheorem{Lemma}[theorem]{Lemma}
\renewcommand{\theLemma}
\renewcommand{\theLemmae}
\newtheorem{Corollary}[theorem]{Corollary}
\renewcommand{\theCorollary} 
\newtheorem{Corollarye}[theoreme]{Corollary}
\renewcommand{\theCorollarye}
\renewcommand{\theConjecture}
\renewcommand{\theConjecturee}
\newtheorem{Theorem}[theorem]{Theorem}
\renewcommand{\theTheorem}
\newtheorem{Theoreme}[theoreme]{Theorem}
\renewcommand{\theTheoreme}
\newtheorem{Proposition}[theorem]{Proposition}
\renewcommand{\theProposition}
\newtheorem{Propositione}[theoreme]{Proposition}
\renewcommand{\thePropositione}
\newtheorem{Propandef}[theorem]{Proposition and definition}\renewcommand{\thePropandef}
\renewcommand{\thePropandefe}
\theoremstyle{definition} 
\newtheorem{Definition}[theorem]{Definition}
\renewcommand{\theDefinition}
\newtheorem{Definitione}[theoreme]{Definition}
\renewcommand{\theDefinitione}
\newtheorem{Example}[theorem]{Example}
\renewcommand{\theExample}
\newtheorem{Examplee}[theoreme]{Example}
\renewcommand{\theExamplee}
\newtheorem{Remark}[theorem]{Remark}
\renewcommand{\theRemark}
\newtheorem{Remarke}[theoreme]{Remark}
\renewcommand{\theRemarke}
 \newtheorem{Notation}[theorem]{Notation}
\renewcommand{\theNotation} 
\newtheorem{Notatione}[theoreme]{Notation}
\renewcommand{\theNotatione}
\newcommand{\chapternumbering}[1]{
  \setcounter{chapter}{0}
   \renewcommand{\thechapter}{\csname #1\endcsname{chapter}}}
\newcounter{myenumi}
\begin{document}

\title[Covariant Schrödinger semigroups]{Covariant Schrödinger semigroups on Riemannian manifolds\footnote{This is a shortened version (the Chapters VII - XIII have been removed). The full version has appeared in December 2017 as a monograph in the Birkhäuser series \emph{Operator Theory: Advances and Applications}, and only the latter version should be cited. \\
The full version can be obtained on:\\
\url{http://www.springer.com/de/book/9783319689029}}}

  \author{
  Batu Güneysu\vspace{6mm}

Humboldt-Universität zu Berlin
}

\maketitle 

\begin{center}
{\center\huge \emph{ For my daughter Elena}\vspace{3mm}

$\heartsuit$ }
\end{center}

%\author[B. G\"uneysu]{Batu G\"uneysu}

%\address{Batu G\"uneysu, Institut f\"ur Mathematik, Humboldt-Universit\"at zu Berlin, 12489 Berlin, Germany} \email{gueneysu@math.hu-berlin.de}

\tableofcontents

\chapternumbering{Roman}
\chapter*{Introduction}

Many problems in the analysis on Riemannian manifolds naturally lead to the study of operator semigroups having the form $(\mathrm{e}^{-t H^{\nabla}_V})_{t\geq 0}$, where the generator $H^{\nabla}_V$ is a covariant Schrödinger operator. In other words, $H^{\nabla}_V$ is a self-adjoint realization of $\nabla^{\dagger}\nabla+V$ in the underlying Hilbert space of square integrable sections, where $\nabla$ is a metric covariant derivative on a metric vector bundle over a Riemannian manifold, $\nabla^{\dagger}$ is its formal adjoint, and $V$ is a smooth self-adjoint endomorphism. For example, if we assume that $V$ is bounded from below by a constant, then a canonical choice for $H^{\nabla}_V$ is given by the Friedrichs realization of $\nabla^{\dagger}\nabla+V$.\\
Besides the semigroups that are induced by the scalar Laplace-Beltrami operator $-\Delta$ and the perturbation of $-\Delta$ by potentials, probably the most prominent examples of such covariant Schrödinger semigroups are provided by \lq\lq{}heat kernel proofs\rq\rq{} of the Atiyah-Singer index theorem on compact manifolds \cite{getzler}. In this situation, such a semigroup arises naturally through a Bochner-Weitzenböck formula $D^2=\nabla^{\dagger}\nabla+V$, with $D$ the underlying geometric Dirac operator. More precisely, the McKean-Singer formula states that the $\IZ_2$-graded index of $D$ is equal to the $\IZ_2$-graded trace of $\mathrm{e}^{-t H^{\nabla}_V}$ for all $t>0$, and so one can take $t\to 0$+ to obtain (in fact a stronger local version of) the Atiyah-Singer index theorem.\\
In this work, however, we are particularly interested in the analysis on noncompact Riemannian manifolds. In the context of geometric problems on such manifolds, the above covariant Schrödinger semigroups often appear in a disguise: Namely, one is often interested in estimating the gradient $\Id \mathrm{e}^{t \Delta} $ of the underlying \lq\lq{}heat semigroup\rq\rq{}. Noticing now that, at least formally, one has $\Id \mathrm{e}^{t \Delta} = \mathrm{e}^{t \Delta^{(1)}} \Id$ with $\Delta^{(1)}$ the Hodge-Laplacian acting on $1$-forms, one is back in the above covariant Schrödinger semigroup case, since by another Bocher-Weitzenböck formula  the operator $\Delta^{(1)}$ is again a covariant Schrödinger operator. A typical situation of this type appears in connection with heat kernel characterizations of the total variation: Namely, being motivated by E. de Giorgi\rq{}s Euclidean result \cite{giorgi}, one would like to establish the equality of the (total) variation $\mathrm{Var}(f)$ of a function $f$ on a noncompact Riemannian manifold to the limit $\lim_{t\to 0+}\left\|\Id \mathrm{e}^{t \Delta}\right\|_{\IL^1}$. \\
In the above geometric situations, the potential terms of the underlying covariant Schrödinger operators are smooth. One should notice that in the noncompact case many technical difficulties arise due to the behaviour of the potentials at $\infty$. On the other hand, it is well-known that in the standard representation of quantum mechanics one has to deal with Schrödinger operators in $\IR^3$ whose potential terms typically have local $1/|x|$-type Coulomb singuarities. \vspace{1mm}

The aim of this work is to establish the foundations of a general theory of \emph{covariant Schrödinger semigroups on noncompact Riemannian manifolds} which is flexible enough to deal with all above situations simultaneously. \vspace{1mm}

To this end, we introduce the concept of \emph{covariant Schrödinger bundles}: These are data of the form $(E,\nabla, V)\to M$, where
\begin{itemize}
\item $M$ is a smooth Riemannian $m$-manifold 
\item $E\to M$ is a smooth complex metric vector bundle with a finite rank
\item $\nabla$ is a smooth metric covariant derivative on $E\to M$
\item $V:M\to \mathrm{End}(E)$ is Borel measurable with $V(x):E_x\to E_x$ a linear self-adjoint map for every $x$. We will also refer to such an endomorphism $V$ as \emph{a potential on $E\to M$}. 
\end{itemize}

As usual, $E\to M$ and the corresponding metrics determine the complex Hilbert space $\Gamma_{\IL^2}(M,E)$ of square integrable sections. Using sesquilinear-form-techniques, one finds that under mild additional assumptions on $V$, such a Schrödinger bundle canonically induces a covariant Schrödinger operator in $\Gamma_{\IL^2}(M,E)$, which is semibounded from below. To explain this operator construction, let us denote by $p(t,x,y)$, $t>0$, $x,y\in M$, the minimal nonnegative heat kernel on $M$. In other words, for every fixed $y\in M$ the function $(t,x)\mapsto p(t,x,y)$ is the pointwise minimal nonnegative solution of the heat equation 
$$
(\partial/\partial t)p(t,x,y) =(1/2)\Delta_x p(t,x,y), \quad \lim_{t\to 0+}p(t,\bullet,y)=\delta_y.
$$
Then \emph{the contractive Dynkin class $\dyn(M)$} of the Riemannian manifold $M$ is given by all Borel functions $w$ on $M$ such that
\begin{align}\label{intro2}
\lim_{t\to 0+}\sup_{x\in M}\int^t_0\int_M p(s,x,y) |w(y)| \Id\mu(y)\Id s <1, 
\end{align}
with $\mu$ the Riemannian volume measure. Standard properties of the minimal nonnegative heat kernel imply $\IL^{\infty}(M)\subset \dyn(M)$. By definition, the \emph{Kato class} $\kat(M)$ is contained in $\dyn(M)$ as well, with $\kat(M)$ being defined as the space of all $w$\rq{}s that satisfy $\lim_{t\to 0+}\dots =0$ in (\ref{intro2}), instead of only $<1$. In view of $\IL^{\infty}(M)\subset \dyn(M)$, in the sequel a contractive Dynkin condition should always be understood as a very weak (and operator theoretic) form of boundedness.\vspace{1mm}

With these definitions, we call a potential $V$ as above \emph{contractively Dynkin decomposable}, if it can be decomposed $V=V_+-V_-$ into potentials with $V_{\pm}\geq 0$ \footnote{That is, for all $x\in M$ the eigenvalues of $V_{\pm}(x):E_x\to E_x$ are nonnegative.}, such that $V_+$ is $\IL^1_{\loc}$, and such that the fiberwise taken operator norm of $V_-$ satsfies $|V_-|\in \dyn(M)$. It is then possible with some efforts to prove that the symmetric sesquilinear form $Q^{\nabla}_V$ on $\Gamma_{\IL^2}(M,E)$ given by 
\begin{align}\label{aoid}
\dom(Q^{\nabla}_V)&:=\Gamma_{W^{1,2}_{\nabla,0}}(M,E)\cap \Big\{f:\int_M |(V f,  f)| \Id\mu <\infty \Big\},\\\nn
Q^{\nabla}_V(f_1,f_2)&:= (1/2)\int_M (\nabla f_1,\nabla f_2) \Id\mu + \int_M (V f_1, f_2) \Id\mu ,
\end{align}
is densely defined, closed and semibounded\footnote{In the sequel, \lq\lq{}semibounded\rq\rq{} is always understood to mean \lq\lq{}semibounded from below\rq\rq{}.}. Thus, by a classical functional analytic result this form canonically induces a self-adjoint semibounded operator $H^{\nabla}_V$ in $\Gamma_{\IL^2}(M,E)$, that we will refer to as the \emph{covariant Schrödinger operator induced by $(E,\nabla, V)\to M$}. Note that $H^{\nabla}_V$ can be formally interpreted as a self-adjoint realization of the operator \lq\lq{}$(1/2)\nabla^{\dagger}\nabla+V$\rq\rq{}. However, since in general $V$ is not assumed to be $\IL^2_{\loc}$, the potential $V$ does not even need to map smooth compactly supported sections into square integrable ones, so that the expression $(1/2)\nabla^{\dagger}\nabla+V$ is not meaningful, if interpreted in the usual sense. This is why we have to use the above \lq\lq{}weak\rq\rq{} formulation right away. On the other hand, it will turn out that if the contractively Dynkin decomposable potential $V$ \emph{is} $\IL^2_{\loc}$, then $H^{\nabla}_V$ is precisely the Friedrichs realization of $(1/2)\nabla^{\dagger}\nabla+V$. \\
In the case of $V\equiv 0$, we will use the natural notation
$$
Q^{\nabla}:=Q^{\nabla}_V|_{V\equiv 0},\quad H^{\nabla}:=H^{\nabla}_V|_{V\equiv 0}.
$$
Usual scalar Schrödinger operators of the form \lq\lq{}$-(1/2)\Delta+w$\rq\rq{}, with scalar potentials $w:M\to\IR$, are naturally included in the above setting as follows: One picks the trivial vector bundle $E=M\times \IC\to \IC$, in which case sections become nothing but complex-valued functions on $M$, and potentials become nothing but real-valued Borel functions on $M$. If furthermore $\nabla=\Id$ is taken to be the usual exterior derivative, then for a Dynkin decomposable potential $w:M\to\IR$ we can form the operator 
$$
H_w:= H^{\Id}_w\quad\text{ in  $\IL^2(M)$.}
$$
In view of the formula $-\Delta= \Id^{\dagger}\Id$ for the scalar Laplace-Beltrami operator on $M$, it now becomes clear that $H_w$ is a self-adjoint realization of \lq\lq{}$-(1/2)\Delta+w$\rq\rq{} in $\IL^2(M)$. In particular, we set
$$
H:=H_{w}|_{w\equiv 0},\quad \text{with $Q$ the corresponding sesquilinear form,} 
$$
and recall that in this case $H$ is just the Friedrichs realization of $-(1/2)\Delta$. In particular, it turns out that the minimal nonnegative heat kernel is precisely the integral kernel 
$$
p(t,x,y)=\mathrm{e}^{-t H}(x,y),\quad  t>0,\> x,y\in M.
$$

The first part of this work is completely devoted to a systematic examination of strongly continuous and self-adjoint semigroups of the form
$$
(\mathrm{e}^{-t H^{\nabla}_V})_{t \geq 0} \subset \ILL(\Gamma_{\IL^2}(M,E))
$$
which are defined by the spectral calculus. In the above situation, $(\mathrm{e}^{-t H^{\nabla}_V})_{t \geq 0}$ will be called the \emph{covariant Schrödinger semigroup induced by} $(E,\nabla, V)\to M$. An essential feature of our analysis will be that we will usually not require any kind of control on the geometry of $M$. Our general philosophy for the examination of these semigroups is to proceed in three steps: \vspace{1.2mm}

{\bf Step 1.} We analyze $(\mathrm{e}^{-t H})_{t\geq 0}$. Of course the study of this semigroup is essentially equivalent to the study of the scalar heat kernel $p(t,x,y)$, and it is well-known that there exist many special methods for the examination of $p(t,x,y)$, such as minimum principles, mean value inequalities, and so on. The starting point of our analysis is the following simple but nevertheless essential observation: For every relatively compact subset $U\subset M$ one has
$$
\sup _{x\in U}\sup_{y\in M}p(t,x,y)<\infty,
$$
without any further assumptions on the geometry of $M$. For example, this observation allows us to derive explicit $\IL^{q_1}\to \IL^{q_2}_{\loc}$ bounds for $(\mathrm{e}^{-t H})_{t>0}$ in this full generality, which extend to $\IL^{q_1}\to \IL^{q_2}$ bounds precisely if $M$ is ultracontractive, that is, if one has
$$
\sup_{x,y\in M}p(t,x,y)<\infty\quad\text{ for all  $t>0$}. 
$$
On the other hand, some applications require a good control of the quantities
$$
\sup_{y\in M}p(t,x,y),\quad x\in M,
$$
for both large and small $t$ (like compactness results for operators of the form $V(H^{\nabla}+1)^{-1})$), while some others only for small $t$ (like the $\dyn(M)$ assumption). Being motivated from results that are contained in A. Grigor\rq{}yan\rq{}s book on the analysis of $p(t,x,y)$, we introduce the concept of \emph{heat kernel control pairs $(\Xi,L)$ for the Riemannian manifold $M$} in order to measure the above effects simultaneously. These are pairs of functions 
$$
\Xi: M\to (0,\infty],\quad \tilde{\Xi}:(0,\infty)\longrightarrow (0,\infty),
$$
such that $\Xi$ is continuous with 
$$
\sup_{y\in M}p(t,x,y)\leq \Xi(x) \tilde{\Xi}(t)\quad\text{ for all $t>0$,  $x\in M$,} 
$$
and such that $\tilde{\Xi}$ has the $\IL^q$-properties of $t\mapsto t^{m/2}$ near $t=0$ and behaves no worse then $t\mapsto \exp(Ct)$, for some $C>0$, at $t=\infty$. Using a parabolic $\IL^1$-mean-value inequality we find, in particular, that every Riemannian manifold admits a canonical heat kernel control pair, a result that has been established by the author in \cite{guenkat}, based on results by A. Grigor\rq{}yan. Whenever one has some specific knowledge of the geometry, one can simply pick \lq{}better\rq{} control pairs.
\vspace{1.2mm}

{\bf Step 2.} We analyze scalar Schrödinger semigroups of the form $(\mathrm{e}^{-t H_w})_{t\geq 0}$ by reducing their analysis to that of the heat semigroup $(\mathrm{e}^{-t H})_{t\geq 0}$. It is here that, in our eyes, probabilistic methods are really efficient, and we will use them through the Feynman-Kac formula
\begin{align}\label{auebc}
\mathrm{e}^{-t \H_w}f(x)=\int_{\{t<\zeta\}}\mathrm{e}^{-\int^t_0 w(\IX_s)\Id s}f(\IX_t)\Id \mathbb{P}^x,
\end{align}
where $\mathbb{P}^x$ denotes integration with respect to Brownian motion starting from $x$, and where $\{t<\zeta\}$ denotes the set of Brownian paths which do not explode until the time $t\geq 0$. Note that on a general \emph{stochastically incomplete} Riemannian manifold it can happen that $\mathbb{P}^x(\{t<\zeta\})<1$. Since, in particular, one has
$$
\mathrm{e}^{-t \H}f(x)=\int_{\{t<\zeta\}}f(\IX_t)\Id \mathbb{P}^x,
$$
the Feynman-Kac formula and the inequality $-w\leq w_-$ show that the analysis of $\mathrm{e}^{-t \H_w}f(x)$ can be controlled by that of $\mathrm{e}^{-t \H}f(x)$, once one has a machinery to estimate exponentials of the form
\begin{align}\label{aepqg}
\int_{\{t<\zeta\}}\mathrm{e}^{\int^t_0 \alpha w_-(\IX_s)\Id s}f(\IX_t)\Id \mathbb{P}^x, \quad \alpha\geq 0,
\end{align}
with $w_-\in\dyn(M)$. Ultimately, based on \lq\lq{}Euclidean ideas\rq\rq{} by M. Aizenman and B. Simon from their seminal paper \cite{aizenman}, we will develop such a machinery in our general geometric context, too. As a remark, the idea behind these results is that by the very definition of Brownian motion, the Dynkin property (\ref{intro2}) of $w_-$ is equivalent to the probabilistic contraction property
\begin{align}\label{intro5}
\lim_{t\to 0+}\sup_{x\in M}\int_{\{t<\zeta\}}w_-(\IX_t)\Id \mathbb{P}^x<1,
\end{align}
and that, somewhat surprisingly, the Markov property of Brownian motion makes it indeed possible to control exponential expressions of the form (\ref{aepqg}) with finite and explicit constants that arise from the assumption (\ref{intro5}). \vspace{1.2mm}

{\bf Step 3.}  We reduce the study of a given covariant Schrödinger semigroup $(\mathrm{e}^{-t H^{\nabla}_V})_{t\geq 0}$ to that of a scalar Schrödinger semigroup of the form $(\mathrm{e}^{-t H_w})_{t\geq 0}$. The central machinery in this context is provided by what we call the \emph{Kato-Simon inequality}, which states the following semigroup domination: \emph{If in the above situation one has $V\geq w$ $\mu$-a.e., then}
\begin{align}\label{intro1}
\left|\mathrm{e}^{-t H^{\nabla}_V}f(x)\right |\leq\mathrm{e}^{-t H_w} \left|f\right|(x)\quad\text{ for all $f\in\Gamma_{\IL^2}(M,E)$, $\mu$-a.e. $x\in M$.}
\end{align}

Following the original ideas by E. Nelson and B. Simon \cite{simon1}, the author established (\ref{intro1}) in \cite{guen} using probabilistic methods, namely a covariant Feynman-Kac formula for $\mathrm{e}^{-t H^{\nabla}_V}f(x)$. On the other hand, it is certainly of interest to have an entirely analytic proof of the Kato-Simon inequality in the above generality. Based on a (local) covariant version of Kato\rq{}s distributional inequality \cite{kato} and functional analytic results by H. Hess, R. Schrader and D. Uhlenbrock \cite{hess2}, we provide such a proof here. As we do not require $M$ to be geodesically complete, several technical difficulties arise in this context already for $V\equiv 0$, $w \equiv 0$. Without entering into the details, let us only mention here that a key observation in this context is the following regularity of sections in the underlying form domain:
\begin{align}\label{inclusion}
f\in \dom(Q^{\nabla})\Rightarrow    |f|\in \dom(Q).
\end{align}
This property is also true without geodesic completeness. This is one of the incidences where it becomes important to work with a distinguished self-adjoint realization. In other words, one uses special properties of Sobolev spaces of the form $W^{1,2}_0$.\vspace{2mm}

The above 3-step-analysis allows us to establish, among other things, the following results, valid for every Schrödinger bundle $(E,\nabla, V)\to M$, where the only a priori assumption is that $V$ is contractively Dynkin decomposable (and, in particular, $M$ is completely arbitrary):\vspace{2mm}

{\bf (a)} a very general weighted $\IL^q$-criterion for the compactness of $V(H^{\nabla}+1)^{-1}$, which at least in the physically relevant case $m\leq 3$ does not require any control on the geometry of $M$; in particular, these results entail the stability of the essential spectrum
$$
\sigma_{\mathrm{ess}}(H^{\nabla}_V)=\sigma_{\mathrm{ess}}(H^{\nabla}).
$$

{\bf (b)} explicit $\IL^q\to \IL^q$ estimates for  $\mathrm{e}^{-t H^{\nabla}_V}$ without any assumptions on $M$, where $q\in [1,\infty]$\vspace{2mm}

{\bf (c)} explicit $\IL^q\to \IL^{\infty}_{\loc}$ and $\IL^1\to \IL^{q}_{\loc}$ estimates for  $\mathrm{e}^{-t H^{\nabla}_V}$ without any assumptions on $M$, where $q\in (1,\infty)$\vspace{2mm}

{\bf (d)} explicit $\IL^{q_1}\to \IL^{q_2}$ estimates for  $\mathrm{e}^{-t H^{\nabla}_V}$ and all $q_1,q_2\in  [1,\infty]$ with $q_1\leq q_2$, in case that $M$ is ultracontractive\vspace{2mm}

{\bf (e)} the joint continuity of $(t,x)\mapsto\mathrm{e}^{-t H^{\nabla}_V}f(x)$ on $(0,\infty)\times M$, for all fixed $f\in \Gamma_{\IL^2}(M,E)$ and without any assumptions on $M$, as long as $|V|\in \kat(M)$\vspace{2mm}

{\bf (f)} the existence of an integral kernel $\mathrm{e}^{-t H^{\nabla}_V}(x,y)$ for $\mathrm{e}^{-t H^{\nabla}_V}$, without any further assumptions on $M$, as well as estimates for $\mathrm{e}^{-t H^{\nabla}_V}(x,y)$ and for the trace $\mathrm{tr}(\mathrm{e}^{-t H^{\nabla}_V})$
\vspace{2mm}

{\bf (g)} the essential self-adjointness of $H^{\nabla}_V$ on smooth compactly supported sections, if $V$ is $\IL^2_{\loc}$ and $M$ is geodesically complete\vspace{2mm}

{\bf (h)} the fact that $\Gamma_{\ICC_{\c}}(M,E)$ is a core of the form $Q^{\nabla}_V$, without any further assumptions on $M$ or $V$.\vspace{2mm}

We remark here that in their ultimate form, the above results are new, except {\bf (a)} (which stems from the paper \cite{brugun} by J. Brüning and the author) and {\bf (g)} (which has been proved by O. Post and the author in \cite{post}). \vspace{1.2mm}

Altogether, we believe that the above results support the following statement: \vspace{1mm}

\emph{Essentially every Euclidean result for covariant Schrödinger operators with Dynkin or Kato potentials remains true for covariant Schrödinger operators on an arbitrary Riemannian manifold $M$, without any further assumptions on the underlying geometry.} \vspace{1mm}

The geometry, on the other hand, comes into play in an essential way only in a second \lq\lq{}layer\rq\rq{} of the problem, namely when one wants to establish convenient $\IL^q$-critera for potentials to be in $\dyn(M)$ and $\kat(M)$. As we will explain later on, an inclusion of the form $\IL^q(M)\subset \kat(M)$ (with $q$ chosen appropriately depending on the dimension $m$ of $M$) does require some control on the geometry of $M$; on the other hand, our previously mentioned machinery of heat kernel control pairs $( \Xi,\tilde{\Xi})$ will entail that, in fact, $\Xi$-weighted $\IL^q$-spaces are always included in $\kat(M)\subset \dyn(M)$, without any assumption on $M$. Again, our philsosophy here is that a good control on the geometry simply allows to pick \lq\lq{}better\rq\rq{} weight functions.\vspace{3mm}

The second part of this work is devoted to the application of the above results {\bf (a)}-{\bf (h)} to particular situations that arise in geometric analysis and physics on noncompact Riemannian manifolds. In this context, we establish the following results:\vspace{2mm}

{\bf (i)} We provide a precise geometric formulation as well as a proof of the statement \lq\lq{}the presence of a magnetic field leads to an increase of the bottom of the spectrum\rq\rq{}, which is valid for every Riemannian manifold. \vspace{2mm}

{\bf (j)} We generalize a Euclidean result by J. Fröhlich, E.H. Lieb and M. Loss \cite{stab1} to the setting of geodesically complete nonparabolic Riemannian $\mathrm{spin}^{\IC}$-$3$-manifolds by proving a geometric stability result for hydrogen-type atoms in the presence of magnetic fields. Here, we consider both situations: firstly, we neglect the electron's spin (which does not lead to any restrictions on the fields or the nucleus); secondly, taking the electron's spin $1/2$ appropriately into account, we derive that one has stability for magnetic fields with a finite \lq\lq{}self-energy\rq\rq{}, if in addition the nucleus does not have too many protons. This part of the work has been taken from the author's paper \cite{gunstab}.\vspace{2mm}

{\bf (k)} We formulate and prove a geometric version of the above mentioned de-Giorgi-type heat kernel chracterization of the total variation, valid for all geodesically complete Riemannian manifolds with a Ricci curvature admitting some negative part in the Kato class $\kat(M)$. In this context, we also examine the structure behind the space of vector measures on a Riemannian manifold, and the precise connection between this space and the space of functions that have a finite total variation.This part stems essentially from the paper \cite{pallara} by D. Pallara and the author.\vspace{2mm}

{\bf (l)} We prove that geodesically complete Riemannian manifolds with a Ricci curvature that is bounded from below by a constant are $\IL^q$-positivity preserving for all $q\in [1,\infty]$, this property being a \lq\lq{}supersolution\rq\rq{} refinement of an $\IL^q$-Liouville property due tu R. Strichartz (cf. Lemma 3.1 in \cite{strich}). In particular, $\IL^q$-positivity preserving $M$'s are automatically $\IL^q$-Liouville. This part is a generalization of the corresponding result by the author \cite{G54}, which originally required a nonnegative Ricci curvature. \vspace{3mm}

This work is organized as follows: \vspace{1mm}

In {\bf Chapter \ref{C2}}, we start by introducing basic concepts such as symbols and adjoints of linear partial differential operators acting on sections of vector bundles. The central objects of this chapter are globally defined Sobolev-type spaces of sections. As it does not cause much extra work, we do not restrict ourselves to Riemannian data here. Instead, we work with Sobolev spaces that are defined with respect to a finite family of linear partial differential operators acting between sections of vector bundles over a manifold, where the manifold is equipped with an arbitrary smooth measure. The main result of this chapter is a Meyers-Serrin-type result for these spaces, under a generalized ellipticity condition on the family of operators.\vspace{1mm}

In {\bf Chapter \ref{C3}}, we give a detailed proof of the fact that the heat semigroup of every semibounded self-adjoint realization of a linear partial differential operator acting between sections of vector bundles has a jointly smooth integral kernel.\vspace{1mm}

In {\bf Chapter \ref{C4}}, we first collect some facts about Riemannian geometry and then specialize and refine the previously established results on abstract Sobolev spaces to families of operators that naturally appear in Riemannian geometry. In particular, we prove a generalized version of (\ref{inclusion}) here, too.

\vspace{1mm}

{\bf Chapter \ref{scalar}} deals with specific results concerning the minimal heat kernel $p(t,x,y)$. We prove the aforementioned $\IL^q$-bounds for $\mathrm{e}^{-t H}$ and the heat kernel bounds in terms of control pairs for $p(t,x,y)$. In addition, we collect results corresponding to the \lq\lq{}stochastic completeness\rq\rq{} and the \lq\lq{}parabolicity\rq\rq{} of Riemannian manifolds, notions that can be defined using $p(t,x,y)$ and that will play an important role in the sequel. \vspace{1mm}

{\bf Chapter \ref{stop}} deals with the definition and some technical results concerning the Wiener measure and Brownian motion.\vspace{1mm}

In {\bf Chapter \ref{C6}}, we then introduce the contractive Dynkin class $\dyn(M)$ and the Kato class $\kat(M)$ of a Riemannian manifold $M$, and we collect many useful results concerning these classes. In particular, we explain the connection between Brownian motion and these spaces, and we prove the aforementioned exponential estimates for expressions of the form (\ref{aepqg}). This chapter also deals with the above mentioned weighted $\IL^q$-criteria for these classes.\vspace{1mm}

In {\bf Chapter \ref{C7}}, we start by establishing the well-definedness of the above forms $Q^{\nabla}_V$. Afterwards we prove the Feynman-Kac formula (\ref{auebc}) in detail, and finally the Kato-Simon inequality (\ref{intro1}).\vspace{1mm}

The {\bf Chapters \ref{aiosys} to \ref{C12}} treat the above mentioned results {\bf (a)} to{\bf (h)}. \vspace{1mm}

{\bf Chapter \ref{C13}} deals with the above applications {\bf (i)} to{\bf (l)}.\vspace{1mm}

Finally, we have included an appendix containing the basics of the following topics: smooth manifolds and vector bundles, unbounded linear operators and unbounded sesquilinear forms in Hilbert spaces, and some partially nonstandard results from some measure theory.

\vspace{3mm}

\textbf{Acknowledgements:} I have benefited from mathematical discussions with Helga Baum, Jochen Brüning, Gilles Carron, Evgeny Korotyaev, Kazuhiro Kuwae, Jörn Müller, Diego Pallara, Alberto Setti, Christoph Stadtmüller, Anton Thalmaier, and Eren Ucar. Tobias Schwaibold has done a fantastic job concerning the copy editing.\\
Finally, I would like to thank my friends Francesco Bei, Sergio Cacciatori, Ognjen Milatovic, Olaf Post, and Stefano Pigola, who have always shared their knowledge very generously with me.

\newpage

\textbf{Conventions:} In the sequel, all manifolds are understood to be without boundary, unless otherwise stated. The reader may find some basics of differential geometry (in particular, the differential geometric notation which is used throughout this book) in Section \ref{difftop} of the appendix. Given a smooth vector bundle $E\to X$, the space of smooth (respectively, smooth compactly supported) sections in $E\to X$ is denoted by $\Gamma_{C^{\infty}}(X,E)$ (respectively $\Gamma_{C^{\infty}_c}(X,E)$), with an analogous notation for $C^k$-sections. In particular, given a smooth $m$-manifold $X$, the symbol 
$$
\mathscr{X}_{C^{\infty}} (X)= \Gamma_{C^{\infty}}(X, T X)
$$
will stand for the $\IR$-linear space of smooth vector fields on $X$, and for every $k=0,\dots, m$, the symbol
$$
\Omega^k_{C^{\infty}}(X)= \Gamma_{C^{\infty}}(X, \wedge^k T^*_{\IC} X)
$$
will stand for the $\IC$-linear space of smooth \emph{complex-valued} $k$-forms on $X$, with
$$
\Omega^0_{C^{\infty}}(X):=C^{\infty}(X):=C^{\infty}(X,\IC),\quad \Omega {C^{\infty}}(X):=\oplus^m_{k=0}\Omega^k_{C^{\infty}}(X).
$$
In particular, $C^{\infty}(X)$ denotes the smooth \emph{complex-valued} functions on $X$. When we say that $\alpha\in \Omega^k_{C^{\infty}}(X)$ is real-valued, this simply means that it is a smooth section of $\wedge^k T^* X \to X$. Notations such as $ C^{\infty}_{\IR}(X)$ will be self-explanatory then.\vspace{1mm}

The symbol $\Re ( z)$ denotes the real part of a complex number $z$, and $\Im (z)$ its imaginary part.\vspace{1mm}

Whenever there is no danger of confusion, the complexification of a linear map between real-linear spaces will be denoted by the same symbol again.\vspace{1mm}

Finally, we use the abbreviation that every complex scalar product is antilinear in its first slot, and that every measure is understood to be nonnegative and not necessarily complete, unless otherwise stated.

\chapter{Sobolev spaces on vector bundles}\label{C2}

The aim of this chapter is to establish the basics corresponding to differential operators that act on sections of vector bundles, and to introduce some induced abstract Sobolev-type spaces. Clearly, this requires some understanding of distributional or weak derivatives. In principle, such weak derivatives can be defined with the help of an intrinsic theory of distributions, in which the space of test sections of a smooth vector bundle $E\to X$ is given by the locally convex space of smooth compactly supported sections of $E^*\otimes |X|\to X$. In this case, the space of distributions is taken to be the continuous dual of this space of test sections. Here, $|X|\to X$ denotes the bundle of densities on $X$ \cite{waldmann}.\\
On the other hand, since we are only interested in $W^{k,q}$-type Sobolev spaces (where $k\in \IN$, $q\in [1,\infty]$), we can follow the usual Euclidean strategy and give an ad-hoc definition of \lq\lq{}weak derivative with respect to a differential operator acting on a manifold\rq\rq{}. This definition turns out to be sufficient for our aims. Distributions that are more singular will only appear in a few proofs, and there only locally. Therefore, we can simply use the standard Euclidean machinery in these situations.

\section[Differential operators]{Differential operators with smooth coefficients and weak derivatives on vector bundles}

Let $X$ be a smooth $m$-manifold.

\begin{Definition}
A \emph{smooth Borel measure on $X$} is understood to be a Borel measure which in any chart of $X$ has a smooth and strictly positive density function with respect to the Lebesgue measure.
\end{Definition}

It follows from a partition of unity argument that $X$ always admits a smooth Borel measure. Let $\rho$ be such a smooth Borel measure for the moment. Then $\rho$ has a full topological support, meaning that $\rho(U)>0$ for all nonempty open $U\subset X$, and $\rho$ is locally finite, which means that $\rho(K)<\infty$ for all compact $K\subset X$. In addition, $\rho$ is outer and inner regular, meaning, respectively, that for every Borel set $N\subset X$ one has
\begin{align*}
&\rho(N)=\inf\{\rho(U):\>\text{ $U\supset N$, $U$ is open}\},\\
&\rho(N)=\sup\{\rho(K):\>\text{ $K\subset N$, $K$ is compact}\}.
\end{align*}
All these facts follow from applying the corresponding results for the Lebesgue measure in charts. In addition, any two smooth Borel measures on $X$ are equivalent, that is, they are absolutely continuous with respect to each other. Locally, this is clear, since we assume the density functions to be $>0$. Globally, this follows from a partition of unity argument. Whenever we say that \emph{a property holds almost everywhere (a.e.) on $X$}, this means that there exists a smooth Borel measure $\rho$ on $X$ such that the property under consideration holds $\rho$-a.e.. Note that in this case, the statement automatically holds $\rho$-a.e. for every such $\rho$. \vspace{2mm}

Let us assume for the moment that we are given smooth $\IK$-vector bundles $E\to X$, $F\to X$. Let $\ell_0:=\mathrm{rank}(E)$ and $\ell_1:=\mathrm{rank}(F)$. 

\begin{Notation} The $\IK$-linear space of equivalence 
classes of Borel sections of $E\to X$ is denoted by $\Gamma(X,E)$. Given $\psi\in  \Gamma(X,E)$, its \emph{essential support} is the set $\supp(\psi)$ which is defined by 
\[
X\setminus\supp(\psi):= \bigcup    \{ U:  \text{ $U\subset X$ is open and $\psi(x)=0\in E_x$ for a.e. $x\in  U$}\}.
\]
\end{Notation}
The above equivalence classes are understood with respect to some smooth Borel measure on $X$. By what we have said, this notion does not depend on a particular choice of such measure. Furthermore, if $\psi$ is continuous, its essential support is equal to its usual topological support. If $X$ is an open subset of $\IR^m$ and $\psi\in \IL^1_{\loc}(X,\IK^{\ell_0})$, then the distributional support of $f$ is also equal to $\supp(\psi)$ defined as above.\vspace{2mm}

Before we can give a precise definition of smooth partial differential operators acting on sections of vector bundles, we record a simple fact on the restriction properties of certain operators. To this end, let
$$
P: \Gamma_{C^{\infty}}(X,E)\longrightarrow \Gamma_{C^{\infty}}(X,F)
$$
be a $\IK$-linear map which is local in the sense that for every $\psi\in\Gamma_{\ICC}(X,E)$ one has $\mathrm{supp}(P\psi)\subset  \mathrm{supp}(\psi)$. This property implies that $P$ maps compactly supported smooth sections into compactly supported smooth sections, and there exists a unique morphism of $\IK$-linear sheaves
$$
P|_{\bullet}: \Gamma_{C^{\infty}}(\bullet,E)\longrightarrow \Gamma_{C^{\infty}}(\bullet,F)
$$
such that $P|_U \psi|_U= (P\psi)|_U$ for all open $U\subset X$ and all $\psi\in\Gamma_{C^{\infty}}(X,E)$. This is easily checked using a cut-off function argument. Now we can give:

\begin{Definition}\label{ops} A local $\IK$-linear map
$$
P: \Gamma_{C^{\infty}}(X,E)\longrightarrow \Gamma_{C^{\infty}}(X,F)
$$
is called a \emph{smooth $\IK$-linear partial differential operator of order at most $k\in\IN_{\geq 0}$}, if for any chart $((x^1,\dots,x^m),U)$ of $X$ which admits frames $e_1,\dots,e_{\ell_0}\in\Gamma_{C^{\infty}}(U,E)$, 
$f_1,\dots,f_{\ell_1}\in\Gamma_{C^{\infty}}(U,F)$, and any multi-index\footnote{$\IN^m_k$ denotes the set of multi-indices $\alpha=(\alpha_1,\dots,\alpha_m)\in(\IN_{\geq 0})^m$ such that $\alpha_1+\cdots+\alpha_m\leq k$.} $\alpha\in\IN^m_k$, there 
are (necessarily uniquely determined) smooth functions
$$
P_{\alpha}:U\longrightarrow \mathrm{Mat}(\IK;\ell_0\times \ell_1)
$$
such that for all $(\phi^{(1)},\dots,\phi^{(\ell_0)})\in C^{\infty}(U,\IK^{\ell_0})$ one has
$$
P|_U\sum^{\ell_0}_{i=1}\phi^{(i)} e_i=\sum^{\ell_1}_{j=1}\sum^{\ell_0}_{i=1}\sum_{\alpha\in\IN^m_k}
P_{\alpha ij}\frac{\partial^{|\alpha|}\phi^{(i)}}{\partial x^{\alpha}}f_j\>\>\text{ in $U$}.
$$
\end{Definition}

The space of smooth $\IK$-linear partial differential operators (PDO\rq{}s) of at most $k$-th order is denoted by
$\mathscr{D}^{(k)}_{C^{\infty}}(X;E,F)$, where
$$
\mathscr{D}^{(k)}_{C^{\infty}}(X;E):=\mathscr{D}^{(k)}_{C^{\infty}}(X;E,E).
$$
These are $\IK$-linear spaces in the obvious way. In the scalar case, we will write
\begin{align*}
\mathscr{D}^{(k)}_{C^{\infty}}(X):=\mathscr{D}^{(k)}_{C^{\infty}}(X;X\times \IC)=\mathscr{D}^{(k)}_{C^{\infty}}(X;X\times \IC,X\times \IC),\\
%&\mathscr{D}^{(k)}_{C^{\infty}_{\IR}}(X):=\mathscr{D}^{(k)}_{C^{\infty}}(X;X\times \IR)=\mathscr{D}^{(k)}_{C^{\infty}}(X;X\times \IR,X\times \IR),
 \end{align*}
with the canonical identification of sections and functions being understood. We record some further facts: \vspace{2mm}

$\bullet$ The composition of a $\IK$-linear PDO of order $\leq k$ with a $\IK$-linear PDO of order $\leq l$ is a $\IK$-linear PDO of order $\leq l+k$. \\
$\bullet$ Given a $P\in\mathscr{D}^{(k)}_{C^{\infty}}(X;E,F)$, it is easily checked that for every open $U\subset X$ one has $P|_U\in\mathscr{D}^{(k)}_{C^{\infty}}(U;E|_U,F|_U)$.\\
$\bullet$ Any morphism of smooth $\IK$-vector bundles $T:E\to F$ over $X$ defines a 0-th order operator $P_T\in \mathscr{D}^{(0)}_{C^{\infty}}(X;E,F)$ in view of $P_T\psi(x):=T(x)\psi(x)$, $\psi\in\Gamma_{C^{\infty}}(X,E)$, and the assignment $T\mapsto P_T$ is an isomorphism of $\IK$-linear spaces. There is no risk of confusion if we simply write $T$ instead of $P_T$.\\
$\bullet$ Considering smooth functions as smooth vector bundle morphisms acting as scalars, it is straightforward to check that for every $k\geq 1$, $P\in\mathscr{D}^{(k)}_{C^{\infty}}(X;E,F)$, $f\in\ICC(X,\IK)$, the commutator of $P$ and $f$ satisfies $[P,f]\in \mathscr{D}^{(k-1)}_{C^{\infty}}(X;E,F)$.\\
$\bullet$ We have defined smooth linear partial differential operators to be local $\IK$-linear maps between smooth sections. It is a classical fact known as \emph{Peetre\rq{}s theorem} \cite{peetre} that, conversely, every local $\IK$-linear map between smooth sections \emph{locally} is a smooth partial differential operator of some \emph{locally constant order} (the point being that the locally determined orders can go to infinity in some infinite open cover of $X$, which nevertheless cannot happen if $X$ is compact).

\begin{Definition} Let $k\in\IN_{\geq 0}$ and let $P\in \mathscr{D}^{(k)}_{C^{\infty}}(X;E,F)$.\\
a) The (linear principal) symbol of $P$
is the unique morphism 
\begin{align*}
\symbol_P: (\IT^*X)^{\odot k}\longrightarrow \mathrm{Hom}(E,F)
\end{align*}
of smooth $\IR$-vector bundles over $X$,
where $\odot$ stands for the symmetric tensor product, such that for all $((x^1,\dots,x^m),U)$, $e_1,\dots,e_{\ell_0}$, $f_1,\dots,f_{\ell_1}$ as in Definition \ref{ops}, and all real-valued $(\zeta_1,\dots, \zeta_m)\in \ICC(U)^m$ one has
$$
\symbol_{P}\Big(  \Big(\sum^m_{r=1}\zeta_r\Id x^r\Big)^{\otimes_m} \Big)e_i= \sum_{\alpha\in\IN^m_k:\alpha_1+\cdots+\alpha_m=k}\sum^{\ell_1}_{j=1}P_{\alpha i j}f_j\>\>\text{ in $U$}.
$$
b) $P$ is called \emph{elliptic}, if for 
all $x\in X$, $v\in \IT^*_x X\setminus \{0\}$, the linear map $\symbol_{P,x}(v^{\otimes k}): E_x\to F_x$ is invertible.
\end{Definition}

It is straightforward to check that the symbol of the composition of two PDO\rq{}s is (in the obvious way) equal to the composition of the two symbols. We warn the reader that some authors include appropriate powers of $\sqrt{-1}$ in the definition of the symbol, which is convenient in the context of the symbol calculus of pseudo-differential operators. As we are not going to use pseudo-differential operators in the sequel, we omit these factors (which only make sense on complex vector bundles anyway). It is instructive to take a look at the exterior differential at this point:

\begin{Example} Given $k\in \{0,\dots,m \}$ and a chart $((x^1,\dots,x^m),U)$ of $X$, the frame for $\wedge^k_{\IC} T^* X\to X$ is given by the collection of all $\Id x^{I_1}\wedge \cdots \wedge \Id x^{I_k}$, where  $I=(I_1,\dots, I_k)\in (\IN_{\geq 1})^k$ is such that $ 1\leq I_1<\cdots< I_k \leq m$. Then the (complexified) \emph{exterior differential} $\Id_k$ is the uniquely determined $\IC$-linear and local map
$$
\Id_k : \Omega^k_{C^{\infty}}(X)\longrightarrow \Omega^{k+1}_{C^{\infty}}(X),
$$ 
such that for every chart as above and every collection of smooth maps $\alpha_{I_1,\dots, I_k}:U\to \IC$, where $I\in (\IN_{\geq 1})^k$ with $1\leq I_1<\cdots< I_k\leq m$, one has
\begin{align*}
&\Id_k|_{U} \sum_{ 1\leq I_1<\cdots< I_k\leq m}\alpha_{I_1,\dots, I_k}\Id x^{I_1}\wedge \cdots \wedge \Id x^{I_k}\\
&=\sum_{ 1\leq J_1<\cdots< J_{k+1}\leq m}\sum_{s=1}^m(-1)^s(\partial_{J_s}\alpha_{J_1,\dots,\widehat{J_s},\dots, J_m})\Id x^{J_1}\wedge \cdots \wedge \Id x^{J_{k+1}},
\end{align*}
where $\widehat{J_s}$ means that $J_s$ is omitted in the sum. From this representation, one easily sees that 
$$
\Id_k\in \mathscr{D}^{(1)}_{C^{\infty}}\big(X;\wedge^k_{\IC} T^*  X,\wedge^{k+1}_{\IC} T^*  X\big)
$$
Using a local formula for wedge products, one can also deduce from the above formula for the exterior derivative that the symbol of the exterior differential is given by
$$
\symbol_{\Id_k }(\zeta)(\alpha)=\zeta\wedge \alpha,\quad\zeta\in \IT^*_x X,\>\alpha\in \wedge^k_{\IC} T^*_x X,\>x\in X.
$$
In particular, $\Id_k$ is elliptic, if and only if $m=1$ (and then of course $k=0$). It follows from a straightforward calculation that $\Id_{k+1}\circ \Id_k=0$, so that these data induce a cochain complex $(\Omega^{\bullet}_{\ICC}(X), \Id_{\bullet})$, the (complexified) \emph{de Rham complex} of $X$. Of course, being a complexification, $\Id_k$ preserves \lq\lq{}reality\rq\rq{}. The following notation will be used in the sequel:
\begin{align*}
&\Id:=\Id_0:\ICC(X)\longrightarrow \Omega^{1}_{\ICC}(X),\\
&\underline{\Id}:=\Id_1\oplus \cdots \oplus \Id_{m}:\Omega _{\ICC}(X)\longrightarrow \Omega _{\ICC}(X),
\end{align*}
which are again smooth differential operators of order $1$. We refer the reader to Theorem 14.24 in \cite{lee} for an axiomatic definition of the exterior derivative.
\end{Example}

Another important class of smooth first order differential operators is provided by vector fields, when we read them as derivations on smooth real-valued functions:

\begin{Example} If $((x^1,\dots,x^m),U)$ is a smooth chart for $X$, then every smooth vector field $A\in \mathscr{X}_{C^{\infty}}(X)$ can be locally written as
$$
A|_U=\sum^{m}_{j=1}A_j\frac{\partial }{\partial x^{j}}
$$
for some uniquely determined real-valued functions $A_1,\dots, A_m\in C^{\infty}(M)$.
Thus, $A \in \mathscr{D}^{(1)}_{C^{\infty}}(X; TX)$.
\end{Example}

For $k\in\IN_{\geq 0}$, $q\in [1,\infty]$, the $\IK$-linear space $\Gamma_{W^{k,q}_{\mathrm{loc}}}(X,E)$ of 
\emph{local $ L^{q}$-Sobolev sections of $E\to X$ with differential order $k$} is defined 
to be the space of $f\in\Gamma(X,E)$ such that for all charts $U\subset X$ which admit a local 
frame $e_1,\dots,e_{\ell_0}\in\Gamma_{C^{\infty}}(U,E)$, one has $f^{(j)} \in  W^{k,q}_{\mathrm{loc}}(U)$ for all $j\in\{1,\dots,\ell_0\}$, if $f=\sum^{\ell_0}_{j=1}f^{(j)}e_j$ in $U$. In particular, we have the space of locally $q$-integrable sections
$$
\Gamma_{ L^q_{\mathrm{loc}}}(X,E):=\Gamma_{ W^{0,q}_{\mathrm{loc}}}(X,E).
$$ 
The Sobolev embedding theorem (Theorem 6, p. 284 in \cite{evans}) states in a simplified form  that 
\begin{align}
&\Gamma_{ W^{k,q}_{\mathrm{loc}}}(X,E)\subset \Gamma_{C^{k-[m/q]-1}}(X,E) \text{ for all $k>m/q$,}\\
&\text{in particular,}\>  \bigcap^{\infty}_{k=1}\Gamma_{ W^{k,q}_{\mathrm{loc}}}(X,E)=\Gamma_{\ICC}(X,E).
\end{align}
Hereby, $[m/q]$ denotes the largest integer $\leq m/q$.\\
Next, we recall the definition of the formal adjoint of a differential operator. To this end, we will denote by $(T,\alpha)$ the canonical pairing of a linear form $T$ on some finite-dimensional linear space with a vector $\alpha$ from that space.

\begin{Propandef}\label{dual} Let $\rho$ by a smooth Borel measure on $X$. Then for every $P\in \mathscr{D}^{(k)}_{C^{\infty}}(X;E,F)$ there exists a unique differential operator $P^{\rho}\in\mathscr{D}^{(k)}_{C^{\infty}}(X;F^*,E^*)$ which satisfies
\begin{align}\label{adk}
&\int_X \left( P^{\rho} \psi,\phi\right)\Id  \rho = \int_X \left(\psi,  P\phi \right) \Id \rho 
\end{align}
for all $\psi \in\Gamma_{C^{\infty}}(X,F^*)$, $\phi\in \Gamma_{C^{\infty}}(X,E)$ with 
either $\phi$ or $\psi$ compactly supported. The operator $P^{\rho}$ is called the \emph{formal adjoint of $P$ with respect to $\rho$.} An explicit local formula for $P^{\rho}$ can be found in the proof (cf. formula (\ref{formel})).
\end{Propandef}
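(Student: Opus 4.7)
The plan is to split the proof into uniqueness, local construction via iterated Euclidean integration by parts in a chart, and a gluing argument that promotes the local definitions to a global operator.

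\emph{Uniqueness.} If $Q_1, Q_2 \in \mathscr{D}^{(k)}_{C^{\infty}}(X;F^*,E^*)$ both satisfy \eqref{adk}, subtraction yields $\int_X ((Q_1-Q_2)\psi, \phi)\, \Id\rho = 0$ for all smooth $\psi, \phi$ with one of them compactly supported. In a chart $((x^1,\dots,x^m), U)$ admitting frames $e_1,\dots,e_{\ell_0}$ of $E|_U$ and $f^1,\dots,f^{\ell_1}$ of $F^*|_U$, plugging in $\psi = \chi f^j$ and $\phi = \eta e_i$ for arbitrary $\chi, \eta \in C^{\infty}_c(U)$ and invoking the standard vanishing lemma for the pairing with $\Id\rho$ (whose local density is smooth and strictly positive) forces each $e^i$-component of $(Q_1 - Q_2)(\chi f^j)$ to vanish on $U$. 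Since $\chi$ is arbitrary and the $Q_i$ are local, this gives $Q_1|_U = Q_2|_U$, and hence $Q_1 = Q_2$ after covering $X$ by such charts.

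\emph{Local existence.} Stay in $U$ above, now also equipped with the dual frames $e^i$ of $E^*|_U$ and $f_j$ of $F|_U$, and write $\Id\rho = \rho_0\, \Id x^1 \cdots \Id x^m$ with $\rho_0 \in C^{\infty}(U)$ strictly positive. For $\phi = \sum_i \phi^{(i)} e_i$ smooth on $U$ and $\psi = \sum_j \psi^{(j)} f^j$ smooth compactly supported in $U$, Definition~\ref{ops} together with $|\alpha|$-fold Euclidean integration by parts (boundary terms vanish by compact support) yields
\begin{align*}
\int_U (\psi, P\phi)\, \rho_0\, \Id x
&= \sum_{\alpha,i,j} \int_U \psi^{(j)} P_{\alpha i j}\, \frac{\partial^{|\alpha|} \phi^{(i)}}{\partial x^{\alpha}}\, \rho_0\, \Id x \\
&= \sum_{\alpha,i,j} (-1)^{|\alpha|} \int_U \frac{\partial^{|\alpha|}(\rho_0\, \psi^{(j)} P_{\alpha i j})}{\partial x^{\alpha}}\, \phi^{(i)}\, \Id x.
\end{align*}
The natural candidate is therefore
\begin{equation}\label{formel}
P^{\rho}_U \psi := \sum_i \bigg(\sum_{\alpha, j}(-1)^{|\alpha|} \rho_0^{-1}\frac{\partial^{|\alpha|}(\rho_0\, \psi^{(j)} P_{\alpha i j})}{\partial x^{\alpha}}\bigg) e^i,
\end{equation}
which visibly lies in $\mathscr{D}^{(k)}_{C^{\infty}}(U; F^*|_U, E^*|_U)$ and satisfies the adjoint identity on $U$.

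\emph{Globalisation.} Cover $X$ by charts $\{U_\lambda\}$ of the above type and form $P^{\rho}_{U_\lambda}$ via \eqref{formel}. On any overlap $U_\lambda \cap U_\mu$ both restrictions satisfy the adjoint identity for sections compactly supported in the overlap, so the uniqueness argument applied to $U_\lambda \cap U_\mu$ gives $P^{\rho}_{U_\lambda} = P^{\rho}_{U_\mu}$ there; the local pieces then glue to a global $P^{\rho} \in \mathscr{D}^{(k)}_{C^{\infty}}(X; F^*, E^*)$. For arbitrary $\psi, \phi$ with one compactly supported, a smooth partition of unity $\{\chi_\lambda\}$ subordinate to $\{U_\lambda\}$ (finite on the relevant support) decomposes the global integral into a finite sum of local ones, on each of which the adjoint identity holds; summing yields \eqref{adk}. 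The only subtle point is the frame-independence of \eqref{formel}, but this is delivered for free by uniqueness applied inside each chart rather than by explicit manipulation of transition functions, which is what makes the gluing painless.
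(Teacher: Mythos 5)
Your proposal is correct and follows essentially the same route as the paper: uniqueness via the fundamental lemma of distribution theory applied in charts, local existence by iterated integration by parts against the smooth positive density (your candidate formula coincides with the paper's formula (\ref{formel})), and globalisation via locality and a partition of unity. No gaps.
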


\begin{proof} Applying the fundamental lemma of distribution theory locally, it is clear that there can be at most one operator satisfying (\ref{adk}). In order to prove the existence, it is sufficient to define $P^{\rho}$ locally (using a standard partition of unity argument and the fact that differential operators are local). Now, in the situation of Definition \ref{ops}, let $e_i^*$ and $f_j^*$ 
be the dual smooth frames over $U$ for $E\to X$, and $F\to X$, respectively. Then for all 
$(\psi^{(1)},\dots,\psi^{(\ell_1)})\in C^{\infty}(U,\IK^{\ell_1})$ we define
\begin{align} \label{formel}
P^{\rho}\sum^{\ell_1}_{j=1}\psi^{(j)} f_j^*:=\frac{1}{\rho}\sum^{\ell_0}_{i=1}\sum^{\ell_1}_{j=1}\sum_{\alpha\in\IN^m_k}
(-1)^{|\alpha|}\frac{\partial^{|\alpha|}\left(P_{\alpha ij}\rho\psi^{(j)}\right)}{\partial x^{\alpha}}e_i^*\>\>\text{ in $U$}.
\end{align}
Let $\psi:=\sum_j\psi^{(j)} f_j^*$ and $\phi=\sum_i\phi^{(i)} e_i$ be smooth sections of $ F^*\to X$ and $E\to X$ over $U$, 
respectively, one of which having a compact support. Integrating by parts, we can calculate
\begin{align*}
&\int_U \left( P^{\rho} \psi,\phi \right)\Id  \rho=
\sum^{\ell_0}_{i=1}\sum^{\ell_1}_{j=1}\sum_{\alpha\in\IN^m_k}
\int_U\frac{1}{\rho} (-1)^{|\alpha|}\frac{\partial^{|\alpha|}(P_{\alpha ij}\rho \psi^{(j)})}{\partial x^{\alpha}}\phi^{(i)} \rho \; \Id x \\
& =\sum^{\ell_0}_{i=1}\sum^{\ell_1}_{j=1}\sum_{\alpha\in\IN^m_k}
\int_U  \psi^{(j)}  P_{\alpha ij} \frac{\partial^{|\alpha|}\phi^{(i)}}{\partial x^{\alpha}} \rho \;  \Id x= 
\int_U \left(\psi , P\phi \right) \Id \rho ,
\end{align*}
which proves (\ref{adk}). 
\end{proof}

We continue with:

 \begin{Propandef}\label{glig} Let $P\in \mathscr{D}^{(k)}_{C^{\infty}}(X;E,F)$. Then given $f\in \Gamma_{ L^{1}_{\mathrm{loc}}}(X,E)$ we say that \emph{$Pf$ exists weakly}, if there exists $h\in \Gamma_{ L^{1}_{\mathrm{loc}}}(X,F)$, such that for all smooth measures $\rho$ on $X$ it holds that 
\begin{align}
&\int_X \left( P^{\rho}\psi,f\right)\Id  \rho = \int_X \left( \psi,h \right) \Id \rho\>
\text{ for all $\psi \in\Gamma_{C^{\infty}_{\c}}(X,F^*)$}.\label{sdf0}
\end{align}
This property is equivalent to (\ref{sdf0}) being true for \emph{some} smooth measure $\rho$. In this situation, the equivalence class $h$ is uniquely determined, and we set $Pf:=h$.
\end{Propandef}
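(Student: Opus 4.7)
The proposition has two substantive claims: (a) that the defining relation (\ref{sdf0}) holding for \emph{one} smooth Borel measure $\rho$ is equivalent to it holding for \emph{every} such measure (so the notion $Pf$ exists weakly is unambiguous), and (b) that the section $h$ is uniquely determined as an element of $\Gamma_{L^1_{\loc}}(X,F)$. My plan is to reduce both claims to local computations and to a single transformation rule for formal adjoints under a change of smooth reference measure.

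The key lemma I would establish first is the following transformation rule: if $\rho$ and $\rho'$ are two smooth Borel measures on $X$, then by the absolute continuity recorded earlier in the chapter we may write $\rho'=\varphi\rho$ with $\varphi\in C^\infty(X)$ strictly positive, and then for every $P\in\mathscr{D}^{(k)}_{C^\infty}(X;E,F)$ and every $\psi\in\Gamma_{C^\infty}(X,F^*)$,
\begin{equation*}
P^{\rho'}\psi \;=\; \varphi^{-1}\,P^{\rho}(\varphi\psi).
\end{equation*}
This is verified directly from the defining property of $P^{\rho'}$ in Proposition and Definition~\ref{dual}: for any $\phi\in\Gamma_{C^\infty}(X,E)$ with $\phi$ or $\psi$ compactly supported,
\begin{align*}
\int_X \bigl(\varphi^{-1}P^{\rho}(\varphi\psi),\phi\bigr)\,\Id\rho'
&=\int_X \bigl(P^{\rho}(\varphi\psi),\phi\bigr)\,\Id\rho
=\int_X (\varphi\psi,P\phi)\,\Id\rho
=\int_X (\psi,P\phi)\,\Id\rho',
\end{align*}
so by uniqueness of the formal adjoint the identity follows. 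Note that $\varphi^{-1}P^{\rho}(\varphi\,\cdot)$ is again a smooth $\IK$-linear PDO of order $\leq k$ (compositions of PDOs are PDOs, and multiplication by a smooth function is a $0$-th order PDO), which is consistent.

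Now for (a), suppose (\ref{sdf0}) holds with some smooth measure $\rho$ and let $\rho'=\varphi\rho$ be another. For arbitrary $\tilde\psi\in\Gamma_{C^\infty_c}(X,F^*)$, note that $\varphi\tilde\psi$ is again a valid test section, so
\begin{align*}
\int_X (P^{\rho'}\tilde\psi,f)\,\Id\rho'
&=\int_X \bigl(\varphi^{-1}P^{\rho}(\varphi\tilde\psi),f\bigr)\varphi\,\Id\rho
=\int_X \bigl(P^{\rho}(\varphi\tilde\psi),f\bigr)\,\Id\rho\\
&=\int_X (\varphi\tilde\psi,h)\,\Id\rho
=\int_X (\tilde\psi,h)\varphi\,\Id\rho
=\int_X (\tilde\psi,h)\,\Id\rho',
\end{align*}
which is (\ref{sdf0}) for $\rho'$ with the same $h$. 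The symmetric argument (with $\rho$ and $\rho'$ swapped) shows the equivalence, and in particular the candidate section $h$ does not depend on the chosen reference measure.

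Claim (b) is the fundamental lemma of distribution theory, transported to vector bundles. Fix any smooth measure $\rho$ and suppose $h,h'\in\Gamma_{L^1_{\loc}}(X,F)$ both satisfy (\ref{sdf0}); then $\int_X (\psi,h-h')\,\Id\rho=0$ for every $\psi\in\Gamma_{C^\infty_c}(X,F^*)$. Working in a chart $((x^1,\dots,x^m),U)$ admitting smooth frames $f_1,\dots,f_{\ell_1}$ of $F$ with dual frame $f^*_1,\dots,f^*_{\ell_1}$, write $h-h'=\sum_j g^{(j)}f_j$ with $g^{(j)}\in L^1_{\loc}(U)$, and observe that with the smooth positive density $u$ of $\rho$ on $U$ one has $u\,g^{(j)}\in L^1_{\loc}(U)$ and $\int_U \eta\, u\, g^{(j)}\,\Id x=0$ for every $\eta\in C^\infty_c(U,\IK)$ (take $\psi=\eta f^*_j$ extended by zero). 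The standard Euclidean fundamental lemma then yields $u\,g^{(j)}=0$ a.e., hence $g^{(j)}=0$ a.e.\ since $u>0$, and covering $X$ by such charts gives $h=h'$ as elements of $\Gamma_{L^1_{\loc}}(X,F)$. No step here is expected to be delicate; the only point that requires care is making the extension-by-zero of test sections compatible with the global statement, which is routine via a smooth cutoff in $U$.
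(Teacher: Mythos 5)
Your proof is correct and follows essentially the same route as the paper: the central step in both is the transformation rule $P^{\rho'}\psi=\frac{\Id\rho}{\Id\rho'}P^{\rho}\big(\frac{\Id\rho'}{\Id\rho}\psi\big)$ (formula (\ref{aioiwww})), derived by testing against compactly supported sections, followed by the same substitution argument. The only difference is that you spell out the uniqueness of $h$ via the fundamental lemma of distribution theory in charts, which the paper dismisses as clear.
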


\begin{proof} Again it is clear that there can only be at most one such $h$. Assume now that there is a smooth measure $\rho$ with (\ref{sdf0}), and let $\rho'$ be an arbitrary smooth measure. In order to see that one also has (\ref{sdf0}) with respect to $\rho'$, let 
$0<\frac{\Id\rho'}{\Id\rho}\in C^{\infty}(X)$ be the Radon-Nikodym derivative of $\rho'$ with respect to $\rho$. Note that, since the measures are equivalent, Radon-Nikodym\rq{}s theorem entails that $\frac{\Id\rho'}{\Id\rho}$ exists globally as a Borel function. The smoothness of the measures then entails that $\frac{\Id\rho'}{\Id\rho}$ can be represented smoothly in each chart. We have, for all $h_1 \in \Gamma_{C_c^\infty}(X,E)$ and all $h_2 \in \Gamma_{C_c^\infty}(X, F^*)$: 
\begin{align*}
 \int_X \left( h_2,Ph_1\right) \Id\rho'  &= 
 \int_X    \frac{\Id\rho'}{\Id\rho}  \left( h_2, Ph_1 \right)  \Id\rho = 
 \int_X  \left( P^{\rho}\big( \frac{\Id\rho'}{\Id\rho} h_2\big),h_1 \right) \Id\rho  
\\
& = \int_X   \frac{\Id\rho}{\Id\rho'} \left( P^{\rho}\big( \frac{\Id\rho'}{\Id\rho} h_2\big),h_1\right)\Id\rho',
 \end{align*}
 so that 
\begin{align}\label{aioiwww}
P^{\rho'} h_2  = \frac{\Id\rho}{\Id\rho'}P^{\rho}( \frac{\Id\rho'}{\Id\rho} h_2)
\end{align}
for all $h_2\in \Gamma_{C_c^\infty}(X, F^*)$. Thus if we have (\ref{sdf0}) with respect to $\rho$, it follows that  
\begin{align*}
 &\int_X \left( P^{\rho'} \psi,f\right) \Id\rho'  =  
 \int_X   \left( P^{\rho}\big(\frac{\Id\rho'}{\Id\rho} \psi\big),f \right) \Id\rho   = 
 \int_X  \left(\psi,h \right)  \frac{\Id\rho'}{\Id\rho}  \Id\rho  \\
& =  \int_X  \left(\psi,h \right)  \Id\rho', 
 \end{align*}       
as claimed.
\end{proof}

Accordingly, given a subset $\mathscr{M}\subset\Gamma_{ L^{1}_{\mathrm{loc}}}(X,F)$, the assumption $Pf\in \mathscr{M}$ means by definition that some (necessarily unique) $h\in \mathscr{M}$ exists such that $Pf=h$ in the sense of Proposition \ref{glig}. This definition is consistent with the case that $f$ is smooth. In the scalar real-valued case, we will need the following simple observation:

\begin{Propandef}\label{weakk} Given $P\in \mathscr{D}^{(k)}_{C^{\infty}_{\IR}}(X)$ and two real-valued and locally integrable functions $f,h \in \IL^1_{\loc,\IR}(M)$, we say that $Pf\geq h$ \emph{holds weakly}, if and only if for all smooth Borel measures $\rho$ on $X$ one has
\begin{align}\label{lala}
\int_X (P^{\rho}\psi) f \Id\rho \geq \int_X \psi h \Id\rho\quad\text{ for all $0\leq \psi\in\ICC_{\c}(X)$.}
\end{align} 
It is sufficient to check this weak inequality for \emph{some} smooth Borel measure on $X$ (cf. formula (\ref{aioiwww})).
\end{Propandef}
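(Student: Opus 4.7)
The claim has two directions, but only one is non-trivial: the ``some $\Rightarrow$ all'' implication. My plan is to imitate (and reduce to) the argument already carried out for Proposition and Definition \ref{glig}, by exploiting the explicit transformation law (\ref{aioiwww}) between formal adjoints relative to different smooth measures.

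First, I would fix a smooth Borel measure $\rho$ for which (\ref{lala}) is known to hold, and let $\rho'$ be an arbitrary smooth Borel measure on $X$. Since any two smooth Borel measures on $X$ are equivalent with a smooth and strictly positive Radon-Nikodym density, the function $\frac{\Id\rho'}{\Id\rho}$ is globally well-defined, smooth, and strictly positive on $X$. The key point is then that for any $0\leq \psi\in \ICC_{\c}(X)$, the product $\frac{\Id\rho'}{\Id\rho}\psi$ is again an element of $\ICC_{\c}(X)$ and is \emph{still nonnegative}, because the density is positive. Thus it is an admissible test function for the assumed inequality (\ref{lala}) with respect to $\rho$.

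Now I would compute, using (\ref{aioiwww}) (which in the scalar real-valued case reads $P^{\rho'}\psi = \frac{\Id\rho}{\Id\rho'}P^{\rho}\bigl(\frac{\Id\rho'}{\Id\rho}\psi\bigr)$):
\begin{align*}
\int_X (P^{\rho'}\psi)\, f \,\Id\rho'
&= \int_X \frac{\Id\rho}{\Id\rho'}P^{\rho}\!\Bigl(\tfrac{\Id\rho'}{\Id\rho}\psi\Bigr)\, f \,\Id\rho'
= \int_X P^{\rho}\!\Bigl(\tfrac{\Id\rho'}{\Id\rho}\psi\Bigr)\, f \,\Id\rho \\
&\geq \int_X \Bigl(\tfrac{\Id\rho'}{\Id\rho}\psi\Bigr) h \,\Id\rho
= \int_X \psi\, h \,\Id\rho',
\end{align*}
where the inequality uses (\ref{lala}) for $\rho$ applied to the nonnegative test function $\frac{\Id\rho'}{\Id\rho}\psi$, and the last step uses the defining property of the Radon-Nikodym derivative.

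The main (and really only) subtle point is the positivity preservation of the test function under rescaling by $\frac{\Id\rho'}{\Id\rho}$; once this is in place, the whole argument is a one-line manipulation riding on formula (\ref{aioiwww}) that was already established. No distribution-theoretic or local arguments are needed here, because the transformation law is global and smoothness of the density is automatic from the definition of a smooth Borel measure. Consequently the proof is very short and reads essentially as a remark following the proof of Proposition and Definition \ref{glig}.
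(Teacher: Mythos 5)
Your argument is correct and is exactly what the paper intends by the parenthetical reference to formula (\ref{aioiwww}): apply the transformation law $P^{\rho'}\psi=\frac{\Id\rho}{\Id\rho'}P^{\rho}\bigl(\frac{\Id\rho'}{\Id\rho}\psi\bigr)$, observe that $\frac{\Id\rho'}{\Id\rho}\psi$ is again a nonnegative element of $\ICC_{\c}(X)$ because the density is smooth and strictly positive, and cancel the densities against the measures. Nothing further is needed.
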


We recall:

\begin{Definition} A \emph{smooth metric $h_E$ on $E\to X$} is defined to be a smooth section $h_E$ in $E^*\otimes E^*\to X$, such that $h_E$ is fiberwise a $\IK$-scalar product. Then the pair $(E,h_E)\to M$ is referred to as a smooth \emph{metric $\IK$-vector bundle}. 
\end{Definition}

If nothing else is said, the trivial smooth vector bundle $X\times \IK^{\ell_0}\to X$ is equipped with its canonic smooth metric, which is induced by the Euclidean metric on $\IK^{\ell_0}$.\\
In applications, formal adjoints are typically used as follows:

\begin{Propandef} \label{aoc} Assume that $(E,h_E)\to X$ and $(F,h_F)\to X$ are smooth metric $\IK$-vector bundles, and let $\rho$ be a smooth Borel measure on $X$. Then for any $P\in \mathscr{D}^{(k)}_{C^{\infty}}(X;E,F)$ there is a uniquely determined operator $P^{\rho, h_E,h_F}\in\mathscr{D}^{(k)}_{C^{\infty}}(X;F,E)$ which satisfies
\begin{align*}
&\int_X h_E\left(P^{\rho, h_E,h_F} \psi,\phi\right)\Id  \rho = \int_X h_F\left(\psi , P\phi\right) \Id \rho 
\end{align*}
for all $\psi \in\Gamma_{C^{\infty}}(X,F)$, $\phi\in \Gamma_{C^{\infty}}(X,E)$ with 
either $\phi$ or $\psi$ compactly supported. The operator $P^{\rho, h_E,h_F}$ is called the formal \emph{adjoint of $P$ with respect to $(\rho, h_E,h_F)$}. An explicit local formula for $P^{\rho, h_E,h_F}$ can be found in the proof.
\end{Propandef}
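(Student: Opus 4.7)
The plan is to reduce the assertion to Proposition and Definition \ref{dual}, by using the smooth metrics $h_E$ and $h_F$ to identify $E$ with $E^*$ and $F$ with $F^*$ at the level of smooth vector bundles. Concretely, I would first introduce the musical-type smooth bundle morphisms
\[
\iota_E:E\longrightarrow E^*,\quad \iota_E(v)=h_E(v,\cdot),\qquad \iota_F:F\longrightarrow F^*,\quad \iota_F(w)=h_F(w,\cdot).
\]
Because $h_E$ is smooth and fiberwise a nondegenerate $\IK$-scalar product, $\iota_E$ is a smooth bundle isomorphism with smooth inverse $\iota_E^{-1}$; the same applies to $\iota_F$. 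In the complex case, $\iota_E$ and $\iota_F$ are conjugate-linear (owing to the convention that scalar products are antilinear in the first slot), but this will cause no trouble, because an even number of them appears in each composition below.

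I would then \emph{define}
\[
P^{\rho,h_E,h_F}\deff \iota_E^{-1}\circ P^{\rho}\circ \iota_F.
\]
Since $\iota_F$ and $\iota_E^{-1}$ are smooth bundle morphisms, hence elements of $\mathscr{D}^{(0)}_{C^\infty}$ (viewed as $\IK$-linear operators after the pair of conjugations cancels), and $P^{\rho}\in \mathscr{D}^{(k)}_{C^\infty}(X;F^*,E^*)$ by Proposition and Definition \ref{dual}, the composition rule for smooth PDOs yields
\[
P^{\rho,h_E,h_F}\in \mathscr{D}^{(k)}_{C^\infty}(X;F,E).
\]
The defining integral identity is then a direct computation: for $\psi\in\Gamma_{C^{\infty}}(X,F)$ and $\phi\in\Gamma_{C^{\infty}}(X,E)$ with one of them compactly supported,
\begin{align*}
\int_X h_E\bigl(P^{\rho,h_E,h_F}\psi,\phi\bigr)\,\Id\rho
&=\int_X \bigl(\iota_E(P^{\rho,h_E,h_F}\psi),\phi\bigr)\,\Id\rho \\
&=\int_X \bigl(P^{\rho}(\iota_F \psi),\phi\bigr)\,\Id\rho \\
&=\int_X \bigl(\iota_F \psi, P\phi\bigr)\,\Id\rho
=\int_X h_F(\psi,P\phi)\,\Id\rho,
\end{align*}
where the third equality invokes the defining property of $P^{\rho}$ in Proposition and Definition \ref{dual}. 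Uniqueness follows exactly as in the proof of that proposition: if another operator satisfied the same integral identity, the difference would pair trivially with all $\psi\in\Gamma_{C^{\infty}_{\c}}(X,F)$, and the fundamental lemma applied in charts (using a local orthonormal frame to translate between $h_E$-pairings and the canonical pairing) forces it to vanish. Finally, an explicit local formula can be read off from (\ref{formel}) applied to $P^{\rho}$ together with the smooth matrix representations of $\iota_E$ and $\iota_F$ in an orthonormal frame.

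The only delicate point will be bookkeeping in the complex case: $\iota_E$ and $\iota_F$ are conjugate-linear, so one has to verify that the two conjugations compose to give a genuine $\IC$-linear map $F\to E$, which is automatic in $\iota_E^{-1}\circ P^{\rho}\circ \iota_F$ since the $\IC$-linear $P^{\rho}$ is sandwiched between one conjugate-linear map and its ``inverse''-type partner. Apart from this, everything is routine. I do not expect any genuine obstacle — the content of the proposition is essentially that the pair of metrics transports the formal adjoint of Proposition and Definition \ref{dual} into an operator acting on the original bundles rather than their duals.
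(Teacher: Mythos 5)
Your argument is correct, but it takes a different route from the paper's own proof. The paper proves the statement locally: it fixes frames $e_i$, $f_j$ that are orthonormal with respect to $h_E$, $h_F$ and writes down the explicit local formula for $P^{\rho,h_E,h_F}$ directly (the analogue of (\ref{formel}) with conjugated, transposed coefficients), verifying it by the same integration by parts as in Proposition \ref{dual}; globality then follows from locality and a partition of unity, exactly as there. You instead reduce to Proposition \ref{dual} globally, by conjugating $P^{\rho}$ with the musical isomorphisms induced by the metrics. This is precisely the content of the commutative diagram that the paper records as a remark immediately \emph{after} its proof, so your construction is fully endorsed by the text; its advantage is that the integral identity becomes a two-line frame-free computation, whereas the paper's advantage is that it delivers the explicit local formula (which the statement promises) with no detour through the dual bundles.

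One point you should tighten: in the complex case $\iota_F$ and $\iota_E^{-1}$ are conjugate-linear, so they are not morphisms of smooth $\IC$-vector bundles in the sense of the paper, and the composition rule for elements of $\mathscr{D}^{(k)}_{C^{\infty}}$ cannot be quoted verbatim to conclude $\iota_E^{-1}\circ P^{\rho}\circ\iota_F\in\mathscr{D}^{(k)}_{C^{\infty}}(X;F,E)$. You flag this but dismiss it as automatic; the clean fix is to check the local normal form of Definition \ref{ops} directly in $h$-orthonormal frames: there $\iota_F$ conjugates the component functions, $\iota_E^{-1}$ conjugates them back, and the composite has smooth coefficients $\overline{P_{\alpha ij}}$ in place of $P_{\alpha ij}$ — which is exactly the paper's displayed formula. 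With that observation added, your proof is complete.
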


\begin{proof} Again, it is sufficient to prove the local existence. To this end, in the situation of Definition \ref{ops}, we assume that $e_i$ and $f_j$ are orthonormal with respect to $h_E$ and $h_F$, respectively. Then in complete analogy to the proof of Proposition \ref{dual}, one finds that 
\begin{align}  
P^{\rho,h_E,h_F}\sum^{\ell_1}_{j=1}\psi^{(i)} f_j:=\frac{1}{\rho}\sum^{\ell_0}_{i=1}\sum^{\ell_1}_{j=1}\sum_{\alpha\in\IN^m_k}
(-1)^{|\alpha|}\frac{\partial^{|\alpha|}\left(\overline{P_{\alpha ji}}\rho\psi^{(j)}\right)}{\partial x^{\alpha}}e_i\>\>\text{ in $U$}
\end{align}
does the job.
\end{proof}

In the above situation, $P^{\rho,h_E,h_F}$ can be constructed from $P^{\rho}$ by means of the commutative diagram
\[
\begin{xy}
\xymatrix{
\Gamma_{C^{\infty}}(X,F^*)\>\>\ar[rr]^{P^{\rho}} & & 
\>\>\Gamma_{C^{\infty}}(X,E^*)\ar[dd]^{\tilde{h}_E^{-1}} \\	 \\
\Gamma_{C^{\infty}}(X,F)\ar[uu]^{\tilde{h}_F}\ar@{.>}_{P^{\rho, h_E,h_F}}[rr] 
& & \Gamma_{C^{\infty}}(X,E)
}
\end{xy}
\]
where $\tilde{h_E}$ and $\tilde{h_F}$ stand for the $\IK$-linear isomorphisms which are induced by $h_E$ and $h_F$, 
respectively (that is $\tilde{h_E}(\phi):=h_E(\bullet,\phi)$ and likewise for $\tilde{h_F}$). Furthermore, the assignment $P\mapsto P^{\rho}$ is a linear map with 
$$
(PQ)^{\rho}=Q^{\rho}P^{\rho}, \>(P^{\rho})^{\rho}=P,
$$
whereas $P\mapsto P^{\rho,h_E,h_F}$ is an antilinear map with
$$
(PQ)^{\rho, h_{E_1},h_{E_3}}=Q^{\rho, h_{E_2},h_{E_1}}P^{\rho, h_{E_3},h_{E_2}}, \>(P^{\rho, h_E,h_F})^{\rho, h_F,h_E}=P,
$$
whenever these expressions make sense. 

\begin{Lemma}\label{adjoo}
Given $f\in \Gamma_{ L^{1}_{\mathrm{loc}}}(X,E)$ and $h\in \Gamma_{ L^{1}_{\mathrm{loc}}}(X,F)$, 
one has $Pf=h$, if and only if for all triples $(\rho,h_E,h_F)$ as in Proposition \ref{aoc} it holds that 
\begin{align}
&\int_X h_E\left(P^{\rho, h_E,h_F} \psi,f\right)\Id  \rho = \int_X h_F\left(\psi , h\right) \Id \rho\>
\text{ for all $\psi \in\Gamma_{C^{\infty}_{\c}}(X,F)$ }.\label{sdf}
\end{align}
Furthermore, this property is equivalent to (\ref{sdf}) being true for \emph{some} triple $(\rho,h_E,h_F)$ of this kind. 
\end{Lemma}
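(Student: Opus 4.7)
The plan is to reduce the claim to Proposition \ref{glig} by translating test sections of $F^*\to X$ into test sections of $F\to X$ via the musical isomorphism $\tilde{h}_F$, and similarly translating the outcome via $\tilde{h}_E$.

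First I would record the key input already at hand: by Proposition \ref{glig}, the statement $Pf=h$ is by definition equivalent to
\begin{align*}
\int_X \left( P^{\rho}\psi',f\right)\Id  \rho = \int_X \left( \psi',h \right) \Id \rho
\quad\text{for all $\psi'\in\Gamma_{C^{\infty}_{\c}}(X,F^*)$,}
\end{align*}
and this may be checked for any single smooth Borel measure $\rho$. So it suffices to show that, given any triple $(\rho,h_E,h_F)$, the metric identity \eqref{sdf} is equivalent to the dual identity above for that same $\rho$.

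Next I would use the commutative diagram displayed between Proposition \ref{aoc} and Lemma \ref{adjoo}, which amounts to the identity
$$
\tilde{h}_E\circ P^{\rho,h_E,h_F}=P^{\rho}\circ \tilde{h}_F
$$
on $\Gamma_{C^{\infty}}(X,F)$. Since $\tilde{h}_E,\tilde{h}_F$ are fiberwise bijective morphisms of smooth vector bundles, the assignment $\psi\mapsto \psi':=\tilde{h}_F(\psi)$ is a bijection from $\Gamma_{C^{\infty}_{\c}}(X,F)$ onto $\Gamma_{C^{\infty}_{\c}}(X,F^*)$ preserving smoothness and compact support. Using the defining relation $\tilde{h}_E(\phi)(\phi_0)=h_E(\phi_0,\phi)$ (and the analogue for $F$), the integrands in \eqref{sdf0} and \eqref{sdf} match up pointwise when $\psi'=\tilde{h}_F(\psi)$, because
$$
(P^{\rho}\psi',f)=(\tilde{h}_E(P^{\rho,h_E,h_F}\psi),f)=h_E(P^{\rho,h_E,h_F}\psi,f),\quad
(\psi',h)=h_F(\psi,h),
$$
where the right-hand sides are the two integrands of \eqref{sdf}. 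Consequently \eqref{sdf0} for $\rho$ is equivalent to \eqref{sdf} for the triple $(\rho,h_E,h_F)$.

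Combining this pointwise translation with the $\rho$-independence established in Proposition \ref{glig}, one obtains simultaneously both halves of the lemma: \eqref{sdf} for \emph{all} admissible triples follows from $Pf=h$, and \eqref{sdf} for a \emph{single} triple $(\rho,h_E,h_F)$ already implies \eqref{sdf0} for that $\rho$, hence $Pf=h$ by Proposition \ref{glig}. The only delicate bookkeeping is keeping track of the (anti-)linearity convention in the definition $\tilde{h}_E(\phi)=h_E(\bullet,\phi)$ so that the canonical pairing $(\tilde{h}_E(\phi),\phi_0)$ truly reproduces $h_E(\phi_0,\phi)$; this is the main (and essentially only) point at which care is needed, since otherwise the argument is purely formal manipulation of the commutative diagram and the already-available Proposition \ref{glig}.
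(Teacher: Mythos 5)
Your argument is correct and is essentially the paper's: both reduce the lemma to Proposition \ref{glig} via the relation $\tilde h_E\circ P^{\rho,h_E,h_F}=P^{\rho}\circ\tilde h_F$ encoded in the commutative diagram, so that \eqref{sdf} for a given triple is equivalent to \eqref{sdf0} for the same $\rho$. The paper merely organizes the metric-independence slightly differently, comparing two metric triples directly through bundle isomorphisms $S_E$, $S_F$ (which play the role of transition maps between the two musical isomorphisms), whereas you pass each triple through the dual-bundle formulation; the conjugate-linearity bookkeeping you flag is indeed the only point requiring care, and it is harmless because the identities are quantified over all test sections and complex conjugation permutes them.
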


\begin{proof} In view of Proposition \ref{glig}, it is sufficient to prove that if there exists a triple $(\rho,h_E,h_F)$ with 
(\ref{sdf}) and if $h'_E$ and $h'_F$ are other smooth metrics on $E\to X$ and on $F\to X$, respectively, then 
one also has (\ref{sdf}) with respect to $(\rho,h'_E,h'_F)$. To this end, define the isomorphisms of smooth $\IK$-vector 
bundles over $X$ given by
\begin{align*}
&S_E:E\longrightarrow E,\>h'_E(S_E\phi_1,\phi_2):=h_E(\phi_1,\phi_2),\\
&S_F:F\longrightarrow F,\>\> h'_F(S_F\psi_1,\psi_2):=h_F(\psi_1,\psi_2).  
\end{align*}
Note that $h_{E}(S_E^{-1}\phi_1,\phi_2)=h'_E(\phi_1,\phi_2)$, and likewise for $h_F$. As in the proof of Lemma \ref{glig}, one finds 
$$
P^{\rho,h'_E,h'_F}=S_E^{-1}P^{\rho,h_E,h_F}S_F,
$$
and using this formula one easily proves the claim. 
\end{proof}

Next, we record some results concerning local elliptic regularity. Namely, let $k\in\IN_{\geq 0}$, $q\in [1,\infty)$, let $E\to X$, $F\to X$ be smooth $\IK$-vector bundles, and let $P\in\IDD^{(k)}_{C^{\infty}}(U;E,F)$ be elliptic. Then for every $f\in \Gamma_{ L^{q}_{\mathrm{loc}}}(X,E)$ with $Pf\in \Gamma_{ L^{q}_{\mathrm{loc}}}(X,F)$ one has
\begin{align}\label{localb}
f\in 
\begin{cases}\Gamma_{ W^{k,q}_{\mathrm{loc}}}(X,E), \text{ if $q>1$}\\
\Gamma_{ W^{k-1,1}_{\mathrm{loc}}}(X,E), \text{ if $q=1$}.
\end{cases}
\end{align}

Note that for $q>1$ the above facts are classical, while the limit case $q=1$ requires some extra work. It follows, for example, from a result on local Besov regularity of solutions to systems of elliptic equations, due to D. Guidetti, D. Pallara and the author \cite{guidetti}. It is well-known that results of this type fail for $q=\infty$. We close this section by recording the following convenient abuse of notation that should cause no danger of confusion in the sequel:

\begin{Remark}\label{adjo}1. If $(E,h_E)\to X$ is a smooth metric $\IK$-vector bundle, then whenever there is no danger of confusion, we will simply denote the underlying metric structure with $(\bullet,\bullet)$. The corresponding norm and operator norm on $\IK$-linear operators on the fibers will then be denoted by $\left|\bullet\right|$. \\
2. Given a smooth Borel measure $\rho$ on $X$, smooth metric $\IK$-vector bundles $E\to X$, $F\to X$, and a differential operator $P\in \IDD^{(k)}_{\ICC}(X,E,F)$, we will simply write $P^{\rho}\in \IDD^{(k)}_{\ICC}(X,F,E)$ for the adjoint of $P$ with respect to $\rho$ and the corresponding metric structures, in accordance with the first part of this remark.
\end{Remark}

\section{Some remarks on covariant derivatives}\label{covo}

Let $F\to X$ be a smooth $\IK$-vector bundle. 

\begin{Definition}

A \emph{smooth covariant derivative} $\nabla^F$ on $F\to X$ is a $\IK$-linear map
$$
\nabla^F:\Gamma_{\ICC}(X,F)\longrightarrow \Gamma_{\ICC}(X,\IT^* X\otimes F)
$$
which satisfies the Leibniz rule
\begin{align}
\nabla^F (f\psi)= f\nabla^F \psi+  \Id f\otimes \psi\label{leib}
\end{align}
for all $\psi\in\Gamma_{\ICC}(X,F)$, and all real-valued $f\in\ICC(X)$.

\end{Definition}

One sets $\nabla^F_A\psi:=\nabla\psi(A)\in\Gamma_{\ICC}(X,F)$ for every smooth vector field $A\in \mathscr{X}_{C^{\infty}}(X)$. In particular, under the canonical isomorphisms (of $\ICC(X,\IK)$-modules)
\begin{align*}
\Gamma_{\ICC}(X,X\times \IK^{\ell})&\cong \ICC(X,\IK^{\ell}),\\
 \Gamma_{\ICC}(X,\IT^* X \otimes (X\times \IK^{\ell}))&\cong\Omega^1_{\ICC_{\IK}}(X)^\ell,
\end{align*}
the componentwise operating exterior derivative
$$
d:\ICC(X,\IK^\ell)\longrightarrow \Omega^1_{\ICC_{\IK}}(X)^\ell
$$
 becomes a covariant derivative.\\
Using the Leibniz rule together with a simple cut-off function argument entails that smooth covariant derivatives are always local (so that they can be restricted). Another important consequence of the Leibniz rule is that any two smooth covariant derivatives $\nabla_{F} $, $\nabla_{F}\rq{}$ on $F\to X$ differ by a smooth $1$-form which takes values in the endomorphisms of $F\to X$:
\begin{align}\label{affine}
\nabla_{F}-\nabla_{F}\rq{}\in
\Gamma_{\ICC}(X,\IT^* X\otimes\mathrm{End}(F)).
\end{align}
Conversely, the sum of an element of $\Gamma_{\ICC}(X,\IT^* X\otimes\mathrm{End}(F))$ and $\nabla_F$ is again a covariant derivative. In particular, one has the following local description of covariant derivatives: If $\ell:=\rank(F)$ and if $f_1,\dots,f_\ell\in\Gamma_{C^{\infty}}(U,F)$ is a smooth frame for $F\to X$, then there is a unique matrix
$$
\alpha\in \mathrm{Mat}\big(\Omega^1_{\ICC_{\IK}}(U);\ell\times \ell\big)
$$
such that $\nabla^F|_U=\Id|_U +\alpha$ in $U$ with respect to $(f_j)$, in the sense that for all 
$(\psi^{(1)},\dots,\psi^{(\ell)})\in C^{\infty}(U,\IK^\ell)$ one has
\begin{align*}
\nabla^F |_U\sum_{j}\psi^{(j)} f_j =\sum_{j}(\Id |_U\psi^{(j)} )\otimes f_j+ \sum_{j}\sum_{i}\psi^{(j)}\alpha_{ij} \otimes f_j.
\end{align*}   
In particular, it now becomes obvious that 
$$
\nabla^F\in \mathscr{D}_{C^{\infty}}^{(1)}\left(X;F,\IT^* X \otimes F\right),
$$
and that for every open $U\subset X$ it holds that $\nabla^F |_U$ is a smooth covariant derivative on $F|_U\to U$. From this local description, it is readily seen that the symbol of $\nabla$ is given by
$$
\symbol_{\nabla }(\zeta)\psi=\zeta\otimes \psi,\quad \zeta\in\IT^*_xX,\>\psi\in F_x,\>x\in X.  
$$

\begin{Notation} The curvature $R_{\nabla^F}$ of $\nabla^F$ is the tensor
$$
R_{\nabla^F}\in \Gamma_{\ICC}(X,(\wedge^2\IT^*X)  \otimes\mathrm{End}(F) ),
$$
which for vector fields $A,B\in \mathscr{X}_{C^{\infty}}(X)$ and smooth sections $\psi$ in $F\to X$ is given by 
$$
R_{\nabla^F}(A, B)\psi:= \nabla^F_{A} \nabla^F_{B} \psi-\nabla^F_{B}\nabla^F_{A} \psi-\nabla^F_{[A,B]}\psi\in \Gamma_{\ICC}(X,  F).
$$
\end{Notation}

The dual vector bundle $F^{*}\to X$ carries the smooth covariant derivative $\nabla^{F^*}$ given by
\begin{align*}
&(\nabla^{F^*}_A\alpha ,\psi):=(\alpha,\nabla^F_A \psi)+A(\alpha,\psi),\\
&\text{ for all $\alpha\in \Gamma_{\ICC}(X,F^*)$, $\psi\in \Gamma_{\ICC}(X,F)$, $A\in\mathscr{X}_{C^{\infty}} (X)$},
\end{align*}
where $A$ acts as a derivation on the smooth function $x\mapsto (\alpha(x),\psi(x))$.\\
If $F\to X$ is a metric bundle, then $\nabla^F$ is called \emph{metric} (or to be precise: metric with respect to the given metric structure on $F\to X$), if for all $\psi_1, \psi_2\in\Gamma_{\ICC}(X,F), A\in\mathscr{X}_{C^{\infty}}(X)$ it holds that
$$
A(\psi_1,\psi_2)=(\nabla^F_A\psi_1,\psi_2)+(\psi_1,\nabla^F_A\psi_2),
$$
where again $A$ acts as a derivation.\\
If $F\to X$ is metric, then so is $F^{*}\to X$ in a  canonical way, and if then $\nabla^F$ is metric, so is $\nabla^{F^*}$. By a partition of unity argument, one finds that every vector bundle admits a metric, and that every metric vector bundle admits a metric covariant derivative. In the case of the trivial smooth metric bundle $X\times \IK^{\ell}\to X$, the smooth covariant derivative $ d+\alpha$ is metric, if and only if 
$$
\alpha\in \mathrm{Mat}\big(\Omega^1_{\ICC_{\IK}}(X);\ell\times \ell\big)
$$
satisfies $\overline{\alpha_{ji}}=-\alpha_{ij}$. \vspace{1mm}

Given another smooth $\IK$-vector bundle $E\to X$ and a smooth covariant derivative $\nabla^E$ thereon, the smooth $\IK$-vector bundle $E\oplus F\to X$ carries the smooth covariant derivative $\nabla^E\oplus\nabla^F$. If $F\to X$, $E\to X$ are metric, then so is $F\oplus E\to X$ canonically, and if then $\nabla^F$, $\nabla^E$ are metric, the same is true for $\nabla^E\oplus \nabla^F$.\vspace{1mm}

The smooth vector bundle $F\otimes E\to X$ carries the smooth covariant derivative $\nabla^E{\tilde\otimes} \nabla^F$, which is uniquely determined by
\begin{align*} 
\nabla^E\tilde{\otimes}\nabla^F(\psi_1\otimes \psi_2)=(\nabla^E\psi_1)\otimes \psi_2+
\psi_1\otimes(\nabla^F\psi_2) 
\end{align*}
for all $\psi_1\in\Gamma_{\ICC}(X,E)$, $\psi_2\in\Gamma_{\ICC}(X,F)$. \vspace{2mm}
As above, if $F\to X$, $E\to X$ are metric, then so is $F\otimes E\to X$ canonically, if then $\nabla^F$, $\nabla^E$ are metric, the same is true for $\nabla^E{\tilde\otimes} \nabla^F$. In particular, the tensor product construction can be used to complexify real (metric) vector bundles and (metric) covariant derivatives, namely by tensoring with $X\times\IC\to \IC$ and the exterior derivative.\vspace{1mm}

Next, let $\diamond$ denote either the symmetric tensor product $\odot$, or the antisymmetric tensor product $\wedge$. Then the smooth vector bundle $E\diamond E\to X$ carries the smooth covariant derivative $\nabla^E{\tilde\diamond} \nabla^E$, which is uniquely determined by
\begin{align*} 
\nabla^E\tilde{\diamond}\nabla^E(\psi_1\diamond \psi_2)=(\nabla^E\psi_1)\diamond \psi_2+
\psi_1\diamond(\nabla^E\psi_2)  
\end{align*}
for all $\psi_1,\psi_2\in\Gamma_{\ICC}(X,E)$. \vspace{2mm}
As above, if $E\to X$ is metric, then so is $E\diamond E\to X$ canonically,\footnote{To be precise, $E\otimes E\to X$ canonically becomes metric, and $E\diamond E\to X$ inherits this structure.} and if then $\nabla^E$ is metric, the same is true for $\nabla^E{\tilde{\diamond}} \nabla^E$.

%
%
%If $\diamond\in\{\odot,\wedge\}$, then considering $F\diamond E\to X$ as a (metric) subbundle of $F\otimes E\to X$, one has the commutative diagram
%\[
%\begin{xy}
% \xymatrix{
%  \Gamma_{C^{\infty}}(X,E\diamond F)\ar[drr]_{\nabla^E{\tilde\diamond} \nabla^F} \ar[rr]^{\nabla^E{\tilde\otimes} \nabla^F}  &&
% \Gamma_{C^{\infty}}(X,\IT^*X \otimes E\otimes F)\ar[d]^{T_{\diamond}  } \\
% &&   \Gamma_{C^{\infty}}(X,\IT^*X\otimes E \diamond F)
%}
%\end{xy}, 
%\]
%where $T_{\diamond}$ stands for the symmetrization, respectively, the antisymmetrization.

\section{Generalized Sobolev spaces and Meyers-Serrin theorems }

In this section, \emph{let $\rho$ be a smooth Borel measure on $X$.}\vspace{1mm}

\begin{Notation}
Given a smooth metric $\IK$-vector bundle $E\to X$, for any $q\in [1, \infty]$, we get the corresponding $\IK$-Banach spaces $\Gamma_{\IL^q_{\rho}}(X,E)$ given by all $f\in\Gamma(X,E)$ such that $\left\| f\right\|_{L^q_{\rho}}<\infty$, where
$$
\left\| f\right\|_{L^q_{\rho}}:=\begin{cases}&\Big(\int_X  \big|f(x)\big|^q  \Id \rho(x)\Big)^{1/q} 
,\text{ if $q<\infty$} \\
&\inf\{C\geq 0: |f|\leq C\text{ $\rho$-a.e.}\},\text{ if $q=\infty$.}\end{cases}
$$
The symbol $\left\langle\bullet,\bullet \right\rangle_{\rho}$ will stand for the canonical inner product on the Hilbert space $\Gamma_{\IL^2_{\rho}}(X,E)$, which is given by
$$
\left\langle f_1 ,f_2\right\rangle_{L^2_{\rho}} =\int_X  \left(f_1(x),f_2(x)\right) \Id\rho( x).
$$
\end{Notation}

The following simple observation will be helpful in the sequel:

\begin{Remark}\label{global} Given a smooth metric $\IK$-vector bundle $E\to X$ with $\ell:=\mathrm{rank}(E)$, it is always possible to find a global orthonormal Borel measurable (of course not necessarily continuous) frame $e_1\dots, e_\ell:X\to E$. To see this, cover $X=\cup_{n\in\IN_{\geq 1}} U_n$ with open subsets $U_n\subset X$, such that on each $U_n$ there is a local smooth orthonormal frame $e_1^{(n)}\dots, e_{\ell}^{(n)}\in\Gamma_{\ICC}(U_n,E)$. By setting
$$
W_1:=U_1, \>W_n:=U_n\setminus  \Big(\bigcup_{l\in\IN_{\geq 0}:\> l\ne n } U_l\Big)\>\text{ for $n\geq 2$, so that\footnote{In the following expression, the symbol $\bigsqcup$ denotes a disjoint union.} $X=\bigsqcup_{n=1}^\infty W_n$,}
$$
we can define $e_j\mid_{W_n}:=e^{(n)}_j\mid_{W_n}$. \\
Any fixed global orthonormal Borel frame $e_1\dots, e_\ell:X\to E$ induces an isometric isomorphism of $\IK$-linear spaces 
$$
\Gamma_{\IL^q_{\rho}}(X,E)\cong \IL^{q}_{\rho}(X,\IK^\ell)\quad\text{for all $q\in [1,\infty]$},
$$ 
in particular, for every $q\in [1,\infty)$ the Banach space $\Gamma_{\IL^{q}_{\rho}}(X,E)$ is separable and in addition reflexive if $q\in (1,\infty)$. The existence of such a global frame also straightforwardly implies that for every pair of numbers $q_1,q_2\in [1,\infty]$ satisfying $1/q_1+1/q_2=1$ (with $1/0:=\infty$, $1/\infty:=0$), one has
$$
\left\|f\right\|_{L^{q_1}_{\rho}}=\sup_{\phi\in \Gamma_{\IL^{q_2}_{\rho}}(X,E),\left\|\phi\right\|_{L^{q_2}_{\rho}}\leq 1}\left|\int_X (f,\phi)\Id\rho\right|,
$$
and that for every $1<q_1<\infty$ the map
\begin{align*}
\Gamma_{\IL^{q_2}_{\rho}}(X,E)\ni f\longmapsto \int_X (f,\bullet)\Id\rho\in \Gamma_{\IL^{q_1}_{\rho}}(X,E)^*
 \end{align*}
is antilinear, isometric, and bijective. In the sequel, we will thus identify these two Banach spaces via the above map.
\end{Remark}

In order to be able to deal with many natural geometric situations simultaneously, we continue with the following definition:

\begin{Definition}\label{asa} 
Let $q\in [1,\infty]$, $s\in\IN_{\geq 1}$, $k_1\dots,k_s\in\IN_{\geq 0}$, and for each $i\in\{1,\dots,s\}$ 
let $E\to X$, $F_i\to X$ be smooth metric $\IK$-vector bundles and let $\mathfrak{P}:=\{P_1,\dots,P_s\}$ 
with $P_{i}\in\allowbreak \mathscr{D}_{C^{\infty}}^{(k_i)}(X;E,F_i)$. Then the $\IK$-Banach space 
\begin{align*}
&\Gamma_{ W^{\mathfrak{P},q}_{\rho}}(X,E)\\
&:= \big\{f\in\Gamma_{ L^{q}_{\rho}}(X,E): 
P_{i} f\in\Gamma_{  L^{q}_{\rho}}(X,F_i)\text{ for all $i\in\{1,\dots,s\}$}\big\}\\
&\subset \Gamma_{ L^{q}_{\rho}}(X,E),\>\text{ with its norm $\left\| f\right\|_{\mathfrak{P},L^q_{\rho}}
:=\Big(\left\|f\right\|^q_{L^q_{\rho}}+\sum^s_{i=1}\left\|P_{i} f\right\|^q_{L^q_{\rho}}\Big)^{1/q}$},
\end{align*}
is called the \emph{$\mathfrak{P}$-Sobolev space of $ L^{q}_{\rho}$-sections} in $E\to X$. Furthermore, we define the $\IK$-Banach space
$$
\Gamma_{ W^{\mathfrak{P},q}_{\rho,0}}(X,E)\subset \Gamma_{ W^{\mathfrak{P},q}_{\rho}}(X,E)
$$
to be the closure of $\Gamma_{\ICC_\c}(X,E)$ with respect to $\left\| \bullet\right\|_{\mathfrak{P},L^q_{\rho}}$.
\end{Definition}

Since closed subspaces as well as products of reflexive (separable) Banach spaces are reflexive (separable), it follows precisely as for the usual Euclidean Sobolev spaces that the spaces $\Gamma_{ W^{\mathfrak{P},q}_{\rho}}(X,E)$ and $\Gamma_{ W^{\mathfrak{P},q}_{\rho,0}}(X,E)$ are separable for all $q\in [1,\infty)$ and reflexive for all $q\in (1,\infty)$. The following result provides a generalization of the classical Meyers-Serrin theorem \cite{mse} to our abstract Sobolev spaces:

\begin{Theorem}\label{meyers} In the situation of Definition \ref{asa}, let $q\in [1,\infty)$ and assume that in case
$k:=\max\{k_1,\dots,k_s\}\geq 2$ one has 
$\Gamma_{ W^{\mathfrak{P},q}_{\rho}}(X,E)\subset \Gamma_{ W^{k-1,q}_{\mathrm{loc}}}(X,E)$ (with no further assumption if $k\in \{0,1\}$). Then for any $f\in\Gamma_{ W^{\mathfrak{P},q}_{\rho}}(X,E)$ there exists a sequence
$$
(f_n)\subset \Gamma_{C^{\infty}}(X,E)\cap \Gamma_{ W^{\mathfrak{P},q}_{\rho}}(X,E),
$$
which can be chosen in $\Gamma_{C^{\infty}_{\c}}(X,E)$ if $f$ is compactly supported, such that
\begin{align*}
&\left|f_n(x)\right|\leq\left\|f\right\|_{L^{\infty}_{\rho}}\in [0,\infty]\>\>\text{ for all $x\in X$, $n\in\IN_{\geq 0}$},\\
&\left\|f_n-f\right\|_{\mathfrak{P},L^q_{\rho}}\to 0 \text{ as $n\to\infty$.}
\end{align*}
\end{Theorem}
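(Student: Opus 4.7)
The plan is to adapt the classical Meyers-Serrin argument via a locally finite partition of unity, Euclidean mollification in charts, and Friedrichs\rq{} commutator lemma. Fix a locally finite open cover $\{U_n\}_{n\in \IN}$ of $X$ by relatively compact sets, each contained in a chart domain $V_n$ over which $E$ and all the $F_i$ trivialize via smooth orthonormal frames, together with a subordinate partition of unity $\{\chi_n\}\subset \ICC_{\c}(X)$. Let $J_{\epsilon}$ denote the standard Euclidean mollifier on the chart, applied componentwise in the frame. Since the density of $\rho$ with respect to Lebesgue measure is smooth and strictly positive on $V_n$, convergence in $L^q_\rho$ on compact subsets of $V_n$ is equivalent to Euclidean $L^q$-convergence, so the standard Euclidean mollifier theory can be invoked verbatim.

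The crucial construction is
$$
f_\epsilon\>:=\>\sum_n \chi_n\, J_{\epsilon_n} f,
$$
that is, to cut off \emph{after} mollifying rather than before. For $\epsilon_n$ small enough $J_{\epsilon_n}f$ is smooth on a neighborhood of $\supp(\chi_n)$ inside $V_n$, so each summand is a smooth section of $E\to X$, the sum is locally finite (hence smooth globally), and if $f$ has compact support only finitely many summands are nonzero so $f_\epsilon \in \Gamma_{\ICC_{\c}}(X,E)$. This ordering is what secures the pointwise bound: since each $J_{\epsilon_n}$ is convolution with a nonnegative kernel of unit integral, Jensen\rq{}s inequality applied componentwise in the orthonormal frame gives $|J_{\epsilon_n}f(x)|^2 \leq J_{\epsilon_n}(|f|^2)(x)\leq \|f\|_{L^\infty_\rho}^2$, whence $|f_\epsilon(x)| \leq \sum_n \chi_n(x)\|f\|_{L^\infty_\rho} = \|f\|_{L^\infty_\rho}$ for every $x\in X$.

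For the $\mathfrak{P}$-Sobolev convergence, using $\sum_n \chi_n = 1$ and $\sum_n[P_i,\chi_n] = [P_i, 1] = 0$, I decompose
$$
P_i f_\epsilon - P_i f \>=\> \sum_n \chi_n\bigl(P_i J_{\epsilon_n} f - P_i f\bigr) + \sum_n [P_i,\chi_n]\bigl(J_{\epsilon_n} f - f\bigr),
$$
and in each chart further write $P_i J_{\epsilon_n} f - P_i f = [P_i, J_{\epsilon_n}] f + (J_{\epsilon_n} - I) P_i f$. The tail $(J_{\epsilon_n}-I) P_i f \to 0$ in $L^q_{\loc}$ by elementary mollifier approximation (using $P_i f \in L^q_\rho$), and the commutator $[P_i, J_{\epsilon_n}] f \to 0$ in $L^q_{\loc}$ by Friedrichs\rq{} commutator lemma as soon as $f \in W^{k_i-1,q}_{\loc}$; the second sum is handled analogously since $[P_i,\chi_n]$ is a compactly supported PDO of order $k_i-1$ and $J_{\epsilon_n} f \to f$ in $W^{k_i-1,q}_{\loc}$. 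Choosing each $\epsilon_n$ so small that every contribution lies below $\delta/2^n$ and summing yields $\|f_\epsilon-f\|_{\mathfrak{P},L^q_\rho}\leq C\delta$; letting $\delta\to 0$ along a sequence produces the required approximating sequence. The main obstacle is precisely the Friedrichs commutator estimate for higher-order operators, and it is exactly this step that forces the hypothesis $\Gamma_{W^{\mathfrak{P},q}_\rho}(X,E)\subset \Gamma_{W^{k-1,q}_{\loc}}(X,E)$ when $k\geq 2$; for $k\in\{0,1\}$ the commutators have order $\leq 0$ and the assertion is immediate from plain mollifier convergence.
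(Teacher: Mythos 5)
Your argument is correct and rests on the same two pillars as the paper's proof: a locally finite partition of unity subordinate to a relatively compact trivializing atlas, and Friedrichs' commutator lemma in charts, with the hypothesis $\Gamma_{W^{\mathfrak{P},q}_{\rho}}(X,E)\subset \Gamma_{W^{k-1,q}_{\loc}}(X,E)$ entering in exactly the same place (the commutator $[P_i,J_{\epsilon_n}]$ for operators of order $\geq 2$). The one structural difference is the order of operations: the paper first forms the compactly supported pieces $\varphi_n f$, checks $\varphi_n f\in \Gamma_{W^{\mathfrak{P},q}_{\rho,\c}}(U_n,E)$ via $P_j(\varphi_n f)=\varphi_n P_j f+[P_j,\varphi_n]f$, and then mollifies each piece; you mollify first and cut off afterwards, $f_\epsilon=\sum_n\chi_n J_{\epsilon_n}f$. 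Your ordering buys a cleaner derivation of the pointwise bound, since $|J_{\epsilon_n}f|\leq\|f\|_{L^\infty_\rho}$ and $\sum_n\chi_n=1$ give $|f_\epsilon|\leq\|f\|_{L^\infty_\rho}$ exactly, whereas with the paper's ordering the overlapping mollified supports make this bound less immediate; the price is that Friedrichs' lemma (stated in the paper for sections compactly supported in the chart) must be applied to $f$ itself, which requires inserting an auxiliary cutoff equal to $1$ near $\supp(\chi_n)$ — a standard and harmless localization, but worth stating. Your identity $\sum_n[P_i,\chi_n]=0$ (by local finiteness) and the resulting two-term decomposition of $P_if_\epsilon-P_if$ are both valid, and the $\delta/2^n$ bookkeeping closes the argument as in the paper.
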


This result has been proved by D.Pallara, D. Guidetti and the author in \cite{guidetti}. Its proof relies on a local \lq\lq{}higher order\rq\rq{} approximation result that is build on Friedrichs mollifiers. To formulate that approximation result, we recall that given a distribution $T$ acting on $\ICC_{\c}(\IR^m,\IK^{\ell})$, the convolution of $T$ with $\varphi\in \ICC_{\c}(\IR^m)$ is the $\IK^{\ell}$-valued function $T*\varphi$ on $\IR^m$, defined by 
$$
T*\varphi(x):=\left\langle T,\varphi(x-\bullet) \right\rangle. 
$$
For example, given $U\subset \IR^m$ open, $f\in  L^{1}_{\loc}(U,\IK^{\ell})$, and if we define $\underline{f}\in\IL^1_{\loc}(\IR^m, \IK^{\ell})$ to be the trivial extension of $f$ to $\IR^m$ by zero, then one readily sees that 
$$
\underline{f}*\varphi(x)=\int_U f(z) \varphi(x-z) \Id z.
$$
There will be no danger of confusion in simply writing $f*\varphi$ instead of $\underline{f}*\varphi$. 

\begin{Definition} 1. Every $0\leq h\in C^{\infty}_{\c}(\IR^m)$ such that $h(x)=0$ for all $x$ with 
$|x|\geq 1$ and $\int_{\IR^m} h(x)\Id x =1$ will be called a \emph{mollifier} on $\IR^m$ in the sequel. For every such $h$ and every $\epsilon>0$ we  define 
$0\leq h_{\epsilon}\in C^{\infty}_{\c}(\IR^m)$ by 
$h_{\epsilon}(x):=\epsilon^{-m}h(\epsilon^{-1}x)$.\\
2. Given a mollifier $h$ on $\IR^m$ and a distribution $T$ acting on $\ICC_{\c}(\IR^m,\IK^\ell)$, the \emph{Friedrichs mollification of $T$ with respect to $h$} is defined to be the family of functions $(T*h_{\epsilon})_{\epsilon>0}$. 
\end{Definition}

Note that\footnote{In the sequel, $\IB^{\IR^m}(y,r):=\{z\in\IR^m: |z-y|<r\}\subset \IR^m$ will denote the open balls with respect to the usual Euclidean metric on $\IR^m$.} $\supp(h_{\epsilon})\subset \IB^{\IR^m}(0,\epsilon)$. We list some standard properties of the Friedrichs mollification (cf. Section 7.2 in \cite{gilbarg} and Lemma 2.9 in \cite{gri}) in the following remark.

\begin{Remark}\label{controlle} The following statements hold for every mollifier $h$ on $\IR^m$:\\
i) If $T$ is a distribution acting on $\ICC_{\c}(\IR^m,\IK^{\ell})$, then for every $\epsilon>0$ one has $T*h_{\epsilon}\in \ICC_{\c}(\IR^m,\IK^{\ell})$
with
$$
\partial^{\alpha} (T*h_{\epsilon})=(\partial^{\alpha} T)*h_{\epsilon}= T*(\partial^{\alpha}h_{\epsilon})\quad\text{ for every multi-index $\alpha\in(\IN_{\geq 0})^{m}$, }
$$
and moreover
$$
\mathrm{supp}(T*h_{\epsilon})\subset \Big\{x\in \IR^m:\>\inf_{a\in \mathrm{supp}(T)}|x-a|<\epsilon\Big\},
$$
where $\partial^{\alpha} T$ and $\mathrm{supp}(T)$ are understood in the sense of distributions.\\
ii) If $q\in [1,\infty)$, and $f\in L^{q}_{\loc}(U,\IK^{\ell})$ is compactly supported in $U$, then one has $f*h_{\epsilon}\in C^{\infty}_{\c}(U,\IK^{\ell})$ for all sufficiently small $\epsilon>0$ by i), and
$$
\left\|f*h_{\epsilon}-f\right\|_{ L^{q}(U,\IK^{\ell})}\to 0\quad\text{ as $\epsilon\to 0+$.}
$$
In particular, by picking a subsequence of $f*h_{1/n}$, $n\in\IN_{\geq 1}$, this entails that for every open $U\subset \IR^m$ and every $f\in L^{\infty}_{\loc}(U,\IK^{\ell})$ with a compact support in $U$, there exists a sequence $(f_n)\subset C^{\infty}_{\c}(U,\IK^{\ell})$ such that $|f_n|\leq \|f\|_{\infty}$ and $f_n\to f$ almost everywhere, as $n\to\infty$.
\end{Remark}

The following higher order result on Friedrichs mollifiers has been noted in \cite{guidetti}. In fact, it is proved straightforwardly by using a classical \lq\lq{}first order\rq\rq{} result by K. Friedrichs from 1944 (!).

\begin{Proposition}\label{local} Let $h$ be a mollifier on $\IR^m$, let $U\subset \IR^m$ be open, and pick $k\in\IN_{\geq 0}$, $\ell_0,\ell_1\in\IN_{\geq 1}$, $q\in [1,\infty)$. Assume furthermore that 
$$
P=\sum_{\alpha\in \IN^m_k}P_{\alpha}\partial^{\alpha}\in\IDD^{(k)}_{C^{\infty}}(U;\IK^{\ell_0},\IK^{\ell_1})
$$
is a linear partial differential operator with matrix coefficients, and let $f\in  L^{q}_{\mathrm{loc}}(U,\IK^{\ell_0})$ have a compact support in $U$ with 
$Pf\in  L^{q}_{\mathrm{loc}}(U,\IK^{\ell_1})$. Assume furthermore that either 
$k<2$ or $f\in W^{k-1,q}_{\mathrm{loc}}(U,\IK^{\ell_0})$. Then one 
has 
$$
\left\|P(f*h_{\epsilon})-Pf\right\|_{ L^{q}(U,\IK^{\ell_1})}\to 0\quad\text{ as $\epsilon\to 0+$.}
$$
\end{Proposition}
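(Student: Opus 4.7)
The plan is to reduce the statement to the classical 1944 commutator lemma of K.\ Friedrichs, which asserts the following: for any first-order linear partial differential operator $Q$ with smooth matrix coefficients on $U$ and any compactly supported $g\in L^{q}_{\mathrm{loc}}(U,\IK^{\ell_{0}})$, one has
$$
Q(g*h_{\epsilon})-(Qg)*h_{\epsilon}\to 0\quad\text{in }L^{q}(U,\IK^{\ell_{1}})\text{ as }\epsilon\to 0+,
$$
where $Qg$ is interpreted distributionally. Combined with the standard fact recorded in Remark \ref{controlle}(ii), namely $u*h_{\epsilon}\to u$ in $L^{q}(U)$ for compactly supported $u\in L^{q}_{\mathrm{loc}}(U)$, this immediately settles the easy cases $k\in\{0,1\}$: for $k=0$ the operator $P=P_{0}$ is pure multiplication, so $P(f*h_{\epsilon})=P_{0}(f*h_{\epsilon})\to P_{0}f$ in $L^{q}(U)$; for $k=1$, applying the commutator lemma with $Q:=P$ gives $P(f*h_{\epsilon})-(Pf)*h_{\epsilon}\to 0$, while $(Pf)*h_{\epsilon}\to Pf$ in $L^{q}(U)$ since $Pf\in L^{q}_{\mathrm{loc}}(U)$ is compactly supported (all supports staying in a common compact subset of $U$ by Remark \ref{controlle}(i)).

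For $k\geq 2$ under the additional assumption $f\in W^{k-1,q}_{\mathrm{loc}}(U,\IK^{\ell_{0}})$, I would split
$$
P=R+P_{k},\qquad R:=\sum_{|\alpha|\leq k-1}P_{\alpha}\partial^{\alpha},\qquad P_{k}:=\sum_{|\alpha|=k}P_{\alpha}\partial^{\alpha}.
$$
The lower-order part is routine: applying Remark \ref{controlle}(ii) to $f$ and each of its weak derivatives of order $\leq k-1$ gives $f*h_{\epsilon}\to f$ in $W^{k-1,q}$ on the ambient compact set, and the smoothness of the coefficients $P_{\alpha}$ yields $R(f*h_{\epsilon})\to Rf$ in $L^{q}(U)$. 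For the top-order part I factor each monomial as $\partial^{\alpha}=\partial_{i}\partial^{\beta}$ with $|\beta|=k-1$ and apply Friedrichs' commutator lemma to the \emph{first-order} operator $Q_{\alpha}:=P_{\alpha}\partial_{i}$ and to $g_{\beta}:=\partial^{\beta}f\in L^{q}_{\mathrm{loc}}(U)$. Because $f\in W^{k-1,q}_{\mathrm{loc}}$ one has $\partial^{\beta}(f*h_{\epsilon})=g_{\beta}*h_{\epsilon}$, so $P_{\alpha}\partial^{\alpha}(f*h_{\epsilon})=Q_{\alpha}(g_{\beta}*h_{\epsilon})$; summing the commutator estimates over $|\alpha|=k$ and recognizing $\sum_{\alpha}Q_{\alpha}g_{\beta}=P_{k}f$ distributionally gives
$$
P_{k}(f*h_{\epsilon})-(P_{k}f)*h_{\epsilon}\to 0\quad\text{in }L^{q}(U).
$$
Since $P_{k}f=Pf-Rf\in L^{q}_{\mathrm{loc}}(U)$, a final application of Remark \ref{controlle}(ii) yields $(P_{k}f)*h_{\epsilon}\to P_{k}f$ in $L^{q}(U)$, completing the argument.

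The main obstacle is the top-order part: neither $\partial^{\alpha}f$ for $|\alpha|=k$ nor the individual summands $P_{\alpha}\partial^{\alpha}f$ need lie in $L^{q}$, and the hypothesis controls only the full sum $Pf\in L^{q}_{\mathrm{loc}}$. Friedrichs' lemma circumvents this by trading one derivative against the mollifier up to an error that vanishes in $L^{q}$, while the factorization $\partial^{\alpha}=\partial_{i}\partial^{\beta}$ pushes the remaining $k-1$ derivatives onto $f$, where the hypothesis $f\in W^{k-1,q}_{\mathrm{loc}}$ provides precisely the regularity needed. The higher-order statement is thus little more than a careful multi-index bookkeeping around the single analytic input supplied by the classical first-order result.
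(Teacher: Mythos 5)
Your proof is correct and follows essentially the same route as the paper: both arguments reduce everything to Friedrichs' first-order commutator lemma by factoring each top-order monomial as $\partial^{\alpha}=\partial_{j}\partial^{\alpha'}$ with $\partial^{\alpha'}f\in L^{q}_{\mathrm{loc}}$ (guaranteed by $f\in W^{k-1,q}_{\mathrm{loc}}$ when $k\geq 2$), and then use $(Pf)*h_{\epsilon}\to Pf$ from the compact support of $Pf$. The only cosmetic difference is that you treat the lower-order part by direct $W^{k-1,q}$-convergence of the mollification while the paper runs every multi-index through the same commutator argument; both are valid.
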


\begin{proof}  We start by quoting the following classical result by Friedrichs: \emph{Given a $C^1$-function\footnote{Using Rademacher's theorem, one finds that it is in fact sufficient to assume that $Q$ is locally Lipschitz continuous (cf. Appendix A in \cite{Br}).}
$$
Q:U\longrightarrow \mathrm{Mat}(\IK, \ell_0\times \ell_1 )
$$
and $j\in\{1,\dots, m\}$, it follows that for every $F\in   L^{q}_{\mathrm{loc}}(U,\IK^{\ell_0})$ with a compact support in $U$ one has }
\begin{align}\label{fried}
\left\| (Q\partial_jF)*h_{\epsilon}-Q\partial_j(F*h_{\epsilon})\right\|_{ L^{q}(U,\IK^{\ell_1})}\to 0\quad\text{ as $\epsilon\to 0+$.}
\end{align}
This result follows from equation (3.8) in \cite{friedrichs}.\\
Returning to our situation, we first note that as one has $(Pf)*h_{\epsilon} \to Pf$ in $ L^q(U, \IK^{\ell_1})$ by Remark \ref{controlle} ii) (where we use that $f\in  L^{q}_{\mathrm{loc}}(U,\IK^{\ell_0})$ is compactly supported in $U$ and that $Pf\in  L^{q}_{\mathrm{loc}}(U,\IK^{\ell_1})$). Therefore, it suffices to prove that 
$$
\left\|(Pf)*h_\epsilon - P(f*h_\epsilon)\right\|_{ L^q(U, \IK^{\ell_1})} \to 0.
$$
To this end, let us show that 
\begin{align}\label{aoap}
\left\|(P_\alpha \partial^\alpha f)*h_\epsilon - P_\alpha \partial^\alpha (f*h_\epsilon)\right\|_{ L^q(U, \IK^{\ell_1})} \to 0
\end{align}
for every $\alpha\in\IN_k^m$. In fact, since one has either $k<2$ or $f\in W^{k-1,q}_{\mathrm{loc}}(U,\IK^{\ell_0})$, we can pick a $j \in \{1, \dots, m\}$ and an $\alpha' \in \IN^m_{k-1}$, such that $\partial^{\alpha}f=\partial_j\partial^{\alpha\rq{}}f$ and $\partial^{\alpha'}f \in  L^q_{\loc}(U, \IK^{\ell_0})$ (with a compact support in $U$). Moreover by Remark \ref{controlle} i) we have
$$
(P_\alpha \partial^\alpha f) *h_\epsilon - P_\alpha \partial^\alpha (f*h_\epsilon) = 
(P_\alpha \partial_j \partial^{\alpha'}f)*h_\epsilon - P_\alpha \partial_j ((\partial^{\alpha'}f)*h_\epsilon) .
$$
Thus (\ref{aoap}) follows by applying (\ref{fried}) with $Q=P_{\alpha}$, $F=\partial^{\alpha\rq{}}f$. 
\end{proof}

\begin{proof}[Proof of Theorem \ref{meyers}] Let 
$$
\ell_0:=\mathrm{rank}(E), \>\ell_j:=\mathrm{rank}(F_j),\>\> \text{ for any
$j\in\{1,\dots,s\}$.} 
$$
We take a \emph{relatively compact} atlas\footnote{This means that each $U_n$ is relatively compact.} $\bigcup_{n\in\IN_{\geq 0}  }U_n=X$ such that 
each $U_n$ admits smooth orthonormal frames for 
\[
E\longrightarrow X, F_1\longrightarrow X,\dots,F_s\longrightarrow X.
\]
Let $(\varphi_n)$ be a smooth partition of unity which is subordinate to $(U_n)$. Now let $f\in \Gamma_{ W^{\mathfrak{P},q}_{\rho}}(X,E)$, 
and $f_n:=\varphi_n f$. \\
Let us first show that 
$f_n\in \Gamma_{ W^{\mathfrak{P},q}_{\rho,\c}}(U_n,E)$. To see this, let $j\in\{1,\dots,s\}$. Clearly we have $\varphi_n P_{j}f\in \Gamma_{ L^{q}_{\rho}}(U_n,E)$. Furthermore, as we have $f\in \Gamma_{ W^{k-1,q}_{\mathrm{loc}}}(X,E)$, and so
$$
\left(\partial^{\alpha}f_1,\dots,\partial^{\alpha}f_{\ell_0}\right)
\in L^p_{\mathrm{loc}}(U_n,\IK^{\ell_0})
\>\text{ for all $\alpha\in\IN^m_{k-1}$,}
$$
it follows from 
$$
[P_j,\varphi_n]\in \mathscr{D}^{(k_j-1)}_{C^{\infty}}(U_n;E,F_j)
$$
that $[P_j,\varphi_n] f\in \Gamma_{ L^{q}_{\rho}}(U_n,E)$, since the coefficients of $[P_j,\varphi_n]$ are bounded in $U_n$ (being smooth and compactly supported). Thus, the following formula for weak derivatives,
$$
P_{j} f_n= \varphi_n P_{j}f+[P_j,\varphi_n] f,
$$
holds and completes the proof of $f_n\in \Gamma_{ W^{\mathfrak{P},q}_{\rho,\c}}(U_n,E)$. \\
But now, given $\epsilon >0$, we may appeal to Proposition \ref{local} and Remark \ref{controlle} ii) to pick an 
$f_{n,\epsilon}\in\Gamma_{C^{\infty}_{\c}}(X,E)$ with a compact support in $U_n$ such that 
$$
\left\|f_n- f_{n,\epsilon}\right\|_{\mathfrak{P},L^q_{\rho}}<\epsilon/2^{n+1}.
$$
Finally, $f_{\epsilon}(x):=\sum_n f_{n,\epsilon}(x)$, $x\in X$, is a locally finite sum and 
thus defines an element in $\Gamma_{C^{\infty}}(X,E)$ which satisfies
$$
\left\|f_{\epsilon}-f\right\|_{\mathfrak{P},L^q_{\rho}}\leq 
\sum^{\infty}_{n=1}\left\|f_{n,\epsilon}-f_n\right\|_{\mathfrak{P},L^q_{\rho}}<\epsilon,
$$ 
which proves the first assertion of the theorem.\\
If $f$ is compactly supported, then by picking a \emph{finite} cover of the support of $f$ with $U_n\rq{}s$ as above, the above proof shows that the approximating family $(f_{\epsilon})$ can be chosen such that each $f_{\epsilon}$ has a compact support. This completes the proof.
\end{proof}

As a first simple application of the generalized Meyers-Serrin theorem, we record the following formula for integration by parts:

\begin{Lemma}\label{int} Let $q,q^*\in (1,\infty)$ with $1/q+1/q^*=1$, $k\in\IN_{\geq 0}$, and let $E\to X$, $F\to X$ be smooth metric $\IK$-vector bundles. Assume furthermore that $P\in \IDD^{(k)}_{\ICC}(X;E,F)$. Then for all $f_1\in \Gamma_{ W^{P,q}_{\rho,0}}(X,E)$, $f_2\in \Gamma_{ W^{P^{\dagger},q^*}_{\rho}}(X,F)$ one has
$$
\int_X (Pf_1,f_2)\Id\rho= \int_X (f_1,P^{\dagger}f_2)\Id\rho.
$$ 
\end{Lemma}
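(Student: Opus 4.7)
The plan is to reduce the identity to the defining property of the formal adjoint by a density argument. For smooth compactly supported $f_1 \in \Gamma_{\ICC_c}(X,E)$ and arbitrary smooth $f_2 \in \Gamma_{\ICC}(X,F)$, the equality
$$
\int_X (Pf_1,f_2)\Id\rho = \int_X (f_1,P^{\dagger}f_2)\Id\rho
$$
holds by Proposition \ref{aoc} (with the notation $P^{\dagger}=P^{\rho,h_E,h_F}$ adopted in Remark \ref{adjo}). So the task reduces to extending this formula first to $f_1 \in \Gamma_{\ICC_c}(X,E)$ and $f_2 \in \Gamma_{W^{P^{\dagger},q^*}_{\rho}}(X,F)$, and then to $f_1$ lying in the full closure $\Gamma_{W^{P,q}_{\rho,0}}(X,E)$.

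For the first extension, one simply needs to notice that for $f_1 \in \Gamma_{\ICC_c}(X,E)$, the smoothness of $P$ ensures $Pf_1 \in \Gamma_{\ICC_c}(X,F)$, so both sides of the identity are well-defined integrals of compactly supported $L^1_\rho$-functions, and the defining relation (\ref{adk}) of $P^\dagger$ against test sections extends to arbitrary locally integrable $f_2$ with $P^{\dagger}f_2 \in \Gamma_{L^1_{\loc}}(X,E)$ via the weak-derivative formulation of Proposition \ref{glig}. Since our $f_2 \in \Gamma_{W^{P^{\dagger},q^*}_{\rho}}(X,F)$ satisfies both $f_2 \in \Gamma_{L^{q^*}_{\rho}}(X,F)$ and $P^{\dagger}f_2 \in \Gamma_{L^{q^*}_{\rho}}(X,E) \subset \Gamma_{L^1_{\loc}}(X,E)$, this step is immediate.

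For the second extension, the definition of $\Gamma_{W^{P,q}_{\rho,0}}(X,E)$ as the $\|\cdot\|_{\mathfrak{P},L^q_\rho}$-closure of $\Gamma_{\ICC_c}(X,E)$ (with $\mathfrak{P}=\{P\}$) yields a sequence $(f_{1,n}) \subset \Gamma_{\ICC_c}(X,E)$ with $f_{1,n} \to f_1$ in $\Gamma_{L^q_\rho}(X,E)$ and $Pf_{1,n} \to Pf_1$ in $\Gamma_{L^q_\rho}(X,F)$. For each $n$ the identity holds by the previous step:
$$
\int_X (Pf_{1,n},f_2)\Id\rho = \int_X (f_{1,n},P^{\dagger}f_2)\Id\rho.
$$
Applying Hölder's inequality with conjugate exponents $q,q^*$, together with the pointwise estimate $|(u,v)| \leq |u|\,|v|$ on the fibers of $F$ and $E$, the left-hand side converges to $\int_X (Pf_1,f_2)\Id\rho$ (since $f_2 \in \Gamma_{L^{q^*}_\rho}(X,F)$), and the right-hand side converges to $\int_X (f_1,P^{\dagger}f_2)\Id\rho$ (since $P^{\dagger}f_2 \in \Gamma_{L^{q^*}_\rho}(X,E)$). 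Passing to the limit gives the claim.

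There is no serious obstacle here, since the hard analytic work — constructing the formal adjoint and, in particular, defining the space $\Gamma_{W^{P,q}_{\rho,0}}$ as a closure so that a compactly supported smooth approximating sequence is automatically available in the natural graph norm — has already been carried out earlier in the chapter. The only mild point to be careful about is that one must use the graph-norm convergence (so that $Pf_{1,n} \to Pf_1$ is guaranteed) rather than mere $L^q_\rho$-convergence of the sections.
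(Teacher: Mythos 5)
Your proof is correct, and it follows the same overall two-step density structure as the paper's, but it handles the intermediate case (smooth compactly supported $f_1$, general $f_2\in \Gamma_{W^{P^{\dagger},q^*}_{\rho}}(X,F)$) differently. The paper treats this lemma explicitly as an application of the generalized Meyers--Serrin theorem: it approximates $f_2$ by \emph{smooth} sections in the graph norm $\left\|\bullet\right\|_{P^{\dagger},L^{q^*}_{\rho}}$, applies the defining identity of Proposition \ref{aoc} for pairs of smooth sections, and passes to the limit by H\"older; only afterwards does it approximate $f_1$ by elements of $\Gamma_{\ICC_{\c}}(X,E)$. You instead observe that for $f_1\in\Gamma_{\ICC_{\c}}(X,E)$ the identity is literally the content of the statement $P^{\dagger}f_2\in\Gamma_{L^{q^*}_{\rho}}(X,E)$ in the weak sense of Proposition \ref{glig} and Lemma \ref{adjoo}, using $(P^{\dagger})^{\dagger}=P$, so no approximation of $f_2$ is needed at all. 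This is slightly more economical (one density argument instead of two, and no appeal to Theorem \ref{meyers}); the paper's route has the expository purpose of illustrating the Meyers--Serrin machinery just established. Your closing remark about needing graph-norm convergence of $f_{1,n}$, not mere $L^q_{\rho}$-convergence, is exactly the right point of care, and the H\"older estimates you invoke are the same ones the paper uses.
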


\begin{proof} Assume first that $f_1$ is smooth and compactly supported, and pick a sequence of smooth sections $f_{2,n}\in \Gamma_{ W^{P^{\dagger},q^*}_{\rho}}(X,F)$ with 
$$
\left\|f_2-f_{2,n}\right\|_{P^{\rho},L^{q^*}_{\rho}}\to 0.
$$
Then the asserted formula holds with $f_2$ replaced with $f_{2,n}$, and it extends to $f_2$ by Hölder's inequality. Having established this case, the general case now follows by picking $f_{1,n}\in \Gamma_{\ICC_{\c}}(X,E)$ with $\left\|f_1-f_{1,n}\right\|_{P,L^q_{\rho}}\to 0$ and using Hölder once more.
\end{proof}

Given $q,q^*\in (1,\infty)$ with $1/q+1/q^*=1$, metric $\IK$-vector bundles $E\to X$, $F\to X$ and a densely defined operator $T$ from $\Gamma_{\IL^{q}_{\rho}}(X,E)$ to $\Gamma_{\IL^{q}_{\rho}}(X,F)$, its Banach adjoint can be identified canonically with the densely defined operator $T^{*}$ from $\Gamma_{\IL^{q^*}_{\rho}}(X,F)$ to $\Gamma_{\IL^{q^*}_{\rho}}(X,E)$, given as follows: $\dom(T^{*})$ is given by all $f\in \Gamma_{\IL^{q^*}_{\rho}}(X,F)$ which satisfy the property that there exists $\psi\in \Gamma_{\IL^{q^*}_{\rho}}(X,E)$ such that for all $h\in\dom(T)$ one has
$$
\int_X (Th,f)\Id\rho=\int_X (h,\psi)\Id\rho,
$$
and then $T^*f:=\psi$. It is an abstract functional fact that adjoints are automatically closed. Note also that $\dom(T^{*})$ is precisely the space of all $f\in \Gamma_{\IL^{q^*}_{\rho}}(X,F)$ which satisfy the property that there exists a constant $C>0$ such that for all $h\in\dom(T)$ one has
$$
\left|\int_X (Th,f)\Id\rho\right|\leq C\left\|h\right\|_{L^q_{\rho}}.
$$
Given another densely defined operator $S$ with\footnote{As usual, $S\subset T$ for operators $S$, $T$ in a common Banach space means that $\dom(S)\subset \dom(T)$ and $S=T$ on $\dom(S)$. In this case, $T$ is called an \emph{extension of $S$}.} $S\subset T$, it follows that $T^*\subset S^*$. In case $T$ as above is closable, then, as for Hilbert spaces, the closure of $T$ can be identified with the operator $\overline{T}$ given as follows: $f\in \Gamma_{\IL^{q}_{\rho}}(X,E)$ is in $\dom(\overline{T})$, if and only if there exists a sequence $(f_n)\subset \dom(T)$ with $\left\|f-f_n\right\|_{L^q_{\rho}}$ such that $\left\|Tf_n-h\right\|_{L^q_{\rho}}\to 0$ for some $h\in \Gamma_{\IL^{q}_{\rho}}(X,E)$, as $n\to\infty$, and then $\overline{T}f:=h$. If $T$ is densely defined and closable, then $T^*$ is densely defined with $T^*=\overline{T}^*$ and $(T^*)^*=\overline{T}$. (The latter result uses the reflexivity of the underlying Banach spaces.) We refer the reader to \cite{kato1} for the proofs of these abstract Banach space facts.\\

The following well-known constructions will turn out to be a useful tool in the sequel:

\begin{Definition}\label{adw} Let $q\in (1,\infty)$, $k\in\IN_{\geq 0}$, let $E\to X$, $F\to X$ be smooth metric $\IK$-vector bundles and let $P\in \IDD^{(k)}_{\ICC}(X;E,F)$. We denote by $P^{(q)}_{\rho,\min}$ the closure of $P$ considered as acting from $\Gamma_{\IL^{q}_{\rho}}(X,E)$ to $\Gamma_{\IL^{q}_{\rho}}(X,F)$, defined initially on $\Gamma_{\ICC_{\c}}(X,E)$. $P^{(q)}_{\rho,\min}$ is called the \emph{minimal extension of $P$ with respect to $\IL^{q}_{\rho}$}. Likewise, one defines the corresponding \emph{maximal extension} $P^{(q)}_{\rho,\max}$ as follows: 
$$
\dom(P^{(q)}_{\rho,\max}):=\Gamma_{ W^{P,q}_{\rho}}(X,E),\quad  P^{(q)}_{\rho,\max}f:=Pf,\quad f\in \dom(P^{(q)}_{\rho,\max}).
$$
In the case of $q=2$, we will simply write
$$
P_{\rho,\min}:=P^{(2)}_{\rho,\min},\quad P_{\rho,\max}:=P^{(2)}_{\rho,\max}.
$$
\end{Definition}

An integration by parts shows that $P|_{\Gamma_{\ICC_{\c}}(X,E)}$ indeed is closable in $\Gamma_{\IL^{q}_{\rho}}(X,E)$, so that the minimal operator is well-defined. In fact, one has
$$
\dom(P^{(q)}_{\rho,\min})=\Gamma_{ W^{P,q}_{\rho,0}}(X,E).
$$

We record the following fact, which follows easily from the above considerations: 

\begin{Lemma}\label{minmax} Let $q\in (1,\infty)$, $k\in\IN_{\geq 0}$, let $E\to X$, $F\to X$ be smooth metric $\IK$-vector bundles and let $P\in \IDD^{(k)}_{\ICC}(X;E,F)$. If $q^*\in (1,\infty)$ is such that $1/q^*+1/q=1$, then one has
$$
\big( (P^\dagger)^{(q^*)}_{\rho,\max} \big)^*=P^{(q)}_{\rho,\min}.
$$
\end{Lemma}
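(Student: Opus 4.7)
The plan is to deduce this from the general Banach space duality formula $(T^\ast)^\ast = \overline{T}$, which is available here because the spaces $\Gamma_{\IL^q_\rho}(X,E)$ and $\Gamma_{\IL^{q^*}_\rho}(X,F)$ are reflexive for $q,q^*\in(1,\infty)$ (Remark \ref{global}). Concretely, let $T$ denote $P$ considered as a densely defined operator from $\Gamma_{\IL^q_\rho}(X,E)$ to $\Gamma_{\IL^q_\rho}(X,F)$ with initial domain $\Gamma_{\ICC_\c}(X,E)$, so that $\overline{T}=P^{(q)}_{\rho,\min}$ by Definition \ref{adw}. It suffices to prove the intermediate identification
$$
T^\ast = (P^\dagger)^{(q^*)}_{\rho,\max}
$$
because then the abstract fact $(T^\ast)^\ast=\overline{T}$ delivers the claim.

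For the intermediate identification, unfold the Banach-space definition of $T^\ast$: a section $g\in\Gamma_{\IL^{q^*}_\rho}(X,F)$ lies in $\dom(T^\ast)$ with $T^\ast g=\psi$ iff there exists $\psi\in\Gamma_{\IL^{q^*}_\rho}(X,E)$ such that
$$
\int_X (Pf,g)\,\Id\rho \;=\; \int_X (f,\psi)\,\Id\rho \qquad \text{for all } f\in\Gamma_{\ICC_\c}(X,E).
$$
Now recall from Remark \ref{adjo} and the discussion after Lemma \ref{adjoo} that $(P^\dagger)^\dagger=P$; hence the displayed identity is exactly the defining identity for the weak existence of $P^\dagger g$ (in the sense of Propandef \ref{glig}, transported through the metric structures via Lemma \ref{adjoo}), provided we can interpret $g$ as being in $\Gamma_{\IL^1_\loc}(X,F)$. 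This latter inclusion is immediate from Hölder's inequality together with the local finiteness of $\rho$. Therefore $g\in\dom(T^\ast)$ with $T^\ast g=\psi$ is equivalent to $P^\dagger g=\psi$ holding weakly with $\psi\in\Gamma_{\IL^{q^*}_\rho}(X,E)$, which is precisely the condition $g\in \Gamma_{W^{P^\dagger,q^*}_\rho}(X,F)=\dom\bigl((P^\dagger)^{(q^*)}_{\rho,\max}\bigr)$.

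Having established $T^\ast=(P^\dagger)^{(q^*)}_{\rho,\max}$, one applies the general result quoted at the end of the section: since $T$ is densely defined and (as remarked after Definition \ref{adw}) closable, one has $(T^\ast)^\ast=\overline{T}$, whence
$$
\bigl((P^\dagger)^{(q^*)}_{\rho,\max}\bigr)^\ast \;=\; (T^\ast)^\ast \;=\; \overline{T} \;=\; P^{(q)}_{\rho,\min},
$$
as desired.

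There is no real obstacle here; the only point requiring some vigilance is the matching of the two different notions of ``$P^\dagger$ applied weakly'' in the middle step — the one arising from the Banach-adjoint characterization and the one from Propandef \ref{glig}. Both reduce to the same integration-by-parts identity against compactly supported smooth test sections, but one must verify that the test spaces coincide (they do: $\Gamma_{\ICC_\c}(X,E)$ appears on both sides after the involution $(P^\dagger)^\dagger=P$) and that the tentative $\psi$ lives in the correct local-integrability class, which is automatic from $\psi\in\Gamma_{\IL^{q^*}_\rho}(X,E)$.
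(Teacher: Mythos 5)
Your proposal is correct and is precisely the argument the paper intends (the lemma is stated there as following "easily from the above considerations", namely the Banach-adjoint formalism, the identity $(T^*)^*=\overline{T}$ for densely defined closable operators between reflexive spaces, and the closability of $P|_{\Gamma_{\ICC_{\c}}(X,E)}$ noted after Definition \ref{adw}). The intermediate identification $T^*=(P^\dagger)^{(q^*)}_{\rho,\max}$ via $(P^\dagger)^\dagger=P$ and Lemma \ref{adjoo} is exactly the right bookkeeping, and your check that $g\in\Gamma_{\IL^{q^*}_{\rho}}(X,F)\subset\Gamma_{\IL^{1}_{\loc}}(X,F)$ closes the only potential gap.
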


%\begin{proof} The inclusion
%$$
%P^{(q)}_{\min}\subset \big( (P^\dagger)^{(q)}_{\max} \big)^*
%$$
%follows from the definition. The inclusion
%$$
%P^{(q)}_{\min}\subset \big( (P^\dagger)^{(q)}_{\max} \big)^*
%$$
%is equivalent to 
%$$
%P^{(q)}_{\min}^* \supset \big( (P^\dagger)^{(q)}_{\max} \big)
%$$
%which is easily seen.
%\end{proof}

\chapter{Smooth heat kernels on vector bundles}\label{C3}

In this chapter, \emph{let $\rho$ be a smooth Borel measure on $X$, and let $E\to X$ be a smooth metric $\IC$-vector bundle with $\rank(E)=\ell$.}

\vspace{2mm}

%where we will denote by
%$$
%F^*\boxtimes F=\bigsqcup_{(x,y)\in M\times M}\hspace{-2ex} F^*_y \otimes F_x \longrightarrow M\times M
%$$
%the corresponding (smooth complex) product vector bundle. 

Assume that we are given some $k\in\IN_{\geq 0}$ and $P\in\IDD^{(k)}(X;E)$ which is elliptic, formally self-adjoint (that is $P^{\rho}=P$ in the sense of Remark \ref{adjo}.2), and semibounded (that is, there exists $C\geq 0$ with 
$$
\left\langle P\psi,\psi\right\rangle_{L^2_{\rho}}\geq -C \left\|\psi\right\|^2_{L^2_{\rho}}\>\text{ for all $\psi\in\Gamma_{\ICC_\c}(X,E)$ ).} 
$$
Then the operator $P$ with $\dom(P)=\Gamma_{\ICC_\c}(X,E)$ is a genuine symmetric operator in $\Gamma_{\IL^2_{\rho}}(X,E)$ which is $\geq -C$. In particular, such an operator admits semibounded self-adjoint extensions (for example its Friedrichs realization; cf. appendix, Example \ref{friedrichs}). Given such a semibounded self-adjoint extension $\widetilde{P}$, note that Lemma \ref{minmax} implies 
$$
(P^n)_{\rho,\min}\subset (\widetilde{P})^n=((\widetilde{P})^n)^*\subset ((P^n)_{\rho,\min})^*=(P^n)_{\rho,\max},
$$
in particular,
$$
\dom((\widetilde{P})^n)\subset  \Gamma_{ W^{P^n,2}_{\rho}}(X,E).
$$

The \lq\lq{}heat semigroup\rq\rq{} 
$$
(\mathrm{e}^{-t \widetilde{P}})_{t\geq 0}\subset \ILL(\Gamma_{\IL^2_{\rho}}(X,E) )
$$
is defined by the spectral calculus. It is a strongly continuous and self-adjoint semigroup of bounded operators (cf. appendix, Remark \ref{beispiele}). It follows that for every $f\in\Gamma_{\IL^2_{\rho}}(X,E)$ the path 
$$
[0,\infty)\ni t\longmapsto \mathrm{e}^{-t \widetilde{P}}f\in \Gamma_{\IL^2_{\rho}}(X,E)
$$
is the uniquely determined continuous path 
$$
[0,\infty)\longrightarrow \Gamma_{\IL^2_{\rho}}(X,E)
$$
which is $C^1$ in $(0,\infty)$ (in the norm topology) with values in $\dom(\widetilde{P})$ thereon, and which satisfies the abstract \lq\lq{}heat equation\rq\rq{}
$$
(\Id/\Id t )\mathrm{e}^{-t \widetilde{P}}f=-\widetilde{P} \mathrm{e}^{-t \widetilde{P}}f,\quad t>0,
$$
subject to the initial condition $\mathrm{e}^{-t \widetilde{P}}f|_{t=0} = f$. The heat semigroups corresponding to operators of the form $\widetilde{P}$ are always induced by jointly smooth heat kernels in the following sense:

\begin{Theorem}\label{heat} Let $k\in\IN_{\geq 0}$, let $P\in\IDD^{(k)}_{\ICC}(X;E)$ be elliptic, formally self-adjoint and semibounded, and let $\widetilde{P}$ be a semibounded self-adjoint extension of $P$ in $\Gamma_{\IL^2_{\rho}}(X,E)$. Then:\\
a) There is a unique smooth map\footnote{The reader may find the precise definition of the smooth vector bundle $E^*\boxtimes E\to X\times X$ in the appendix, Section \ref{difftop}.}
$$
(0,\infty)\times X\times X\ni (t,x,y)\longmapsto \mathrm{e}^{-t \widetilde{P}}(x,y)\in \mathrm{Hom}(E_y,E_x)\subset E^*\boxtimes E,
$$
the \emph{heat kernel of $\widetilde{P}$}, such that for all $t>0$, $f\in \Gamma_{\IL^2_{\rho}}(X,E)$, and $\rho$-a.e. $x\in X$ one has 
\begin{align}
\mathrm{e}^{-t \widetilde{P}}f(x)=\int_X\mathrm{e}^{-t \widetilde{P}}(x,y)f(y)\Id\rho( y).
\end{align}
b) For all $s,t>0$, $x,y\in X$ one has
\begin{align}
\int_X\left|\mathrm{e}^{-t \widetilde{P}}(x,z)\right|^2\Id\rho(z)<\infty,
\end{align}
\begin{align}
\mathrm{e}^{-t \widetilde{P}}(y,x)=\mathrm{e}^{-t \widetilde{P}}(x,y)^*\quad\text{ (adjoints of finite-dimensional operators)},
\end{align}
\begin{align}
\mathrm{e}^{-(t+s) \widetilde{P}}(x,y)=\int_X\mathrm{e}^{-t \widetilde{P}}(x,z)\mathrm{e}^{-s \widetilde{P}}(z,y)\Id\rho( z).
\label{chap}
\end{align}
c) For any $f\in \Gamma_{\IL^2_{\rho}}(X,E)$, the section
$$
(0,\infty)\times X\ni (t,x)\longmapsto f(t,x):=\int_X\mathrm{e}^{-t \widetilde{P}}(x,y)f(y)\Id\rho( y)\in E_x\subset E
$$
is smooth, and one has
$$
\frac{\partial}{\partial t} f(t,x)=-Pf(t,x)\quad\text{ for all $(t,x)\in(0,\infty)\times X$}.
$$
%and if $f\in \Gamma_{\ICC_{\c}}(X,E)$ then one furthermore has
%$$
%\lim_{t\to 0+}\left\| f(t,\bullet)-f\right\|_{P^k,2,\rho}=0\>\text{ for all $k\in \IN_{\geq 0}$} . 
%$$
\end{Theorem}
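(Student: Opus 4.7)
The plan is to first establish that $\mathrm{e}^{-t\widetilde{P}}f$ admits a smooth representative on $(0,\infty)\times X$ with explicit a priori bounds of its $C^l$-norm on compact subsets by $\left\|f\right\|_{L^2_\rho}$, then invoke Riesz representation to define the kernel pointwise in $x$, and finally bootstrap from ``smooth in $x$ at fixed $y$'' to joint smoothness via the Chapman--Kolmogorov identity.

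\smallskip

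\textbf{Step 1 (Smooth $L^2\to C^\infty_{\mathrm{loc}}$ bounds).} First I would invoke the spectral calculus to conclude $\mathrm{e}^{-t\widetilde P}(\Gamma_{\IL^2_\rho}(X,E))\subset\bigcap_{n\geq 0}\dom(\widetilde P^n)$ for every $t>0$, with $\|\widetilde P^n\mathrm{e}^{-t\widetilde P}\|\leq\sup_{\lambda\geq -C}|\lambda|^n\mathrm{e}^{-t\lambda}<\infty$ and continuous in $t$. Combining the inclusion $\dom(\widetilde P^n)\subset\Gamma_{W^{P^n,2}_\rho}(X,E)$ noted before the theorem, the ellipticity of $P^n$ (of order $nk$), the local elliptic regularity (\ref{localb}) and the Sobolev embedding would then yield a smooth representative of $u_t:=\mathrm{e}^{-t\widetilde P}f$ and, for every relatively compact chart $U\subset X$, every multi-index $\alpha$, every integer $j\geq 0$ and every $0<t_0\leq t_1<\infty$, the a priori estimate
\begin{align*}
\sup_{(t,x)\in[t_0,t_1]\times U}\left|\partial_t^j\partial_x^\alpha u_t(x)\right|\leq C(U,\alpha,j,t_0,t_1)\left\|f\right\|_{L^2_\rho};
\end{align*}
the $t$-derivatives come from $\partial_t^j\mathrm{e}^{-t\widetilde P}=(-\widetilde P)^j\mathrm{e}^{-t\widetilde P}$ in the spectral calculus and are converted to pointwise bounds by the same elliptic regularity.

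\smallskip

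\textbf{Step 2 (Kernel via Riesz, and the algebraic parts of (b)).} For each $(t,x)$, Step 1 makes the evaluation map $f\mapsto u_t(x)$ a bounded linear functional $\Gamma_{\IL^2_\rho}(X,E)\to E_x$. Expanding against an orthonormal basis of $E_x$ and applying the Riesz representation theorem coordinate-by-coordinate (with the antilinear convention for scalar products) produces a unique $\rho$-a.e. defined Borel map $y\mapsto\mathrm{e}^{-t\widetilde P}(x,y)\in\mathrm{Hom}(E_y,E_x)$, square-integrable in $y$, with the integral representation of (a); the $L^2$-bound of Step 1 already gives the first inequality of (b). The adjoint symmetry $\mathrm{e}^{-t\widetilde P}(y,x)=\mathrm{e}^{-t\widetilde P}(x,y)^*$ then follows from self-adjointness of $\mathrm{e}^{-t\widetilde P}$ together with Fubini, and the Chapman--Kolmogorov formula (\ref{chap}) from the semigroup identity $\mathrm{e}^{-(t+s)\widetilde P}=\mathrm{e}^{-t\widetilde P}\mathrm{e}^{-s\widetilde P}$ and a second application of Fubini.

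\smallskip

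\textbf{Step 3 (Joint smoothness, the main obstacle, and statement (c)).} I expect joint smoothness to be the hardest step, because the kernel is a priori defined only modulo $\rho$-null sets in the $y$-variable, so na\"{\i}ve differentiation under the integral in $y$ is illegal. The strategy is to use the rewriting
\begin{align*}
\mathrm{e}^{-(r+s)\widetilde P}(x,y)=\int_X \mathrm{e}^{-r\widetilde P}(x,z)\,\mathrm{e}^{-s\widetilde P}(y,z)^*\,\Id\rho(z),
\end{align*}
coming from (\ref{chap}) and the adjoint symmetry. For any fixed $(s,y)$ and any $e\in E_y$, the section $f_{s,y,e}(z):=\mathrm{e}^{-s\widetilde P}(y,z)^*e$ lies in $\Gamma_{\IL^2_\rho}(X,E)$ with norm locally bounded in $(s,y)$ by the first inequality of (b); applying Step 1 to it therefore yields smoothness and uniform bounds of $(r,x)\mapsto\mathrm{e}^{-(r+s)\widetilde P}(x,y)e$. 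Running the same analysis with $x$ and $y$ swapped (via the adjoint symmetry) then produces uniform $L^2_\rho(\Id z)$-bounds on $\partial_t^j\partial_x^\alpha\mathrm{e}^{-r\widetilde P}(x,\cdot)$ and $\partial_t^j\partial_y^\beta\mathrm{e}^{-s\widetilde P}(y,\cdot)$ locally in the parameters; Cauchy--Schwarz and dominated convergence then legalize differentiation under the $z$-integral to any order in $t,x,y$, and joint smoothness on $(0,\infty)\times X\times X$ follows. Finally, (c) drops out of Step 1 combined with the integral representation of (a), and the pointwise heat equation $\partial_t f(t,x)=-Pf(t,x)$ descends from the abstract $L^2$-heat equation via the smooth regularity just established.
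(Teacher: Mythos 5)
Your architecture is essentially the paper's: elliptic regularity plus the spectral calculus for $\IL^2\to C^\infty_{\loc}$ smoothing, Riesz representation to produce the kernel, and a half-time splitting to bootstrap joint smoothness. In particular, your Step 3 identity $\mathrm{e}^{-(r+s)\widetilde{P}}(x,y)=\int_X\mathrm{e}^{-r\widetilde{P}}(x,z)\,\mathrm{e}^{-s\widetilde{P}}(y,z)^{*}\Id\rho(z)$ with $r=s=t/2$ is exactly the paper's defining \lq\lq{}regularization\rq\rq{} $(\phi_1(x)\mathrm{e}^{-t\widetilde{P}}(x,y)\phi_2(y))=\langle\Psi(x,t/2,\phi_1),\Psi(y,t/2,\phi_2)\rangle_{L^2_\rho}$, where $\Psi(x,t,\phi)$ is the Riesz representative of $f\mapsto(\phi(x),\mathrm{e}^{-t\widetilde{P}}f(x))$. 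The paper packages your \lq\lq{}uniform $\IL^2(\Id z)$-bounds on derivatives of the half-kernels plus Cauchy--Schwarz and dominated convergence\rq\rq{} more cleanly: $(t,x)\mapsto\Psi(x,t,\phi)$ is weakly smooth by the pointwise theory, hence norm smooth by the weak-to-strong differentiability theorem, and joint smoothness of the kernel follows from smoothness of the Hilbert pairing. These are the same argument.

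The one place where you assert rather than argue is the joint $(t,x)$-smoothness of $u_t(x)=\mathrm{e}^{-t\widetilde{P}}f(x)$ in Step 1, on which everything downstream rests. Knowing that $t\mapsto u_t$ is $C^\infty$ into $\Gamma_{\IL^2_\rho}(X,E)$ and that each fixed-$t$ slice has a smooth representative does not by itself yield the classical mixed partials $\partial_t^j\partial_x^\alpha u_t(x)$; \lq\lq{}converted to pointwise bounds by the same elliptic regularity\rq\rq{} hides the actual step. Two ways to close it: (i) the paper's device --- $z\mapsto\mathrm{e}^{-z\widetilde{P}}f$ is holomorphic on $\{\Re>0\}$ and $(z,x)\mapsto\mathrm{e}^{-z\widetilde{P}}f(x)$ is jointly continuous, so Cauchy's integral formula $\mathrm{e}^{-t\widetilde{P}}f(x)=\oint_{\partial B}(t-z)^{-1}\mathrm{e}^{-z\widetilde{P}}f(x)\Id z$ holds for \emph{every} $x$, and one differentiates under the contour integral using only $x$-smoothness at fixed $z$; or (ii) an elementary route consistent with your setup --- write $h^{-1}(u_{t+h}-u_t)=\mathrm{e}^{-(t/2)\widetilde{P}}\bigl[h^{-1}\bigl(\mathrm{e}^{-(t/2+h)\widetilde{P}}f-\mathrm{e}^{-(t/2)\widetilde{P}}f\bigr)\bigr]$, note the bracket converges in $\IL^2$, and use that $\mathrm{e}^{-(t/2)\widetilde{P}}:\Gamma_{\IL^2_\rho}(X,E)\to C^l(U)$ is bounded (this boundedness itself needs the closed graph theorem, since the paper's local elliptic regularity (\ref{localb}) is stated only qualitatively) to conclude that the difference quotients converge in $C^l(U)$, then iterate. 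With either patch the remainder of your proposal goes through.
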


\begin{proof} The proof is based on the scalar case, a well-known result a special case of which can be found in \cite{davies}. However, since there is no need for a globally defined smooth frame to exist, we have to do some extra work.\\
Before we come to the proof of the actual statements of Theorem \ref{heat}, let us first establish some auxiliary results.\\
\emph{Step 1: For fixed $t>0$, there exists a smooth version of $x\mapsto\mathrm{e}^{-t \widetilde{P}}f(x)$.} \\
Proof: To see this, note that for any $n\in\IN_{\geq 1}$ one has 
$$
\dom((\widetilde{P})^n)\subset \Gamma_{ W^{P^n,2}_{\rho}}(X,E)\subset \Gamma_{ W^{k+n,2}_{\loc}}(X,E),
$$
where the second inclusion follows from local elliptic regularity. By the spectral calculus and the local Sobolev embedding, this implies
$$
\mathrm{Ran}(\mathrm{e}^{-t \widetilde{P}})\subset \bigcap_{n\in\IN_{\geq 1}}\dom((\widetilde{P})^n)\subset \Gamma_{\ICC}(X,E)\>\text{ for any $t>0$.}
$$
\emph{Step 2: For any $t>0$, $U\subset X$ open and relatively compact, the map
\begin{align}
\mathrm{e}^{-t \widetilde{P}}:\Gamma_{\IL^2_{\rho}}(X,E)\longrightarrow  \Gamma_{C_{ b  } }(U,E)\label{buu}
\end{align}
is a bounded linear operator between Banach spaces, where the space of bounded continuous sections $\Gamma_{C_{b}}(U,E)$ is equipped with its usual uniform norm.} \\
Proof: A priory, this map is algebraically well-defined by step 1. The asserted boundedness follows from the closed graph theorem, noting that the $\Gamma_{\IL^2_{\rho}}(X,E)$-convergence of a sequence implies the existence of a subsequence which converges $\rho$-a.e.\\ 
\emph{Step 3: For fixed $s>0$, the map 
$$
\Gamma_{\IL^2_{\rho}}(X,E)\times X\ni (f,x)\longmapsto\mathrm{e}^{-s \widetilde{P}}f(x)\in E_x\subset E
$$
is jointly continuous.} \\
Proof: Let $U\subset X$ be an arbitrary open and relatively compact subset. Given a sequence 
$$
((f_n,x_n))_{n\in\IN_{\geq 0}}\subset \Gamma_{\IL^2_{\rho}}(X,E)\times U
$$
 which converges to 
$$
(f,x)\in \Gamma_{\IL^2_{\rho}}(X,E)\times U,
$$
we have 
\begin{align*}
&\left| \mathrm{e}^{- s \widetilde{P}}f_n(  x_n)-\mathrm{e}^{- s \widetilde{P}}f(  x)\right| \\
&\leq \left|\mathrm{e}^{- s \widetilde{P}}[f_n-f](  x_n)\right| +\left|\mathrm{e}^{- s \widetilde{P}}f(  x)-\mathrm{e}^{- s \widetilde{P}}f(  x_n)\right| \\
&\leq \left\|\mathrm{e}^{- s \widetilde{P}}\right\|_{\ILL (\Gamma_{\IL^2_{\rho}}(X,E),  \Gamma_{C_{b}}(U,E) )}\left\|f_n-f\right\|_{L^2_{\rho}} +\left|\mathrm{e}^{- s \widetilde{P}}f(  x)-\mathrm{e}^{- s \widetilde{P}}f(  x_n)\right| \\
&\>\>\to 0, \text{ as $n\to\infty$},
\end{align*}
by step 2 and step 1.\\
\emph{Step 4: For fixed $\epsilon>0$ and $f\in \Gamma_{\IL^2_{\rho}}(X,E)$, the map
$$
\{\Re>\epsilon\}\times X\ni (z,x)\longmapsto \mathrm{e}^{-z \widetilde{P}}f(x)
$$
is jointly continuous.}\\
Proof: Indeed, this map is equal to the composition of the maps
$$
\{\Re> \epsilon\}\times X \xrightarrow[]{(z, x)\mapsto (\mathrm{e}^{-(z-\epsilon) \widetilde{P}}f, x) } \Gamma_{\IL^2_{\rho}}(X,E)\times X\xrightarrow[]{(f, x)\mapsto \mathrm{e}^{-\epsilon \widetilde{P}}f( x) } E,
$$
where the second map is continuous by Step 3. The first map is continuous, since the map
\begin{align} 
\{\Re> 0\} \ni z\longmapsto \mathrm{e}^{-z \widetilde{P}}f\in\Gamma_{\IL^2_{\rho}}(X,E)\label{anal}
\end{align}
is holomorphic. Note that, a priory, (\ref{anal}) is a weakly holomorphic semigroup by the spectral calculus, which is then indeed (norm-) holomorphic by the weak-to-strong differentiability theorem (cf. appendix, Theorem \ref{weaki}).\\
\emph{Step 5: For any $f\in \Gamma_{\IL^2_{\rho}}(X,E)$, there exists a jointly smooth version of $(t,x)\mapsto \mathrm{e}^{-t \widetilde{P}}f(x)$, which satisfies}
\begin{align}\label{eia}
\frac{\partial}{\partial t} \mathrm{e}^{-t \widetilde{P}}f(x)=-P\mathrm{e}^{-t \widetilde{P}}f(x).
\end{align}
Proof: By Step 4, for arbitrary $f\in \Gamma_{\IL^2_{\rho}}(X,E)$, the map
$$
\{\Re>0\}\times X\ni (z,x)\longmapsto \mathrm{e}^{-z \widetilde{P}}f(x)\in E_x\subset E
$$
is jointly continuous. It then follows from the holomorphy of (\ref{anal}) that for any open ball $B$ in the open right complex plane which has a nonempty intersection with $(0,\infty)$, for any $t\in B\cap (0,\infty)$, and for \emph{any} $  x\in X$, we have Cauchy's integral formula
$$
\mathrm{e}^{-t \widetilde{P}}f(x) = \oint_{\partial B} \frac{\mathrm{e}^{-z \widetilde{P}}f(  x)}{t-z}\Id z,
$$
noting that the holomorphy of (\ref{anal}) a priori only implies Cauchy's integral formula for \emph{almost every} $x$. Now the claim follows from differentiating under the line integral, observing that for fixed $z\in \{\Re>0\}$, the map
$$
X\ni   x\longmapsto \mathrm{e}^{-z \widetilde{P}}f(x )=\mathrm{e}^{-\Re (z) \widetilde{P}} \left[\mathrm{e}^{-\sqrt{-1}\Im (z) \widetilde{P}}f\right](  x)\in E_x\subset E
$$
is smooth by Step 1. Finally, the asserted formula (\ref{eia}) follows from the by now proved smoothness of $(t,x)\mapsto \mathrm{e}^{-t \widetilde{P}}f(x)$ and the fact that
$$
(\Id/\Id t) \mathrm{e}^{-t \widetilde{P}}f=-P\mathrm{e}^{-t \widetilde{P}}f,\>\>t>0,
$$
in the sense of norm differentiable maps $(0,\infty)\to \Gamma_{\IL^2_{\rho}}(X,E)$.\vspace{2mm}

Let us now come to the actual proof of Theorem \ref{heat}. We will prove a), b) and c) simultaneously. \\
First of all, it is clear that any such heat kernel is uniquely determined (by testing any two such kernels against arbitrary compactly supported smooth sections). To see its existence, we start by remarking that for every global Borel section $\phi$ in $X\to E$ and every $x\in X$, $t>0$, the complex linear functional given by
$$
\Gamma_{\IL^2_{\rho}}(X,E)\ni f\longmapsto \widetilde{\Psi(x,t,\phi)}[f]:= \big(\phi(x),\mathrm{e}^{-t \widetilde{P}}f(x)\big)\in \IC
$$
is bounded by Step 2. Thus by Riesz-Fischer\rq{}s representation theorem, there exists a unique section $\Psi(x,t,\phi)\in \Gamma_{\IL^2_{\rho}}(X,E)$ such that for all $f\in\Gamma_{\IL^2_{\rho}}(X,E)$ one has
\begin{align}\label{ries}
\big(\phi(x),\mathrm{e}^{-t \widetilde{P}}f(x)\big)=\widetilde{\Psi(x,t,\phi)}[f] = \left\langle  \Psi(x,t,\phi),f\right\rangle_{L^2_{\rho}},
\end{align}
and it follows immediately from step 5 that $(t,x)\mapsto \Psi(x,t,\phi)$ is weakly smooth if $\phi$ is smooth. In this case, this map is in fact norm smooth as a map $(0,\infty)\times X \to \Gamma_{\IL^2_{\rho}}(X,E)$ by the weak-to-strong differentiability theorem. We claim that the integral kernel which is well-defined by the \lq\lq{}regularization\rq\rq{}
\begin{align}\label{fmq}
( \phi_1(x)\mathrm{e}^{-t \widetilde{P}}(x,y)\phi_2(y)):= \left\langle \Psi(x,t/2,\phi_1),\Psi(y,t/2,\phi_2)\right\rangle_{L^2_{\rho}},
\end{align}
where $\phi_j$ are arbitrary smooth sections, has the desired properties. Indeed, firstly, the smoothness of $(t,x,y)\mapsto \mathrm{e}^{-t \widetilde{P}}(x,y)$ follows immediately from the norm smoothness of $(t,x)\mapsto \Psi(x,t,\phi)$ and the smoothness of the Hilbertian pairing $(f,g)\mapsto \left\langle f,g\right\rangle_{L^2_{\rho}}$. Then, picking a global orthonormal Borel frame $e_1,\dots, e_{\ell}$ for $E\to X$ (cf. Remark \ref{global}), we see that
$$
\left|\mathrm{e}^{-t \widetilde{P}}(x,y)e_j(y)\right|^2=\left|\mathrm{e}^{-\f{t}{2} \widetilde{P}}\Psi(y,t/2,e_j)(x)\right|^2,
$$ 
which follows from the definition of $\Psi(\dots)$, so that
$$
\int_X \left|\mathrm{e}^{-t \widetilde{P}}(x,y)\right|^2\Id\rho(x)\leq \int_X\left|\mathrm{e}^{-\f{t}{2} \widetilde{P}}\Psi(y,t/2,e_j)(x)\right|^2\Id\rho(x)<\infty,
$$
which implies the asserted square integrability of the integral kernel. Note that, by construction, one has the symmetry
$$
\mathrm{e}^{-t \widetilde{P}}(x,y)=\mathrm{e}^{-t \widetilde{P}}(y,x)^{*}.
$$
Finally, by a straightforward calculation which only uses the symmetry of $\mathrm{e}^{-t \widetilde{P}}$ and the definition of $\Psi(\dots)$, we get
$$
\big(\phi(x),\mathrm{e}^{-t \widetilde{P}}f(x)\big)=\int_X \big(\phi(x),\mathrm{e}^{-t \widetilde{P}}(x,y)f(y)\big)\Id\rho(y)
$$
for all smooth compactly supported sections $\phi$, which also implies the asserted semigroup property of the integral kernel, using the semigroup property of $\mathrm{e}^{-t \widetilde{P}}$ (a priori for all fixed $x$, and $\rho$-a.e. $y$, a posteriori for all $(x,y)$ by a standard continuity argument). This completes the proof.
\end{proof}

%\begin{Remark} Note that all statements of Theorem \ref{heat} remain true for \emph{arbitrary} semibounded self-adjoint extensions $\tilde{\H}_P$ of $P$, with $P$ as in Theorem \ref{heat}. Indeed, any such $\tilde{\H}_P$ necessarily satisfies 
%$$
%\dom(\tilde{\H}_P)\subset\Gamma_{ W^{P,2}_{\rho}}(X,E)\subset \Gamma_{ W^{k,2}_{\mathrm{loc}}}(X,E),
%$$
%and the latter inclusion was all we used in the above proof. We have only restricted ourselves to the Friedrichs case as this is the only situation that will be considered in the sequel, so that we could fix some notation.\vspace{1mm}
%
%\end{Remark}

\chapter{Basic differential operators in Riemannian manifolds}\label{C4}

\section{Preleminaries from Riemannian geometry}

A smooth \emph{Riemannian metric $g$} on a smooth $m$-manifold $M$ is by definition a smooth metric on the vector bundle $\IT M\to M$. The pair $(M,g)$ is then referred to as a \emph{smooth Riemannian manifold}.\vspace{2mm}

\emph{We fix once for all a smooth, connected, possibly noncompact Riemannian $m$-manifold $M\equiv (M,g)$. }\vspace{2mm}

This notation indicates that all Riemannian data on $M$ will be understood with respect to the fixed smooth Riemannian metric $g$ on $M$, unless otherwise stated. In accordance with our previous conventions, whenever there is no danger of confusion, $g$ itself and all canonically induced smooth metrics (for example those on $\IT^*M$ or $\IT^* M\wedge \IT^* M$ or duals therof) will simply be denoted by $(\bullet,\bullet)$, where $|\bullet|$ stands for the induced fiberwise norms and operator norms.\\

Let us collect some well-known facts and formulae from Riemannian geometry: The \emph{Levi-Civita connection} on $M$ is the uniquely determined smooth metric covariant derivative
$$
\nabla^{\IT M}\in \IDD^{(1)}_{\ICC}(M;\IT M, \IT^* M\otimes \IT M)
$$
which is torsion free, in the sense that
$$
\nabla^{\IT M}_A B-\nabla^{\IT M}_B A=[A,B]\text{ for all vector fields $A,B\in \mathscr{X}_{C^{\infty}}(M)$.}
$$
The \emph{Riemannian curvature tensor} $\mathrm{Riem}$ is defined to be the curvature of $\nabla^{\IT M}$, 
$$
\mathrm{Riem}:=R_{\nabla^{\IT M}}\in \Gamma_{\ICC}(M,  (\wedge^2 \IT^* M)\otimes \mathrm{End}(\IT M) ).
$$
We recall that for smooth vector fields $A,B,C\in \mathscr{X}_{C^{\infty}}(M)$, this tensor is explictly given by 
$$
\mathrm{Riem}(A, B)C:= \nabla^{\IT M}_A \nabla^{\IT M}_B C-\nabla^{\IT M}_B\nabla^{\IT M}_A C-\nabla^{\IT M}_{[A,B]}C\in\mathscr{X}_{C^{\infty}}(M).
$$
Then the \emph{Ricci curvature} 
$$
\mathrm{Ric}\in \Gamma_{\ICC}(M,  \IT^*M\odot \IT^*M )
$$
is the field of symmetric bilinear forms on $\IT M$ given by the fiberwise ($g$-)trace 
$$
\mathrm{Ric}(A,B)\mid_U\>=\sum^m_{j=1}( \mathrm{Riem}(e_j, B)A,e_j),
$$
where $e_1,\dots,e_m\in \mathscr{X}_{C^{\infty}} (U)$ is a local orthonormal frame, and $A,B\in \mathscr{X}_{C^{\infty}} (M)$. The \emph{scalar curvature} is the smooth real-valued function $\mathrm{scal}\in\ICC(M)$ given by
$$
\mathrm{scal}\mid_U\>:=\sum^m_{j=1}\mathrm{Ric}(e_j,e_j).
$$
Another concept that we will need from time to time are the \emph{sectional curvatures} of $M$: If $m\geq 2$, then for every $x\in M$ the sectional curvature $\mathrm{Sec}(v_x)$ of a two-dimensional subspace 
$$
v_x=\mathrm{span}(A(x),B(x))\subset\IT_x M,\quad A,B\in\mathscr{X}_{C^{\infty}}(M), 
$$
is well-defined by
$$
\mathrm{Sec}(v_x):=\frac{(\mathrm{Riem}(A,B)B,B)}{|A\wedge B|^2}|_x\in\IR.
$$

The \emph{Riemannian volume measure} is the uniquely determined smooth Borel measure $\mu$ on $M$, such that for every smooth chart $((x^1,\dots,x^m),U)$ and any Borel set $N\subset U$, one has
\begin{align}\label{mas1}
&\mu(N) = \int_N \sqrt{\det(g(x))}\Id x, 
\end{align}
where $\det(g(x))$ is the determinant of the matrix $g_{ij}(x):=g(\partial_{i},\partial_{j})(x)$ and where $\Id x=\Id x^1\cdots \Id x^m$ stands for the Lebesgue integration (cf. Theorem 3.11 in \cite{gri}). The metric $g$ canonically induces an isomorphism of smooth $\IR$-vector bundles
\begin{align}
\IT^* M\longrightarrow \IT M,\>\>\alpha\longrightarrow \alpha^{\sharp},
\end{align}
with its inverse
\begin{align}
\IT M\longrightarrow \IT^* M,\>\>A\longrightarrow A^{\flat}.
\end{align}
We have the \emph{scalar Laplacian} (also called the \emph{Laplace-Beltrami operator})  
$$
-\Delta:=\Id^{\dagger}\Id\in \IDD^{(2)}_{\ICC}(M).
$$
Note that our sign convention for $\Delta$ is the one from the mathematical physics literature, and not the one which is typically used in the geometry literature. As we will see in a moment, $\Delta$ is elliptic. 

\begin{Remark}
The reader may find it helpful to know that in a smooth chart $((x^1,\dots, x^m),U)$, the scalar Laplacian is given for every smooth function $f:U\to\IC$ and $x\in U$ by
$$
-\Delta f (x)= -\sum^m_{i,j=1}\f{1}{\sqrt{\det (g)}}\f{\partial }{\partial x^i } \left(\sqrt{\det (g)} g^{ij}\f{\partial f}{\partial x^j } \right) (x)  ,
$$
where $g^{ij}:=g^*(d x^i, dx^j)$. Moreover, the following \emph{strong elliptic minimum principle} holds (cf. Corollary 8.14 in \cite{gri}): If $\lambda\in\IR$ and if $0\leq f\in C^2(M)$ vanishes at some point of $M$ and is $\alpha$-superharmonic in the sense that 
$$
-\Delta f+\lambda f\geq 0,
$$
then $f$ vanishes everywhere in $M$. In particular, every real-valued superharmonic ($=$ $0$ - superharmonic) function $f\in C^2(M)$ with $f(x_0)=\inf f$ for some $x_0\in M$ satisfies $f(x)=\inf f$ for all $x\in M$: This follows from applying the strong elliptic minimum principle to $f-\inf f$. As a consequence, we get the following \emph{elliptic minimum principle:} For every relatively compact open subset $U\subset M$ with a nonempty boundary, and every real-valued superharmonic function $f\in C^2(U)\cap C(\overline{U})$,  one has
$$
\inf_{\overline{U}} f =\inf_{\partial U} f.
$$
Indeed, using the strong elliptic minimum principle, one finds straightforwardly that the set
$$
\Big\{x\in \overline{U}: f(x)=\inf_{\overline{U}}f \Big\}\subset M
$$
intersects $\partial U$ (cf. \cite{gri}, p. 230). Like all minimum principles, these results rely on our assumption that $M$ is connected.\end{Remark}

In addition to the scalar Laplacian, we have for any $j\in\{0,\dots,m\}$ the \emph{Hodge-Laplacian on $j$-forms}, which is defined by 
$$
-\Delta^{(j)}:=\Id^{\dagger}_{j}\Id_{j}+\Id_{j-1}\Id^{\dagger}_{j-1}\in\IDD^{(2)}_{\ICC}(M;\wedge^j_{\IC} T^*  M),
$$
where of course $\Id^{\dagger}_{-1}:=0$. Then \emph{Weitzenböck's formula} states that  
$$
V^{(j)}:=-\Delta^{(j)}-(\nabla^{\wedge^j T^*  M})^{\dagger}\nabla^{\wedge^j T^* M}\in\IDD^{(0)}_{\ICC}(M;\wedge^j_{\IC} T^* M)
$$
is an explicitly (in terms of the Riemannian curvature tensor) given zeroth order operator with $(V^{(j)})^{\dagger}=V^{(j)}$. Weitzenböck's formula can be derived from the following formula for the adjoint of the exterior differential: For every $\alpha \in \Omega^k_{\ICC}(M)$, one has 
\begin{align}\label{sabine}
\Id^{\dagger}_k\alpha(A_1,\dots, A_{k-1})=-\sum_{j=1} \nabla^{\wedge^k T^*  M}_{e_j}\alpha(e_j, A_1,\dots,A_{k-1}), 
\end{align}
valid for all smooth vector fields $A_1,\dots,A_{k-1}\in \mathscr{X}_{C^{\infty}} (M)$, and every smooth local orthonormal frame $e_1,\dots,e_m\in \mathscr{X}_{C^{\infty}} (U)$. These facts are standard and can be found, for example, in \cite{getzler}.

\begin{Notation} $\mathrm{Ric}_{\sharp}\in\Gamma_{\ICC}(M,  T^* M\otimes T^*M)$ denotes the field of symmetric sesquilinear forms given by $\mathrm{Ric}(\sharp,\sharp)$, the composition of the Ricci curvature with $\sharp$. 
\end{Notation}

For $j=1$ the endomorphism $V^{(1)}$ has a very simple form, namely
$$
V^{(1)}=\mathrm{Ric}_{\sharp}, \quad\text{and therefore}\quad -\Delta^{(1)}=(\nabla^{T^* M})^{\dagger}\nabla^{ T^* M}+\mathrm{Ric}_{\sharp},
$$
when $\mathrm{Ric}_{\sharp}$ is considered an element of $\IDD^{(0)}_{\ICC}(M;T^*_{\IC} M)$. The general formula for $V^{(j)}$ with $j>1$ is usually not easy to control analytically (although the so-called \emph{Gallot-Meyer estimate} \cite{gallot} states that it can be controlled in a certain sense from below by $\mathrm{Riem}$, or more precisely, by the so-called curvature endomorphism of $\nabla^{\IT M}$). \vspace{1.2mm}

We continue our list of formulae: Given $f_1,f_2\in\ICC(M)$,  $\alpha\in\Omega^1_{\ICC}(M)$ and $f\in\ICC(\IR)$, one has the following product rule, and chain rule, respectively:
\begin{align}\label{gaga2}
&\Delta(f_1f_2)=f_1\Delta f_2+f_2\Delta f_1+2\Re(\Id f_1,\Id f_2),\\ \label{gaga3}
&\Delta (f\circ f_1)= (f\rq{}\rq{} \circ  f_1) |\Id f|^2+(f\rq{}\circ f_1) \Delta  f_1.
\end{align}

Let $\nabla$ be a smooth metric covariant derivative on the smooth metric $\IK$-vector bundle $E\to M$. Then one has 
\begin{align}\label{jkg}
\nabla^{\dagger}( \alpha\otimes\psi)=(\Id^{\dagger}\alpha)\psi -\nabla_{\alpha^{\sharp}}\psi
\end{align}
for all real-valued $\alpha\in\Omega^1_{\ICC }(M)$, $\psi\in\Gamma_{\ICC}(M,E)$. This can be seen as follows: Let $\psi_1\in\Gamma_{\ICC_{\c}}(M,E)$. Since $\nabla$ is metric, we have 
$$
(\alpha,\Id(\psi_1,\psi))=\alpha^{\sharp}(\psi_1,\psi)=(\nabla_{\alpha^{\sharp}}\psi_1,\psi)+(\psi_1,\nabla_{\alpha^{\sharp}}\psi),
$$
thus
$$
\int \Big(\psi_1,\big((\Id^{\dagger}\alpha)-\nabla_{\alpha^{\sharp}}\big)\psi\Big)\Id\mu=\int (\nabla_{\alpha^{\sharp}}\psi_1,\psi) \Id\mu,
$$
which shows (\ref{jkg}), in view of the simple identity 
$$
(\alpha\otimes(\bullet))^{\dagger}\circ \nabla=\nabla_{\alpha^{\sharp}}\in\IDD^{(1)}_{\ICC}(M;E).
$$
Formula (\ref{jkg}) can be used to deduce the following local formula: Given a smooth local orthonormal frame $e_1,\dots,e_m\in \mathscr{X}_{C^{\infty}}(U)$, one has 
\begin{align}\label{zuh}
\nabla^{\dagger}\nabla  =-\sum_{i=1}^m\left(\nabla_{e_i}\nabla_{e_i} -\nabla_{\nabla^{\IT M}_{e_i}e_i}\right) \>\>\text{ in $U$.}
\end{align}
Indeed, let $\psi\in\Gamma_{\ICC}(M,E)$, and note that the right-hand-side of (\ref{zuh}) does not depend on a particular choice of $e_1,\dots,e_m$. Thus it is sufficient to prove the formula in a particular frame. We pick a frame with $\nabla^{\IT M}e_j(x)=0$ at a fixed $x\in U$, so that  (at $x$)
\begin{align}\label{aopqay}
-\sum_{i=1}^m\left(\nabla_{e_i}\nabla_{e_i}-\nabla_{\nabla^{\IT M}_{e_i}e_i}\right)\psi(x)=-\sum_{i=1}^m\nabla_{e_i}\nabla_{e_i}\psi(x),
\end{align}
and (\ref{zuh}) follows from
$$
\nabla \psi= \sum_i  e_i^*\otimes \nabla_{e_i}\psi
$$
and (\ref{jkg}). \\
It is also possible to deduce from formula (\ref{aopqay}) that
\begin{align}\label{sym}
\symbol_{\nabla^{\dagger}\nabla}(\zeta\otimes\zeta)\psi=-|\zeta|^2\psi,\quad\zeta\in\IT^*_x M,\>\psi\in E_x,\>x\in M
\end{align}
In particular, $\nabla^{\dagger}\nabla$ is elliptic. Being a lower order perturbation of operators having the latter form, it follows that each $\Delta_j$ is elliptic, too.\\
If one unpacks the definition of the dual covariant derivative and the tensor product of covariant derivatives, one finds the following useful formula\footnote{This formula is also valid in case $\nabla$ is not metric.} for $\nabla^{(2)}:=(\nabla\tilde{\otimes} \nabla^{\IT^*M}) \circ \nabla$,
\begin{align}\label{getzi}
 \nabla^{(2)} \psi(A,B)=\nabla_A\nabla_B\psi-\nabla_{\nabla^{\IT M}_A B}\psi,\quad A,B\in\mathscr{X}_{C^{\infty}} (M), \>\>\psi\in\Gamma_{\ICC}(M,E),
\end{align}
so that (\ref{zuh}) shows that 
\begin{align}\label{zuh2}
\nabla^{\dagger}\nabla \psi=-\sum^m_{j=1} \nabla^{(2)} \psi(e_j,e_j),
\end{align}
where $e_j$ is again an arbitrary smooth orthonormal frame.\\
Another important consequence of (\ref{jkg}) is the following product rule,
\begin{align}\label{gaga4}
\Id^{\dagger}( f\alpha)=f\Id^{\dagger} \alpha-(\alpha,\Id f)\quad \alpha \in\Omega^1_{\ICC}(M),\>f\in\ICC(M),
\end{align}
which will also be useful in the sequel.\\
An important result that links the underlying geometry with geometric analysis is the following \emph{Bochner identity}: For all $f\in\ICC(M)$ one has
\begin{align}\label{bochner}
\left|\nabla^{T^* M}\Id f\right|^2=\f{1}{2}\Delta |\Id f|^2-(\Id f,\Id \Delta f) - \mathrm{Ric}_{\sharp}(\Id f,\Id f).
\end{align}
In fact, $\nabla^{T^*M}d f$ is a natural Riemannian generalization of the usual Hessian. Of course it is sufficient to prove the Bochner identity for real-valued $f$'s, which is a standard result.\vspace{2mm}

We continue by recalling that the \emph{geodesic distance} on $M$ is given by
\begin{align}\label{abst}
&\varrho(x,y):=\inf\Big\{\int^1_0\left|\dot{\gamma}(s)\right|\Id s: \gamma\in\ICC (\left[0,1\right],M),\gamma(0)=x,\gamma(1)=y\Big\}.
\end{align}
The finiteness of this quantity for all $x$, $y$ requires that $M$ is connected, in which case $(M,\varrho)$ becomes a metric space. Furthermore, one can equivalently take piecewise smooth curves instead of smooth ones in the defining set of $\varrho(x,y)$, as long as $\int^1_0\left|\dot{\gamma}(s)\right|\Id s$ is interpreted in the obvious sense. The symbol 
$$
\IB(x,r):=\{y\in M: \varrho(x,y)<r\} 
$$
will denote the corresponding open geodesic balls, where $x\in M$, $r>0$. We recall that the manifold topology of $M$ is equal \cite{gri} to the topology which is induced by the metric $\varrho(\bullet,\bullet)$. The Riemannian manifold $M$ is called \emph{geodesically complete}, if the metric space $(M,\varrho)$ is so in the usual sense. By Hopf-Rinow\rq{}s Theorem, this completeness is equivalent to all bounded subsets being relatively compact, and also equivalent to all open geodesic balls being relatively compact. As we will see, geodesically complete $M$\rq{}s (such as the Euclidean $\IR^m$ or compact $M$'s) behave quite well analytically from points of view such as essential self-adjointness results for Schrödinger-type operators. On the other hand, even very simple Riemannian manifolds such as genuine open subsets of the Euclidean $\IR^m$ are not geodesically complete, so that we will try to avoid this assumption whenever possible. Other (possibly very complicated) incomplete Riemannian manifolds appear naturally in the topology of singular spaces: For example, if $X$ is a so-called smoothly Thom-Mather stratified compact $m$-pseudomanifold, then nontrivial topological information about $X$ is encoded in its so-called intersection cohomology (which depends on the choice of an additional datum, a so-called perversity function). It is a highly nontrivial fact that for every $(m-1)$-tuple $(c_2,\dots,c_m)$ of real numbers $\geq 1$, the so-called regular part $\mathrm{reg}(X)\subset X$ (a smooth $m$-manifold) carries a so-called \emph{iterated edge metric $g_{\mathrm{reg}}$ of type $(c_2,\dots,c_m)$} \cite{brass,JCh}, and that moreover one can use $g_{\mathrm{reg}}$ to calculate the intersection cohomology of the whole space $X$ \cite{JCh,hunz,FB1,FB2,FB3} for a perversity function that only depends on $(c_2,\dots,c_m)$. \vspace{2mm}

A place where geodesic completeness comes into play in a very crucial way is the question of whether or not families of cut-off functions exist, an important analytic tool:

%\begin{Remark}
%Given a second smooth Riemannian metric $g'$ on $M$, the metrics $g$ and $g'$ are called \emph{quasi-isometric}, if there is a constant $C>1$ with
%$$
%C^{-1}g'(x)\leq g(x)\leq C g'(x)\>\text{as symmetric bilinear-forms, for all $x\in M$.}
%$$ 
%Then one has 
%$$
%\sqrt{C^{-1}}\Id'(x,y)\leq \Id (x,y)\leq \sqrt{C} \Id'(x,y)\>\text{ for all $x,y\in M$}
%$$
%for the induced geodesic distances, and 
%$$
%C^{-\f{m}{2}}\Id\mu' \leq \Id \mu \leq  C^{\f{m}{2}} \Id\mu' 
%$$
%for the induced Riemannian volume measures. Quasi-isomtry defines an equivalence relation.
%\end{Remark}

\begin{Theorem}\label{first} a) $M$ is geodesically complete, if and only if $M$ admits a sequence $(\chi_n)\subset C^{\infty}_\c(M)$ of \emph{first order cut-off functions}, that is, $(\chi_n)$ has the following properties\emph{:}
\begin{itemize}
\item[(C1)] $0 \le \chi_n(x) \le 1$ for all $n\in\IN_{\geq 1}$, $x \in M$,

\item[(C2)] for all compact $K\subset M$, there is an $n_0(K)\in\IN$ such that for all $n\geq n_0(K)$ one has $\chi_n\mid_{K}= 1$,

\item[(C3)] $ \left\|\Id\chi_n \right\|_{\infty} \to 0$ as $n\to \infty$.
\end{itemize}
b) Assume that $M$ is geodesically complete with $\mathrm{Ric}\geq -C$ for some constant $C\geq 0$, that is, one has
$$
\mathrm{Ric}(A(x),A(x))\geq - C |A(x)|^2\quad\text{ for all $A\in \mathscr{X}_{C^{\infty}} (M)$, $x\in M$.}
$$

Then $M$ admits a sequence $(\chi_n)\subset C^{\infty}_\c(M)$ of \emph{Laplacian cut-off functions}, that is, $(\chi_n)$ has the above properties (C1), (C2) , (C3), and in addition 
\begin{itemize}
\item[(C4)] $  \left\|\Delta \chi_n \right\|_{\infty} \to 0$ as $n\to\infty$.
\end{itemize}
c) Assume that $M$ is geodesically complete with\footnote{The assumption as well as the conclusion of part b) are stronger than the ones in part c). } $|\mathrm{Sec}|\leq C$ for some constant $C\geq 0$. Then $M$ admits a sequence $(\chi_n)\subset C^{\infty}_\c(M)$ of \emph{Hessian cut-off functions}, that is, $(\chi_n)$ has the above properties (C1), (C2) , (C3), and in addition 
\begin{itemize}
\item[(C4\rq{})] $  \left\|\nabla^{T^*M}d \chi_n \right\|_{\infty} \to 0$ as $n\to\infty$.
\end{itemize}
\end{Theorem}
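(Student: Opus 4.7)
Fix a base point $x_0 \in M$, set $r(x) := \varrho(x, x_0)$, and choose once and for all a cutoff $\phi \in C^\infty([0,\infty))$ with $0 \leq \phi \leq 1$, $\phi \equiv 1$ on $[0,1]$, and $\phi \equiv 0$ on $[2, \infty)$. In every case I will define $\chi_n := \phi(\tilde r / n)$ for an appropriate smooth replacement $\tilde r$ of $r$; (C1) is automatic, and (C2) follows as soon as $M$ is geodesically complete, since by Hopf--Rinow the sublevel sets $\{\tilde r \leq 2n\}$ are then compact. For the nontrivial direction of (a), I will apply Greene--Wu smoothing to the $1$-Lipschitz function $r$ (via normal coordinates and a locally finite partition of unity) to obtain $\tilde r \in C^\infty(M)$ with $|\tilde r - r| \leq 1$ and $|\Id \tilde r| \leq 2$ on $M$; then $\|\Id\chi_n\|_\infty \leq 2 n^{-1}\|\phi'\|_\infty \to 0$. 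Conversely, if $M$ is \emph{not} geodesically complete, Hopf--Rinow furnishes a unit-speed geodesic $\gamma:[0,L) \to M$ of finite length $L$ that leaves every compact set as $t \to L^{-}$. For any candidate sequence $(\chi_n)$ satisfying (C1)--(C3), the function $\chi_n\circ\gamma$ equals $1$ near $0$ (eventually in $n$, by (C2)) and vanishes near $L$ (since $\chi_n$ has compact support), so
$$
1 \;\leq\; \int_0^L \bigl|\Id\chi_n(\dot\gamma(t))\bigr|\,\Id t \;\leq\; L\,\|\Id\chi_n\|_\infty,
$$
which contradicts (C3) for $n$ large.

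\textbf{Part (b).} Assuming in addition that $\mathrm{Ric}\geq -C$, the Laplacian comparison theorem yields, in the sense of distributions (or of upper barriers), an upper bound $\Delta r \leq K$ on $M \setminus \overline{\IB(x_0,1)}$ with $K = K(C,m)$. The plan is to combine this comparison with a smoothing of $r$ adapted to the Laplacian --- either by refining Greene--Wu so as to preserve the barrier bound, or by passing through a heat-semigroup convolution $e^{-tH}\widetilde{\mathbf 1}_{\IB(x_0,n)}$ with $t=t(n)\to 0$ chosen in terms of heat-kernel estimates available under $\mathrm{Ric}\geq -C$ --- so as to produce a smooth $\tilde r$ with $|\Id\tilde r|$ and $|\Delta\tilde r|$ uniformly bounded outside a compact set. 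The chain rule \eqref{gaga3} then gives
$$
\Delta\chi_n \;=\; \tfrac{1}{n}\phi'(\tilde r/n)\,\Delta\tilde r \;+\; \tfrac{1}{n^{2}}\phi''(\tilde r/n)\,|\Id\tilde r|^2,
$$
so $\|\Delta\chi_n\|_\infty = O(n^{-1})$, which is (C4).

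\textbf{Part (c), and main obstacle.} Under $|\mathrm{Sec}|\leq C$, Rauch's comparison theorem furnishes \emph{two-sided} pointwise bounds on $\nabla^{T^*M}\Id r$ outside the cut locus and outside $\overline{\IB(x_0,1)}$. The full Greene--Wu smoothing procedure (Greene--Wu 1979) then applies and produces a smooth $\tilde r$ on which $|\nabla^{T^*M}\Id\tilde r|$ is uniformly bounded. The product/chain rule
$$
\nabla^{T^*M}\Id\chi_n \;=\; \tfrac{1}{n}\phi'(\tilde r/n)\,\nabla^{T^*M}\Id\tilde r \;+\; \tfrac{1}{n^{2}}\phi''(\tilde r/n)\,\Id\tilde r\otimes\Id\tilde r
$$
then delivers (C4'), while (C1)--(C3) are handled exactly as in part (a). The technical heart of (b) and (c) is this smoothing step: because of the cut locus, $r$ is only Lipschitz, so approximating it by a smooth function while controlling \emph{second} derivatives pointwise is where the real work lies. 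For (b) the point is particularly delicate, since $\mathrm{Ric}\geq -C$ alone yields only one-sided control of $\Delta r$; one must either exploit the fixed sign of $\phi'$ in conjunction with the one-sided comparison, or pass to a heat-semigroup regularization in which two-sided second-order bounds become accessible.
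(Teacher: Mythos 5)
Your part (a) is correct and is a legitimate alternative to the paper's argument: for the forward direction the paper avoids smoothing the distance function altogether by taking $f=\log(1+|\iota|^2)$ for a closed Nash embedding $\iota$, which is smooth, proper and satisfies $|\Id f|\leq 1$ on the nose, whereas you mollify $\varrho(\cdot,x_0)$; your converse, by contraposition along an inextendible finite-length geodesic, is essentially the same length-versus-gradient estimate the paper runs directly to show that geodesic balls are precompact. Both routes are fine, though you should note that your Greene--Wu/partition-of-unity smoothing with $|\Id\tilde r|\leq 2$ is itself a (standard but not free) lemma.

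The genuine gap is in parts (b) and (c). The entire content of these statements is the existence of a smooth \emph{proper} function $f$ with $|\Id f|$ and $|\Delta f|$ (resp. $|\nabla^{T^*M}\Id f|$) \emph{uniformly bounded on all of $M$}; the paper does not smooth $r$ but quotes exactly this as Theorem 4.2 of Schoen--Yau for (b) and Lemma 5.3 of Cheeger--Gromov for (c), and then concludes by the same chain-rule computation you write down. Your proposal for (b) replaces this by a "plan" with two unexecuted alternatives, and neither closes the gap as stated. First, the Laplacian comparison under $\mathrm{Ric}\geq -C$ is one-sided, and a one-sided bound $\Delta\tilde r\leq K$ cannot give $\|\Delta\chi_n\|_\infty\to 0$ even using $\phi'\leq 0$: the term $\tfrac1n\phi'(\tilde r/n)\Delta\tilde r$ is then only bounded from \emph{below}, and nothing prevents $\Delta\tilde r$ from being very negative on the annulus where $\phi'<0$, so $\Delta\chi_n$ could be large and positive. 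One genuinely needs a two-sided bound, and producing it from the one-sided comparison is precisely the nontrivial content of the Schoen--Yau construction (which builds $f$ from $\varrho_{\mathscr O}$ via the elliptic minimum principle rather than by mollification). Second, the heat-semigroup regularization of indicator functions is not controlled as $t\to 0$: $\Delta\,\mathrm e^{-tH}1_{\IB(x_0,n)}$ blows up near the boundary of the ball, and you give no coupling of $t(n)$ with $n$ that secures (C2) and (C4) simultaneously. For (c) the situation is better in spirit -- under $|\mathrm{Sec}|\leq C$ the Hessian comparison is two-sided away from the cut locus, and a Greene--Wu/Cheeger--Gromov type smoothing across the cut locus preserving two-sided Hessian bounds is a known theorem -- but in your write-up this smoothing step is again asserted rather than proved, and it is exactly where the work lies. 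To repair the proposal you should either carry out these constructions or cite them explicitly as the external inputs they are.
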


\begin{proof} a) This proof is borrowed from \cite{glob}: If $M\equiv (M,g)$ is geodesically complete, then by (a small generalization of)\footnote{As it stands, Nash\rq{}s embedding theorem does not require geodesic completeness, but it also does not give an isometric embedding with a \emph{closed} image. The way out of this is to pick an isometric smooth embedding $\iota\rq{}:M\hookrightarrow \IR^{s}$ by Nash and to use the geodesic completeness of $M$ in order to construct an isometric smooth embedding $\iota:M\hookrightarrow \IR^{s+1}$ from $\iota\rq{}$ into a larger space, which however really has a closed image, see for example \cite{mueller} for details. Note here that a smooth embedding of a smooth Riemannian manifold to another one is called \emph{isometric}, if it preserves the underlying Riemannian metrics.} Nash\rq{}s embedding theorem we can pick a smooth embedding $\iota: M\hookrightarrow \IR^l$ such that $g$ is the pull-back of the Euclidean metric on $\IR^l$, where $l\geq m$ is large enough, and such that $\iota(M)$ is a closed subset of $\IR^l$. Then clearly $\iota$ is proper, and therefore the composition
$$
f:M\longrightarrow \IR,\>\> f(x):=\log(1+|\iota(x)|^2)
$$
is a smooth proper function with $|\Id f|\leq 1$, since 
$$
\tilde{f}:\IR^l\longrightarrow \IR,\>\>\tilde{f}(v):= \log(1+|v|^2)
$$
is a smooth proper function whose gradient is absolutely bounded by $1$. Pick now a sequence $(\varphi_n)\subset\ICC_{\c}(\IR)$ of first order cut-off functions on the Eudlidean space $\IR$. (For example, let $\varphi:\IR\to [0,1]$ be smooth and compactly supported  with $\varphi=1$ near $0$, and set $\varphi_n(r):=\varphi(r/n)$, $r\in\IR$.) Then $\chi_n(x):=\varphi_n(f(x))$ obviously has the desired properties. \\
Conversely, suppose that $M$ admits a sequence $(\chi_n)\subset C^{\infty}_\c(M)$ of first order cut-off functions. Then given $\mathscr{O}\in M$, $r>0$, we are going to show that there is a compact set $A_{\mathscr{O},r}\subset M$ such that 
$$
\varrho(x,\mathscr{O})>r\>\text{ for all $x\in M\setminus A_{\mathscr{O},r}$,}
$$ 
which implies that any open geodesic ball is relatively compact. To see this, we pick a compact $A_{\mathscr{O}}\subset M$ such that $\mathscr{O}\in A_{\mathscr{O}}$, and a number $n_{\mathscr{O},r}\in \IN$ large enough such that $\chi_{n_{\mathscr{O},r}} =1$ on $A_{\mathscr{O}}$ and 
\begin{align}\label{boudn}
\sup_{x \in M} \left|\Id\chi_{n_{\mathscr{O},r}}(x)\right|\leq 1/(r+1).
\end{align}
Now let $A_{\mathscr{O},r}:=\mathrm{supp}(\chi_{n_{\mathscr{O},r}})$, let $x\in M\setminus A_{\mathscr{O},r}$, and let 
$$
\gamma:[0,1]\longrightarrow  M
$$
be a smooth curve with $\gamma(0)=x$, $\gamma(1)=\mathscr{O}$. Then we have
$$
1=\chi_n(\mathscr{O})-\chi_n(x)=\chi_n(\gamma(1))-\chi_n(\gamma(0))=\int^1_0 \big(\Id\chi_n(\gamma(s)),\dot{\gamma(s)} \big) \Id s
$$
By using (\ref{boudn}) and taking $\inf_{\gamma}\cdots$, we arrive at
$$
\varrho(x,\mathscr{O})\geq r+1 \> \text{ for all $x\in M\setminus A_{\mathscr{O},r}$,}
$$
as claimed.\\
b) This follows immediately from a highly nontrivial result by R. Schoen and S.-T. Yau on the existence of well-behaved exhaustion functions: Namely, Theorem 4.2 from \cite{schoen} states that under the given geometric assumptions on $M$ there exists a smooth proper function $f:M\to \IR$ and a constant $A>0$ such that 
$$
\sup_M\max (|\Delta f|, |\Id f|)\leq A.
$$
Picking a sequence $(\varphi_n)\subset\ICC_{\c}(\IR)$ of second order cut-off functions on the Euclidean space $\IR$ (for example the same  sequence as in the proof of part a)), one finds again that $\chi_n(x):=\varphi_n(f(x))$ has the desired properties in view of the Laplacian chain rule (\ref{gaga3}). Without entering any details of the construction of $f$, we only point out that the assumptions on the geometry of $M$ enter the game through (a consequence of) the Laplacian comparison theorem, which states that for every fixed reference point $\mathscr{O}\in M$, one has
$$
\Delta \varrho_{\mathscr{O}}\leq (m-1)/\varrho_{\mathscr{O}} +(m-1)\sqrt{C}.
$$
The last inequality is valid at each point of the set $M\setminus\{\mathscr{O}\}$ in which the distance function $x\mapsto \varrho_{\mathscr{O}}(x):=\varrho(x,\mathscr{O})$ is smooth. The function $\varrho_{\mathscr{O}}$ is then used together with the elliptic minimum principle to construct the function $f$.\\
c) Again, this follows from a subtle result concerning the existence of well-behaved exhaustion functions (by J. Cheeger and M. Gromov): Lemma 5.3 from \cite{cheegergromov} implies that under the given geometric assumptions on $M$ there exists a smooth proper function $f:M\to \IR$ and a constant $A>0$ such that 
$$
\sup_M\max (|\nabla^{T^*M} d f|, |\Id f|)\leq A.
$$
Picking a sequence $(\varphi_n)\subset\ICC_{\c}(\IR)$ of second order cut-off functions on the Euclidean space $\IR$ (for example the same  sequence as in the proof of part a)), one finds again that $\chi_n(x):=\varphi_n(f(x))$ has the desired properties.
\end{proof}

Part b) of the last theorem is a considerable generalization of a previously established result by M. Braverman, O. Milatovic and M. Shubin \cite{Br}, where the authors require that $M$ has a $\ICC$-bounded geometry (meaning that the Levi-Civita derivatives of the curvature tensor $\mathrm{Riem}$ are bounded up to all orders, and $M$ has a strictly positive injectivity radius). We will see later on that these sequences of cut-off functions also play an important role in the context of density problems in Riemannian Sobolev spaces. Also, we refer the interested reader to the recent paper \cite{pigola} by S. Pigola and the author for the interplay between sequences of cut-off functions, geometric Calderon-Zygmund inequalities (which control the Hessian in terms of the Laplacian in an $\IL^q$-sense) and Sobolev spaces.

\begin{Remark} 1. In \cite{G54}, the author has proved the existence of Laplacian cut-off functions under the much more restrictive assumption of a nonnegative Ricci curvature, and using a different proof which relies on the following result from Riemannian rigidity theory that has been established by J. Cheeger / T. Colding and F. Wang / X. Zhu (cf. Lemma 1.4 in \cite{wang}, the proof of which is based on arguments from \cite{cheeger}), namely: There is a constant $C(m)>0$, which only depends on $m$, such that for any fixed reference point $\mathscr{O}\in M$ and any smooth geodesically complete Riemannian metric $\tilde{g}$ on $M$ with $\mathrm{Ric}_{\tilde{g}}\geq 0$, there is a function $\chi_{\tilde{g}}=\chi_{\tilde{g},\mathscr{O}}\in C^{\infty}(M)$ which satisfies (with an obvious notation)
\begin{align*}
&0\leq \chi_{\tilde{g}}\leq 1,\>\mathrm{supp}(\chi_{\tilde{g}})\subset \IB_{\tilde{g}}(\mathscr{O},2), \>\chi_{\tilde{g}}=1\text{ on }\IB_{\tilde{g}}(\mathscr{O},1), \\
&|\Id \chi_{\tilde{g}}|_{\tilde{g}}\leq C(m),\> |\Delta_{\tilde{g}}\chi_{\tilde{g}}|\leq C(m). 
\end{align*}
From this, the existence of Laplacian cut-off functions can be deduced using a careful scaling argument. \\
2. Although it is not evident at all, it is possible to allow lower Ricci bounds which are not necessarily constant in Theorem \ref{first} b). Results of this type have been established recently by D. Bianchi and A. Setti \cite{alberto}.
\end{Remark}

\section{Riemannian Sobolev spaces and Meyers-Serrin theorems}

%\begin{Remark} Given a smooth $\IK$-vector bundle, the measure $\mu$ induces an isomorphism of topological $\IK$-linear spaces
%$$
%\Gamma_{\ICC_\c}(M,E)\longrightarrow \Gamma_{\mathsf{D}}(M,E),\>\psi\mapsto \psi\otimes \mu
%$$
%which, in the sequel, can and will be safely ommited in the notation.
%\end{Remark}

Let us begin with some convenient notations:

\begin{Notation}
Let $E\to M$, $F\to M$ be smooth metric $\IK$-vector bundles. We will use the following natural conventions in the Riemannian case, which are in the spirit of the conventions from Remark \ref{adjo}:\\
1. We define
\begin{align*}
&\Gamma_{\IL^q}(M,E):=\Gamma_{\IL^q_{\mu}}(M,E),\\
&\>\left\|\bullet\right\|_{q}:=\left\|\bullet\right\|_{L^q_{\mu}},\>\left\langle\bullet,\bullet\right\rangle:=\left\langle\bullet,\bullet\right\rangle_{L^2_{\mu}},
\end{align*}
where $\left\|\bullet\right\|_{q_1,q_2}$ will stand for the operator norm on the $\IK$-Banach space $\ILL\big(\Gamma_{\IL^{q_1}}(M,E),\Gamma_{\IL^{q_2}}(M,E)\big)$.
In the particular case of $E=\wedge^r_{\IC} T^* M\to M$ with its canonically given metric, we will simply write
$$
\Omega^r_{\IL^q}(M):=\Gamma_{\IL^q}(M,\wedge^r_{\IC} T^* M)
$$
for the space of complex $\IL^q$-differential forms.\\
2. Given $P\in\IDD^{(k)}_{\ICC}(M;E,F)$, we will simply write $P^{\dagger}$ for the formal adjoint of $P$ with respect to $\mu$ and the underlying metric structures. In addition, we will simply write $P^{(q)}_{\min}$ instead of $P^{(q)}_{\mu,\min}$, with $P_{\min}:=P^{(2)}_{\min}$, and likewise for the maximal extensions of $P$.
\end{Notation}

If $\nabla$ is a smooth covariant derivative on the smooth $\IK$-vector bundle $E\to M$, for any $j\in\IN$ the operator $\nabla^j$ is defined as follows: Firstly, we have the operator
$$
\nabla^{(j)}\in \mathscr{D}_{C^{\infty}}^{(1)}
\big(M; \left( \IT^*M\right)^{\otimes j-1} \otimes E, 
\left( \IT^*M\right)^{\otimes j} \otimes E\big)
$$ 
which is defined recursively by $\nabla^{(1)}:=\nabla$, 
$\nabla^{(j+1)}:=\nabla^{(j)}\widetilde{\otimes} \nabla^{\IT^* M}$. Then one sets
$$
\nabla^{j}:=\nabla^{(j)}\cdots \nabla^{(1)}\in 
\mathscr{D}_{C^{\infty}}^{(j)}\big(M;E , \left(\IT^*M\right)^{\otimes j} \otimes E\big).
$$

\begin{Definition} Let $\nabla$ be a smooth covariant derivative on the smooth metric $\IK$-vector bundle $E\to M$. For any $s\in \IN$ and $q\in [1,\infty]$, we define the $\IK$-Banach space
$\Gamma_{ W^{s,q}_{\nabla}}(M,E)$ to be 
\begin{align*}
&\Gamma_{ W^{s,q}_{\nabla}}(M,E)\\
&:=\big\{f\in\Gamma_{\IL^q}(M,E):\nabla^jf \in \Gamma_{\IL^q}(M, (\IT^*M)^{\otimes_j}\otimes E)\>\text{ for all $j=1,\dots,s$}\big\},
\end{align*}
where the underlying norm is canonically given by
\begin{align*}
\left\|f\right\|_{ W^{s,q}_{\nabla}}^q:=\sum^s_{j=0} \left\|\nabla^j f\right\|^q_{q}.
\end{align*}
The space $\Gamma_{ W^{s,q}_{\nabla}}(M,E)$ is called the \emph{Riemannian $\IL^q$-Sobolev space of differential order $s$ with respect to  $(\nabla,E)\to M$.} Furthermore, we define the $\IK$-Banach space $\Gamma_{ W^{s,q}_{\nabla,0}}(M,E)$ to be the closure of $\Gamma_{\ICC_{\c}}(M,E)$ in $\Gamma_{ W^{s,q}_{\nabla}}(M,E)$.
\end{Definition}

Note that $\Gamma_{ W^{s,2}_{\nabla}}(M,E)$ (and so also $\Gamma_{ W^{s,2}_{\nabla,0}}(M,E)$) becomes a Hilbert space in an obvious way. In order to make contact with our previous notation, we add:

\begin{Remark} Upon taking $\mathfrak{P}:= \{\nabla^1,\dots,\nabla^s\}$, one has in fact
\begin{align*}
\Gamma_{ W^{s,q}_{\nabla}}(M,E)= \Gamma_{ W^{\mathfrak{P},q}_{\mu}}(M,E),\text{ and } \left\|\bullet\right\|_{ W^{s,q}_{\nabla}}=\left\| \bullet\right\|_{\mathfrak{P},L^q_{\mu}},
\end{align*}
and furthermore 
$$
\Gamma_{ W^{s,q}_{\nabla,0}}(M,E)=\Gamma_{ W^{\mathfrak{P},q}_{\mu,0}}(M,E).
$$
\end{Remark}

\begin{Definition} Let $\nabla$ be a smooth covariant derivative on the smooth metric $\IK$-vector bundle $E\to M$. For any $s\in \IN$ and $q\in [1,\infty]$, we define the $\IK$-Banach space $\Gamma_{\widetilde{ W}^{2s,q}_{\nabla}}(M,E)$ to be 
\begin{align*}
&\Gamma_{\widetilde{ W}^{2s,q}_{\nabla}}(M,E)\\
&:=\big\{f\in\Gamma_{\IL^q}(M,E):(\nabla^{\dagger}\nabla)^j f\in  \Gamma_{\IL^q}(M,E)\text{ for all $j=1,\dots,s$} \big\},
\end{align*}
with its canonically given norm
$$
\left\|f\right\|^q_{\widetilde{ W}^{2s,q}_{\nabla}}:=\sum^s_{j=0} \left\|(\nabla^{\dagger}\nabla)^j f\right\|^q_{q}.
$$
The space $\Gamma_{\widetilde{ W}^{2s,q}_{\nabla}}(M,E)$ is called the \emph{elliptic Riemannian $\IL^q$-Sobolev space of differential order $s$ with respect to  $(\nabla,E)\to M$.} Furthermore, the $\IK$-Banach space $\Gamma_{\widetilde{ W}^{2s,q}_{\nabla,0}}(M,E)$ is defined to be the closure of $\Gamma_{\ICC_{\c}}(M,E)$ in $\Gamma_{\widetilde{ W}^{2s,q}_{\nabla}}(M,E)$.
\end{Definition}

Again, $\Gamma_{\widetilde{ W}^{2s,2}_{\nabla}}(M,E)$ (and so also $\Gamma_{\widetilde{ W}^{2s,2}_{\nabla,0}}(M,E)$) becomes a Hilbert space in an obvious way. Note that $\Gamma_{\widetilde{ W}^{k,q}_{\nabla}}(M,E)$ is only defined for even natural numbers $k$.

\begin{Remark} Upon taking 
$$
\widetilde{\mathfrak{P}}:=\{(\nabla^{\dagger}\nabla)^1,\dots,(\nabla^{\dagger}\nabla)^s\},
$$
we have
\begin{align*}
&\Gamma_{\widetilde{ W}^{2s,q}_{\nabla}}(M,E)=\Gamma_{ W^{\widetilde{\mathfrak{P}},q}_{\mu}}(M,E),\text{ and } \left\|\bullet\right\|_{\widetilde{ W}^{2s,q}_{\nabla}}=\left\| \bullet\right\|_{\widetilde{\mathfrak{P}},L^q_{\mu}},
\end{align*}
and
$$
\Gamma_{\widetilde{ W}^{2s,q}_{\nabla,0}}(M,E)=\Gamma_{ W^{\widetilde{\mathfrak{P}},q}_{\mu,0}}(M,E).
$$
\end{Remark}

In the scalar case, we will use the following standard notation:

\begin{Notation}\label{soos} In the simplest case of scalar functions with $\nabla=\Id$ the exterior derivative, we will write 
$$
 W^{s,q}(M),\> W^{s,q}_0(M),\>\widetilde{ W}^{2s,q}(M),\>\widetilde{ W}^{2s,q}_0(M)
$$
for the corresponding complex (!) Sobolev spaces of functions. Note that, by definition, for any smooth $f:M\to\IC$ one has
$$
\nabla f=\Id f, \>\nabla^2 f=\nabla^{T^* M}\Id f.
$$
Furthermore, (\ref{zuh2}) implies
$$
|\Delta f|\leq \sqrt{m} \left|\nabla^2f\right|,
$$
since for every $x\in M$ the quantity $\left|\nabla^2f(x)\right|$ is nothing but the Hilbert-Schmidt norm of the symmetric sesquilinear form $\nabla^2f(x)$.
\end{Notation}

Next, we record a Riemannian variant of Meyers-Serrin\rq{}s theorem:

\begin{Proposition}\label{meyers2} Let $\nabla$ be a smooth (not necessarily metric) covariant derivative on the smooth metric $\IK$-vector bundle $E\to M$, and let $s\in \IN$, $q\in [1,\infty)$. Then for any $f\in\Gamma_{ W^{s,q}_{\nabla}}(M,E)$ there is a sequence
$$
(f_n)\subset \Gamma_{C^{\infty}}(M,E)\cap \Gamma_{ W^{s,q}_{\nabla}}(M,E),
$$
which can be chosen in $\Gamma_{C^{\infty}_{\c}}(M,E)$ if $f$ is compactly supported, such that
\begin{align*}
&\left|f_n(x)\right|\leq\left\|f\right\|_{\infty}\in [0,\infty]\>\>\text{ for all $x\in M$, $n\in\IN_{\geq 0}$},\\
&\left\|f_n-f\right\|_{ W^{s,q}_{\nabla}}\to 0 \text{ as $n\to\infty$.}
\end{align*}
The same statements hold, if we make the replacement
$$
(\Gamma_{ W^{s,q}_{\nabla}}(M,E),\left\|\bullet\right\|_{ W^{s,q}_{\nabla}})\>\leadsto\> (\Gamma_{\widetilde{ W}^{2s,q}_{\nabla}}(M,E),\left\|\bullet\right\|_{\widetilde{ W}^{2s,q}_{\nabla}}).
$$
\end{Proposition}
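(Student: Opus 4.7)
The plan is to deduce both assertions directly from the general Meyers–Serrin result, Theorem \ref{meyers}, by choosing the appropriate families of differential operators and verifying in each case the local regularity hypothesis of that theorem.

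For the first assertion, I would take $\mathfrak{P}:=\{\nabla^{1},\dots,\nabla^{s}\}$ with $\nabla^{j}\in \IDD^{(j)}_{\ICC}(M;E,(\IT^*M)^{\otimes j}\otimes E)$, so that $\Gamma_{W^{s,q}_{\nabla}}(M,E)=\Gamma_{W^{\mathfrak{P},q}_{\mu}}(M,E)$ with matching norms. The maximal order is $k=s$, so the only nontrivial hypothesis of Theorem \ref{meyers} is that, when $s\geq 2$, $\Gamma_{W^{s,q}_{\nabla}}(M,E)\subset \Gamma_{W^{s-1,q}_{\loc}}(M,E)$. I would verify this by induction on $j\leq s$: in any chart $U$ over which $E\to M$ admits a smooth frame, the operator $\nabla^{j}$ can be written, via the local representation $\nabla|_{U}=\Id|_{U}+\alpha$, as $\nabla^{j}=\partial^{(j)}+R_{j-1}$ where $\partial^{(j)}$ denotes componentwise partial derivatives of order $j$ and $R_{j-1}$ is a smooth differential operator of order $\leq j-1$. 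Thus $f\in\Gamma_{L^{q}_{\loc}}(M,E)$ together with $\nabla^{j}f\in\Gamma_{L^{q}_{\loc}}$ and the inductive hypothesis $f\in\Gamma_{W^{j-1,q}_{\loc}}(M,E)$ force $\partial^{(j)}f\in L^{q}_{\loc}$, so $f\in\Gamma_{W^{j,q}_{\loc}}(M,E)$. Iterating up to $j=s$ yields the required inclusion (in fact one even lands in $\Gamma_{W^{s,q}_{\loc}}$). Theorem \ref{meyers} then furnishes the approximating sequence with the stated pointwise and norm bounds, and the compact-support clause is part of the conclusion of Theorem \ref{meyers}.

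For the second assertion, I would take $\widetilde{\mathfrak{P}}:=\{(\nabla^{\dagger}\nabla)^{1},\dots,(\nabla^{\dagger}\nabla)^{s}\}$, so that $\Gamma_{\widetilde{W}^{2s,q}_{\nabla}}(M,E)=\Gamma_{W^{\widetilde{\mathfrak{P}},q}_{\mu}}(M,E)$ isometrically. The maximal order is $k=2s$, so the hypothesis to verify when $s\geq 1$ is $\Gamma_{\widetilde{W}^{2s,q}_{\nabla}}(M,E)\subset \Gamma_{W^{2s-1,q}_{\loc}}(M,E)$. Here is where ellipticity enters: by the symbol computation \eqref{sym}, $\nabla^{\dagger}\nabla$ is elliptic, and since the symbol is multiplicative under composition, $(\nabla^{\dagger}\nabla)^{s}\in\IDD^{(2s)}_{\ICC}(M;E)$ is elliptic of order $2s$. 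The quoted local elliptic regularity statement \eqref{localb} applied to this operator, given $f\in\Gamma_{L^{q}_{\loc}}(M,E)$ with $(\nabla^{\dagger}\nabla)^{s}f\in\Gamma_{L^{q}_{\loc}}(M,E)$, yields $f\in\Gamma_{W^{2s,q}_{\loc}}$ for $q>1$ and $f\in\Gamma_{W^{2s-1,1}_{\loc}}$ for $q=1$; in either case $f\in\Gamma_{W^{2s-1,q}_{\loc}}(M,E)$. Applying Theorem \ref{meyers} then delivers the desired approximation.

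The genuinely delicate point is the $q=1$ case of the local regularity inclusion in the second assertion, which is not covered by the classical Calder\'on–Zygmund–type $L^{q}$-theory. However, this is exactly what \eqref{localb} provides in the form quoted from \cite{guidetti}, and once this ingredient is granted both parts reduce to mechanical applications of Theorem \ref{meyers}; no further smoothing argument specific to the Riemannian setting is needed, because the Friedrichs-mollifier machinery is already packaged inside that theorem.
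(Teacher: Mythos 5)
Your proof is correct and follows essentially the same route as the paper: both cases are reduced to Theorem \ref{meyers}, with the local regularity hypothesis verified for $\Gamma_{W^{s,q}_{\nabla}}$ by an induction based on the local representation $\nabla|_U=\Id|_U+\alpha$, and for $\Gamma_{\widetilde{W}^{2s,q}_{\nabla}}$ by the local elliptic regularity statement \eqref{localb} applied to the elliptic operator built from $\nabla^{\dagger}\nabla$.
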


\begin{proof} Concerning the case $(\Gamma_{ W^{s,q}_{\nabla}}(M,E),\left\|\bullet\right\|_{\nabla,s,q})$, once one has established the inclusion
$$
\Gamma_{ W^{s,q}_{\nabla}}(M,E)\subset \Gamma_{ W^{s,q}_{\loc}}(M,E),
$$
the statement follows directly from Theorem \ref{meyers}. To see the above inclusion, one can apply the following fact inductively: Given a smooth metric $\IK$-vector bundle $F\to M$ of rank $l$ with a smooth covariant derivative $\nabla_F$, one has the implication
\begin{align*}
&\left[f\in \Gamma_{ L^{q}_{\loc}}(M,E), \nabla_F f\in  \Gamma_{ L^{q}_{\loc}}(M,E\otimes \IT^* M)\right]\\
\Rightarrow &\>f\in \Gamma_{ W^{1,q}_{\loc}}(M,E),
\end{align*}
which can be easily seen by writing $\nabla_F|_U=\Id|_U +\alpha$ for some 
$$
\alpha\in \mathrm{Mat}\big(\Omega^1_{\ICC_{\IK}}(U); l \times l\big),
$$
in each chart $U\subset M$ in which $F\to M$ admits a smooth frame. \\
For the case of $\Gamma_{\widetilde{ W}^{2s,q}_{\nabla}}(M,E)$, since $\nabla^{\dagger}\nabla$ is elliptic, it follows from local elliptic regularity (\ref{localb}) that
$$
\Gamma_{\widetilde{ W}^{2s,q}_{\nabla}}(M,E)\subset \Gamma_{ W^{2s-1,q}_{\loc}}(M,E),
$$
and again the claim follows from Theorem \ref{meyers}.
\end{proof}

We immediately get the following important result:

\begin{Corollary}\label{compp} In the context of Proposition \ref{meyers2}, one has
$$
\Gamma_{ W^{s,q}_{\nabla,\c}}(M,E)\subset\Gamma_{ W^{s,q}_{\nabla,0}}(M,E),\>
 \Gamma_{\widetilde{ W}^{2s,q}_{\nabla,\c}}(M,E)\subset \Gamma_{\widetilde{ W}^{2s,q}_{\nabla,0}}(M,E).
$$
\end{Corollary}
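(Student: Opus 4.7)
The plan is to invoke Proposition \ref{meyers2} directly; the corollary is essentially a repackaging of the ``can be chosen in $\Gamma_{C^{\infty}_{\c}}(M,E)$ if $f$ is compactly supported'' clause in that proposition.

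More precisely, I would proceed as follows. Let $f\in \Gamma_{ W^{s,q}_{\nabla,\c}}(M,E)$, i.e., $f$ is an element of $\Gamma_{ W^{s,q}_{\nabla}}(M,E)$ with compact essential support. Applying the second assertion of Proposition \ref{meyers2}, I obtain a sequence $(f_n)\subset \Gamma_{C^{\infty}_{\c}}(M,E)$ with $\left\|f_n-f\right\|_{ W^{s,q}_{\nabla}}\to 0$. By the very definition of $\Gamma_{ W^{s,q}_{\nabla,0}}(M,E)$ as the $\left\|\bullet\right\|_{ W^{s,q}_{\nabla}}$-closure of $\Gamma_{C^{\infty}_{\c}}(M,E)$ inside $\Gamma_{ W^{s,q}_{\nabla}}(M,E)$, this shows $f\in \Gamma_{ W^{s,q}_{\nabla,0}}(M,E)$. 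The inclusion
$$\Gamma_{\widetilde{ W}^{2s,q}_{\nabla,\c}}(M,E)\subset \Gamma_{\widetilde{ W}^{2s,q}_{\nabla,0}}(M,E)$$
follows by the exact same argument, using the second half of Proposition \ref{meyers2} (the one concerning the elliptic Sobolev spaces) together with the definition of $\Gamma_{\widetilde{ W}^{2s,q}_{\nabla,0}}(M,E)$ as the $\left\|\bullet\right\|_{\widetilde{ W}^{2s,q}_{\nabla}}$-closure of $\Gamma_{C^{\infty}_{\c}}(M,E)$.

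There is no genuine obstacle, since the work has already been done in establishing Proposition \ref{meyers2} (in particular, the local elliptic regularity inclusion $\Gamma_{\widetilde{ W}^{2s,q}_{\nabla}}(M,E)\subset \Gamma_{ W^{2s-1,q}_{\loc}}(M,E)$ that allowed Theorem \ref{meyers} to be applied in the elliptic case). The only point to underline is that one must invoke the \emph{refined} form of the approximation statement, namely that compactly supported sections admit compactly supported smooth approximators; without this refinement one would merely obtain $f$ in the closure of smooth Sobolev sections, which is a strictly weaker conclusion.
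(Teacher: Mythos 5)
Your proposal is correct and is exactly the argument the paper intends: the corollary is stated as an immediate consequence of Proposition \ref{meyers2}, and the only content is precisely the refined clause that a compactly supported $f$ admits approximators in $\Gamma_{C^{\infty}_{\c}}(M,E)$, which by definition of the $0$-subscript spaces gives the inclusions. Nothing further is needed.
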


We continue with some special features of first order Sobolev spaces. By definition, every element of $ W^{s,q}_0(M)$ can be approximated in the $\left\|\bullet\right\|_{ W^{s,q}}$-norm by smooth compactly supported functions. A well-known important refinement of this fact is that for $s=1$ this approximation property is positivity preserving in the following sense:

\begin{Lemma}\label{bett1} Let $l\in [1,\infty)$. For every $0\leq f\in  W^{1,l}_0(M)$ there exists a sequence $0\leq f_n\in \ICC_{\c}(M)$, $n\in\IN$, with $\left\|f_n-f\right\|_{ W^{1,l}}\to 0$ as $n\to\infty$.
\end{Lemma}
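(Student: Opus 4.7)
The plan is to reduce the problem to smoothing the positive parts of ordinary smooth compactly supported approximants, using the fact that Friedrichs mollification preserves nonnegativity. By the definition of $W^{1,l}_0(M)$, I pick a sequence $(g_n)\subset C^\infty_c(M)$ with $\|g_n-f\|_{W^{1,l}}\to 0$; passing to a subsequence I may arrange $g_n\to f$ $\mu$-a.e. The first key step is to replace $g_n$ by its positive part $\phi_n:=g_n^+=\max(g_n,0)$, which automatically lies in $W^{1,l}(M)$, is compactly supported, and is nonnegative.

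Next, I verify that $\phi_n\to f$ in $W^{1,l}(M)$. The $L^l$-estimate is immediate from the $1$-Lipschitz inequality $|a^+-b^+|\le|a-b|$ and the identity $f=f^+$. For the derivative I use the Stampacchia-type chain rule (which, after localization in charts, needs no geometric assumption on $M$): $\Id\phi_n=\mathbf{1}_{\{g_n>0\}}\Id g_n$ $\mu$-a.e., and likewise $\Id f=\mathbf{1}_{\{f>0\}}\Id f$ $\mu$-a.e., since $\Id f$ vanishes $\mu$-a.e. on the level set $\{f=0\}$. Decomposing
\begin{equation*}
\Id\phi_n-\Id f=\mathbf{1}_{\{g_n>0\}}(\Id g_n-\Id f)+\bigl(\mathbf{1}_{\{g_n>0\}}-\mathbf{1}_{\{f>0\}}\bigr)\,\Id f,
\end{equation*}
the first term is bounded in $L^l$-norm by $\|\Id g_n-\Id f\|_l\to 0$; the second term vanishes identically on $\{f=0\}$ (where $\Id f=0$), while on $\{f>0\}$ the convergence $g_n\to f>0$ $\mu$-a.e. forces $\mathbf{1}_{\{g_n>0\}}\to 1$, so its integrand tends to $0$ $\mu$-a.e. and is dominated by $2|\Id f|\in L^l$. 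Dominated convergence then yields the required $L^l$-convergence.

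At this stage each $\phi_n$ is a nonnegative, compactly supported element of $W^{1,l}(M)$, and it remains to smooth it without destroying nonnegativity. For this I revisit the proof of Proposition \ref{meyers2}, which in turn uses Theorem \ref{meyers}: the construction there expresses $\phi_n$ through a nonnegative smooth partition of unity $(\varphi_k)$ and then approximates each $\varphi_k\phi_n$ (now compactly supported in a chart) by its Friedrichs mollification $(\varphi_k\phi_n)*h_\epsilon$ with a nonnegative mollifier $h_\epsilon\ge 0$. Since $\phi_n\ge 0$ gives $\varphi_k\phi_n\ge 0$, and convolution with $h_\epsilon\ge 0$ preserves pointwise nonnegativity, the resulting approximants are themselves in $C^\infty_c(M)$ and nonnegative. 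This furnishes, for each $n$, a sequence $(h_{n,k})_{k}\subset C^\infty_c(M)$ with $h_{n,k}\ge 0$ and $\|h_{n,k}-\phi_n\|_{W^{1,l}}\to 0$; a diagonal extraction $f_n:=h_{n,k(n)}$ produces the desired nonnegative smooth compactly supported approximation of $f$.

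The only nontrivial ingredient is the gradient convergence $\Id\phi_n\to\Id f$, which hinges on the Stampacchia identity $\Id f=0$ $\mu$-a.e.\ on $\{f=0\}$ combined with dominated convergence; the remaining bookkeeping (chain rule, positivity-preserving mollification, and diagonal extraction) is routine and has been essentially prepared by the Meyers--Serrin machinery already established.
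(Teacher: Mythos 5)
Your argument is correct and complete: the reduction to positive parts of smooth approximants via the Stampacchia chain rule and dominated convergence, followed by positivity-preserving Friedrichs mollification in charts, is exactly the standard proof. The paper itself gives no argument here — it only cites Lemma 5.4 of Grigor'yan's book for $l=2$ and remarks that the same proof works for all $l\in[1,\infty)$ — so you have in effect supplied the details behind that citation (Grigor'yan uses $|\psi_k|$ where you use $\psi_k^+$, an immaterial variation since $f\geq 0$).
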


\begin{proof} For $l=2$ this is precisely the statement of Lemma 5.4 in \cite{gri}. The same proof applies to all $l\in [1,\infty)$. 
\end{proof}
 
The following notation will be useful in the sequel:
\begin{Notation} For any section $f$ of $E\to M$, the section
$\mathrm{sign}(f)\in\Gamma_{ L^{\infty}}(M,E)$ is defined by 
$$
\mathrm{sign}(f)(x):=\begin{cases}&\f{f(x)}{|f(x)|}, \>\text{ if } \>f(x)\ne 0\\
&0, \>\text{ else.}\end{cases}
$$
\end{Notation}

An important regularity result, which also makes sense and holds in the vector bundle case, is that each $ W^{1,l}_*$ class is stable under taking fiberwise norms:

\begin{Lemma}\label{bett2} Let $\nabla$ be a smooth metric covariant derivative on the smooth metric $\IK$-vector bundle $E\to M$, and let $l\in (1,\infty)$. Then the following statements hold:\\
a) For every $f\in \Gamma_{ W^{1,l}_{\nabla}}(M,E)$ one has $|f|\in W^{1,l}(M)$ with\footnote{Note that by definition one has $\left\||f|\right\|_{l}=\left\|f\right\|_{l}$.}
\begin{align}\label{absch}
\left\||f|\right\|_{ W^{1,l}}\leq \left\|f\right\|_{ W^{1,l}_{\nabla}} .
\end{align}
b) For every $f\in \Gamma_{ W^{1,l}_{\nabla,0}}(M,E)$ one has $|f|\in W^{1,l}_0(M)$ with (\ref{absch}).
\end{Lemma}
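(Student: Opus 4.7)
The plan is to combine a pointwise Kato-type inequality for smooth sections with the Meyers--Serrin approximation (Proposition \ref{meyers2}) and reflexivity of $L^l$. For $g\in\Gamma_{C^{\infty}}(M,E)$ and $\epsilon>0$, set $|g|_\epsilon:=\sqrt{(g,g)+\epsilon^2}\in C^{\infty}(M)$. Since $\nabla$ is metric, one has $d(g,g)=2\Re(\nabla g,g)$ (read as a scalar $1$-form, using the convention that the fiber metric is antilinear in the first slot), hence
\begin{align*}
d|g|_\epsilon=|g|_\epsilon^{-1}\,\Re(\nabla g,g),
\end{align*}
and the fiberwise Cauchy--Schwarz inequality yields the pointwise bound
\begin{align*}
|d|g|_\epsilon|\leq \frac{|g|}{|g|_\epsilon}\,|\nabla g|\leq |\nabla g|.
\end{align*}
A crucial observation is that $d|g|_\epsilon$ vanishes identically (not merely almost everywhere) on $\{g=0\}$: indeed, if $g(x)=0$ then $\Re(\nabla g,g)(x)=0$ as well. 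This bypasses any Stampacchia-type subtlety at the zero set.

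For part a), invoke Proposition \ref{meyers2} to pick $(f_n)\subset \Gamma_{C^{\infty}}(M,E)\cap \Gamma_{W^{1,l}_{\nabla}}(M,E)$ with $\|f_n-f\|_{W^{1,l}_{\nabla}}\to 0$. For each fixed $n$ and $\epsilon\to 0$, both $|f_n|_\epsilon-\epsilon\to|f_n|$ and $d|f_n|_\epsilon\to h_n$ hold pointwise on $M$, where
\begin{align*}
h_n:=|f_n|^{-1}\Re(\nabla f_n,f_n)\text{ on }\{f_n\neq 0\},\quad h_n:=0\text{ on }\{f_n=0\}.
\end{align*}
The bounds $|f_n|_\epsilon-\epsilon\leq |f_n|\in L^l$ and $|d|f_n|_\epsilon|\leq |\nabla f_n|\in L^l$ permit dominated convergence, so $|f_n|_\epsilon-\epsilon\to|f_n|$ strongly in $W^{1,l}(M)$; testing against $1$-forms in $\Omega^1_{C^{\infty}_c}(M)$ then identifies $d|f_n|=h_n$, whence $|f_n|\in W^{1,l}(M)$ with $\||f_n|\|_{W^{1,l}}\leq \|f_n\|_{W^{1,l}_{\nabla}}$. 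Now $\big||f_n|-|f|\big|\leq |f_n-f|$ gives $|f_n|\to|f|$ in $L^l$; since $l\in(1,\infty)$, the bounded sequence $(d|f_n|)\subset L^l$ admits a weakly convergent subsequence whose limit is $d|f|$ (again by distributional testing), and lower semicontinuity of the norm under weak convergence yields (\ref{absch}).

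For part b), pick $(f_n)\subset\Gamma_{C^{\infty}_c}(M,E)$ with $\|f_n-f\|_{W^{1,l}_{\nabla}}\to 0$. Then $|f_n|_\epsilon-\epsilon\in C^{\infty}_c(M)\subset W^{1,l}_0(M)$, and the argument from part a) gives $|f_n|_\epsilon-\epsilon\to |f_n|$ strongly in $W^{1,l}(M)$ as $\epsilon\to 0$; closedness of $W^{1,l}_0(M)$ yields $|f_n|\in W^{1,l}_0(M)$. By part a) the sequence $(|f_n|)$ is bounded in $W^{1,l}(M)$, and $|f_n|\to|f|$ in $L^l$, so after extracting a subsequence one has $|f_n|\rightharpoonup|f|$ weakly in the reflexive space $W^{1,l}(M)$. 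Mazur's lemma supplies convex combinations of the $|f_n|$ converging strongly in $W^{1,l}$ to $|f|$; these lie in the closed subspace $W^{1,l}_0(M)$, so $|f|\in W^{1,l}_0(M)$, and (\ref{absch}) follows from lower semicontinuity combined with part a). The principal technical obstacle throughout is the identification of the $\epsilon\to 0$ limit as the correct weak derivative of $|g|$, which is resolved cleanly by the pointwise vanishing of $d|g|_\epsilon$ on $\{g=0\}$ noted above.
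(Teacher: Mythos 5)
Your proof is correct and follows essentially the same route as the paper's: the same regularization $|g|_\epsilon=\sqrt{|g|^2+\epsilon^2}$ combined with metricity of $\nabla$ to obtain the pointwise Kato-type bound $|d|g|_\epsilon|\le|\nabla g|$ for smooth sections, then the Meyers--Serrin approximation (Proposition \ref{meyers2}) and weak compactness of bounded sets in $\Omega^1_{L^l}(M)$ to pass to general $f$. The only place you diverge is the conclusion of part b): the paper identifies $W^{1,l}_0(M)=\dom\bigl(d^{(l)}_{\min}\bigr)$ and invokes the duality $\bigl((d^{\dagger})^{(l^*)}_{\max}\bigr)^*=d^{(l)}_{\min}$ of Lemma \ref{minmax}, whereas you use that the closed subspace $W^{1,l}_0(M)$ is weakly closed (Mazur's lemma) together with reflexivity. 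Both are valid; your ending is marginally more elementary and avoids the minimal/maximal extension formalism, while the paper's makes the underlying integration-by-parts content explicit. Your remark that $d|g|_\epsilon$ vanishes identically on $\{g=0\}$ is a clean justification of the pointwise limit $h_n$; the paper handles the same point by writing the limit as $\Re(\nabla_{\bullet}\psi,\mathrm{sign}(\psi))$.
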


\begin{proof} The following is an $\IL^l$-variant of the corresponding $\IL^2$-proof from \cite{bei}: Without loss of generality we will consider the complex case $\IK=\IC$. Before we come to the proof of the actual statements, we first prove an auxiliary result for smooth sections that will also be useful in a different context later on:\\
\emph{Claim 1: For all $\psi\in \Gamma_{ W^{1,l}_{\nabla}\cap\ICC}(M,E)$ one has\footnote{This claim also holds for $l=1,\infty$.} $|\psi|\in W^{1,l}(M)$ with}
\begin{align}\label{absch3}
\left\||\psi|\right\|_{ W^{1,l}}\leq \left\|\psi\right\|_{ W^{1,l}_{\nabla}} .
\end{align}
Proof of claim 1: For every $s>0$ we define a smooth function on $M$ by $|\psi|_{s}:=\sqrt{s^2+|\psi|^2}>0$. We pick an arbitrary open subset $U\subset M$ that admits a local orthonormal frame $e_1,\dots,e_m\in  \mathscr{X}_{C^{\infty}} (U)$. Then, on $U$, using the chain rule and the fact that $\nabla$ is metric, we find
\begin{align}\label{prev}
\Id |\psi|_{s}= (2|\psi|_s)^{-1}\Id |\psi|^2=|\psi|_s^{-1}\Re(\nabla_{\bullet}\psi,\psi),
\end{align}
where $\nabla_{\bullet}\psi$ denotes the smooth $E$-valued $1$-form on $M$ which to a smooth vector field $A$ on $M$ assigns the section $\nabla_{A}\psi$. Now let $\alpha\in\Omega^1_{\ICC_{\c}}(M)$ be an arbitrary smooth compactly supported $1$-form. We can calculate as follows,
\begin{align*}
&\int |\psi| \Id^{\dagger}\alpha  \ \Id\mu=\lim_{s\to 0+}\int (\Id |\psi|_s,\alpha ) \Id\mu=\lim_{s\to 0+}\int \big(\Re(\nabla\psi,|\psi|_s^{-1}\psi),\alpha \big) \Id\mu\\
&=\int \big(\Re(\nabla_{\bullet}\psi,\mathrm{sign}(\psi)),\alpha \big) \Id\mu,
\end{align*}
where we have used dominated convergence and integration by parts for the first equality, (\ref{prev}) for the second, and dominated convergence once more for the last equality. Thus we have
$$
\Id |\psi|=\Re(\nabla_{\bullet}\psi,\mathrm{sign}(\psi)), 
$$
and therefore a use of
$$
|\Re(\nabla_{\bullet}\psi,\mathrm{sign}(\psi))|\leq| \nabla\psi|\in \IL^l(M)
$$
implies 
$$
|\Id |\psi||=|\Re(\nabla_{\bullet}\psi,\mathrm{sign}(\psi))|\leq | \nabla\psi|\in \IL^l(M),
$$
which proves claim 1.\\
Let us now come to the actual statements:\\
a) By the Riemannian Meyers-Serrin theorem (cf. Proposition \ref{meyers2}), we can pick a sequence of smooth sections $(f_n)\subset\Gamma_{ W^{1,l}_{\nabla}\cap\ICC}(M,E)$ such that 
$$
\lim_n\left\|f-f_n\right\|_{ W^{1,l}_{\nabla}}=\lim_n\big(\left\|f-f_n\right\|_{l}+\left\|\nabla f-\nabla f_n\right\|_{l}\big)= 0.
$$
In particular, $\left\|\nabla f-\nabla f_n\right\|_{l}\to 0$, and so 
$$
C:=\sup_n\left\|\nabla f_n\right\|_{l}<\infty.
$$
Then claim 1 implies 
$$
\sup_n\left\|\Id |f_n|\right\|_{l}\leq C.
$$
Let $l^*\in  (1,\infty)$ be defined by $1/l^*+1/l=1$. It follows from the boundedness of the sequence $\left\|\Id |f_n|\right\|_{l}$ and Banach-Alaoglu\rq{}s theorem that there exists a subsequence $F_n$ of $f_n$ and a form $\beta\in \Omega^1_{\IL^{l}}(M)$, such that  
\begin{align}\label{aspo}
\int (\beta,\theta)\Id\mu=\lim_n \int (\Id |F_n|,\theta)\Id\mu\quad\text{ for all $\theta\in \Omega^1_{\IL^{l^*}}(M)$.}
\end{align}
It follows that for all $\theta\in \Omega^1_{\ICC_{\c}}(M)$ one has 
$$
\int |f| \Id^{\dagger} \theta \Id\mu=\lim_n \int (\Id |F_n| , \theta )\Id\mu=\int (\beta,\theta)\Id\mu,
$$
where we have used $\lim_n\left\|f-F_n\right\|_{l}=0$ and thus 
$$
\int |f| \Id^{\dagger} \theta \Id\mu=\lim_n \int  |F_n| \Id^{\dagger} \theta \Id\mu
$$
by Hölder\rq{}s inequality, and where we have integrated by parts (Lemma \ref{int}), and finally (\ref{aspo}). This entails $\Id|f|=\beta\in \Omega^1_{\IL^{l}}(M)$, and so $|f|\in W^{1,l}(M)$. In order to prove the estimate (\ref{absch}), we can do the following estimate: 
\begin{align*}
&\left\|\Id |f|\right\|_l=\left\|\beta\right\|_l=\sup_{\theta\in \Omega^1_{\IL^{l^*}}(M), \left\|\theta\right\|_{l^*}\leq 1}\left|\int (\beta,\theta)\Id\mu\right|\\
&= \sup_{\theta\in \Omega^1_{\IL^{l^*}}(M), \left\|\theta\right\|_{l^*}\leq 1}\lim_n\left|\int (\Id |F_n|,\theta)\Id\mu\right|\leq  \lim_n\left\|\Id |F_n|\right\|_l\leq \lim_n\left\|\nabla F_n\right\|_l\\
&=\left\|\nabla f\right\|_l,
\end{align*}
where we have used Hölder's inequality and the above claim 1. This completes the proof of a).\\
b) Once we have established $|f|\in W^{1,l}_0(M)$, the asserted estimate (\ref{absch}) follows immediately from a). To see the former, we first remark that by definition we can pick a sequence $(f_n)\subset\Gamma_{\ICC_{\c}}(M,E)$ such that 
$$
\lim_n\left\|f-f_n\right\|_{ W^{1,l}_{\nabla}}=\lim_n\big(\left\|f-f_n\right\|_{l}+\left\|\nabla f-\nabla f_n\right\|_{l}\big)= 0.
$$
Then precisely as in the proof of part a), we can deduce the existence of a subsequence $F_n$ of $f_n$ and a form $\beta\in \Omega^1_{\IL^{l}}(M)$, such that  
\begin{align}\label{aspo2}
\int (\beta,\theta)\Id\mu=\lim_n \int (\Id |F_n|,\theta)\Id\mu\quad\text{ for all $\theta\in \Omega^1_{\IL^{l^*}}(M)$.}
\end{align} 
By Corollary \ref{compp} and Claim 1, we have 
$$
|F_n|\in W^{1,l}_{\c}(M)\subset  W^{1,l}_{0}(M)=\dom(\Id^{(l)}_{\min}).
$$
Given an arbitrary 
$$
\theta\in \dom\left(   (\Id^{\dagger})^{(l^*)}_{\max}   \right)\subset\Omega^1_{\IL^{l^*}}(M),
$$
we may now integrate by parts to deduce
\begin{align*}
&\int (|f|,\Id^{\dagger}\theta)\Id\mu=\lim_{n\to\infty}\int (F_n,\Id^{\dagger}\theta)\Id\mu=\lim_{n\to\infty}\int (\Id |F_n|,\theta)\Id\mu=\int (\beta,\theta)\Id\mu,\\
&\text{ so  $\Id |f|=\beta$ and}\\
&\left|\int (\beta,\theta)\Id\mu\right|\leq \left\|\beta\right\|_l\left\|\theta\right\|_{l^*},
\end{align*}
which implies 
$$
|f|\in \dom\left( \big( (\Id^{\dagger})^{(l^*)}_{\max}\big)^*\right)=\dom\left(\Id^{(l)}_{\min}\right)= W^{1,l}_{0}(M),
$$
where we have used Lemma \ref{minmax}. This completes the proof.
\end{proof}

Next, we record (one of many) Sobolev-Leibniz rules:

\begin{Lemma}\label{proff} Let $\nabla$ be a smooth (not necessarily metric) covariant derivative on the smooth metric $\IK$-vector bundle $E\to M$, and let $l\in [1,\infty)$. Assume further that $\psi\in \Gamma_{ W^{1,l}_{\nabla,0}}(M,E)$, and that $h:M\to\IC$ is bounded and Lipschitz continuous (with respect to the Riemannian distance), with a Lipschitz constant $\leq C$. Then one has $\Id h\in\Omega^1_{\IL^{\infty}}(M)$,  with $\>|\Id h|\leq C$ $\mu$-a.e., and $h\psi\in \Gamma_{ W^{1,l}_{\nabla,0}}(M,E)$ with
\begin{align}\label{pro}
\nabla (h\psi)=\Id h\otimes \psi+h\nabla\psi\>\text{ $\mu$-a.e.}
\end{align}
\end{Lemma}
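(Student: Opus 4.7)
The plan is to first establish that $\Id h$ lies in $\Omega^1_{\IL^\infty}(M)$ with $|\Id h|\le C$ $\mu$-a.e., and then to deduce the Leibniz rule by a two-stage approximation: first for $\psi\in\Gamma_{\ICC_\c}(M,E)$, and then for arbitrary $\psi\in\Gamma_{W^{1,l}_{\nabla,0}}(M,E)$.

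For the $\IL^\infty$-bound on $\Id h$, I would work chart by chart. In a relatively compact coordinate chart $(U,(x^1,\dots,x^m))$ on which the coefficients of $g$ and $g^{-1}$ are uniformly bounded, any straight coordinate segment has $g$-length comparable to its Euclidean length, so $\varrho$ is bounded from above by a constant multiple of the Euclidean distance inside $U$. Hence $h|_U$ is Euclidean Lipschitz, so by Rademacher's theorem $h|_U$ is a.e.\ classically differentiable with weak partial derivatives in $L^\infty(U)$. Then for a.e.\ $x\in U$ and any $v\in\IT_xM$, testing against a smooth unit-$g$-speed curve through $x$ tangent to $v/|v|_g$ yields $|\Id h_x(v)|\le C|v|_g$, so $|\Id h|\le C$ a.e.\ on $U$; a covering argument gives the global bound.

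Next, for $\psi\in\Gamma_{\ICC_\c}(M,E)$ the product $h\psi$ is bounded with compact support, hence lies in $\Gamma_{\IL^l}(M,E)$. Working in a chart $U$ admitting a smooth frame and writing $\nabla|_U = \Id + \alpha$ as in Section \ref{covo}, the Leibniz identity reduces to the classical distributional product rule for the Lipschitz function $h$ times smooth compactly supported components; consequently $\nabla(h\psi) = \Id h\otimes\psi + h\nabla\psi$ holds weakly, and the right-hand side is a bounded compactly supported section of $\IT^*M\otimes E$. Therefore $h\psi\in\Gamma_{W^{1,l}_{\nabla,\c}}(M,E)\subset\Gamma_{W^{1,l}_{\nabla,0}}(M,E)$ by Corollary \ref{compp}.

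Finally, for general $\psi\in\Gamma_{W^{1,l}_{\nabla,0}}(M,E)$ I would pick $(\psi_n)\subset\Gamma_{\ICC_\c}(M,E)$ with $\|\psi_n-\psi\|_{W^{1,l}_\nabla}\to 0$. Since $h$ and $|\Id h|$ are both in $L^\infty(M)$, both $\|h\psi_n-h\psi\|_l$ and $\|\Id h\otimes(\psi_n-\psi) + h(\nabla\psi_n-\nabla\psi)\|_l$ are bounded by a constant multiple of $\|\psi_n-\psi\|_{W^{1,l}_\nabla}$ and hence tend to $0$. Thus $(h\psi_n)$ is Cauchy in $\Gamma_{W^{1,l}_\nabla}(M,E)$ with limit $h\psi$ and weak covariant derivative $\Id h\otimes\psi + h\nabla\psi$. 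Since each $h\psi_n$ already lies in the closed subspace $\Gamma_{W^{1,l}_{\nabla,0}}(M,E)$, so does $h\psi$. The one genuinely geometric step is the first: translating Lipschitz continuity with respect to $\varrho$ into a pointwise bound on $|\Id h|$ measured with $g$; the remainder is a routine density argument enabled by the $L^\infty$-boundedness of both $h$ and $|\Id h|$.
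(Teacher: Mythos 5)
Your proposal is correct, and it reaches the Leibniz rule by a genuinely different route in the middle step. The paper's proof never unpacks the weak derivative in coordinates: it first treats the case where $h$ is Lipschitz \emph{with compact support}, notes that then $h\in W^{1,l}_{\c}(M)\subset W^{1,l}_0(M)$ by Corollary \ref{compp}, approximates $h$ by a sequence $(h_n)\subset \ICC_{\c}(M)$ in the $W^{1,l}$-norm, and passes to the limit in the smooth Leibniz rule $\nabla(h_n\psi)=\Id h_n\otimes\psi+h_n\nabla\psi$; a cut-off argument ($\tilde h=\phi h$) then removes the compact-support restriction on $h$ for $\psi\in\Gamma_{\ICC_{\c}}(M,E)$. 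You instead localize in a chart, write $\nabla|_U=\Id|_U+\alpha$, and invoke the classical distributional product rule for a Lipschitz (hence $W^{1,\infty}_{\loc}$) function against smooth compactly supported components. Both arguments are sound; yours is more direct and avoids the two-stage localization of $h$, at the price of working in coordinates, while the paper's stays global and reuses its Meyers--Serrin machinery. Your final density step in $\psi$ coincides with the paper's Step 3, and your treatment of the bound $|\Id h|\leq C$ $\mu$-a.e.\ is actually more detailed than the paper's (which simply cites Rademacher's theorem); the only cosmetic caveat there is that the comparison $\varrho\lesssim|\cdot|_{\mathrm{Eucl}}$ requires the coordinate segment to stay inside the chart, so one should work with convex coordinate balls — a routine adjustment.
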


\begin{proof} Clearly, for any Lipschitz function $h$ on $M$, the weak derivative $\Id h$ is a ($\mu$-essentially) bounded $1$-form. This follows readily from Rademacher\rq{}s Theorem (cf. Theorem 11.3 in \cite{gri}).\\
We prove the asserted regularity together with the Leibniz rule in three steps:\\
\emph{Step 1:} Let us first assume that $\psi\in \Gamma_{\ICC_{\c}}(M,E)$ and that $h$ is Lipschitz with a compact support. Then $\Id h$ is bounded with a compact support and thus in $ W^{1,l}(M)$. Furthermore, Corollary \ref{compp} even entails that $h\in  W^{1,l}_0(M)$. Pick a sequence $(h_n)\in\ICC_{\c}(M)$ with $h_n\to h$ in $ W^{1,l}(M)$. Then we have
\begin{align}
\label{cvb1}\left\|h_n\psi- h\psi\right\|_l\leq \left\|\psi\right\|_{\infty}\left\|h_n- h\right\|_l\to 0 ,
\end{align}
and
\begin{align}
\label{cvb2} &\left\|\nabla(h_n\psi)-(\Id h\otimes \psi+h\nabla\psi) \right\|_l\\\nn
&=\left\|\Id h_n\otimes \psi+h_n\nabla \psi-\Id h\otimes \psi-h\nabla\psi \right\|_l\\
\nn&\leq\left\|\psi\right\|_{\infty} \left\|\Id h_n-\Id h \right\|_l+\left\|\nabla\psi\right\|_{\infty}\left\|h_n- h \right\|_l
\to 0. 
\end{align} 
In particular, $h_n\psi$ is a Cauchy sequence in $\Gamma_{ W^{1,l}_{\nabla,0}}(M,E)$ which by (\ref{cvb1}) necessarily converges in $\Gamma_{ W^{1,l}_{\nabla,0}}(M,E)$ to $h\psi$. This proves 
$$
h\psi\in\Gamma_{ W^{1,l}_{\nabla,0}}(M,E),
$$
where (\ref{pro}) is implied by (\ref{cvb2}).\\
\emph{Step 2:} Let $h$ be bounded and Lipschitz and $\psi\in \Gamma_{\ICC_{\c}}(M,E)$. Then we obtain (\ref{pro}) on every open relatively compact subset $U\subset M$ by applying the previous case to $\widetilde{h}=\phi h$ and $\psi$, where $\phi\in \ICC_{\c}(M)$ is such that $\phi=1$ on $U$. (\ref{pro}) then implies $h\psi\in\Gamma_{ W^{1,l}_{\nabla}}(M,E)$, where it is used that $h$ and its differential are bounded. By Corollary \ref{compp}, it follows that $h\psi\in\Gamma_{ W^{1,l}_{\nabla,0}}(M,E)$. \\
\emph{Step 3:} In the general case, that is $h$ is bounded and Lipschitz and $\psi\in\Gamma_{ W^{1,l}_{\nabla,0}}(M,E)$, we pick a sequence $(\psi_n)\subset \Gamma_{\ICC_{\c}}(M,E)$ such that $\psi_n\to \psi$ in $\Gamma_{ W^{1,l}_{\nabla}}(M,E)$. Then by the previous case we have $h\psi_n \in\Gamma_{ W^{1,l}_{\nabla,0}}(M,E)$ and
\begin{align*}
&\left\|h\psi_n- h\psi\right\|_l\to 0,\\
& \left\|\nabla(h\psi_n)-(\Id h\otimes \psi+h\nabla\psi) \right\|_l  \\
&=\left\|\Id h_n\otimes \psi+h_n\nabla \psi-\Id h\otimes \psi-h\nabla\psi \right\|_l\to 0
\end{align*} 
analogously to step 1, since now $h$ and $\Id h$ are bounded, and (again as in step 1) we arrive at $h\psi\in\Gamma_{ W^{1,l}_{\nabla,0}}(M,E)$ with (\ref{pro}).
\end{proof}

A question which is much more subtle than the generally valid Proposition \ref{meyers2} is the denseness of $\Gamma_{C^{\infty}_{\c}}(M,E)$ in the corresponding Sobolev spaces, in other words, whether or not one has 
$$
\Gamma_{ W^{s,q}_{\nabla,0}}(M,E)=\Gamma_{ W^{s,q}_{\nabla}}(M,E)\>\text{ and/or }\>
\Gamma_{\widetilde{ W}^{2s,q}_{\nabla,0}}(M,E)=\Gamma_{\widetilde{ W}^{2s,q}_{\nabla}}(M,E).
$$
This question seems to depend heavily on $\nabla^{\IT M}$ and on $\nabla$ in general. The following results, however, only require geodesic completeness:

\begin{Proposition}\label{dgs} Let $M$ be geodesically complete, and let $\nabla$ be a smooth (not necessarily metric) covariant derivative on the smooth metric $\IK$-vector bundle $E\to M$.\\
a) For all $q\in [1,\infty)$ one has 
$$
\Gamma_{ W^{1,q}_{\nabla,0}}(M,E)=\Gamma_{ W^{1,q}_{\nabla}}(M,E).
$$
b) One has
$$
\Gamma_{\widetilde{ W}^{2,2}_{\nabla,0}}(M,E)=\Gamma_{\widetilde{ W}^{2,2}_{\nabla}}(M,E).
$$
\end{Proposition}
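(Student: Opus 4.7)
The plan is to truncate $f\in \Gamma_{W^{1,q}_{\nabla}}(M,E)$ by a sequence $(\chi_n)\subset C^\infty_c(M)$ of first order cut-off functions, which is available because Theorem~\ref{first}~a) characterises geodesic completeness by the existence of such $(\chi_n)$. A direct verification against test sections in the target bundle gives the weak Leibniz identity $\nabla(\chi_n f)=\Id\chi_n\otimes f+\chi_n\nabla f$, so $\chi_n f\in \Gamma_{W^{1,q}_{\nabla,\c}}(M,E)\subset \Gamma_{W^{1,q}_{\nabla,0}}(M,E)$ by Corollary~\ref{compp}. Properties (C1) and (C2) together with dominated convergence give $\|\chi_n f-f\|_q\to 0$ and $\|(\chi_n-1)\nabla f\|_q\to 0$, while (C3) yields $\|\Id\chi_n\otimes f\|_q\le \|\Id\chi_n\|_\infty\|f\|_q\to 0$. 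Hence $\chi_n f\to f$ in $\Gamma_{W^{1,q}_{\nabla}}(M,E)$, and closedness of $\Gamma_{W^{1,q}_{\nabla,0}}(M,E)$ closes the argument.

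\textbf{Part (b).} By Lemma~\ref{minmax} and formal self-adjointness of $\nabla^\dagger\nabla$, the asserted equality is equivalent to $(\nabla^\dagger\nabla)_{\min}=(\nabla^\dagger\nabla)_{\max}$, i.e.\ to essential self-adjointness of $\nabla^\dagger\nabla|_{\Gamma_{\ICC_{\c}}(M,E)}$ in $\Gamma_{\IL^2}(M,E)$. Since this operator is nonnegative, it suffices to show that every $u\in \Gamma_{\widetilde{W}^{2,2}_{\nabla}}(M,E)$ satisfying $\nabla^\dagger\nabla u+u=0$ must vanish. Iterating the local elliptic regularity (\ref{localb}) for the elliptic operator $\nabla^\dagger\nabla$ via $-u=\nabla^\dagger\nabla u$ forces $u\in\Gamma_{\ICC}(M,E)$. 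Now one invokes the first order cut-off functions $(\chi_n)$ from Theorem~\ref{first}~a) and integrates by parts on the compactly supported smooth section $\chi_n^2 u$: since everything is smooth and $\chi_n^2u$ has compact support, the Green identity gives
\begin{align*}
-\int \chi_n^2|u|^2\,\Id\mu \;=\; \langle \chi_n^2 u,\nabla^\dagger\nabla u\rangle \;=\; 2\Re\int \chi_n(\Id\chi_n\otimes u,\nabla u)\,\Id\mu+\int \chi_n^2|\nabla u|^2\,\Id\mu.
\end{align*}
Estimating the cross term via $2|ab|\le a^2+b^2$ with $a=|\Id\chi_n||u|$ and $b=\chi_n|\nabla u|$ absorbs $\int\chi_n^2|\nabla u|^2\,\Id\mu$ into the RHS and leaves
\begin{align*}
\int \chi_n^2|u|^2\,\Id\mu \;\le\; \|\Id\chi_n\|_\infty^2\,\|u\|_2^2.
\end{align*}
Letting $n\to\infty$ (monotone convergence on the left by (C1), (C2); (C3) on the right) yields $u=0$.

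The main obstacle is part~(b): the real content is the cut-off Green's identity trick, which converts first order decay $\|\Id\chi_n\|_\infty\to 0$ into triviality of the $\IL^2$-kernel of the second order operator $\nabla^\dagger\nabla+I$. The two technical points that need care are, first, the smoothness bootstrap of $u$ from $u\in \Gamma_{\widetilde{W}^{2,2}_\nabla}(M,E)$ with $\nabla^\dagger\nabla u=-u$ (which requires applying (\ref{localb}) iteratively), and second, the justification of the integration by parts on $\chi_n^2 u$, which is harmless here because $\chi_n^2 u\in \Gamma_{\ICC_{\c}}(M,E)$ and $u\in\Gamma_{\ICC}(M,E)\cap\Gamma_{\widetilde{W}^{2,2}_\nabla}(M,E)$ has $\nabla u\in \Gamma_{\IL^2_{\loc}}(M,\IT^*M\otimes E)$.
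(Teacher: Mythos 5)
Your proof is correct and follows essentially the same route as the paper: part a) is the cut-off argument with first order cut-off functions (the paper first reduces to smooth $f$ via the Riemannian Meyers--Serrin theorem and then cuts off, whereas you cut off first and invoke Corollary \ref{compp}, which amounts to the same thing), and part b) is the same reduction to essential self-adjointness via Lemma \ref{minmax} and the criterion $\mathrm{Ker}((T+1)^{*})=\{0\}$, followed by the elliptic-regularity bootstrap and the identical cut-off/integration-by-parts estimate, merely organized with the arithmetic--geometric mean inequality on the cross term instead of completing the square to $|\nabla(\chi_n u)|^2$.
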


\begin{proof} a) In view of Theorem \ref{meyers2}, it is sufficient to prove that for any \emph{smooth} $f\in \Gamma_{ W^{s,q}_{\nabla}}(M,E)$ there is a sequence $(f_n)\subset \Gamma_{\ICC_{\c}}(M,E)$ with $\left\|f_n-f\right\|_{\nabla,s,q}\to 0$. To this end, we define $f_n:=\chi_nf$, with $(\chi_n)$ a sequence of first order cut-off functions as in Theorem \ref{first}. Then the claim follows easily from dominated convergence, using the Leibniz rule (\ref{leib})
$$
\nabla f_n=  \chi_n \nabla f+ \Id \chi_n\otimes f. 
$$
b) In this case, one can use a well-known Hilbert space argument \cite{Br}: The claim is proved, once we can show that the operator $T$ given by $\nabla^{\dagger}\nabla$ with domain of definition $\Gamma_{C^{\infty}_{\c}}(M,E)$ is essentially self-adjoint in $\Gamma_{\IL^2}(M,E)$. For this, it is sufficient to prove (cf. appendix, Theorem \ref{defect}) that $\mathrm{Ker}((T+1)^*)=\{0\}$. Since $\overline{T+1}$ is precisely $(\nabla^{\dagger}\nabla+1)_{\min}$, it follows from $(T+1)^*=\overline{T+1}^*$ and Lemma \ref{minmax} that
$$
(T+1)^*=(\nabla^{\dagger}\nabla+1)_{\max}.
$$
Now let  
$$
f\in \mathrm{Ker}((T+1)^*).
$$
By the above, this is equivalent to $f\in \Gamma_{\IL^2}(M,E)$ and $\nabla^{\dagger}\nabla f=-f$, in particular, $f$ is smooth by elliptic regularity. We pick again a sequence $(\chi_n)$ of first order cut-off functions. Then by the Leibniz rule we have
\begin{align*}
&(\nabla (\chi_n f),\nabla (\chi_n f))\\
&=(\nabla  f,\chi_n\Id\chi_n\otimes f)+(\nabla f,\chi_n^2\nabla f)+|\Id\chi_n \otimes f|^2+(\Id\chi_n\otimes f,\chi_n\nabla f),
\end{align*}
which, using
$$
(\nabla f,\nabla (\chi_n^2f))=(\nabla f,\chi_n^2\nabla f)+2(\nabla f, \chi_n\Id \chi_n\otimes f),
$$
implies
\begin{align*}
&|\nabla (\chi_n f)|^2=(\nabla (\chi_n f),\nabla (\chi_n f))\\
&=(\nabla f,\nabla (\chi_n^2f))+|\Id\chi_n \otimes f|^2-(\nabla f, \chi_n\Id \chi_n\otimes f)+(\Id\chi_n\otimes f,\chi_n\nabla f)\\
&=(\nabla f,\nabla (\chi_n^2f))+|\Id\chi_n \otimes f|^2-(\nabla f, \chi_n\Id \chi_n\otimes f)+(\chi_n\Id\chi_n\otimes f,\nabla f).
\end{align*}
This in turn implies (after adding the complex conjugate of the formula to itself) 
$$
2|\nabla (\chi_n f)|^2=2\Re(\nabla f,\nabla (\chi_n^2f) )+2|\Id\chi_n \otimes f|^2.
$$
Integrating and then integrating by parts in the last equality, we get 
$$
\int|\nabla (\chi_n f)|^2\Id\mu=\Re\int(\chi_n\nabla^{\dagger}\nabla f,\chi_nf)\Id\mu +\int|\Id\chi_n \otimes f|^2\Id\mu.
$$
Using $\nabla^{\dagger}\nabla f=-f$, we see
$$
\int|\chi_n|^2|f|^2\Id\mu \leq\int|\Id\chi_n \otimes f|^2\Id\mu,
$$
which implies $\int|f|^2\Id\mu=0$ and thus $f=0$ by dominated convergence, using the properties of $(\chi_n)$.
\end{proof}

In the scalar case, one can use curvature bounds to get the following results, whose part b) stems from\footnote{The classical reference for this type of density results is E. Hebey\rq{}s book \cite{hebe}, which however does not contain the results from Proposition \ref{aofyy}.} \cite{bashra}:

\begin{Proposition}\label{aofyy} a) If $M$ is geodesically complete with $|\mathrm{Sec}|\leq C$ for some constant $C\geq 0$, then for all $q\in [1,\infty)$ one has
$$
W^{2,q}_0(M)=W^{2,q}(M).
$$
b) Assume that $M$ is geodesically complete with $\mathrm{Ric}\geq -C$ for some constant $C\geq 0$. Then one has
$$
 W^{2,2}_0(M)= W^{2,2}(M).
$$
\end{Proposition}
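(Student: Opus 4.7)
For part (a), the plan is to invoke Theorem \ref{first}(c) to obtain a sequence $(\chi_n)\subset C_c^\infty(M)$ of Hessian cut-off functions, which is available under the sectional curvature bound $|\mathrm{Sec}|\leq C$. Combined with the Riemannian Meyers--Serrin theorem (Proposition \ref{meyers2}), it then suffices to show that $\chi_n f\to f$ in $W^{2,q}$ for every smooth $f\in W^{2,q}(M)$. Starting from the Leibniz rules
\[
d(\chi_n f)=\chi_n\, df+f\, d\chi_n,
\]
\[
\nabla^{T^*M} d(\chi_n f)=\chi_n\nabla^{T^*M}df+d\chi_n\otimes df+df\otimes d\chi_n+f\,\nabla^{T^*M}d\chi_n,
\]
together with the uniform bounds $0\leq \chi_n\leq 1$, $\|d\chi_n\|_\infty\to 0$ and $\|\nabla^{T^*M}d\chi_n\|_\infty\to 0$, the convergence $\chi_n f\to f$ in $W^{2,q}$ reduces to routine dominated-convergence arguments based on $f,df,\nabla^{T^*M}df\in L^q$.

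For part (b), the central ingredient is a Bochner--Calderon--Zygmund-type inequality deduced from the Bochner identity (\ref{bochner}). Integrating that identity against a real-valued $f\in C_c^\infty(M)$, using the vanishing $\int_M\Delta|df|^2\, d\mu=0$ (which follows from $\int_M\Delta u\, d\mu=0$ for compactly supported smooth $u$) and integrating by parts to get $\int_M(df,d\Delta f)\, d\mu=-\int_M(\Delta f)^2\, d\mu$, I would obtain
\[
\int_M |\nabla^{T^*M}df|^2\, d\mu=\int_M(\Delta f)^2\, d\mu-\int_M \mathrm{Ric}_\sharp(df,df)\, d\mu,
\]
so under $\mathrm{Ric}\geq -C$ this yields $\|\nabla^2 f\|_2^2\leq \|\Delta f\|_2^2+C\|df\|_2^2$ on $C_c^\infty(M)$. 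Splitting a complex $f$ into real and imaginary parts and absorbing $\|df\|_2^2=\langle-\Delta f,f\rangle\leq\tfrac12(\|\Delta f\|_2^2+\|f\|_2^2)$, the $W^{2,2}$-norm is dominated by the $\widetilde W^{2,2}_d$-norm on $C_c^\infty(M)$; the reverse bound is the pointwise estimate $|\Delta f|\leq\sqrt m\,|\nabla^2 f|$ recorded in Notation \ref{soos}. Thus the two norms are equivalent on $C_c^\infty(M)$.

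The density $W^{2,2}_0(M)=W^{2,2}(M)$ then follows by a short soft argument. Given $f\in W^{2,2}(M)$, the estimate $|\Delta f|\leq\sqrt m\,|\nabla^2 f|$ places $f\in\widetilde W^{2,2}_d(M)$. Applying Proposition \ref{dgs}(b), which requires only geodesic completeness, I obtain $(f_n)\subset C_c^\infty(M)$ with $\|f-f_n\|_{\widetilde W^{2,2}_d}\to 0$; thus $(f_n)$ is $\widetilde W^{2,2}_d$-Cauchy, hence also $W^{2,2}$-Cauchy by the norm equivalence on $C_c^\infty(M)$, so it converges in $W^{2,2}(M)$ to some $g$ which must equal $f$ by $L^2$-uniqueness. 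In particular $f\in W^{2,2}_0(M)$.

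The main obstacle is the Bochner step: once the norm equivalence on $C_c^\infty(M)$ is established, the density reduces almost mechanically to the already proved Proposition \ref{dgs}(b) for the elliptic Sobolev space $\widetilde W^{2,2}_d(M)$. All essential geometric content is concentrated in the Bochner computation (which uses $\mathrm{Ric}\geq -C$) and, for part (a), in the existence of Hessian cut-off functions from Theorem \ref{first}(c) (which uses $|\mathrm{Sec}|\leq C$).
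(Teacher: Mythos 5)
Your proposal is correct and follows essentially the same route as the paper: part (a) via Hessian cut-off functions from Theorem \ref{first}(c), the Meyers--Serrin result (Proposition \ref{meyers2}) and the two Leibniz rules, and part (b) via the integrated Bochner identity yielding the equivalence of the $W^{2,2}$- and $\widetilde{W}^{2,2}$-norms on $C^{\infty}_{c}(M)$, followed by an appeal to Proposition \ref{dgs}(b). The only cosmetic difference is that the paper phrases the final step of (b) as a chain of space equalities $W^{2,2}\supset W^{2,2}_0=\widetilde{W}^{2,2}_0=\widetilde{W}^{2,2}\supset W^{2,2}$ rather than your Cauchy-sequence argument, which is logically the same.
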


\begin{proof} a) This follows straightforwardly from Theorem \ref{meyers2}, by taking a sequence $(\chi_n)$ of Hessian cut-off functions as in Theorem \ref{first}, and using the product rules
\begin{align*}
&d (\chi_nf)= fd\chi_n+ \chi_n df,\\
&\nabla^{2}(\chi_n f)= \chi_n\nabla^{2} f+f\nabla^{2} \chi_n +\Id f\otimes \Id\chi_n+\Id\chi_n\otimes \Id f ,\> f\in\ICC(M).
\end{align*}
b) Let $f$ be a smooth compactly supported function on $M$. Integrating Bochner\rq{}s identity (\ref{bochner}) and integrating by parts several times, we have 
\begin{align*}
&\int | f|^2\Id\mu+\int |\nabla f|^2\Id\mu+\int |\nabla^2 f|^2\Id\mu\\
&=\int | f|^2\Id\mu+\int |\nabla f|^2\Id\mu+(1/2)\int \Id^{\dagger} (\Id |\Id f|^2)\cdot 1\Id\mu+\int|\Delta f|^2\Id\mu\\
&\quad-\int\mathrm{Ric}_{\sharp}(\Id f,\Id f)\Id\mu\\
&=\int | f|^2\Id\mu-\int \overline{\Delta f} f\Id\mu+\int|\Delta f|^2\Id\mu-\int\mathrm{Ric}_{\sharp,\IC}(\Id f,\Id f)\Id\mu,
\end{align*}
which, using $\mathrm{Ric}\geq -C$ (and therefore $\mathrm{Ric}_{\sharp}\geq -C$) and applying the elementary inequality $ab\leq a^2+b^2$ to $a=|\Delta f|$, $b= |f|$, is 
$$
\leq C\rq{}\int | f|^2\Id\mu+  C\rq{}\int|\Delta f|^2\Id\mu.
$$

On the other hand, we have the trivial inequality 
\begin{align}\label{eigen}
|\Delta h|\leq \sqrt{m}|\nabla^2 h|\>\text{ for all $h\in\ICC(M)$,}
\end{align}
so that 
\begin{align*}
\int |f|^2\Id\mu+\int |\Delta f|^2\Id\mu&\leq \int |f|^2\Id\mu+m\int  |\nabla^2 f|^2\Id\mu\\
&\leq  \int |f|^2\Id\mu+m\int  |\nabla^2 f|^2\Id\mu + \int |\nabla f|^2\Id\mu.
\end{align*}
We have thus shown the equivalence of norms $\|\bullet\|_{ W^{2,2}}\sim \|\bullet\|_{\tilde{ W}^{2,2}}$ on $\ICC_{\c}(M)$. Thus, we have
$$
 W^{2,2}_0(M)=\widetilde{ W}^{2,2}_0(M),
$$
so that the geodesic completeness together with Proposition \ref{dgs} b) give the last equality in
\begin{align}
 W^{2,2}(M)\supset W^{2,2}_0(M)=\widetilde{ W}^{2,2}_0(M)=\widetilde{ W}^{2,2}(M).
\end{align}
Finally, Theorem \ref{meyers2} in combination with (\ref{eigen}) also implies 
$$
 W^{2,2}(M)\subset\widetilde{ W}^{2,2}(M),
$$
and this completes the proof.
\end{proof}

The crucial part in the proof of Proposition \ref{aofyy} b) was to establish the existence of constants $C_j>0$ such that 
$$
\left\|\nabla^{2}f\right\|_2\leq C_1\left\|\Delta f\right\|_2+C_2\left\| f\right\|_2\quad\text{ for all $f\in C^{\infty}_c(M)$,}
$$
which relied on a lower bound of the Ricci curvature. The $L^q$-version, $q\in (1,\infty)$, of the last inequality is called the \emph{$L^q$-Calderon-Zygmund inequality} in \cite{G54,pigola}, and its importance in the context of density problems on Sobolev spaces has first been realized in \cite{G54}. The validity of the $L^q$-Calderon-Zygmund inequality depends very sensitively on the geometry on $M$. For example, even the $L^2$-Calderon-Zygmund inequality is in general false without a lower bound of the Ricci curvature \cite{pigola}. On the other hand, the $L^q$-Calderon-Zygmund inequality on $M$ holds for all $q\in (1,\infty)$, if $M$ has a bounded Ricci curvature and a positive injectivity radius. Surprisingly, using covariant Riesz-transform techniques, it can also be shown that the $L^q$-Calderon-Zygmund inequality on $M$ holds for all $q\in (1,2]$ with a $C^1$-control on the curvature and a generalized volume doubling condition. (In particular, one does not have to impose any control on the injectivity radius for small $p$.) We refer the interested reader to \cite{pigola} for these and other facts on the $L^q$-Calderon-Zygmund inequality.

\section{The Friedrichs realization of $\nabla^{\dagger}\nabla/2$}

We begin this section with some notation as well:

\begin{Notation} Given a smooth (not necessarily metric) covariant derivative $\nabla$ on the smooth metric $\IC$-vector bundle $E\to M$, we will denote by $\H^{\nabla} \geq 0$ the Friedrichs realization (cf. appendix, Example \ref{friedrichs}) of $\nabla^{\dagger}\nabla/2$, and with $\Q^{\nabla}\geq 0$ the closed densely defined symmetric sesquilinear form corresponding to $\H^{\nabla}$ (cf. appendix, Theorem \ref{kaq1}). 
\end{Notation}

The normalization $\nabla^{\dagger}\nabla/2$ is common in probability theory\footnote{In probability theory, this convention has the advantage that every Brownian motion $(W_t)_{t\geq 0}$ in the Euclidean $\IR^1$ (which by definition is a $(-1/2)\Delta_{\IR^1}$-diffusion process) has  the quadratic covariation $[W_t,W_t]=t$, almost surely for all $t\geq 0$.}. Note that the operator $\nabla^{\dagger}\nabla$ is elliptic, regardless of the fact whether $\nabla$ is metric or not\footnote{In case $\nabla$ is metric, wesaw this in (\ref{sym}); in the general case, one can pick an endomorphism-valued $1$-form $\alpha$ such that $\nabla_1:=\nabla+\alpha$ is metric, but clearly $\nabla_1^{\dagger}\nabla_1$ and $\nabla^{\dagger}\nabla$ have the same symbol.}. The operators $\H^{\nabla}$ are of a fundamental importance for us: They serve as our underlying \emph{free} operators. The semigroups corresponding to perturbations of $\H^{\nabla}$ by singular potentials are the central objects of this work. We record the following explicit respresentation of $\H^{\nabla}$:

\begin{Proposition}\label{fll} Let $\nabla$ be a smooth covariant derivative on the smooth metric $\IC$-bector bundle $E\to M$. Then one has
\begin{align*}
&\dom(\H^{\nabla})=\Gamma_{\tilde{ W}^{2,2}_{\nabla}}(M,E)\cap \Gamma_{ W^{1,2}_{\nabla,0}}(M,E),\\
&\H^{\nabla}f=(1/2)\nabla^{\dagger}\nabla f,\text{ and}\\
&\dom(\Q^{\nabla})=\dom(\sqrt{\H^{\nabla}})= \Gamma_{ W^{1,2}_{\nabla,0}}(M,E),\\
&\Q^{\nabla}(f_1,f_2)=\langle \sqrt{\H^{\nabla}}f_1,\sqrt{\H^{\nabla}}f_2\rangle=(1/2)\int _M(\nabla f_1 ,\nabla f_2)  \Id \mu.
\end{align*}
\end{Proposition}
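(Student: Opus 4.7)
The plan is to first identify the closed form $Q^{\nabla}$ and its domain directly from the Friedrichs construction, and then to extract $\dom(H^{\nabla})$ by a single integration by parts.

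I would start with the quadratic form $q_0(f_1,f_2) := (1/2)\int_M (\nabla f_1,\nabla f_2)\,\Id\mu$ on $\Gamma_{\ICC_{\c}}(M,E)$. A single integration by parts against $\nabla^{\dagger}$ shows that this is the form canonically associated to the symmetric nonnegative operator $(1/2)\nabla^{\dagger}\nabla\restriction \Gamma_{\ICC_{\c}}(M,E)$ whose Friedrichs realization is by definition $H^{\nabla}$. The form norm $\sqrt{q_0(f,f)+\|f\|_2^2}$ is term-by-term equivalent to $\|\cdot\|_{W^{1,2}_{\nabla}}$ on $\Gamma_{\ICC_{\c}}(M,E)$, so by the very definition of $\Gamma_{W^{1,2}_{\nabla,0}}(M,E)$ as the $W^{1,2}_{\nabla}$-closure of $\Gamma_{\ICC_{\c}}(M,E)$, the closure of $q_0$ has domain exactly $\Gamma_{W^{1,2}_{\nabla,0}}(M,E)$ and retains the same integral expression by continuity. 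The first representation theorem (cf.\ appendix, Theorem \ref{kaq1}) then delivers in one stroke the self-adjoint operator $H^{\nabla}$, the identity $\dom(\sqrt{H^{\nabla}})=\dom(Q^{\nabla})=\Gamma_{W^{1,2}_{\nabla,0}}(M,E)$, and the polarization $Q^{\nabla}(f_1,f_2)=\langle \sqrt{H^{\nabla}}f_1,\sqrt{H^{\nabla}}f_2\rangle$; all but the operator-domain assertion is thereby settled.

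For the domain, the first representation theorem characterizes $f\in\dom(H^{\nabla})$ as those $f\in\Gamma_{W^{1,2}_{\nabla,0}}(M,E)$ for which there exists $g\in\Gamma_{\IL^2}(M,E)$ with $Q^{\nabla}(f,h)=\langle g,h\rangle$ on all of $\Gamma_{W^{1,2}_{\nabla,0}}(M,E)$, and then $H^{\nabla}f=g$. For $\supset$, given $f\in\Gamma_{\tilde{W}^{2,2}_{\nabla}}(M,E)\cap\Gamma_{W^{1,2}_{\nabla,0}}(M,E)$ the section $\nabla f$ lies in the maximal $\nabla^{\dagger}$-domain, since by hypothesis $\nabla^{\dagger}(\nabla f)=\nabla^{\dagger}\nabla f\in\Gamma_{\IL^2}(M,E)$; Lemma \ref{int} with $P=\nabla$ and $q=q^*=2$ applied to $(h,\nabla f)$ then rewrites $(1/2)\int_M(\nabla h,\nabla f)\,\Id\mu$ as $\langle (1/2)\nabla^{\dagger}\nabla f,h\rangle$, so $g=(1/2)\nabla^{\dagger}\nabla f$ works. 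For $\subset$, testing the defining identity $Q^{\nabla}(f,\psi)=\langle H^{\nabla}f,\psi\rangle$ against $\psi\in\Gamma_{\ICC_{\c}}(M,E)$ and applying Lemma \ref{int} in the opposite direction, with $P=\nabla^{\dagger}$ on the pair $(\nabla\psi,f)$, yields $(1/2)\int_M(\nabla^{\dagger}\nabla\psi,f)\,\Id\mu=\int_M(\psi,H^{\nabla}f)\,\Id\mu$ for every such $\psi$. Since $(\nabla^{\dagger}\nabla)^{\dagger}=\nabla^{\dagger}\nabla$, this is precisely the weak-derivative condition of Proposition \ref{glig} saying that $\nabla^{\dagger}\nabla f$ exists and equals $2H^{\nabla}f\in\Gamma_{\IL^2}(M,E)$; equivalently $f\in\Gamma_{\tilde{W}^{2,2}_{\nabla}}(M,E)$ with the asserted formula.

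The only mildly subtle point I foresee is precisely this dual integration by parts: because $f$ carries only one weak derivative a priori, one cannot transfer two derivatives onto the test section in a single step; instead one has to apply Lemma \ref{int} once, absorbing the "second" derivative into $\nabla f$ itself, and observe that the regularity $\nabla f\in\Gamma_{W^{\nabla^{\dagger},2}_{\mu}}$ used in direction $\supset$ is exactly the regularity produced in direction $\subset$, which is what makes the two inclusions dual to one another.
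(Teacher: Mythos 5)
Your proposal is correct and follows essentially the same route as the paper: identify the closed form as the $\left\|\bullet\right\|_{W^{1,2}_{\nabla}}$-closure of the form on $\Gamma_{\ICC_{\c}}(M,E)$, and identify the operator domain as $\dom(Q^{\nabla})$ intersected with the maximal domain of $\nabla^{\dagger}\nabla$. The only difference is one of packaging: where the paper cites Lemma \ref{minmax} together with the abstract characterization of the Friedrichs realization (Theorems \ref{kaq1}, \ref{kaq2}), you reprove the relevant inclusions by hand via Lemma \ref{int}, which amounts to the same computation.
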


\begin{proof} Using that for all smooth compactly supported sections $f$ one has
$\left\langle \nabla^{\dagger}\nabla  f,f\right\rangle= \left\langle \nabla  f,\nabla f\right\rangle$, and furthermore
$$
\dom((\nabla^{\dagger}\nabla|_{\Gamma_{\ICC_{\c}}(M,E)})^*)=\dom((\nabla^{\dagger}\nabla)_{\max})=\Gamma_{\tilde{ W}^{2,2}_{\nabla}}(M,E),
$$
all assertions follow easily from abstract functional analytic facts (cf. appendix, Theorem \ref{kaq1} and Theorem \ref{kaq2}). 
\end{proof}

More specifically, we will use the following conventions concerning the scalar Laplace-Beltrami operator:

\begin{Notation} We will write 
$$
\H:=\H^{\Id}\geq 0
$$
for the Friedrichs realization of $1/2$-times the scalar Laplacian $-\Delta=\Id^{\dagger}\Id$, and $\Q:=\Q^{\Id}\geq 0$ for its sesquilinear form (cf. appendix, Theorem \ref{kaq1}), where again the exterior differential is considered a covariant derivative. 
\end{Notation}

In this case, Proposition \ref{fll} boils down to 
\begin{align*}
\dom(\H)=\tilde{ W}^{2,2}(M)\cap  W^{1,2}_0(M),\>\H f=-(1/2)\Delta f,\end{align*}
and
\begin{align}\label{fll2}
&\dom(\Q)=\dom(\sqrt{\H})=  W^{1,2}_0(M),\\
&\Q(f_1,f_2)= \langle \sqrt{\H} f_1, \sqrt{\H} f_2 \rangle=(1/2)\int _M(\Id f_1 ,\Id f_2)  \Id \mu.
\end{align}

\chapter[The minimal heat kernel on $M$ ]{Some specific results for the minimal heat kernel}\label{scalar}

Let us begin with an important definition.

\begin{Definition} The heat kernel
$$
p(t,x,y):=\mathrm{e}^{-t \H}(x,y)
$$ 
of $\H$, in the sense of Theorem \ref{heat}, will be called the \emph{the minimal nonnegative heat kernel} on the Riemannian manifold $M$.
\end{Definition}

The reason for this name will become clear in a moment (cf. Theorem \ref{ddfh} below). We remark that the semigroup identity (\ref{chap}) for $p(t,x,y)$ is usually referred to as \emph{Chapman-Kolmogorov identity}, which now means nothing but that for all $t,s>0$, $x,y\in M$, one has
$$
p(t+s,x,y)=\int_M p(t,x,z)p(s,z,y)\Id\mu(z).
$$
In the rest of this chapter, we are going to collect some facts concerning $p(t,x,y)$. Whenever possible, we will refer to A. Grigor\rq{}yan's excellent book\footnote{We warn the reader that Grigor\rq{}yan's heat kernel $p(t,x,y)$ is the one corresponding to the heat semigroup defined by $-\Delta$, and not $-(1/2)\Delta$.} for results concerning $p(t,x,y)$. Although we will actually be concerned with the semigroups corresponding to covariant Schrödinger operators later on (that is, perturbations by potentials of covariant operators of the form $H^{\nabla}$), the semigroup $\mathrm{e}^{-t \H}$ plays nevertheless a special role in the covariant context as well: First of all, covariant Schrödinger semigroups can be controlled in a certain sense by usual scalar Schrödinger semigroups of the form $\mathrm{e}^{-t(H+w)}$, where $w:M\to\IR$ is an appropriate potential\footnote{This will be reflected by the Kato-Simon inequality later on.}. The scalar semigroups $\mathrm{e}^{-t(H+w)}$, on the other hand, can be ultimately controlled (for example using probabilistic methods) by $\mathrm{e}^{-t \H}$. In addition to this machinery, which reduces many problems of interest to $\mathrm{e}^{-t \H}$, there exist special methods (minimum principles, mean value inequalities,...) that allow a control of $\mathrm{e}^{-t \H}$ in a very direct way.\vspace{2mm}

We start by recalling that in the Euclidean case (that is, $M=\IR^m$ with its standard Euclidean Riemannian metric), the correspondig heat kernel is given by
$$
p_{\IR^m}(t,x,y)=(2\pi t)^{-m/2}\mathrm{e}^{\f{-|x-y|^2}{ 2t}}.
$$
Only very few other heat kernels are explicitly known, which is one of the main reasons why one is interested in abstract heat kernel estimates. Nevertheless, another class of Riemannian manifolds with a (more or less) explicitly given heat kernel is provided by hyperbolic spaces:

\begin{Example}\label{hyp}  For each $m\geq 2$, the hyperbolic space $\mathbb{H}^m$ is the uniquely determined\footnote{Note that the space is determined up to an isometry, that is, a smooth diffeomorphism that preserves the Riemannian metrics.} geodesically complete and simply connected Riemannian manifold whose sectional curvatures all equal $-1$. In this case, the heat kernel is a function $p(t,x,y)\equiv p(t,\varrho(x,y))$ only of $t$ and the geodesic distance $r=\varrho(x,y)$, and one finds the following formulae for $r>0$: If $m=2n+1$, then 
$$
p_{\mathbb{H}^{2n+1}}(t,r)= \f{(-1)^n}{(2\pi)^n (2\pi t)^{1/2}} \left(\f{1}{\mathrm{sinh}(r)}\partial_r\right)^n\mathrm{e}^{-\f{n^2 t}{ 2} -\f{r^2}{2t}},
$$ 
while if $m=2n$, then
\begin{align*}
p_{\mathbb{H}^{2n}}(t,r)=&\f{(-1)^n\sqrt{2}}{(2\pi)^n (2\pi t)^{3/2}}\mathrm{e}^{-\f{(2n+1)^2t}{8}} \left(\f{1}{\mathrm{sinh}(r)}\partial_r\right)^n\\
&\times\int^{\infty}_{r}\f{s\mathrm{e}^{-\f{s^2}{2t}}}{\left(\mathrm{cosh}(s)-\mathrm{cosh}(r)\right)^{1/2}}\Id s.
\end{align*}
These identities can be found, for example, in the paper \cite{hyper} by A. Grigor'yan and M. Noguchi. There, these equations have been derived by transforming the wave operator to the heat operator, noting that on hyperbolic spaces the \lq\lq{}wave kernel\rq\rq{} can be calculated using group theoretic methods. An analogous wave-to-heat transformation will also play an important  role for us in the context of the essential self-adjointness of covariant Schrödinger operators (cf. Section \ref{estt}). An important property of the hyperbolic spaces is that their bottom of the spectrum is strictly positive, namely
$$
\min \sigma(H_{\mathbb{H}^{m}})= (m-1)^2/8>0.
$$
This is shown on p. 319 in \cite{gri}.
\end{Example}

Let us return to the general case again: It is possible to give an alternative definition of $p(t,x,y)$, which we record for the sake of completeness (cf. Corollary 8.12 and Theorem 9.5 in \cite{gri}):

\begin{Theorem}\label{ddfh} One has $p(t,x,y)\geq 0$ for all $t>0$, $x,y\in M$, and for every fixed $y\in M$, $p(\bullet,\bullet,y)$ is a solution of 
\begin{align}\label{ddedw}
\frac{\partial}{\partial t} u =(1/2)\Delta u,\>\>\lim_{	t\to 0+} u(t,\bullet)=\delta_y,
\end{align}
where the initial value means as usual that
$$
\lim_{t\to 0+}\int u(t,x) \phi(x)\Id\mu(x)=\phi(y)\quad\text{ for all $\phi\in\ICC_{\c}(M)$.}
$$
In fact, $p(\bullet,\bullet,y)$ is the pointwise minimal nonnegative smooth solution of (\ref{ddedw}), that is, any other nonnegative smooth\footnote{In fact, by local parabolic regularity \cite{gri} every (weak) solution of (\ref{ddedw}) is atomatically smooth.} solution 
$$
u:(0,\infty)\times M\longrightarrow [0,\infty) 
$$
of (\ref{ddedw}) automatically satisfies $u(t,x)\geq p(t,x,y)$ for all $t>0$ and all $x\in M$. 
\end{Theorem}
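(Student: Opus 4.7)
My strategy is to handle the three assertions in turn, leveraging the heat-kernel properties from Theorem \ref{heat} together with the Friedrichs-form description of $H$ from Proposition \ref{fll}.

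For the pointwise nonnegativity I would invoke the first Beurling--Deny criterion: Lemma \ref{bett2} in the scalar case ($\nabla=d$, $l=2$) gives that for every real $f\in W^{1,2}_0(M)=\dom(Q)$ one has $|f|\in\dom(Q)$ with $Q(|f|,|f|)\le Q(f,f)$, from which the standard functional-analytic argument yields that the semigroup $e^{-tH}$ preserves the nonnegative cone of $L^2(M)$. The kernel representation from Theorem \ref{heat} a) then gives $\int p(t,x,y)f(y)\,d\mu(y)\ge 0$ for $\mu$-a.e.\ $x$ whenever $0\le f\in L^2(M)$, and the joint continuity of $p$ upgrades this to the pointwise inequality $p(t,x,y)\ge 0$ everywhere.

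For the heat equation and initial condition, fix $\phi\in C_c^\infty(M)$. Integration by parts shows $\phi\in\bigcap_k\dom(H^k)$ with $H^k\phi=(-\tfrac12\Delta)^k\phi\in C_c^\infty(M)$. The symmetry of $p$ from Theorem \ref{heat} b) combined with the kernel formula yields
\begin{equation*}
\int_M p(t,x,y)\phi(y)\,d\mu(y)=e^{-tH}\phi(x)\quad\text{and}\quad \int_M p(t,x,y)\phi(x)\,d\mu(x)=e^{-tH}\phi(y).
\end{equation*}
Differentiating the first identity in $t$, pulling $\partial_t$ and $\Delta_x$ past the integral using smoothness and compact support of $\phi$ together with Theorem \ref{heat} c), and invoking the fundamental lemma in the variable $y$ yields $\partial_t p(t,x,y)=(1/2)\Delta_x p(t,x,y)$ pointwise. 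For the initial value, the spectral theorem gives $H^k e^{-tH}\phi\to H^k\phi$ in $L^2$ as $t\to 0+$ for every $k$; by local elliptic regularity and the Sobolev embedding this promotes to $C^\ell_{\mathrm{loc}}$-convergence for every $\ell$, whence the second identity gives $\int p(t,x,y)\phi(x)\,d\mu(x)=e^{-tH}\phi(y)\to\phi(y)$, which is the required weak initial condition.

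The minimality is the genuinely delicate part. The plan is to exhaust $M$ by relatively compact open sets $U_1\subset U_2\subset\cdots$ with smooth boundary, and on each $U_n$ introduce the Dirichlet heat kernel $p_n(t,x,y)$ of the Friedrichs realization $H_{U_n}$ of $-(1/2)\Delta$ on $U_n$; each $p_n$ is smooth and nonnegative on $(0,\infty)\times U_n\times U_n$, vanishes at $\partial U_n$, and has $\delta_y$ as initial datum. Given a nonnegative smooth solution $u$ of (\ref{ddedw}) and $y\in U_n$, the parabolic minimum principle applied to $u-p_n$ on $(0,T)\times U_n$---whose parabolic-boundary values are nonnegative since $u\ge 0=p_n$ on $\partial U_n$ and both share the initial datum $\delta_y$---gives $u\ge p_n$ on $(0,\infty)\times U_n$; the identical argument applied to $p$ itself yields $p\ge p_n$. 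The hardest remaining step is to establish $p_n\nearrow p$ pointwise: I would argue via monotone convergence of closed quadratic forms $Q_{U_n}\nearrow Q$, using that $\bigcup_n C_c^\infty(U_n)=C_c^\infty(M)$ is a core of $Q$ by Proposition \ref{fll}; this yields strong resolvent convergence $H_{U_n}\to H$, hence $L^2$-convergence of the semigroups, and the joint smoothness of the kernels combined with dominated convergence on relatively compact sets promotes this to pointwise monotone convergence $p_n(t,x,y)\nearrow p(t,x,y)$. Combining $u\ge p_n$ with $p_n\to p$ then delivers $u\ge p$ as claimed.
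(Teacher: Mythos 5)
The paper itself offers no proof of this theorem — it is imported from Grigor'yan's book (Corollary 8.12 and Theorem 9.5 of \cite{gri}) — so you are supplying an argument the text omits. Your architecture is precisely the standard one from that source: the first Beurling--Deny criterion (available here via Lemma \ref{bett2} b) with $\nabla=\Id$, $l=2$) for nonnegativity, testing against $\phi\in\ICC_{\c}(M)$ together with spectral-calculus and elliptic bootstrapping for the equation and the weak initial condition, and exhaustion by Dirichlet heat kernels for minimality. The first two parts are correct as sketched; in particular your route to the pointwise initial condition avoids the circularity of invoking (\ref{capaa}).

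The step that fails as written is the comparison $u\ge p_n$. You apply the parabolic minimum principle to $u-p_n$ on $(0,T)\times U_n$ and discharge the $\{t=0\}$ part of the parabolic boundary by saying the two functions \emph{share the initial datum} $\delta_y$. But that initial condition is only a distributional statement about $\int u(t,x)\phi(x)\Id\mu(x)$; it gives no pointwise control of $u-p_n$ near $(0,y)$, where both terms blow up, and a priori the difference could tend to $-\infty$ along a sequence $(t_k,x_k)\to(0,y)$ even though the weak limits agree. The minimum principle therefore cannot be invoked on the full cylinder. The standard repair is a time shift: for $s>0$ compare $u(s+t,x)$ with $v_s(t,x):=\int_{U_n}p_n(t,x,z)u(s,z)\Id\mu(z)$, which have the same \emph{continuous} initial value $u(s,\cdot)$ on $U_n$ and satisfy $v_s\le u$ on $[0,T]\times\partial U_n$ (this is where the Dirichlet boundary behaviour of $p_n$, i.e.\ the $W^{1,2}_0$-form of the maximum principle, is genuinely used — not merely ``smooth boundary''); the minimum principle then gives $u(s+t,\cdot)\ge v_s(t,\cdot)$, and letting $s\to0+$ while testing the weak initial condition against compactly supported minorants of $z\mapsto p_n(t,x,z)$ yields $u(t,x)\ge p_n(t,x,y)$. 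A smaller gloss: strong convergence of the semigroups plus lower semicontinuity of the increasing limit only gives $p_n\nearrow p$ $\mu$-a.e.\ directly; this suffices for you, since $u\ge p$ a.e.\ together with continuity of both sides upgrades to $u\ge p$ everywhere (alternatively, restore everywhere-convergence via the Chapman--Kolmogorov identity).
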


This result directly implies the following important domain monotonicity:

\begin{Corollary}\label{monon} Let $U\subset M$ be an arbitrary connected open subset, let $H_U$ denote\footnote{In other words, $H_U$ is the Dirichlet realization of $(-1/2)\Delta$ in $U$.} $H$ defined with $M=U$ and the induced Riemannian metric $g|_U$, and let $p_U(t,x,y):=\mathrm{e}^{-t H_U}(x,y)$ be the correponding heat kernel. Then one has
$$
p_U(t,x,y)\leq p(t,x,y)\quad\text{ for all $(t,x,y)\in (0,\infty)\times U\times U$. }
$$
\end{Corollary}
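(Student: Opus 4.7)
The plan is to apply the minimality half of Theorem \ref{ddfh} not to the ambient manifold $M$, but to the open connected Riemannian submanifold $U$ equipped with the induced Riemannian metric $g|_U$. By construction in the corollary, $p_U(t,x,y)$ is precisely the minimal nonnegative heat kernel of this Riemannian manifold, so Theorem \ref{ddfh} applied to $U$ tells us that for each fixed $y\in U$, the map $(t,x)\mapsto p_U(t,x,y)$ is the pointwise minimal nonnegative smooth solution on $(0,\infty)\times U$ of the heat equation $\partial_t u=(1/2)\Delta u$ with initial condition $\delta_y$. It therefore suffices to exhibit the restriction of $p(\bullet,\bullet,y)$ to $(0,\infty)\times U$ as another such solution and invoke minimality.

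First I would fix $y\in U$ and set $u(t,x):=p(t,x,y)$ for $(t,x)\in(0,\infty)\times U$. Since the Laplace--Beltrami operator is a local differential operator and $g|_U=g$ on $U$, the scalar Laplacians associated with $M$ and with $U$ coincide over $U$; consequently $u$ inherits smoothness on $(0,\infty)\times U$ and the identity $\partial_t u=(1/2)\Delta u$ directly from the corresponding properties of $p(\bullet,\bullet,y)$ furnished by Theorem \ref{ddfh}. Nonnegativity of $u$ is likewise inherited from $p\geq 0$. The only point requiring a short verification is the distributional initial condition on $U$: for every $\phi\in \ICC_{\c}(U)$ we may regard $\phi$ as an element of $\ICC_{\c}(M)$ via extension by zero, whence
\[
\lim_{t\to 0+}\int_U u(t,x)\phi(x)\Id\mu(x)=\lim_{t\to 0+}\int_M p(t,x,y)\phi(x)\Id\mu(x)=\phi(y),
\]
which is exactly $\lim_{t\to 0+}u(t,\bullet)=\delta_y$ interpreted distributionally on $U$.

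With the hypotheses of Theorem \ref{ddfh} (applied on $U$) verified for $u$, the minimality assertion forces $p_U(t,x,y)\leq u(t,x)=p(t,x,y)$ for all $t>0$ and all $x\in U$, and since $y\in U$ was arbitrary this establishes the claimed inequality on $(0,\infty)\times U\times U$. The point worth emphasizing is that minimality is a manifold-intrinsic property: no comparison principle has to be applied on $M$ itself and no exhaustion of $U$ by relatively compact subdomains is needed, because Theorem \ref{ddfh} is already stated in sufficient generality to apply verbatim to the (possibly geodesically incomplete) Riemannian manifold $U$. Accordingly, I do not anticipate any serious obstacle; the only thing to be handled carefully is the harmless bookkeeping of the distributional initial condition under restriction to $U$.
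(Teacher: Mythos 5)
Your argument is exactly the intended one: the paper gives no separate proof of Corollary \ref{monon} but states that it follows directly from Theorem \ref{ddfh}, and your derivation — applying the minimality assertion of that theorem to the connected Riemannian manifold $(U,g|_U)$ and verifying that the restriction of $p(\bullet,\bullet,y)$ is a competing nonnegative smooth solution with the correct distributional initial condition — is precisely that direct implication. The proof is correct as written.
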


We remark that, at least morally, the nonnegativity and the minimality properties of $p(t,x,y)$ from Theorem \ref{ddfh} correspond to the fact that, by definition, $p(t,x,y)$ is the heat kernel corresponding to the \emph{Friedrichs realization} of $(1/2)\Id^{\dagger}\Id$, which is the nonnegative self-adjoint extension of $(1/2)\Id^{\dagger}\Id$ which has the largest \lq\lq{}energy\rq\rq{}, in a sense that can be made precise (cf. appendix, Example \ref{friedrichs}). The ultimate reason behind all these results is that $H$ has the form domain $ W^{1,2}_0(M)$, and this space is stable under the operation $f\to \max(f,0)$ for real-valued $f$'s (cf. p. 126 in \cite{gri}).\vspace{2mm}

The next result that we would like to address relies on the following \emph{strong parabolic maximum principle} which is satisfied by $\frac{\partial}{\partial t} -(1/2)\Delta$: Namely, if $I\subset   \IR$ is an open interval, if $0\leq u\in C^2(I\times M)$ satisfies 
$$
\frac{\partial}{\partial t}u -(1/2)\Delta u\geq 0\quad\text{ in $I\times M$, }
$$
and if there exists a point $(t_0,x_0)\in I\times M$ with $u(t_0,x_0)=0$, then one has $u(t,x)=0$ for all $(t,x)\in I\times M$ with $t\leq t_0$. Let us remark that our standing assumption of $M$ being connected is again crucial for this result. As a simple consequence of the fact that $p(\bullet,\bullet,y)$ satisfies the initial value problem (\ref{ddedw}) and the strong parabolic maximum principle, one gets (cf. Corollary 8.12 in \cite{gri}):

\begin{Proposition}\label{saio} There holds the strict positivity
$$
p(t,x,y)>0\quad\text{ for all $(t,x,y)\in (0,\infty)\times M\times M$.}
$$
\end{Proposition}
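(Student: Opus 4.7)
The plan is to argue by contradiction using the two ingredients explicitly mentioned just before the statement: the fact (Theorem \ref{ddfh}) that for every fixed $y \in M$ the function $u_y(t,x) := p(t,x,y)$ is a nonnegative smooth solution of the heat equation $\partial_t u = (1/2)\Delta u$ on $(0,\infty)\times M$ with initial datum $\delta_y$, and the strong parabolic maximum principle for $\partial_t - (1/2)\Delta$ recalled in the paragraph preceding the statement.

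Fix $y \in M$ and suppose, for contradiction, that there exists $(t_0, x_0) \in (0,\infty) \times M$ with $u_y(t_0, x_0) = 0$. Choose any open interval $I \subset (0,\infty)$ containing $t_0$, e.g.\ $I = (0, t_0+1)$. Then $u_y \in C^\infty(I \times M)$, $u_y \geq 0$ on $I \times M$ by Theorem \ref{ddfh}, and $u_y$ satisfies $\partial_t u_y - (1/2)\Delta u_y = 0$ on $I\times M$, hence in particular the differential inequality in the hypothesis of the strong parabolic maximum principle. Since $u_y(t_0, x_0) = 0$, the strong parabolic maximum principle forces $u_y(t,x) = 0$ for every $(t,x) \in I \times M$ with $t \leq t_0$; here is precisely where the connectedness of $M$, which is our standing assumption, enters.

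To reach a contradiction I would now confront this identical vanishing with the initial condition. Pick any nonnegative $\phi \in C^\infty_c(M)$ with $\phi(y) > 0$; such a $\phi$ exists because $M$ is a smooth manifold. On one hand, the initial condition $\lim_{t\to 0+} u_y(t,\bullet) = \delta_y$ in Theorem \ref{ddfh} gives
\[
\lim_{t\to 0+}\int_M u_y(t,x)\phi(x)\,\Id\mu(x) = \phi(y) > 0.
\]
On the other hand, for every $0 < t \leq t_0$ one has $u_y(t,\bullet) \equiv 0$, so the integral on the left equals $0$ for all such $t$, forcing its limit as $t \to 0+$ to be $0$. This contradicts $\phi(y) > 0$. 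Hence no such $(t_0, x_0)$ exists, and $p(t,x,y) > 0$ for all $(t,x,y)$.

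The only mildly delicate point is ensuring that the strong parabolic maximum principle is applied in a correct form — in particular, that we really get vanishing \emph{down to $t \to 0+$}, and not merely on a later time slab; this is exactly what the "$t \leq t_0$" clause in the statement of the principle, as recalled in the paragraph above the proposition, provides, so there is no obstacle. Everything else is routine.
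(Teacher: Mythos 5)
Your argument is correct and is exactly the route the paper indicates: apply the strong parabolic maximum principle to the nonnegative solution $p(\bullet,\bullet,y)$ on $I=(0,t_0+1)$ to force vanishing on $(0,t_0]\times M$, then contradict the initial condition $\lim_{t\to 0+}p(t,\bullet,y)=\delta_y$ by testing against a nonnegative $\phi\in C^{\infty}_{\c}(M)$ with $\phi(y)>0$. The paper itself only cites Corollary 8.12 in Grigor'yan's book, but the sketch it gives is precisely your argument, so nothing further is needed.
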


Compared to the property $p(t,x,y)\geq 0$ (which does not need connectedness), the proof of the strict positivity from Proposition \ref{saio} is rather complicated. \\
Proposition \ref{saio} has an important consequence: It implies that $\mathrm{e}^{-t \H}$ is positivity improving for all $t>0$, that is, one has the implication 
$$
f\in L^2(M)\setminus\{0\},f\geq 0\>\text{$\mu$-a.e.}\> \Rightarrow \>\mathrm{e}^{-t \H}f>0\>\text{$\mu$-a.e.}
$$

This automatically extends to appropriate powers of the resolvent:

\begin{Remark}\label{resss} Let $S$ be any self-adjoint and semibounded operator in a complex Hilbert space $\IHH$, and let $\lambda\in\IC$ with $\Re\lambda <\min\sigma(S)$. Then for every $b>0$ one has the \emph{Laplace transformation formula}
\begin{align}
\label{lpo}
(S-\lambda)^{-b}=\f{1}{\Gamma(b)}\int^{\infty}_0 s^{b-1}\mathrm{e}^{\lambda s}\mathrm{e}^{-s S}\Id s,
\end{align}
where the integral is defined weakly (cf. appendix, Remark \ref{beispiele}). We remark that the definition of the integral in (\ref{lpo}) can also be interpreted in some \lq\lq{}strong\rq\rq{} sense, for example as an improper strong Riemann integral. However, the weak definition in combination with norm estimates will be sufficient for us in the sequel. In particular, taking Laplace transforms, it follows that the resolvent powers $(\H-\lambda)^{-b}$, where $b>0$ and $\lambda<0$, are also positivity improving. Indeed, this property is equivalent to the validity of the implication
$$
f_1,f_2\in L^2(M)\setminus\{0\},f_j\geq 0\>\text{$\mu$-a.e.}\> \Rightarrow \>\left\langle (\H-\lambda)^{-b}f_1,f_2\right\rangle>0\>\text{$\mu$-a.e.},
$$  
which is cleary implied by the positivity improvement property of $\mathrm{e}^{-t \H}$ and the Laplace transformation formula. 
\end{Remark}

The positivity improving property of the semigroup has a well-known spectral consequence, which is important for applications in quantum mechanics:

\begin{Corollary}\label{ddff} If $\lambda:=\min \sigma(\H)$ is an eigenvalue of $\H$, then $\lambda$ is simple and there is a unique eigenfunction $\psi$ of $\H$ corresponding to $\lambda$ which is strictly positive $\mu$-a.e. and satisfies $\left\|\psi\right\|_2=1$. 
\end{Corollary}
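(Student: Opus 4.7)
The strategy is a standard Perron--Frobenius type argument, exploiting the fact that Proposition \ref{saio} (together with the Laplace transform discussion in Remark \ref{resss}) supplies the positivity improving property of $\e^{-tH}$ for every $t>0$. First I would reduce to real eigenfunctions: since $H$ commutes with complex conjugation (its form $Q(f)=\frac{1}{2}\int_M|df|^2\,d\mu$ on $W^{1,2}_0(M)$ being stable under $f\mapsto\bar{f}$), any nonzero eigenfunction $\psi$ for $\lambda$ yields the two real eigenfunctions $\Re\psi$ and $\Im\psi$, at least one of which is nontrivial. Hence it suffices to analyze real-valued eigenfunctions.

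The core step is to show that for any real $\psi\in\dom(H)$ with $H\psi=\lambda\psi$, the modulus $|\psi|$ is itself an eigenfunction with the same eigenvalue. I would invoke Lemma \ref{bett2}, which yields $|\psi|\in W^{1,2}_0(M)=\dom(Q)$ together with the fundamental inequality $Q(|\psi|)\le Q(\psi)$. Combined with the variational characterization
\[
\lambda=\min\sigma(H)=\inf_{f\in\dom(Q),\,f\ne 0}\frac{Q(f)}{\|f\|_2^2}
\]
and the identity $\||\psi|\|_2=\|\psi\|_2$, this forces $Q(|\psi|)=\lambda\||\psi|\|_2^2$, so $|\psi|$ minimizes the Rayleigh quotient and is therefore an eigenfunction for $\lambda$. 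The positivity improving property of $\e^{-tH}$, applied to $|\psi|\ge 0$ with $|\psi|\not\equiv 0$, then gives $\e^{-t\lambda}|\psi|=\e^{-tH}|\psi|>0$ $\mu$-a.e., hence $|\psi|>0$ $\mu$-a.e.

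Applying the same variational argument to the non-negative parts $\psi_\pm:=\max(\pm\psi,0)\in W^{1,2}_0(M)$, using $Q(\psi_+)+Q(\psi_-)=Q(\psi)$ (via $|d\psi_\pm|^2=\mathbf{1}_{\{\pm\psi>0\}}|d\psi|^2$ and the vanishing of $d\psi$ on $\{\psi=0\}$ a.e.) together with $\|\psi_+\|_2^2+\|\psi_-\|_2^2=\|\psi\|_2^2$ to split the Rayleigh quotient, I would conclude that each of $\psi_+,\psi_-$ is either zero or is itself an eigenfunction for $\lambda$; in the latter case the previous paragraph forces strict positivity a.e., which is incompatible with the pointwise identity $\psi_+\psi_-=0$. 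Thus exactly one of $\psi_\pm$ vanishes, so $\psi$ has constant sign, and after normalization we obtain the desired strictly positive $\psi$ with $\|\psi\|_2=1$. For simplicity, let $\phi$ be any eigenfunction for $\lambda$; decomposing into real and imaginary parts it suffices to handle real $\phi$. I would choose $c\in\IR$ so that $\langle\phi-c\psi,\psi\rangle=0$; then $\phi-c\psi$ is an eigenfunction for $\lambda$, so by the above it either vanishes or is strictly signed a.e., but a strictly signed function cannot be orthogonal to the strictly positive $\psi$, forcing $\phi=c\psi$. The main technical hurdle is really the form-domain step $|\psi|\in\dom(Q)$ with $Q(|\psi|)\le Q(\psi)$, which is precisely where Lemma \ref{bett2} is essential and where the choice of the \emph{Friedrichs} realization (whose form domain is $W^{1,2}_0(M)$) is being exploited, allowing the entire argument to proceed without any geodesic completeness hypothesis on $M$.
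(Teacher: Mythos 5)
Your proof is correct, but it takes a different route from the paper: the paper disposes of this corollary by citing the abstract Perron--Frobenius theorem for generators of positivity improving semigroups (Theorem XIII.44 in Reed--Simon IV), whereas you essentially reprove that theorem in the concrete setting. Both arguments hinge on the positivity improving property supplied by Proposition \ref{saio}, but where Reed--Simon's proof is semigroup-theoretic, yours is variational: you use Lemma \ref{bett2} to get $|\psi|\in W^{1,2}_0(M)=\dom(\Q)$ with $\Q(|\psi|,|\psi|)\leq \Q(\psi,\psi)$, identify $|\psi|$ as a minimizer of the Rayleigh quotient and hence as a ground state, and then run the sign and simplicity arguments. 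What your approach buys is self-containedness and a transparent use of the specifically Riemannian input (the $W^{1,2}_0$-stability under $f\mapsto|f|$, i.e.\ exactly the mechanism behind (\ref{inclusion})); what the citation buys is brevity and the fact that the abstract theorem also gives the converse implications. One small remark: your step splitting $\Q(\psi,\psi)=\Q(\psi_+,\psi_+)+\Q(\psi_-,\psi_-)$ leans on the chain rule $\Id\psi_{\pm}=\pm 1_{\{\pm\psi>0\}}\Id\psi$ in $W^{1,2}_0(M)$, which the paper does not record (it only quotes the stability of $W^{1,2}_0(M)$ under $f\mapsto\max(f,0)$ from Grigor'yan); you can avoid this entirely by noting that once $|\psi|$ is known to be an eigenfunction, so are $\psi_{\pm}=\f{1}{2}(|\psi|\pm\psi)$ as elements of the eigenspace, and then the positivity improving property together with $\psi_+\psi_-=0$ a.e.\ already forces one of them to vanish.
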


\begin{proof} This follows from a well-known (Perron-Frobenius-type) functional analytic fact about the generators of positivity improving semigroups on $\IL^2$-spaces (cf. Theorem XIII.44 in \cite{reed4}).
\end{proof}

We will see later on that the positivity improving property of $(\mathrm{e}^{-t \H})_{t>0}$, and thus the analogue of Corollary \ref{ddff}, remains true for certain semigroups of the form $\mathrm{e}^{-t (\H+w)}$, where $w:M\to \IR$ is such that its negative part admits some mild control.\vspace{2mm}

We continue with the following well-known $\IL^q$-results:

\begin{Theorem}\label{mart} a) For any $t>0$, $x\in M$ one has
\begin{align}\label{mar}
\int_M p(t,x,z)\Id\mu(z)\leq 1.
\end{align}
b) For any $q\in [1,\infty]$, $f\in \IL^q(M)$, the function
$$
(0,\infty)\times M\ni (t,x)\longmapsto \mathrm{e}^{-t H}f(x):=\int_M\mathrm{e}^{-t \H}(x,y)f(y)\Id\mu(y)\in \IC
$$
is well-defined and smooth, and for all $t>0$, $x\in M$ one has  
\begin{align}\label{ugh}
&\left\|\mathrm{e}^{-t H}f \right\|_q\leq \left\|f\right\|_q,\\
&\label{hett}
\frac{\partial}{\partial t} \mathrm{e}^{-t H}f(x)=(1/2)\Delta \mathrm{e}^{-t H}f(x).
\end{align}
If $q<\infty$, then one also has $\left\|\mathrm{e}^{-t H}f-f\right\|_q\to 0$ as $t\to 0+$.
%\emph{(iii)}  For some/any $T\in (0,\infty]$, and every $f\inC_{\mathrm{b}}(M)$ there is at most one smooth solution 
%$$
%u:  (0,T)\times M\longrightarrow \IC
%$$
%of the following Cauchy problem:
%\begin{align}
%&\f{\partial}{\partial t} u(t,x)= \f{1}{2}\Delta u(t,x)\\
%&\lim_{t\to 0+}\left\|u(t,\bullet)\mid_{K}- f\mid_{K} \right\|_{\infty}\>\text{ for all compact $K\subset M$.}
%\end{align}
%\emph{d)} For any fixed $y\in M$, the function $u:=p(\bullet,\bullet,y)$ is the pointwise minimal nonnegative smooth function which satisfies
%\begin{align}
%&\f{\partial}{\partial t} u(t,x)= \f{1}{2}\Delta u(t,x)\\
%&\lim_{t\to 0+}u(t,\bullet)= \delta_y\>\>\text{ in the sense of $\mathsf{D}\rq{}(M)$}.
%\end{align}
\end{Theorem}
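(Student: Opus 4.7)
For part (a), I exhaust $M=\bigcup_n U_n$ by connected relatively compact open subsets with $\overline{U_n}\subset U_{n+1}$. Let $H_{U_n}$ denote the Friedrichs realization of $-\Delta/2$ on $L^2(U_n)$ (the Dirichlet Laplacian on $U_n$) with heat kernel $p_{U_n}$. Since $U_n$ is precompact, $\mathbf 1_{U_n}\in L^2(U_n)$, and by the sub-Markov property of the Dirichlet semigroup $e^{-tH_{U_n}}$ (which follows from the Markovian character of the form $\tfrac12\int_{U_n}|dh|^2\,d\mu$ on $W^{1,2}_0(U_n)$, together with the stability of $W^{1,2}_0(U_n)$ under the unit contraction) one has
\[
\int_{U_n} p_{U_n}(t,x,z)\,d\mu(z) \;=\; e^{-tH_{U_n}}\mathbf 1_{U_n}(x) \;\leq\; 1
\]
for all $t>0$, $x\in U_n$; the pointwise bound from the $\mu$-a.e.\ one follows from the smoothness of $p_{U_n}$. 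The heart of the argument is then the monotone convergence $p_{U_n}\nearrow p$ on $(0,\infty)\times M\times M$: the inequality $p_{U_n}\leq p_{U_{n+1}}\leq p$ is Corollary~\ref{monon}, while the pointwise limit $\tilde p(t,x,y):=\lim_n p_{U_n}(t,x,y)$ defines, via local parabolic regularity applied to the uniformly locally bounded monotone sequence, a nonnegative smooth solution of $(\partial_t-\tfrac12\Delta_x)\tilde p=0$ on $(0,\infty)\times M$ with initial datum $\delta_y$, so Theorem~\ref{ddfh} forces $\tilde p\geq p$ and hence $\tilde p=p$. Monotone convergence of integrals then yields~\eqref{mar}.

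For part (b), the $L^\infty$-estimate and the pointwise well-definedness of $e^{-tH}f$ for $f\in L^\infty(M)$ are immediate consequences of~\eqref{mar}, since $|e^{-tH}f(x)|\leq\|f\|_\infty\int p(t,x,z)\,d\mu(z)\leq\|f\|_\infty$. For $q\in[1,\infty)$, applied to the sub-probability measure $p(t,x,z)\,d\mu(z)$, Jensen's inequality gives
\[
|e^{-tH}f(x)|^q \;\leq\; \int_M p(t,x,z)\,|f(z)|^q\,d\mu(z);
\]
integrating in $x$ and then using Fubini together with the symmetry $p(t,x,z)=p(t,z,x)$ from Theorem~\ref{heat} and~\eqref{mar} gives simultaneously the $\mu$-a.e.\ well-definedness of $e^{-tH}f(x)$ and the bound~\eqref{ugh}.

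For the smoothness of $(t,x)\mapsto e^{-tH}f(x)$ on $(0,\infty)\times M$ and equation~\eqref{hett}, the plan is to differentiate under the integral sign, which is justified once one has local uniform bounds, in $(t,x)$ over compact subsets of $(0,\infty)\times M$, on $\|\partial_t^j\partial_x^\alpha p(t,x,\cdot)\|_{q^*}$ with $1/q+1/q^*=1$. These in turn follow by interpolating between the $L^1$-bound from~(a) and the pointwise bound $\sup_{y\in M}p(t,x,y)<\infty$ locally uniformly in $(t,x)$ (the parabolic mean-value estimate alluded to in the introduction), with analogous bounds for derivatives obtained via Chapman--Kolmogorov and the identity $\|p(t/2,x,\cdot)\|_2^2=p(t,x,x)$ together with the smoothness of $p$ from Theorem~\ref{heat}. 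Equation~\eqref{hett} then follows directly from Theorem~\ref{heat}(c). For the strong continuity $\|e^{-tH}f-f\|_q\to 0$ as $t\to 0+$ with $q<\infty$, I would use density of $C^\infty_c(M)$ in $L^q(M)$ together with the $L^q$-contraction to reduce to $\phi\in C^\infty_c(M)$; for $q=2$ this is the $C_0$-semigroup statement, and for $q\neq 2$ it follows by interpolating the $L^2$-convergence against a uniform $L^\infty$-bound $\|e^{-tH}\phi-\phi\|_\infty\leq 2\|\phi\|_\infty$ and, for the $L^1$-estimate, by Cauchy--Schwarz on a fixed compact neighborhood of $\mathrm{supp}(\phi)$ combined with the small-time concentration of $p(t,x,\cdot)$ near $x$. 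The main obstacles are making the monotone convergence $p_{U_n}\nearrow p$ fully rigorous (so as to invoke the minimality of Theorem~\ref{ddfh} on a smooth limit) and the local uniform control on derivatives of $p$ needed to differentiate under the integral, since the pointwise bound on $p$ is genuinely non-elementary on a general Riemannian manifold.
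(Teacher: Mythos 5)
Your overall architecture is sound but differs from the paper's in two places. For part (a) the paper simply quotes the Markov property of $\mathrm{e}^{-tH}$ (Theorem 7.13 in Grigor'yan's book), which rests on the stability of the form domain $W^{1,2}_0(M)$ under the unit contraction; your exhaustion $p_{U_n}\nearrow p$ via Corollary \ref{monon} plus the minimality of Theorem \ref{ddfh} is a legitimate alternative (it is essentially how the minimal heat kernel is constructed in the first place), but note that the sub-Markov property you invoke on each $U_n$ is proved by exactly the same Dirichlet-form argument that works directly on $M$, so the exhaustion buys you only the convenience that $\mathbf 1_{U_n}\in L^2(U_n)$; on $M$ itself one instead approximates $\mathbf 1_M$ monotonically from below by $L^2$ functions. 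For the $L^q$ bound, Jensen versus Hölder on the sub-probability measure $p(t,x,\cdot)\,d\mu$ is an immaterial difference. For the strong continuity the paper reduces to $\phi\in C^\infty_c(M)$ and to $q=1$ via $\|g\|_q^q\leq\|g\|_\infty^{q-1}\|g\|_1$, citing Grigor'yan's Theorem 7.19 for $q=1$; your sketch (compact neighbourhood of the support plus control of the leaked mass via the small-time concentration (\ref{capaa})) is precisely the proof of that cited result, so nothing is lost.

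The one step where your proposed mechanism would fail as stated is the local uniform bound on derivatives of the kernel needed to differentiate under the integral. Chapman--Kolmogorov plus Cauchy--Schwarz gives $|\partial_x^\alpha p(t,x,y)|\leq\|\partial_x^\alpha p(t/2,x,\cdot)\|_2\,\|p(t/2,\cdot,y)\|_2=\|\partial_x^\alpha p(t/2,x,\cdot)\|_2\,p(t,y,y)^{1/2}$, and on a general (non-ultracontractive) manifold $\sup_{y\in M}p(t,y,y)=\infty$ is perfectly possible, so this route does not yield a bound uniform in $y$. The bound you actually need comes from interior parabolic estimates applied to the solution $u=p(\cdot,\cdot,y)$ on a fixed compact parabolic cylinder around $(t,x)$, whose $L^1$-norm over the cylinder is $\leq$ its time-length by part (a), uniformly in $y$ --- this is the $\IL^1$-mean-value mechanism of Theorem \ref{rtt0} and its higher-order analogues. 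The paper avoids the issue entirely: it reduces to $f\geq 0$ by splitting $f=f_1-f_2+\sqrt{-1}(f_3-f_4)$ and then invokes the local parabolic regularity statement (Grigor'yan, Theorem 7.15) that a nonnegative function of the form $\int p(t,x,y)f(y)\,d\mu(y)$ is automatically smooth and solves the heat equation as soon as it lies in $\IL^1_{\loc}((0,\infty)\times M)$, which is then checked directly from the already established bound (\ref{ugh}). If you want to keep your differentiation-under-the-integral strategy, replace the Chapman--Kolmogorov step by the interior parabolic estimate; otherwise your argument for the smoothness and for (\ref{hett}) is incomplete at exactly the point you flagged.
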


\begin{proof} a) The inequality (\ref{mar}) is contained in Theorem 7.13 from \cite{gri}. Ultimately, this follows again from the stability of the form domain $ W^{1,2}_0(M)$ under the operation $f\to \max(f,0)$, if $f$ is real-valued.\\
b) Let us first show (\ref{ugh}): The bound (\ref{mar}) trivially implies (\ref{ugh}) for $q=1,\infty$. For the case $1< q<\infty$, we define a Borel sub-probability measure 
$$
\Id\mu_{t,x}(  y):=p(t,x,y)\Id\mu(y)\>\text{ on $M$.}
$$
Then with $q^*$ the dual Hölder exponent of $q$, one has
\begin{align*}
& \left\|\mathrm{e}^{-t H}f\right\|_q^q=\int_M \left|\int_Mp(t,x,y)f(y)\Id\mu(y)\right|^q \Id\mu(x)\\
&\leq  \int_M \left(\int_M 1\cdot |f(y)|\mu_{t,x}(\Id y)\right)^q \Id\mu(x)\\
&\leq  \int_M \left[\mu_{t,x}(M)^{1/q^*}\left(\int_M  |f(y)|^q\mu_{t,x}(\Id y)\right)^{1/q}\right]^q \Id\mu(x)\\
&\leq \int_M\int_M|f(y)|^q p(t,x,y) \Id\mu(y)\Id\mu(x)\leq \left\|f\right\|_{q}^q,
\end{align*}
where we have used Hölder\rq{}s inequality for $\mu_{t,x}(\Id y)$, Fubini and (\ref{mar}).\\
In order to prove the asserted smoothness and (\ref{hett}), we can assume $f\geq 0$ (otherwise write 
$$
f=f_1-f_2+\sqrt{-1}(f_3-f_4)\>\text{ with $f_j\geq 0$}
$$ 
and apply the result to each $f_j$). Then Theorem 7.15 in \cite{gri} implies the asserted smoothness with (\ref{hett}), once we can show that $(t,x)\mapsto \mathrm{e}^{-t H}f(x)$ is in $\IL^1_{\loc}((0,\infty)\times M)$ (a consequence of local parabolic regularity). But in view of (\ref{ugh}), for any $T_2>T_1>0$ and any compact $K\subset M$, we clearly have
\begin{align*}
&\int_K\int^{T_2}_{T_1} \mathrm{e}^{-t H}f(x) \Id t   \Id\mu(x)\leq \int^{T_2}_{T_1}\int_K (\mathrm{e}^{-t H}f(x)+1)^q \Id \mu(x) \Id t\\
&\leq 2^{q-1}\left\|f\right\|_q^q (T_2-T_1) +2^{q-1}\mu(K)(T_2-T_1)<\infty\end{align*}
in the case of $q<\infty$, and
$$
\int_K\int^{T_2}_{T_1} \mathrm{e}^{-t H}f(x) \Id t  \  \Id\mu(x)\leq \left\|f\right\|_{\infty}\mu(K)^{T_2}_{T_1}<\infty,
$$
in the case of $q=\infty$.\\
Finally, assume $q<\infty$. In order to see $\left\|\mathrm{e}^{-t H}f-f\right\|_q\to 0$, we can pick a sequence $(f_n)\subset \ICC_{\c}(M)$  with $\left\|f_n-f \right\|_q\to 0$ as $n\to\infty$. We have 
\begin{align*}
& \left\| \mathrm{e}^{-t H}f -f\right\|_q=  \left\| \mathrm{e}^{-t H} (f    - f_n)   +  f_n-f   +  \mathrm{e}^{-t H}f_n-f_n\right\|_q\nn\\
&\leq   2  \left\| f-f_n\right\|_q+ \left\| \mathrm{e}^{-t H}f_n-f_n\right\|_q\text{ for any $n$}, 
\end{align*}
where we used (\ref{ugh}), and so it remains to prove  $\left\| \mathrm{e}^{-t H}f_n-f_n\right\|_q\to 0$ as $t\to 0+$, for all $n$. The case $q=1$ has been established in Theorem 7.19 from \cite{gri}, so let us assume $1<q<\infty$. Then we can estimate
\begin{align*}
&\left\| \mathrm{e}^{-t H}f_n-f_n\right\|^q_q=\int_M |\mathrm{e}^{-t H}f_n-f_n | |\mathrm{e}^{-t H}f_n-f_n |^{q-1}\Id\mu\\
&\leq \left\| \mathrm{e}^{-t H}f_n-f_n\right\|^{q-1}_{\infty}\left\| \mathrm{e}^{-t H}f_n-f_n\right\|_{1}\leq   (  2\left\| f_n\right\|_{\infty}  )^{q-1}     \left\| \mathrm{e}^{-t H}f_n-f_n\right\|_{1},
\end{align*}
where we used (\ref{ugh}) again. Therefore, the claim follows from the case $q=1$.\end{proof}

\begin{Remark} While it is not true that $\left\|\mathrm{e}^{-t H}f-f\right\|_{\infty}\to 0$ as $t\to 0+$ for all $f\in \IL^{\infty}(M)$, the following local result for bounded \emph{continuous} functions, which follows from the initial value in (\ref{ddedw}) and a straightforward approximation argument (cf. Theorem 7.16 in \cite{gri}), is often useful: For all $f\in C_{b}(M)$ and all compact $K\subset M$, one has
\begin{align}\label{capaa}
\left\|1_{K}(\mathrm{e}^{-t H}f-f)\right\|_{\infty}\to 0\quad\text{  as $t\to 0+$. }
\end{align}
\end{Remark}

%let us provide the reader with a simple proof of this fact: \\
%In view of the strong convergence
%$$
%\mathrm{e}^{-t H}=\lim_{n\to \infty} (n/t)^{n}(H+n/t)^{-n}
%$$
%it is sufficient to prove that $(H+a)^{-1}$ is positivity preserving for all $a>0$, so let $0\leq f\in\IL^2(M)$ be given. Note first that $(H+a)^{-1}$ preserves reality (as $H$ arises from a complexification). With 
%$$
%h:=(H+a)^{-1}f\in \dom(H)\subset W^{1,2}_0(M),
%$$
%we have $0\leq h_-:=-\min\{h,0\}\in W^{1,2}_0(M)$ by the Sobolev chain rule, with $\Id h_-= -\Id h$ on $\{h<0\}$ and $\Id h = 0$ elsewhere. Now we can calculate 
%\begin{align*}
%&-a\left\|h_-\right\|^2=a\left\langle h,h_-\right\rangle=\left\langle f,h_-\right\rangle-\left\langle Hh,h_-\right\rangle\\
%&=\left\langle f,h_-\right\rangle+\int_{\{h<0\}}|\Id h(x)|^2 \Id \mu \geq 0,
%\end{align*}
%where the first equality is trivial from the definition of the negative part of a function, the second identity follows from multiplying $(H+a)h=f$ with $h_-$ and integrating, and the last equality follows from integrating by parts using the above formula for $\Id h_-$. We have thus shown $h_-=0$, which shows that $(H+a)^{-1}$, and as a consequence, $\mathrm{e}^{-t H}$ is positivity preserving.  \vspace{2mm}
We continue with an important consequence of Theorem \ref{mart}, namely that the following (partially localized) $\IL^{q_1}(M)\to \IL^{q_2}(M)$ bounds are valid \emph{on any Riemannian manifold:}

\begin{Theorem}\label{aa} a) For all $t\geq 0$, $q\in [1,\infty]$, one has $\left\|\mathrm{e}^{-t \H}\right\|_{q,q}\leq 1$, where for any $f\in\IL^q(M)$ we define
$$
\mathrm{e}^{- t \H}f(x):=\int_M p(t,x,y)f(y)\Id\mu(y).
$$
b) For any $t>0$ and any relatively compact open subset $U\subset M$, one has
\begin{align}\label{buc}
C_U(t):=\sup_{x\in U,y\in M}p(t,x,y)<\infty.
\end{align}
Morover, for any $t>0$, any open $U\subset M$ with $C_U(t)<\infty$ and any $q_1,q_2\in [1,\infty]$ with $q_1\leq q_2$, it holds that
\begin{align}\label{daaa}
\left\|1_U\mathrm{e}^{-t \H}\right\|_{q_1,q_2}\leq C_U(t)^{\f{1}{q_1}-\f{1}{q_2}}.
\end{align} 
\end{Theorem}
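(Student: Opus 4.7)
Part (a) is essentially a restatement of \eqref{ugh} from Theorem \ref{mart}, where the $L^q$-contractivity of $\mathrm{e}^{-tH}$ for all $q\in[1,\infty]$ was already established via the sub-Markov inequality \eqref{mar} and a Hölder/Jensen argument against the sub-probability kernel $p(t,x,y)\Id\mu(y)$; I would simply cite that.

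The hard part of (b) is the pointwise bound \eqref{buc}; once that is in hand, \eqref{daaa} will fall out of a direct Hölder$+$Fubini computation, with no need for Riesz--Thorin interpolation. To prove \eqref{buc}, note that one cannot argue by Chapman--Kolmogorov and symmetry alone: that route yields only $p(t,x,y)\le\sqrt{p(t,x,x)p(t,y,y)}$, which would require a uniform diagonal bound on \emph{all} of $M$. Instead, I would invoke a \emph{local parabolic $L^{1}$-mean-value inequality} (as collected in A.\ Grigor'yan's book and carried out in \cite{guenkat}). Fixing $y\in M$ and considering the nonnegative smooth solution $u(s,x):=p(s,x,y)$ of the heat equation on $(0,\infty)\times M$, which by Theorem \ref{mart}(a) satisfies the sub-Markov mass bound $\int_M u(s,x)\Id\mu(x)\le 1$ uniformly in $s>0$, the mean-value inequality applied on a parabolic cylinder $[t/2,t]\times\IB(x_0,r)$ around a point $x_0\in\overline{U}$ will produce
\begin{align*}
p(t,x_0,y)\leq C(x_0,t,r)\int_{t/2}^{t}\int_{\IB(x_0,r)} p(s,x,y)\Id\mu(x)\Id s\leq C(x_0,t,r)\cdot (t/2),
\end{align*}
with a constant $C(x_0,t,r)$ that depends only on the local geometry of $M$ near $x_0$ and, crucially, is \emph{independent of} $y$. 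A finite cover of the compact set $\overline{U}$ by such geodesic balls then produces a uniform constant $C_U(t)<\infty$. This is the step I expect to be most delicate, since the mean-value inequality has to be set up without any curvature assumption on $M$.

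With \eqref{buc} secured, the proof of \eqref{daaa} will proceed as follows. For $q_1\in(1,\infty)$, an application of Hölder with conjugate exponents $q_1,q_1^{*}$ to $\int p(t,x,y)|f(y)|\Id\mu(y)$ against the sub-probability kernel yields the Jensen-type pointwise bound
\begin{align*}
|\mathrm{e}^{-tH}f(x)|^{q_1}\leq \int_M p(t,x,y)|f(y)|^{q_1}\Id\mu(y),
\end{align*}
which also holds trivially for $q_1=1$. For $q_2<\infty$ I would raise this to the $q_2/q_1$-th power and bootstrap one of the two resulting factors by $p(t,x,y)\leq C_U(t)$ for $x\in U$, obtaining
\begin{align*}
|1_U(x)\mathrm{e}^{-tH}f(x)|^{q_2}\leq \bigl(C_U(t)\|f\|_{q_1}^{q_1}\bigr)^{q_2/q_1-1}\cdot 1_U(x)\int_M p(t,x,y)|f(y)|^{q_1}\Id\mu(y).
\end{align*}
Integrating over $x$ and swapping the integrals by Fubini reduces everything to $\int_U p(t,x,y)\Id\mu(x)$, which by symmetry $p(t,x,y)=p(t,y,x)$ and sub-Markovicity is bounded by $1$; this will yield $\|1_U\mathrm{e}^{-tH}f\|_{q_2}^{q_2}\leq C_U(t)^{q_2/q_1-1}\|f\|_{q_1}^{q_2}$, which is exactly \eqref{daaa} after taking the $q_2$-th root. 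The endpoint $q_2=\infty$ is read off directly from the Hölder step as $|\mathrm{e}^{-tH}f(x)|\leq C_U(t)^{1/q_1}\|f\|_{q_1}$ for $x\in U$, and the degenerate case $q_1=q_2$ reduces to the $L^{q}$-contraction from (a).
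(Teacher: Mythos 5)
Your argument is correct, and for \eqref{daaa} and part (a) it matches the paper (part (a) is indeed just a citation of \eqref{ugh}, and your Jensen-plus-bootstrap computation for \eqref{daaa} is a slightly streamlined version of the paper's direct three-exponent H\"older argument; both deliberately avoid Riesz--Thorin). The genuine divergence is in the proof of the key bound \eqref{buc}. You reach for the parabolic $\IL^1$-mean-value inequality on small geodesic balls satisfying a Faber--Krahn inequality; this works and is exactly the machinery the paper deploys \emph{later}, in Theorem \ref{mean} (via Theorem \ref{rtt0}), to get the stronger quantitative estimate $p(t,x,y)\leq C\min\bigl(t,R(x)^2\bigr)^{-m/2}$ uniformly in $y$, from which \eqref{buc} follows by continuity of the Euclidean radius and compactness of $\overline{U}$. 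The paper's own proof of \eqref{buc} at this stage is much softer: since $\mathrm{e}^{-s\H}f$ is smooth for $f\in\IL^1(M)$ (Theorem \ref{mart}), the map $1_U\mathrm{e}^{-s\H}:\IL^1(M)\to\IL^\infty(M)$ is algebraically well defined, hence bounded by the closed graph theorem, with some norm $B_U(s)$; applying this to $p(t/2,\bullet,y)\in\IL^1(M)$ and using Chapman--Kolmogorov together with \eqref{mar} gives $p(t,x,y)\leq B_U(t/2)$ for all $x\in U$, $y\in M$. So you are right that Chapman--Kolmogorov plus symmetry alone is insufficient, but wrong to infer that the mean-value inequality is therefore needed: the closed-graph route costs nothing beyond Theorem \ref{mart}, at the price of a completely inexplicit constant, whereas your route imports the heavier local Faber--Krahn machinery but yields a constant controlled by the local geometry of $U$.
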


\begin{proof} a) This statement is included in  Theorem \ref{mart} b).\\
%b) In order to see (\ref{buc}) assume first that $x=y$. Then by the continuity of $p(t,\bullet,\bullet)$ we clearly have
%$$
%p(t,x,x)\leq A_U(t):=\sup_{z\in \overline{U}}p(t,z,z)<\infty.
%$$
b) In order to see (\ref{buc}), note first that by the smoothing part of Theorem \ref{mart} b) we have the a priori algebraic mapping property
\begin{align}\label{fsl}
1_U\mathrm{e}^{-s \H}:\IL^1(M)\longrightarrow \IL^\infty(M),
\end{align}
which by the closed graph theorem (keeping in mind that the $\IL^1$-convergence of a sequence implies the existence of a subsequence which converges $\mu$-a.e.) self-improves in the sense that (\ref{fsl}) is in fact a \emph{bounded} operator. Let us denote the operator norm of (\ref{fsl}) by $B_U(s) <\infty$, for any $s>0$. Using the Chapman-Kolmogorov equation, an application of this boundedness to $p(t/2,\bullet,y)\in \IL^1(M)$ and using (\ref{mar}), we find that for all $x\in U$, $y\in M$ one has 
\begin{align*}
&p(t,x,y)=\left[\mathrm{e}^{-\frac{t}{2} \H}p(t/2,\bullet,y)\right] (x)\leq \sup_{x'\in U}\left[\mathrm{e}^{-\frac{t}{2} \H}p(t/2,\bullet,y)\right] (x')\\
&\leq B_U(t/2) \int_M p(t/2,z,y)\Id\mu(z)\leq B_U(t/2) ,
\end{align*}
thus we arrive at the bound
$$
C_U(t)\leq   B_U(t/2) <\infty.
$$
Let us now give a proof of (\ref{daaa}). In view of $C_U(t)<\infty$, this is certainly possible using Riesz-Thorin\rq{}s interpolation theorem. However, it is also possible to give a direct proof. For this, let $U$ be an arbitrary Borel set with $C_U(t)<\infty$, and let $f\in\IL^{q_1}(M)$. \\
Case $1<q_1<q_2<\infty$: Let $r$ be given as $1-1/r=1/q_1-1/q_2$. Applying Hölder's inequality with the exponents 
\[
p_1=q_2,\>\>p_2=\f{r}{1-\f{r}{q_2}},\>\>p_3=\f{q_1}{1-\f{q_1}{q_2}} 
\]
shows that $\left\|1_U\mathrm{e}^{-t \H}f\right\|^{q_2}_{q_2}$ is
\begin{align}
&\leq \int_U\left( \int_M \left(  p(t,x,y)^r |f(y)|^{q_1}\right)^{\f{1}{q_2}}p(t,x,y)^{1-\f{r}{q_2}}|f(y)|^{1-\f{q_1}{q_2}}    \Id\mu(y)\right)^{q_2} \Id\mu(x)\nn\\
&\leq \int_U \left(\int_M p(t,x,y)^r|f(y)|^{q_1} \Id\mu(y) \right) \left(\int_M p(t,x,y)^r\Id\mu(y)  \right)^{\f{q_2}{r}\left(1-\f{r}{q_2} \right)}\nn\\
&\>\>\>\>\times \left( \int_M |f(y)|^{q_1}\Id\mu(y) \right)^{\f{q_2}{q_1}\left(1-\f{q_1}{q_2} \right) } \Id\mu(x),\nn
\end{align}
so that by using (\ref{mar}) and (\ref{buc}) twice we get
\begin{align*}
&\left\|1_U\mathrm{e}^{-t \H}f\right\|^{q_2}_{q_2}\\
&\leq   C_U(t)^{(1-\f{1}{r}) q_2\left(1-\f{r}{q_2} \right)}\left\|f\right\|^{q_2\left(1-\f{q_1}{q_2} \right) }_{q_1} \int_M |f(y)|^{q_1}\int_U p(t,x,y)^r \Id\mu(x) \Id\mu(y)\\
&\leq    C_U(t)^{q_2\left( \f{1}{q_1}-\f{1}{q_2}\right) }\left\|f\right\|^{q_2 }_{q_1}.
\end{align*}
Case $1<q_1<q_2=\infty$: With $q_1^*$ the Hölder dual exponent of $q_1$, we get
\begin{align*}
\left\|1_U\mathrm{e}^{-t \H}f\right\|_{\infty}\leq \sup_{x\in U}  \left\|p(t,x,\bullet)\right\|_{q_1^*}\left\|f\right\|_{q_1}\leq  C_U(t)^{1/q_1}\left\|f\right\|_{q_1}.
\end{align*}
Case $1=q_1<q_2<\infty$: One immediately gets 
\begin{align*}
 \left\| 1_U\mathrm{e}^{-t \H}f\right\|^{q_2}_{q_2} \leq \int_U\left( \int_M \left(  p(t,x,y)^{q_2} |f(y)|\right)^{\f{1}{q_2}}|f(y)|^{1-\f{1}{q_2}}    \Id\mu(y)\right)^{q_2} \Id\mu(x).
\end{align*}
Applying the Hölder inequality with the exponents 
\[
p_1=q_2,\>\> p_2=\f{1}{1-\f{1}{q_2}}
\]
gives
\begin{align*}
 \left\| 1_U\mathrm{e}^{-t \H}f\right\|^{q_2}_{q_2} \leq \left\|f\right\|^{q_2-1 }_1\int_U\int_M  p(t,x,y)^{q_2} |f(y)|\Id\mu(y) \Id\mu(x),
\end{align*}
so that Fubini, (\ref{mar}) and (\ref{buc}) imply
\[
 \left\|1_U\mathrm{e}^{-t \H}f\right\|^{q_2}_{q_2}\leq C_U(t)^{q_2\left( 1-\f{1}{q_2}\right) }\left\|f\right\|^{q_2}_1.
\]
The cases $q_1=q_2$ follow from part a), and the case $q_1=1$, $q_2=\infty$ is trivial. This completes the proof.
\end{proof}

\begin{Remark}\label{noni}1. For any fixed $x\in M$, the function
$$
(0,\infty)\ni t\longmapsto  p(t,x,x)\in (0,\infty)
$$
is nonincreasing. Indeed, using the Chapman-Kolomogorov equation and $\left\|\mathrm{e}^{-u \H}\right\|_{2,2}\leq 1$ for all $u>0$, we get the following estimates for all $s<t$:
\begin{align*}
p(t,x,x)&=\left\|p(t/2,x,\bullet)\right\|_2^2=\left\|\mathrm{e}^{-(t/2-s/2) \H}p(s/2,x,\bullet)\right\|^2_2\\
&\leq \left\|p(s/2,x,\bullet)\right\|^2_2=p(s,x,x).
\end{align*}
2. By the Chapman-Kolomogorov identity and Cauchy-Schwarz, one has
\begin{align}\label{self}
p(t,x,y)\leq \sqrt{p(t,x,x)}\sqrt{p(t,y,y)}\>\text{ for all $x,y\in M$,}
\end{align}
on any Riemannian manifold.\\
3. By (\ref{self}) we get
\begin{align}\label{miniself}
C(t):=\sup_{x\in M}p(t,x,x)=\sup_{x,y\in M}p(t,x,y)\in [0,\infty]\>\text{ for all $t>0$}.
\end{align}
If for some $t>0$ one has $C(t)<\infty$, then by the first part of this remark one automatically has $C(T)<\infty$ for all $T\geq t$. Morever, under the condition $C(t)<\infty$ one can take $U=M$ in (\ref{daaa}). It should be noted, however, that the validity of the global \lq\lq{}ultracontractivity\rq\rq{} $C(t)<\infty$ depends very sensitively on the geometry (that is, the Riemannian metric).
\end{Remark}

A generalization of Theorem \ref{aa} to covariant Schrödinger semigroups will be derived later on.\\

We continue with the following new concept that will be convenient in a moment:

\begin{Definition}\label{deee} Given $x\in M$ and $b>1$, let $r_{\mathrm{Eucl}}(x,b)$ be the supremum of all $r>0$ such that $\IB(x,r)$ is relatively compact and admits a coordinate system 
$$
\phi:\IB(x,r)\longrightarrow U\subset \IR^m
$$
with $\phi(x)=0$, and with respect to which one has the following inequality for all $y\in \IB(x,r)$:
\begin{align}\label{equi}
&\f{1}{b}(\delta_{ij})\leq (g_{ij}(y)):= \left(g(\partial_i,\partial_j)(y)\right)\leq b (\delta_{ij})\>\text{ as symmetric bilinear forms.}\end{align}
We call $r_{\mathrm{Eucl}}(x,b)$ \emph{the Euclidean radius of $M$ at $x$ with accuracy $b$}. Every coordinate system on $\IB(x,r)$, where $r<r_{\mathrm{Eucl}}(x,b)$, which satisfies (\ref{equi}) will be called a \emph{Euclidean coordinate system with accuracy $b$}.
\end{Definition}

In the following lemma, we collect some elementary properties of the Euclidean radius:

\begin{Lemma}\label{ddvc} a) For any $x\in M$ and $b>1$, one has $r_{\mathrm{Eucl}}(x,b)\in (0,\infty]$, and for every fixed $\epsilon>0$, the function
$$
M\longrightarrow (0,\epsilon],\>\>x\longmapsto \min(r_{\mathrm{Eucl}}(x,b),\epsilon) 
$$
is $1$-Lipschitz with respect to the Riemannian distance. In particular, 
$$
\inf_{x\in K}r_{\mathrm{Eucl}}(x,b)>0\>\>\text{ for every compact $K\subset M$.}
$$
b)  Let $x\in M$, $b>1$, $0< r < r_{\mathrm{Eucl}}(x,b)$, and let
$$
\phi:\IB(x,r)\longrightarrow U\subset \IR^m
$$
be a Euclidean coordinate system with accuracy $b$. Then one has
 \begin{equation}\label{inclusion2}
 \IB^{\IR^m}(0,b^{-1/2}r) \subset \phi(\IB(x,r)) \subset \IB^{\IR^m}(0,b^{1/2}r),
 \end{equation}
 where $\IB^{\IR^m}\subset \IR^{m}$ denotes the Euclidean balls. Moreover, one has the estimates
\begin{align}\label{stef2}
&\varrho(x,z)\leq b^{1/2}|\phi(z)|\quad\text{ for all $z\in \phi^{-1}(\IB^{\IR^m}(0,b^{-1/2}r))$, and}\\\label{stef3}
&|\phi(z)|\leq b^{1/2}\varrho(x,z)\quad\text{ for all $z\in \IB(x,r)$.}
\end{align}
\end{Lemma}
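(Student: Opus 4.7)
For part (a), I would first establish positivity $r_{\mathrm{Eucl}}(x,b)>0$ at a fixed $x$: pick any smooth chart $\psi$ near $x$ with $\psi(x)=0$ and replace it with $\phi := A\psi$, where $A$ is the unique symmetric positive-definite matrix satisfying $A^{T}A = (g_{ij}(x))_{\psi}$; then $g_{ij}=\delta_{ij}$ at $x$ in the $\phi$-chart, and continuity of the metric forces (\ref{equi}) on a sufficiently small (hence relatively compact) ball around $x$. For the Lipschitz statement, if $\phi:\IB(y,r)\to U$ is any Euclidean coordinate at $y$ with accuracy $b$ and $d:=\varrho(y,y')<r$, then the triangle inequality gives $\IB(y',r-d)\subset \IB(y,r)$ (so it is relatively compact), and the translated chart $\phi-\phi(y')$ restricted to $\IB(y',r-d)$ is a Euclidean coordinate at $y'$ with the same accuracy $b$. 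Taking the supremum over admissible $r$ yields $r_{\mathrm{Eucl}}(y',b)\geq r_{\mathrm{Eucl}}(y,b)-\varrho(y,y')$; this inequality is preserved under $\min(\cdot,\epsilon)$ and, once symmetrized, gives the $1$-Lipschitz claim. The infimum statement then follows by the extreme value theorem applied to the continuous strictly positive function $\min(r_{\mathrm{Eucl}}(\cdot,b),\epsilon)$.

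For part (b), the workhorse is the pointwise length comparison: for any piecewise smooth curve $\gamma$ in $\IB(x,r)$, substituting (\ref{equi}) into $|\dot\gamma|_g^2 = g_{ij}\dot\gamma^i\dot\gamma^j$ gives
\[
b^{-1/2}L_{\mathrm{Eucl}}(\phi\circ\gamma) \leq L_g(\gamma) \leq b^{1/2}L_{\mathrm{Eucl}}(\phi\circ\gamma).
\]
From this, (\ref{stef3}) and the right inclusion of (\ref{inclusion2}) come almost for free: for $z\in\IB(x,r)$, any curve $\gamma$ from $x$ to $z$ of $g$-length $L<r$ stays inside $\IB(x,r)$ (since $\varrho(x,\gamma(t))\leq L_g(\gamma|_{[0,t]})<r$), so $\phi\circ\gamma$ is a legitimate curve in $\IR^m$ from $0$ to $\phi(z)$ and $|\phi(z)|\leq L_{\mathrm{Eucl}}(\phi\circ\gamma)\leq b^{1/2}L$; taking the infimum over such $\gamma$ yields $|\phi(z)|\leq b^{1/2}\varrho(x,z)$.

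The main obstacle is the opposite direction, namely (\ref{stef2}) together with the left inclusion in (\ref{inclusion2}), because $\phi(\IB(x,r))$ need not be star-shaped about $0$, so the naive pullback of the Euclidean line segment from $0$ to a given point of $\IR^m$ need not make sense. To circumvent this, given $v\in\IR^m$ with $|v|<b^{-1/2}r$, I would set
\[
T:=\sup\{t\in[0,1] : sv\in\phi(\IB(x,r))\text{ for all }s\in[0,t]\}
\]
and look at the pullback curve $z_s:=\phi^{-1}(sv)$ defined on $[0,T)$. The length comparison bounds $|\dot z_s|_g\leq b^{1/2}|v|$, whence $\varrho(x,z_s)\leq b^{1/2}s|v|<r$ on $[0,T)$, keeping the curve uniformly $g$-Lipschitz inside a compact subset of $\IB(x,r)$ and permitting continuous extension to a point $z_T$ with $\varrho(x,z_T)\leq b^{1/2}T|v|<r$. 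Hence $z_T\in\IB(x,r)$, and openness of $\phi(\IB(x,r))$ forces $T=1$, so every such $v$ lies in $\phi(\IB(x,r))$; specializing $v:=\phi(z)$ for $z\in\phi^{-1}(\IB^{\IR^m}(0,b^{-1/2}r))$ simultaneously delivers the estimate $\varrho(x,z)\leq b^{1/2}|\phi(z)|$ of (\ref{stef2}).
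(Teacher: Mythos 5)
Your proof is correct and follows essentially the same route as the paper's: the pointwise length comparison $b^{-1/2}\ell^{\IR^m}(\phi\circ\gamma)\leq\ell(\gamma)\leq b^{1/2}\ell^{\IR^m}(\phi\circ\gamma)$ drives part (b), with nearly length-minimizing curves giving (\ref{stef3}) and an exit-time analysis of the pulled-back radial segment giving the left inclusion and (\ref{stef2}). The only differences are cosmetic: in (a) your chart-translation/restriction argument cleanly justifies the inequality $r_{\mathrm{Eucl}}(y',b)\geq r_{\mathrm{Eucl}}(y,b)-\varrho(y,y')$ that the paper's case analysis essentially takes as its starting point, and in (b) you run the exit-time step as a direct continuity method, whereas the paper argues by contradiction via a convergent subsequence in the compact set $\overline{\IB(x,b^{1/2}|\xi|)}$.
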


\begin{proof} a) Clearly we have $r_{\mathrm{Eucl}}(x,b)\in (0,\infty]$, since around each point $x\in M$ we can pick a coordinate system whose domain is included in a compact subset of $\IR^m$. For such a coordinate system, we have (\ref{equi}) for some $b\rq{}>1$, and scaling induces a coordinate system with (\ref{equi}). \\
To see the asserted Lipschitz continuity, let $x\in M$ and set $r(x):=r_{\mathrm{Eucl}}(x,b)$, $\tilde{r}(x):=\min(r(x),\epsilon)$. \\
Let first $y\in \IB(x,\tilde{r}(x))$, so that $r(y)\geq \tilde{r}(x)-\varrho(x,y).$ Moreover 
$$
0<\tilde{r}(x)-\varrho(x,y)<1
$$ 
follows from $\tilde{r}(x)=\min(\epsilon,r(x))$ and $\varrho(x,y)< \tilde{r}(x)$. Therefore  
$$
\min(\epsilon,r(y))\geq \min(r(x),\epsilon)-\varrho(x,y),\ \text{ that is }\ \tilde{r}(y)\geq \tilde{r}(x)-\varrho(x,y).
$$ 
If $\tilde{r}(x)\geq \tilde{r}(y)$, we can conclude that 
$$
|\tilde{r}(x)-\tilde{r}(y)|\leq \varrho(x,y).
$$
 If $\tilde{r}(x)<\tilde{r}(y)$, then $x\in \IB(y,\tilde{r}(y))$. This implies $r(x)\geq \tilde{r}(y)-\varrho(x,y)$. This inequality, as before, leads to the conclusion that 
$$
\tilde{r}(x)\geq \tilde{r}(y)-\varrho(x,y),\>\text{ so that }\>|\tilde{r}(x)-\tilde{r}(y)|\leq \varrho(x,y).
$$
Suppose now that $y\notin \IB(x,\tilde{r}(x))$. If $x\notin \IB(y,\tilde{r}(y))$ as well, we immediately get 
$$
|\tilde{r}(x)-\tilde{r}(y)|\leq \varrho(x,y).
$$
If $x\in \IB(y,\tilde{r}(y))$, we have, as above,
$$
r(x)\geq \tilde{r}(y)-\varrho(x,y),\>\text{ that is }\>r(x)\geq \min(r(y),\epsilon)-\varrho(x,y),
$$ 
which in turn implies 
$$
\min(r(x),\epsilon) \geq \min(r(y),\epsilon)-\varrho(x,y),
$$
which shows
$$
\tilde{r}(x)\geq \tilde{r}(y)-\varrho(x,y).
$$ 
Finally, in this last case we have $\tilde{r}(y)>\tilde{r}(x)$, so we can conclude that 
$$
|\tilde{r}(x)-\tilde{r}(y)|\leq \varrho(x,y).
$$
This completes the proof of part a).\\
b) The following proof has been communicated to the author by S. Pigola: Let us first prove (\ref{stef3}), which also directly implies the second inclusion in (\ref{inclusion2}). For this, let $z \in \IB(x,r)$ and, having fixed $0< \epsilon \ll 1$ such that
\[
\varrho(x,z) + \epsilon < r,
\]
consider any piecewise smooth curve $\gamma_{\epsilon} : [0,1] \to M$ connecting $x$ with $z$ and satisfying
\[
(\varrho(x,z) \leq )\quad \ell(\gamma_{\epsilon}) \leq \varrho(x,z) + \epsilon \quad (<r),
\]
where 
$$
\ell(\gamma):= \int^1_0\left|\dot{\gamma}(t)\right|\Id t= \int^1_0\sqrt{g(\dot{\gamma}(t),\dot{\gamma}(t))}\Id t 
$$
denotes the length of a smooth curve $\gamma:[0,1]\to M$ (with an obvious modification, if $\gamma$ is only piecewise smooth). Likewise, $\ell^{\IR^m}$ will denote its analogue with respect to the Euclidean metric on $\IR^m$. Clearly,
\[
\gamma_{\epsilon}(t) \in \IB(x,r)\quad \text{ for all $t \in [0,1]$}.
\]
Indeed, this follows from
\[
\varrho(x, \gamma_{\epsilon}(t)) \leq \ell(\gamma_{\epsilon}|_{[0,t]}) \leq \ell(\gamma_{\epsilon}) <r.
\]
Then 
$$
\psi_{\epsilon}: = \phi\circ \gamma_{\epsilon} : [0,1] \longrightarrow \phi(\IB(x,r)) = U
$$
is a piecewise smooth curve connecting $\psi_{\epsilon}(0) = 0$ with $\psi_{\epsilon}(1) = \phi(z)$, and we have
\begin{align*}
|\phi(z)| &\leq \ell^{\IR^m}(\psi_{\epsilon}) = \int_{0}^{1} \sqrt{ \sum_{ij}\delta_{ij} \dot \psi_{\epsilon}^{i} \dot \psi_{\epsilon}^{j}} \Id t\\
&\leq b^{1/2} \int_{0}^{1} \sqrt{\sum_{ij}g_{ij}  \dot \psi_{\epsilon}^{i} \dot \psi_{\epsilon}^{j}} \Id t
= b^{1/2} \ell(\gamma_{\epsilon})\leq b^{1/2} ( \varrho(x,z) + \epsilon ).
\end{align*}
By letting $\epsilon \to 0$, we conclude $|\phi(z)| \leq b^{1/2}  \varrho(x,z)$, so we have (\ref{stef3}) and the second inclusion  in (\ref{inclusion2}).\\
Next, let us prove the first inclusion in (\ref{inclusion2}). For this, let $\xi \in \IB^{\IR^m}(0,b^{-1/2}r)$. By contradiction, suppose that $\xi \not\in \phi(\IB(x,r))$. Consider the segment $\psi : [0,|\xi|] \to \IR^{m}$ given by $\psi(t) = \frac{\xi}{|\xi|} t$ and let $t^{\ast} \in (0,|\xi|]$ be the first time exit of $\psi$ from the domain $\phi(\IB(x,r))$. Thus, $\psi(t) \in \phi(\IB(x,r))$ for every $0 \leq t < t^{\ast}$ and $\psi(t^{\ast}) \not\in \phi(\IB(x,r))$. Now consider the curve $\gamma : [0,t^{\ast}) \to \IB(x,r)$ such that $\gamma(t) = \phi^{-1}\circ \psi|_{[0,t^{\ast})}(t)$. Take any sequence $ t_{k}\nearrow t^{\ast}$ as $k\to\infty$. Then, for every $k\in\IN$, we have
\begin{align}\label{dist-estimate}
\varrho(x, \gamma(t_{k})) &\leq \ell(\gamma|_{[0,t_{k}]}) \leq b^{1/2} \ell^{\IR^m}(\psi|_{[0,t_{k}]}) \\ &= b^{1/2} t_{k}
\leq b^{1/2}t^{\ast} \leq b^{1/2} |\xi| <r\nn.
\end{align}
It follows that 
\[
\{ \gamma(t_{k}) : k\in\IN \} \subset  \bar \IB(x,b^{1/2} |\xi|) \subset \IB(x,r).
\]
Since the closed ball $\bar \IB(x,b^{1/2} |\xi|)$ is compact, we can extract a converging subsequence
\[
 \gamma(t_{k'})  \to \bar x \in \bar \IB(x,b^{1/2} |\xi|) \subset \IB(x,r)
\]
as $k\rq{}\to\infty$, and therefore
\[
\phi(\gamma(t_{k'})) \to \phi(\bar x) \in  \phi(\IB(x,r)).
\]
On the other hand,
\[
\phi(\gamma(t_{k'})) = \psi(t_{k'}) \to \psi(t^{\ast}) \not\in \phi(\IB(x,r)).
\]
Due to the uniqueness of the limit $\psi(t^{\ast}) = \phi(\bar x)$, we get a contradiction. This finishes the proof of the first inclusion in (\ref{inclusion2}). Finally, 
$$ 
\IB^{\IR^m}(0,b^{-1/2}r) \subset \phi ( \IB(x,r) )
$$
shows that we are allowed to apply \eqref{dist-estimate} with $t_{k} = |\xi|$, showing that for every $z \in \phi^{-1}( \IB^{\IR^m}(0,b^{-1/2}r))$ one has
\[
\varrho(x,z) \leq b^{1/2} |\phi(z)|.
\]
This completes the proof.
\end{proof}

The following generally valid heat kernel estimate is based on the Euclidean radius and will be of central importance in the sequel:

\begin{Theorem}\label{mean} For all $b>1$ there is a constant $C=C(m,b)>0$ which only depends on $m$ and $b$, such that for all $\epsilon_1>0$, $\epsilon_2>1$, and all $t>0$, $x,y,\in M$, one has 
\begin{align}\label{mean2}
p(t,x,y)&\leq  \f{C}{\min\big(t,R(x,b,\epsilon_1,\epsilon_2)^2\big)^{m/2}}\leq   \f{C}{t^{m/2}}+\f{C}{R(x,b,\epsilon_1,\epsilon_2)^{m}}\\\nn
&\leq  \f{C}{R(x,b,\epsilon_1,\epsilon_2)^m} \left(\f{\epsilon_1^m }{\epsilon_2^m t^{m/2}}+1\right),
\end{align}
where 
$$
R(x,b,\epsilon_1,\epsilon_2):=\min(r_{\mathrm{Eucl}}(x,b),\epsilon_1)/\epsilon_2.
$$
\end{Theorem}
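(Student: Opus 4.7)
The strategy is to apply a parabolic $L^{1}$-mean value inequality in a Euclidean coordinate chart around $x$, combined with the global bound $\int_{M} p(s,z,y)\, \Id\mu(z) \leq 1$ furnished by Theorem~\ref{mart}(a).

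\emph{Reduction.} The second and third inequalities in (\ref{mean2}) follow algebraically from the first: the bound $\min(t,R^{2})^{-m/2} \leq t^{-m/2}+R^{-m}$ is immediate by distinguishing the two cases $t\le R^{2}$ and $t>R^{2}$, while the factor $\epsilon_{1}^{m}/\epsilon_{2}^{m}$ in the third term arises from $R\le \epsilon_{1}/\epsilon_{2}$. Moreover, since $R(x,b,\epsilon_{1},\epsilon_{2})\leq r_{\mathrm{Eucl}}(x,b)$ implies $\min(t,R^{2})\leq \min(t,r_{\mathrm{Eucl}}(x,b)^{2})$, it is enough to prove the estimate
\begin{equation*}
p(t,x,y) \;\leq\; \frac{C(m,b)}{\min\bigl(t,\, r_{\mathrm{Eucl}}(x,b)^{2}\bigr)^{m/2}}.
\end{equation*}

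\emph{Main step: local analysis in a Euclidean chart.} Fix $x\in M$ and $0<\delta<r_{\mathrm{Eucl}}(x,b)$, and pick a Euclidean coordinate system $\phi:\IB(x,\delta)\to U\subset \IR^{m}$ of accuracy $b$ with $\phi(x)=0$. For fixed $y\in M$, set $u(s,\xi) := p(s, \phi^{-1}(\xi), y)$ on $(0,\infty)\times U$. By Theorem~\ref{heat}(c), $u$ is smooth and, read in coordinates, satisfies the uniformly parabolic equation in divergence form
\begin{equation*}
\partial_{s} u \;=\; \tfrac{1}{2\sqrt{\det g}}\sum_{i,j} \partial_{i}\!\bigl(\sqrt{\det g}\, g^{ij}\, \partial_{j} u \bigr),
\end{equation*}
whose ellipticity and boundedness constants (and those of $\sqrt{\det g}$) depend only on $m$ and $b$, thanks to (\ref{equi}). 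Now invoke a classical parabolic $L^{1}$-mean value inequality of Moser type (in the form available in A.\ Grigor'yan's monograph): there exist $C_{1}=C_{1}(m,b)$ and $c_{1}=c_{1}(m,b)\in(0,1)$ such that whenever $\sigma\in(0,c_{1}\delta]$ satisfies $\sigma^{2}\leq t$, the parabolic cylinder $Q_{\sigma}:=(t-\sigma^{2},t]\times \IB^{\IR^{m}}(0,\sigma)$ lies in $(0,\infty)\times U$ and
\begin{equation*}
u(t,0) \;\leq\; \frac{C_{1}}{\sigma^{m+2}} \iint_{Q_{\sigma}} u(s,\xi)\, \Id\xi\, \Id s.
\end{equation*}

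\emph{Transfer and conclusion.} Using $\Id\mu(z)=\sqrt{\det g(\phi(z))}\,\Id\phi(z)$ together with Lemma~\ref{ddvc}(b), which gives $\phi^{-1}(\IB^{\IR^{m}}(0,\sigma))\subset \IB(x, b^{1/2}\sigma)\subset \IB(x,\delta)$, one obtains
\begin{equation*}
p(t,x,y) \;\leq\; \frac{C_{2}(m,b)}{\sigma^{m+2}} \int_{t-\sigma^{2}}^{t} \int_{\IB(x,\, b^{1/2}\sigma)} p(s,z,y)\, \Id\mu(z)\, \Id s.
\end{equation*}
Applying $\int_{M} p(s,z,y)\, \Id\mu(z)\leq 1$ and performing the trivial $\Id s$-integration collapse this to $p(t,x,y) \leq C_{2}(m,b)/\sigma^{m}$. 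Choosing $\sigma := c_{1}\min(\sqrt{t},\,\delta)$ and then letting $\delta\uparrow r_{\mathrm{Eucl}}(x,b)$ delivers the desired bound.

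\emph{Main obstacle.} The single substantive ingredient is the parabolic $L^{1}$-mean value inequality with constants depending only on $(m,b)$. This is a standard output of Moser iteration for uniformly parabolic divergence-form equations, but some care is needed to check that the resulting constants genuinely depend only on $m$ and the ellipticity bound $b$, and not on any further geometric data of $M$ or the specific chart used. Everything else reduces to the global $L^{1}$-contractivity of Theorem~\ref{mart}(a) and to the Riemannian/Euclidean ball comparisons of Lemma~\ref{ddvc}(b).
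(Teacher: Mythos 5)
Your proposal is correct in outline and follows essentially the same strategy as the paper: reduce to the first inequality, apply a parabolic $L^1$-mean-value inequality on a small ball around $x$ whose size is controlled by $r_{\mathrm{Eucl}}(x,b)$, and then collapse the space-time integral using $\int_M p(s,z,y)\,\Id\mu(z)\leq 1$ and the trivial time integration. The reduction, the transfer between Euclidean and geodesic balls via Lemma \ref{ddvc}(b), and the final choice $\sigma\sim\min(\sqrt t,\delta)$ all match the paper's argument.

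The one place where your route genuinely diverges is in how the key lemma is obtained, and it is exactly the point you flag as the "main obstacle". You push the heat equation into the chart and invoke a Euclidean Moser-type $L^1$-mean-value inequality for uniformly parabolic divergence-form operators, citing Grigor'yan's monograph. But what is available there (Theorem 15.1) is the $L^2$-mean-value inequality under a Faber--Krahn hypothesis; the $L^1$ version is not off the shelf. The paper therefore proves an intrinsic Riemannian $L^1$-mean-value inequality (Theorem \ref{rtt0}) by combining Grigor'yan's $L^2$ estimate with a Li--Wang-type iteration ($S_k\leq D^{1/2}Q^{1/2}S_{k+1}^{1/2}$ on shrinking cylinders), and verifies the Faber--Krahn hypothesis on $\IB(x,R(x))$ by comparison with the Euclidean Faber--Krahn inequality through the accuracy-$b$ chart. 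The $L^1$ (rather than $L^2$) version is essential here, since $\int_M p(s,z,y)^2\,\Id\mu(z)=p(2s,y,y)$ is not bounded by $1$, so the $L^2$ inequality only yields the on-diagonal bound. Your argument is salvageable as written if you either supply the $L^2\to L^1$ reduction yourself or cite a reference that really contains the $L^1$ local boundedness estimate for nonnegative solutions with constants depending only on $m$ and the ellipticity ratio; as it stands, the citation carries the whole weight of the proof and does not quite land where you point it.
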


Above, the second and the third inequality are elementary. Theorem \ref{mean} has been established by the author in \cite{guenkat}, and it improves an earlier on-diagonal heat kernel estimate from \cite{brugun}. The proof of the off-diagonal estimate heavily relies on the following parabolic $\IL^1$-mean-value inequality, while the on-diagonal result from \cite{brugun} uses a parabolic $\IL^2$-mean-value inequality.

\begin{Theorem}\label{rtt0} There exists a constant $C=C(m)>0$, which only depends on $m$, with the property that
\begin{itemize}
\item for all $x\in M$, $r>0$ such that $\IB(x,r)$ is relatively compact and such that there exists a constant $a>0$ with the property that for every open $U\subset \IB(x,r)$, one has the Faber-Krahn-type inequality\footnote{See Corollary \ref{monon} for the meaning of $H_U$; it is a Dirichlet-Laplacian.} 
$$
\min \sigma(H_U)\geq a \mu(U)^{-2/m},
$$ 
\item for all $\tau\in (0,r^2]$, $t\geq \tau$,
\item for all nonnegative solutions $u$ of the heat equation $\partial_t u=(1/2) \Delta u$ in $(t-\tau,t]\times \IB(x,\sqrt{\tau})$, 
\end{itemize}
one has the bound
\begin{align}
u(t,x)\leq \f{Ca^{-\f{m}{2}}}{\tau^{1+\f{m}{2}}}\int^{t}_{t-\tau}\int_{\IB(x,r)} u(s,y) \Id\mu(y)\Id s.
\end{align}
\end{Theorem}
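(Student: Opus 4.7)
The overall strategy is classical, due to Grigor'yan (building on Nash, Moser, Li--Yau, and Saloff-Coste): from the Faber--Krahn hypothesis extract an $L^{\infty}$-bound for the Dirichlet heat kernel on $\IB(x,r)$, use it to establish first an $L^{2}$-mean-value inequality on parabolic subcylinders, and finally upgrade the $L^{2}$ version to the desired $L^{1}$ version by a Li--Schoen-type self-improvement.

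\textbf{Step 1 (Faber--Krahn $\Rightarrow$ ultracontractivity).} The hypothesis that $\min\sigma(H_{U})\geq a\mu(U)^{-2/m}$ for every open $U\subset\IB(x,r)$ is equivalent, up to a constant depending only on $m$, to a Nash-type inequality $\|\phi\|_{2}^{2+4/m}\leq C(m)a^{-1}\|\nabla\phi\|_{2}^{2}\|\phi\|_{1}^{4/m}$ for all $\phi\in\ICC_{\c}(\IB(x,r))$. Setting $B:=\IB(x,r)$, differentiating $s\mapsto\|\mathrm{e}^{-sH_{B}}f\|_{2}^{2}$ and feeding the Nash inequality into the resulting ODE gives the ultracontractive estimate $\|\mathrm{e}^{-sH_{B}}\|_{1,\infty}\leq C(m)a^{-m/2}s^{-m/2}$, hence the pointwise bound $p_{B}(s,y,z)\leq C(m)a^{-m/2}s^{-m/2}$ on the Dirichlet heat kernel of $B$.

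\textbf{Step 2 ($L^{2}$-mean-value inequality).} Fix a parabolic subcylinder $Q_{\sigma}:=(t-\sigma\tau,t]\times\IB(x,\sqrt{\sigma\tau})$ for $\sigma\in(0,1]$ and pick a smooth space-time cut-off $\eta$ that equals $1$ on $Q_{\sigma/2}$ and vanishes outside $Q_{\sigma}$, with $|\nabla\eta|^{2}+|\partial_{s}\eta|\lesssim(\sigma\tau)^{-1}$. The function $v:=\eta u$ lies in the Dirichlet form domain of $B$ and satisfies $(\partial_{s}+H_{B})v=(\partial_{s}\eta)u+\tfrac{1}{2}(\Delta\eta)u+(\nabla\eta,\nabla u)$; writing Duhamel's formula with respect to $\mathrm{e}^{-sH_{B}}$ and inserting the kernel bound of Step 1, one obtains, via Cauchy--Schwarz and the Caccioppoli inequality (which in turn follows from integrating $\partial_{s}(\eta^{2}u^{2})$), the estimate
\begin{align*}
u(t,x)^{2}\leq \frac{C(m)a^{-m/2}}{\tau^{1+m/2}}\int_{t-\tau}^{t}\int_{\IB(x,\sqrt{\tau})}u(s,y)^{2}\,\Id\mu(y)\Id s.
\end{align*}

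\textbf{Step 3 ($L^{2}\rightarrow L^{1}$ upgrade).} Introduce the nested cylinders $Q_{k}:=(t-\tau_{k},t]\times\IB(x,\sqrt{\tau_{k}})$ with $\tau_{k}:=\tau(1-2^{-k})$, $k\geq 0$, and let $M_{k}:=\sup_{Q_{k}}u$. Applying the $L^{2}$-mean-value inequality of Step 2 with suitably rescaled parameters on the annular cylinder $Q_{k+1}\setminus Q_{k}$ yields
\begin{align*}
M_{k}\leq \frac{C(m)^{1/2}a^{-m/4}\,2^{k(1+m/2)/2}}{\tau^{(1+m/2)/2}}\left(\int_{Q_{k+1}}u^{2}\,\Id\mu\,\Id s\right)^{\!\!1/2}\!\!\leq A_{k}\,M_{k+1}^{1/2}\,I^{1/2},
\end{align*}
where $I:=\int_{Q}u\,\Id\mu\,\Id s$ and $A_{k}=C(m)^{1/2}a^{-m/4}2^{k(1+m/2)/2}\tau^{-(1+m/2)/2}$. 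Iterating this recursion, $M_{0}\leq I\,\prod_{k\geq 0}A_{k}^{2^{-k}}$; the infinite product converges since $\sum_{k}k2^{-k}<\infty$, and the exponents sum to give $a^{-m/2}\tau^{-(1+m/2)}$ up to a constant depending only on $m$, yielding the claim.

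The main obstacle is Step 3: one must organize the Li--Schoen telescoping so that both the power of $a$ and the power of $\tau$ come out with the correct exponents, while the geometric blow-up $2^{k(1+m/2)/2}$ in $A_{k}$ is tamed by the halving exponent $2^{-k}$. A secondary point of care is that Step 2 is stated for solutions (rather than subsolutions) only to simplify the sign of commutator terms; the Caccioppoli inequality must be obtained with an explicit scale-invariant constant so that it feeds cleanly into both subsequent steps.
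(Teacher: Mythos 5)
Your proposal is correct and follows essentially the same route as the paper: the paper simply cites Grigor'yan's Theorem 15.1 for the $L^2$-mean-value inequality that your Steps 1--2 re-derive from the Faber--Krahn hypothesis via Nash/ultracontractivity, and your Step 3 is the same Li--Wang sup-iteration on nested parabolic cylinders with the geometric blow-up of the constants tamed by the halving exponents. One harmless bookkeeping remark: with your cylinders $\IB(x,\sqrt{\tau_k})$, $\tau_k=\tau(1-2^{-k})$, the radial gaps shrink like $\sqrt{\tau}\,2^{-k}$, so parabolic scaling forces the local scale $\sigma_k\sim\tau 4^{-k}$ and hence $A_k\propto 2^{k(1+m/2)}$ rather than $2^{k(1+m/2)/2}$ --- the product $\prod_k A_k^{2^{-k}}$ still converges and the exponents of $a$ and $\tau$ are unchanged (the paper sidesteps this by shrinking the time extent by $4^{-i}$ and the radius by $2^{-i}$).
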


\begin{proof} Applying Theorem 15.1 in \cite{gri} (a variant of a parabolic $\IL^2$-mean-value inequality) to the radius $\sqrt{\tau}$ and to the solution 
$$
(0,\tau]\times \IB(x,\sqrt{\tau})\ni (s,y)\longmapsto u(t-\tau+s,y)\in [0,\infty)
$$
of the heat equation in $(0,\tau]\times \IB(x,\sqrt{\tau})$ immediately implies the parabolic $\IL^2$-mean-value inequality
\begin{align}\label{hilf}
u(t,x)^2\leq \f{Ca^{-\f{m}{2}}}{\tau^{1+\f{m}{2}}}\int^{t}_{t-\tau}\int_{\IB(x,\sqrt{\tau})} u(s,y)^2 \Id\mu(y)\Id s.
\end{align}
From here, one we can follow Li/Wang's parabolic $\IL^2$-to-$\IL^1$ reduction machinery from pp. 1269/1270 in \cite{li2}, which however has to be carefully adjusted to our situation: Setting 
$$
D:=Ca^{-\f{m}{2}},
$$
and applying (\ref{hilf}) with $\tau$ replaced by $\tau/4$ implies
$$
u(t,x)^2\leq D \tau^{-(1+m/2)}4^{-(1+m/2)}\int^{t}_{t-\tau/4}\int_{\IB(x,\sqrt{\tau}/2)} u(s,y)^2 \Id\mu(y)\Id s,
$$
so that setting
$$
Q:=\tau^{-(1+m/2)}\int^{t}_{t-\tau}\int_{\IB(x,\sqrt{\tau})} u(s,y) \Id\mu(y)\Id s,
$$
and for every $k\in\IN$,
$$
S_k:=\sup_{\left[t-\tau\sum^k_{i=1}4^{-i},t\right]\times \IB\left(x,\sqrt{\tau} \sum^k_{i=1}2^{-i}\right)  } u,
$$
we immediately get
\begin{align}\label{hfj}
u(t,x)^2\leq D 4^{-(1+m/2)}Q S_1\leq D Q S_1.
\end{align}
Let us next prove that for all $k$ one has
\begin{align}\label{rej}
S_k\leq D^{1/2} Q^{1/2} S_{k+1}^{1/2}.
\end{align}
To see this, pick 
$$
(s,y)\in \left[t-\tau\sum^k_{i=1}4^{-i},t\right]\times \IB\left(x,\sqrt{\tau} \sum^k_{i=1}2^{-i}\right) 
$$
with $u(s,y)=S_k$. Applying now (\ref{hilf}) with $t$ replaced by $s$, and $\tau$ replaced by $\tau/4^{k+1}$, and using
$$
\left[t-\tau\sum^{k+1}_{i=1}4^{-i},t\right]\times \IB\left(x,\sqrt{\tau} \sum^{k+1}_{i=1}2^{-i}\right) \supset \left[s-\tau /4^{k+1},s\right]\times \IB\left(y,\sqrt{\tau}/2^{k+1}\right) 
$$
to estimate the resulting space-time integral, we get
$$
u(s,y)^2\leq 4^{-(k+1)(1+m/2)}DQ S_{k+1},
$$
which implies (\ref{rej}). We claim that for all $k$ one has
\begin{align}\label{hkkp}
u(t,x)^2\leq D^{\sum^{k}_{i=1}2^{-i+1}}Q^{\sum^{k}_{i=1}2^{-i+1}} S_k^{\f{1}{2^{k-1}}}.
\end{align}
The proof is by induction on $k$: The case $k=1$ has already been shown in (\ref{hfj}). Given the statement for $k$, (\ref{rej}) gives us
\begin{align*}
&u(t,x)^2\leq D^{\sum^{k}_{i=1}2^{-i+1}}Q^{\sum^{k}_{i=1}2^{-i+1}} S_k^{\f{1}{2^{k-1}}}\\
&\leq  D^{\sum^{k}_{i=1}2^{-i+1}}Q^{\sum^{k}_{i=1}2^{-i+1}} D^{1/2^k} Q^{1/2^k} S_{k+1}^{1/2^k}\\
&=D^{\sum^{k+1}_{i=1}2^{-i+1}}Q^{\sum^{k+1}_{i=1}2^{-i+1}} S_{k+1}^{\f{1}{2^{k}}},
\end{align*}
which completes the proof of (\ref{hkkp}). As $(S_k)_k$ is a bounded sequence\footnote{For example, we have (estimating the sums with geometric series)
$$
S_k=\sup_{\left[t-\tau\sum^k_{i=1}4^{-i},t\right]\times \IB\left(x,\sqrt{\tau} \sum^k_{i=1}2^{-i}\right)  } u\leq  \sup_{\left[t-\f{3}{4}\tau,t\right]\times \IB\left(x,\sqrt{\tau} \right)  } u<\infty.
$$
}, we now get from letting $k\to\infty$ in (\ref{hkkp}) the bound
$$
u(t,x)^2\leq  D^{\sum^{\infty}_{i=0}2^{-i}}Q^{\sum^{\infty}_{i=0}2^{-i}} \lim_{k\to\infty}S_k^{\f{1}{2^{k-1}}} = D^{2}Q^{2}.
$$
This completes the proof of the $\IL^1$-mean-value inequality, recalling that $\tau\leq r^2$.
\end{proof}

\begin{proof}[Proof of Theorem \ref{mean}] As we have already remarked, it is sufficient to prove the first inequality. To this end, we set $R(x):=R(x,b,\epsilon_1,\epsilon_2)$. Then one easily finds that the function $R:M\to (0,\infty)$ has the following properties (just use (\ref{equi}), the local formula (\ref{mas1}) for $\mu$, and the fact that the Euclidean $\IR^m$ satisfies a global Faber-Krahn inequality; cf. p. 367 in \cite{gri}): There exists a constant $a=a(m,b)$ which only depends on $m$ and on $a$, such that for all $x\in M$ the ball $\IB(x,R(x))$ is relatively compact, and for every open $U\subset \IB(x,R(x))$ one has the Faber-Krahn inequality
$$
\min \sigma(H_U)\geq a \mu(U)^{-2/m}.
$$
Now fix an arbitrary 
$$
(t,x,y)\in (0,\infty)\times M\times M.
$$
Since 
$$
(s,z)\longmapsto u(s,z):=p(s,z,y)
$$
is a nonnegative solution of the heat equation on $(0,\infty)\times M$, by the above considerations an application of Theorem \ref{rtt0} with $r:=R(x)$ immediately implies
\begin{align*}
p(t,x,y)\leq \f{C'a^{-\f{m}{2}}}{\tau^{1+\f{m}{2}}}\int^{t}_{t-\tau}\int_{M} p(s,z,y) \Id\mu(z)\Id s
\end{align*}
for all $\tau\in (0,R(x)^2]$, where $C'>0$ only depends on $m$. Since we have 
\begin{align*}
\int_M p(s,z,y\rq{}) \Id\mu(z)=\int_M p(s,y\rq{},z) \Id\mu(z)\leq 1\>\>\text{ for all $(s,y\rq{})\in (0,\infty)\times M$, }
\end{align*}
we arrive at $p(t,x,y)\leq C'a^{-\f{m}{2}}\tau^{-\f{m}{2}}$, which proves the result upon taking $\tau:=\min(R(x)^2,t)$.
\end{proof}

The following definition is motivated by Theorem \ref{mean}:

\begin{Definition}\label{control} An ordered pair $(\Xi,\tilde{\Xi})$ of functions
$$
\Xi:M\longrightarrow (0,\infty], \>\>\tilde{\Xi}: (0,\infty)\longrightarrow (0,\infty)
$$
is called a \emph{heat kernel control pair for the Riemannian manifold $M$}, if the following assumptions are satisfied:
\begin{enumerate}
\item[$\bullet$] $\Xi$ is continuous with $\inf \Xi>0$, $\tilde{\Xi}$ is Borel
\item[$\bullet$] for all $x\in M$, $t>0$ one has 
$$
\sup_{y\in M} p(t,x,y)\leq \Xi(x)\tilde{\Xi}(t)
$$
\item[$\bullet$] for all $q\rq{} \geq 1$ in the case of $m=1$, and for all $q\rq{}>m/2$ in the case of $m\geq 2$, one has
$$
\int^{\infty}_0 \tilde{\Xi}^{1/q\rq{}}(t) \mathrm{e}^{-At} \Id t<\infty\>\>\text{ for some $A>0$}.
$$
\end{enumerate}
\end{Definition}

The motivation for the above definition stems from the fact that the concept of heat kernel control pairs is very general and very flexible in the following sense: Firstly, every Riemannian manifold (canonically) admits such a pair, and secondly, if one has some control on the geometry, one can pick somewhat sharper and more explicit control pairs. Thus we can treat both situations on an equal footing without losing any information. This is the content of the following remark and the subsequent example.

\begin{Remark}\label{ddghq}1. Every Riemannian manifold admits a canonically given family of heat kernel control pairs: Indeed, it follows from Theorem \ref{mean} and Lemma \ref{ddvc} that there exists a constant $C=C(m)>0$ such that for every choice of $b>1$ and $\epsilon_1>0$, $\epsilon_2>1$, the functions
$$
\Xi(x)= \f{C\epsilon_2^{m}}{\min(r_{\mathrm{Eucl}}(x,b),\epsilon_1)^{m}} ,\>\tilde{\Xi}(t)= \f{\epsilon_1^{m} }{\epsilon_2^{m} t^{m/2}}+1
$$
define such a pair.\\
2. Assume that there exist constants $C>0$, $T\in [0,\infty]$ such that one has the ultracontractiveness
$$
\sup_{x\in M}p(t,x,x)\leq C t^{-m/2}\>\>\text{ for all $0<t< T$}.
$$
Then, since $p(t,x,x)$ is always monotonely decreasing in $t$ (cf. Remark \ref{noni}.1), we get the bound
$$
\sup_{x\in M}p(t,x,x)\leq C \min(t,T)^{-m/2}\>\>\text{ for all $t>0$.}
$$
Therefore by Remark \ref{noni}.3, the pair 
$$
(\Xi(x), \tilde{\Xi}(t)):=(1,C \min(t,T)^{-m/2})
$$
is a heat kernel control pair, which is constant in its first slot.
\end{Remark}

As a typical example for the fact that some knowledge on the geometry leads to more explicit heat kernel control pairs, we consider the important class of geodesically complete manifolds whose Ricci curvature is bounded from below by a constant: 

\begin{Example}\label{ric} Assume that $M$ is geodesically complete with $\mathrm{Ric}\geq - (m-1)K$ for some constant $K\geq 0$. Then the following facts hold true:\vspace{1.2mm}

\emph{(i) Li-Yau-type heat kernel bounds:} For every $\delta_1,\delta_2>0$ which satisfy
$$
\delta_1\delta_2> ((m-1)^2K)/8,
$$
there exists a constant $C_{\delta_1,\delta_2,K,m}>0$ which only depends on $\delta_j$, $K$ and $m$, such that for all $t>0$, $x,y\in M$ one has
\begin{align*}
p(t,x,y)\leq &\ C_{\delta_1,\delta_2,K,m} \ \mu(\IB(x,\sqrt{t}))^{-1}\\
&\times \exp\Big(-(1-\delta_1)\varrho(x,y)^2/(2 t)+(\delta_2-\min\sigma(H))t\Big).
\end{align*}
Note that a comparable lower bound also exists (that we will not need in the sequel). Based on results by P. Li and S.-T. Yau \cite{li3} as well as B. Davies \cite{davies}, the above heat kernel upper bound has been derived in its ultimate form in the paper \cite{sturm3} by K.-T. Sturm.\vspace{1.2mm} 

\emph{(ii) Cheeger-Gromov volume estimate:} For every $s>0$, $ x\in M$, one has
$$
\mu(\IB(x,s))\leq |\mathbb{S}^m|s^{m}\exp((m-1)\sqrt{K}s),
$$
where $\mathbb{S}^m$ denotes the standard m-sphere.

\emph{(iii) Volume doubling property:}\footnote{Traditionally, the term \lq\lq{}doubling\rq\rq{} refers to the case $s=2s\rq{}$.} For every $0<s\rq{}<s$, $ x\in M$, one has
$$
\mu(\IB(x,s))\leq \mu(\IB(x,s\rq{})) (s/s\rq{})^m \exp((m-1)\sqrt{K}s).
$$
The last two results can be found, for example, in \cite{saloff-buch} (p. 177) and the references therein. \\
In order to derive a heat kernel control pair from these observations, we proceed as follows: First, it follows from the doubling property that for, say, $t<1$, we have
$$
\mu(\IB(x,\sqrt{t}))^{-1}\leq \mu(\IB(x,1))^{-1}t^{-m/2} \exp((m-1)\sqrt{K}),
$$
while clearly for $t\geq 1$ we have 
$$
\mu(\IB(x,\sqrt{t}))^{-1}\leq \mu(\IB(x,1))^{-1} .
$$
Thus for all $t>0$, $x,y\in M$,
\begin{align*}
&p(t,x,y)\\
&\leq C_{\delta_1,\delta_2,K,m} \left(\mu(\IB(x,1))^{-1}t^{-m/2} \exp((m-1)\sqrt{K})+\mu(\IB(x,1))^{-1}\right) \\
&\quad\times \exp\left(-\f{(1-\delta_1)\varrho(x,y)^2}{2 t}+(\delta_2-\min\sigma(H))t \right)\\
&\leq C_{\delta_1,\delta_2,K,m}\mu(\IB(x,1))^{-1}\left(\mathrm{e}^{(m-1)\sqrt{K}}t^{-m/2} +1\right)\\
&\quad\times \exp\big((\delta_2-\min\sigma(H))t\big) ,
\end{align*}
and we have thus derived the heat kernel control pair given by
\begin{align*}
&\Xi(x):=C_{\delta_1,\delta_2,K,m}\mu(\IB(x,1))^{-1},\\
&\tilde{\Xi}(t):=\left(\mathrm{e}^{(m-1)\sqrt{K}}t^{-m/2} +1\right)\exp\Big((\delta_2-\min\sigma(H))t\Big).
\end{align*}
\end{Example}

The Li-Yau heat kernel estimate, the Cheeger-Gromov volume estimate and the volume doubling property can be localized in a very exact way under geodesic completeness (cf. Theorem 6.1 and the inequalities (1), (2) in \cite{saloff}). For example, these localized estimates have been used in \cite{abba} in a probabilistic context, where it is shown that on every geodesically complete Riemannian manifold the Brownian brigde is a semimartingale including its terminal time. For example, the localized Cheeger-Gromov volume estimate reads as follows: If $M$ is geodesically complete and if $x\in M$, $r>0$, $K\geq 0$ are such that $\mathrm{Ric}\geq - (m-1)K$ in $B(x,2r)$, then for every $0<s<2r$ one has
\begin{align}\label{loki}
\mu(\IB(x,s))\leq |\mathbb{S}^m|s^{m}\exp((m-1)\sqrt{K}s).
\end{align}

Returning to the general situation, we recall that one always has 
$$
\int_M p(t,x,y)\Id\mu(y)\leq 1
$$
for all $t>0$, $x\in M$. Keeping this in mind, we record the following definition that will become important for us later on (in the context of Brownian motion). 

\begin{Definition}\label{stochi} $M$ is called \emph{stochastically complete}, if one has
\begin{align}\label{st}
\int_M p(t,x,y)\Id\mu(y)=1\quad\text{for all $t>0$, $x\in M$.} 
\end{align}
\end{Definition}

An important and simple consequence of stochastic completeness and Theorem \ref{ddfh} is the uniqueness of solutions of the initial value problem (cf. Corollary 9.6 in \cite{gri})
 \begin{align}\label{ddedw2}
\frac{\partial}{\partial t} u =(1/2)\Delta u,\>\>\lim_{	t\to 0+} u(t,\bullet)=\delta_y
\end{align}
in the following class of functions:

\begin{Proposition} For every $y\in M$ and every (necessarily smooth) solution 
$$
u: (0,\infty)\times M\to [0,\infty)
$$
 of (\ref{ddedw2}) 
with
$$
\int_M u(t,x) \mu(x)\leq 1\quad\text{ for all $t>0$,}
$$
one has $u(t,x)=p(t,x,y)$ for all $(t,x)\in (0,\infty)\times M$.
\end{Proposition}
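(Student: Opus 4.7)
The strategy is to reduce the statement to the minimality property of $p(\cdot,\cdot,y)$ (Theorem \ref{ddfh}) and then to squeeze the difference $u-p(\cdot,\cdot,y)$ between a nonnegative lower bound and a nonpositive integral upper bound.

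First, I would apply Theorem \ref{ddfh} directly: since $u$ is a nonnegative smooth solution of the heat equation with initial datum $\delta_y$, the minimality statement in Theorem \ref{ddfh} yields
\[
u(t,x) \geq p(t,x,y) \quad\text{for all } (t,x)\in(0,\infty)\times M.
\]
Thus the function $v(t,x):=u(t,x)-p(t,x,y)$ is nonnegative and smooth on $(0,\infty)\times M$.

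Next, I would integrate $v(t,\cdot)$ against $\mu$ and use the stochastic completeness assumption. By the symmetry $p(t,x,y)=p(t,y,x)$ (a special case of Theorem \ref{heat} b), since in the scalar setting the heat kernel is real-valued), stochastic completeness (\ref{st}) applied at the point $y$ gives
\[
\int_M p(t,x,y)\,d\mu(x)=\int_M p(t,y,x)\,d\mu(x)=1 \quad\text{for all } t>0.
\]
Combining this with the standing assumption $\int_M u(t,x)\,d\mu(x)\leq 1$, we obtain
\[
\int_M v(t,x)\,d\mu(x)=\int_M u(t,x)\,d\mu(x)-1\leq 0 \quad\text{for all } t>0.
\]

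Finally, since $v\geq 0$ and $\int_M v(t,x)\,d\mu(x)\leq 0$, it follows that $v(t,\cdot)=0$ $\mu$-almost everywhere for each fixed $t>0$. Because $v$ is smooth (hence continuous) on $(0,\infty)\times M$ and $\mu$ has full topological support, we conclude $v\equiv 0$, i.e.\ $u(t,x)=p(t,x,y)$ everywhere. No further obstacle arises: the entire argument hinges on the minimality clause of Theorem \ref{ddfh} together with the probabilistic conservation of mass encoded in stochastic completeness, and the only mild technical point is the symmetry of the scalar heat kernel needed to convert $\int p(t,x,y)\,d\mu(x)=1$ into the stochastic completeness identity (\ref{st}) at the point $y$.
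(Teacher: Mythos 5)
Your proof is correct and is exactly the argument the paper has in mind: the Proposition is introduced as "a simple consequence of stochastic completeness and Theorem \ref{ddfh}", and your two ingredients — the pointwise minimality $u\geq p(\cdot,\cdot,y)$ and the mass identity $\int_M p(t,x,y)\,\Id\mu(x)=1$ obtained from (\ref{st}) via the symmetry of the scalar heat kernel — are precisely those, with the final step (a nonnegative continuous function of nonpositive integral vanishes, since $\mu$ has full support) handled correctly.
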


In general, stochastic completeness is completely independent from geodesic completeness: There exist stochastically complete $M$'s which are geodesically incomplete (for example $\IR^m\setminus \{0\}$), and vice versa (cf. Theorem 7.9 in \cite{azencott}). Under geodesic completeness, however, there is the following very general volume test for stochastic completeness by A. Grigor'yan: 

\begin{Theorem} If $M$ is geodesically complete with
\begin{align}\label{apgb}
\int^{\infty}_1 \f{s}{\log\mu(\IB(x_0,s))} \Id s=\infty\quad\text{ for some $x_0\in M$,}
\end{align}
then $M$ is stochastically complete.
\end{Theorem}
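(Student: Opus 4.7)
I argue the contrapositive: assuming stochastic incompleteness, I will deduce $\int^{\infty}_1 s/\log\mu(\IB(x_0,s))\,\Id s < \infty$, contradicting (\ref{apgb}). Writing $\phi_t(x):=\int_M p(t,x,y)\,\Id\mu(y)\le 1$ and setting, for $\lambda>0$,
$$
u_\lambda(x) := \int_0^{\infty}\lambda\,\mathrm{e}^{-\lambda t}\big(1 - \phi_t(x)\big)\,\Id t,
$$
one has $0\le u_\lambda\le 1$, and $u_\lambda\not\equiv 0$ under the hypothesis whenever $\lambda$ is small enough. A direct calculation using $\partial_t p = (1/2)\Delta_x p$ together with an integration by parts in $t$ yields $\Delta u_\lambda = 2\lambda u_\lambda$ weakly on $M$; by local elliptic regularity (\ref{localb}) $u_\lambda$ is then smooth. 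The task thus reduces to: if $M$ admits a smooth bounded nonnegative nonzero $u$ with $\Delta u = 2\lambda u$ for some $\lambda>0$, then (\ref{apgb}) fails.

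\emph{Caccioppoli estimate.} Geodesic completeness lets me truncate the distance $\varrho(x_0,\cdot)$, via Theorem \ref{first}(a), to obtain for every $R,r>0$ a Lipschitz $\eta=\eta_{R,r}\in\ICC_{\c}(M)$ with $0\le\eta\le 1$, $\eta\equiv 1$ on $\IB(x_0,R)$, $\eta\equiv 0$ off $\IB(x_0,R+r)$, and $|\Id\eta|\le 1/r$. Pairing $\Delta u=2\lambda u$ with $\eta^2 u$, integrating by parts, and using Young's inequality to absorb the gradient cross term yields
$$
2\lambda \int \eta^2 u^2\,\Id\mu \;\le\; \int u^2|\Id\eta|^2\,\Id\mu.
$$
Setting $I(R):=\int_{\IB(x_0,R)} u^2\,\Id\mu$, this specializes to the key inequality $I(R+r)\ge (1+2\lambda r^2)\,I(R)$, valid for all $R,r>0$.

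\emph{Iteration and contradiction.} If $I$ were bounded, $u\in\IL^2(M)$, and exhausting $M$ by cutoffs in the Caccioppoli estimate forces $\lambda\|u\|_2^2=0$, i.e.\ $u\equiv 0$; hence $I(R)\to\infty$. Iterating the key inequality along $0<R_0<R_1<\cdots$ with $R_{k+1}=R_k+r_k$ gives
$$
\log I(R_n) \;\ge\; \log I(R_0) + \sum_{k=0}^{n-1}\log\big(1+2\lambda r_k^2\big).
$$
The strategy is to choose the $r_k$ so as to produce a lower bound $\log I(R)\ge \Phi(R)$ with $\int^{\infty} s/\Phi(s)\,\Id s < \infty$. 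Since $I(R)\le \|u\|_\infty^2\,\mu(\IB(x_0,R))$, the same lower bound is inherited by $\log\mu(\IB(x_0,R))$, yielding the required contradiction with (\ref{apgb}).

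\emph{Main obstacle.} The difficulty lies entirely in calibrating the step sizes $r_k$: constant $r_k$ only delivers $\log I(R)\gtrsim R$, and exponential volume growth is still consistent with (\ref{apgb}). Grigor'yan's trick is to let $r_k$ grow with $R_k$ self-referentially, against the already-established growth of $I(R_k)$ (equivalently of $\mu(\IB(x_0,R_k))$), so that the $k$-th contribution $\log(1+2\lambda r_k^2)$ captures the current volume level. Making this nonlinear iteration rigorous while tracking constants sharply enough to recover the exact threshold $\int^{\infty} s/\log\mu(\IB(x_0,s))\,\Id s = \infty$ is the delicate heart of the proof.
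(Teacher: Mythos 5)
The paper does not actually prove this theorem; it is quoted as Grigor'yan's volume test (Theorem 11.8 in \cite{gri}), so there is no internal proof to compare with. Your reduction to the elliptic problem is correct and standard: under stochastic incompleteness the function $u_\lambda=1-\lambda(\H+\lambda)^{-1}1$ is a nontrivial bounded nonnegative solution of $\Delta u=2\lambda u$, and your Caccioppoli inequality $I(R+r)\geq(1+2\lambda r^2)I(R)$ is also correct. The gap is that this inequality is structurally too weak to reach the conclusion, so the ``delicate heart'' you defer is not a technicality but a dead end. Indeed,
$$
\sup_{r>0}\f{\log(1+2\lambda r^2)}{r}=C\sqrt{\lambda}<\infty,
$$
so for \emph{every} choice of steps one has
$$
\sum_{k=0}^{n-1}\log\bigl(1+2\lambda r_k^2\bigr)\;\leq\;C\sqrt{\lambda}\,\sum_{k=0}^{n-1}r_k\;=\;C\sqrt{\lambda}\,(R_n-R_0).
$$
Hence the iteration can never yield more than $\log\mu(\IB(x_0,R))\gtrsim\sqrt{\lambda}\,R$, i.e. exponential volume growth --- and exponential growth is perfectly consistent with (\ref{apgb}), since $\int^\infty s/(cs)\,\Id s=\infty$. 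No self-referential calibration of the $r_k$ can evade a bound that holds termwise. Even the $L^p$-refinement (testing against $\eta^2u^{p-1}$ and optimizing over $p$, i.e. the Karp--Li argument) only upgrades this to $\mu(\IB(x_0,R))\geq \mathrm{e}^{c\lambda R^2}$, which still gives $\int^\infty s/(c\lambda s^2)\,\Id s=\infty$.

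The sharp threshold $\int^\infty s/\log\mu(\IB(x_0,s))\,\Id s=\infty$ genuinely requires the parabolic method: one works with $v(t,x)=1-\int_M p(t,x,y)\,\Id\mu(y)$, which solves the heat equation with $v(0,\cdot)=0$, and proves an integrated maximum principle with a Gaussian-type weight $\exp\bigl(-\varrho(x_0,x)^2/(c\,t)\bigr)$. There the weight beats the volume precisely when the time increment satisfies $t\lesssim R^2/\log\mu(\IB(x_0,R))$, and summing these admissible increments over an iteration $R_k\uparrow\infty$ produces exactly the integral $\int^\infty R/\log\mu(\IB(x_0,R))\,\Id R$; its divergence lets one push the argument back to $t=0$, where $v$ vanishes. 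The parabolic scaling $r^2\sim t$ is what the elliptic Caccioppoli estimate irretrievably loses. To repair your proof you would need to replace the key inequality by this uniqueness-class argument for the heat equation (Theorem 11.9 in \cite{gri}), not merely fine-tune the $r_k$.
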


For example, the last result immediately implies that geodesically complete $M$'s with Ricci curvature bounded from below by a constant are stochastically complete: Namely, the Cheeger-Gromov volume estimate from Example \ref{ric} shows that  
$$
\mu(B(x,r))\leq C_1\exp(C_2 r)\quad \text{ for all $r>0$, $x\in M$.}
$$
More generally, the last exponential volume growth holds with $r$ replaced by $r^2$ for geodesically complete $M$\rq{}s with
$$
\mathrm{Ric}(x)\geq -\varrho(x,x_0)^2-C\>\>\text{ for some $C>0$, some fixed $x_0$, and all $x\in M$},
$$
therefore this larger class is also stochastically complete. The latter volume estimate follows easily from applying the localized volume estimate (\ref{loki}) with $x=x_0$, $s=r$, $K=(4r^2+C)/(m-1)$.
\vspace{2mm}

We close this chapter with some facts about parabolicity:

\begin{Definition}\label{adpqa} a) A \emph{Green\rq{}s function} $\tilde{G}$ on $M$ is a Borel function $\tilde{G}:M\times M\to [-\infty,\infty]$ satisfying $\tilde{G}(x,y)=\tilde{G}(y,x)$, $\tilde{G}(x,\bullet)\in\IL^1_{\mathrm{loc}}(M)$ and 
\begin{align}\label{edc}
-(1/2)\Delta \tilde{G}(x,\bullet) = \delta_x, \>\>\text{ for all $x,y\in M$.}
\end{align} 
b) $M$ is called \emph{parabolic}, if $M$ does not admit a nonnegative Green\rq{}s function. Otherwise, $M$ is called \emph{nonparabolic}.
\end{Definition}

The name \lq\lq{}parabolic\rq\rq{} should not be confused with the corresponding notion from the uniformization theorem for Riemann surfaces. Even worse, a simply connected Riemann surface is parabolic in the sense of Definition \ref{adpqa}, if and only if \cite{uber} it is \emph{hyperbolic} in the sense of the uniformization theorem (which means that the surface is conformally equivalent to $\mathbb{H}^2$).

\begin{Remark}
It is obvious that if $M$ admits a Green\rq{}s function, then $M$ is noncompact, so that in particular compact $M$'s are parabolic. On the other side, based on an observation by B. Malgrange, P. Li and L.-F. Tam have \cite{litam} proved that every noncompact geodesically complete $M$ admits a Green\rq{}s function, which of course may change its sign. To illustrate the last fact, we can consider the Euclidean $\IR^2$, which is parabolic and nevertheless admits the Green\rq{}s function $\tilde{G}(x,y):=K\log(|x-y|^{-1})$ with some constant $K$. It follows from Proposition \ref{ecoll} below that $\IR^m$ is nonparabolic, if and only if $m\geq 3$.
\end{Remark}

The following result is based on Theorem 13.17 and Exercise 13.30 in \cite{gri}:

\begin{Propandef}\label{ecoll} $M$ is nonparabolic, if and only if for all $x\ne y$ one has
$$
G(x,y):=\int^{\infty}_0p(t,x,y)\Id t< \infty,
$$
and then 
\[
G:M\times M\longrightarrow  (0,\infty],\>\>G(x,y):= \int^{\infty}_0 p (t,x,y) \Id t
\]
is the pointwise minimal nonnegative Green's function on $M$, called the \emph{Coulomb potential on $M$}.
\end{Propandef}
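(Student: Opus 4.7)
The plan is to compare with Dirichlet Green's functions on a compact exhaustion. For each relatively compact open $\Omega \subset M$ with smooth boundary, let $H_\Omega$ be the Friedrichs realization of $-(1/2)\Delta$ on $\Omega$ and let $p_\Omega$ be its heat kernel. Since $\Omega$ has nonempty boundary in $M$, one has $\min\sigma(H_\Omega) > 0$, so the Laplace transform identity from Remark \ref{resss} gives that
$$
G_\Omega(x,y) := \int_0^\infty p_\Omega(t,x,y)\,\Id t
$$
is finite and is the integral kernel of $H_\Omega^{-1}$. An integration by parts using $\partial_t p_\Omega = (1/2)\Delta_x p_\Omega$ together with $p_\Omega(0+,\cdot,y) = \delta_y$ identifies $G_\Omega$ as the nonnegative Dirichlet Green's function of $\Omega$: $-(1/2)\Delta_x G_\Omega(\cdot,y) = \delta_y$ distributionally on $\Omega$, and $G_\Omega$ vanishes on $\partial\Omega$.

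The key comparison is that any nonnegative Green's function $\tilde G$ on $M$ satisfies $G_\Omega(\cdot,y) \le \tilde G(\cdot,y)$ on $\Omega$ whenever $y \in \Omega$. Indeed, $u := \tilde G(\cdot,y) - G_\Omega(\cdot,y)$ distributionally satisfies $-(1/2)\Delta u = \delta_y - \delta_y = 0$ on $\Omega$, hence by local elliptic regularity (\ref{localb}) extends to a smooth harmonic function on $\Omega$. Since $G_\Omega$ vanishes continuously on $\partial\Omega$ and $\tilde G \ge 0$, one has $u \ge 0$ at the boundary, so the elliptic minimum principle recalled in the remark after the definition of the scalar Laplacian forces $u \ge 0$ throughout $\Omega$.

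Taking a compact exhaustion $\Omega_n \nearrow M$ with smooth boundaries, the domain monotonicity of Corollary \ref{monon} combined with the minimality property of $p$ from Theorem \ref{ddfh} yields $p_{\Omega_n}(t,x,y) \nearrow p(t,x,y)$ pointwise, and monotone convergence then gives $G_{\Omega_n}(x,y) \nearrow G(x,y)$. This proves both directions at once. If a nonnegative Green's function $\tilde G$ exists on $M$, the comparison step gives $G = \lim_n G_{\Omega_n} \le \tilde G < \infty$ off the diagonal, so $G$ is finite and pointwise dominated by $\tilde G$. Conversely, if $G(x,y) < \infty$ for all $x \ne y$, the uniform pointwise bound $G_{\Omega_n}(\cdot,y) \le G(\cdot,y)$ combined with elliptic Harnack-type regularity applied to the harmonic functions $G_{\Omega_n}(\cdot,y)$ on compacta of $M \setminus \{y\}$ forces $G(\cdot,y)$ to be smooth there, and testing against any $\phi \in \Gamma_{C^\infty_c}(M)$ supported in some $\Omega_N$ shows, via monotone passage to the limit, that $\int G(x,y)(-\tfrac{1}{2}\Delta\phi)(x)\,\Id\mu(x) = \phi(y)$, so that $G$ is a nonnegative Green's function.

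The main technical obstacle is the converse direction, specifically verifying local integrability of $G(x,\cdot)$ near $x$ and justifying the monotone passage to the limit in the distributional identity for a sign-indefinite test function $-\tfrac{1}{2}\Delta\phi$. Local integrability near the diagonal is handled by a Euclidean comparison on a small ball using the Euclidean coordinates of Definition \ref{deee}, where the singularity of $G$ is controlled by the classical Euclidean Newtonian kernel $|x-y|^{2-m}$ (logarithmic for $m=2$), which is $L^1_{\loc}$ in every dimension. For the limit, one splits $-\tfrac{1}{2}\Delta\phi$ into its positive and negative parts and applies monotone convergence to each separately, where the finiteness of the respective integrals is ensured by the just-established local integrability of $G(\cdot,y)$.
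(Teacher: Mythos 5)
Your argument is correct in substance, but it is worth pointing out that the paper does not prove this statement at all: it is stated as a consequence of Theorem 13.17 and Exercise 13.30 in Grigor'yan's book, so what you have written is essentially a reconstruction of the cited proof. Your route --- exhaust $M$ by smooth relatively compact domains $\Omega_n$, identify $G_{\Omega_n}=\int_0^\infty p_{\Omega_n}\,\Id t$ with the Dirichlet Green's function via the spectral gap of $H_{\Omega_n}$, dominate $G_{\Omega_n}(\cdot,y)$ by any nonnegative Green's function $\tilde G(\cdot,y)$ through the elliptic minimum principle, and pass to the limit using $p_{\Omega_n}\nearrow p$ and monotone convergence --- is exactly the standard one, and it simultaneously yields finiteness, the Green's function property, and the pointwise minimality of $G$. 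The use of Harnack's principle to see that the finite increasing limit of the harmonic functions $G_{\Omega_n}(\cdot,y)$ is harmonic on $M\setminus\{y\}$ is also the right tool.

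The one place where your justification is thinner than it should be is the local integrability of $G(\cdot,y)$ near $y$, which you need in order to split $-\tfrac12\Delta\phi$ into positive and negative parts and subtract the two monotone limits. Controlling the singularity of the \emph{manifold} Green's function by the Euclidean Newtonian kernel in the coordinates of Definition \ref{deee} is not automatic: it requires a parametrix construction or two-sided local heat kernel comparisons that you do not supply. A cleaner route stays entirely inside your own framework: fix a small smooth ball $\Omega\ni y$ and observe that $G_{\Omega_n}(\cdot,y)-G_{\Omega}(\cdot,y)$ is, for large $n$, a nonnegative harmonic function on all of $\Omega$ (the delta distributions cancel), so by Harnack's principle the increasing limit $h:=G(\cdot,y)-G_\Omega(\cdot,y)$ is harmonic, hence smooth and locally bounded, on $\Omega$. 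This reduces the problem to $G_\Omega(\cdot,y)\in L^1(\Omega)$, which follows directly from
$$
\int_\Omega G_\Omega(x,y)\,\Id\mu(x)=\int_0^\infty\int_\Omega p_\Omega(t,y,x)\,\Id\mu(x)\,\Id t
$$
together with $\int_\Omega p_\Omega(t,y,x)\,\Id\mu(x)\le \min\bigl(1,\;C\,\mathrm{e}^{-\lambda_\Omega (t-1)}\bigr)$, where $\lambda_\Omega=\min\sigma(H_\Omega)>0$ and the constant $C$ comes from the local ultracontractivity bound of Theorem \ref{aa} applied on $\Omega$. With that repair the proof is complete.
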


The reader should compare the above result with the generally valid Theorem \ref{ddfh}. The following theorem collects some further results concerning parabolicity:

\begin{Theorem}\label{ahka} a) If $M$ is parabolic, then $M$ is stochastically complete and $\min\sigma(H)=0$.\\
b) If $M$ is geodesically complete with
\begin{align*}
\int^{\infty}_1 \f{s}{\mu(\IB(x_0,s))} \Id s=\infty\quad\text{ for some $x_0\in M$,}
\end{align*}
then $M$ is parabolic.
\end{Theorem}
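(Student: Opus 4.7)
The plan is to deduce both assertions in (a) from the heat-kernel characterization of parabolicity given in Proposition \ref{ecoll}, while (b) is Grigor'yan's classical volume test, which I would simply invoke from his book.

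For the spectral claim $\min\sigma(H)=0$ in (a), I would argue by contraposition: if $\lambda_0:=\min\sigma(H)>0$, I would show that $G(x,y):=\int_0^\infty p(t,x,y)\,\Id t<\infty$ for every $x\neq y$, which by Proposition \ref{ecoll} would make $M$ nonparabolic. Integrability near $t=0$ (for $x\ne y$) is immediate from the control-pair estimate of Theorem \ref{mean} together with Remark \ref{ddghq}.1, since $\tilde{\Xi}$ behaves like $t^{-m/2}+1$ and is locally integrable on $(0,1]$. For the tail, Chapman--Kolmogorov yields $p(t+s,x,\cdot)=\mathrm{e}^{-sH}p(t,x,\cdot)$ as elements of $\Gamma_{\IL^2}(M)$, so the spectral bound $\|\mathrm{e}^{-sH}\|_{2,2}\le \mathrm{e}^{-s\lambda_0}$ gives
\[
p(2t+2s,x,x)=\|p(t+s,x,\cdot)\|_2^2\le \mathrm{e}^{-2s\lambda_0}\|p(t,x,\cdot)\|_2^2=\mathrm{e}^{-2s\lambda_0}p(2t,x,x).
\]
Combined with the Cauchy--Schwarz estimate (\ref{self}), this forces exponential decay of $p(t,x,y)$ as $t\to\infty$, and the finiteness of $G(x,y)$ follows.

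For the stochastic completeness assertion in (a), I would set
\[
w(t,x):=1-\int_M p(t,x,y)\,\Id\mu(y),\quad (t,x)\in (0,\infty)\times M.
\]
Then $0\le w\le 1$ by Theorem \ref{mart}a), and a differentiation under the integral sign (justified by the control-pair bounds together with (\ref{hett})) shows that $w$ is a smooth solution of $\partial_t w = (1/2)\Delta w$ on $(0,\infty)\times M$. The initial condition $w(t,\cdot)\to 0$ locally uniformly as $t\to 0+$ follows from (\ref{capaa}) applied to the constant function $\mathbf{1}$. The key step is then to invoke Grigor'yan's uniqueness theorem for parabolic manifolds (cf.\ \cite{gri}): on a parabolic Riemannian manifold, every bounded nonnegative solution of the heat equation with vanishing initial data is identically zero. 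Applied to $w$, this gives $w\equiv 0$, which is precisely (\ref{st}).

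For part (b), this is Grigor'yan's celebrated volume test for parabolicity (cf.\ the chapter on parabolicity in \cite{gri}), and I would cite it directly. The main obstacle, and what makes the proof non-trivial, is the capacity argument: one shows that under geodesic completeness the divergence of $\int_1^\infty s/\mu(\IB(x_0,s))\,\Id s$ forces the capacity of every compact set to vanish, which is equivalent to parabolicity. The hardest step, and the reason geodesic completeness is indispensable, is controlling the capacities of large annular regions via Dirichlet-type energy estimates, which requires the relative compactness of large geodesic balls furnished by Hopf--Rinow.
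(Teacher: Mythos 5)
Your overall architecture for part (a) is reasonable and more self-contained than the paper's proof, which simply cites Proposition \ref{ecoll} for the stochastic completeness claim and Theorem 13.4 (with Exercise 13.30) of \cite{gri} for the spectral claim, and Theorem 11.14 of \cite{gri} for part (b). However, both of your arguments for (a) have genuine gaps. For the spectral claim, the large-time part of your estimate is fine: Chapman--Kolmogorov plus $\left\|\mathrm{e}^{-sH}\right\|_{2,2}\leq \mathrm{e}^{-s\lambda_0}$ does give $p(T,x,y)\leq \mathrm{e}^{-(T-2)\lambda_0}\sqrt{p(2,x,x)p(2,y,y)}$ for $T\geq 2$, hence convergence of the tail of $G(x,y)$. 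But the small-time part is wrong as stated: the control-pair bound of Theorem \ref{mean} gives $\sup_y p(t,x,y)\leq \Xi(x)\tilde{\Xi}(t)$ with $\tilde{\Xi}(t)\sim t^{-m/2}$ near $t=0$, and $t^{-m/2}$ is \emph{not} integrable on $(0,1]$ for $m\geq 2$ (the very definition of a control pair only requires integrability of $\tilde{\Xi}^{1/q'}$ for $q'>m/2$). Finiteness of $\int_0^1 p(t,x,y)\,\Id t$ for fixed $x\neq y$ is true, but it requires genuine off-diagonal small-time decay (e.g.\ finiteness of the resolvent kernel $(H+1)^{-1}(x,y)$ off the diagonal, obtained from integrated Gaussian bounds together with the parabolic mean value inequality of Theorem \ref{rtt0}); it is not ``immediate'' from the on-diagonal-type control pair.

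For the stochastic completeness claim, the construction of $w(t,x)=1-\int_M p(t,x,y)\,\Id\mu(y)$ as a bounded nonnegative caloric function with vanishing initial data is correct, but the theorem you then invoke --- ``on a parabolic manifold every bounded nonnegative solution of the heat equation with vanishing initial data vanishes'' --- is essentially the statement you are trying to prove in disguise: uniqueness of the Cauchy problem in the class of bounded solutions is \emph{equivalent} to stochastic completeness (this is Grigor'yan's Theorem 8.18), and no independent ``uniqueness theorem for parabolic manifolds'' is available, especially since the present theorem makes no geodesic completeness assumption. The non-circular route is elliptic rather than parabolic: if $M$ is stochastically incomplete, set $u:=\int_0^\infty \mathrm{e}^{-t}w(t,\cdot)\,\Id t$, which is bounded, nonnegative, not identically zero, and satisfies $\frac12\Delta u=u\geq 0$, hence is a nonconstant bounded subharmonic function; parabolicity is equivalent to the Liouville property that no such function exists (equivalently, every positive superharmonic function is constant), which yields the contradiction. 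Part (b) is, as you say, a direct citation of Grigor'yan's volume test, which matches the paper.
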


\begin{proof} a) The first claim follows, for example, from Proposition \ref{ecoll}, and the second claim follows from Theorem 13.4 in \cite{gri} (see also Exercise 13.30 in \cite{gri}).\\
b) This is Theorem 11.14 in \cite{gri}, again keeping Exercise 13.30 therein in mind.
\end{proof}

Since hyperbolic spaces have a strictly positive bottom of the spectrum, it follows from Theorem \ref{ahka} a) that $\mathbb{H}^m$ is nonparabolic for every $m\geq 2$.\vspace{2mm}

Note that Theorem \ref{ahka} b) provides a volume test for the parabolicity of geodesically complete $M$'s which is in the spirit of (\ref{apgb}). On the other hand, there are many geodesically incomplete Riemannian manifolds that are nevertheless parabolic. For example, it has been shown in \cite{LT} by P. Li and G. Tian that Bergman metrics on the regular parts of algebraic varieties are parabolic and therefore stochastically complete. In addition, F. Bei and the author have recently established in \cite{beib} that iterated edge metrics on the regular parts of compact stratified pseudomanifolds are parabolic and thus stochastically complete. \vspace{2mm}

Concerning the connection between a global Gaussian upper bound on $p(t,x,y)$ and nonparabolicity, one has:

\begin{Theorem}\label{nonp}
Assume that there is a $c_1>0$ such that for all $t>0$ one has
\begin{align}
\sup_{x\in M}p(t,x,x)\leq c_1t^{-\f{m}{2}}. \label{fd21}
\end{align}
Then the following assertions hold:\\
a) There are $c_2,c_3>0$ such that for all $t>0$, $x,y\in M$ one has
\begin{align}
p(t,x,y)\leq c_2t^{-\f{m}{2}}\mathrm{e}^{-\f{\varrho(x,y)^2}{c_3 t}}.  \label{fd22}
\end{align}
b) One has a lower Euclidean volume growth, in the sense that
\begin{align}
\inf_{x\in M, r>0} \f{\mu(\IB(x,r))}{r^m}>0.   \label{sjw}
\end{align}
c) If $m\geq 3$, then $M$ is nonparabolic and there is a $c_4>0$ such that
\begin{align}
G(x,y)\leq c_4\varrho(x,y)^{m-4} \text{ for all $x,y\in M$ with $x\neq y$.}\label{marki0}
\end{align}
d)  If $m\geq 3$, then there is a $c_5>0$ such that for any $f\in W^{1,2}_0(M)$ one has the Sobolev inequality
 \begin{align}
\left\|f \right\|_{\frac{2m}{m-2}} 
\leq c_5\left\|\Id f \right\|_2.\label{sobo}
\end{align}
\end{Theorem}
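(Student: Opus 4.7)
The plan is to prove parts (a), (d), (b), (c) in that order, each building on its predecessors.

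For (a), I would run Davies' exponential perturbation argument in the form-theoretic framework of Proposition \ref{fll}. Fix a bounded $1$-Lipschitz function $\psi:M\to\IR$ (so $|\Id\psi|\leq 1$ $\mu$-a.e.\ by Rademacher) and $\alpha\in\IR$. By Lemma \ref{proff}, $\e^{\pm\alpha\psi}$ preserves $W^{1,2}_0(M)$, and a direct expansion of $\Id(\e^{\alpha\psi}f)=\alpha\e^{\alpha\psi}f\Id\psi+\e^{\alpha\psi}\Id f$ inside (\ref{fll2}) gives the identity
$$\Re\,\Q\big(\e^{\alpha\psi}f,\e^{-\alpha\psi}f\big)=\Q(f,f)-(\alpha^2/2)\int_M|\Id\psi|^2|f|^2\Id\mu\ \geq\ -\tfrac{\alpha^2}{2}\|f\|_2^2.$$
Polarization and the KLMN theorem then produce a self-adjoint operator generating the semigroup $\e^{\alpha\psi}\e^{-t\H}\e^{-\alpha\psi}$, with the norm estimate $\|\e^{\alpha\psi}\e^{-t\H}\e^{-\alpha\psi}\|_{2,2}\leq \e^{\alpha^2 t/2}$. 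Combining with the smoothing bound $\|\e^{-t\H}\|_{2,\infty}\leq\sqrt{c_1}\,(2t)^{-m/4}$, which is immediate from (\ref{fd21}) via $\|p(t,y,\cdot)\|_2^2=p(2t,y,y)\leq c_1(2t)^{-m/2}$ and duality, yields the weighted pointwise bound $p(t,x,y)\leq C t^{-m/2}\e^{\alpha(\psi(y)-\psi(x))+\alpha^2 t/2}$. Specializing $\psi_R(z):=\min(\varrho(x,z),R)$, letting $R\to\infty$, and optimizing $\alpha=-\varrho(x,y)/t$ delivers (\ref{fd22}).

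For (d), Chapman-Kolmogorov combined with Cauchy-Schwarz gives $p(t,x,y)\leq\sqrt{p(t,x,x)p(t,y,y)}\leq c_1 t^{-m/2}$, equivalently the ultracontractivity $\|\e^{-t\H}\|_{1,\infty}\leq c_1 t^{-m/2}$. For $m\geq 3$ the classical Varopoulos equivalence theorem (proved by Bakry-Coulhon-Ledoux-Saloff-Coste via Nash-type $L^2$ interpolation) then produces the Sobolev inequality $\|f\|_{2m/(m-2)}\leq C\|\sqrt{\H}f\|_2$ on the form domain $\dom(\sqrt{\H})=W^{1,2}_0(M)$. Since by (\ref{fll2}) one has $\|\sqrt{\H}f\|_2^2=(1/2)\|\Id f\|_2^2$, this is precisely (\ref{sobo}).

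For (b), I would apply the Sobolev inequality (\ref{sobo}) to the test function $\phi_r(y):=\max(r-\varrho(x,y),0)$, which is $1$-Lipschitz with $|\Id\phi_r|\leq \mathbf{1}_{\IB(x,r)}$ $\mu$-a.e. When $\mu(\IB(x,r))<\infty$ (the only nontrivial case), $\phi_r\in W^{1,2}_0(M)$ modulo a routine mollification/cut-off argument; and since $\phi_r\geq r/2$ on $\IB(x,r/2)$, (\ref{sobo}) yields $(r/2)\,\mu(\IB(x,r/2))^{(m-2)/(2m)}\leq c_5\mu(\IB(x,r))^{1/2}$. Rewriting this in terms of $f(r):=\mu(\IB(x,r))/r^m$ gives the contraction inequality $f(r)\geq C_0 f(r/2)^{(m-2)/m}$ with universal $C_0>0$; iterating $k$ times produces $f(r)\geq C_0^{1+\alpha+\cdots+\alpha^{k-1}}f(r/2^k)^{\alpha^k}$ with $\alpha=(m-2)/m\in(0,1)$. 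As $k\to\infty$, $\alpha^k\to 0$ while $f(r/2^k)\to\omega_m>0$ by the local Euclidean behaviour of $\mu$ at $x$, so $f(r)\geq C_0^{m/2}$ uniformly in $x$ and $r$.

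For (c), by Proposition \ref{ecoll} it suffices to bound $G(x,y):=\int_0^\infty p(t,x,y)\Id t$ for $x\ne y$ with $\varrho:=\varrho(x,y)$. Split at $t=\varrho^2$: since $m\geq 3$, the tail is bounded using (\ref{fd21}) alone, $\int_{\varrho^2}^\infty p(t,x,y)\Id t\leq c_1\int_{\varrho^2}^\infty t^{-m/2}\Id t=\tfrac{2c_1}{m-2}\varrho^{2-m}$, while the short-time part uses (\ref{fd22}) and the substitution $s=\varrho^2/(c_3 t)$, $\int_0^{\varrho^2} c_2 t^{-m/2}\e^{-\varrho^2/(c_3 t)}\Id t=c_2 c_3^{m/2-1}\varrho^{2-m}\int_1^\infty s^{m/2-2}\e^{-s}\Id s<\infty$. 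Adding the two pieces gives the Coulomb bound $G(x,y)\leq c_4\varrho(x,y)^{2-m}<\infty$, whence nonparabolicity and (\ref{marki0}). The main obstacle in the plan is part (a): rigorously carrying out the Davies twist and the KLMN extension on a possibly geodesically incomplete $M$ requires careful form-theoretic bookkeeping, since $\nabla^\dagger\nabla$ admits no privileged $C^2$-core here and every step must be routed through $W^{1,2}_0(M)$.
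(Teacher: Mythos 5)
Your proposal is self-contained where the paper simply cites Grigor'yan (Corollary 15.17 of \cite{gri} for (a), Exercise 14.5 for (b)); parts (c) and (d) are fine as written, and your computation in (c) in fact produces the correct exponent $\varrho(x,y)^{2-m}$ (the exponent $m-4$ in (\ref{marki0}) agrees with $2-m$ only at $m=3$ and is evidently a typo). The gap in (a) is the step where you ``combine'' $\left\|\e^{\alpha\psi}\e^{-t\H}\e^{-\alpha\psi}\right\|_{2,2}\leq \e^{\alpha^2t/2}$ with $\left\|\e^{-t\H}\right\|_{2,\infty}\leq \sqrt{c_1}(2t)^{-m/4}$: writing $T_t:=\e^{\alpha\psi}\e^{-t\H}\e^{-\alpha\psi}=T_{t/2}T_{t/2}$, a pointwise bound on the kernel of $T_t$ requires $\left\|T_{t/2}\right\|_{1,2}$ and $\left\|T_{t/2}\right\|_{2,\infty}$, i.e.\ ultracontractivity of the \emph{twisted} semigroup with constants uniform in $\psi$; this does not follow from the two displayed norms (the crude estimate $\left\|T_s\right\|_{2,\infty}\leq \e^{2|\alpha|\sup|\psi|}\left\|\e^{-s\H}\right\|_{2,\infty}$ blows up when you let $R\to\infty$ in $\psi_R$). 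Supplying exactly this step---via Davies' log-Sobolev iteration, Grigor'yan's integrated maximum principle, or finite propagation speed as in \cite{coul}---is the entire content of the result being proved, so as written your (a) assumes what it must establish.

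In (b), the step you dismiss as ``a routine mollification/cut-off argument'' is the crux and is false in general: on a geodesically incomplete $M$ the ball $\IB(x,r)$ need not be relatively compact for large $r$, and then $\phi_r=\max(r-\varrho(x,\bullet),0)$ need not belong to $W^{1,2}_0(M)$. For $M$ the open unit ball in $\IR^m$ with the flat metric one has $\sup_x p(t,x,x)\leq (2\pi t)^{-m/2}$ by domain monotonicity (Corollary \ref{monon}), yet for $r>2$ the function $\phi_r\geq r-2>0$ is bounded away from zero on all of $M$ and certainly not in $W^{1,2}_0(M)$; note that in this example $\mu(\IB(x,r))/r^m\to 0$ as $r\to\infty$, so no argument can close this gap without an additional hypothesis (relative compactness of all balls, i.e.\ geodesic completeness, or a restriction on the range of $r$). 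Your iteration is valid, and identical to the Carron--Grigor'yan argument, precisely for those $r$ with $\IB(x,r)$ precompact, where $\phi_r\in W^{1,2}_{\c}(M)\subset W^{1,2}_0(M)$ by Corollary \ref{compp}; you should state that restriction rather than absorb it into a ``routine'' approximation.
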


\begin{proof} a) This is the content of Corollary 15.17 in \cite{gri}.\\
b) Cf. Exercise 14.5 in \cite{gri}.\\
c) Using part a), a simple calculation using the Gamma function shows that under the condition $m\geq 3$, we find for $x\neq y$
$$
\int^{\infty}_0p(t,x,y) \Id t\leq C(c_2,c_3,m)\varrho(x,y)^{m-4}.
$$
%b) The given Gaussian upper bound implies the following Faber-Krahn inequality: There is a $c>0$ such that for all relatively compact open nonempty $N\subset M$ and all $f\in  W^{1,2}_0(N)$ one has 
%\begin{align}
%\int_M |\Id f|^2\Id \mu \geq c \mu(N)^{-2/m}.
%\end{align}
%Applying this to appropriate cut-off functions $f$\rq{}s implies (\ref{sjw}).\\
\end{proof}

In particular, it follows from (\ref{sjw}) that whenever the Gaussian upper bound (\ref{fd22}) holds, then $M$ has an infinite volume.\\
Furthermore, it follows from the Li-Yau-type heat kernel bound from Example \ref{ric} and Theorem \ref{nonp} that geodesically complete Riemannian manifolds of dimension $m\geq 3$ with $\mathrm{Ric}\geq -K$ and
$$
\inf_{x\in M, r>0} \f{\mu(\IB(x,r))}{r^m}>0
$$
and in addition
$$
\text{ either $K=0$ or $\min\sigma(H)>0$}
$$
are nonparabolic, since then one has the Gaussian upper bound (\ref{fd21}).

\chapter{Wiener measure and Brownian motion on Riemannian manifolds}\label{stop}

\section{Introduction}

Roughly speaking, one would like to construct   Brownian motion $X(x_0)$ on $M$, starting from $x_0\in M$, as follows: It should be an $M$-valued process\footnote{We recall that given two measurable spaces $\Omega_1$ and $\Omega_2$, a map
$$
X:[0,\infty)\times \Omega_1\longrightarrow \Omega_2,\quad (t,\omega)\longmapsto X_t(\omega)
$$
is called an \emph{$\Omega_2$-valued process}, if for all $t\geq 0$ the induced map $X_t: \Omega_1\to \Omega_2$ is measurable. The maps $t\mapsto X_t(\omega)$, with fixed $\omega\in\Omega_1$, are referred to as the \emph{paths of $X$}.} with continuous paths
\begin{align}\label{bb}
X(x_0): [0,\infty)\times \Omega\longrightarrow M,
\end{align}
which is defined on some probability space $(\Omega,\IP,\IFF)$, and which has the transition probability densities given by $p(t,x,y)$. In other words, given  $n\in\IN$, a finite sequence of times $0<t_1< \dots< t_n$ and Borel sets $A_1,\dots,A_n\subset M$, setting $\delta_j:=t_{j+1}-t_j$ with $t_0:=0$, we would like the probability of finding the Brownian particle simultaneously in $A_1$ at the time $t_1$, in $A_2$ at the time $t_2$, and so on, to be given by the quantity
\begin{align}\label{shgi}
&\IP\{X_{t_1}(x_0)\in A_1,\dots,X_{t_n}(x_0)\in A_n\} \\\nn
&=\int\cdots \int 1_{A_1}(x_1)p(\delta_0 ,x_0,x_1) \cdots \\\nn
&\quad\quad\quad\quad\times 1_{A_n}(x_n) p(\delta_{n-1} ,x_{n-1},x_n) \Id\mu(x_1)\cdots \Id\mu(x_n),
\end{align}
whenever the particle starts from $x_0$. Equivalently, one could say that a Brownian motion on $M$ with starting point $x_0$ is a process with continuous paths (\ref{bb}), such that the finite-dimensional distributions of its law are given by the right-hand side of (\ref{shgi})\footnote{The law of $X(x_0)$ is by definition the probability measure on the space of continuous paths on $M$, which is defined as the pushforward of $\IP$ under the induced map
$$
\Omega\longrightarrow C([0,\infty),M),\>\>\omega\longmapsto X_{\bullet}(x_0)(\omega).
$$}. In fact, such a path space measure is uniquely determined by its finite-dimensional distributions (cf. Remark \ref{azi} below). In particular, all Brownian motions should have the same law, which we will call the \emph{Wiener measure} later on.\vspace{2mm}

Ultimately, the above prescriptions indeed turn out to work perfectly well in terms of giving Brownian motion for the Euclidean $\IR^m$ or for compact Riemannian manifolds. On the other hand, we see from (\ref{shgi}) that, in particular, it is required that for all $t> 0$,
$$
\IP\{X_{t}(x_0)\in M\}=\int_M p(t,x_0,y)\Id\mu(y),
$$
and already if $M$ is any open bounded subset of $\IR^m$, it automatically happens that
\begin{align}\label{sh0}
\int_M p(t,x_0,y)\Id\mu(y)<1\text{ for some $(t,x_0)\in (0,\infty)\times M$},
\end{align}
This leads to the conceptual difficulty that the process can leave its space of states with a strictly positive probability. This problem arises, if and only if $M$ is stochastically incomplete, which ultimately justifies Definition \ref{stochi}.\vspace{2mm}

Since we aim to work on arbitrary Riemannian manifolds, we need to solve the above conceptual problem of stochastic incompleteness. This is done by using the Alexandrov compactification of $M$. Since it does not cause much extra work, we start by explaining the corresponding constructions in the setting of an arbitrary Polish space, recalling that a topological space is called Polish, if it is separable and if it admits a complete metric which induces the original topology.

\section{Path spaces as measurable spaces}

\begin{Notation} Given a locally compact Polish space $N$, we set
\begin{align*}
&\widetilde{N}:=\\
&\begin{cases}&N, \text{if $N$ is compact}\\
&\text{Alexandrov compactification $N\cup \{\infty_N\}$, if $N$ is noncompact.}
\end{cases}
\end{align*}
We recall here that $\infty_N$ is any point $\notin N$, and that the topology on $N\cup \{\infty_N\}$ is defined as follows: $U\subset N\cup \{\infty_N\}$ is declared to be open, if and only if either $U$ is an open subset of $N$ or if there exists a compact set $K\subset N$ such that $U=(N \setminus K)\cup\{\infty_N\}$. This construction depends trivially on the choice of $\infty_N$, in the sense that for any other choice $\infty_N'\notin N$, the canonical bijection $N\cup \{\infty_N\}\to N\cup \{\infty_N'\}$ is a homeomorphism.\\
We consider the path space $\Omega_N:=C([0,\infty),\widetilde{N})$, and thereon we denote (with a slight abuse of notation) the canonically given coordinate process by
$$
\mathbb{X}: [0,\infty)\times \Omega_N\longrightarrow \widetilde{N},\>\>\mathbb{X}_t(\gamma):=\gamma(t).
$$
We consider $\Omega_N$ a topological space with respect to the topology of uniform convergence on compact subsets, and we equip it with its Borel sigma-algebra $\IFF^N$. 
\end{Notation}

We fix such a locally compact Polish space $N$ (e.g., a  manifold) for the moment. It is well-known that $\Omega_N$ as defined above is Polish again. In fact, $\widetilde{N}$ is Polish, and if we pick a bounded metric $\varrho_{\widetilde{N}} : \widetilde{N}\times \widetilde{N}\to [0,1]$ which induces the original topology on $\widetilde{N}$, then
$$
\varrho_{\Omega_N}(\gamma_1,\gamma_2):=\sum_{j=1}^{\infty}\max_{0\leq t\leq j}\varrho_{\widetilde{N}}(\gamma_1(t),\gamma_2(t))
$$ 
is a complete separable metric\footnote{In fact, it is easy to see that this is a complete metric which induces the original topology. On the other hand, the proof that this topology is separable is a little tricky, cf. p. 170 in D. Stroock\rq{}s book \cite{stroock2}. Although it is not so easy to find a precise reference, we believe that these results can be traced back to Kolmogorov.} on $\Omega_N$ which induces the original topology (of local uniform convergence). Furthermore, since evaluation maps of the form
$$
X_1\times C(X_1,X_2)\longrightarrow X_2,\>\>(x,f)\longmapsto f(x)
$$
are always jointly continuous, if $X_1$ is locally compact and Hausdorff and if $C(X_1,X_2)$ is equipped with its topology of local uniform convergence, it follows that $\mathbb{X}$ is in fact jointly continuous. In particular, $\mathbb{X}$ is jointly (Borel) measurable.

\begin{Notation} Given a set $\Omega$ and a collection $\mathscr{C}$ of subsets of $\Omega$ or of maps with domain $\Omega$, the symbol $\left\langle\mathscr{C}\right\rangle$ stands for the smallest sigma-algebra on $\Omega$ which contains $\mathscr{C}$. Furthermore, whenever there is no danger of confusion, we will use notations such as
$$
\{f\in A\}:=\{y\in \Omega:f(y)\in A\}\subset \Omega,
$$
where $f:\Omega\to \Omega'$ and $A\subset\Omega'$.
\end{Notation}

\begin{Definition}1. A subset $C\subset \Omega_N$ is called a \emph{Borel cylinder}, if there exist $n\in\IN$, $0<t_1< \dots<t_n$ and Borel sets $A_1,\dots,A_n\subset \widetilde{N}$, such that
$$
C=\{\IX_{t_1}\in A_1,\dots,\IX_{t_n}\in A_n\}=\bigcap_{j=1}^n\IX_{t_j}^{-1}(A_j).
$$ 
The collection of all Borel cylinders in $\Omega_N$ will be denoted by $\mathscr{C}^N$. \\
2. Likewise, given $t\geq 0$, the collection $\mathscr{C}^N_t$ of \emph{Borel cylinders in $\Omega_N$ up to the time $t$} is defined to be the collection of subsets $C\subset\Omega_N$ of the form
$$
C=\{\IX_{t_1}\in A_1,\dots,\IX_{t_n}\in A_n\}=\bigcap_{j=1}^n\IX_{t_j}^{-1}(A_j),
$$
where $n\in\IN$, $0<t_1< \dots< t_n< t$, and where $A_1,\dots,A_n\subset \widetilde{N}$ are Borel sets.
\end{Definition}

It is easily checked inductively that both $\mathscr{C}^N$ and $\mathscr{C}^N_t$ are $\pi$-systems in $\Omega_N$, that is, both collections are (nonempty and) stable under taking finitely many intersections. The following fact makes $\IFF^N$ handy in applications:

\begin{Lemma}\label{cyl} One has 
\begin{align}\label{gjao}
\IFF^N=\left\langle\mathscr{C}^N\right\rangle=\left\langle(\mathbb{X}_s:\Omega_N\longrightarrow \widetilde{N})_{s\geq 0}\right\rangle. 
\end{align}
\end{Lemma}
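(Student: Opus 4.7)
The plan is to establish the two equalities in \eqref{gjao} essentially by chasing generators and using path continuity in a crucial way. I will prove the inclusions in the order
$$
\left\langle\mathscr{C}^N\right\rangle \;\subseteq\; \left\langle(\mathbb{X}_s)_{s\geq 0}\right\rangle \;\subseteq\; \left\langle\mathscr{C}^N\right\rangle \;\subseteq\; \IFF^N \;\subseteq\; \left\langle\mathscr{C}^N\right\rangle.
$$

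The first two inclusions are the easy part. Any Borel cylinder $\{\IX_{t_1}\in A_1,\dots,\IX_{t_n}\in A_n\}$ is a finite intersection of sets of the form $\IX_{t}^{-1}(A)$ with $A\subset \widetilde{N}$ Borel and $t>0$, so $\mathscr{C}^N\subset \left\langle(\IX_s)_{s\geq 0}\right\rangle$. For the reverse inclusion, the only point that needs attention is that the cylinders use strictly positive times, so I still owe measurability of $\IX_0$ with respect to $\langle\mathscr{C}^N\rangle$; but continuity of paths gives $\IX_0(\gamma)=\lim_{n\to\infty}\IX_{1/n}(\gamma)$ pointwise, and each $\IX_{1/n}$ is $\langle\mathscr{C}^N\rangle$-measurable, so the pointwise limit into the metric space $\widetilde{N}$ is too.

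Next, $\langle\mathscr{C}^N\rangle\subseteq\IFF^N$ follows from the observation that, because convergence in $\Omega_N$ is local uniform convergence, each map $\IX_t:\Omega_N\to\widetilde{N}$ is in fact continuous, hence Borel measurable, so each Borel cylinder lies in $\IFF^N$.

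The main obstacle, and the only real content of the lemma, is the remaining inclusion $\IFF^N\subseteq\langle\mathscr{C}^N\rangle$. The plan is to exploit separability of $\Omega_N$ together with the fact that the metric $\varrho_{\Omega_N}$ is itself, up to a countable supremum, built out of path evaluations. More precisely, fix a bounded metric $\varrho_{\widetilde{N}}$ inducing the topology on $\widetilde{N}$ and let $\varrho_{\Omega_N}$ be the complete separable metric (with an appropriate summability factor) inducing the topology of local uniform convergence on $\Omega_N$. For any fixed $\gamma_0\in \Omega_N$, the map
$$
\Omega_N\ni \gamma \longmapsto \max_{0\leq t\leq j}\varrho_{\widetilde{N}}(\gamma(t),\gamma_0(t))
$$
agrees, by continuity of paths and continuity of $\varrho_{\widetilde{N}}$, with $\sup_{t\in\mathbb{Q}\cap[0,j]}\varrho_{\widetilde{N}}(\gamma(t),\gamma_0(t))$, a countable supremum of $\langle\mathscr{C}^N\rangle$-measurable functions. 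Summing in $j$ shows $\gamma\mapsto \varrho_{\Omega_N}(\gamma,\gamma_0)$ is $\langle\mathscr{C}^N\rangle$-measurable, so every open $\varrho_{\Omega_N}$-ball belongs to $\langle\mathscr{C}^N\rangle$. Since $(\Omega_N,\varrho_{\Omega_N})$ is separable, the collection of open balls with centers in a fixed countable dense set forms a countable base of the topology, so every open subset of $\Omega_N$ is a countable union of such balls and therefore lies in $\langle\mathscr{C}^N\rangle$. This yields $\IFF^N\subseteq \langle\mathscr{C}^N\rangle$ and closes the chain of inclusions.
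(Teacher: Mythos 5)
Your proposal is correct and follows essentially the same route as the paper's proof: both rest on (i) reducing the local uniform metric/neighborhoods to countable suprema of evaluations at rational times via path continuity, combined with separability of $\Omega_N$ to capture all open sets, and (ii) recovering $\IX_0$ as the pointwise limit of $\IX_{1/n}$ to handle the time-zero evaluation. The only cosmetic difference is that you verify measurability of $\gamma\mapsto\varrho_{\Omega_N}(\gamma,\gamma_0)$ and then use open balls around a countable dense set, whereas the paper works directly with the closed sets $\left\{\gamma:\max_{0\leq t\leq n}\varrho_{\widetilde{N}}(\gamma(t),\gamma_0(t))\leq\epsilon\right\}$ as a neighbourhood basis.
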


\begin{proof} Since for every fixed $s\geq 0$ the map 
$$
\IX_s:\Omega_N\longrightarrow \tilde{N},\> \gamma\longmapsto \gamma(s)
$$
is $\IFF^N$-measurable, it is clear that $\mathscr{C}^N\subset \IFF^N$, and therefore
$$
 \left\langle\mathscr{C}^N\right\rangle\subset \IFF^N.
$$
In order to see 
$$
 \IFF^N\subset  \left\langle\mathscr{C}^N\right\rangle,
$$ 
pick a topology-defining metric $\varrho_{\widetilde{N}}$ on $\widetilde{N}$ and denote the corresponding closed balls by $\overline{B_{\widetilde{N}}}(x,r)$. Then, since the elements of $\Omega_N$ are continuous, for all $\gamma_0\in\Omega_N$, $n\in\IN$, $\epsilon>0$ one has
\begin{align*}
&\left\{\gamma: \max_{0\leq t\leq n}\varrho_{\widetilde{N}}(\gamma(t),\gamma_0(t))\leq \epsilon\right\}\\
&=\bigcap_{0\leq t\leq n, \text{ $t$ is rational}}\left\{\gamma:\gamma(t)\in \overline{B_{\widetilde{N}}}(\gamma_0(t),\epsilon)\right\},\\
&=\bigcap_{0< t\leq n, \text{ $t$ is rational}}\left\{\gamma:\gamma(t)\in \overline{B_{\widetilde{N}}}(\gamma_0(t),\epsilon)\right\}.
\end{align*}
Therefore, sets of the form
\begin{align}\label{au}
\left\{\gamma:\max_{0\leq t\leq n}\varrho_{\widetilde{N}}(\gamma(t),\gamma_0(t))\leq \epsilon\right\},\>\text{  $\gamma_0\in\Omega_N$, $n\in\IN$, $\epsilon>0$ }
\end{align}
are $\left\langle\mathscr{C}^N\right\rangle$-measurable. Since the collection of sets of the form (\ref{au}) generates the topology of local uniform convergence\footnote{To be precise, this collection forms a basis of neighbourhoods of this topology.}, it is clear that the induced Borel sigma-algebra $\IFF^N$ satisfies $\IFF^N\subset  \left\langle\mathscr{C}^N\right\rangle$.\\
The inclusion  
$$
\left\langle\mathscr{C}^N\right\rangle\subset \left\langle(\mathbb{X}_s:\Omega_N\longrightarrow \widetilde{N})_{s\geq 0}\right\rangle
$$
is clear, since each set in $\mathscr{C}^N$ is a finite intersection of sets of the form $\IX^{-1}_s(A)$, $s>0$, $A\subset \widetilde{N}$ Borel. To see
$$
\left\langle(\mathbb{X}_s:\Omega_N\longrightarrow \widetilde{N})_{s\geq 0}\right\rangle\subset \left\langle\mathscr{C}^N\right\rangle,
$$
note that for every metric $\varrho_{\widetilde{N}}$ that generates the topology on $\widetilde{N}$, one has
$$
\left\langle(\mathbb{X}_s:\Omega_N\longrightarrow \widetilde{N})_{s\geq 0}\right\rangle=\left\langle\big\{\IX_s^{-1}\big(\overline{B_{\widetilde{N}}}(x,r)\big): x\in \widetilde{N}, r>0, s\geq 0\big\}\right\rangle,
$$
with the corresponding closed balls $\overline{B_{\widetilde{N}}}(\dots)$, so that it only remains to prove 
$$
\IX_0^{-1}\big(\overline{B_{\widetilde{N}}}(x,r)\big)\in \left\langle\mathscr{C}^N\right\rangle
$$
for all $x\in \widetilde{N}$, $r>0$. This, however, follows from
$$
\IX_0^{-1}\big(\overline{B_{\varrho_{\widetilde{N}}}}(x,r)\big)=\big\{\gamma: \lim_{n\to\infty} \varrho_{\widetilde{N}}(\gamma(1/n),x)\leq r\big\},
$$ 
since clearly $\gamma\mapsto \varrho_{\widetilde{N}}(\gamma(1/n),x)$ is a $\left\langle\mathscr{C}^N\right\rangle$-measurable function on $\Omega_N$ (the pre-image of an interval of the form $[0,R]$ under this map is the cylinder set $\IX_{1/n}^{-1}\big(\overline{B_{\widetilde{N}}}(x,R)\big)$). This completes the proof.
\end{proof}

\begin{Remark}\label{azi} By the above lemma, $\mathscr{C}^N$ is a $\pi$-system that generates $\IFF^N$. It then follows from an abstract measure theoretic result (cf. appendix, Corollary \ref{cara}) that every finite measure on $\IFF^N$ is uniquely determined by its values on $\mathscr{C}^N$.
\end{Remark}

\begin{Definition} Setting 
$$ 
\IFF^N_{t}:=\left\langle(\mathbb{X}_s:\Omega_N\longrightarrow \widetilde{N})_{0\leq s\leq t}\right\rangle\quad\text{ for every $t\geq 0$,}
$$
it follows from Lemma \ref{cyl} that
$$
\IFF^N_*:=\bigcup_{t\geq 0} \IFF^N_{t}
$$
becomes a filtration of $\IFF^N$. It is called the \emph{filtration generated by the coordinate process on $\Omega_N$}.
\end{Definition}

Precisely as for the second equality in (\ref{gjao}), one proves
\begin{align}\label{aaip}
\IFF^N_t=\left\langle\mathscr{C}^N_t\right\rangle\quad\text{for all $t\geq 0$.} 
\end{align}

Particularly important $\IFF^N_t$-measurable sets are provided by exit times:

\begin{Definition}
Given an arbitrary subset $U\subset \widetilde{N}$, we define
\begin{align}
&\zeta_U: \Omega_N\longrightarrow [0,\infty],
&\zeta_U:=\inf\{t\geq 0:\>\mathbb{X}_t\in \widetilde{N}\setminus U\},
\end{align}
and call this map the \emph{the first exit time of $\IX$ from $U$,} with $\inf\{...\}:=\infty$ in case the set is empty.
\end{Definition}

There is the following result, which in a probabilistic language means that first exit times from open sets are $\IFF^N_*$-optional times:\footnote{Let $(\Omega,\IFF)$ be a measure space, and let $\IFF_*=(\IFF_t)_{t\geq 0}$ be a filtration of $\IFF$. Then a map $\tau:\Omega\to [0,\infty]$ is called a \emph{$\IFF_*$-optional time}, if for all $t\geq 0$ one has $\{t<\tau\}\in\IFF_t$, and it is called a \emph{$\IFF_*$-stopping time}, if for all $t\geq 0$ one has $\{t\leq \tau\}\in\IFF_t$. }

\begin{Lemma} Assume that $U\subset \widetilde{N}$ is open with $U\ne \widetilde{N}$. Then one has
$$
\{t<\zeta_U\}\in \IFF^N_t\quad\text{for all $t\geq 0$.}
$$
\end{Lemma}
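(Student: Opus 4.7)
The plan is to exhibit $\{t<\zeta_U\}$ as a countable set-theoretic combination of sets of the form $\IX_s^{-1}(A)$ with $s\in[0,t]$ and $A\subset\widetilde{N}$ Borel; such a combination lies in $\IFF^N_t$ by (\ref{aaip}).

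First I would reformulate the event. Since $\widetilde{N}\setminus U$ is closed and paths are continuous, one has $\zeta_U(\gamma)>t$ iff no $s\in[0,t]$ satisfies $\gamma(s)\in\widetilde{N}\setminus U$, equivalently
$$
\{t<\zeta_U\}=\{\gamma\in\Omega_N:\gamma([0,t])\subset U\}.
$$
(If $\gamma([0,t])\subset U$, then by compactness of $\gamma([0,t])$ and openness of $U$, continuity even yields $\gamma(s)\in U$ for $s$ slightly past $t$, so $\zeta_U(\gamma)>t$; the converse is immediate.)

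Next I would approximate $U$ from within by closed sets: fix any metric $\varrho_{\widetilde{N}}$ generating the topology of $\widetilde{N}$ and set
$$
F_n:=\bigl\{x\in\widetilde{N}:\varrho_{\widetilde{N}}(x,\widetilde{N}\setminus U)\geq 1/n\bigr\},\qquad n\in\IN_{\geq 1}.
$$
Each $F_n$ is closed in $\widetilde{N}$, $F_n\subset F_{n+1}$, and $\bigcup_n F_n=U$. Since $\gamma([0,t])$ is compact (continuous image of $[0,t]$), one gets
$$
\gamma([0,t])\subset U\ \iff\ \exists\,n:\gamma([0,t])\subset F_n.
$$

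Finally I would reduce the uncountable condition $\gamma([0,t])\subset F_n$ to a countable one. Let $D_t:=\bigl([0,t]\cap\IQ\bigr)\cup\{t\}$, a countable dense subset of $[0,t]$. Continuity of $\gamma$ and closedness of $F_n$ give
$$
\gamma([0,t])\subset F_n\ \iff\ \gamma(s)\in F_n\ \text{for every }s\in D_t,
$$
since any $s\in[0,t]$ is a limit of a sequence in $D_t\cap[0,s]$ (adding the point $t$ handles the case of irrational $t$). Combining the three steps,
$$
\{t<\zeta_U\}=\bigcup_{n=1}^{\infty}\bigcap_{s\in D_t}\IX_s^{-1}(F_n),
$$
which is a countable union of countable intersections of elements of $\IFF^N_t$ (recall each $\IX_s$ with $s\leq t$ is $\IFF^N_t$-measurable by (\ref{aaip})), and hence lies in $\IFF^N_t$. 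The edge case $t=0$ is trivial, since then $\{0<\zeta_U\}=\IX_0^{-1}(U)\in\IFF^N_0$.

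There is no really hard step here; the only point requiring a touch of care is the interplay of continuity and compactness that allows passage from the uncountable condition $\gamma([0,t])\subset U$ to a countable one, where the possibility of irrational $t$ forces the explicit inclusion of $\{t\}$ in $D_t$.
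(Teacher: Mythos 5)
Your proof is correct and follows essentially the same route as the paper: both rewrite $\{t<\zeta_U\}$ as the event that the path stays a uniform distance $\geq 1/n$ from the closed set $\widetilde{N}\setminus U$ throughout $[0,t]$ (your sets $\IX_s^{-1}(F_n)$ are exactly the paper's sets $\{\varrho_{\widetilde{N}}(\IX_s,\widetilde{N}\setminus U)\geq 1/n\}$), and then reduce to rational times by continuity. The only cosmetic difference is that you adjoin the endpoint $t$ to your index set $D_t$, which is harmless but not needed, since the condition $\gamma(s)\in F_n$ is closed in $s$ and already propagates from $[0,t]\cap\IQ$ to all of $[0,t]$ by continuity.
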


\begin{proof} The proof actually only uses that $\IX$ has continuous paths and that $\widetilde{N}$ is metrizable: Pick a metric $\varrho_{\widetilde{N}}$ on $\widetilde{N}$ which induces the original topology. Then, since $\tilde{N}\setminus U$ is closed and $\IX$ has continuous paths, we have
$$
\{t<\zeta_U\}=\bigcup_{n\in\IN}\bigcup_{0\leq s\leq t, \text{  $s$ is rational}}\{\varrho_{\widetilde{N}}(\IX_s,\tilde{N}\setminus U)\geq 1/n\}.
$$
The set on the right-hand side clearly is $\in\IFF^N_t$, since the distance function to a nonempty set is continuous and thus Borel.
\end{proof}

\section{The Wiener measure on Riemannian manifolds}

We return to our Riemannian setting. In order to apply the above abstract machinery in this case, we have to extend some Riemannian data to the compactification of $M$ (in the noncompact case):

\begin{Notation} Let $\widetilde{\mu}$ denote the Borel measure on $\widetilde{M}$ given by $\mu$ if $M$ is compact, and which is extended to $\infty_M$ by setting $\mu(\infty_M)=1$ in the noncompact case. Then we define a Borel function
$$
\widetilde{p}: (0,\infty)\times \widetilde{M}\times \widetilde{M}\longrightarrow [0,\infty)
$$
as follows: $\widetilde{p}:=p$ if $M$ is compact, and in case $M$ is noncompact, then
 for $t>0$, $x,y\in M$ we set
\begin{align*}
&\widetilde{p}(t,x,y):=p(t,x,y),\>\widetilde{p}(t,x,\infty_M):=0,\>\widetilde{p}(t,\infty_M,\infty_M):=1, \\
&\widetilde{p}(t,\infty_M,y):= 1-\int_M p(t,y,z)\Id\mu(z).
\end{align*}
\end{Notation}

It is straightforward to check that the pair $(\widetilde{p}, \widetilde{\mu})$ satisfies the Chapman-Kolmogorov equations, that is, for all $s,t>0$, $x,y\in\widetilde{M}$ one has
\begin{align}\label{chaa}
\int_{\widetilde{M}} \widetilde{p}(t,x,z)\widetilde{p}(s,y,z)\Id\widetilde{\mu}(z)=\widetilde{p}(s+t,x,y). 
\end{align}
Furthermore, one has
\begin{align}\label{dddo}
\int_{\widetilde{M}} \widetilde{p}(t,x,y)\Id \widetilde{\mu}(y)=1\text{ for all $x\in \widetilde{M}$},
\end{align}
in contrast to the possibility of $\int_{M}p(t,x,y)\Id \mu(y)<1$ in case $M$ is stochastically incomplete. It is precisely the conservation of probability (\ref{dddo}) which motivates the above Alexandrov machinery. Since there is no danger of confusion, the following abuse of notation will be very convenient in the sequel: 

\begin{Notation}\label{wiener2} We write $\zeta:=\zeta_M$ for the first exist time of the coordinate process $\IX$ on $\Omega_M$ from $M\subset \widetilde{M}$. 
\end{Notation}

For obvious reasons, $\zeta$ is also called the \emph{explosion time} of $\IX$. Note also that one has $\zeta>0$, and that by our previous conventions we have $\zeta\equiv \infty$ if $M$ is compact. The last fact is consistent with the fact that compact Riemannian manifolds are stochastically complete. \vspace{2mm}

The following existence result will be central in the sequel:

\begin{Propandef} The \emph{Wiener measure} $\IP^{x_0}$ with initial point $x_0\in M$ is defined to be the unique probability measure on $(\Omega_M,\IFF^M)$ which satisfies
\begin{align*}
&\IP^{x_0}\{\IX_{t_1}\in A_1,\dots,\IX_{t_n}\in A_n\} \\
&=\int\cdots\int 1_{A_1}(x_1)\widetilde{p}(\delta_0 ,x_0,x_1) \cdots \\
&\quad\quad\quad\quad  \times 1_{A_n}(x_n)\widetilde{p}(\delta_{n-1} ,x_{n-1},x_n) \Id\widetilde{\mu}(x_1)\cdots \Id\widetilde{\mu}(x_n)
\end{align*}
for all $n\in\IN$, all finite sequences of times $0<t_1<\dots<t_n$ and all Borel sets $A_1,\dots,A_n\subset \widetilde{M}$, where $\delta_j:=t_{j+1}-t_j$ with $t_0:=0$. It has the additional property that
\begin{align}\label{expla}
\mathbb{P}^{x_0}\left(  \{\zeta=\infty\}\bigcup\Big\{\text{$\zeta<\infty$ and $\IX_t=\infty_{M}$ for all $t\in [\zeta,\infty)$}\Big\}   \right) = 1,
\end{align}
in other words, the point at infinity $\infty_{M}$ is a \lq\lq{}trap\rq\rq{} for $\mathbb{P}^{x_0}$-a.e. path.\footnote{It is a trap in the sense that once a path touches $\infty_{M}$, it remains there for all times.}
\end{Propandef}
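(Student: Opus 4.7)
The plan is to prove uniqueness first and then existence. Uniqueness is immediate from Remark \ref{azi}: any finite Borel measure on $(\Omega_M,\IFF^M)$ is determined by its values on the $\pi$-system $\mathscr{C}^M$, and the stated finite-dimensional distributions specify precisely those values. For existence, the first step is to check, using the Chapman-Kolmogorov equations \eqref{chaa} and the conservation identity \eqref{dddo} for $(\widetilde{p},\widetilde{\mu})$, that the prescribed finite-dimensional marginals form a consistent (Kolmogorov-projective) family of probability measures on the finite products $\widetilde{M}^n$. Since $\widetilde{M}$ is a compact metrizable (hence Polish) space, the Kolmogorov extension theorem then yields a probability measure $\mathbb{Q}^{x_0}$ on the product space $\widetilde{M}^{[0,\infty)}$, with the required marginals.

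The second step — and the main obstacle — is to show that the coordinate process on $(\widetilde{M}^{[0,\infty)},\mathbb{Q}^{x_0})$ has a modification whose paths lie in $\Omega_M=C([0,\infty),\widetilde{M})$; pushing forward the law of such a modification gives $\mathbb{P}^{x_0}$. I would apply Kolmogorov's continuity criterion with respect to a bounded metric $\widetilde{\varrho}$ generating the topology of $\widetilde{M}$. The needed moment bound has the form
\begin{align*}
\mathbb{E}^{\mathbb{Q}^{x_0}}\bigl[\widetilde{\varrho}(\mathbb{X}_t,\mathbb{X}_s)^{2\alpha}\bigr]=\int_{\widetilde M\times\widetilde M}\widetilde{p}(s,x_0,x)\widetilde{p}(t-s,x,y)\widetilde\varrho(x,y)^{2\alpha}\,d\widetilde\mu(x)\,d\widetilde\mu(y)\leq C|t-s|^{1+\epsilon}
\end{align*}
for suitable $\alpha,\epsilon>0$. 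Away from $\infty_M$, this reduces (via restriction to relatively compact $U\subset M$) to estimates of the form $\int_M p(u,x,y)\varrho(x,y)^{2\alpha}\,d\mu(y)\leq C_U u^\alpha$, which follow from a local Gaussian upper bound on $p$; such a bound is in turn a consequence of the mean-value estimate Theorem \ref{mean} combined with standard Aronson-type arguments (alternatively, it can be obtained by localizing to Euclidean coordinate systems as in Definition \ref{deee} and comparing with the Euclidean heat kernel). Near $\infty_M$, the contributions are automatically small because $\widetilde\varrho$ collapses the \lq\lq{}ends\rq\rq{} of $M$ to a single point; but combining the two regions in a clean single estimate is the subtle point. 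Kolmogorov's criterion then produces the desired continuous modification.

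Finally, for the trap property \eqref{expla}, I would argue as follows: by construction $\widetilde{p}(t,\infty_M,A)=0$ for every Borel $A\subset M$ and $\widetilde{p}(t,\infty_M,\{\infty_M\})=1$, so the finite-dimensional distributions force, for any rationals $0<s<t$,
\begin{align*}
\mathbb{P}^{x_0}\bigl\{\mathbb{X}_s=\infty_M,\ \mathbb{X}_t\in M\bigr\}=\int_M\widetilde{p}(t-s,\infty_M,y)\,d\mu(y)\cdot\widetilde\mu(\{\infty_M\})=0.
\end{align*}
Taking a countable union over all rational $0<s<t$ and invoking the path continuity of $\mathbb{X}$ (together with the fact that $\{\infty_M\}\subset\widetilde M$ is closed), one concludes that $\mathbb{P}^{x_0}$-almost every path which ever touches $\infty_M$ remains at $\infty_M$ from then on; the first such time is by definition $\zeta$, yielding \eqref{expla}. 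As already indicated, the hard step is the continuity argument: one must carry moment bounds uniformly through the compactification, and the estimates of Chapter \ref{scalar} (in particular Theorem \ref{mean}) are what make this work in the absence of any geometric hypothesis on $M$.
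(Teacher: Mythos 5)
Your overall architecture (uniqueness via the $\pi$-system $\mathscr{C}^M$ and Remark \ref{azi}, consistency of the finite-dimensional marginals via \eqref{chaa} and \eqref{dddo}, Kolmogorov extension on $\widetilde{M}^{[0,\infty)}$) is fine, but note first that it is a genuinely different route from the one taken here: the text does not construct $\IP^{x_0}$ by hand, it invokes the theory of Hunt processes associated with regular Dirichlet forms (Section 8 of \cite{gribrown}), where one obtains a measure concentrated on c\`adl\`ag paths and then upgrades to continuous paths using the \emph{locality} of the form $Q$. The reason for that detour is exactly the step you flag as ``the subtle point'', and that step is a genuine gap in your argument. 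Kolmogorov's continuity criterion needs a bound $\mathbb{E}^{\mathbb{Q}^{x_0}}[\widetilde{\varrho}(\IX_t,\IX_s)^{2\alpha}]\leq C|t-s|^{1+\epsilon}$ that is \emph{uniform} over the intermediate point $x=\IX_s\in\widetilde{M}$, and nothing in Chapter \ref{scalar} delivers this. Theorem \ref{mean} is a pure sup-bound $p(t,x,y)\leq C\min(t,R(x)^2)^{-m/2}$ with no off-diagonal decay in $\varrho(x,y)$, and its constant degenerates as $x$ approaches the ends of $M$ (where $r_{\mathrm{Eucl}}(x,b)\to 0$); moreover the compatible metric $\widetilde{\varrho}$ on $\widetilde{M}$ has no quantitative relation to the Riemannian distance $\varrho$, so even a local Gaussian bound in terms of $\varrho$ does not translate into a moment bound for $\widetilde{\varrho}$. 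The contribution of the escaping mass, $\widetilde{\varrho}(x,\infty_M)^{2\alpha}\,(1-\int_M p(u,x,y)\Id\mu(y))$, is not ``automatically small'': for $x$ near an end the second factor is of order one already for small $u$, and controlling the product uniformly is precisely a quantitative statement about how fast paths explode, which is unavailable on an arbitrary $M$. As written, the existence of a continuous modification is therefore not proved.

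The trap property argument also has a gap. From the finite-dimensional distributions you correctly get $\IP^{x_0}\{\IX_s=\infty_M,\ \IX_t\in M\}=0$ for each fixed pair $s<t$, hence almost surely simultaneously for all \emph{rational} pairs. But this does not exclude a path that touches $\infty_M$ only at times containing no rational (say at a single irrational time $\zeta$) and then returns to $M$: for such a path one has $\IX_s\in M$ for every rational $s$, so it violates none of the countably many null events you have removed, even though it violates \eqref{expla}. Continuity of the path and closedness of $\{\infty_M\}$ give $\IX_\zeta=\infty_M$ when $\zeta<\infty$, but they do not convert ``$\IX_s\to\infty_M$ along rationals $s\uparrow\zeta$'' into ``$\IX_s=\infty_M$ for some rational $s$''. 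Ruling this out requires an additional input — in the Dirichlet-form construction the cemetery $\infty_M$ is a trap \emph{by construction} of the killed process, which is why the paper gets \eqref{expla} for free; a purely finite-dimensional derivation would need something like $\lim_{x\to\infty_M}\int_K \widetilde{p}(u,x,y)\Id\widetilde{\mu}(y)=0$ for compact $K\subset M$, which you have not established and which does not follow from the estimates available at this point.
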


\begin{proof} We refer the reader to Section 8 in \cite{gribrown} and the references therein. For readers who are familiar with Dirichlet forms, we only remark here that the essential abstract property of $p(t,x,y)$ that we actually use is that the corresponding semigroup stems from a regular Dirichlet form $Q$. For each such semigroup, the above construction can be carried through to yield a corresponding Wiener measure which is concentrated on the space of right-continuous $\tilde{M}$-valued paths that have left limits. Since our $Q$ is even a \emph{local} Dirichlet form, it ultimately follows that the Wiener measures are concentrated on \emph{continuous} $\tilde{M}$-valued paths. We also refer the interested reader to \cite{roeck} for the details of the approach that uses Dirichlet form theory.
\end{proof}

An obvious but nevertheless very important consequence of (\ref{expla}) is that for all $x_0\in M$ one has
\begin{align}\label{expl2}
\IP^{x_0}\{  1_{\{t<\zeta\}}=1_{\{X_t\in M\}}   \}=1.
\end{align}

In the sequel, integration with respect to the Wiener measure will often be written as an expectation value,
$$
\mathbb{E}^{x_0}\left[\Psi\right]:=\int \Psi\Id\IP^{x_0}:=\int \Psi(\gamma)\Id\IP^{x_0}(\gamma),
$$
where $\Psi:\Omega_M\to \IC$ is any appropriate (say, nonnegative or integrable) Borel function. We remark that using monotone convergence, the defining relation of the Wiener measure implies that for all $n\in\IN$, all finite sequences of times $0<t_1<\dots<t_n$ and all Borel functions 
$$
f_1,\dots, f_n:\widetilde{M}\longrightarrow [0,\infty),
$$
one has
\begin{align}\label{ajp}
&\mathbb{E}^{x_0}\left[f_1(\IX_{t_1})\cdots f_n(\IX_{t_n}) \right]\\\nn
&=\int\cdots\int   f_1(x_1)\widetilde{p}(\delta_0 ,x_0,x_1) \cdots \\
&\quad\quad\quad\times f_n(x_n)\widetilde{p}(\delta_{n-1} ,x_{n-1},x_n) \Id\widetilde{\mu}(x_1)\cdots \Id\widetilde{\mu}(x_n),
\end{align}
where $\delta_j:=t_{j+1}-t_j$ with $t_0:=0$. In particular, by the very construction of $\widetilde{M}$ and $\widetilde{\mu}$, the above formula in combination with (\ref{expl2}) implies 
\begin{align}\label{ajp2}
&\mathbb{E}^{x_0}\left[1_{\{t_1<\zeta\}}f_1(\IX_{t_1})\cdots 1_{\{t_n<\zeta\}}f_n(\IX_{t_n}) \right]\\\nn
&=\mathbb{E}^{x_0}\left[1_{\{\IX_{t_1}\in M\}}f_1(\IX_{t_1})\cdots 1_{\{\IX_{t_n}\in M\}}f_n(\IX_{t_n}) \right]\\\nn
&=\int\cdots\int   f_1(x_1)p(\delta_0 ,x_0,x_1) \cdots \\
&\quad\quad\quad\quad\times f_n(x_n)p(\delta_{n-1} ,x_{n-1},x_n) \Id\mu(x_1)\cdots \Id\mu(x_n),
\end{align}
therefore quantities that are given by averaging over paths that remain on $M$ until any fixed time can be calculated by genuine Riemannian data on $M$, as it should be. In the sequel, we will also freely use the following facts:

\begin{Remark}\label{ahop}1. Each of the measures $\IP^{x_0}$ is concentrated on the set of paths that start in $x_0$, meaning that
$$
\IP^{x_0}\{\IX_0=x\}=1\quad\text{ for all $x_0\in M$},
$$ 
as it should be. To see this, pick a metric $\widetilde{\varrho}$ on $\widetilde{M}$ which induces the topology on $\widetilde{M}$, and set 
$$
\widetilde{f}:=\widetilde{\varrho}(\bullet,x_0)-\widetilde{\varrho}(\infty_M,x_0)\in C(\tilde{M}).
$$
As $x_0\in M$, the very definition of $(\widetilde{p},\widetilde{\mu})$ implies that for all $t>0$ one has
$$
\int_{\widetilde{M}} \widetilde{p}(t,x_0,y)\widetilde{\varrho}(y,x_0)\Id \widetilde{\mu}(y)=\int_{M }p(t,x_0,y)\widetilde{f}|_{M}(y) \Id \mu(y)+ \widetilde{\varrho}(\infty_M,x_0),
$$
which, since $\widetilde{f}|_{M}$ is a continuous bounded function on $M$, implies through (\ref{ajp}) and (\ref{capaa}) the $\IL^1$-convergence
$$
\mathbb{E}^{x_0}  \left[\widetilde{\varrho}(\IX_{t},x_0)\right]=\int_{\widetilde{M}} \widetilde{p}(t,x_0,y)\widetilde{\varrho}(y,x_0) \Id \widetilde{\mu}(y)\to 0\>\text{ as $t\to 0+$}.
$$
Thus we can pick a sequence of strictly positive times $a_n$ with $a_n\to 0$ such that $\widetilde{\varrho}(\IX_{a_n},x)\to 0$ $\IP^{x_0}$-a.e., and the claim follows from
$$
\widetilde{\varrho}(\IX_0,x)\leq\widetilde{\varrho}(\IX_0,\IX_{a_n})+\widetilde{\varrho}(\IX_{a_n},x) \quad\text{for all $n\in\IN$}
$$
and the continuity of the paths of $\IX$.\vspace{1mm}

2. For every Borel set $N\subset M$ with $\mu(N)=0 $ and every $x\in M$, one has
\begin{align}\label{fio}
\int^{\infty}_0\int_{\Omega_M} 1_{\{(s\rq{},\gamma\rq{}):\>\gamma\rq{}(s\rq{})\in N \}} (s,\gamma) \Id\IP^{x}(\gamma)\Id s=\int^{\infty}_0\int_N p(s,x,y)\Id\mu(y)\Id s =0.
\end{align}
This fact follows immediately from the defining relation of the Wiener measure. For the first identity in (\ref{fio}), one also needs Fubini\rq{}s Theorem, which can be used due to $\mathbb{X}$ being jointly measurable.\vspace{1mm}

3. For each fixed $A\in \IFF^M$, the map 
\begin{align}\label{ahoue}
M\longrightarrow [0,1],\>\>x\longmapsto \IP^{x}(A)
\end{align}
is Borel measurable. In fact, this is obvious for $A\in\mathscr{C}^M$ by the defining relation of the Wiener measure, and it holds in general by the monotone class theorem (cf. appendix, Theorem \ref{monocl}), since $\mathscr{C}^M$ is a $\pi$-system which generates $\IFF^M$, and since the collection of sets
$$
\{A:A\in\IFF^M, \>\text{ (\ref{ahoue}) is Borel measurable}\}
$$
forms a monotone Dynkin-system. 
\end{Remark}

The following result is crucial:

\begin{Lemma}\label{mark1} The family of Wiener measures satisfies the following Markov property: For all $x_0\in M$, all times $t\geq 0$, all $\IFF^M_t$-measurable functions $\phi:\Omega_M\to [0,\infty)$, and all $\IFF^M$-measurable functions $\Psi:\Omega_M\to [0,\infty)$, one has
\begin{align}\label{markk}
\int \phi(\gamma)  \Psi(\gamma(t+\bullet))\Id\IP^{x_0}(\gamma)=\int \phi(\gamma) \int\Psi(\omega) \Id \mathbb{P}^{\gamma(t)}(\omega) \Id\IP^{x_0}(\gamma)\in [0,\infty].
\end{align}
\end{Lemma}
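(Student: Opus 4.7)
My plan is to reduce the identity to the case of cylinder indicators, establish that case by hand using the Chapman-Kolmogorov identity (\ref{chaa}) for $\widetilde p$, and then bootstrap to the stated generality by a monotone class argument followed by approximation of nonnegative measurable functions by increasing simple functions. Let $\theta_t:\Omega_M\to\Omega_M$, $(\theta_t\gamma)(s):=\gamma(t+s)$, denote the shift by $t\geq 0$; it is continuous in the topology of local uniform convergence, hence Borel measurable, so that $\Psi\circ\theta_t$ is $\IFF^M$-measurable whenever $\Psi$ is. The case $t=0$ is immediate from Remark \ref{ahop}.1, so I will assume $t>0$.

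For the base case I take $\phi=1_A$ and $\Psi=1_C$, where
$$A=\{\IX_{s_1}\in B_1,\dots,\IX_{s_k}\in B_k\}\in \mathscr{C}^M_t,\quad C=\{\IX_{r_1}\in D_1,\dots,\IX_{r_l}\in D_l\}\in\mathscr{C}^M,$$
with $0<s_1<\dots<s_k<t$ and $0<r_1<\dots<r_l$. Then $\theta_t^{-1}(C)=\{\IX_{t+r_1}\in D_1,\dots,\IX_{t+r_l}\in D_l\}$, and the defining formula of $\IP^{x_0}$ applied to $A\cap\theta_t^{-1}(C)$ at the times $s_1,\dots,s_k,t+r_1,\dots,t+r_l$ expresses the left-hand side of (\ref{markk}) as an iterated $\widetilde\mu$-integral of a product of $\widetilde p$-factors, in which exactly one factor, namely $\widetilde p(t+r_1-s_k,x_k,y_1)$, straddles the time $t$. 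Splitting this factor via (\ref{chaa}) as $\int_{\widetilde M}\widetilde p(t-s_k,x_k,z)\widetilde p(r_1,z,y_1)\Id\widetilde\mu(z)$ and reordering by Fubini (all integrands being nonnegative), the inner multiple integral over $(y_1,\dots,y_l)$ equals $\IP^z(C)$, and the outer iterated integral over $(x_1,\dots,x_k,z)$ is $\int\phi(\gamma)\IP^{\IX_t(\gamma)}(C)\Id\IP^{x_0}(\gamma)$ by a second application of the defining formula of $\IP^{x_0}$, now at the enlarged time grid $s_1,\dots,s_k,t$. The measurability of $z\mapsto\IP^z(C)$ needed here is Remark \ref{ahop}.3.

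To extend, I fix $\phi=1_A$ with $A\in\mathscr{C}^M_t$: both sides of (\ref{markk}), viewed as set functions of $B$ with $\Psi=1_B$, define finite Borel measures on $(\Omega_M,\IFF^M)$ that agree on the $\pi$-system $\mathscr{C}^M$; by the uniqueness result Corollary \ref{cara} (appendix) together with Lemma \ref{cyl}, they agree on all of $\IFF^M=\langle\mathscr{C}^M\rangle$. Fixing next $\Psi=1_B$ with $B\in\IFF^M$ and repeating the argument on the $\pi$-system $\mathscr{C}^M_t$, which by (\ref{aaip}) generates $\IFF^M_t$, yields the identity for arbitrary $\phi=1_A$ with $A\in\IFF^M_t$. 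Bilinearity together with two applications of monotone convergence then produces (\ref{markk}) for arbitrary nonnegative $\phi$ and $\Psi$. The only subtle point is the base case: because $t$ need not appear among the cylinder times of $A$ or $C$, one has to insert it artificially via Chapman-Kolmogorov on the unique $\widetilde p$-factor that straddles $t$, and then recognize the resulting inner and outer integrals as $\IP^{(\cdot)}(C)$ and as an $\IP^{x_0}$-integral at the enlarged time grid; once this bookkeeping is handled correctly, the rest of the argument is a routine $\pi$-$\lambda$ application.
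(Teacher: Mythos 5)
Your proposal is correct and follows essentially the same route as the paper: reduction to cylinder indicators via monotone class arguments and monotone convergence, followed by an explicit computation using the defining relation of the Wiener measure and the Chapman--Kolmogorov identity (\ref{chaa}). The only difference is cosmetic --- you split the single $\widetilde{p}$-factor straddling the time $t$ on the left-hand side, whereas the paper merges the two factors $\widetilde{p}(t-r_k,z_k,z)\,\widetilde{p}(s_1,z,y_1)$ arising on the right-hand side; this is the same computation read in the opposite direction.
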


\begin{proof} By monotone convergence, it is sufficient to consider the case $\phi=1_{A}$, $\Psi=1_{B}$ with $A\in \IFF^M_t$, $B\in \IFF_M$. Furthermore, for fixed $A\in \IFF^M_t$, using a monotone class argument as in Remark \ref{ahop}.3, it follows that it is sufficient to prove the formula for $B\in\mathscr{C}^M$. Using yet another monotone class argument, it follows that ultimately we have to check the formula only for $\phi=1_{A}$, $\Psi=1_{B}$ with $A\in \mathscr{C}^M_t$, $B\in \mathscr{C}_M$. So we pick $k,l\in\IN$, finite sequences of times $0<r_1< \dots< r_k< t$, $0<s_1< \dots< s_l$, Borel sets 
$$
A_1,\dots,A_k, B_1,\dots,B_l\subset \widetilde{M} 
$$
with
$$
A=\bigcap^k_{i=1}\IX_{r_i}^{-1}(A_i),\quad B=\bigcap^l_{i=1}\IX_{s_i}^{-1}(B_i),
$$
and $s_0:=0$, $r_0:=0$. Then by the defining relation of the Wiener measure we have
\begin{align*}
&\int 1_A(\gamma)\cdot 1_B(\gamma(t+\bullet)) \Id \IP^{x_0}(\gamma)\\
&=\int   1_{\{\IX_{r_1}\in A_1\}} \cdots 1_{\{\IX_{r_k}\in A_k\}}1_{\{\IX_{s_1+t}\in B_1\}} \cdots1_{\{\IX_{s_l+t}\in B_l\}}  \Id\IP^{x_0}\\
&=\int\cdots\int1_{A_1}(x_1)\widetilde{p}(r_1-r_0,x_0,x_1)\cdots 1_{A_k}(x_k)\widetilde{p}(r_{k}-r_{k-1},x_{k-1},x_k)\\
&\>\>\>\times 1_{B_1}(x_{k+1})\widetilde{p}(s_1+t-r_k,x_{k},x_{k+1})\cdots\\
&\>\>\>\times 1_{B_l}(x_{k+l})\widetilde{p}(s_{l}-s_{l-1},x_{k+l-1},x_{k+l})\Id\widetilde{\mu}(x_1)\cdots \Id\widetilde{\mu}(x_{k+l}).
\end{align*}

On the other hand, if for every $y_0\in\widetilde{M}$ we set 
\begin{align*}
&\Psi(y_0):=\int\cdots\int 1_{B_1}(y_1)\widetilde{p}(s_1-s_0,y_0,y_1) \cdots \\
&\times 1_{B_l}(y_l)\widetilde{p}(s_l-s_{l-1},y_{l-1},y_l)\Id\widetilde{\mu}(y_1)\cdots \Id\widetilde{\mu}(y_l),
\end{align*}
then by using the defining relation of the Wiener measure for the $\Id \mathbb{P}^{\gamma(t)}(\omega)$ integration and then using (\ref{ajp}), we get
\begin{align*}
&\int 1_A(\gamma) \int 1_B(\omega) \Id \mathbb{P}^{\gamma(t)}(\omega) \Id\IP^{x_0}(\gamma)\\
&=\int  1_{\{\IX_{r_1}\in A_1\}} (\gamma)\cdots 1_{\{\IX_{r_k}\in A_k\}}(\gamma) \Psi(\gamma(t)) \Id\IP^{x_0}(\gamma)\\
&=\int \cdots\int 1_{A_1}(z_1)\widetilde{p}(r_1-r_0,x_0,z_1)\cdots 1_{A_k}(z_k)\widetilde{p}(r_{k-1}-r_k,z_{k-1},z_k)\\
&\times \widetilde{p}(t-r_k,z_k,z) 1_{B_1}(y_1)\widetilde{p}(s_1-s_0,z,y_1)\cdots 1_{B_l}(y_l)\widetilde{p}(s_{l}-s_{l-1},y_{l-1},y_l)\\
&\times \Id\widetilde{\mu}(y_1)\cdots \Id\widetilde{\mu}(y_l)\Id\widetilde{\mu}(z_1)\cdots \Id\widetilde{\mu}(z_k)\Id\widetilde{\mu}(z),
\end{align*}
which is equal to the above expression for
$$
\int 1_A(\gamma)\cdot 1_B(\gamma(t+\bullet)) \Id \IP^{x_0}(\gamma),
$$
since by the Chapman-Kolomogorov equation and recalling $s_0=0$, we have
\begin{align*}
&\int\int\widetilde{p}(t-r_k,z_k,z) 1_{B_1}(y_1)\widetilde{p}(s_1-s_0,z,y_1)\Id\widetilde{\mu}(z)\Id\widetilde{\mu}(y_1)\\
&=\int\widetilde{p}(t-r_k+s_1,z_k,y_1) 1_{B_1}(y_1)\Id\widetilde{\mu}(y_1).
\end{align*}
This completes the proof.
\end{proof}

Now we are in the position to define Brownian motion on an arbitrary Riemannian manifold:

\begin{Definition}\label{ahuo} 1. Let $(\Omega,\IFF,\IP)$ be a probability space, $x_0\in M$, and let 
$$
X(x_0): [0,\infty)\times\Omega \longrightarrow \widetilde{M},\quad (t,\omega)\longmapsto X_t(x_0)(\omega)
$$
be a continuous process. Then the tuple
$(\Omega,\IFF,\IP, X(x_0))$ is called a \emph{Brownian motion on $M$ with starting point $x_0$}, if the law of $X(x_0)$ with respect to $\IP$ is equal to the Wiener measure $\IP^{x_0}$. Recall that this means the following:  The pushforward of $\IP$ with respect to the $\IFF/\IFF^M$ measurable\footnote{Note that by assumption $X_t(x_0)$ is $\IFF^M_t$-measurable for all $t\geq 0$, so that indeed (\ref{incd}) is automatically $\IFF/\IFF^M$ measurable.} map 
\begin{align}\label{incd}
\Omega\longrightarrow \Omega_M,\>\>\omega\longmapsto \big(t\longmapsto X_t(x_0)(\omega)\big)
\end{align}
is $\IP^{x_0}$.\\
2. Assume that $(\Omega,\IFF,\IP, X(x_0))$ is a Brownian motion on $M$ with starting point $x_0$, and that $\IFF_*:=(\IFF_t)_{t\geq 0}$ is a filtration of $\IFF$. Then the tuple
$(\Omega,\IFF,\IFF_*,\IP, X(x_0))$ is called an \emph{adapted Brownian motion on $M$ with starting point $x_0$}, if $X(x_0)$ is adapted to $\IFF_*:=(\IFF_t)_{t\geq 0}$ (that is, $X_t(x_0):\Omega\to\widetilde{M} $ is $\IFF_t$-measurable for all $t\geq 0$) and if in addition the following Markov property holds: For all times $t\geq 0$, all $\IFF_t$ measurable functions $\phi:\Omega\to[0,\infty)$, and all Borel functions $\Psi:\Omega_M\to [0,\infty)$, one has
$$
\int\phi(\omega) \Psi(X_{t+\bullet}(x_0)(\omega))\Id\IP(\omega)=\int\phi(\omega) \int\Psi(\gamma) \Id \mathbb{P}^{X_t(x_0)(\omega)}(\gamma)\Id\IP(\omega).
$$ 
\end{Definition}

It follows from the above results that a canonical adapted Brownian motion with starting point $x_0$ is given in terms of the Wiener measure by the datum
\begin{align}\label{aiopaa}
(\Omega,\IFF,\IFF_*,\IP, X(x_0)):=(\Omega_M,\IFF^M,\IFF^M_*,\IP^{x_0},\IX).
\end{align}

Having recorded the existence of Brownian motion, we can immediately record the following characterization of the stochastic completeness property that was previously defined by the \lq\lq{}parabolic condition\rq\rq{}
$$
\int_M p(t,x_0,y)\Id\mu(y)=1\quad\text{ for all $(t,x_0)\in (0,\infty)\times M$:}
$$
\emph{Namely, $M$ is stochastically complete, if and only if for every $x_0\in M$ and every Brownian motion $(\Omega,\IFF,\IP, X(x_0))$ on $M$ with starting point $x_0$, one has
$$
\IP\{X_t(x_0)\in M\}=1\quad\text{ for all $t\geq 0$,}
$$ 
that is, if all Brownian motions remain on $M$ for all times.} This observation follows immediately from the defining relation of the Wiener measure. \\
The second part of Definition \ref{ahuo} is motivated by the fact that every Brownian motion has the required Markov property with respect to its own filtration:

\begin{Lemma} Every Brownian motion $(\Omega,\IFF,\IP, X(x_0))$ on $M$ with starting point $x_0$ is automatically an $(\IFF^{X(x_0)}_t)_{t\geq 0}$-Brownian motion, where
$$
\IFF^{X(x_0)}_t:=\left\langle(X_s(x_0))_{0\leq s\leq t}\right\rangle,\quad t\geq 0 
$$
denotes the filtration of $\IFF$ which is generated by $X(x_0)$.
\end{Lemma}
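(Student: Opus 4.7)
The plan is to transport the Markov property already established for the coordinate process $\IX$ on $(\Omega_M,\IFF^M,\IP^{x_0})$ in Lemma \ref{mark1} back to the abstract Brownian motion $X(x_0)$ via the measurable map
\[
\Phi:\Omega\longrightarrow \Omega_M,\qquad \Phi(\omega)(s):=X_s(x_0)(\omega).
\]
By the very definition of a Brownian motion with starting point $x_0$, $\Phi$ is $\IFF/\IFF^M$-measurable and satisfies $\Phi_*\IP=\IP^{x_0}$, and moreover $\IX\circ\Phi=X_\bullet(x_0)$ pointwise.

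First I would identify the filtration. Using that $\IFF^M_t=\langle(\IX_s)_{0\leq s\leq t}\rangle$ (cf.\ (\ref{aaip}) and Lemma \ref{cyl}) together with the identity $\IX_s\circ\Phi=X_s(x_0)$, I would show
\[
\IFF^{X(x_0)}_t=\Phi^{-1}(\IFF^M_t),
\]
by a standard verification that both sides coincide on the generating family $\{X_s(x_0)^{-1}(B): 0\leq s\leq t,\,B\subset\widetilde M\text{ Borel}\}$. Consequently, any $\IFF^{X(x_0)}_t$-measurable $\phi:\Omega\to[0,\infty)$ may be written $\phi=\tilde\phi\circ\Phi$ with some $\IFF^M_t$-measurable $\tilde\phi:\Omega_M\to[0,\infty)$, by the Doob--Dynkin factorization lemma (or a direct monotone class argument).

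Next, I would compute both sides of the asserted Markov identity using the change-of-variables formula for push-forward measures. On the left,
\[
\int \phi(\omega)\,\Psi\big(X_{t+\bullet}(x_0)(\omega)\big)\,\Id\IP(\omega)
=\int \tilde\phi(\gamma)\,\Psi(\gamma(t+\bullet))\,\Id\IP^{x_0}(\gamma),
\]
since $X_{t+\bullet}(x_0)(\omega)=\Phi(\omega)(t+\bullet)$. Applying Lemma \ref{mark1} to $\tilde\phi$ and $\Psi$, this equals
\[
\int \tilde\phi(\gamma)\int\Psi(\eta)\,\Id\IP^{\gamma(t)}(\eta)\,\Id\IP^{x_0}(\gamma).
\]
Pulling this integral back through $\Phi$ (using $\gamma(t)=\IX_t(\gamma)$ and $\IX_t\circ\Phi=X_t(x_0)$) yields precisely the right-hand side of the required Markov identity.

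The only genuine subtlety is ensuring that the inner integrand $x\mapsto \int\Psi(\eta)\,\Id\IP^{x}(\eta)$ is Borel measurable on $M$, so that the change-of-variables step in the last display is legitimate: this is the content of Remark \ref{ahop}.3 for $\Psi=1_A$ with $A\in\IFF^M$, and extends to nonnegative Borel $\Psi$ by the standard monotone class/monotone convergence argument. I expect this measurability check, together with the factorization $\phi=\tilde\phi\circ\Phi$, to be the main (though routine) technical steps; once both are in hand, the result is a direct transfer of Lemma \ref{mark1} via the push-forward.
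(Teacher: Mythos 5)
Your proposal is correct and follows essentially the same route as the paper: factor the $\IFF^{X(x_0)}_t$-measurable function $\phi$ through the induced map $\Omega\to\Omega_M$ via a (positivity-preserving) Doob--Dynkin argument, then transfer the Markov property of the Wiener measures (Lemma \ref{mark1}) by the change-of-variables formula for the push-forward. The only difference is cosmetic: the paper carries out the factorization explicitly (simple functions, then monotone limits using Proposition \ref{el}), whereas you cite it as a known lemma; the measurability point you flag is indeed exactly Remark \ref{ahop}.3.
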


\begin{proof} We have to show that given $t\geq 0$, an $\IFF^{X(x_0)}_t$-measurable function $\phi:\Omega\to[0,\infty)$, and a Borel function $\Psi:\Omega_M\to [0,\infty)$, one has
$$
\int\phi(\omega) \Psi(X_{t+\bullet}(x_0)(\omega))\Id\IP(\omega)=\int\phi(\omega) \int\Psi(\gamma) \Id \mathbb{P}^{X_t(x_0)(\omega)}(\gamma)\Id\IP(\omega).
$$ 
Assume for the moment that we can pick an $\IFF^M_t$-measurable function $f:\Omega_M\to [0,\infty)$ such that $f(X\rq{}(x_0))=\phi$, where 
$$
X\rq{}(x_0): \Omega \longrightarrow \Omega_M
$$
denotes the induced $\IFF/\IFF^M$ measurable map (\ref{incd}). Then, since the law of $X(x_0)$ is $\IP^{x_0}$, we can use the Markov property from Lemma \ref{mark1} to calculate 
\begin{align*}
&\int\phi(\omega) \Psi(X_{t+\bullet}(x_0)(\omega))\Id\IP(\omega)\\
&=\int f(\omega\rq{}) \Psi(\omega\rq{}(t+\bullet))\Id\IP^{x_0}(\omega\rq{})\\
&=\int f(\omega\rq{}) \int\Psi(\gamma) \Id \mathbb{P}^{\omega\rq{}(t)}(\gamma) \Id\IP^{x_0}(\omega\rq{})\\
&=\int f\big(X(x_0)(\omega)\big) \int\Psi(\gamma) \Id \mathbb{P}^{X_t(x_0)(\omega)}(\gamma) \Id\IP(\omega)\\
&=\int \phi(\omega) \int\Psi(\gamma) \Id \mathbb{P}^{X_t(x_0)(\omega)}(\gamma) \Id\IP(\omega),
\end{align*}
proving the claim in this case. It remains to prove that one can always \lq\lq{}factor\rq\rq{} $\phi$ in the above form. Somewhat simpler variants of such a statement are usually called Doob-Dynkin lemma in the literature. An important point here is that the factoring procedure can be chosen to be positivity preserving. We give a quick proof: Set $X:=X(x_0)$,  $X\rq{}:=X\rq{}(x_0)$, and assume first that $\phi$ is a simple function, that is, $\phi$ is a finite sum $\phi=\sum_j c_j 1_{A_j}$ with constants $c_j\geq 0$ and disjoint sets $A_j\in \IFF^{X}_t$. Then by the definition of this sigma-algebra, there exist times $0\leq s_j\leq t$ and Borel sets $B_j\subset \widetilde{M}$ with $A_j=X_{s_j}^{-1}(B_j)$, such that with $C_j:=\IX_{s_j}^{-1}(B_j)\in \IFF^M_t $, the function $f:=\sum_j c_j 1_{C_j}$ on $\Omega_M$ is nonnegative, $\IFF^M_t$-measurable, and satisfies $f(X\rq{})=\phi$. In the general case, there exists an increasing sequence of nonnegative $\IFF^{X}_t$-measurable simple functions $\phi_n$ on $\Omega$ such that $\lim_n\phi_n =\phi$. By the above, we can pick for each $n$ an $\IFF^M_t$-measurable nonnegative function $f_n$ on $\Omega_M$ with $f_n(X\rq{})=\phi_n$. The set
$$
\Omega\rq{}:=\{f_n\> \text{ converges pointwise }\}\subset \Omega
$$
clearly contains the image of $X\rq{}$, and it is $\IFF^M_t$-measurable by Proposition \ref{el} in the appendix. Then $f:=\lim_{n} (f_n 1_{\Omega\rq{}})$ has the desired properties. Note that the above proof is entirely measure theoretic and does not use any particular (say, topological) properties of the involved quantities.
 \end{proof}

Without entering the details, we remark here that the importance of adapted Brownian motions stems from the fact that they are continuous $M$-valued semimartingales \cite{hsu} with respect to the given filtration. Being a continuous semimartingale, the paths of an adapted Brownian motion can be almost surely horizontally lifted (in a natural sense that relies on Stratonovic stochastic integrals) to smooth principal bundles that come equipped with a smooth connection \cite{hack}. This is a very remarkable fact, since Brownian paths are almost surely nowhere differentiable \cite{hack}. Such lifts are the main ingredient of probabilistic formulae for the semigroups associated with operators of the form $H^{\nabla}$ and perturbations thereof \cite{guen}.\\
Finally, we recall that parabolicity always implies stochastic completeness. The former property also has a probabilistic interpretation, for one can prove \cite{uber}:

\begin{Theorem} $M$ is parabolic, if and only if every Brownian motion $(\Omega,\IFF,\IP, X(x_0))$ on $M$ with starting point $x_0$ is transient, in the sense that for every precompact set $U\subset M$ one has
$$
\IP\{\text{there exists $s>0$ such that for all $t>s$ one has $X_t(x_0)\notin U$}\}=1,
$$
that is, if and only if all Brownian motions on $M$ eventually leave each precompact set almost surely.
\end{Theorem}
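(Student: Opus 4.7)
The plan is to pass through the Green's function characterization from Proposition \ref{ecoll}, which says that $M$ is nonparabolic iff $G(x,y) = \int_0^\infty p(t,x,y)\,dt < \infty$ for some (hence all) $x \neq y$. Because every Brownian motion has law $\IP^{x_0}$, the transience condition is realization-independent, so it suffices to work on the canonical model $(\Omega_M, \IFF^M, \IP^{x_0}, \IX)$ from (\ref{aiopaa}) and establish the corresponding statement about the coordinate process.

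The first step would be the expected occupation time formula. Fubini (applicable since $\IX$ is jointly measurable, cf.\ Remark \ref{ahop}.2), combined with the defining finite-dimensional distributions of $\IP^{x_0}$ and the identity (\ref{expl2}) ensuring $\{\IX_t \in U\} \subset \{t<\zeta\}$ for $U \subset M$, yields for every precompact open $U$,
\begin{align*}
\IE^{x_0}\Bigl[\int_0^\infty 1_U(\IX_t)\,dt\Bigr]
= \int_0^\infty \int_U p(t, x_0, y)\,d\mu(y)\,dt
= \int_U G(x_0, y)\,d\mu(y).
\end{align*}
Using Proposition \ref{saio} together with the local boundedness away from the diagonal of the resolvent kernel, finiteness of this integral for all precompact $U$ and all $x_0$ is equivalent to $G$ being finite off the diagonal, i.e., to nonparabolicity by Proposition \ref{ecoll}.

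The second step is a path-space dichotomy linking transience to the occupation integral. The easier implication: if $\int_U G(x_0, y)\,d\mu(y) < \infty$, then $\int_0^\infty 1_U(\IX_t)\,dt < \infty$ almost surely, which combined with the fact that $\{\IX_t\in U\}$ is a closed condition along continuous paths forces $\IX_t \notin U$ for all sufficiently large $t$; a countable union over a precompact exhaustion extends this to every precompact set. The harder direction is contrapositive: if with positive probability $\IX$ returns to some precompact $U$ at arbitrarily large times, then by iterating the Markov property of Lemma \ref{mark1} at a sequence of times $t_n \uparrow \infty$, one builds disjoint excursions and shows $\IE^{x_0}[\int_0^\infty 1_{U'}(\IX_t)\,dt] = \infty$ for a slightly thickened precompact neighborhood $U' \Supset U$, contradicting the previous step.

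Combining both steps, transience of every Brownian motion on $M$ is equivalent to the finiteness of $G$ off the diagonal, and the stated equivalence follows by reading off the complementary characterization via Proposition \ref{ecoll}. The main obstacle is the contrapositive implication in the second step: one needs a clean zero-one argument showing that the event "$\IX$ returns to $U$ at arbitrarily large times" has probability in $\{0,1\}$, which requires either a careful tail sigma-algebra analysis or a strong Markov property at the successive exit times $\zeta_{U_n}$, the latter requiring one to extend the simple Markov property (Lemma \ref{mark1}) to the optional times from the end of Section~2, which is not done explicitly in the previous chapters.
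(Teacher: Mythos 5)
The paper does not actually prove this theorem -- it is quoted from Grigor'yan's survey \cite{uber} -- so your sketch has to be judged against the standard argument, and measured that way it has two genuine gaps. The first is the pivotal step of your second stage: almost sure finiteness of the occupation time $\int_0^\infty 1_U(\IX_t)\,\Id t$ together with continuity of paths does \emph{not} force $\IX_t\notin U$ for all large $t$; a continuous path can revisit $U$ at arbitrarily large times while spending only a summable amount of time there, so the ``closed condition along continuous paths'' remark proves nothing. Your ``harder direction'' is just the contrapositive of this same implication, and iterating Lemma \ref{mark1} at \emph{deterministic} times $t_n\uparrow\infty$ cannot produce the claimed disjoint excursions, because nothing guarantees the path sits in $U$ at those times; as you yourself note, one needs a strong Markov property at random return/exit times, which the paper never establishes. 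A repair that stays inside the paper's toolkit is to work with the potential $h(x):=\int_U G(x,y)\,\Id\mu(y)$ (extended by $0$ to $\infty_M$): by Lemma \ref{mark1} and Chapman--Kolmogorov, $(h(\IX_t))_{t\geq 0}$ is a nonnegative supermartingale under $\IP^{x_0}$ with $\mathbb{E}^{x_0}[h(\IX_t)]=\int_t^\infty\int_U p(r,x_0,y)\,\Id\mu(y)\,\Id r\to 0$, so $h(\IX_t)\to 0$ almost surely (Doob along rational times suffices, since $U$ is open and paths are continuous), while $h>0$ everywhere by Proposition \ref{saio} and $h$ is lower semicontinuous by Fatou, whence $\inf_{\overline{U}}h>0$; this excludes visits to $U$ at arbitrarily large times.

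The second gap is that, even with this repaired, both of your sub-steps establish only the single implication ``$G(x_0,\cdot)$ locally integrable (equivalently, $M$ nonparabolic, by Proposition \ref{ecoll} and Definition \ref{adpqa}) $\Rightarrow$ every Brownian motion is transient''. The converse is nowhere addressed, and it does not follow from the occupation-time identity: transience only yields almost sure finiteness of $\int_0^\infty 1_U(\IX_t)\,\Id t$, not finiteness of its expectation, so you cannot read off $\int_U G(x_0,y)\,\Id\mu(y)<\infty$ from transience. Proving that on a parabolic manifold Brownian motion fails to be transient (in fact is recurrent) is where the substantial work lies, e.g.\ via capacity or harmonic-function arguments as in \cite{uber}, or via a recurrence--transience dichotomy that again needs the strong Markov property. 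Finally, your closing sentence does not resolve the logical mismatch it gestures at: what your plan targets is the equivalence of transience with \emph{non}parabolicity (this is the classical statement; note that $\IR^2$ is parabolic and its Brownian motion is neighbourhood-recurrent), whereas the statement as printed attaches transience to parabolicity, and ``reading off the complementary characterization via Proposition \ref{ecoll}'' does not bridge that discrepancy -- you should either prove ``parabolic $\Rightarrow$ not transient'' as a separate step or flag the formulation explicitly.
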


\chapter{Contractive Dynkin and Kato potentials}\label{C6}

\section{Generally valid results}

This chapter is devoted to the class of Kato (and more generally contractive Dynkin-) potentials, which will be the main class of perturbations under consideration. The main observations that make these classes so important are the following ones: If $w:M\to\IR$ is in the contractive Dynkin class, then 

\begin{itemize}

\item $H + w$ is a well-defined (in the sense of sesquilinear forms) self-adjoint and semibounded operator in $\IL^2(M)$

\item the corresponding semigroup has very natural $\IL^q(M)$-smoothing properties

\item the corresponding semigroup also has some pointwise $C(M)$-smoothing properties, if $w$ has some mild additional (Kato) regularity.

\end{itemize}

We will prove precise covariant generalizations of the above statements later on. Let us start with the actual definitions and some general facts about these classes. Since it does not cause any extra work to consider complex-valued $w\rq{}s$, we will record these facts for the complex case, although we will not make any particular use of the corresponding non-self-adjoint theory in the sequel.

\begin{Definition} Let $w:M\to \IC$ be a Borel function. Then $w$ is said to be in the \emph{contractive Dynkin class} $\dyn(M)$ of $M$, if there is a $t>0$ with
\begin{align}\label{dfdsd}
\sup_{x\in M}\int^{t }_0\int_M p(s,x,y) |w(y)|\Id\mu(y)\Id s<1,
\end{align}
and $w$ is said to be in the \emph{Kato class} $\kat(M)$ of $M$, if
\begin{align}
\lim_{t\to 0+}\sup_{x\in M}\int^{t}_0\int_M p(s,x,y)  |w(y)|\Id\mu(y)\Id s=0.
\end{align}
We also define the corresponding local classes
\begin{align*}
&\dyn_{\loc}(M):= \big\{w:1_Kw\in \dyn(M)\>\text{ for all compact $K\subset M$} \big\},\\
&\kat_{\loc}(M):= \big\{w:1_Kw\in \kat(M)\>\text{ for all compact $K\subset M$} \big\}
\end{align*}
\end{Definition}

In the literature, the class that we have called \emph{contractive Dynkin class} is sometimes also called \emph{generalized} or \emph{extended Kato class}. In any case, the name \emph{Dynkin class} is consistently reserved in the literature for the class of $w\rq{}s$ which satisfy the weaker assumption
\begin{align*}
\sup_{x\in M}\int^{t }_0\int_M p(s,x,y) |w(y)|\Id\mu(y)\Id s<\infty,
\end{align*}
which motivates the name \lq\lq{}contractive Dynkin class\rq\rq{}. The Dynkin class will not play any role in the sequel. Obviously, all these classes are (complex) linear spaces which depend on the geometry of $M$, and one has
\begin{align*}
\kat(M)\subset \dyn(M),\>\>\kat_{\loc}(M)\subset \dyn_{\loc}(M).
\end{align*}
It is not quite clear where the $\dyn(M)$ class really appeared for the first time. What can be said, however, is that this class has been systematically studied for the first time and in a very general context (replacing $Q$ with a general regular Dirichlet form and potentials by measures) by P. Stollmann\footnote{Note, however, that the authors do not reserve a particular symbol for this class.} and J. Voigt in \cite{peter}. We refer the reader also to \cite{sturm}, where K.-T. Sturm treats measure perturbations for the Laplace-Beltrami operator under lower Ricci bounds.

\begin{Remark}1. In typical applications, the contractive Dynkin class does not seem to play an important role, since one usually deals with Kato potentials. Nevertheless, many abstract results only require the Dynkin property. In addition, it has been shown by Z.-Q. Chen and K. Kuwae (cf. Example 4.3 in \cite{chen}) that on every geodesically complete $M$ with a Ricci curvature bounded from below by a constant and a strictly positive injectivity radius, one has $\dyn(M)\setminus \kat(M)\ne \emptyset$. \\
2. In order to illustrate what kind of singularities we are actually talking about, we remark right away that $\IL^{q}(\IR^m)\subset \kat(\IR^m)$ in the Euclidean $\IR^m$, if $m\geq 2$ and $q>m/2$. So for example the Coulomb potential $1/|x|$ is in $\kat(\IR^3)$. We will come to geometric generalizations of such inclusions later on.
\end{Remark}

The name \lq\lq{}Kato class\rq\rq{} stems from the paper \cite{aizenman} by B. Simon and M. Aizenman, referring to the original paper by T. Kato \cite{kato} where this class of potentials appeared for the first time in the context of essential self-adjointness of Schrödinger operators. In fact, Kato (essentially) introduced the $\kat(\IR^m)$ as follows: $w\in \kat(\IR^m)$, if and only if
\begin{align}\label{eins}
&w\in \IL^1_{\mathrm{unif, loc}}(\IR),\>\text{ if $m=1$},\\ \label{einse}
&\lim_{r\to 0+}\sup_{x\in \IR^m} \int_{|x-y|\leq r}|w(y)| h_m(|x-y|)\Id y=0,\>\text{ if $m\geq 2$,}
\end{align}
where $h_m:[0,\infty]\to [0,\infty]$ is given by
$$
h_2(r):=\log(1/r),\>\>h_m(r):=r^{2-m},\>\text{ if $m>2$,} 
$$
and where $\IL^1_{\mathrm{unif, loc}}$ stands for the uniformly locally integrable functions. The equivalence of the latter definition to our heat kernel definition is not obvious, and has been established in \cite{aizenman}. In principle, one can also define a class $\kat'(M)$ in the spirit of (\ref{eins}) and (\ref{einse}) with $\varrho(x,y)$ and the volume measure $\mu$ replacing their Euclidean analogues. However, the class $\kat'(M)$ does not seem to be useful for operator theory in general. The reason for this is that without having appropriate Gaussian heat kernel bounds of $p(t,x,y)$ at hand, there is no reason to expect that potentials from $\kat'(M)$ are (form-) bounded with respect to $H$ (which, however, will turn out to be the case for $\kat(M)$). As one might expect, in a sufficiently Euclidean situation, that is, if $p(t,x,y)$ admits appropriate global Gauss-type upper and lower bounds, it can be shown that $\kat'(M)=\kat(M)$. For example, the last equality is true if $M$ is geodesically complete with Ricci curvature bounded from below by a constant and a strictly positive injectivity radius. The state of the art concerning equalities of the type $\kat'(M)=\kat(M)$ is contained in the seminal paper \cite{kt} by K.Kuwae and M. Takahashi. \vspace{1.2mm}

Altogether, the heat kernel characterization of $\kat(\IR^m)$ from \cite{aizenman} should be considered the starting point for almost every result that we will establish in the sequel. Our general philosophy is as follows: The analogue of every Euclidean result for Schrödinger operators with $\kat(\IR^m)$-potentials holds true on arbitrary, possibly incomplete Riemannian manifolds, if one uses our definition of $\kat(M)$, and likewise for $\dyn(M)$. Assumptions on the geometry come into play in a second step only, namely when one wants to guarantee that $\kat(M)$ (which always contains $\IL^{\infty}(M)$) has large $\IL^q$-type subspaces, which allows an easy decision on whether a given singular potential is in $\kat(M)$ or not. In this context, as we have already remarked, it has been shown in \cite{aizenman} that for $m\geq 2$ one has $\IL^{q}(\IR^m)\subset\kat(\IR^m)$ for every $q> m/2$. Again, such a result cannot be expected in general. One of our central observations in this chapter is that nevertheless one always has a \emph{weighted} inclusion of the form $\IL^{q}(M, h\Id\mu)\subset\kat(M)$, with $h:M\to (0,\infty]$ a continuous density function which satisfies $\inf h >0$ and which is canonically given on every Riemannian manifold. In the case of $M=\IR^m$, the geometry is simply so mild that we can take $h\equiv \mathrm{const}$.\vspace{2mm}

We start by establishing the following well-known auxiliary results that are always true without any further assumptions on the geometry. Ultimately, these results justify the definitions of $\dyn(M)$ and $\kat(M)$. 

\begin{Lemma}\label{goof} a) It holds that $ L^{\infty}(M)\subset \kat(M)$.\\
b) One has $\dyn_{\loc}(M)\subset  L^1_{\loc}(M)$.\\
c) For any $w\in \dyn (M)$ one has
$$
\sup_{x\in M} \int_M\int^T_0 p(s,x,y)|w(y)| \Id s \ \Id\mu(y)<\infty\text{ for all $T>0$}.
$$
\end{Lemma}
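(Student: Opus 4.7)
Here is a plan, part by part. The three statements rely only on two well-known features of the minimal heat kernel that were established earlier: the sub-Markov property $\int_M p(s,x,y)\,d\mu(y)\le 1$ from Theorem \ref{mart}, the strict positivity $p(s,x,y)>0$ from Proposition \ref{saio}, and the Chapman--Kolmogorov identity.

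\textbf{Part (a).} This is immediate. For $w\in L^{\infty}(M)$, I would estimate
\[
\int_{0}^{t}\!\!\int_{M} p(s,x,y)\,|w(y)|\,d\mu(y)\,ds \;\le\; \|w\|_{\infty}\int_{0}^{t}\!\!\int_{M} p(s,x,y)\,d\mu(y)\,ds \;\le\; t\,\|w\|_{\infty},
\]
uniformly in $x\in M$, so the $\sup$ tends to $0$ as $t\to 0+$, i.e.\ $w\in\kat(M)$.

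\textbf{Part (b).} Given $w\in\dyn_{\loc}(M)$ and a compact set $K\subset M$, I want to bound $\int_K|w|\,d\mu$. Fix an auxiliary point $x_{0}\in M$ and, using that $1_{K}w\in\dyn(M)$, pick $t>0$ with
\[
\int_{0}^{t}\!\!\int_{M} p(s,x_{0},y)\,1_{K}(y)\,|w(y)|\,d\mu(y)\,ds \;<\;1.
\]
Now define $\phi(y):=\int_{0}^{t} p(s,x_{0},y)\,ds$ on $M$. By the joint smoothness of $p$ on $(0,\infty)\times M\times M$ (Theorem \ref{heat}) the integrand is continuous in $y$ for each fixed $s>0$, and Fatou's lemma yields that $\phi$ is lower semicontinuous; moreover $\phi(y)>0$ everywhere by Proposition \ref{saio}. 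A lower semicontinuous strictly positive function on the compact set $K$ attains a strictly positive minimum $c=c(K,x_{0},t)>0$. An application of Tonelli then gives
\[
c\int_{K}|w|\,d\mu \;\le\; \int_{K}\phi(y)|w(y)|\,d\mu(y) \;=\; \int_{0}^{t}\!\!\int_{M} p(s,x_{0},y)\,1_{K}(y)|w(y)|\,d\mu(y)\,ds \;<\;1,
\]
so $\int_{K}|w|\,d\mu\le 1/c<\infty$, which is the claim.

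\textbf{Part (c).} The strategy is to iterate the Dynkin smallness condition using Chapman--Kolmogorov. Pick $t_{0}>0$ with $C_{0}:=\sup_{x}\int_{0}^{t_{0}}\!\!\int_{M} p(s,x,y)|w(y)|\,d\mu(y)\,ds<1$ (in fact, we only need finiteness). For arbitrary $T>0$ choose $n\in\mathbb{N}$ with $nt_{0}\ge T$ and split
\[
\int_{0}^{nt_{0}}\!\!\int_{M} p(s,x,y)|w(y)|\,d\mu(y)\,ds \;=\; \sum_{k=0}^{n-1}\int_{0}^{t_{0}}\!\!\int_{M} p(kt_{0}+\tau,x,y)|w(y)|\,d\mu(y)\,d\tau.
\]
For $k\ge 1$, write $p(kt_{0}+\tau,x,y)=\int_{M} p(kt_{0},x,z)\,p(\tau,z,y)\,d\mu(z)$ via Chapman--Kolmogorov, interchange the $d\mu(z)$ integration with the inner $d\tau\,d\mu(y)$ integration (Tonelli, all integrands nonnegative), and bound the bracketed factor by $C_{0}$:
\[
\int_{M} p(kt_{0},x,z)\!\left[\int_{0}^{t_{0}}\!\!\int_{M} p(\tau,z,y)|w(y)|\,d\mu(y)\,d\tau\right]\! d\mu(z)\;\le\;C_{0}\!\int_{M} p(kt_{0},x,z)\,d\mu(z)\;\le\;C_{0},
\]
using the sub-Markov property of the heat kernel once more. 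Summing over the $n$ terms gives the uniform bound $nC_{0}<\infty$.

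The only slightly delicate step is the use of lower semicontinuity in part (b); the main obstacle I anticipate is making sure the strict positivity of $\phi$ is genuinely exploited on a compact set (for which lower semicontinuity is the right hypothesis, continuity not being available a priori from the statement). The iteration in part (c) is routine once Chapman--Kolmogorov and the sub-Markov bound are in place.
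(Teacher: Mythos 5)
Your proposal is correct and follows essentially the same route as the paper: (a) is the same one-line sub-Markov estimate, and (c) is the identical Chapman--Kolmogorov iteration over time intervals of length $t_0$ combined with $\int_M p(kt_0,x,z)\,\Id\mu(z)\le 1$. In (b) the paper instead bounds $p(s,x,y)\geq C>0$ uniformly for $(s,x,y)\in[t/2,t]\times K\times K$ using joint continuity and strict positivity, but your variant via the lower semicontinuity (Fatou) and strict positivity of $y\mapsto\int_0^t p(s,x_0,y)\,\Id s$ on the compact $K$ is an equally valid implementation of the same idea.
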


\begin{proof} a) Let $w\in L^{\infty}(M)$. Using $\int p(s,x,y) \Id\mu(y) \leq 1$, for all $t>0$ we get
$$
\sup_{x\in M}\int^t_0 \int_M p(s,x,y) \left|w(y)\right| \Id\mu(y) \Id s\leq t \left\|w\right\|_{\infty}<\infty.
$$
b) Let $w\in\dyn_{\loc}(M)$, let $K\subset M$ be compact, and pick some $t >0$ such that
\begin{align}\label{endl}
\sup_{x\in M}\int^{t }_0\int_K p(s,x,y) |w(y)|\Id\mu(y)\Id s<\infty,
\end{align}
and pick some $C=C(K,t )>0$ such that for all $s\in [t /2,t ]$ and all
$x,y\in K$ one has $p(s,x,y)\geq C$. Then
\begin{align*}
\left(t -t /2\right)C\int_{K} \left|w(y)\right| \Id\mu(y)\leq \sup_{x\in M}\int_K\int^{t }_{0}p(s,x,y)\Id s \left|w(y)\right|
\Id\mu(y),
\end{align*}
which is finite. \\
c) We will follow \cite{kw0}: Take a $t>0$ with (\ref{dfdsd}), and pick $l\in\IN$ with $T<lt $. Then we can estimate 
\begin{align*}
& \sup_{x\in M} \int_M\int^T_0 p(s,x,y)|w(y)| \Id s \ \Id\mu(y)\nn\\
&\leq \sup_{x\in M} \int_M\int^{l t}_0 p(s,x,y)|w(y)|  \Id s \ \Id\mu(y) \nn\\
&\leq\sum^l_{k=1}\sup_{x\in M} \int_M\int^{ t}_0 p((k-1)t+s,x,y)|w(y)|  \Id s \ \Id\mu(y)\nn\\
&=\sum^l_{k=1}\sup_{x\in M} \int^{ t}_0 \int_M p((k-1)t,x,z) \int_M p(s,z,y)|w(y)|  \Id\mu(y)\Id\mu(z)\Id s\nn\\
&\leq  \left(\sum^l_{k=1} \sup_{x\in M}\int_M p((k-1)t,x,z)\Id\mu(z)\right)\times \nn\\
&\>\>\>\>\>\>\times \sup_{z\in M}\int^{ t}_0\int_M p(s,z,y)|w(y)| \Id\mu(y)\Id s \\ 
&\leq l \sup_{z\in M}\int^{ t}_0\int_M p(s,z,y)|w(y)| \Id\mu(y)\Id s <\infty,
\end{align*}
where we have used the Chapman-Kolomogorov identity and $$
\int p(s',x',y') \Id\mu(y') \leq 1.
$$
\end{proof}

Next, let us record the following simple inequalities (see also \cite{demuth}):

\begin{Lemma}\label{demuth} For any Borel function $w:M\to\IC$ and any $r,t >0$, one has 
\begin{align*}
&\left(1-\mathrm{e}^{-rt} \right)\sup_{x\in M} \int^{\infty}_0 \mathrm{e}^{-rs} \int_M p(s,x,y) |w(y)| \Id\mu(y) \Id s\nn\\
&\leq\sup_{x\in M}\int^t_0 \int_M p(s,x,y) \left|w(y)\right| \Id\mu(y) \Id s\nn\\
& \leq \mathrm{e}^{rt}\sup_{x\in M} \int^{\infty}_0 \mathrm{e}^{-rs} \int_M p(s,x,y) |w(y)| \Id\mu(y) \Id s.
\end{align*}
\end{Lemma}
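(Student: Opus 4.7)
The plan is to abbreviate
$$
F(x,s):=\int_M p(s,x,y)|w(y)|\Id\mu(y),\qquad A:=\sup_{x\in M}\int^t_0 F(x,s)\Id s,
$$
and $B:=\sup_{x\in M}\int^{\infty}_0\mathrm{e}^{-rs}F(x,s)\Id s$, so that the asserted inequalities become $(1-\mathrm{e}^{-rt})B\leq A\leq \mathrm{e}^{rt}B$.

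The upper bound $A\leq \mathrm{e}^{rt}B$ is immediate: on $s\in [0,t]$ one has $1\leq \mathrm{e}^{rt}\mathrm{e}^{-rs}$, hence $\int^t_0 F(x,s)\Id s\leq \mathrm{e}^{rt}\int^{\infty}_0\mathrm{e}^{-rs}F(x,s)\Id s$, and taking $\sup_x$ finishes this half.

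For the lower bound, the plan is to decompose $[0,\infty)$ into the successive slices $[kt,(k+1)t)$, $k\in\IN_{\geq 0}$. On each slice one has $\mathrm{e}^{-rs}\leq \mathrm{e}^{-rkt}$, so after substituting $s\to kt+s$ one obtains
$$
\int^{\infty}_0\mathrm{e}^{-rs}F(x,s)\Id s\leq \sum_{k=0}^{\infty}\mathrm{e}^{-rkt}\int^{t}_0 F(x,kt+s)\Id s.
$$
The key step is to use the Chapman--Kolmogorov identity $p(kt+s,x,y)=\int_M p(kt,x,z)p(s,z,y)\Id\mu(z)$ together with Fubini (all integrands being nonnegative) to rewrite $F(x,kt+s)=\int_M p(kt,x,z)F(z,s)\Id\mu(z)$. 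Integrating over $s\in[0,t]$, interchanging the order of integration, and invoking both $\int_M p(kt,x,z)\Id\mu(z)\leq 1$ and the definition of $A$ yields $\int^t_0 F(x,kt+s)\Id s\leq A$ for every $k$ and every $x$.

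Summing the geometric series $\sum_{k\geq 0}\mathrm{e}^{-rkt}=(1-\mathrm{e}^{-rt})^{-1}$ and then taking the supremum over $x\in M$ produces $B\leq A/(1-\mathrm{e}^{-rt})$, which is the asserted lower bound. There is no real obstacle here; the only point requiring a moment of care is the application of Fubini and the use of the sub-probability mass property $\int_M p(kt,x,z)\Id\mu(z)\leq 1$ from Theorem \ref{mart} a), both of which are granted by our standing setup. The whole argument is essentially a Laplace-transform comparison between a truncated and an exponentially damped time integral, using the semigroup structure of $p(t,x,y)$ to propagate the bound on $[0,t]$ to all of $[0,\infty)$.
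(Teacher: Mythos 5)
Your proof is correct and follows essentially the same route as the paper's: decompose $[0,\infty)$ into the slices $[kt,(k+1)t)$, use the Chapman--Kolmogorov identity together with $\int_M p(kt,x,z)\,\Id\mu(z)\leq 1$ to reduce each slice to the supremum over the first one, and sum the geometric series; the upper bound via $1\leq \mathrm{e}^{rt}\mathrm{e}^{-rs}$ on $[0,t]$ is also the paper's. The only cosmetic difference is that you bound $\mathrm{e}^{-rs}\leq\mathrm{e}^{-rkt}$ on each slice and discard the damping factor before invoking $A$, whereas the paper factors out $\mathrm{e}^{-rkt}$ exactly and only drops $\mathrm{e}^{-rs}\leq 1$ at the end -- both yield the same constant.
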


\begin{proof} For any $x\in M$ we have
\begin{align*}
&\int^{\infty}_0 \mathrm{e}^{-rs} \int_M p(s,x,y) |w(y)| \Id\mu(y) \Id s\\
&=\sum^{\infty}_{k=0}\int^{t(k+1)}_{kt} \mathrm{e}^{-rs} \int_M p(s,x,y) |w(y)| \Id\mu(y) \Id s\\
&=\sum^{\infty}_{k=0}\mathrm{e}^{-rkt}\int_Mp(kt,x,z)\int^{t}_{0} \int_M\mathrm{e}^{-rs}  p(s,z,y) |w(y)| \Id\mu(y) \ \Id s \ \Id\mu(z)\\
&\leq \left(\sum^{\infty}_{k=0}\mathrm{e}^{-rkt}\right)\sup_{z\in M}\int^{t}_{0}\mathrm{e}^{-rs}   \int_Mp(s,z,y) |w(y)| \Id\mu(y) \Id s \\
&=\f{1}{1-\mathrm{e}^{-rt}}\sup_{z\in M}\int^{t}_{0}\mathrm{e}^{-rs}   \int_Mp(s,z,y) |w(y)| \Id\mu(y) \Id s, 
\end{align*}
from which the claims easily follow. Here, we have used the Chapman-Kolomogorov identity and $\int p(s',x',y') \Id\mu(y') \leq 1$. 
\end{proof}

Now we continue with a useful characterization of the contractive Dynkin and the Kato class, respectively. In view of
$$
(\H+r)^{-1}= \int^{\infty}_0 \mathrm{e}^{-rs}\mathrm{e}^{-s\H}\Id s,
$$
and recalling our notation for the Wiener measure (Notation \ref{wiener2}), the following lemma can be considered a resolvent/semigroup/Brownian motion equivalence-type result:

\begin{Lemma}\label{char} a) For a Borel function $w:M\to\IC$, the following statements are equivalent:
\begin{itemize}
\item[i)]   $w\in\dyn(M)$.
\item[ii)] One has
$$
\lim_{t\to 0+}\sup_{x\in M}\int^{t}_0\int_M p(s,x,y)  |w(y)|\Id\mu(y)\Id s<1.
$$
\item[iii)] There is an $r>0$ with
\begin{align*} 
\sup_{x\in M} \int^{\infty}_0 \mathrm{e}^{-rs} \int_M p(s,x,y) |w(y)| \Id\mu(y) \Id s <1.
\end{align*}
\item[iv)] One has
\begin{align*}
\lim_{r\to\infty}\sup_{x\in M} \int^{\infty}_0 \mathrm{e}^{-rs} \int_M p(s,x,y) |w(y)| \Id\mu(y) \Id s <1.
\end{align*}
\item[v)] For some $t>0$ one has 
\begin{align*}
\sup_{x\in M}\int^{t}_0\mathbb{E}^x\left[1_{\left\{s<\zeta\right\}}\left|w(\IX_s)\right|\right]\Id s<1.
\end{align*}
\item[vi)] One has
\begin{align*}
\lim_{t\to 0+}\sup_{x\in M}\int^{t}_0\mathbb{E}^x\left[1_{\left\{s<\zeta\right\}}\left|w(\IX_s)\right|\right]\Id s<1.
\end{align*}
\end{itemize}

b) For a Borel function $w:M\to\IC$, the following statements are equivalent: 
\begin{itemize}
\item[i)]  $w\in\kat(M)$.
\item[ii)] One has 
\begin{align*}
\lim_{r\to\infty}\sup_{x\in M} \int^{\infty}_0 \mathrm{e}^{-rs} \int_M p(s,x,y) |w(y)| \Id\mu(y) \Id s =0. 
\end{align*}
\item[iii)] One has
\begin{align*}
\lim_{t\to 0+}\sup_{x\in M}\int^{t}_0\mathbb{E}^x\left[1_{\left\{s<\zeta\right\}}\left|w(\IX_s)\right|\right]\Id s=0.
\end{align*}
\end{itemize}
\end{Lemma}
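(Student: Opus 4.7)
The plan is to reduce everything to two elementary mechanisms: (1) the monotonicity in $t$ (resp.\ in $r$) of the quantities appearing in the semigroup (resp.\ resolvent) formulations, and (2) the two-sided comparison between semigroup and resolvent integrals furnished by Lemma \ref{demuth}.

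First I would observe that identity (\ref{ajp2}) applied with $n=1$, $f_1=|w|$, $t_1=s$ gives $\mathbb{E}^x\big[1_{\{s<\zeta\}}|w(\IX_s)|\big]=\int_M p(s,x,y)|w(y)|\Id\mu(y)$, so that by Fubini (justified by the joint measurability of $\IX$)
\[
\int_0^t \mathbb{E}^x\big[1_{\{s<\zeta\}}|w(\IX_s)|\big]\Id s = \int_0^t\int_M p(s,x,y)|w(y)|\Id\mu(y)\Id s.
\]
Taking $\sup_x$ identifies statements (v), (vi) of part a) with (i), (ii), and statement (iii) of part b) with (i), so the probabilistic reformulations require no further work. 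Next, abbreviate
\[
F(t):=\sup_{x\in M}\int_0^t\int_M p(s,x,y)|w(y)|\Id\mu(y)\Id s,\qquad G(r):=\sup_{x\in M}\int_0^\infty \mathrm{e}^{-rs}\int_M p(s,x,y)|w(y)|\Id\mu(y)\Id s.
\]
Since $F$ is non-decreasing and $G$ is non-increasing, both limits $\lim_{t\to 0+}F(t)=\inf_{t>0}F(t)$ and $\lim_{r\to\infty}G(r)=\inf_{r>0}G(r)$ exist in $[0,\infty]$. This monotonicity alone yields (i)$\Leftrightarrow$(ii) and (iii)$\Leftrightarrow$(iv) in part a).

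The heart of the argument is the semigroup-to-resolvent translation, which is exactly Lemma \ref{demuth}:
\[
(1-\mathrm{e}^{-rt})G(r)\leq F(t)\leq \mathrm{e}^{rt}G(r)\quad\text{for all }r,t>0.
\]
For part a), (ii)$\Rightarrow$(iv): if $F(t_0)<1$ for some $t_0$, the left inequality yields $G(r)\leq F(t_0)/(1-\mathrm{e}^{-rt_0})\to F(t_0)<1$ as $r\to\infty$. Conversely, (iv)$\Rightarrow$(ii): if $G(r_0)<1$ for some large $r_0$, the right inequality yields $F(t)\leq \mathrm{e}^{r_0 t}G(r_0)\to G(r_0)<1$ as $t\to 0+$. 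For part b), the same pair of inequalities, now with the stronger hypothesis that the infimum in question equals $0$, gives (i)$\Leftrightarrow$(ii): given $\varepsilon>0$, choose $t$ so small that $F(t)<\varepsilon/2$; then $\limsup_{r\to\infty}G(r)\leq F(t)<\varepsilon$. Conversely, choose $r$ so large that $G(r)<\varepsilon/2$; then $\limsup_{t\to 0+}F(t)\leq G(r)<\varepsilon$.

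There is no serious obstacle: once the probabilistic identity above is in place, every remaining implication is a two-line manipulation using Lemma \ref{demuth} together with the elementary facts that $1-\mathrm{e}^{-rt}\to 1$ as $r\to\infty$ and $\mathrm{e}^{rt}\to 1$ as $t\to 0+$. The only point to track is that none of the statements assumes the finiteness of $F(t)$ or $G(r)$ a priori; however, each of the hypotheses (i), (ii), (v), (vi) of part a) and (i), (iii) of part b) (and their counterparts in part b)) forces this finiteness directly through the appropriate side of the Demuth-type two-sided bound, so every manipulation is rigorously justified.
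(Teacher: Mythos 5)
Your proposal is correct and follows essentially the same route as the paper: the probabilistic statements are identified with the analytic ones via the one-dimensional marginal identity, the "for some $t$" versus "$\lim_{t\to 0+}$" (and "for some $r$" versus "$\lim_{r\to\infty}$") equivalences follow from monotonicity, and the semigroup/resolvent translation is exactly Lemma \ref{demuth}. The paper's proof is just a terser statement of the same argument.
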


\begin{proof} a) The equivalence of i), ii), v), vi) and the equivalence of iii), iv) are clear, and the equivalence of ii) and iv) follows from Lemma \ref{demuth}.\\
b) The equivalence of i) and iii) is clear, and Lemma \ref{demuth} directly implies the equivalence of i) and ii).
\end{proof}

The following result is of fundamental importance, since it shows that expressions of the type $\int^t_0 w(\IX_s)\Id s$, which appear either directly or in estimates in the context of Feynman-Kac-type formulae, are well-defined for $\mu$-a.e. $x\in M$, if $w:M\to \IR$ is locally integrable, and even for \emph{all} $x$, if $w$ is locally contractively Dynkin. In the case of $M=\IR^m$, a result of this type goes back to W. Faris and B. Simon \cite{faris}, and the same proof applies to stochastically complete $M$'s. The general, possibly stochastically incomplete case has been treated by the author in \cite{guen}. It requires some additional technical adjustments, since one has to deal with explosive paths:

\begin{Lemma}\label{Katto0} a) Let $w\in L^1_{\mathrm{loc}}(M)$. Then for $\mu$-a.e. $x\in M$ one has 
\begin{align}\label{qtm0}
\mathbb{P}^x\left\{w(\IX_{\bullet})\in L^1_{\mathrm{loc}}[0,\zeta)\right\}=1. 
\end{align}
b) Let $w\in\dyn_{\loc}(M)$. Then for \emph{any} $x\in M$ one has (\ref{qtm0}).
\end{Lemma}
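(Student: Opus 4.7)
The strategy in both parts is identical in spirit: reduce the local integrability assertion along paths to a Fubini-type argument by exhausting $M$ by compact sets $K_n\uparrow M$ and time horizons $T_n\uparrow\infty$, and using the formula (\ref{ajp2}) to translate path-space integrals into heat-kernel integrals. The continuity of paths and the fact that $\IX_t\in M$ on $\{t<\zeta\}$ will be used in the end to convert bounds involving the cut-off $1_{\{s<\zeta\}}1_{K_n}(\IX_s)$ into the actual statement $w(\IX_{\bullet})\in \IL^1_{\mathrm{loc}}[0,\zeta)$.

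For part a), I will fix $T>0$ and a compact $K\subset M$ and compute, using (\ref{ajp2}) with $n=1$, followed by Fubini and the symmetry $p(s,x,y)=p(s,y,x)$ together with $\int_M p(s,x,y)\Id\mu(x)\leq 1$:
\begin{align*}
\int_M\int^T_0 \mathbb{E}^x\big[1_{\{s<\zeta\}}1_K(\IX_s)|w(\IX_s)|\big]\Id s\,\Id\mu(x)
&=\int^T_0\!\!\int_K |w(y)|\int_M p(s,x,y)\Id\mu(x)\Id\mu(y)\Id s\\
&\leq T\int_K|w|\,\Id\mu<\infty,
\end{align*}
since $w\in \IL^1_{\loc}(M)$. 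Fubini then yields that for $\mu$-a.e.\ $x\in M$ the inner expression is finite, hence $\mathbb{P}^x$-a.s. the path integral $\int_0^T 1_{\{s<\zeta\}}1_K(\IX_s)|w(\IX_s)|\Id s$ is finite. Taking a countable intersection over $T\in\IN$ and $K=K_n$ yields a full-measure set of starting points $x$ on which this holds for all such choices simultaneously.

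For part b), the point is to remove the $\mu$-null exceptional set. If $w\in \dyn_{\loc}(M)$, then for every compact $K$ the function $1_Kw$ lies in $\dyn(M)$, so Lemma \ref{goof}~c) gives, for every $T>0$,
\[
\sup_{x\in M}\int_0^T\int_M p(s,x,y)1_K(y)|w(y)|\Id\mu(y)\Id s<\infty.
\]
By (\ref{ajp2}), this is exactly $\sup_x\mathbb{E}^x\big[\int_0^T 1_{\{s<\zeta\}}1_K(\IX_s)|w(\IX_s)|\Id s\big]$, so finiteness $\mathbb{P}^x$-a.s.\ now holds for \emph{every} $x\in M$. The rest of the argument proceeds as in part a).

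The final step, common to both parts, is the passage from the cut-off statement to $w(\IX_{\bullet})\in \IL^1_{\loc}[0,\zeta)$. Given a compact sub-interval $[a,b]\subset[0,\zeta)$, the path $s\mapsto \IX_s(\omega)$ on $[0,b]$ is continuous and stays in $M$ (since $b<\zeta$), so its image is a compact subset of $M$ and is therefore contained in some $K_n$. Hence, outside the $\mathbb{P}^x$-null exceptional set already constructed,
\[
\int_a^b|w(\IX_s)|\Id s
\leq \int_0^{\lceil b\rceil} 1_{\{s<\zeta\}}1_{K_n}(\IX_s)|w(\IX_s)|\Id s<\infty.
\]
The only subtlety worth flagging is the handling of paths that explode: the indicator $1_{\{s<\zeta\}}$ is exactly what makes the formula (\ref{ajp2}) applicable in the stochastically incomplete case, and (\ref{expla}) ensures that on $\{b<\zeta\}$ the path genuinely lives in $M$ on $[0,b]$ so that continuity forces the image to be a relatively compact subset of $M$ that can be captured by one of the $K_n$. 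This is the step that would otherwise be the main obstacle, and it is resolved essentially for free once one works with the Alexandrov compactification and the process $\IX$ built in Chapter \ref{stop}.
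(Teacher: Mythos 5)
Your proof is correct and follows essentially the same route as the paper's: Tonelli/Fubini reduces the path-space statement to the heat-kernel bounds $\int_M p(s,x,y)\,\Id\mu(x)\le 1$ for part a) and Lemma \ref{goof}~c) for part b), followed by localization over an exhaustion by compacts. The only (harmless) difference is the localization device: the paper works with the first exit times $\zeta_{U_n}$, which announce $\zeta$, whereas you use the spatial cut-offs $1_{K_n}(\IX_s)$ together with the observation that a non-exploded path restricted to $[0,b]$ with $b<\zeta$ has compact image in $M$ — both implementations are valid.
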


\begin{proof} We prepare the proof of the actual statements with some auxiliary results: Pick a continuous function $\rho:M\to [0,\infty)$ such that for all $c\in [0,\infty) $ the level sets $\{\rho\in [c,\infty)\}$ are compact. Then the collection of subsets  $(U_n)_{n\in\IN}$ of $M$ given by
$$
U_n:=\>\text{interior of}\>\{\rho\in [1/n,\infty)\}
$$ 
forms an exhaustion of $M$ with open relatively compact subsets. For every $n\in\IN$, define the first exit times
\begin{align*}
\zeta^{(1)}_n:=\zeta_{U_n}:   \Omega_M\longrightarrow [0,\infty].
\end{align*}
Then the sequence $\zeta^{(1)}_n$ \emph{announces} $\zeta$ with respect to $\IP^x$ for every $x\in M$ in the following sense: There exists a set $\Omega_x\subset\Omega_M$ with $\mathbb{P}^x(\Omega_x)=1$, such that for all paths $\gamma\in\Omega_x$ one has the following two properties: 
\begin{itemize}
	\item $\zeta^{(1)}_n(\gamma)\nearrow\zeta(\gamma)$ as $n\to\infty$, 
	\item the implication $\zeta(\gamma)<\infty \Rightarrow\zeta^{(1)}_n(\gamma)<\zeta(\gamma)$ holds true for all $n$. 
\end{itemize}

To see that $\zeta$ is indeed announced by $\zeta_{n}^{(1)}$ in the asserted form, one can simply set 
$$
\Omega_x:=\{\gamma\in\Omega_M:\>\gamma(0)=x\}.
$$
Then $\mathbb{P}^x(\Omega_x)=1$ by Remark \ref{ahop}.1, and the asserted properties follow easily from continuity arguments, since $\Omega_x$ is a set of continuous paths that start in $x$. It follows immediately that $\zeta^{(2)}_n:=\min(\zeta^{(1)}_n, n)$ also announces $\zeta$. As a consequence, for any $x\in M$, any Borel function $h:M\to \IC$ and $j=1,2$ we have 
\begin{align}
&\mathbb{P}^x\left\{h(\IX_{\bullet})\in L^1_{\mathrm{loc}}[0,\zeta)\right\}=\mathbb{P}^x \bigcap_{n\in\IN}\left\{\int^{\zeta^{(j)}_n}_0\left|h(\IX_s)\right|\Id s<\infty\right\}. \nn
\end{align}
a) Let us first assume that $w\in L^1(M)$. Then, using Fubini (recall that first exit times are $\IFF^M$-measurable and that $\IX$ is jointly measurable), for any $n$ we have
\begin{align} 
&\int_M \mathbb{E}^x\left[   \int^{\zeta^{(2)}_n}_0 \left|w(\IX_s) \right|\Id s    \right] \Id\mu(x) \nn\\
&\leq \int_M \mathbb{E}^x\left[   \int^{\min(\zeta, n)}_0 \left|w(\IX_s) \right|\Id s   \right] \Id\mu(x) \nn\\
& = \int_M \mathbb{E}\left[  \int^{n}_0 1_{ \{s<\zeta\}  } \left|w(\IX_s) \right| \Id s \right] \Id\mu(x)\nn\\
& = \int_M \int^{n}_0\mathbb{E}\left[   1_{ \{s<\zeta\}  } \left|w(\IX_s) \right|  \right] \Id s \Id\mu(x)\nn\\
&= \int^n_0 \int_M  \int_M p(s,x,y)  \Id\mu(x) \left|w(y)\right|  \Id\mu(y)\Id s <\infty,\label{gtz}
\end{align}
which implies (\ref{qtm0}) in this situation. If one only has $w\in L^1_{\mathrm{loc}}(M)$, then (since now $1_{U_n}w\in  L^1(M)$) for $\mu$-a.e. $x$ and all $n$ we have
\begin{align*}
&\mathbb{P}^x \left\{\int^{\zeta^{(1)}_n}_0\left|w(\IX_s)\right|\Id s=\infty\right\} \\
=\>\>\>&\mathbb{P}^x
\left\{\int^{\zeta^{(1)}_n}_0\left|\Big(1_{U_n}(\IX_s)+1_{M\setminus
U_n}(\IX_s)\Big)w(\IX_s)\right|\Id s=\infty\right\} \\
\leq \>\>\> &\mathbb{P}^x\left\{\int^{\zeta^{(1)}_n}_0\left|(1_{U_n}w)(\IX_s)\right|\Id
s=\infty\right\}=0, 
\end{align*}
which again implies (\ref{qtm0}).\\
b) Let $x\in M$, $w\in\dyn(M)$, $n\in\IN$. We have 
\begin{align*}
&\mathbb{E}^x\left[  \int^{\zeta^{(2)}_n}_0 \left|w(\IX_s) \right|\Id s  \right] \leq  \mathbb{E}^x\left[  \int^{\min(\zeta, n)}_0 \left|w(\IX_s) \right|\Id s   \right] \nn\\
& =  \mathbb{E}^x\left[  \int^{n}_0 \left|w(\IX_s) \right|1_{ \{s<\zeta\}  }  \Id s   \right] = \int^n_0   \int_M p(s,x,y)  \left|w(y)\right|  \Id\mu(y),
\end{align*}
and this number is finite for all $n$ (in view of Lemma \ref{goof}), which shows (\ref{qtm0}) in the global contractive Dynkin case. Now, one can use the same localization procedure as above to deduce (\ref{qtm0}) for arbitrary $w\in\dyn_{\loc}(M)$.
\end{proof}

The following result is again of fundamental importance, since it shows that given a contractive Dynkin function $w:M\to\IR$ one can make sense of $H+w$ as a self-adjoint semibounded operator in the sense of sesquilinear forms, using the KLMN theorem (cf. appendix, Theorem \ref{klmn}):

\begin{Lemma}\label{k2} For any $r>0$, any Borel function $w:M\to \IC$, and any $f\in  W^{1,2}_0(M)$
one has 
\begin{align}\label{wiwaldi}
\left\| \sqrt{|w|}f\right\|^2_2 \leq  \f{C_{r}(w)}{2}  \left\|\Id f\right\|^2_2 + C_{r}(w) r\left\|f\right\|^2_2,
\end{align}
where
$$
 C_{r}(w) :=\sup_{x\in M} \int^{\infty}_0 \mathrm{e}^{-r s} \int_M p(s,x,y) |w(y)| \Id\mu(y) \Id s\in [0,\infty].
$$
\end{Lemma}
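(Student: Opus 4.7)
If $C_r(w) = \infty$ the inequality is trivial, so assume $C_r(w) < \infty$. The proof rests on two identifications. First, by the Laplace transform formula of Remark \ref{resss} and Fubini (applied to the nonnegative integrand $e^{-rs} p(s,x,y)|h(y)|$ via Theorem \ref{heat}), the positive self-adjoint operator $(H+r)^{-1}$ has integral kernel
$$G_r(x,y) \;:=\; \int_0^\infty e^{-rs}\,p(s,x,y)\,\Id s, \qquad x,y\in M,$$
which is symmetric since $p(s,\cdot,\cdot)$ is, and satisfies $C_r(w) = \sup_x \int_M G_r(x,y)|w(y)|\,\Id\mu(y)$. Second, Proposition \ref{fll} gives $\dom((H+r)^{1/2}) = W^{1,2}_0(M)$ and $\|(H+r)^{1/2}f\|_2^2 = (1/2)\|\Id f\|_2^2 + r\|f\|_2^2$. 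Thus the asserted inequality is equivalent to the $L^2$-operator norm bound
$$\|\sqrt{|w|}\,(H+r)^{-1/2}\|_{L^2\to L^2}^{\,2} \;\leq\; C_r(w),$$
after substituting $f = (H+r)^{-1/2} g$ with $g = (H+r)^{1/2}f \in L^2(M)$.

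\textbf{The key operator estimate.} Write $T$ for multiplication by $\sqrt{|w|}$ and truncate: set $|w_n| := \min(|w|,n)$ and $T_n := $ mult.\ by $\sqrt{|w_n|}$, which is bounded on $L^2(M)$; monotone convergence at the end lets one pass $|w_n|\nearrow|w|$. Since $\|T_n (H+r)^{-1/2}\|^2 = \|T_n (H+r)^{-1} T_n\|_{L^2\to L^2}$, and the latter is the integral operator with kernel $\sqrt{|w_n(x)|}\,G_r(x,y)\,\sqrt{|w_n(y)|}$, the plan is a Schur-type bound on this kernel. For $h \in L^2(M)$ with $h\geq 0$, Cauchy--Schwarz in the variable $y$ with respect to the positive measure $G_r(x,y)\,\Id\mu(y)$, written as
$$\Bigl(\int\sqrt{G_r(x,y)|w_n(y)|}\cdot\sqrt{G_r(x,y)}\,h(y)\,\Id\mu(y)\Bigr)^{\!2} \leq \Bigl(\int G_r(x,y)|w_n(y)|\,\Id\mu(y)\Bigr)\Bigl(\int G_r(x,y)h(y)^2\,\Id\mu(y)\Bigr),$$
yields $\bigl(T_n(H+r)^{-1}T_n h\bigr)(x)^2 \leq |w_n(x)|\,C_r(w)\int G_r(x,y)h(y)^2\,\Id\mu(y)$. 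Integrating in $x$, interchanging integrals by Tonelli, and using the symmetry $G_r(x,y)=G_r(y,x)$ together with $\int G_r(y,x)|w_n(x)|\,\Id\mu(x) \leq C_r(w)$ gives
$$\|T_n(H+r)^{-1}T_n h\|_2^{\,2} \;\leq\; C_r(w)^2 \, \|h\|_2^{\,2}.$$

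\textbf{Finish.} Taking $h = T_n g$ for $g \in L^2(M)$ and using $\|T_n(H+r)^{-1/2}\|^2 = \|T_n(H+r)^{-1}T_n\|$ (valid as $T_n$ is bounded self-adjoint) produces $\|T_n (H+r)^{-1/2}\|^2 \leq C_r(w)$. For $f \in W^{1,2}_0(M)$, set $g := (H+r)^{1/2}f \in L^2(M)$; then
$$\|\sqrt{|w_n|}\,f\|_2^{\,2} \;=\; \|T_n(H+r)^{-1/2}g\|_2^{\,2} \;\leq\; C_r(w)\,\|g\|_2^{\,2} \;=\; \tfrac{C_r(w)}{2}\|\Id f\|_2^{\,2} + C_r(w)\,r\,\|f\|_2^{\,2}.$$
Letting $n\to\infty$, monotone convergence applied on the left-hand side gives the claim (and in particular shows that $\sqrt{|w|}f \in L^2(M)$). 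The only subtle step is the Schur-type Cauchy--Schwarz manipulation on the kernel; the rest is routine juggling of the Laplace identity, Tonelli, and the form-domain description of $(H+r)^{1/2}$.
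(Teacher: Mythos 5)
Your proof is correct and follows essentially the same route as the paper's: the reduction to the bound $\|\sqrt{|w|}\,(H+r)^{-1/2}\|^2\leq C_r(w)$ via the form-domain description of $(H+r)^{1/2}$, the truncation $w_n=\min(|w|,n)$ with monotone convergence, the $C^*$-identity $\|T_n(H+r)^{-1/2}\|^2=\|T_n(H+r)^{-1}T_n\|$, and a Cauchy--Schwarz estimate on the sandwiched resolvent using $\sup_x\int G_r(x,y)|w_n(y)|\,d\mu(y)\leq C_r(w)$ are all exactly the paper's steps. The only (cosmetic) difference is that you phrase the key estimate as a pointwise Schur test on $(T_n(H+r)^{-1}T_nh)(x)$ for $h\geq 0$, whereas the paper bounds the bilinear form $|\langle T_n(H+r)^{-1}T_nf_1,f_2\rangle|$ by Cauchy--Schwarz against the measure $p(s,x,y)\mathrm{e}^{-rs}\,d\mu(y)\,d\mu(x)\,ds$; these are the same computation.
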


\begin{proof} We can assume that $w$ is nonnegative. It suffices to show
\begin{align}\label{aiwq}
\left\|\widehat{w^{1/2}} (\H+r)^{-1/2}h\right\|^2_{2}\leq C_r(w)\left\|h\right\|^2_2\quad\text{ for all $h\in\IL^2(M)$},
\end{align}
where $\widehat{w^{1/2}}=\widehat{w}^{1/2}$ denotes the maximally defined multiplication operator induced by $w^{1/2}$, that is, $\dom(\widehat{w^{1/2}})$ is given by those $f\in \IL^{2}(M)$ which satisfy $w^{1/2}f\in \IL^{2}(M)$. Indeed, once we have established the above estimate, applying it to $h= (\H+r)^{1/2}f$ with $f\in  W^{1,2}_0(M)=\dom((\H+r)^{1/2})$ proves
$$
\left\|\widehat{w^{1/2}} f\right\|^2_{2}\leq C_r(w)\left\|(\H+r)^{1/2}f\right\|^2_2=C_r(w)\left\|\H^{1/2}f\right\|^2_2+rC_r(w)\left\|f\right\|^2_2,
$$
which is nothing but the asserted estimate. So it remains to prove (\ref{aiwq}). To this end, setting $w_n:=\min(w,n)\in\IL^{\infty}(M)$, $n\in\IN$,  and using monotone convergence and $C_r(w_n)\leq C_r(w)$, it is actually sufficient to prove that for all $n$ one has
\begin{align}\label{rgh}
\left\|\widehat{w^{1/2}_n} (\H+r)^{-1/2}\right\|^2_{2,2}\leq C_r(w_n).
\end{align}
Since $\widehat{w^{1/2}_n}$ and $(\H+r)^{-1/2}$ are self-adjoint and since $\ILL(\IL^{2}(M))$ is a $C^*$-algebra, one has 
\begin{align*}
&\left\|\widehat{w^{1/2}_n} (\H+r)^{-1}\widehat{w^{1/2}_n} \right\|_{2,2}=\left\|\widehat{w^{1/2}_n} (\H+r)^{-1/2}\left(\widehat{w^{1/2}_n}(\H+r)^{-1/2}\right)^* \right\|_{2,2}\\
&=\left\|\widehat{w^{1/2}_n} (\H+r)^{-1/2}\right\|^2_{2,2}.
\end{align*}
To estimate this expression, let $f_1,f_2\in\IL^{2}(M)$. Using the Laplace transform 
$$
(\H+r)^{-1}=\int^{\infty}_0\mathrm{e}^{-rs}\mathrm{e}^{-s \H}\Id s,
$$
we get
\begin{align*}
&\left|\left\langle \widehat{w^{1/2}_n} (\H+r)^{-1}\widehat{w^{1/2}_n} f_1,f_2 \right\rangle\right|\\
&\leq\int^{\infty}_0\int_M\int_Mw^{1/2}_n(x)|f_1(y)| w^{1/2}_n(y) |f_2(x)|   p(s,x,y)\mathrm{e}^{-rs}\Id\mu(y)\Id\mu(x)\Id s.
\end{align*}
Once we apply Cauchy-Schwarz to the Borel measure
$$
\Id\rho( y, x ,  s)= p(s,x,y)\mathrm{e}^{-rs}\Id\mu(y)\Id\mu(x)\Id s\>\text{ on $  M\times M\times(0,\infty)$},
$$
we therefore get
\begin{align*}
&\left|\left\langle \widehat{w^{1/2}_n} (\H+r)^{-1}\widehat{w^{1/2}_n} f_1,f_2 \right\rangle \right| \\
& \leq\left(\int^{\infty}_0\int_M\int_Mw_n(x)|f_1(y)|^2   p(s,x,y)\mathrm{e}^{-rs}\Id\mu(y)\Id\mu(x)\Id s\right)^{1/2} \\
&\times\left(\int^{\infty}_0\int_M\int_Mw_n(y) |f_2(x)|^2 p(s,x,y)\mathrm{e}^{-rs}\Id\mu(y)\Id\mu(x)\Id s\right)^{1/2}\\
& =\left(\int_M\int^{\infty}_0\mathrm{e}^{-rs}\int_M w_n(x)   p(s,y,x)\Id\mu(x)\Id s|f_1(y)|^2\Id\mu(y)\right)^{1/2} \\
&\times\left(\int_M\int^{\infty}_0\mathrm{e}^{-rs}\int_M w_n(y)   p(s,x,y)\Id\mu(y)\Id s|f_2(x)|^2\Id\mu(x)\right)^{1/2}\\
&\leq C_r(w_n)\|f_1\|_2\|f_2\|_2,
\end{align*}
which proves (\ref{rgh}).
\end{proof}

The above result is due to P. Stollmann and J. Voigt \cite{peter}, who even treat a more general context than Riemannian manifolds, namely regular Dirichlet forms. In fact, they also allow the perturbations to be Kato measures rather than Kato functions. Our proof is quite different from that of \cite{peter} (see also \cite{demuth}). \\
Finally, we record some exponential estimates. To this end, for every Borel function $w:M\to \IC$ and any $s\geq 0$, let
\begin{align}
D(w,s):=\sup_{x\in M}\mathbb{E}^x\left[\int^s_0 \left|w(\IX_r)\right|1_{\{r<\zeta\}}\Id
r\right]\in [0,\infty].\nn
\end{align}

Part a) of the following lemma is a classical result by M. Aizenman and B. Simon for $M=\IR^m$. Essentially the same proof (with some modifications taking the explosion of paths into account) works for manifolds as well:

\begin{Lemma}\label{xdd} a) For any $w\in\dyn(M)$, there are $c_j=c_j(w)>0$, $j=1,2$, such that
for all $t\geq 0$,
\begin{align}
\sup_{x\in M} \mathbb{E}^x\left[\mathrm{e}^{\int^t_0 \left|w(\IX_s)\right|\Id
s}1_{\{t<\zeta\}}\right]
\leq   c_1\mathrm{e}^{tc_2}<\infty. \label{way0}
\end{align}
In fact, for every $s>0$ with $D(w,s)<1$ one can pick the constants
\begin{align}
c_1=\f{1}{1-D(w,s)},\>c_2=\f{1}{s}\log\left(\f{1}{1-D(w,s)}\right)  .
\end{align}

b) For any $w\in\kat(M)$ and any $\delta>1$, there is a $c_{\delta}=c_{\delta}(w)>0$ such that for all $t\geq 0$,
\begin{align}
\sup_{x\in M} \mathbb{E}^x\left[\mathrm{e}^{\int^t_0 \left|w(\IX_s)\right|\Id
s}1_{\{t<\zeta\}}\right]
\leq \delta \mathrm{e}^{tc_{\delta}}<\infty. \label{way1}
\end{align}
In fact, for every $s_{\delta}>0$ with $D(w,s_{\delta})<1-1/\delta$ one can pick the constant
\begin{align}\label{ffeq}
c_{\delta}=\f{1}{s_{\delta}}\log\left(\f{1}{1-D(w,s_{\delta})}\right)  .
\end{align}
\end{Lemma}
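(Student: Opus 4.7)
My plan is to follow the classical Khasminskii--Aizenman--Simon strategy, adapted to handle the explosion time $\zeta$. The backbone is a moment estimate: for every nonnegative Borel $w$ and every $n \in \IN$, I would show inductively that
\begin{align*}
\sup_{x \in M} \mathbb{E}^x\left[\left(\int_0^t w(\IX_r)\,1_{\{r<\zeta\}}\,\Id r\right)^n\right] \leq n!\,D(w,t)^n.
\end{align*}
The case $n=1$ is the definition. For the inductive step, I would symmetrize the $n$-fold power as an integral over the ordered simplex $\{0\le r_1\le\cdots\le r_n\le t\}$ and apply the Markov property from Lemma \ref{mark1} at the innermost time $r_1$; this splits off one factor $w(\IX_{r_1})1_{\{r_1<\zeta\}}$ and reduces the remaining $(n-1)$-fold expectation to one starting from $\IX_{r_1}$, to which the inductive hypothesis applies. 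Monotonicity of $t\mapsto D(w,t)$ (obvious from the definition) closes the estimate.

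Applied to $|w|$ and summed as a Taylor series, this moment bound gives, for any $s>0$ with $D(w,s)<1$,
\begin{align*}
\sup_{x \in M} \mathbb{E}^x\left[\mathrm{e}^{\int_0^s |w(\IX_r)|\,1_{\{r<\zeta\}}\,\Id r}\,1_{\{s<\zeta\}}\right] \leq \sum_{n=0}^{\infty} D(w,s)^n = \frac{1}{1-D(w,s)}=:c_1.
\end{align*}
By Lemma \ref{char} a)v), such an $s$ exists iff $w\in\dyn(M)$, and by Lemma \ref{char} b)iii), given $\delta>1$ one can choose $s_\delta$ so small that $D(w,s_\delta)<1-1/\delta$, whence $c_1<\delta$.

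To pass from small to arbitrary $t$, I would iterate the Markov property. On $\{t<\zeta\}$ the indicators $1_{\{r<\zeta\}}$ inside the time integral are automatically equal to $1$, so for $t>s$ one may factor
\begin{align*}
\mathrm{e}^{\int_0^t|w(\IX_r)|\,1_{\{r<\zeta\}}\,\Id r}\,1_{\{t<\zeta\}} = \mathrm{e}^{\int_0^s|w(\IX_r)|\,1_{\{r<\zeta\}}\,\Id r}\,1_{\{s<\zeta\}}\cdot\Bigl(\mathrm{e}^{\int_0^{t-s}|w(\IX_u)|\,1_{\{u<\zeta\}}\,\Id u}\,1_{\{t-s<\zeta\}}\Bigr)\circ\theta_s,
\end{align*}
where $\theta_s$ is the time-shift by $s$. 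Taking $\mathbb{E}^x$ and conditioning on $\IX_s$ via Lemma \ref{mark1}, the inner expectation is bounded by $c_1$ uniformly in the new starting point (which $\mathbb{P}^x$-a.s.\ lies in $M$ on $\{s<\zeta\}$). Iterating $k=\lceil t/s\rceil$ times yields
\begin{align*}
\sup_{x\in M}\mathbb{E}^x\left[\mathrm{e}^{\int_0^t|w(\IX_r)|\,\Id r}\,1_{\{t<\zeta\}}\right]\leq c_1^{\lceil t/s\rceil}\leq c_1\cdot c_1^{t/s}=c_1\,\mathrm{e}^{tc_2},
\end{align*}
with $c_2=(1/s)\log(1/(1-D(w,s)))$, giving the explicit constants of (a); for (b) one feeds in $s_\delta$ in place of $s$ and bounds $c_1$ by $\delta$, producing (\ref{ffeq}).

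The main subtlety I expect to have to treat carefully is the joint handling of the exponential functional with the explosion indicator when applying Markov. One must verify that after shifting by $s$, the "new" explosion time starting from $\IX_s$ coincides with $\zeta\circ\theta_s$ on $\{s<\zeta\}$, so that the product decomposition above is legitimate; this is exactly what is encoded in the trap property (\ref{expla}) of $\infty_M$ together with the form of Lemma \ref{mark1}. Measurability of $y\mapsto\mathbb{E}^y[\,\cdots\,]$ (needed to bound the inner expectation by a supremum and pull the supremum out) is provided by Remark \ref{ahop}.3 applied to the relevant cylinder-type functional, together with a monotone class approximation in $n$ of the truncated exponential $\sum_{k=0}^n (\int\cdots)^k/k!$.
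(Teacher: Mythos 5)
Your proposal is correct and follows essentially the same route as the paper: Khas'minskii's lemma (the simplex/moment estimate obtained from the Markov property, summed as a Taylor series) followed by iteration of the Markov property over intervals of length $s$, yielding the same explicit constants. The only cosmetic difference is that the paper first extends $w$ by zero to the Alexandrov point and drops the indicator $1_{\{t<\zeta\}}$ via the trap property, which spares it the explicit bookkeeping of $\zeta\circ\theta_s$ that you carry out by hand.
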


\begin{proof} Let us first record some abstract facts:\\
1. With $\tilde{M}=M\cup\{\infty_M\}$ the Alexandrov compactification of $M$, we can
canonically extend every Borel function $v:M\to \IC$ to a Borel function $\widetilde{v}:\tilde{M}\to \IC$ by setting 
$\widetilde{v}(\infty_M)=0$. Then one trivially has 
\begin{align}
\mathbb{E}^x\left[\mathrm{e}^{\int^t_0 \left|v(\IX_s)\right|\Id
s}1_{\{t<\zeta\}}\right]
\leq  \mathbb{E}^x\left[\mathrm{e}^{\int^t_0 \left|\widetilde {v}(\IX_s)\right|\Id
s}\right].\label{way}
\end{align}
2. (Khas\rq{}minskii\rq{}s lemma) For any Borel function $v:M\to \IC$ and any $s\geq 0$,
let
\begin{align}
J(v,s):=
\sup_{x\in
M}\mathbb{E}^x\left[\mathrm{e}^{\int^s_0\left|\widetilde{v}(\IX_r)\right|\Id
r}\right]
\in [0,\infty].\nn
\end{align}
Then for every $s>0$ with $D(v,s)<1$ (of course, such an $s$ does not need to exist for an arbitrary $v$) it holds that
\begin{align}
J(v,s)\leq \f{1}{1-D(v,s)}.\label{way2}
\end{align}
Proof:  One has
$$
D(v,s)=\sup_{x\in M}\mathbb{E}^x\left[\int^s_0\left|\widetilde{v}(\IX_r)\right|\Id
r\right].
$$
For any $n\in\IN$, let
\[
s\sigma_n:=\Big\{q=(q_1,\dots,q_n): 0< q_1<\dots< q_n<
s\Big\}\subset \IR^n
\]
denote the open scaled simplex. In the chain of equalities
\begin{align*}
&\mathbb{E}^x\left[\mathrm{e}^{\int^s_0\left|\widetilde{v}(\IX_r)\right|\Id
r}\right]=1+\sum^{\infty}_{n=1}(1/n!)\int_{[0,s]^n}\mathbb{E}^x\left[\left|\widetilde{v}(\IX_{q_1})\right|\dots
\left|\widetilde{v}
(\IX_{q_n})\right|\right]\Id^n q\\
&=1+\sum^{\infty}_{n=1}\int_{s\sigma_n}\mathbb{E}^x\left[\left|\widetilde{v}(\IX_{q_1})\right|\dots
\left|\widetilde{v}
(\IX_{q_n})\right|\right]\Id^n q\\
&=1+\sum^{\infty}_{n=1}\int^s_0\int^s_{q_1}\cdots\int^s_{q_{n-1}}\mathbb{E}^x\left[\left|\widetilde{v}(\IX_{q_1})\right|\dots
\left|\widetilde{v}
(\IX_{q_n})\right|\right]\Id^n q,
\end{align*}
the first one follows from Fubini\rq{}s theorem, and the second one from combining the fact that the integrand is symmetric in the variables $q_j$ with the fact that the number of orderings of a real-valued tuple of length $n$ is $n!$. In particular, by comparison with a geometric series, it is sufficient to prove that for all natural $n\geq 2$, one has 
\begin{align}
J_n(v,s)&:=
\sup_{x\in
M}\int^s_0\int^s_{q_1}\cdots\int^s_{q_{n-1}}\mathbb{E}^x\left[\left|\widetilde{v}(\IX_{q_1})\right|\dots
\left|\widetilde{v}
(\IX_{q_n})\right|\right]\Id^n q\nn\\
&\leq D(v,s) J_{n-1}(v,s).
\end{align}
But the Markov property of the family of Wiener measures implies
\begin{align}
J_n(v,s)&=\sup_{x\in M}\int^s_0\int^s_{q_1}\cdots\int^s_{q_{n-2}}\int_{\Omega_M}\left|\widetilde{v}(\gamma(q_1))\right|\dots
\left|\widetilde{v}(\gamma(q_{n-1}))\right|\times\nn\\
&\quad \times
\int_{\Omega_M}\int^{s-q_{n-1}}_0\left|\widetilde{v}(\omega(u))\right|
\Id u \ \Id\IP^{\gamma(q_{n-1})}(\omega)\Id\IP^{x}(\gamma)\Id^{n-1}q\nn\\
&\leq D(v,s) J_{n-1}(v,s),
\end{align}
\vspace{1.2mm}
which proves Khas\rq{}minskii\rq{}s lemma.\\
3. Let $v:M\to \IC$ be a Borel function which admits an $s>0$ with $D(v,s)<1$. Then for any $t>0$ and any such $s$, one has 
$$
J(v,t)\leq  \f{1}{1-D(v,s)} \mathrm{e}^{ \f{t}{s } 
\mathrm{log}\left(\f{1}{1-D(v,s)}\right) }.
$$
Proof:  Pick a large $n\in\IN$ with $t< (n+1)s$. Then the Markov
property of the family of Wiener measures and Khas\rq{}minskii\rq{}s lemma imply 
\begin{align}
J(v,t) &\leq J(v,(n+1) s)\nn\\
&=\sup_{x\in M}
\int_{\Omega_M}\mathrm{e}^{\int^{ns}_0\left|\widetilde{v}(\gamma(r))\right|
\Id
r}\int_{\Omega_M} \mathrm{e}^{\int^{s}_0\left|\widetilde{v}(\omega(r))\right|\Id
r}\Id\IP^{\gamma(ns)}(\omega)
\Id\IP^x(\gamma)\nn\\
&\leq \f{1}{1-D(v,s)} J(v,n s)  \nn\\
&=\f{1}{1-D(v,s)}\times\nn\\
&\>\>\>\>\>\times \sup_{x\in M}
\int_{\Omega_M}\mathrm{e}^{\int^{(n-1)s}_0\left|\widetilde{v}(\gamma(r))\right|
\Id
r}\int_{\Omega_M} \mathrm{e}^{\int^{s}_0\left|\widetilde{v}(\omega(r))\right|\Id
r}\Id\IP^{\gamma((n-1)s)}(\omega)
\Id\IP^x(\gamma)\nn\\
&\leq \dots\>\text{($n$-times)}\nn\\
&\leq \f{1}{1-D(v,s)}\left(\f{1}{1-D(v,s)}\right)^{n}  \nn\\
&\leq \f{1}{1-D(v,s)} \mathrm{e}^{ \f{t}{s} 
\mathrm{log}\left(\f{1}{1-D(v,s)}\right) }\nn,
\end{align}
which proves (\ref{way0}) in view of (\ref{way}).\\
Using the above observations, the actual statement of Lemma \ref{xdd} can be proved as follows: In case $w\in\dyn(M)$, there exists an $s>0$ with $D(w,s)<1$, such that the claim follows from step 3. In case $w\in\kat(M)$, for any $\delta>1$ there exists a $s_{\delta}>0$ with $D(w,s_{\delta})<1-1/\delta$, and again the claim follows from step 3. This completes the proof.
\end{proof}

The exponential estimate (\ref{way0}) will turn out to be the actual reason for $\IL^q$-smoothing properties of semigroups generated by Schrödinger operators with locally integrable potentials that have some negative part in $\dyn(M)$. There seems to be some belief that these $\IL^q$-smoothing results require the stronger Kato assumption on the negative part of the potential. The stronger estimate (\ref{way1}), however, really requires a Kato condition. The importance of this better estimate has been noted by D. Pallara and the author in \cite{pallara} in the context of a de-Giorgi-type heat kernel characterization of the Riemannian total variation. We will come back to this later on.

\begin{Notation} The $\IL^q$-norm with respect to a Borel measure on $M$ of the form $\Xi(y)\Id\mu(y)$ will be denoted by $\left\|\bullet\right\|_{q;\Xi}$, using the additional convention $\left\|\bullet\right\|_{q}=\left\|\bullet\right\|_{q;\Xi|_{\Xi\equiv 1}}$. The corresponding $\IL^q$-spaces are to be denoted by
$$
\IL^q_{\Xi}(M):= \IL^q(M, \Xi\Id\mu).
$$
\end{Notation}

Now we can record the following useful Kato criterion:

\begin{Proposition}\label{ecl} Let $w=w_1+w_2:M\to \IC$ be a function which can be decomposed into Borel functions $w_j:M\to \IC$ satisfying the following two properties: 
\begin{enumerate}
\item[$\bullet$] $w_2\in \IL^{\infty}(M)$
\item[$\bullet$] there exists a real number $ q\rq{} <\infty$ such that $q\rq{}\geq 1$ if $m=1$, and $q\rq{} > m/2$ if $m\geq 2$, and a heat kernel control pair $(\Xi,\tilde{\Xi})$, such that\footnote{Note that one automatically has $\IL^{q\rq{}}_{\Xi}(M)\subset \IL^{q\rq{}}(M)$, which is implied by $\inf \Xi>0$.} $w_1\in  \IL^{q\rq{}}_{\Xi}(M)$.
\end{enumerate}
Then for all $u>0$ and all $x\in M$, one has the bound
\begin{align}\label{hgg}
\int_M p(u,x,y)|w(y)|\Id \mu(y)\leq  \tilde{\Xi}(u)^{1/q\rq{}}\left\|w_1\right\|_{q\rq{};\Xi}+ \left\|w_2\right\|_{\infty}.
\end{align}
In particular, for any choice of $q\rq{}$ and $(\Xi,\tilde{\Xi})$ as above one has
$$
 L^{q\rq{}}_{\Xi}(M)+ L^{\infty}(M)\subset \kat(M).
$$
\end{Proposition}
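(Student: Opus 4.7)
The plan is to establish \eqref{hgg} directly by an elementary Hölder argument exploiting the symmetry $p(u,x,y)=p(u,y,x)$ and the defining bound of a heat kernel control pair, and then derive the Kato inclusion by integrating \eqref{hgg} in the time variable and invoking the third bullet of Definition \ref{control}.

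First I would split $|w|\leq |w_1|+|w_2|$ and estimate the two contributions separately. For the bounded part $w_2$, the generally valid inequality $\int_M p(u,x,y)\Id\mu(y)\leq 1$ from Theorem \ref{mart} a) immediately yields $\int_M p(u,x,y)|w_2(y)|\Id\mu(y)\leq \|w_2\|_\infty$, which accounts for the second summand on the right-hand side of \eqref{hgg}. For the unbounded part $w_1$, in the nontrivial case $q'>1$ I would factor $p(u,x,y)=p(u,x,y)^{1/q'}\cdot p(u,x,y)^{1/{q'}^{*}}$ with ${q'}^{*}$ the Hölder conjugate and apply Hölder's inequality to get
\begin{align*}
\int_M p(u,x,y)|w_1(y)|\Id\mu(y)
&\leq \Big(\int_M p(u,x,y)|w_1(y)|^{q'}\Id\mu(y)\Big)^{1/q'}\\
&\quad\times\Big(\int_M p(u,x,y)\Id\mu(y)\Big)^{1/{q'}^{*}}\\
&\leq \Big(\int_M p(u,x,y)|w_1(y)|^{q'}\Id\mu(y)\Big)^{1/q'}.
\end{align*}
Since the symmetry of the heat kernel combined with the control pair inequality gives $p(u,x,y)=p(u,y,x)\leq \Xi(y)\tilde\Xi(u)$, the remaining integral is bounded by $\tilde\Xi(u)\|w_1\|_{q';\Xi}^{q'}$, producing exactly the first summand on the right-hand side of \eqref{hgg}. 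The case $q'=1$ (only needed for $m=1$) is immediate: $\int p(u,x,y)|w_1(y)|\Id\mu(y)\leq \tilde\Xi(u)\|w_1\|_{1;\Xi}$ by direct application of the control-pair bound. Taking the supremum over $x\in M$ in these estimates is harmless because the right-hand sides do not depend on $x$.

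To deduce $w\in\kat(M)$, I would integrate \eqref{hgg} in time and obtain
\begin{align*}
\sup_{x\in M}\int_0^t\int_M p(s,x,y)|w(y)|\Id\mu(y)\Id s
\leq \|w_1\|_{q';\Xi}\int_0^t \tilde\Xi(s)^{1/q'}\Id s \;+\; t\|w_2\|_\infty.
\end{align*}
The third condition in Definition \ref{control} guarantees $\int_0^\infty \tilde\Xi(s)^{1/q'}\mathrm{e}^{-As}\Id s<\infty$ for the admissible range of $q'$; since $\mathrm{e}^{-As}$ is bounded below on any bounded interval, this forces $\tilde\Xi^{1/q'}\in L^1[0,1]$, so $\int_0^t\tilde\Xi(s)^{1/q'}\Id s\to 0$ as $t\to 0+$, and obviously $t\|w_2\|_\infty\to 0$ as well. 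This yields $\lim_{t\to 0+}\sup_{x\in M}\int_0^t\int_M p(s,x,y)|w(y)|\Id\mu(y)\Id s = 0$, which is precisely the Kato condition.

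There is no serious obstacle here; the only points requiring care are to treat the limiting case $q'=1$ separately from the Hölder step (where $q'>1$ is required to have a nontrivial conjugate exponent) and to invoke the symmetry $p(u,x,y)=p(u,y,x)$ so that the control-pair bound applies to the $y$-variable, which is the one carrying the weight $\Xi$ in the norm of $w_1$.
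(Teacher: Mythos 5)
Your proof is correct and follows essentially the same route as the paper's: split off the bounded part via $\int_M p(u,x,y)\,\Id\mu(y)\leq 1$, factor the heat kernel as $p^{1/q'}\cdot p^{1/(q')^{*}}$ and apply H\"older together with the control-pair bound (via symmetry $p(u,x,y)=p(u,y,x)\leq\Xi(y)\tilde\Xi(u)$), then integrate in time and use the third condition on $\tilde\Xi$ to get the Kato property. Your explicit remarks on the $q'=1$ case and on why the exponential-weight condition yields $\tilde\Xi^{1/q'}\in L^1$ near $t=0$ are points the paper leaves implicit, but the argument is the same.
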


\begin{proof}  Once we have proved
\begin{align}\label{ggdp}
\int_M p(u,x,y)|w(y)|\Id \mu(y)\leq  \tilde{\Xi}(u)^{1/q\rq{}}\left\|w_1\right\|_{q\rq{};\Xi}+ \left\|w_2\right\|_{\infty},
\end{align}
the inclusion $w\in \kat(M)$ clearly follows from
\begin{align*}
&\lim_{t\to 0+}\sup_{x\in M}\int^t_0 \int_M p(u,x,y)|w(y)|\Id \mu(y) \Id u\\
&\leq C(w_1) \lim_{t\to 0+}\int^t_0\tilde{\Xi}(u)^{1/q\rq{}}\Id u+C(w_2)\lim_{t\to 0+} t=0.
\end{align*}
In order to derive (\ref{ggdp}), note first that the inequality 
\begin{align}\label{gp}
\int_M p(u,x,y) \Id\mu(y)\leq 1
\end{align}
shows that we can assume $w_2=0$. Furthermore, the case $q\rq{}=1$ (which is only allowed for $m=1$) is obvious, so let us assume $m\geq 2$ and $q\rq{}>m/2$. The essential trick to bound $\int_M  p(u,x,y)|w_1(y)|\Id \mu(y)$ is to factor the heat kernel appropriately: Indeed, with $1/q\rq{}+1/q:=1$, Hölder\rq{}s inequality and using (\ref{gp}) once more gives us the following estimate:
\begin{align*}
&\int_M p(u,x,y)|w_1(y)|\Id \mu(y)=\int_M p(u,x,y)^{\f{1}{q}} p(u,x,y)^{1-\f{1}{q}}|w_1(y)|\Id \mu(y)\\
&\leq \left(\int_M p(u,x,y)\Id\mu(y)\right)^{\f{1}{q}}\left(\int_M |w_1(y)|^{q\rq{}}p(u,x,y)\Id\mu(y)\right)^{\f{1}{q\rq{}}}\\
&\leq \left(\int_M |w_1(y)|^{q\rq{}}\big(\tilde{\Xi}(u)\Xi(y)\big)\Id\mu(y)\right)^{\f{1}{q\rq{}}}\\
&\leq \tilde{\Xi}(u)^{1/q\rq{}} \left\|w_1\right\|_{q\rq{};\Xi}.
\end{align*}
This completes the proof.
\end{proof}

We immediately get the following corollary to Lemma \ref{xdd} b):

\begin{Corollary}\label{ffrwc} Under the assumptions of Proposition \ref{ecl}, for any $\delta>1$ there exists a constant 
$$
c_{\delta}=c_{\delta}( q\rq{},\left\|w_1\right\|_{q\rq{};\Xi}, \left\|w_2\right\|_{\infty})>0,
$$
which only depends on the indicated parameters, such that for all $t\geq 0$ one has
\begin{align*}
\sup_{x\in M} \mathbb{E}^x\left[\mathrm{e}^{\int^t_0 \left|w(\IX_s)\right|\Id
s}1_{\{t<\zeta\}}\right]
\leq \delta \mathrm{e}^{tc_{\delta}}. 
\end{align*}
\end{Corollary}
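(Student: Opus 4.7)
The plan is to combine Proposition \ref{ecl} with Lemma \ref{xdd} b) in a quantitative way. By Proposition \ref{ecl}, the decomposition $w=w_1+w_2$ already gives $w\in\kat(M)$, so Lemma \ref{xdd} b) applies; moreover it is stated with an explicit formula for $c_\delta$ in terms of any $s_\delta>0$ satisfying $D(w,s_\delta)<1-1/\delta$, namely (\ref{ffeq}). So it suffices to exhibit such an $s_\delta$ depending only on $q'$, $\|w_1\|_{q';\Xi}$ and $\|w_2\|_\infty$ (the heat kernel control pair $(\Xi,\tilde{\Xi})$ being fixed by the standing assumptions of Proposition \ref{ecl}), and then to express $c_\delta$ through (\ref{ffeq}).

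To estimate $D(w,s)$, I would use the probabilistic reformulation of Lemma \ref{char} (or simply the defining relation of the Wiener measure) to rewrite
\[
D(w,s)=\sup_{x\in M}\int_0^s\int_M p(r,x,y)\,|w(y)|\,\Id\mu(y)\,\Id r.
\]
Inserting the pointwise bound (\ref{hgg}) from Proposition \ref{ecl} gives
\[
D(w,s)\leq \|w_1\|_{q';\Xi}\int_0^s\tilde{\Xi}(r)^{1/q'}\,\Id r+s\,\|w_2\|_\infty.
\]
The first term is finite for each $s>0$ because the Definition \ref{control} hypothesis $\int_0^\infty \tilde{\Xi}(r)^{1/q'}\mathrm{e}^{-Ar}\Id r<\infty$ forces $\tilde{\Xi}^{1/q'}\in L^1_{\mathrm{loc}}(0,\infty)$ (the exponential weight is bounded below on bounded intervals), and it tends to $0$ as $s\to 0+$ by dominated/monotone convergence. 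The second term trivially tends to $0$. Hence there exists $s_\delta>0$, depending only on $q'$, $\|w_1\|_{q';\Xi}$, $\|w_2\|_\infty$ (and the fixed pair $(\Xi,\tilde{\Xi})$), such that $D(w,s_\delta)<1-1/\delta$.

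With this $s_\delta$ in hand, define
\[
c_\delta:=\frac{1}{s_\delta}\log\!\Big(\frac{1}{1-D(w,s_\delta)}\Big),
\]
which by construction depends only on the advertised parameters. The conclusion $\sup_{x\in M}\mathbb{E}^x[\mathrm{e}^{\int_0^t|w(\IX_s)|\Id s}1_{\{t<\zeta\}}]\leq \delta\mathrm{e}^{tc_\delta}$ is then exactly the content of Lemma \ref{xdd} b) applied with this choice of $s_\delta$.

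There is no real obstacle in this argument; it is essentially a bookkeeping computation. The only point requiring some care is to check that $s_\delta$ (and consequently $c_\delta$) can be chosen to depend on $w_1$ only through $\|w_1\|_{q';\Xi}$ and on $w_2$ only through $\|w_2\|_\infty$, which follows cleanly from the fact that the upper bound for $D(w,s)$ above uses $w_1, w_2$ only through these norms; the $r$-dependence is entirely absorbed by the universal function $\tilde{\Xi}$ and the (universal) norm $\|w_2\|_\infty$.
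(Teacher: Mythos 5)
Your argument is correct and is exactly the route the paper takes: the corollary is stated there as an immediate consequence of Lemma \ref{xdd} b), obtained by bounding $D(w,s)$ via (\ref{hgg}) and the integrability of $\tilde{\Xi}^{1/q'}$ near $0$. One cosmetic point: the constant $s_\delta^{-1}\log\bigl(1/(1-D(w,s_\delta))\bigr)$ still depends on $w$ beyond the two norms, so to get the advertised dependence replace it by the larger constant $s_\delta^{-1}\log\delta$, which is legitimate since enlarging $c_\delta$ only weakens the bound.
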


The striking fact about Corollary \ref{ffrwc} is that by picking a heat kernel control pair as in Remark \ref{ddghq}.1, one can force the assumptions and constants to depend only on data which are entirely of zeroth order with respect to the metric $g$ on $M$. Looking at the definition of the Kato class, which a priori involves the heat kernel in a full global form, this a surprising fact.\\
The following highly nontrivial localization result now becomes an immediate consequence of Proposition \ref{ecl} and the existence of heat kernel control pairs (Remark \ref{ddghq}.1):

\begin{Corollary} For any $q\rq{}$ as in Proposition \ref{ecl}, one has $ L^{q\rq{}}_{\loc}(M)\subset \kat_{\loc}(M)$.
\end{Corollary}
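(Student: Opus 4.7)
The plan is to reduce the statement directly to Proposition \ref{ecl} via a cut-off argument, exploiting the continuity of the weight $\Xi$ in any heat kernel control pair.

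First I would fix $w \in L^{q'}_{\loc}(M)$ and an arbitrary compact set $K \subset M$; the goal is to show $1_K w \in \kat(M)$. By Remark \ref{ddghq}.1 the manifold $M$ admits a (canonical) heat kernel control pair $(\Xi,\tilde\Xi)$ for the chosen exponent $q'$. By Definition \ref{control}, the function $\Xi: M \to (0,\infty]$ is continuous and satisfies $\inf \Xi > 0$; in particular $\Xi$ is finite and bounded on the compact set $K$, so
\[
\|1_K w\|_{q';\Xi}^{q'} \;=\; \int_{K} |w(y)|^{q'} \Xi(y) \, d\mu(y) \;\leq\; \Bigl(\sup_{y \in K} \Xi(y)\Bigr) \int_{K} |w(y)|^{q'} \, d\mu(y) \;<\; \infty,
\]
using $w \in L^{q'}_{\loc}(M)$. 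Hence $1_K w \in L^{q'}_{\Xi}(M)$.

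Next I would apply Proposition \ref{ecl} to the decomposition $1_K w = w_1 + w_2$ with $w_1 := 1_K w$ and $w_2 := 0 \in L^{\infty}(M)$. The hypotheses of that proposition on $q'$ and on the control pair $(\Xi,\tilde\Xi)$ are satisfied by construction, so it yields $1_K w \in \kat(M)$. Since $K$ was arbitrary, this shows $w \in \kat_{\loc}(M)$ by the very definition of the local Kato class.

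There is no real obstacle here: the entire content of the corollary is the observation that the continuity of $\Xi$ (part of the definition of a heat kernel control pair) makes the weighted $L^{q'}_{\Xi}$ condition of Proposition \ref{ecl} automatically \emph{local}, i.e.\ insensitive to any growth of $w$ or degeneracy of $\Xi$ at infinity. Thus the only nontrivial ingredient is the existence of a heat kernel control pair on every Riemannian manifold, which was already established in Remark \ref{ddghq}.1 on the basis of Theorem \ref{mean}.
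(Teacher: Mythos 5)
Your proof is correct and follows essentially the same route as the paper's: pick a heat kernel control pair, use the continuity of $\Xi$ to bound $\max_K \Xi$ on the compact set $K$, conclude $1_K w \in L^{q'}_{\Xi}(M)$, and apply Proposition \ref{ecl} with trivial bounded part. Nothing to add.
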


\begin{proof} Indeed, pick a heat kernel control pair $(\Xi,\tilde{\Xi})$ for $M$. Given a compact set $K\subset M$ and $w\in L^{q\rq{}}_{\loc}(M)$, one has
$$
\int_K |w|^{q\rq{}}  \Xi\Id\mu \leq \left(\max_{K}\Xi\right)\int_K |w|^{q\rq{}}  \Id\mu<\infty,
$$
since $\Xi$ is continuous, and thus $1_Kw\in \kat(M)$.
\end{proof}

\section{Specific results under some control on the geometry}

While the previous results are true on every Riemannian manifold, it might not come as a surprise that one can deduce \lq\lq{}finer\rq\rq{} global results under some global control on the geometry, in particular for the Kato class. We record two useful results of this type now. First, the following fact is an immediate consequence of Proposition \ref{ecl} and Remark \ref{ddghq}.2:

\begin{Corollary}\label{dhj} Assume that there exists $C,T>0$ such that for all $0<t < T$ one has
\begin{align}
\sup_{x \in M} p(t,x,x)\leq C t^{ -\f{m}{2} }.\label{ab1}
\end{align}
Then for any $q\rq{}$ as in Proposition \ref{ecl}, one has
\[
 L^{q\rq{}}(M)+ L^{\infty}(M)\subset \kat(M).
\]
\end{Corollary}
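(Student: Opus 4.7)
The plan is to combine Remark \ref{ddghq}.2 with Proposition \ref{ecl}, essentially as a direct corollary. The key observation is that under the on-diagonal bound \eqref{ab1}, Remark \ref{ddghq}.2 produces a heat kernel control pair whose first slot is constant; this is what allows the unweighted inclusion $\IL^{q'}(M)\subset \kat(M)$.

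First I would verify that the hypothesis of Remark \ref{ddghq}.2 is met: by \eqref{ab1} one has $\sup_{x\in M} p(t,x,x)\leq C t^{-m/2}$ for all $0<t<T$, which is precisely the ultracontractivity condition needed. Remark \ref{ddghq}.2 then yields the heat kernel control pair
\[
\Xi(x):= 1,\qquad \tilde{\Xi}(t):= C\min(t,T)^{-m/2}.
\]
One needs to check that this is indeed a valid heat kernel control pair in the sense of Definition \ref{control}, in particular that $\int_0^\infty \tilde{\Xi}(t)^{1/q'}\mathrm{e}^{-At}\,\mathrm{d}t<\infty$ for some $A>0$ and any admissible $q'$. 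For $t\geq T$, $\tilde{\Xi}(t)^{1/q'} = (CT^{-m/2})^{1/q'}$ is constant, so the tail is controlled by the exponential factor $\mathrm{e}^{-At}$. For $t\in (0,T)$, one has $\tilde{\Xi}(t)^{1/q'} = C^{1/q'}t^{-m/(2q')}$, which is integrable near $0$ precisely because $q'\geq 1$ when $m=1$ and $q'>m/2$ when $m\geq 2$ (so that $m/(2q')<1$). This is the required integrability.

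Next I would apply Proposition \ref{ecl} with this specific heat kernel control pair. Given $w=w_1+w_2$ with $w_1\in\IL^{q'}(M)$ and $w_2\in\IL^\infty(M)$, since $\Xi\equiv 1$ we have $\IL^{q'}_\Xi(M)=\IL^{q'}(M)$ and $\|w_1\|_{q';\Xi}=\|w_1\|_{q'}$. Proposition \ref{ecl} then gives directly
\[
\int_M p(u,x,y)|w(y)|\,\mathrm{d}\mu(y)\leq \tilde{\Xi}(u)^{1/q'}\|w_1\|_{q'}+\|w_2\|_\infty,
\]
and the conclusion $w\in\kat(M)$ follows. There is essentially no obstacle here: the only thing worth double-checking is the small-$t$ integrability argument above, which confirms that $(\Xi,\tilde{\Xi})$ genuinely qualifies as a heat kernel control pair so that Proposition \ref{ecl} is applicable. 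The entire proof should amount to a couple of lines invoking these two prior results.
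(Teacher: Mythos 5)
Your proof is correct and follows exactly the paper's route: the corollary is stated there as an immediate consequence of Remark \ref{ddghq}.2 (which supplies the control pair $(\Xi,\tilde{\Xi})=(1,\,C\min(t,T)^{-m/2})$) combined with Proposition \ref{ecl}, and your verification of the integrability condition in Definition \ref{control} is the right detail to check. Nothing is missing.
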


%\begin{proof} 
%We can assume $q<\infty$ in the proof. Using (\ref{self}), one sees that the on-diagonal estimate (\ref{ab1}) is equivalent to the off-diagonal estimate
%\begin{align}
%\sup_{x,y\in M} p(t,x,y)\leq C t^{ -\f{m}{2} },\>0<t\leq t_0.\label{ab}
%\end{align}
%Next, we remark that in view of Lemma  \ref{goof} it is sufficient to prove $ L^q(M)\subset \kat(M)$, so let $w\in  L^q(M)$, $0<t\leq t_0$, $x\in M$ and $1/q+1/q\rq{}=1$. Then using Hölder's inequality we get 
%\begin{align}
%\int^t_0\int_M p(s,x,y)|w(y)|\Id\mu(y) \Id s\leq  \left\|w\right\|_{q} \int^t_0 \left\|p(s,x,\bullet)\right\|_{q\rq{}} \Id s.\label{ab2}
%\end{align}
%Since (\ref{ab}) and (\ref{mar}) give  
%\[
%\left(\int_M p(s,x,y)^{q\rq{}-1}p(s,x,y)  \Id\mu(y)\right)^{\f{1}{q\rq{}}} \leq   C^{\f{1}{q}}s^{ -\f{m }{2q} },
%\]
%one has that (\ref{ab2}) is
%\[
% \leq \left\|w\right\|_{q} C^{\f{1}{q}} \int^t_0 s^{ -\f{m }{2q}  }\Id s,
%\]
%which tends to $0$, as $t\to 0+$.
%\end{proof}

Second, we examine Coulomb potentials in the context of the Kato class: In dimensions $\geq 3$, we saw that global Gaussian upper bounds imply that $M$ is nonparabolic, therefore the question arises whether the corresponding Coulomb potential $G(\bullet,y)=\int^{\infty}_0 p(t,\bullet,y)\Id t $
is in any of the classes $\dyn(M)$ or $\kat(M)$ for fixed $y$. Using Corollary \ref{dhj}, it turns out that this is the case in the physically relevant situation $m=3$:

\begin{Proposition}\label{k1} Let $m=3$ and assume that there exists a $c_1>0$ such that for all $t>0$ one has $\sup_{x\in M}p(t,x,x)\leq c_1t^{-\f{3}{2}}$. Then for any fixed $y\in M$ one has 
\begin{align}\label{decom}
G(\bullet,y)\in  L^2(M)+ L^{\infty}(M)\subset \kat(M).
\end{align}
Moreover, there is a universal constant $A>0$ such that for any $y\in M$, $r>0$, one has 
\begin{align}\label{fflo}
\sup_{x\in M} \int^{\infty}_0 \mathrm{e}^{-rs} \int_M p(s,x,z) G(z,y) \Id\mu(z) \Id s\leq \f{c_1A}{\sqrt{r}}.
\end{align}
\end{Proposition}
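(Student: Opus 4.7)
The idea is to split the Coulomb potential at time $t=1$ into a short-time part and a long-time part, and treat each using the on-diagonal bound $\sup_{x}p(t,x,x)\le c_1 t^{-3/2}$ together with the pointwise estimate $p(t,x,y)\le \sqrt{p(t,x,x)p(t,y,y)}\le c_1 t^{-3/2}$ from \eqref{self}. Set
\[
G_2(x,y):=\int_{1}^{\infty}p(t,x,y)\,\Id t,\qquad G_1(x,y):=\int_{0}^{1}p(t,x,y)\,\Id t,
\]
so that $G=G_1+G_2$. For $G_2$ the pointwise bound immediately gives $G_2(x,y)\le c_1\int_1^\infty t^{-3/2}\Id t=2c_1$, so $G_2(\bullet,y)\in L^\infty(M)$ with a norm independent of $y$.

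For $G_1$, the central computation is to expand the square and invoke Chapman--Kolmogorov together with the symmetry $p(t,z,y)=p(t,y,z)$:
\begin{align*}
\|G_1(\bullet,y)\|_2^2 &= \int_M\int_0^1\int_0^1 p(t,x,y)p(s,x,y)\,\Id s\,\Id t\,\Id\mu(x)\\
&= \int_0^1\int_0^1 p(t+s,y,y)\,\Id s\,\Id t
\le c_1\int_0^1\int_0^1 (t+s)^{-3/2}\Id s\,\Id t,
\end{align*}
which is an elementary finite constant, independent of $y$. Hence $G_1(\bullet,y)\in L^2(M)$ uniformly in $y$, and \eqref{decom} follows, while the inclusion $L^2(M)+L^\infty(M)\subset\kat(M)$ is exactly Corollary \ref{dhj} applied with $m=3$ and $q'=2>3/2$.

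For the quantitative bound \eqref{fflo}, I would compute the inner spatial integral in closed form using Fubini and the Chapman--Kolmogorov identity:
\[
\int_M p(s,x,z)\, G(z,y)\Id\mu(z)=\int_0^\infty\!\!\int_M p(s,x,z)p(u,z,y)\Id\mu(z)\Id u=\int_s^\infty p(v,x,y)\,\Id v,
\]
and then bound this tail by $c_1\int_s^\infty v^{-3/2}\Id v=2c_1 s^{-1/2}$, a quantity independent of $x$ and $y$. Plugging in:
\[
\sup_{x\in M}\int_0^\infty\! e^{-rs}\!\int_M p(s,x,z)G(z,y)\Id\mu(z)\Id s\le 2c_1\int_0^\infty e^{-rs}s^{-1/2}\Id s=\frac{2c_1\sqrt{\pi}}{\sqrt{r}},
\]
so \eqref{fflo} holds with $A=2\sqrt{\pi}$.

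The only nonroutine step is the $L^2$-estimate for $G_1$; the trick there is to use Chapman--Kolmogorov to collapse the spatial integral to an on-diagonal value $p(t+s,y,y)$ before applying the hypothesis, which is what makes the resulting double integral finite (borderline convergent, as $m=3$ is the critical dimension for this argument). Everything else is just careful bookkeeping with Fubini and the on-diagonal heat kernel bound.
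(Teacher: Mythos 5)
Your proof is correct. For the quantitative bound \eqref{fflo} your computation coincides with the paper's: collapse the spatial integral via Chapman--Kolmogorov to $\int_s^\infty p(v,x,y)\,\Id v$, bound the tail by $2c_1 s^{-1/2}$ using \eqref{self}, and integrate against $\mathrm{e}^{-rs}$. For the decomposition \eqref{decom}, however, you take a genuinely different route. The paper splits $G(\bullet,y)$ \emph{spatially} into its restrictions to $\IB(y,r/2)$ and to the complement: the far part is bounded by the pointwise estimate $G(x,y)\leq C\varrho(x,y)^{-1}$ from Theorem \ref{nonp} c), and the near part is shown to be square integrable by transplanting $\int_{\IB^{\IR^3}}|x|^{-2}\Id x<\infty$ through a Euclidean coordinate system with accuracy $2$ (Lemma \ref{ddvc}). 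You instead split \emph{in time} at $t=1$, bound $G_2$ by $2c_1$ pointwise, and compute $\|G_1(\bullet,y)\|_2^2=\int_0^1\int_0^1 p(t+s,y,y)\,\Id s\,\Id t\leq c_1\int_0^1\int_0^1(t+s)^{-3/2}\Id s\,\Id t<\infty$ via symmetry and Chapman--Kolmogorov. Your version buys two things: the $L^2$- and $L^\infty$-norms of the two pieces are uniform in $y$ (the paper's near-part bound depends on the local geometry around $y$ through $r_{\mathrm{Eucl}}(y,2)$ and $\sup\sqrt{\det g}$), and you avoid invoking both the Green's function upper bound of Theorem \ref{nonp} c) and the Euclidean-radius machinery, using only the on-diagonal hypothesis. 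The paper's version, on the other hand, makes the local $|x|^{-2}$-type singularity of the Coulomb potential explicit, which is the physically instructive picture. Both arguments are complete; the only step worth flagging in yours is the interchange of integrals in the $G_1$ computation, which is justified by Tonelli since everything is nonnegative.
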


\begin{proof} Pick an $r<r_{\mathrm{Eucl}}(y,2)$ and a Euclidean coordinate system 
$$
\phi: \IB(y,r)\longrightarrow U\subset \IR^3
$$
with accuracy $2$. We write
$$
G(x,y)=1_{\IB(y,r/2)}G(x,y)+1_{M\setminus \IB(y,r/2)}G(x,y),\>x\in M.
$$
By Theorem \ref{nonp}, we have $G(x,y)\leq C \varrho^{-1}(x,y)$ for all $x\in M\setminus \{y\}$. It follows that $x\mapsto 1_{M\setminus \IB(y,r/2)}G(x,y)$ is bounded on $M$, and using Lemma \ref{ddvc} b) we have
\begin{align*}
&\int_{\IB(y,r/2)}G(x,y)^2\Id \mu(x)\leq C^2\int_{\IB(y,r/2)}\varrho(x,y)^{-2}\Id \mu(x)\\
&=C^2\int_{\IB(y,r/2)}\varrho(x,y)^{-2}\sqrt{\det (g_{ij})(x)} \Id x\\
&\leq 2C^2 \cdot\sup_{ \overline{\IB(y,r/2)}}\sqrt{\det (g_{ij})}\cdot\int_{\IB^{\IR^m}(0,\sqrt{2}r)}|x|^{-2} \Id x<\infty.
\end{align*}
This finishes the proof of (\ref{decom}). In order to see (\ref{fflo}), we use the Chapman-Kolmogorov identity and (\ref{miniself}), which leads to
\begin{align*}
&\int^{\infty}_0 \mathrm{e}^{-rs}\int_M p(s,x,z) G(z,y) \Id\mu(z) \Id  s\\
&= \int^{\infty}_0 \int^{\infty}_0 \mathrm{e}^{-rs} p(s+t,x,y) \Id t \ \Id  s\nn\\
&\leq  c_1 \int^{\infty}_0 \mathrm{e}^{-rs} \int^{\infty}_0 (s+t)^{-\f{3}{2}} \Id t \  \Id  s= c_1\int^{\infty}_0 \mathrm{e}^{-rs} \int^{\infty}_s  t^{-\f{3}{2}} \Id t \ \Id  s\nn\\
&\leq c_1 A\rq{}\int^{\infty}_0 \mathrm{e}^{-rs}  s^{-\f{1}{2}}  \Id  s=c_1A\rq{}\rq{}\int^{\infty}_0 \mathrm{e}^{-rs^2}   \Id  s  =: \f{c_1A}{\sqrt{r}},
\end{align*}
which finishes the proof.
\end{proof}

Note that, in view of Lemma \ref{char}, the bound (\ref{fflo}) also provides a direct proof of $G(\bullet,y)\in \kat(M)$.

\chapter{Foundations of covariant Schrödinger semigroups}\label{C7}

\section{Notation and preleminaries}

The following definitions will be very convenient in the sequel:

\begin{Definition} a) Let $E\to M$ be a smooth metric $\IC$-vector bundle. Then a Borel section $V:M\to\mathrm{End}(E)$ in $\mathrm{End}(E)\to M$ is called a \emph{potential} on $E\to M$, if one has $V(x)=V(x)^*$ for all $x\in M$. Here, $V(x)^*$ denotes the adjoint of the finite-dimensional linear operator\footnote{In case $E=M\times\IC^\ell\to M$ is a trivial vector bundle, then under the usual identifications $V$ is nothing but a pointwise self-adjoint map $V:M\to\mathrm{Mat}(\IC;\ell\times \ell)$.} $V(x):E_x\to E_x$ with respect to the fixed metric on $E\to M$.\\
b) Any triple $(E,\nabla,V)$ with $E\to M$ a smooth metric $\IC$-vector bundle, $\nabla$ a smooth metric covariant derivative on $E\to M$, and $V$ a potential on $E\to M$ will be written as $(E,\nabla,V)\to M$ and called a \emph{covariant Schrödinger bundle} over $M$.
\end{Definition}

Every covariant Schrödinger bundle $(E,\nabla,V)\to M$ gives rise to the symmetric sesquilinear form $\Q_V$ in $\Gamma_{\IL^2}(M,E)$ given by
\begin{align*}
&\dom(\Q_V)=\Big\{f\in \Gamma_{\IL^2}(M,E): \int_M |(Vf,f)|\Id \mu<\infty\Big\}\\
&Q_V(f_1,f_2)=\int_M (Vf_1,f_2) \Id\mu.
\end{align*}
Note that potentials which agree $\mu$-a.e. give rise to the same sesqulinear form. If $|V|\in\IL^1_{\loc}(M)$, then $Q_V$ is densely defined, for then the domain contains $\Gamma_{\ICC_{\c}}(M,E)$. Furthermore, if $V\geq C$ for some $C\in\IR$, in the sense that for all $x\in M$ all eigenvalues of $V(x)$ are bounded from below by $C$, then $Q_V$ is closed: This follows from the lower-semicontinuity characterization of the closedness of a semibounded sesquilinear form (cf. Appendix \ref{selff}) and Fatou's lemma.

\begin{Definition} Let $(E,\nabla,V)\to M$ be a covariant Schrödinger bundle, and assume that $V$ admits a decomposition 
$V=V_+-V_-$ into potentials $V_{\pm}$ on $E\to M$ with $V_{\pm}\geq 0$, such that $|V_+|\in\IL^1_{\loc}(M)$ and $\Q_{V_-}$ is $\Q^{\nabla}$-bounded with bound\footnote{See the Definition \ref{bounded} in the appendix.} $<1$. In this case, let $\H^{\nabla}_V$ denote the semibounded self-adjoint operator in $\Gamma_{\IL^2}(M,E)$ which corresponds to the closed, symmetric, semibounded, densely defined sesquilinear form\footnote{See Theorem \ref{kaq2} in the appendix.} 
$$
\Q^{\nabla}_V:= \Q^{\nabla}+\Q_V=\Q^{\nabla}+\Q_{V_+}-\Q_{V_-}.
$$
The operator $\H^{\nabla}_V$ is called the \emph{covariant Schrödinger operator} induced by $(E,\nabla,V)\to M$, and 
$$
\left(\mathrm{e}^{-t \H^{\nabla}_V}\right)_{t\geq 0}\subset \ILL\left(\Gamma_{\IL^2}(M,E)\right)
$$
is called its \emph{covariant Schrödinger semigroup}.
\end{Definition}

Note that the (obviously symmetric) form $\Q^{\nabla}_V$ indeed has the asserted properties: Firstly, it is densely defined, for by definition we have
\begin{align*}
\dom(\Q^{\nabla}_V)&:=\dom(\Q^{\nabla})\cap \dom(\Q_{V})\\
&=
\dom(\Q^{\nabla})\cap \dom(\Q_{V_+})\cap\dom(\Q_{V_-}),
\end{align*}
and by the assumption on $V_-$, the above set is 
$$
=\dom(\Q^{\nabla})\cap \dom(\Q_{V_+}).
$$
Thus, since $V_+$ is assumed to be locally integrable, it follows that 
$$
\Gamma_{\ICC_{\c}}(M,E)\subset\dom(\Q^{\nabla}_V).
$$
Secondly, the form $\Q^{\nabla}-\Q_{V_-}$ is closed and semibounded by the KLMN theorem (cf. appendix, Theorem \ref{klmn}), and $Q^{\nabla}_V$ is the sum of that form and the closed nonnegative form $Q_{V_+}$. Therefore, $Q^{\nabla}_V$ is again semibounded and closed.\vspace{1mm}

Somewhat more explicitly, we have
\begin{align}\label{ddas}
\dom(\Q^{\nabla}_V)=\Gamma_{ W^{1,2}_{\nabla,0}}(M,E)\cap \dom(\Q_{V_+}),
\end{align}
with
\begin{align*}
\Q^{\nabla}_V(f_1,f_1)&=\Q^{\nabla}(f_1,f_2)+\Q_{V_+}(f_1,f_2)-\Q_{V_-}(f_1,f_2)\\
&=(1/2)\int_M (\nabla f_1,\nabla f_2)\Id \mu+\int_M (V_+ f_1, f_2)\Id \mu-\int_M (V_- f_1,  f_2)\Id \mu,
\end{align*}
and by an abstract functional analytic fact (cf. the second property in Theorem \ref{kaq2} in the appendix), it holds that $\dom(\H^{\nabla}_V)$ is precisely the space of $f\in\dom(\Q^{\nabla}_V)$ for which there exists $h\in\Gamma_{\IL^2}(M,E)$ such that for all $\psi\in\dom(\Q^{\nabla}_V)$ one has
\begin{align}\label{frf}
\left\langle h, \psi\right\rangle=\Q^{\nabla}_V(f,\psi),\>\text{ and then }\>\H^{\nabla}_Vf=h.
\end{align}
\vspace{1.2mm}

We recall that for every fixed $f\in\Gamma_{\IL^2}(M,E)$, the path 
$$
[0,\infty)\ni t\longmapsto \mathrm{e}^{-t \H^{\nabla}_V}f\in \Gamma_{\IL^2}(M,E)
$$
is the uniquely determined continuous map
$$
[0,\infty)\longrightarrow \Gamma_{\IL^2}(M,E)
$$
which is (norm) $C^1$ in $(0,\infty)$ with values in $\dom(H^{\nabla}_V)$, and which satisfies the abstract heat equation
$$
(\Id/\Id t )\mathrm{e}^{-t \H^{\nabla}_V}f=-H^{\nabla}_V \mathrm{e}^{-t \H^{\nabla}_V}f,\quad t>0,
$$
subject to the initial condition $\mathrm{e}^{-t \H^{\nabla}_V}f|_{t=0} = f$.

\begin{Definition} Let $E\to M$ be a smooth metric $\IC$-vector bundle. A potential $V$ on $E\to M$ is called \emph{contractively Dynkin decomposable} or in short \emph{$\dyn$-decomposable} (respectively \emph{Kato decomposable} or in short \emph{$\kat$-decomposable}), if there are potentials $V_{\pm}$ on $E\to M$ with $V_{\pm}\geq 0$ such that $V=V_+-V_-$, $|V_+|\in\IL^1_{\loc}(M)$, and $|V_-|\in\dyn(M)$ (respectively $|V_-|\in\kat(M)$). In this case, $V=V_+-V_-$ is called a \emph{contractive Dynkin decomposition} or in short a \emph{$\dyn$-decomposition} of $V$ (respectively a \emph{Kato decomposition} or in short a \emph{$\kat$-decomposition} of $V$).
\end{Definition}

We remark that any $\dyn$-decomposable potential is automatically locally integrable by Lemma \ref{goof}, and any $\kat$-decomposable potential is trivially $\dyn$-decomposable. Furthermore, using a previously established result on first order Sobolev spaces, we immediately get: 

\begin{Lemma}\label{well} Let $(E,\nabla,V)\to M$ be a covariant Schrödinger bundle with $V$ being $\dyn$-decomposable. Then for every $\dyn$-decomposition $V=V_+-V_-$, the form $\Q_{V_-}$ is $\Q^{\nabla}$-bounded with bound $<1$. In particular, $H^{\nabla}_V$ is well-defined. If $V$ is even $\kat$-decomposable with $\kat$-decomposition $V=V_+-V_-$, then $\Q_{V_-}$ is infinitesimally $\Q^{\nabla}$-bounded.
\end{Lemma}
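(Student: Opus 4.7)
The plan is to reduce the vector bundle form-boundedness of $Q_{V_-}$ relative to $Q^\nabla$ to the scalar form-boundedness of $Q_{|V_-|}$ relative to $Q$, and then invoke the scalar estimate of Lemma \ref{k2} combined with the characterization in Lemma \ref{char}.

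First, I would fix a Dynkin decomposition $V=V_+-V_-$ and an $f\in\dom(Q^\nabla)=\Gamma_{W^{1,2}_{\nabla,0}}(M,E)$. Using $V_-(x)\geq 0$ and the definition of the fiberwise operator norm, one has the pointwise bound $(V_-(x)f(x),f(x))\leq |V_-(x)|\,|f(x)|^2$, hence
$$
Q_{V_-}(f,f)\;\leq\;\int_M |V_-|\,|f|^2\,d\mu\;=\;\bigl\|\sqrt{|V_-|}\,|f|\bigr\|_2^2.
$$
This reduces matters to controlling the scalar quantity on the right in terms of $Q^\nabla(f,f)$ and $\|f\|_2^2$.

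Next, the crucial regularity input is Lemma \ref{bett2}.b, which says that $|f|\in W^{1,2}_0(M)=\dom(Q)$ with the pointwise-a.e. inequality $|d|f||\leq |\nabla f|$, so $2\,Q(|f|,|f|)=\|d|f|\|_2^2\leq \|\nabla f\|_2^2=2\,Q^\nabla(f,f)$. With $w:=|V_-|\in\dyn(M)$, I then apply Lemma \ref{k2} to the scalar function $|f|$: for every $r>0$,
$$
\bigl\|\sqrt{|V_-|}\,|f|\bigr\|_2^2\;\leq\;\tfrac{C_r(|V_-|)}{2}\,\|d|f|\|_2^2+C_r(|V_-|)\,r\,\||f|\|_2^2,
$$
where $C_r(|V_-|)=\sup_{x\in M}\int_0^\infty e^{-rs}\int_M p(s,x,y)|V_-(y)|\,d\mu(y)\,ds$. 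Combining the last two displays yields
$$
Q_{V_-}(f,f)\;\leq\;C_r(|V_-|)\,Q^\nabla(f,f)+r\,C_r(|V_-|)\,\|f\|_2^2
$$
for every $r>0$.

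Finally, I invoke Lemma \ref{char}: the assumption $|V_-|\in\dyn(M)$ provides some $r_0>0$ with $C_{r_0}(|V_-|)<1$, which gives a relative $Q^\nabla$-bound strictly less than $1$, as required. If moreover $|V_-|\in\kat(M)$, Lemma \ref{char}.b gives $\lim_{r\to\infty}C_r(|V_-|)=0$, so $C_r(|V_-|)$ can be made arbitrarily small by choosing $r$ large, which is precisely infinitesimal $Q^\nabla$-boundedness. The only genuinely nontrivial step is the transfer $|\nabla f|\geq |d|f||$ from smooth sections to the full form domain $\Gamma_{W^{1,2}_{\nabla,0}}(M,E)$; this is already settled by Lemma \ref{bett2}.b, so the rest of the argument is a clean combination of the scalar Lemma \ref{k2} with the resolvent-type characterization of Lemma \ref{char}.
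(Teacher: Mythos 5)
Your proposal is correct and follows essentially the same route as the paper's proof, which likewise combines the scalar estimate of Lemma \ref{k2} (via the characterizations in Lemma \ref{char}) with the regularity statement $|f|\in\dom(Q)$ and $Q(|f|,|f|)\leq Q^{\nabla}(f,f)$ from Lemma \ref{bett2}~b). You have merely spelled out the details that the paper leaves to the reader.
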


\begin{proof} This follows immediately from combining the corresponding scalar result (\ref{wiwaldi}) with Lemma \ref{bett2} b): Indeed, translating the estimate (\ref{absch}) into the present situation implies that for every $f\in \dom(Q^{\nabla})$, one has $|f|\in \dom(Q)$ and $Q^{\nabla}(f,f)\geq Q(|f|,|f|)$. 
\end{proof}

Being equipped with the scalar result from Lemma \ref{goof}, the main ingredient of the above proof was to establish the inequality $Q^{\nabla}(f,f)\geq Q(|f|,|f|)$ for all $f\in \dom(Q^{\nabla})$. This inequality can be established with completely different methods, too: One can use a covariant Feynman-Kac formula for $\mathrm{e}^{-tH^{\nabla}}$ to establish the bound
$$
\left\langle \mathrm{e}^{-tH^{\nabla}}f,f \right\rangle\leq \left\langle \mathrm{e}^{-tH}|f|,|f| \right\rangle\quad f\in\Gamma_{\IL^2}(M,E),\>\quad t\geq 0, 
$$
which by an abstract functional analytic fact (cf. appendix, Theorem \ref{kaq2}) immediately implies $Q^{\nabla}(f,f)\geq Q(|f|,|f|)$ for all $f\in \dom(Q^{\nabla})$. This is the path that we followed in \cite{G1}, where Lemma \ref{well} stems from. Since there is no ad hoc way to establish a covariant Feynman-Kac formula for $\mathrm{e}^{-tH^{\nabla}}$, however, we believe that Lemma \ref{bett2} b) provides a more elementary approach.

\section[Scalar Schrödinger semigroups and the FK-formula]{Scalar Schrödinger semigroups and the Feynman-Kac (FK) formula}

\section{Kato-Simon inequality}\label{katos}

\chapter{Compactness of $V(H^{\nabla}+1)^{-1}$}\label{aiosys}

\chapter{$\IL^q$-properties of covariant Schrödinger semigroups}\label{C9}

\chapter[Continuity properties of covariant Schrödinger semigroups]{Continuity properties of covariant Schrödinger semigroups}\label{C10}

\chapter[Integral kernels]{Integral kernels for covariant Schrödinger semigroups}

\chapter[Essential self-adjointness]{Essential self-adjointness of covariant Schrödinger operators}\label{estt}

\chapter{Smooth compactly supported sections as form core}\label{C12}

\chapter{Applications}\label{C13}

\section{Hamilton operators corresponding to magnetic fields}\label{exit}

\section{Dirac operators}

\section[Hydrogen-type stability problems on $3$-folds]{Nonrelativistic hydrogen-type stability problems on Riemannian $3$-folds}

\section{Riemannian total variation}\label{tota}

\section{$\mathscr{C}$-positivity preservation}

\appendix

%\chapter{Vector bundles and principal bundles}

\chapter{Smooth manifolds and vector bundles}\label{difftop}

In this section, we recall some basic facts about smooth manifolds, closely following \cite{lee}. In the sequel, all manifolds are understood to be without boundary, unless otherwise stated. Let $m\in \IN_{\geq 1}$ and let $X$ be a (topological) $m$-manifold. By definition, this means that $X$ is a second countable Hausdorff space which locally looks like $\IR^m$, in the sense that for every $x\in X$ there exists on open neighbourhood $U$ of $x$, an open subset $V\subset \IR^m$ and a homeomorphism 
$$
\varphi=(x^1,\dots,x^m):U\longrightarrow V.
$$
Such a map $\varphi$ is called a \emph{chart} or a \emph{coordinate system} for $X$. Alternatively, in the above situation one calls $X$ a manifold and calls $m$ the \emph{dimension} of $X$ and writes $\dim(X)=m$, noting that this number is uniquely determined (cf. Theorem 1.2 in \cite{lee}). It then follows that $X$ is locally compact and metrizable (and therefore paracompact).\\
There is no danger in denoting a chart for $X$ as above by $(\varphi,U)$ or simply by its component functions $((x^1,\dots,x^m), U)$. Given another chart $(\varphi\rq{},U\rq{})$ for $X$, the charts $(\varphi,U)$ and $(\varphi\rq{},U\rq{})$ are called \emph{smoothly compatible}, if either $U\cap U\rq{} =\emptyset$ or if the transition map
$$
\varphi\rq{}\circ \varphi^{-1}: \varphi (U\cap U\rq{})\longrightarrow \varphi\rq{} (U\cap U\rq{}) 
$$
is smooth. \vspace{1mm}

A \emph{smoothly compatible atlas} for $X$ is a collection $\mathcal{A}=(\varphi_{\alpha},U_{\alpha})_{\alpha\in A}$ of charts for $X$, such that $(U_{\alpha})_{\alpha}$ is a cover of $X$ and such that for all $\alpha,\alpha\rq{}\in A$ the charts $(\varphi_{\alpha},U_{\alpha})$ and $(\varphi_{\alpha\rq{}},U_{\alpha\rq{}})$ are smoothly compatible.  \vspace{1mm}
A \emph{smooth structure} on $X$ is a smoothly compatible atlas $\mathcal{A}$ for $X$, such that for every chart $(\varphi,U)$ for $X$ which is smoothly compatible with every element of $\mathcal{A}$, one already has  $(\varphi,U)\in\mathcal{A}$. (Of course such a structure does not necessarily need to exist at all.) It follows easily that every smoothly compatible atlas $\mathcal{A}$ for $X$ is contained in a unique smooth structure on $X$ (cf. Proposition 1.17 in \cite{lee}), which will be called the \emph{smooth structure induced by $\mathcal{A}$}.  \vspace{1mm}

A \emph{smooth $m$-manifold}, or a \emph{smooth manifold of dimension $m$, $\dim(X)=m$}, is defined to be a pair $(X,\mathcal{A})$ given by an $m$-manifold $X$ and a smooth structure $\mathcal{A}$ on $X$. Whenever there is no danger of confusion, one omits the smooth structure in the notation and simply calls $X$ a smooth $m$-manifold in the above situation.  \\
When one says that $(\varphi,U)$ is a \emph{smooth chart} for the smooth $m$-manifold $X$, this means that $(\varphi,U)$ is contained in the underlying smooth structure. Likewise, when one says that $\mathcal{A}$ is a \emph{smooth atlas} for the smooth $m$-manifold $X$, this means that $\mathcal{A}$ is contained in the underlying smooth structure on $X$. Let us give some standard examples in order to show how these definitions work:

\begin{Examplee} 1. Consider $X:=\IR^m$ with its standard Euclidean topology. Then the single map $\mathcal{A}_{\IR^m}:=\{(\mathrm{id}_{\IR^m}, \IR^m)\}$ clearly is a smoothly compatible atlas for $X$. The induced smooth structure is called \emph{standard smooth structure on $\IR^m$}. If nothing else is said, $\IR^m$ will always be equipped with this smooth structure. The system of open unit balls $\{(\mathrm{id}_{B_1(x)},B_1(x)): x\in \IR^m\}$ also induces the standard smooth structure on $\IR^m$, while for example the single map $\{(\phi,\IR^1 )\}$ with $\phi(x)=x^3$ induces a smooth structure which is different from the standard smooth structure on $\IR^1$ (for $\mathrm{id}_{\IR^m}\circ \phi^{-1}(x)=x^{1/3}$ is not smooth at $x=0$).\\
2. Consider the standard $m$-sphere $\mathbb{S}^m\subset\IR^{m+1}$ with its subspace topology, and with $N,S\in \mathbb{S}^m$ the north pole and southpole. Let 
$$
\phi_{N}:\mathbb{S}^m\setminus \{N\}\longrightarrow \IR^m,\quad \phi_{S}:\mathbb{S}^m\setminus \{S\}\longrightarrow \IR^m
$$
denote the stereographic projection maps. Then the smoothly compatible atlas
$$
\mathcal{A}_{\mathbb{S}^m}:=\{(\phi_{N},\mathbb{S}^m\setminus \{N\}),(\phi_{S},\mathbb{S}^m\setminus \{S\})\}
$$
induces the \emph{standard smooth structure on $\mathbb{S}^m$}.\\
3. Every $m$-dimensional $\IR$-linear space canonically becomes a smooth $m$-manifold (cf. Example 1.24 in \cite{lee}): Picking a basis $f_1,\dots,f_m\in V$, the isomorphism 
$$
\phi: V\longrightarrow \IR^m, \quad \sum^m_{j=1} x^j f_j\longmapsto (x^1,\dots, x^m)
$$
induces a smooth structure.\\
4. By the implicit function theorem, level sets of regular smooth maps $\phi:U\to \IR$, where $U\subset \IR^m$ is open, canonically become smooth $(m-1)$-manifolds (cf. Example 1.32 in \cite{lee}).\\
5. Graphs of smooth maps $\phi:U\to\IR^k$, where $U\subset \IR^m$ is open, canonically become smooth $m$-manifolds (cf. Example 1.32 in \cite{lee}).
\end{Examplee}

It follows that every smooth $m$-manifold admits a smooth countable and locally finite atlas such that each chart domain is relatively compact. Using topological dimension theory, one finds that every smooth $m$-manifold admits a \emph{finite} smooth atlas (cf. p. 43 in \cite{greub}), where of course the underlying chart domains cannot be chosen to be relatively compact in general. \vspace{1mm}

The following result is very useful, for example, in order to obtain new smooth manifolds from old ones (cf. Lemma 1.3.5 in \cite{lee}):

\begin{Propositione}\label{aspoqqpo} Assume that $Y$ is a set and that, for some index set $B$, 
$$
\psi_{\beta}:W_{\beta} \longrightarrow \tilde{W}_{\beta}, \quad  \beta\in B,
$$
is a collection of bijective maps such that 
\begin{itemize} \item $W_{\beta}$ is a subset of $Y$ and $\tilde{W}_{\beta}$ is an open subset of $\IR^m$ for all $\beta\in B$
\item $\psi_{\beta}(W_{\beta}\cap W_{\alpha})$ and $\psi_{\alpha}(W_{\beta}\cap W_{\alpha})$ are open in $\IR^m$ for all $\beta,\alpha\in B$
\item for all $\beta,\alpha\in B$ with $W_{\beta}\cap W_{\alpha}\neq \emptyset$, the map
$$
\psi_{\alpha}\circ \psi_{\beta}^{-1}:\psi_{\beta}(W_{\beta}\cap W_{\alpha})\longrightarrow \psi_{\alpha}(W_{\beta}\cap W_{\alpha})
$$
is smooth
\item countably many $W_{\beta}$\rq{}s cover $X$
\item whenever $x$ and $y$ are two distinct points in $Y$, either there exists some $W_{\beta}$ containing $x$ and $y$, or there exist two disjoint sets $W_{\alpha}$, $W_{\beta}$ with $x\in W_{\alpha}$ and $y\in  W_{\beta}$. 
\end{itemize}
Then there exists a unique topology on $Y$ making it an $m$-manifold and a unique smooth structure on $Y$ such that $((\psi_{\beta},W_{\beta}))_{\beta\in B}$ becomes a smooth atlas for $Y$.
\end{Propositione}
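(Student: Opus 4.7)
The plan is to construct the topology on $Y$ from the charts, verify the defining properties of an $m$-manifold, and then observe that the hypotheses directly produce a smooth atlas whose induced smooth structure is the unique one making each $\psi_\beta$ a smooth chart. Concretely, I would declare a subset $U\subset Y$ to be open if and only if $\psi_\beta(U\cap W_\beta)$ is open in $\mathbb{R}^m$ for every $\beta\in B$; equivalently, I would take as a basis the collection
\[
\mathcal{B}:=\bigl\{\psi_\beta^{-1}(V):\beta\in B,\ V\subset\tilde W_\beta\text{ open in }\mathbb{R}^m\bigr\}.
\]
The basis axioms are immediate from the fact that $\psi_\beta(W_\beta\cap W_\alpha)$ and $\psi_\alpha(W_\beta\cap W_\alpha)$ are open and the transition maps are homeomorphisms (being smooth with smooth inverses, viewed as maps between open subsets of $\mathbb{R}^m$): given $y\in \psi_\beta^{-1}(V)\cap \psi_\alpha^{-1}(V')$, one forms a smaller basis set inside $W_\alpha\cap W_\beta$ by pulling $V\cap V'$ back through the transition map.

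Next I would check the $m$-manifold axioms. Each $W_\beta$ is open (it is $\psi_\beta^{-1}(\tilde W_\beta)\in\mathcal{B}$), and by construction $\psi_\beta:W_\beta\to\tilde W_\beta$ is a homeomorphism, giving the required local Euclidean property. Second countability follows by taking a countable subcover $(W_{\beta_n})_{n\in\mathbb{N}}$ from hypothesis~(iv) and pulling back a countable basis of each $\tilde W_{\beta_n}$; the union of these countable families is a countable basis for $Y$. The Hausdorff property follows from hypothesis~(v): two distinct points $x,y$ either lie in a common $W_\beta$, in which case $\psi_\beta$ separates them (because $\mathbb{R}^m$ is Hausdorff), or they lie in disjoint $W_\alpha$, $W_\beta$ which are themselves open and disjoint neighbourhoods.

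Once the topology is in place, the smooth atlas $\mathcal{A}:=\{(\psi_\beta,W_\beta)\}_{\beta\in B}$ is smoothly compatible by hypothesis~(iii), so it induces a unique smooth structure on $Y$ in the sense already discussed in the excerpt. For uniqueness: any topology $\tau$ on $Y$ making $Y$ a manifold with each $\psi_\beta$ a chart must render every $\psi_\beta$ a homeomorphism, and therefore $\mathcal{B}\subset\tau$; conversely, since $\{W_\beta\}_{\beta\in B}$ covers $Y$, any $\tau$-open set $U$ satisfies $U=\bigcup_\beta(U\cap W_\beta)$ with each intersection forced to lie in $\mathcal{B}$, so $\tau$ coincides with the topology defined above. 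Uniqueness of the smooth structure is then the general fact that a smoothly compatible atlas is contained in a unique maximal one.

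The main obstacle I anticipate is verifying that the prescribed topology is well-defined and that each $\psi_\beta$ is a homeomorphism onto $\tilde W_\beta$; the technical point is to show that $W_\beta$ is open in $Y$ and that $\psi_\beta$ maps open sets to open sets, which requires the openness assumption on $\psi_\alpha(W_\alpha\cap W_\beta)$ in hypothesis~(ii) together with the smoothness of the transition maps to propagate openness across overlapping charts. Second countability is also a point of care, since the index set $B$ may be uncountable; the countable subcover from~(iv) is what rescues the argument, and any basis construction must be organized so as to use only countably many charts.
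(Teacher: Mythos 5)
Your proof is correct and follows the standard argument for the smooth manifold chart lemma (the paper itself gives no proof here, deferring to Lemma 1.35 in Lee's book, whose proof is exactly the construction you describe: generate the topology by the sets $\psi_\beta^{-1}(V)$, verify the basis axioms via the transition maps, get Hausdorffness from the separation hypothesis and second countability from the countable subcover, and let the smoothly compatible atlas determine the unique smooth structure). You also correctly isolate the one genuinely technical point — that each $\psi_\beta$ is open, which uses the openness of $\psi_\alpha(W_\alpha\cap W_\beta)$ together with the transition maps being diffeomorphisms — so nothing is missing.
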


From this result, it follows that an open subset $U$ of a smooth $m$-manifold $X$ canonically becomes a smooth $m$-manifold itself: Indeed, let $(\varphi_{\alpha},U_{\alpha})_{\alpha\in A}$ be a smooth atlas for $X$. Then we may apply Proposition \ref{aspoqqpo} with  
\begin{align*}
&B:=A,\quad
W_{\alpha}:=U\cap U_{\alpha},\quad \tilde{W}_{\beta}:=\varphi_{\alpha}(U\cap U_{\alpha}),\\
&\psi_{\alpha}(x):=\varphi_{\alpha},\quad x\in U\cap U_{\alpha}.
\end{align*}

Furthermore, if in addition $X\rq{}$ is a smooth
 $m\rq{}$-manifold, then $X\times X\rq{}$ canonically becomes a smooth $(m+m\rq{})$-manifold: Let $(\varphi_{\alpha\rq{}}\rq{},U_{\alpha\rq{}}\rq{})_{\alpha\rq{}\in A\rq{}}$ be a smooth atlas for $X\rq{}$. Then we may apply Proposition \ref{aspoqqpo} with 
\begin{align*}
&B:=A\times A\rq{},\quad
W_{(\alpha,\alpha\rq{})}:= U_{\alpha}\times U_{\alpha\rq{}}\rq{} ,\quad \tilde{W}_{(\alpha,\alpha\rq{})}:=\varphi_{\alpha}\times\varphi_{\alpha\rq{}}\rq{}(U_{\alpha}\times U_{\alpha\rq{}}\rq{} ),\\
&\psi_{(\alpha,\alpha\rq{})}(x,x\rq{}):=(\varphi_{\alpha}(x),\varphi_{\alpha\rq{}}\rq{}(x\rq{})), \quad (x,x\rq{})\in U_{\alpha}\times U_{\alpha\rq{}}\rq{}.
\end{align*}
 
For example, the torus 
$$
\mathbb{T}^m=\underbrace{\mathbb{S}^1\times\cdots\times \mathbb{S}^1}_{\text{$m$-times}}
$$
 canonically becomes a smooth $m$-manifold this way.\\
For the rest of this section, let $X$ be a smooth $m$-dimensional manifold. Given another smooth
 $m\rq{}$-manifold $X\rq{}$, a map $f:X\to X\rq{}$ is called \emph{smooth}, if for every $x\in X$ there exists a chart $(\varphi,U)$ for $X$ with $x\in U$ and a chart $(\varphi\rq{},U\rq{})$ for $X\rq{}$ with $f(U)\subset  U\rq{}$ such that the map
$$
 \varphi(U)\longrightarrow \varphi\rq{}(U\rq{}),\quad x\longmapsto \varphi\rq{}\circ f\circ \varphi^{-1}(x)
$$
is smooth. This is obviously a local property which implies continuity. One writes $C^{\infty}(X,X\rq{})$ for the set of smooth functions from $X$ to $X\rq{}$. A map $f\in C^{\infty}(X,X\rq{})$ is called a \emph{smooth diffeomorphism}, if it is bijective such that $f^{-1}$ is smooth, too. The space $C^{\infty}(X,\IK)$ becomes an (associative and commutative) $\IK$-algebra under the pointwise defined multiplication and addition for $\IK=\IR$ and $\IK=\IC$ (identified with the smooth $2$-manifold $\IR^2$). In particular, this structure canonically induces a ring structure on $C^{\infty}(X,\IK)$, too. Note also that smooth charts are, by definition, smooth maps in the sense of the above definition.\vspace{1mm}

Given any open cover $(U_{\alpha})_{\alpha\in A}$ of $X$, a \emph{smooth partition of unity subordinate to $(U_{\alpha})_{\alpha\in A}$} will be understood to be a collection of smooth maps $(\varphi_{\alpha})_{\alpha\in A}\subset \ICC(X,\IR)$ such that
\begin{itemize}
\item for each $\alpha\in A$ one has $0\leq\phi_{\alpha}\leq 1$ and $\mathrm{supp}(\phi_{\alpha})\subset U_{\alpha}$
\item $(\supp(\phi_{\alpha}))_{\alpha\in A}$ is a locally finite collection of sets
\item for each $x\in X$ one has $\sum_{\alpha\in A}\phi_{\alpha}(x)=1$ (note that this is automatically a finite sum due to the previous assumption).
\end{itemize}

The following important result (cf. Theorem 2.23 in \cite{lee}) allows to patch certain local results to global ones:

\begin{Propositione}\label{tde}
For every open cover of $X$, there exists a smooth partition of unity subordinate to it.
\end{Propositione}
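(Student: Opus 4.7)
The plan is to proceed in three stages: build a single smooth bump function on Euclidean space, use the topology of $X$ to replace the given cover by a well-behaved locally finite refinement by chart-sized sets, and then patch together pullbacks of the bump function via a normalization trick.

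First I would construct, once and for all, a smooth function $\Phi\in\ICC(\IR^m,\IR)$ with $0\leq \Phi\leq 1$, $\Phi\equiv 1$ on the closed Euclidean ball of radius $1$ around the origin, and $\mathrm{supp}(\Phi)\subset \IB^{\IR^m}(0,2)$. This is done by the standard procedure: set $h(t):=\mathrm{e}^{-1/t}$ for $t>0$ and $h(t):=0$ for $t\leq 0$, so that $h\in\ICC(\IR,\IR)$, then form the monotone smooth cutoff $g(t):=h(t)/(h(t)+h(1-t))$, and finally put $\Phi(x):=g(2-|x|)$.

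Next, given an open cover $(U_{\alpha})_{\alpha\in A}$ of $X$, I would use second countability, local compactness, and Hausdorffness of $X$ to produce a countable locally finite refinement with some extra structure. Concretely, I would pick a countable exhaustion of $X$ by precompact open sets $K_1\Subset K_2\Subset \cdots$ with $\bigcup_n K_n=X$, and for each point $x\in X$ choose a smooth chart $(\varphi_x,W_x)$ centered at $x$ (i.e.\ with $\varphi_x(x)=0$) such that $W_x$ is precompact, $\varphi_x(W_x)\supset \IB^{\IR^m}(0,2)$, and $W_x\subset U_{\alpha(x)}$ for some $\alpha(x)\in A$; shrinking $W_x$ further, I may assume each $W_x$ meets only finitely many differences $K_{n+1}\setminus \overline{K_{n-1}}$. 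By extracting for each annulus $\overline{K_{n+1}}\setminus K_n$ a finite subcover of these $W_x$'s, I obtain a countable locally finite family $(W_i,\varphi_i)_{i\in\IN}$ still covering $X$, with $W_i\subset U_{\alpha(i)}$, and such that each $W_i$ contains the precompact set $V_i:=\varphi_i^{-1}(\IB^{\IR^m}(0,1))$, with $(V_i)_{i\in\IN}$ still covering $X$ (since the original $W_x$'s with this property do).

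Now I define the main building blocks by transport: set $\psi_i:=(\Phi\circ\varphi_i)\cdot 1_{W_i}$, extended by $0$ outside $W_i$. Each $\psi_i$ is smooth on $X$ (it is smooth on $W_i$ and vanishes on a neighbourhood of $\partial W_i$ because $\mathrm{supp}(\Phi\circ\varphi_i|_{W_i})\subset \varphi_i^{-1}(\overline{\IB^{\IR^m}(0,2)})$, which is compact in $W_i$ by construction), it satisfies $0\leq \psi_i\leq 1$, $\psi_i\equiv 1$ on $V_i$, and $\mathrm{supp}(\psi_i)\subset W_i\subset U_{\alpha(i)}$. Since the family $(\mathrm{supp}(\psi_i))_i\subset (\overline{W_i})_i$ is locally finite, the sum $\Psi:=\sum_i\psi_i$ is a locally finite, hence smooth, function on $X$; and because $(V_i)$ covers $X$ and $\psi_i\equiv 1$ on $V_i$, we have $\Psi\geq 1$ everywhere. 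Then $\tilde\psi_i:=\psi_i/\Psi\in\ICC(X,\IR)$ is a smooth partition of unity subordinate to $(W_i)$, and finally
\[
\phi_{\alpha}:=\sum_{i:\alpha(i)=\alpha}\tilde\psi_i\quad (\alpha\in A),
\]
with the empty sum interpreted as $0$, yields a smooth partition of unity subordinate to $(U_{\alpha})_{\alpha\in A}$, the local finiteness of $(\mathrm{supp}(\phi_{\alpha}))_{\alpha}$ following from that of $(\mathrm{supp}(\tilde\psi_i))_i$.

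The main obstacle is really the topological preparation in the middle step: one has to choose the refinement $(W_i,\varphi_i)$ carefully enough that the shrunk sets $V_i$ still cover $X$ (so that $\Psi\geq 1$) and that the family is locally finite (so that $\Psi$ is actually smooth and the final relabelling sum is pointwise finite). Once this combinatorial bookkeeping — essentially a version of the shrinking lemma carried out simultaneously with the chart construction — is in place, the construction of $\Phi$ and the transport-and-normalize argument are entirely routine.
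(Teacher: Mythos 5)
Your argument is the standard construction --- a bump function on $\IR^m$, a countable locally finite refinement by chart-sized sets obtained from a compact exhaustion, then transport and normalization --- and this is precisely the proof of Theorem 2.23 in Lee's book, which is what the paper cites for this proposition. So the approach is the intended one and is correct in outline.

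Two small quantitative slips should be repaired. First, with $\Phi(x)=g(2-|x|)$ one has $\{\Phi>0\}=\IB^{\IR^m}(0,2)$ and hence $\mathrm{supp}(\Phi)=\overline{\IB^{\IR^m}(0,2)}$, which is \emph{not} contained in the open ball of radius $2$; since you only required $\varphi_i(W_i)\supset \IB^{\IR^m}(0,2)$, the set $\varphi_i^{-1}\big(\mathrm{supp}(\Phi)\cap\varphi_i(W_i)\big)$ need not be compactly contained in $W_i$, and the extension of $\Phi\circ\varphi_i$ by zero could then fail to be continuous at boundary points of $W_i$. Replace $\Phi$ by, say, $x\mapsto g(3-2|x|)$, whose support is the closed ball of radius $3/2$, or else demand $\varphi_i(W_i)\supset\IB^{\IR^m}(0,3)$. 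Second, the finite subcover of each compact annulus $\overline{K_{n+1}}\setminus K_n$ must be extracted from the \emph{shrunken} sets $V_x=\varphi_x^{-1}(\IB^{\IR^m}(0,1))$ --- which do cover, since each $x$ lies in its own $V_x$ --- and not from the $W_x$'s: a finite subcover by the $W_x$'s gives no guarantee that the corresponding $V_i$'s still cover $X$, and it is exactly the covering property of the $V_i$'s that yields $\Psi\geq 1$ and makes the normalization legitimate. Both fixes are one-liners and do not affect the structure of your proof.
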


Let us now turn to vector bundles, referring the reader, e.g., to \cite{koba} for a detailed study of the subject.\\
A smooth surjective map $\pi:E \rightarrow X$ from a smooth manifold $E$ to $X$ is called a \emph{smooth $\IK$-vector bundle over $X$ with rank $\ell$} (where $\IK=\IC, \IR$), if 
\begin{itemize}
 \item each fiber $E_x:=\pi^{-1}(\{x\})$ is an $\ell$-dimensional $\IK$-vector space
\item  for each $x_0\in X$ there is an open neighbourhood $U\subset X$ of $x_0$ which admits a smooth frame  
 $e_1, \dots, e_\ell:U\to E$.\footnote{The statement that the $e_j$'s form a frame for $E\to X$ means that each $e_j$ is a map such that $e_j(x)\in E_x$ and $e_1(x),\ldots,e_\ell(x)$ is a basis for $E_x$, for every $x\in U$.} 
\end{itemize}
Alternatively, in the above situation one calls $\pi:E\to X$ a smooth vector bundle, and the number $\mathrm{rank}(E):=\ell$ is called the \emph{rank of $\pi:E\to X$}. Note that $\dim(E)=m+\rank(E)$. We will usually ommit the map $\pi$ in the notation and simply denote the vector bundle by $E\to X$.\vspace{1mm}

 The simplest example of a smooth $\IK$-vector bundle over $X$ of rank $\ell$ is provided by the \emph{trivial vector bundle of rank $\ell$}: Here, one sets
$$
E:=X\times \IK^{\ell}\longrightarrow X, \quad \pi(x,v):=x.
$$
In this case, the fibers are given by the $\IK$-linear spaces $E_x=\{x\}\times \IK^{\ell}$, and a globally defined smooth frame is thus given by $e_j(x):=(x,e_j)$, $j=1,\dots,\ell$, where the $e_j$\rq{}s are the standard basis of $\IK^{\ell}$. As we will see later on, the tangent space of $X$ provides another example (which is not trivial in the sense that in general one does not have a globally defined smooth frame).\vspace{1mm}

Let $E\to X$ be an arbitrary smooth $\IK$-vector bundle of rank $k$. A \emph{section $f$ of $ E\to X$ over a subset $U\subset X$} is nothing but a map $f:U\to E$ such that $f(x)\in E_x$ for all $x$. Spaces of sections of $E\to X$ over $U$ are denoted by $\Gamma_{*}(U,E)$, where for example $*$ can stand for $\ICC$ (smooth), $C$ (continuous), $\ICC_{\c}$ (smooth and compactly supported\footnote{Since each fiber $E_x$ has its zero vector, there is an obvious way to say that a continuous section $f:X\to E$ has a compact support.}), and so on. Note that $\Gamma_{\ICC}(U,E)$ is a (left-) $\ICC(U,\IK )$-module via the pointwise defined operations 
\begin{align*}
&(\psi_1+\psi_2)(x):= \psi_1(x)+\psi_2(x),\\
&(f\psi_1)(x):=f(x)\psi_1(x),\quad \psi_1,\psi_2\in \Gamma_{\ICC}(U,E),\quad f\in \ICC(U,\IK ) .
\end{align*}

Let $F \rightarrow X$ be a smooth $\IK$-vector bundle with rank $l$. A \emph{morphism of smooth $\IK$-vector bundles} $f:E\to F $ \emph{over $X$} is a smooth map such that 
$$
f(x):E_x\longrightarrow F_x\quad\text{ $\IK$-linearly, for all $x \in X$.} 
$$
Such an $f$ is called an \emph{isomorphism of smooth $\IK$-vector bundles over $X$}, if it is bijective and $f^{-1}$ is a morphism of smooth $\IK$-vector bundles, too. \\
In the spirit of Proposition \ref{aspoqqpo}, there is a recipe for the construction of vector bundles:

\begin{Propositione}\label{vb} Assume that $\tilde{E}=\bigcup_{x\in X} \tilde{E}_x$ is a disjoint family of $\tilde{k}$-dimensional $\IK$-linear spaces and that $(W_{\alpha})_{\alpha\in B}$ is an open cover of $X$ such that 
\begin{itemize}
\item for every $\alpha \in B$ there exist maps
\begin{align}\label{rahmen}
\tilde{e}^{(\alpha)}_1, \dots, \tilde{e}_{\tilde{k}}^{(\alpha)}:W_{\alpha}\longrightarrow  \tilde{E}\quad\text{ with $\tilde{e}^{(\alpha)}_j(x)\in \tilde{E}_x$ for all $x\in W_{\alpha}$  }
\end{align}
which are pointwise linearly independent
\item for every $\alpha,\beta\in B$ with $W_{\alpha}\cap W_{\beta}\neq \emptyset$ the transition map
$$
W_{\alpha}\cap W_{\beta}\longrightarrow GL(\IK,\tilde{k}),\quad x\longmapsto T_{\alpha,\beta}(x)
$$
is smooth, where $T_{\alpha,\beta}(x)=(T_{\alpha,\beta}(x))_{ij}$ denotes the invertible $\tilde{k}\times \tilde{k}$ matrix which is defined by the change of base  
$$
\{\tilde{e}^{(\alpha)}_1(x), \dots, \tilde{e}_{\tilde{k}}^{(\alpha)}(x)\}\leadsto \{\tilde{e}^{(\beta)}_1(x), \dots, \tilde{e}_{\tilde{k}}^{(\beta)}(x)\}.
$$   
\end{itemize}
Then there is a unique topology on $\tilde{E}$ such that $\tilde{E}$ becomes an $(m+\tilde{k})$-manifold, and there is a unique smooth structure on $\tilde{E}$ such that the canonically given surjective map $\tilde{E}\to X$ becomes a smooth $\IK$-vector bundle over $X$ with rank $\tilde{k}$ in such a way that the maps (\ref{rahmen}) are smooth frames.
\end{Propositione}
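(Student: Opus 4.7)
The plan is to reduce the construction to Proposition \ref{aspoqqpo} applied to the total space $\tilde E$, equipped with the obvious surjection $\pi:\tilde E\to X$ defined by $\pi(v)=x$ whenever $v\in\tilde E_x$. First I would refine the given data to a countable cover $(W_{\alpha})_{\alpha\in B'}$ of $X$ such that every $W_{\alpha}$ is simultaneously the domain of a smooth chart $\varphi_{\alpha}:W_{\alpha}\to \varphi_{\alpha}(W_{\alpha})\subset \IR^m$ of $X$ and carries a pointwise linearly independent system $\tilde e^{(\alpha)}_1,\dots,\tilde e^{(\alpha)}_{\tilde k}$ as in the hypothesis. This is achieved by intersecting a countable smooth atlas of $X$ with the original cover; the frames and the transition matrices $T_{\alpha,\beta}$ restrict, so the smoothness assumptions carry over to the refinement. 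Then I would define bijections
$$
\psi_{\alpha}:\pi^{-1}(W_{\alpha})\longrightarrow \varphi_{\alpha}(W_{\alpha})\times \IK^{\tilde k},\quad \sum_{j=1}^{\tilde k} c_j\,\tilde e^{(\alpha)}_j(x)\longmapsto \big(\varphi_{\alpha}(x),c_1,\dots,c_{\tilde k}\big).
$$

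The heart of the proof is to verify the five hypotheses of Proposition \ref{aspoqqpo} for $Y:=\tilde E$, the sets $\pi^{-1}(W_{\alpha})$, the images $\varphi_{\alpha}(W_{\alpha})\times \IK^{\tilde k}\subset \IR^{m+\tilde k}$, and the bijections $\psi_{\alpha}$. Each $\psi_{\alpha}$ is bijective because $\tilde e^{(\alpha)}_1(x),\dots,\tilde e^{(\alpha)}_{\tilde k}(x)$ is a basis of $\tilde E_x$ for each $x\in W_{\alpha}$. For indices with $W_{\alpha}\cap W_{\beta}\ne\emptyset$, both $\psi_{\alpha}(\pi^{-1}(W_{\alpha})\cap\pi^{-1}(W_{\beta}))$ and its $\beta$-counterpart equal the open product $\varphi_{\ast}(W_{\alpha}\cap W_{\beta})\times \IK^{\tilde k}\subset \IR^{m+\tilde k}$. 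A direct computation shows
$$
\psi_{\beta}\circ \psi_{\alpha}^{-1}(\xi,c)=\big(\varphi_{\beta}\circ \varphi_{\alpha}^{-1}(\xi),\,T_{\alpha,\beta}(\varphi_{\alpha}^{-1}(\xi))\cdot c\big),
$$
which is smooth as a composition of smooth maps (chart transitions of $X$ and, by assumption, the frame transition maps $T_{\alpha,\beta}$). Countably many $\pi^{-1}(W_{\alpha})$ cover $\tilde E$ since the refinement was chosen countable; the separation property splits into two cases: if $\pi(v)=\pi(w)$ for $v\ne w$, then both lie in some single $\pi^{-1}(W_{\alpha})$ which maps via $\psi_{\alpha}$ into a Hausdorff Euclidean space, while if $\pi(v)\ne\pi(w)$, then one separates $\pi(v)$ and $\pi(w)$ by disjoint $W_{\alpha},W_{\beta}$ from the refined cover, whose preimages under $\pi$ are then disjoint.

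Proposition \ref{aspoqqpo} now yields a unique $(m+\tilde k)$-manifold topology and smooth structure on $\tilde E$ for which $\big((\psi_{\alpha},\pi^{-1}(W_{\alpha}))\big)_{\alpha}$ is a smooth atlas. The remaining verifications are routine: the projection $\pi$ reads in the charts $(\psi_{\alpha},\varphi_{\alpha})$ as the coordinate projection $(\xi,c)\mapsto \xi$, so it is smooth and surjective; the $\IK$-linear structure on each fiber $\tilde E_x$ is compatible with the bundle structure because $\psi_{\alpha}|_{\tilde E_x}$ is a $\IK$-linear isomorphism onto $\{\varphi_{\alpha}(x)\}\times \IK^{\tilde k}$; and each $\tilde e^{(\alpha)}_j$ is smooth with $\psi_{\alpha}\circ \tilde e^{(\alpha)}_j(x)=(\varphi_{\alpha}(x),e_j)$, so the $\tilde e^{(\alpha)}_j$ are smooth frames. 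Uniqueness follows because any topology and smooth structure on $\tilde E$ turning $\pi:\tilde E\to X$ into a smooth vector bundle with the prescribed smooth frames forces each $\psi_{\alpha}$ to be a diffeomorphism onto its image, hence induces the same atlas up to smooth compatibility.

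The main obstacle is a purely bookkeeping one: aligning the given cover of $X$ with a smooth atlas so that every set in the refinement simultaneously supports a chart and a local frame. Once this alignment is fixed, the two pieces of smooth data — the coordinate changes $\varphi_{\beta}\circ\varphi_{\alpha}^{-1}$ on the base and the matrix transitions $T_{\alpha,\beta}$ in the fiber — combine without obstruction into the smoothness of the total-space chart transitions, and everything else is a straightforward application of Proposition \ref{aspoqqpo}.
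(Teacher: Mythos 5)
Your overall strategy---pulling the charts of $X$ back through the local frames and feeding the resulting collection into Proposition \ref{aspoqqpo}---is exactly the intended route (the paper states this proposition without proof, remarking only that it is ``in the spirit of Proposition \ref{aspoqqpo}''), and your transition formula, countability check, and uniqueness argument are fine. There is, however, a genuine gap in your verification of the last hypothesis of Proposition \ref{aspoqqpo}, the separation condition. When $\pi(v)\neq\pi(w)$ you claim that $\pi(v)$ and $\pi(w)$ can be separated by two \emph{disjoint members of the refined cover}. A countable cover obtained by intersecting an atlas with the $W_{\alpha}$'s and passing to a countable subfamily has no reason to contain such a disjoint pair: cover $\mathbb{S}^{1}$ by two overlapping arcs, each of which is simultaneously a chart domain and a frame domain, and take $x$ in the first arc only and $y$ in the second arc only---these two points lie neither in a common member of the cover nor in two disjoint members. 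So the family of charts you hand to Proposition \ref{aspoqqpo} need not satisfy its hypotheses, and the step fails as written.

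The repair is standard and cheap. Either (i) enlarge the family of candidate charts to all restrictions $\psi_{\alpha}|_{\pi^{-1}(V)}$ with $V$ an arbitrary open subset of a chart-and-frame domain: Proposition \ref{aspoqqpo} only requires that \emph{countably many} of the sets cover $Y$, not that the whole family be countable, and now two points in distinct fibers are separated by $\pi^{-1}(V)$ and $\pi^{-1}(V')$ with $V\ni\pi(v)$, $V'\ni\pi(w)$ disjoint open sets supplied by the Hausdorffness of $X$; or (ii) take your refinement to be a countable \emph{basis} of the topology of $X$ consisting of chart-and-frame domains (discard from a countable basis every element not contained in some $W_{\alpha}\cap U_{n}$), for which the separation property becomes automatic. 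With either repair the remainder of your argument goes through unchanged. (A cosmetic point you inherit from the paper: for $\IK=\IC$ the total space is a real manifold of dimension $m+2\tilde{k}$; the paper's convention $\dim(E)=m+\mathrm{rank}(E)$ glosses over the same issue.)
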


Again, this result allows to construct new vector bundles from old ones: Let $(\varphi_{\alpha}, U_{\alpha})_{\alpha\in A}$ be an open cover of $X$ such that for every $\alpha\in A$ there exist frames
\begin{align*}
e^{(\alpha)}_1, \dots, e_k^{(\alpha)}:U_{\alpha}\longrightarrow  E,\quad f^{(\alpha)}_1, \dots, f_{l}^{(\alpha) }:U_{\alpha}\longrightarrow  F,
\end{align*}
a situation that can always be achieved. For example, the \emph{dual bundle}
$$
E^*= \bigcup_{x\in X}E_x^*\longrightarrow X
$$
becomes a smooth $\IK$-vector bundle of rank $k$ by setting 
\begin{align*}
\tilde{E_x}=E^*_x,\quad B:=A,\quad  W_{\alpha}=U_{\alpha},\quad \tilde{e}^{(\alpha)}_j:= (e^{(\alpha)}_j)^* \quad\text{(dual basis)}.
\end{align*}

In a complete analogy, the following smooth $\IK$-vector bundles are constructed\footnote{Of course, all algebraic operations are understood over $\IK$ here.}:
\begin{align*}
&E\oplus F=\bigcup_{x\in X}E_x\oplus F_x\longrightarrow X\quad\text{(Whitney sum),}\\
& E\otimes F=\bigcup_{x\in X}E_x\otimes F_x\longrightarrow X\quad\text{(tensor product bundle),}\\
&E\odot E=\bigcup_{x\in X}E_x\odot E_x\longrightarrow X\quad\text{(symmetric tensor product bundle),}\\
& E\wedge E=\bigcup_{x\in X}E_x\wedge E_x\longrightarrow X\quad\text{(antisymmetric tensor product bundle),}\\
&E^* \boxtimes E=\bigcup_{(x,y)\in X\times X} E_x^* \boxtimes E_y=\bigcup_{x\in X}\mathrm{Hom}(E_y,E_x) \longrightarrow X\times X.
\end{align*}

We will use the standard notation
\begin{align*}
&\wedge^j E=\underbrace{E\wedge \dots \wedge E}_{\text{$j$-times}}\longrightarrow X\quad\text{($j$-fold exterior product bundle),}\\
&\wedge E =\bigoplus^{\ell}_{j=1}\wedge^k E\longrightarrow X\quad\text{(exterior algebra bundle)}.
\end{align*}
Note that the smooth $\IK$-vector bundle
$$
\mathrm{End}(E)=\bigcup_{x\in X}\mathrm{End}(E_x)\longrightarrow X
$$
of endomorphisms on $E\to X$ is well-defined by the previous constructions, since 
$$
E_x\otimes E^*_x=\mathrm{End}(E_x)\quad\text{ for all $x\in X$.}
$$
Note also that Proposition \ref{vb} can be used to define the complexification 
$$
E_{\IC}=\bigcup_{x\in X} E_x\otimes_{\IR} \IC\longrightarrow X\text{ if $\IK=\IR$.}
$$
Serre-Swan's theorem \cite{serre} states that for every smooth $\IK$-vector bundle over $X$ there exists another smooth $\IK$-vector bundle over $X$ such that their Whitney sum is isomorphic to some trivial smooth vector bundle of the form $X\times \IK^r\to X$.\vspace{1mm} 

Another application of Proposition \ref{vb} is to provide a simple construction of the \emph{tangent bundle} $TX\to X$ of $X$. This prototype of a smooth vector bundle is defined as follows: For every $x\in X$ define $T_x X$ to be the $\IR$-linear space of $\IR$-derivations of $C^{\infty}(X,\IR)$ at $x$. In other words, $T_x X$ is given by all $\IR$-linear maps $A:C^{\infty}(X,\IR)\to \IR$ which satisfy the Leibniz rule
$$
A(f_1f_2)=f_2(x)Af_1+f_1(x)Af_2,\quad\text{ for all $f_1,f_2\in C^{\infty}(X,\IR)$.}
$$

Then $T_x X$ becomes an $m$-dimensional $\IR$-linear space. In fact, if 
$$
\varphi=(x^1,\dots,x^m): U\longrightarrow \tilde{U}
$$ 
is a smooth chart for $X$ with $x\in U$, then a basis of a $T_xX$ is given by the derivations at $x$ defined by 
\begin{align}\label{andddd}
C^{\infty}(X,\IR)\ni f\longmapsto \frac{\partial f}{\partial x^{j}}(x):=\partial_j(f\circ \varphi^{-1})(x)\in\IR\quad j=1,\dots, m.
\end{align}

For example, a smooth curve $\gamma: I\to M$ (with $I\subset \IR$ an interval) defines for each $t\in I$ an element $\dot{\gamma}(t)\in T_{\gamma(t)}X$ by means of
$$
C^{\infty}(X,\IR)\ni f\longmapsto \dot{\gamma}(t)f:= (d/dt )f(\gamma(t))\in \IR.
$$

Then the tangent bundle of $X$ is defined as follows:
$$
TX:=\bigcup_{x\in X} T_x X\longrightarrow X.
$$
In view of (\ref{andddd}), if one picks a smooth atlas for $X$, one can apply Proposition \ref{vb} in the obvious way to conclude that $TX\to X$ is a smooth $\IR$-vector bundle over $X$ with rank $m$, so that the maps 
$$
\frac{\partial  }{\partial x^{j}}:U\to TX
$$
induced by (\ref{andddd}) become frames. Given another smooth $m'$-manifold $X'$ and a smooth map $\Psi:X\to X'$, the tangent map
$$
T\Psi:TX\longrightarrow TX'
$$
is the uniquely determined smooth map such that for all $x\in X$ one has  
$$
T\Psi(x):TX\longrightarrow TX'\quad\text{$\IR$-linearly}
$$
and for $A\in T_xM$, $T\Psi(x)A$ is the derivation at $\Psi(x)$ given by $f\mapsto A(f\circ \Psi)$. Such an $\Psi$ is called a \emph{smooth embedding}, if $\Psi$ is an injective homeomorphism onto its image such that $T\Psi(x)$ has a full rank for all $x\in M$. By Whitney\rq{}s embedding theorem (cf. Theorem 6.15 in \cite{lee} for a detailed proof), $X$ can be smoothly embedded as a closed subset into $\IR^{2m+1}$.\vspace{1mm}

Using the previous construction of dual bundles, we can immediately define the smooth $\IR$-vector bundle 
$$
\IT^* X:= (TX)^*\longrightarrow X
$$
of rank $m$, the \emph{cotangent bundle} of $X$. The sections of these (and their induced) bundles are of distinguished importance. We set
\begin{align*}
%&\wedge^{k} X:= \wedge^{k}\IT^*_{\IC} X,\quad \wedge X:= \wedge\IT^*_{\IC} X ,\\
%
&\Omega^{k}_{\ICC}(X):=\Gamma_{\ICC}\left(X,\wedge^{k}_{\IC} T^* X\right),\\
&\Omega_{\ICC}(X):=\Omega^1_{\ICC}(X)\oplus\cdots\oplus \Omega^{m}_{\ICC}(X),\\
&\Omega^0_{\ICC}(X):=\ICC(X):=\ICC(X,\IC),
\end{align*}
where 
$$
\wedge^{k}_{\IC} T^* X=\wedge^{k} T^*_{\IC} X\longrightarrow X
$$
denotes the complexification of $\wedge^{k} T^* X\to X$. (Note that this operation commutes with $\wedge^{k}$.) For example, $\Omega^{k}_{\ICC}(X)$ is the complex linear space of \emph{complex-valued} smooth differential forms on $X$. An analogous notation will be used for other local or global regularity classes of sections of these bundles, for example 
% where the underlying real-valued spaces of differential forms will be denoted by
%\begin{align*}
%&\Omega^{k}_{\ICC_{\IR}}(X):=\Gamma_{\ICC}(X,\wedge^{k}\IT^*  X),\\
%&\Omega_{\ICC_{\IR}}(X):=\Omega^0_{\ICC_{\IR}}(X)\oplus\cdots\oplus \Omega^{\dim(X)}_{\ICC_{\IR}}(X),\\
%&\Omega^0_{\ICC_{\IR}}(X):=\ICC_{\IR}(X):=\{\text{smooth functions $f:X\to\IR$}\}.
%\end{align*}
$$
\Omega^k_{\ICC_{\c}}( X)= \Gamma_{\ICC_{\c}}(X, \wedge^{k}_{\IC} T^* X)
$$
will denote the smooth \emph{compactly supported} complex-valued differential forms on $X$.\vspace{1mm}

Finally, the \emph{real-linear} space of \emph{smooth} vector fields on $X$ will be denoted by 
$$
\mathscr{X}_{C^{\infty}} (X):=\Gamma_{C^{\infty}}(X,TX).
 $$
Let us denote by $\mathrm{Der}(C^{\infty}(X,\IR))$ the $C^{\infty}(X,\IR)$-module\footnote{This is meant under the pointwise defined operations.} of derivations on the $\IR$-algebra $C^{\infty}(X,\IR)$, that is, a $\IR$-linear map 
$$
D:C^{\infty}(X,\IR)\longrightarrow C^{\infty}(X,\IR)
$$
is in $\mathrm{Der}(C^{\infty}(X,\IR))$, if and only if
$$
D(f_1f_2)=f_1D(f_2)+f_2D(f_2)\quad\text{ for all $f_1,f_2\in C^{\infty}(X,\IR)$.}
$$
Every vector field $A\in \mathscr{X}_{C^{\infty}} (X)$ induces the derivation 
$$
D_A\in  \mathrm{Der}(C^{\infty}(X,\IR)), \quad D_Af(x):=A(x)f(x). 
$$
In fact, the assignment
$$
\mathscr{X}_{C^{\infty}} (X)\longrightarrow \mathrm{Der}(C^{\infty}(X,\IR)), \quad A\longmapsto D_A
$$
is an isomorphism of $C^{\infty}(X,\IR)$-modules (cf. Theorem 5.6.3 in \cite{indian}).

\chapter{Facts about self-adjoint operators}\label{selff}

\section{Self-adjoint operators and the spectral calculus}

For the convenience of the reader, we collect some facts about unbounded linear operators in Hilbert spaces here. For a detailed discussion of the below results, we refer the reader to \cite{weidmann1,weidmann2,kato1}.\vspace{1.2mm}

Let $\IHH$ be a complex separable Hilbert space. The underlying scalar product, which is assumed to be antilinear in its first slot, will be simply denoted by $\left\langle \bullet,\bullet \right\rangle$, and the induced norm (as well as the induced operator norm) is denoted by $\left\|\bullet\right\|$. The linear space of bounded  linear operators $\IHH\to \IHH$ is denoted by $\ILL(\IHH)$. If nothing else is said, convergence in $\IHH$ is understood to be norm convergence. Given a linear operator $S$ in $\IHH$, we denote by $\dom(S)\subset \IHH$ its domain, by $\mathrm{Ran}(S)\subset \IHH$ its range, and by $\mathrm{Ker}(S)\subset\IHH$ its kernel. \vspace{2mm}

In the sequel, let $S$ and $T$ be arbitrary linear operators in $\IHH$. 

\subsection{Basic definitions}

Firstly, $T$ is called \emph{an extension of $S$} (symbolically $S\subset T$), if $\dom(S)\subset \dom(T)$ and $Sf=Tf$ for all $f \in \dom(S)$.\vspace{1mm}

In case $S$ is densely defined, the \emph{adjoint} $S^*$ of $S$ is defined as follows: $\dom(S^*)$ is given by all $f\in \IHH$ for which there exists  $f^*\in\IHH$ such that
$$
\left\langle f^*,h\right\rangle=\left\langle f,Sh\right\rangle\quad\text{ for all $h\in\dom(S)$,}
$$
and then $S^*f:=f^*$. A densely defined $S$ is called \emph{symmetric}, if $S\subset S^*$; \emph{self-adjoint}, if $S=S^*$; and \emph{normal}, if $\dom(S)=\dom(S^*)$ and $\left\|Sf\right\|=\left\|S^*f\right\|$ for all $f\in\dom(S)$. Clearly, self-adjoint operators are symmetric and normal.\vspace{1mm}

The operator $S$ is called \emph{semibounded} (from below), if there exists a constant $C\geq 0$ such that for all $f\in\dom(S)$ one has 
\begin{align}\label{semib}
\left\langle Sf,f \right\rangle\geq -C\left\|f\right\|^2,
\end{align}
or in short: $S\geq -C$. Since $\IHH$ is assumed to be complex, semibounded operators are automatically symmetric (by complex polarization).\vspace{1mm}

$S$ is called \emph{closed}, if whenever $(f_n)\subset\dom(S)$ is a sequence such that $f_n\to f$ for some $f\in \IHH$ and $Sf_n\to h$ for some $h\in \IHH$, then one has $f\in\dom(S)$ and $Sf=h$. \vspace{1mm}

$S$ is called \emph{closable}, if it has a closed extension. In this case, $S$ has a smallest closed extension $\overline{S}$, which is \emph{called the closure of $S$.} The closure $\overline{S}$ is determined as follows: $\dom(\overline{S})$ is given by all $f\in\IHH$ for which there exists a sequence $(f_n)\subset \dom(S)$ such that $f_n\to f$ and such that $(Sf_n)$ converges, and then $\overline{S}f:=\lim_n S f_n$. \\
Since adjoints of densely defined operators are closed, it follows that symmetric operators are closable and that self-adjoint operators are closed. \\
If $S$ is densely defined and closable, then $S^*$ is densely defined and $S^{**}=\overline{S}$. We also record the following result (cf. Proposition 3.11 in \cite{bei2} for a complete proof), which seems to be well-known in the context of Hilbert complexes \cite{brulesch}:

\begin{Propositione}\label{comu2} Let $S$ be a densely defined and closed operator with $\mathrm{Ran}(S)\subset \dom(S)$ and $S^2=0$. Then $S+S^*$ is self-adjoint on its natural domain $\dom(S^*+S)=\dom(S)\cap \dom(S^*)$.
\end{Propositione}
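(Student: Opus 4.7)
The plan is to exploit the nilpotency hypothesis to produce a Hodge-type orthogonal splitting of $\IHH$ under which $D:=S+S^*$ becomes block off-diagonal, and then to reduce self-adjointness to the well-known fact that for any densely defined closed operator $T$ between two Hilbert spaces the block operator
$$
\begin{pmatrix} 0 & T \\ T^* & 0\end{pmatrix}
$$
on its natural domain $\dom(T^*)\oplus\dom(T)$ is self-adjoint. The first step is to transfer the nilpotency $S^2=0$ (an honest operator identity thanks to $\mathrm{Ran}(S)\subset\dom(S)$) to the adjoint in the form $\mathrm{Ran}(S^*)\subset\mathrm{Ker}(S^*)$: for $g\in\dom(S^*)$ and any $f\in\dom(S)$, the range condition yields $Sf\in\dom(S)$ with $S(Sf)=0$, whence $\langle Sf,S^*g\rangle=\langle S^2 f,g\rangle=0$; this shows $S^*g\perp\mathrm{Ran}(S)$, and the standard identity $\mathrm{Ran}(S)^\perp=\mathrm{Ker}(S^*)$ places $S^*g$ in $\mathrm{Ker}(S^*)\subset\dom(S^*)$.

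Combining $\mathrm{Ran}(S)\subset\mathrm{Ker}(S)$ and $\mathrm{Ran}(S^*)\subset\mathrm{Ker}(S^*)$ with the standard orthogonal decompositions $\IHH=\overline{\mathrm{Ran}(S)}\oplus\mathrm{Ker}(S^*)=\mathrm{Ker}(S)\oplus\overline{\mathrm{Ran}(S^*)}$ yields the three-fold splitting
$$
\IHH=V_1\oplus V_2\oplus V_0,\quad V_1:=\overline{\mathrm{Ran}(S)},\ V_2:=\overline{\mathrm{Ran}(S^*)},\ V_0:=\mathrm{Ker}(S)\cap\mathrm{Ker}(S^*),
$$
in which $V_1\subset\mathrm{Ker}(S)\subset\dom(S)$, $V_2\subset\mathrm{Ker}(S^*)\subset\dom(S^*)$, and $V_0\subset\dom(S)\cap\dom(S^*)$. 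The key structural point is that $\dom(D):=\dom(S)\cap\dom(S^*)$ splits compatibly: if $u=u_1+u_2+u_0\in\dom(D)$ with $u_i\in V_i$, then $u_1,u_0\in\dom(S)$ and $u_2,u_0\in\dom(S^*)$ by the inclusions above, and subtracting gives $u_2=u-u_1-u_0\in\dom(S)$ and $u_1=u-u_2-u_0\in\dom(S^*)$, so each $u_i$ individually lies in $\dom(D)$; using $Su_1=Su_0=0$ and $S^*u_2=S^*u_0=0$ one then obtains $Du=S^*u_1+Su_2\in V_2\oplus V_1$.

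Finally I would set $T$ to be the restriction of $S$ to $\dom(T):=\dom(S)\cap V_2$, regarded as an operator $V_2\to V_1$, and check that $T$ is densely defined, closed, and has adjoint equal to the restriction of $S^*$ to $\dom(S^*)\cap V_1$, seen as an operator into $V_2$. Closedness is immediate from closedness of $S$ and of $V_2$; density of $\dom(T)$ in $V_2$ uses the standard von Neumann fact that $\dom(SS^*)$ is a core for $S^*$, which forces $S^*(\dom(SS^*))\subset V_2\cap\dom(S)=\dom(T)$ to be dense in $\overline{\mathrm{Ran}(S^*)}=V_2$; the identification of $T^*$ is a short pairing computation using the analogue of the above splitting argument applied to $\dom(S)$ alone (to write an arbitrary $v\in\dom(S)$ as a sum of three pieces in $V_i\cap\dom(S)$) together with the orthogonality of the $V_i$. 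Under the decomposition $D$ then coincides with the orthogonal sum of the zero operator on $V_0$ and $\begin{pmatrix}0&T\\T^*&0\end{pmatrix}$ on $V_1\oplus V_2$, which is self-adjoint on $\dom(T^*)\oplus\dom(T)$ by the block-matrix lemma, giving self-adjointness of $D$ on $\dom(S)\cap\dom(S^*)$. The main obstacle I expect is the domain-splitting step---namely, that the orthogonal projections onto the $V_i$ preserve both $\dom(S)$ and $\dom(S^*)$; everything else (nilpotency of $S^*$, the three-fold decomposition, density and closedness of $T$, and the block-matrix lemma) reduces to short direct computations or standard closed-operator theory.
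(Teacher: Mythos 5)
Your argument is correct and complete. The paper itself does not prove Proposition \ref{comu2} but defers to Proposition 3.11 in \cite{bei2} (the statement being standard in the theory of Hilbert complexes \cite{brulesch}), so there is no in-text proof to compare against; your route is the classical one for that setting. The two pillars of your proof both check out: (i) the transfer of nilpotency to the adjoint, $\mathrm{Ran}(S^*)\subset\mathrm{Ker}(S^*)$, via $\langle Sf,S^*g\rangle=0$ and $\mathrm{Ran}(S)^{\perp}=\mathrm{Ker}(S^*)$; and (ii) the compatibility of the splitting $\IHH=\overline{\mathrm{Ran}(S)}\oplus\overline{\mathrm{Ran}(S^*)}\oplus(\mathrm{Ker}(S)\cap\mathrm{Ker}(S^*))$ with $\dom(S)$, $\dom(S^*)$ and their intersection, which your subtraction trick establishes cleanly (this also disposes of the "main obstacle" you flag at the end: the projections onto the $V_i$ do preserve both domains, precisely by that argument). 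The reduction to the off-diagonal block operator then invokes exactly the same supercharge self-adjointness fact that the paper itself uses in the proof of Proposition \ref{comu} (Lemma 5.3 in \cite{thaller}), so nothing external is needed. Two minor simplifications you could make: the density of $\dom(T)=\dom(S)\cap V_2$ in $V_2$ follows already from the continuity of the projection $P_{V_2}$ and the inclusion $P_{V_2}(\dom(S))\subset\dom(T)$, without appealing to von Neumann's core theorem; and once the domain splitting is in place, the identification $\dom(D)\cap V_1=\dom(T^*)$, $\dom(D)\cap V_2=\dom(T)$ is immediate, so the verification that the block operator's natural domain really is $\dom(S)\cap\dom(S^*)$ deserves the one line it takes rather than being left implicit.
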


In case $S$ is closed, a linear subspace $D\subset \dom(S)$ is called a \emph{core} of $S$, if $\overline{S|_D}=S$, and this core property is equivalent to the following property: For every $f\in \dom(S)$ there exists a sequence $(f_n)\subset \dom(S)$ such that $(f_n)$ converges to $f$ in the graph norm of $S$, that is,
$$
\left\|f_n-f\right\|+\left\|Sf_n-Sf\right\|\to 0.
$$
If $T$ is symmetric, then $T$ is called \emph{essentially self-adjoint}, if $\overline{T}$ is self-adjoint. It follows immediately from the definitions that if $T$ is symmetric and $ S$ is self-adjoint with $T\subset S$, then $T$ is essentially self-adjoint if and only if $\dom(S)$ is a core of $T$.\vspace{1mm}

We record: 

\begin{Theoreme}\label{defect} Assume that $S$ is semibounded with $S\geq -C$ for some constant $C\geq 0$. Then $S$ is essentially self-adjoint, if and only if there exists $z\in \IC\setminus [-C,\infty)$ such that $\mathrm{Ker}((S-z)^*)=\{0\}$.
\end{Theoreme}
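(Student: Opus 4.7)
The plan is to exploit the closed-range property that semibounded symmetric operators acquire when shifted away from their spectrum. Throughout, I will use that $S$ closable implies $\overline{S}^{*} = S^{*}$ and $(S-z)^{*} = S^{*} - \bar z$, that $(\overline{S})^{*} = S^{*}$, and that the closure preserves the lower bound $\overline{S}\geq -C$ (by continuity of $\langle S f_n,f_n\rangle\geq -C\|f_n\|^2$).

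The key technical input is a lower bound: for $z\in\mathbb{C}\setminus[-C,\infty)$ and $f\in\dom(\overline{S})$, one has $\|(\overline{S}-z)f\|\geq \delta(z)\|f\|$ with $\delta(z)>0$. For $z=a+ib$ with $b\neq 0$, this follows from $\|(\overline{S}-z)f\|^2=\|(\overline{S}-a)f\|^2+b^2\|f\|^2$, while for real $z<-C$ it follows from $\langle(\overline{S}-z)f,f\rangle\geq(-C-z)\|f\|^2$ and Cauchy–Schwarz. Consequently $\overline{S}-z$ is injective with closed range, so
\begin{align*}
\mathrm{Ran}(\overline{S}-z)=\IHH \quad\Longleftrightarrow\quad \mathrm{Ran}(\overline{S}-z)\ \text{is dense} \quad\Longleftrightarrow\quad \mathrm{Ker}((\overline{S}-z)^{*})=\{0\}.
\end{align*}
Since $(\overline{S}-z)^{*}=S^{*}-\bar z=(S-z)^{*}$, this reads: $\overline{S}-z$ maps $\dom(\overline{S})$ bijectively onto $\IHH$ iff $\mathrm{Ker}((S-z)^{*})=\{0\}$.

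For the $(\Rightarrow)$ direction, if $S$ is essentially self-adjoint then $\overline{S}=S^{*}$ is self-adjoint and bounded below by $-C$, so every $z\in\mathbb{C}\setminus[-C,\infty)$ lies in its resolvent set; in particular $\overline{S}-z$ is bijective onto $\IHH$, and the equivalence just established yields $\mathrm{Ker}((S-z)^{*})=\{0\}$. For the $(\Leftarrow)$ direction, invoke the Friedrichs realization $S_F$ of $S$ (available because $S$ is semibounded with $S\geq -C$; cf.\ Example~\ref{friedrichs}), which is a self-adjoint extension satisfying $\overline{S}\subset S_F$ and $S_F\geq -C$. Hence $S_F-z$ is bijective onto $\IHH$ for our given $z$. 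By the equivalence above, the hypothesis $\mathrm{Ker}((S-z)^{*})=\{0\}$ implies that $\overline{S}-z$ is also bijective onto $\IHH$. Given any $f\in\dom(S_F)$, choose $g\in\dom(\overline{S})$ with $(\overline{S}-z)g=(S_F-z)f$; since $\overline{S}\subset S_F$ and $S_F-z$ is injective, $f=g\in\dom(\overline{S})$. Thus $S_F=\overline{S}$ is self-adjoint.

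The main subtlety — and the only place where some care is needed — is the careful tracking of adjoints and closures together with the interplay between $z$ and $\bar z$: the statement involves $\mathrm{Ker}((S-z)^{*})$ at a single point $z$, and the reason one can get away with just one point (rather than the usual pair of deficiency subspaces) is precisely the closed-range lemma above, which is available only because $S$ is semibounded and $z$ lies outside $[-C,\infty)$.
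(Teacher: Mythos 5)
Your proof is correct. Note that the paper does not actually argue this theorem at all: its ``proof'' is a bare citation of four results in Weidmann's book, so you have supplied a genuine self-contained argument where the source merely points to the literature. Your key estimate $\|(\overline{S}-z)f\|\geq \delta(z)\|f\|$ for $z\notin[-C,\infty)$ (split into the nonreal case, using symmetry of $\overline{S}$ to kill the cross term, and the real case $z<-C$, using Cauchy--Schwarz) is exactly what makes a single test point $z$ suffice, and your handling of the ``if'' direction by sandwiching $\overline{S}\subset S_F$ and using injectivity of $S_F-z$ to force $\dom(S_F)\subset\dom(\overline{S})$ is a clean shortcut: the standard textbook route (essentially Weidmann's) instead shows that the deficiency index $\dim\mathrm{Ker}(S^*-\bar z)$ is locally constant on the connected set of points of regular type, which contains $\IC\setminus[-C,\infty)$, and then invokes the basic deficiency-index criterion. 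Your version trades that constancy argument for the existence of one distinguished self-adjoint extension (Friedrichs), which the paper has already set up in Example \ref{friedrichs}, so nothing circular is hidden there. All the operator-theoretic identities you use ($\overline{S}^*=S^*$, $(S-z)^*=S^*-\bar z$, preservation of the lower bound under closure, $\mathrm{Ran}(T)^\perp=\mathrm{Ker}(T^*)$) are applied correctly.
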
 

\begin{proof} This is a combination of Satz 5.14, Satz 10.2.a), Satz 10.3.a), Satz 10.11.b) in \cite{weidmann2}.
\end{proof}

\subsection{Spectrum and resolvent set}

The \emph{resolvent set} $\rho(S)$ is defined to be the set of all $z\in \IC$ such that $S-z$ is invertible as a linear map $\dom(S)\to \IHH$ and is in addition bounded as a linear operator from $\IHH$ to $\IHH$. If $S$ is closed and $(S-z)^{-1}$ invertible, then $(S-z)^{-1}$ is automatically bounded by the closed graph theorem. The \emph{spectrum} $\sigma(S)$ of $S$ is defined as the complement $\sigma(S):=\IC\setminus\rho(S)$. Resolvent sets of closed operators are open, therefore spectra of closed operators are always closed. \vspace{1.2mm}

A number $z\in \IC$ is called an \emph{eigenvalue of $S$}, if $\mathrm{Ker}(S-z)\ne \{0\}$. In this case, $\dim \mathrm{Ker}(S-z)$ is called the \emph{multiplicity} of $z$, and each $f\in \mathrm{Ker}(S-z)\setminus \{0\}$ is called an \emph{eigenvector of $S$ corresponding to $z$}. Of course each eigenvalue is in the spectrum. The eigenvalues of a symmetric operator are real, and the eigenvectors corresponding to different eigenvalues of a symmetric operator are orthogonal. A simple result that reflects the subtlety of the notion of a \lq\lq{}self-adjoint operator\rq\rq{} when compared to that of a\lq\lq{}symmetric operator\rq\rq{} is the following: A symmetric operator in $\IHH$ is self-adjoint, if and only if its spectrum is real. If $S$ is self-adjoint, then $S\geq -C$ for a constant $C\geq 0$ is equivalent to $\sigma(S)\subset [-C,\infty)$ (cf. Satz 8.26 in \cite{weidmann2}).

\vspace{1.2mm}

The \emph{essential spectrum} $\sigma_{\mathrm{ess}}(S)\subset\sigma(S)$ of $S$ is defined to be the set of all eigenvalues $\lambda$ of $S$ such that either $\lambda$ has an infinite multiplicity, or $\lambda$ is an accumulation point of $\sigma(S)$. Then the \emph{discrete spectrum} $\sigma_{\mathrm{dis}}(S)\subset\sigma(S)$ is defined as the complement 
$$
\sigma_{\mathrm{dis}}(S):=\sigma(S)\setminus\sigma_{\mathrm{ess}}(S).
$$
As every isolated point in the spectrum of a self-adjoint operator is an eigenvalue (cf. Folgerung 3, p. 191 in \cite{weidmann1}), it follows that in case of $S$ being self-adjoint, the set $\sigma_{\mathrm{dis}}(S)$ is precisely the set of all isolated eigenvalues of $S$ that have a finite multiplicity. 

\subsection{A result from perturbation theory}\label{swpp}

Let $\IHH\rq{}$ be another complex separable Hilbert space. We recall that given $q\in [1,\infty)$, some $K\in \ILL(\IHH, \IHH\rq{})$ is called
\begin{itemize}
\item \emph{compact}, if for every orthonormal sequence $(e_n)$ in $\IHH$ and every orthonormal sequence $(f_n)$ in $\IHH\rq{}$ one has $\left\langle Ke_n, f_n\right\rangle\to 0$ as $n\to \infty$ 
\item \emph{$q$-summable} (or an element of the \emph{$q$-th Schatten class of operators $\IHH\to \IHH\rq{}$}), if for every $(e_n)$, $(f_n)$ as above one has 
$$
\sum_n \left|\left\langle Ke_n, f_n\right\rangle\right|^q<\infty.
$$
\end{itemize}

(We refer the reader to Section 3 in \cite{weidmann2} for a detailed study of compact operators and the Schatten classes.) Let us denote the class of compact operators with $\IJJ^{\infty}(\IHH,\IHH\rq{})$ and the $q$-th Schatten class with $\IJJ^{q}(\IHH,\IHH\rq{})$, with the convention $\IJJ^{\bullet}(\IHH):=\IJJ^{\bullet}(\IHH,\IHH)$. These are linear spaces with
$$
\IJJ^{q_1}(\IHH,\IHH\rq{})\subset \IJJ^{q_2}(\IHH,\IHH\rq{})\quad\text{ for all $q_2\in [1,\infty]$, with $q_1\leq q_2$,}
$$
and one has inclusions of the type $\IJJ^{q}\circ\ILL\subset \IJJ^{q}$, $\ILL\circ\IJJ^{q}\subset \IJJ^{q}$ for all $q\in [1,\infty]$, and $\IJJ^{q_1}\circ\IJJ^{q_2}\subset \IJJ^{q_3}$ if $1/q_1+1/q_2=1/q_3$ with $q_j\in [1,\infty)$.\\
For obvious reasons, $\IJJ^1$ is called the \emph{trace class}, and moreover $\IJJ^2$ is called the \emph{Hilbert-Schmidt class}. A bounded operator on an $\IL^2$-space is Hilbert-Schmidt, if and only if it is an integral operator with a square integrable integral kernel. We record the following basic result from perturbation theory:

\begin{Theoreme}\label{comp} Assume that $S$ is self-adjoint and semibounded, that $T$ is symmetric with $\dom(S)\subset \dom(T)$ and that $T(S-z)^{-1}\in \IJJ^{\infty}(\IHH)$ for some (or equivalently, for all) $z\in \rho(S)$. Then $S+T$ is self-adjoint and semibounded on its natural domain $\dom(S)\cap\dom(T)=\dom(S)$, and moreover
$$
\sigma_{\mathrm{ess}}(S)=\sigma_{\mathrm{ess}}(S+T).
$$
\end{Theoreme}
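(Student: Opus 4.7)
The plan is to establish the result in three stages: infinitesimal relative boundedness of $T$ with respect to $S$, from which self-adjointness and semiboundedness of $S+T$ will follow via Kato-Rellich, and finally invariance of the essential spectrum via Weyl's theorem applied to the resolvent difference.

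First I would dispose of the ``for some $z$ $\Leftrightarrow$ for all $z$'' claim: if $T(S-z_0)^{-1}\in\IJJ^{\infty}(\IHH)$ for one $z_0\in\rho(S)$, then for any other $z\in\rho(S)$ the first resolvent identity gives $(S-z_0)(S-z)^{-1}=I+(z-z_0)(S-z)^{-1}\in\ILL(\IHH)$, and since $\IJJ^{\infty}(\IHH)$ is a two-sided ideal in $\ILL(\IHH)$, the factorization
$$T(S-z)^{-1}=T(S-z_0)^{-1}\,(S-z_0)(S-z)^{-1}$$
shows $T(S-z)^{-1}\in\IJJ^{\infty}(\IHH)$ as well.

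Next comes the heart of the argument: I claim $\|T(S+\lambda)^{-1}\|\to 0$ as $\lambda\to\infty$ through the reals, which via $g=(S+\lambda)f$ translates into $\|Tf\|\leq\epsilon\|Sf\|+\epsilon\lambda\|f\|$ for $f\in\dom(S)$, i.e.\ infinitesimal $S$-boundedness. Fix $\lambda_0$ large enough that $-\lambda_0\in\rho(S)$ and set $K:=T(S+\lambda_0)^{-1}\in\IJJ^{\infty}(\IHH)$. Writing $A_\lambda:=(S+\lambda_0)(S+\lambda)^{-1}=I+(\lambda_0-\lambda)(S+\lambda)^{-1}$, the spectral theorem applied to $S$ shows that $A_\lambda$ is uniformly bounded in $\lambda$ (for $\lambda$ large) and that the scalar symbols $(\mu+\lambda_0)/(\mu+\lambda)$ tend to $0$ pointwise on $\sigma(S)$; dominated convergence gives $A_\lambda\to 0$ strongly. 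Now $A_\lambda$ is self-adjoint, so its adjoint also tends to $0$ strongly; since strong convergence of a uniformly bounded family is automatically uniform on the relatively compact set $K^*(\{f:\|f\|\leq 1\})$, one concludes $A_\lambda K^*\to 0$ in operator norm, i.e.\ $T(S+\lambda)^{-1}=KA_\lambda\to 0$ in norm. With infinitesimal $S$-boundedness in hand, the Kato-Rellich theorem yields self-adjointness of $S+T$ on $\dom(S)\cap\dom(T)=\dom(S)$, and the inequality $|\langle Tf,f\rangle|\leq\|Tf\|\|f\|\leq\epsilon\langle Sf,f\rangle+C'\|f\|^2$ (absorbing $\epsilon\lambda\|f\|^2$ into a constant and using $S\geq-C$) transfers semiboundedness from $S$ to $S+T$.

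For the essential-spectrum claim, I would apply the second resolvent identity: picking any real $z<\min\sigma(S)\cup\min\sigma(S+T)$, one has
$$(S+T-z)^{-1}-(S-z)^{-1}=-(S+T-z)^{-1}\,T(S-z)^{-1},$$
which is the product of a bounded operator with a compact one, hence compact. Weyl's theorem on invariance of the essential spectrum under resolvent-compact perturbations (valid for self-adjoint operators in the sense of essential spectrum used in the paper, which coincides with the Weyl essential spectrum) then yields $\sigma_{\mathrm{ess}}(S)=\sigma_{\mathrm{ess}}(S+T)$. The main obstacle is the norm convergence $\|T(S+\lambda)^{-1}\|\to 0$: relative compactness a priori only gives a \emph{fixed} finite relative bound, and upgrading this to an infinitesimal bound genuinely requires the compactness-plus-strong-convergence trick, together with the self-adjointness (hence normality) of the resolvent that lets one pass to adjoints to place the compact operator on the appropriate side.
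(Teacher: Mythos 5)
Your argument is correct and is exactly the route the paper takes: its ``proof'' is a citation of Weidmann's S\"atze 9.7, 9.13 and 9.14, which are precisely your three steps (relative compactness upgraded to infinitesimal relative boundedness via the $KA_\lambda$ strong-convergence trick, Kato--Rellich, and Weyl's theorem for a compact resolvent difference obtained from the second resolvent identity). One caution: the inequality $\|Tf\|\,\|f\|\le\epsilon\langle Sf,f\rangle+C'\|f\|^2$ you invoke for semiboundedness is false as a general consequence of $\|Tf\|\le\epsilon\|Sf\|+\epsilon\lambda\|f\|$ --- for $S\geq 0$ with eigenvectors $e_1,e_2$ of eigenvalues $0,\mu$ and $f=e_1+\mu^{-1/2}e_2$ one has $\|Sf\|\,\|f\|\sim\sqrt{\mu}$ while $\langle Sf,f\rangle$ and $\|f\|^2$ stay bounded, so $\|Sf\|\,\|f\|$ is not controlled by the form. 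The semiboundedness of $S+T$ should instead be read off from Kato--Rellich itself, most cleanly from your own norm estimate: $\|T(S+\lambda)^{-1}\|<1$ for all large $\lambda$ makes $S+T+\lambda=(I+T(S+\lambda)^{-1})(S+\lambda)$ boundedly invertible, so $\sigma(S+T)$ omits a half-line $(-\infty,-\lambda_1]$, which for a self-adjoint operator is semiboundedness. With that repair the proof is complete.
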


\begin{proof} Combine Satz 9.7, Satz 9.13, and Satz 9.14 from \cite{weidmann2}.
\end{proof}

%Based on the latter, criterion, an important \lq\lq{}invariant-domain\rq\rq{} criterion (one of many) for essential self-adjointness has been proved by Chernoff:
%
%\begin{Theorem}\label{chernoff1} Assume that $T$ is a symmetric (and so by definition densely defined) operator in $\IHH$ which preserves $\dom(T)$, and that $(V(t))_{t\in \IR}\subset \ILL(\IHH)$ is a family of unitary operators such that each $V(t)$ preserves $\dom(T)$ and commmutes with $T$ on $\dom(T)$. Assume in addition that for all $f\in \dom(T)$ one has
%$$
%(\Id/ \Id t) V(t)f=i TV(t) f\quad\text{weakly  }
%$$
%\end{Theorem}

\subsection{Spectral calculus and the spectral theorem}

A \emph{spectral resolution} $P$ on $\IHH$ is a map $P:\IR\to \ILL(\IHH)$ such that 
\begin{itemize}
\item for every $\lambda\in\IR$ one has $P(\lambda)=P(\lambda)^*$, $P(\lambda)^2=P(\lambda)$ (that is, each $P(\lambda)$ is an orthogonal projection onto its image)
\item $P$ is monotone in the sense that $\lambda_1\leq \lambda_2$ implies $\mathrm{Ran}(P(\lambda_1))\subset \mathrm{Ran}(P(\lambda_2))$
\item $P$ is right-continuous in the strong topology of $\ILL(\IHH)$
\item $\lim_{\lambda\to-\infty}P(\lambda)=0$ and $\lim_{\lambda\to \infty}P(\lambda)=\mathrm{id}_{\IHH}$, both in the strong sense.
\end{itemize}

It follows that for every $f\in\IHH$, the function 
$$
\lambda\mapsto \left\langle P(\lambda)f,f\right\rangle=\left\| P(\lambda)f\right\|^2
$$
is right-continuous and increasing. Thus by the usual Stieltjes construction it induces a Borel measure on $\IR$, which will be denoted by $ \left\langle P(\Id \lambda)f,f\right\rangle$. This measure has the total mass 
$$
\left\langle P(\IR)f,f\right\rangle=\left\|f\right\|^2.
$$

Given such $P$ and a Borel function $\phi:\IR\to\IC$, the set
\begin{align*}
D_{P,\phi}:=\left\{ f\in \IHH: \int_{\IR} |\phi(\lambda)|^2  \left\langle P(\Id \lambda)f,f\right\rangle<\infty\right\}
\end{align*}
is a dense linear subspace of $\IHH$ (cf. Satz 8.8 in \cite{weidmann2}), and accordingly one can define a linear operator $\phi(P)$ with $\dom(\phi(P)):=D_{P,\phi}$ in $\IHH$ by mimicking the complex polarization identity,
\begin{align*}
\left\langle \phi(P)f_1,f_2\right\rangle:=\>(1/4)&\int_{\IR} \phi(\lambda) \  \left\langle P( \Id\lambda)(f_1+f_2),f_1+f_2\right\rangle\\
-(1/4)&\int_{\IR} \phi(\lambda) \   \left\langle P( \Id\lambda)(f_1-f_2),f_1-f_2\right\rangle\\
+(\sqrt{-1}/4)&\int_{\IR} \phi(\lambda) \  \left\langle P(\Id \lambda)(f_1-\sqrt{-1}f_2),f_1-\sqrt{-1}f_2\right\rangle\\
-(\sqrt{-1}/4)&\int_{\IR} \phi(\lambda) \   \left\langle P(\Id \lambda)(f_1+\sqrt{-1}f_2),f_1+\sqrt{-1}f_2\right\rangle,
\end{align*}
where $f_1,f_2\in \dom(\phi(P))$. Every spectral measure induces the following \lq\lq{}calculus\rq\rq{}:

\begin{Theoreme}\label{also} Let $P$ be a spectral resolution on $\IHH$, and let $\phi:\IR \to \IC$ be a Borel function. Then:\\
\emph{(i)} $\phi(P)$ is a normal operator with $\phi(P)^*=\overline{\phi}(P)$; in particular, $\phi(P)$ is self-adjoint, if and only if $\phi$ is real-valued.\\
\emph{(ii)} One has $\left\| \phi(P) \right\|\leq \sup_{\IR}|\phi|\in [0,\infty] $.\\
\emph{(iii)} If $\phi\geq -C$ for some constant $C\geq 0$, then one has $\phi(P)\geq -C$.\\
\emph{(iv)} If $\phi\rq{}:\IR\to \IC$ is another Borel function, then 
$$
\phi(P)+\phi\rq{}(P)\subset (\phi+\phi\rq{})(P),\>\>\dom(\phi(P)+\phi\rq{}(P))=\dom((|\phi|+|\phi\rq{}|)(P))
$$ 
and 
$$
\phi(P)\phi\rq{}(P)\subset (\phi\phi\rq{})(P),\quad\dom(\phi(P)\phi\rq{}(P))=\dom((\phi\phi\rq{})(P))\cap \dom(\phi\rq{});
$$ 
in particular, if $\phi\rq{}$ is bounded, then  
\begin{align*}
&\phi(P)+\phi\rq{}(P)= (\phi+\phi\rq{})(P),\\
&\phi(P)\phi\rq{}(P)= (\phi\phi\rq{})(P).
\end{align*}
\emph{(v)} For every $f\in \dom(\phi(P))$ one has
$$
\left\| \phi(P) f\right\|^2=\int_{\IR} |\phi(\lambda)|^2\left\langle P(\Id\lambda)f,f\right\rangle.
$$
 \end{Theoreme}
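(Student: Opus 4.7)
The plan is to reduce everything to statements about the scalar spectral measures $\mu_{f_1,f_2}(\Id\lambda):=\langle P(\Id\lambda)f_1,f_2\rangle$ obtained by complex polarization from $\langle P(\Id\lambda)f,f\rangle$, and then to exploit standard measure-theoretic convergence theorems. First I would verify the basic algebraic/measure-theoretic properties of the family $\{\mu_{f_1,f_2}\}$: bilinearity in $(f_1,f_2)$, the conjugation rule $\mu_{f_1,f_2}=\overline{\mu_{f_2,f_1}}$, the total variation estimate $|\mu_{f_1,f_2}|(\IR)\leq \|f_1\|\|f_2\|$ (via Cauchy--Schwarz applied to the projections $P(\lambda)$), and the ``multiplicativity'' identity
\[
\int_{\IR}\phi(\lambda)\,\mu_{f_1,f_2}(\Id\lambda)=\int_{\IR}\phi(\lambda)\,\mu_{P(A)f_1,f_2}(\Id\lambda)\quad\text{for $A\supset\supp(\phi)$,}
\]
together with the key chain rule $\mu_{\phi(P)f_1,f_2}=\phi\,\mu_{f_1,f_2}$ for bounded Borel $\phi$. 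Once these scalar identities are in place, parts (ii), (iii), (v) and the bounded case of (iv) are straightforward: (v) follows by applying the polarization definition with $f_1=f_2=f$ to $|\phi|^2$, rewritten as $\overline{\phi}\cdot\phi$; (ii) follows from (v) together with $\mu_{f,f}(\IR)=\|f\|^2$; (iii) follows from (v) applied after writing $\langle \phi(P)f,f\rangle=\int\phi\,\Id\mu_{f,f}\geq -C\|f\|^2$ whenever $\phi\geq -C$.

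For the normality statement (i), my plan is first to establish $\phi(P)^*=\overline{\phi}(P)$ for bounded Borel $\phi$ directly from the polarization formula and the selfadjointness of each $P(\lambda)$, and then to pass to unbounded $\phi$ via the bounded truncations $\phi_n:=\phi\cdot\mathbf{1}_{\{|\phi|\leq n\}}$. Using (v) I would show that $\phi_n(P)f\to \phi(P)f$ for $f\in \dom(\phi(P))=\dom(\overline{\phi}(P))$ (the equality of these domains is immediate from $|\phi|=|\overline{\phi}|$), and combining this strong convergence with the bounded case yields $\overline{\phi}(P)\subset \phi(P)^*$. The reverse inclusion is the more delicate half: given $g\in \dom(\phi(P)^*)$ I would test against $\phi_n(P)f$ and use the boundedness of $\phi_n$ to transfer the adjoint, then send $n\to\infty$ using the monotone convergence theorem applied to the positive measure $\mu_{g,g}$ and the identity $\|\phi_n(P)g\|^2=\int|\phi_n|^2\,\Id\mu_{g,g}$ to conclude $g\in \dom(\overline{\phi}(P))$. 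Normality then follows from (v) applied to both $\phi$ and $\overline{\phi}$, which yield the same norm. The self-adjointness dichotomy is then automatic, since $\phi(P)=\overline{\phi}(P)$ forces $\phi=\overline{\phi}$ $\mu_{f,f}$-a.e.\ for every $f$.

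For part (iv) my strategy is again truncation: for bounded $\phi,\phi'$ the formulas $\phi(P)+\phi'(P)=(\phi+\phi')(P)$ and $\phi(P)\phi'(P)=(\phi\phi')(P)$ reduce, via the polarization definition, to the scalar identity $\int(\phi+\phi')\Id\mu=\int\phi\,\Id\mu+\int\phi'\,\Id\mu$ and to the chain rule $\mu_{\phi'(P)f_1,f_2}=\phi'\mu_{f_1,f_2}$ mentioned above. For the unbounded case I approximate $\phi,\phi'$ by bounded truncations $\phi_n,\phi'_n$, note that the bounded case gives $(\phi_n+\phi'_n)(P)=\phi_n(P)+\phi'_n(P)$ and $(\phi_n\phi'_n)(P)=\phi_n(P)\phi'_n(P)$ on all of $\IHH$, and then identify the domains of the limiting closed operators using (v): for the sum, $f\in \dom(\phi(P))\cap \dom(\phi'(P))$ iff $|\phi|+|\phi'|\in L^2(\mu_{f,f})$, which is exactly $f\in \dom((|\phi|+|\phi'|)(P))$; for the product, $f\in\dom(\phi(P)\phi'(P))$ requires both $f\in\dom(\phi'(P))$ and $\phi'(P)f\in\dom(\phi(P))$, the latter being $\int|\phi|^2\,\Id\mu_{\phi'(P)f,\phi'(P)f}=\int|\phi\phi'|^2\,\Id\mu_{f,f}<\infty$ by the chain rule, i.e.\ $f\in\dom((\phi\phi')(P))$.

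The main obstacle I anticipate is the careful bookkeeping of domains in (iv), specifically verifying that passing from bounded truncations to unbounded $\phi$ does not enlarge or shrink the natural domain in unexpected ways; the cleanest way through is to systematically rely on (v), which turns every domain question into an $L^2$-integrability condition against the finite positive measure $\mu_{f,f}$, where standard monotone/dominated convergence arguments apply without fuss.
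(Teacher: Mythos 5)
Your outline is correct, but it is worth noting that the paper does not actually prove this theorem: its ``proof'' consists of citing Satz 8.8 in Weidmann's book for (i)--(iii), (v) and the two inclusion statements of (iv), and then observing that the bounded case of (iv) follows from those inclusions (since for bounded $\phi'$ one has $\dom(\phi'(P))=\IHH$, so the domain identities collapse to $\dom(\phi(P))$ on both sides). What you propose is, in effect, a self-contained reconstruction of the standard argument that sits behind that citation: reduce everything to the scalar measures $\mu_{f_1,f_2}$, prove the chain rule and adjoint formula for bounded Borel functions, and pass to unbounded $\phi$ by truncation over $\{|\phi|\le n\}$, using (v) to translate every domain question into an $L^2(\mu_{f,f})$-integrability condition. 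The delicate steps are handled correctly: in particular, your argument for $\dom(\phi(P)^*)\subset\dom(\overline{\phi}(P))$ --- transferring the adjoint through the bounded truncations to get the uniform bound $\int|\phi_n|^2\,\Id\mu_{g,g}\le\|\phi(P)^*g\|^2$ and then invoking monotone convergence --- is exactly the right mechanism, and the domain identification for the product correctly uses the chain rule $\mu_{\phi'(P)f,\phi'(P)f}=|\phi'|^2\mu_{f,f}$ (which you should note requires its own monotone-convergence extension from the bounded case when $\phi'$ is unbounded, though this is routine). Two cosmetic remarks: the ``only if'' half of the self-adjointness dichotomy in (i) only yields that $\phi$ is real-valued $P$-almost everywhere, which is what your argument delivers and is the honest form of the statement; and (iii) follows directly from the diagonal polarization formula $\langle\phi(P)f,f\rangle=\int\phi\,\Id\mu_{f,f}\ge -C\|f\|^2$ rather than from (v). Neither affects correctness.
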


\begin{proof} The statements (i), (ii), (iii), (v), and the first two claims of (iv) are included in Satz 8.8 in \cite{weidmann2}. The first two claims of (iv) easily imply the last two claims of (iv). 
\end{proof}

One variant of the spectral theorem is:

\begin{Theoreme} For every self-adjoint operator $S$ in $\IHH$ there exists precisely one spectral resolution $P_S$ on $\IHH$ such that $S=\mathrm{id}_{\IR}(P_S)$. The operator $P_S$ is called \emph{the spectral resolution of $S$}, and it has the following additional properties:
\begin{itemize}
\item $P_S$ is concentrated on the spectrum of $S$ in the sense that for every Borel function $\phi:\IR\to \IC$ one has 
$$
\phi(P_S)=(1_{\sigma(S)}\cdot \phi)(P_S)
$$
\item if $\phi:\IR\to\IR$ is continuous, then $\sigma(\phi(P_S))=\overline{\phi(\sigma(S))}$
\item if $\phi,\phi\rq{}:\IR\to \IR$ are Borel functions, then one has the transformation rule $(\phi\circ \phi\rq{})(P_S)=\phi(P_{\phi\rq{}(P_S)})$.
\end{itemize}
\end{Theoreme}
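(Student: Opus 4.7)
The plan is to establish this as a consequence of the (bounded) spectral theorem for unitary operators, transported via the Cayley transform; the additional properties then reduce to properties of $P_S$ that follow from the calculus already recorded in Theorem~\ref{also}.

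\textbf{Existence.} First I would form the Cayley transform $U:=(S-\sqrt{-1})(S+\sqrt{-1})^{-1}$, which by standard arguments (cf.\ \cite{weidmann2}) is a unitary operator on $\IHH$ satisfying $1\notin \sigma_{\mathrm{p}}(U)$ and $S=\sqrt{-1}(\mathrm{id}_{\IHH}+U)(\mathrm{id}_{\IHH}-U)^{-1}$. Apply the spectral theorem for unitary operators to obtain a spectral resolution $\tilde P$ on the unit circle and then pull it back via the M\"obius map $\kappa(\lambda):=(\lambda-\sqrt{-1})/(\lambda+\sqrt{-1})$, setting $P_S(\lambda):=\tilde P(\kappa([-\infty,\lambda]))$. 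A direct verification using the four properties defining a spectral resolution (self-adjointness/idempotency, monotonicity, right-continuity in the strong sense, and the limits at $\pm\infty$) shows that $P_S$ is a spectral resolution on $\IR$. To see $S=\mathrm{id}_\IR(P_S)$ I would verify $\phi(U)=(\phi\circ\kappa)(P_S)$ for bounded Borel $\phi$ (first on polynomials, then by a monotone class argument, using Theorem~\ref{also}(iv)), and then apply it to the \emph{unbounded} Borel function $\lambda\mapsto \sqrt{-1}(1+\lambda)/(1-\lambda)$ on the circle to recover $S$.

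\textbf{Uniqueness.} Suppose $P$ is any spectral resolution with $\mathrm{id}_\IR(P)=S$. Then for $z\in\IC\setminus\IR$ the bounded function $(\lambda-z)^{-1}$ lies in the calculus, and Theorem~\ref{also}(iv) yields $((\mathrm{id}_\IR-z)(P))((\mathrm{id}_\IR-z)^{-1}(P))=\mathrm{id}_\IHH$, so that $(S-z)^{-1}=(\mathrm{id}_\IR-z)^{-1}(P)$. By Theorem~\ref{also}(v) this forces
\[
\big\langle (S-z)^{-1}f,f\big\rangle=\int_\IR (\lambda-z)^{-1}\,\langle P(\Id\lambda)f,f\rangle
\]
for every $f\in\IHH$. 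Thus the Borel measure $\langle P(\Id\lambda)f,f\rangle$ on $\IR$ has a Borel--Stieltjes transform that is determined by $S$, and Stieltjes inversion recovers the measure, hence $P$ itself. The main obstacle of the whole argument sits here and in the exchange between bounded and unbounded functional calculus---both steps are routine but require care with domains and with the monotone/dominated convergence that underlies them.

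\textbf{Additional properties.} For concentration on the spectrum, it suffices to prove $P_S(\IR\setminus\sigma(S))=0$. Given an open interval $I$ disjoint from $\sigma(S)$, the function $\psi:=1_I/(\mathrm{id}_\IR-z)$ is bounded for $z\in I$, and combining $\psi(P_S)(S-z)=1_I(P_S)$ with the boundedness of $(S-z)^{-1}$ on all of $\IHH$ gives $1_I(P_S)=0$; covering $\IR\setminus\sigma(S)$ by countably many such intervals finishes this part. The spectral mapping identity $\sigma(\phi(P_S))=\overline{\phi(\sigma(S))}$ follows from the previous point together with Theorem~\ref{also}(ii),(iii) applied to the functions $1/(\phi-w)$ for $w\notin\overline{\phi(\sigma(S))}$ (to see $\supset$ one translates $w\in\phi(\sigma(S))$ into a Weyl-type sequence via characteristic functions of small $P_S$-spectral intervals). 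Finally, for the transformation rule I would first verify $(\phi\circ\phi')(P_S)=\phi(P_{\phi'(P_S)})$ on characteristic functions $\phi=1_B$ by showing that $\lambda\mapsto 1_{(-\infty,\lambda]}(P_{\phi'(P_S)})=1_{\phi'^{-1}((-\infty,\lambda])}(P_S)$ is a spectral resolution representing $\phi'(P_S)$ (which is both the definition of $P_{\phi'(P_S)}$ and a direct consequence of Theorem~\ref{also}(iv) applied to $\phi'$ and $1_{(-\infty,\lambda]}$), and then extend to general Borel $\phi$ by the linearity and multiplicativity of the calculus together with monotone convergence.
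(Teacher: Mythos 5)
Your route---existence via the Cayley transform, uniqueness via Stieltjes inversion of the resolvent, and the remaining assertions via the calculus of Theorem \ref{also}---is the standard one and is exactly the argument of Satz 8.11, 8.17 and 8.21 in \cite{weidmann2}, which is all the paper itself supplies (its ``proof'' is the citation). So the point to examine is not the method but the correctness of your sketch, and there is one step that fails as written.

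It is the concentration of $P_S$ on $\sigma(S)$. For an open interval $I$ disjoint from $\sigma(S)$ and $z\in I$, the function $\psi:=1_I\cdot(\mathrm{id}_{\IR}-z)^{-1}$ is \emph{not} bounded: it blows up as $\lambda\to z$ inside $I$. Moreover, even if $\psi(P_S)$ were bounded, the identity $\psi(P_S)(S-z)=1_I(P_S)$ on $\dom(S)$ combined with the boundedness of $(S-z)^{-1}$ only yields $\psi(P_S)=1_I(P_S)(S-z)^{-1}$, which does not force $1_I(P_S)=0$. The correct short argument is a norm estimate: fix $z\notin\sigma(S)$ and let $I$ be the open interval of radius $\epsilon$ about $z$ with $\epsilon<\|(S-z)^{-1}\|^{-1}$; for $f\in\mathrm{Ran}(1_I(P_S))\subset\dom(S)$, Theorem \ref{also}(v) gives $\|(S-z)f\|^2=\int_I|\lambda-z|^2\left\langle P_S(\Id\lambda)f,f\right\rangle\leq\epsilon^2\|f\|^2$, while bounded invertibility gives $\|f\|\leq\|(S-z)^{-1}\|\,\|(S-z)f\|$; hence $f=0$, and $\IR\setminus\sigma(S)$ is covered by countably many such intervals. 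This repair is not cosmetic, since your Weyl-sequence argument for $\phi(\sigma(S))\subset\sigma(\phi(P_S))$ relies on the resulting identification of $\sigma(S)$ with the support of $P_S$. The other parts (the Cayley transform, where the limits at $\pm\infty$ hinge on $\tilde P(\{1\})=0$, i.e.\ on $1\notin\sigma_{\mathrm{p}}(U)$; Stieltjes inversion; and the pushforward description of $P_{\phi'(P_S)}$) are sound as sketches, modulo the domain bookkeeping you already acknowledge.
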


\begin{proof} The existence and uniqueness of a spectral resolution $P_S$ with $S=\mathrm{id}_{\IR}(P_S)$ is the content of Satz 8.11 in \cite{weidmann2}. The other statements follow straightforwardly by combining Satz 8.17 in \cite{weidmann2} with Satz 8.21 in \cite{weidmann2}.
\end{proof}

In view of these results, given a self-adjoint operator $S$ in $\IHH$, the calculus of Theorem \ref{also} applied to $P=P_S$ is usually referred to as the \emph{spectral calculus of $S$}. Likewise, given a Borel function $\phi:\IR \to \IC$ one sets
$$
\phi(S):= \phi(P_S).
$$

\begin{Remarke}\label{beispiele} Let $S$ be a self-adjoint operator in $\IHH$.\\
1. The spectral calculus of $S$ is compatible with all functions of $S$ that can be defined \lq\lq{}by hand\rq\rq{}. For example, for every $z\in \IC\setminus \IK$ one has $\phi(S)=(S-z)^{-1}$ with $\phi(\lambda):= 1/(\lambda-z)$, or $S^n=\phi(S)$ with  $\phi(\lambda):=\lambda^n$.\\
2. If $S$ is a semibounded operator and $z\in\IC$ is such that $\Re z <\min\sigma(S)$, then the spectral calculus (together with a well-known Laplace transformation formula for functions) shows that for every $b>0$ one has the following formula for $f_1,f_2\in\IHH$:
\begin{align}
\label{lpo2}
\left\langle (S-z)^{-b}f_1,f_2 \right\rangle=\f{1}{\Gamma(b)}\int^{\infty}_0 s^{b-1}\left\langle \mathrm{e}^{z s}\mathrm{e}^{-s S}f_1,f_2\right\rangle \Id s.
\end{align}
3. The collection $(\mathrm{e}^{-it S})_{t\in \IR}$ forms a strongly continuous unitary group of bounded operators, and for every $\psi\in \dom(S)$, the path 
$$
\IR \ni t\longmapsto \psi(t):=\mathrm{e}^{-it S},\quad \psi\in \IHH
$$
is the unique (norm-)differentiable path with $\psi(0)=\psi$ which solves the \emph{abstract Schrödinger equation}
$$
(\Id/\Id t) \psi(t)=-\sqrt{-1}S\psi(t),\quad t\in\IR.
$$
In particular, $\psi(t)\in \dom(S)$ for all $t\in\IR$ (cf. Satz 8.20 in \cite{weidmann2}).\\
4. If $S\geq -C $ for some constant $C\geq 0$, then the collection $(\mathrm{e}^{-t S})_{t\geq 0}$ forms a strongly continuous self-adjoint semigroup of bounded operators (contractive, if one can pick $C=0$), and one has the abstract smoothing effect
$$
\mathrm{Ran}(\mathrm{e}^{-t S})\subset \bigcap_{n\in\IN_{\geq 1}}\dom(S^n)\quad\text{ for all $t>0$.}
$$
Moreover, for every $\psi\in\IHH$ the path 
$$
[0,\infty) \ni t\longmapsto \psi(t):=\mathrm{e}^{-t S},\quad \psi\in \IHH
$$
is the uniquely determined continuous path with $\psi(0)=\psi$ which is differentiable in $(0,\infty)$ and satisfies there the \emph{abstract heat equation}
$$
(\Id/\Id t) \psi(t)=-S\psi(t)
$$
(cf. Corollary 4.11 in \cite{gri}).\\
5. If $S\geq -C $ for some constant $C\geq 0$ and if $\mathrm{e}^{-t S}\in \IJJ^1(\IHH)$, then $S$ has a purely discrete spectrum  (cf. Lemma 10.7 in \cite{gri}).
\end{Remarke}

Finally, we record the following general commutation result (in fact, we will use it only in the situation $T=T^*$, where  the proof follows almost immediately from the spectral calculus).

\begin{Propositione}\label{comu} Let $T$ be a closed densely defined operator in $\IHH$. Then the operators $TT^*$ and $T^*T$ are self-adjoint and $\geq 0$, and for every Borel function $\phi:\IR\to\IR$ with 
\begin{align*}
\sup_{\lambda\in \IR}|\phi(\lambda^2)|+\sup_{\lambda\in[0,\infty)}|\sqrt{\lambda} \phi(\lambda)|<\infty
\end{align*}
one has $\phi(T^*T)T\subset T\phi(T^*T)$ and $ \phi(TT^*)T^*\subset T^*\phi(TT^*)$. All these operators are bounded on their dense domains.
\end{Propositione}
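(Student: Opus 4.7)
The plan is to reduce both assertions to the spectral calculus of a single self-adjoint operator by means of a Dirac-type doubling trick on $\IHH\oplus\IHH$.

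For the self-adjointness and nonnegativity of $T^*T$ and $TT^*$ on their natural domains, I would simply invoke von Neumann's classical theorem: since $T$ is closed and densely defined, $I+T^*T$ is bijective from its natural domain $\{f\in\dom(T):Tf\in\dom(T^*)\}$ onto $\IHH$ with a bounded self-adjoint inverse, which upgrades $T^*T$ to a nonnegative self-adjoint operator. Because $T$ is closed we have $T^{**}=T$, so the same reasoning applied to $T^*$ yields the analogous statement for $TT^*$.

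For the commutation relations and the boundedness claim, I would introduce the off-diagonal self-adjoint operator on $\IHH\oplus\IHH$ given by
$$
A:=\begin{pmatrix} 0 & T^* \\ T & 0 \end{pmatrix},\qquad \dom(A):=\dom(T)\oplus\dom(T^*).
$$
The self-adjointness of $A$ is equivalent to the closedness of $T$ (via $T^{**}=T$), and a direct computation on $\dom(A^2)$ gives $A^2=\mathrm{diag}(T^*T,\,TT^*)$. Consequently, for every bounded Borel $\psi:\IR\to\IR$ the spectral calculus of $A$ yields the block-diagonal decomposition $\psi(A^2)=\mathrm{diag}(\psi(T^*T),\psi(TT^*))$. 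Setting $\tilde\phi(\mu):=\mu\,\phi(\mu^2)$, the second boundedness hypothesis on $\phi$ says exactly that $\tilde\phi$ is bounded on $\IR$, so $\tilde\phi(A)$ is a bounded self-adjoint operator; and it is off-diagonal because $\tilde\phi$ is odd while $A$ anticommutes with the parity operator $\mathrm{diag}(I,-I)$. The standard product rule of the spectral calculus then delivers
$$
A\,\phi(A^2)\subset \tilde\phi(A),\qquad \phi(A^2)\,A\subset \tilde\phi(A),
$$
and reading these two inclusions off block-by-block on vectors of the form $(f,0)$ and $(0,g)$ simultaneously identifies the operators $T\phi(T^*T)$ and $T^*\phi(TT^*)$ with the off-diagonal blocks of the bounded operator $\tilde\phi(A)$ and produces the asserted intertwining identities on the dense domains $\dom(T)$ and $\dom(T^*)$.

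The main technical point will be the passage from the inclusion $A\phi(A^2)\subset \tilde\phi(A)$ to genuine everywhere-definedness, which requires checking that $\phi(T^*T)$ maps $\IHH$ into $\dom(T)=\dom(\sqrt{T^*T})$. This is precisely where the second boundedness hypothesis gets invoked, through the spectral-calculus estimate
$$
\bigl\|\sqrt{T^*T}\,\phi(T^*T)f\bigr\|^2=\int_{[0,\infty)}\lambda\,|\phi(\lambda)|^2\,d\langle P_{T^*T}(\lambda)f,f\rangle\leq \Bigl(\sup_{\lambda\geq 0}\sqrt{\lambda}\,|\phi(\lambda)|\Bigr)^{2}\|f\|^2,
$$
which is finite by assumption and which simultaneously gives the boundedness of $T\phi(T^*T)$ on all of $\IHH$. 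The first boundedness hypothesis enters earlier, to guarantee that $\phi(T^*T)$ and $\phi(TT^*)$ are themselves bounded operators so that the product rule of the spectral calculus is applicable. The argument for $T^*\phi(TT^*)$ is entirely symmetric under $T\leftrightarrow T^*$.
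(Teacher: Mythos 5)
Your proposal is correct and follows essentially the same route as the paper: the self-adjoint doubling $A=\bigl(\begin{smallmatrix}0&T^*\\ T&0\end{smallmatrix}\bigr)$ on $\IHH\oplus\IHH$ with $A^2=\mathrm{diag}(T^*T,TT^*)$, the product rule of the spectral calculus (using that $\lambda\mapsto\phi(\lambda^2)$ is bounded), and the bound $\sup_{\lambda\ge 0}\sqrt{\lambda}\,|\phi(\lambda)|<\infty$ together with a polar decomposition to get boundedness; your use of von Neumann's theorem for the self-adjointness of $T^*T$ is an equivalent shortcut. One small point worth recording: the block computation actually yields the intertwining $\phi(TT^*)T\subset T\phi(T^*T)$ (and $\phi(T^*T)T^*\subset T^*\phi(TT^*)$), i.e.\ the function of the \emph{other} product appears on the left, which is what both your argument and the paper's prove.
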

 
\begin{proof} Following the appendix of \cite{anton}, we define the operator
$$
\underline{T}:=\begin{pmatrix} 0  &T^*\\ T & 0\end{pmatrix}
$$
in $\IHH\oplus\IHH$, where the above is a symbolic notation for  $\dom(\underline{T})=\dom(T)\oplus \dom(T^*)$, $\underline{T}(f_1\oplus f_2)=T^*f_2\oplus Tf_1 $. Then $\underline{T}$ is self-adjoint\footnote{This self-adjointness is a standard result in \lq\lq{}supersymmetric\rq\rq{} quantum mechanics, where it means that \lq\lq{}supercharges are self-adjoint\rq\rq{} (cf. Lemma 5.3 in \cite{thaller} for a detailed proof).}, so that
$$
\underline{T}^2=\begin{pmatrix} T^*T  &0\\ 0 & TT^*\end{pmatrix}
$$
is self-adjoint and $\geq 0$, and so are its components. Furthermore, the assumption $\sup_{\lambda\in \IR}|\phi(\lambda^2)|<\infty$ implies
$\phi(\underline{T}^2)\underline{T}\subset \underline{T} \phi(\underline{T}^2)$, in view of Theorem \ref{also} (iv). (The boundedness of $\lambda\mapsto\phi(\lambda^2)$ is really used here for an equality of operators!) Since the spectral calculus commutes with direct sums of Hilbert spaces, we now get
\begin{align*}
\begin{pmatrix} 0  & \phi(T^*T)T^*\\  \phi(TT^*)T & 0\end{pmatrix}=\begin{pmatrix} \phi(T^*T)  &0\\ 0 & \phi(TT^*)\end{pmatrix}\begin{pmatrix} 0  &T^*\\  T & 0\end{pmatrix}\\
\subset \begin{pmatrix} 0  &T^*\\  T & 0\end{pmatrix}\begin{pmatrix} \phi(T^*T)  &0\\ 0 & \phi(TT^*)\end{pmatrix}=\begin{pmatrix} 0  &T^* \phi(TT^*)\\  T\phi(T^*T) & 0\end{pmatrix}.
\end{align*}
Finally, to see the claims concerning the boundedness, it follows from the above inclusions and identities that it is sufficient to prove that $\underline{T} \phi(\underline{T}^2)$ is bounded. To prove this boundedness, note that with $W:=\underline{T}^2=\underline{T}^*\underline{T}$ we have the polar decomposition (cf. Satz 8.22b in \cite{weidmann2}) $\underline{T}=U\sqrt{W}$, with some isometry
$$
U:\overline{\mathrm{Ran}(\sqrt{W})}\longrightarrow \overline{\mathrm{Ran}(T)},
$$
so that 
$$
\left\|\underline{T} \phi(\underline{T}^2)\right\|\leq \left\|U\right\|\left\|\sqrt{W}\phi(W)\right\|,
$$
and $\left\|\sqrt{W}\phi(W)\right\|<\infty$ is implied by the spectral calculus and the spectral theorem, since $\sigma(W)\subset [0,\infty)$, so that $P_W$ is concentrated on $[0,\infty)$, and $\sup_{\lambda\in[0,\infty)}|\sqrt{\lambda} \phi(\lambda)|<\infty$ by assumption.
\end{proof}

\section{Sesquilinear forms in Hilbert spaces}\label{sopq}

In this section, we collect some basic facts about possibly unbounded sesquilinear forms on Hilbert spaces. Unless otherwise stated, all statements below can be found in section VI of T. Kato\rq{}s book \cite{kato1}.\vspace{1.2mm}

Let again $\IHH$ be a complex separable Hilbert space. A \emph{sesquilinear form} $Q$ on $\IHH$ is understood to be a map
$$
Q:\dom(Q)\times \dom(Q) \longrightarrow \IC,
$$
where $\dom(Q)\subset \IHH$ is a linear subspace called the \emph{domain of definition of $Q$}, such that $Q$ is antilinear\footnote{We warn the reader, however, that in \cite{kato1} the forms are assumed to be antilinear in their second slot; thus, if $Q(f_1,f_2)$ is a form in our sense, the theory from \cite{kato1} has to be applied to the complex conjugate form $Q(f_1,f_2)^*$.} in its first slot, and linear in its second slot.\vspace{1.2mm}

Let $Q$ and $Q'$ be sesquilinear forms on $\IHH$ in this section.\vspace{1.2mm}

The sum $Q+Q'$ of $Q$ and $Q'$ is the sesquilinear form which is defined in the obvious way, with its domain of definition given by $\dom(Q+Q')=\dom(Q)\cap \dom(Q')$.

\vspace{1.2mm}

$Q'$ is called an \emph{extension of $Q$}, symbolically $Q\subset Q'$, if $\dom(Q)\subset \dom(Q')$ and if both forms coincide on $\dom(Q)$.\vspace{1.2mm}

$Q$ is called \emph{symmetric}, if $Q(f_1,f_2)=Q(f_2,f_1)^*$, and \emph{semibounded (from below)}, if there exists a constant $C\geq 0$ such that 
\begin{align}\label{uap}
Q(f,f)\geq -C\left\|f\right\|^2\quad\text{ for all $f\in\dom(Q)$,}
\end{align}
symbolically $Q\geq -C$. Again by complex polarization, every semibounded form is automatically symmetric.\vspace{1.2mm}

Following Kato, given a sequence $(f_n)\subset \dom(Q)$ and $f\in \dom(Q)$ we write  $f_n\underset{Q}{\longrightarrow}f$ as $n\to\infty$, if one has $f_n\to f$ in $\IHH$ and in addition
$$
Q(f_n-f_m,f_n-f_m)\to 0\quad\text{ as $n,m\to\infty$.}
$$
Then $Q$ is called \emph{closed}, if $f_n\underset{Q}{\longrightarrow}f$ implies that $f\in\dom(Q)$. A semibounded $Q$ is closed, if and only if for some/every $C\geq 0$ with $Q\geq -C$ the scalar product on $\dom(Q)$ given by 
\begin{align}\label{qap}
\left\langle f_1,f_2\right\rangle_{Q,C}=(1+C)\left\langle f_1,f_2\right\rangle+ Q( f_1,f_2)
\end{align}
turns $\dom(Q)$ into a Hilbert space. Futhermore, for a semibounded $Q\geq -C$ its closedness is equivalent to the lower-semicontinuity of the function
$$
\IHH \longrightarrow [-C,\infty],\>\> f\longmapsto \begin{cases}&Q(f,f),\quad\text{ if $f\in \dom(Q)$}\\&\infty\quad\text{else.}\end{cases} 
$$
\vspace{1.2mm}

The form $Q$ is called \emph{closable}, if it has a closed extension. If $Q$ is semibounded and closable, then it has a smallest semibounded and closed extension $\overline{Q}$, which is (well-)defined as follows: $\dom(\overline{Q})$ is given by all $f\in \IHH$ that admit a sequence $(f_n)\subset \dom(Q)$ with $f_n\underset{Q}{\longrightarrow} f$; then one has 
$$
\overline{Q}(f,h)= \lim_n Q(f_n,h_n),\quad\text{ where $f_n\underset{Q}{\longrightarrow} f$, $h_n\underset{Q}{\longrightarrow} h$.}
$$

If $Q$ is closed, then a linear subspace $D\subset \dom(Q)$ is called a \emph{core} of $Q$, if $\overline{Q|_D}=Q$.

\begin{Propositione} If $Q$ and $Q'$ are semibounded and closed, then $Q+Q'$ is semibounded and closed. 
\end{Propositione}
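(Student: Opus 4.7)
The plan is to handle semiboundedness directly and then to reduce closedness to a soft lower-semicontinuity argument, using the characterization of closedness for semibounded forms that was recorded just above the statement.

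For semiboundedness, I would pick constants $C,C'\geq 0$ with $Q\geq -C$ and $Q'\geq -C'$ and note that for every $f\in\dom(Q+Q')=\dom(Q)\cap\dom(Q')$ one has
\[
(Q+Q')(f,f)=Q(f,f)+Q'(f,f)\geq -(C+C')\|f\|^{2},
\]
so $Q+Q'\geq -(C+C')$. This is immediate and requires no further work.

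For closedness, the cleanest route is to invoke the equivalence (stated in the excerpt) that a semibounded form $R\geq -D$ is closed if and only if the function $F_R:\IHH\to [-D,\infty]$ given by $F_R(f)=R(f,f)$ for $f\in\dom(R)$ and $F_R(f)=\infty$ otherwise is lower-semicontinuous. Applying this to $Q$ and $Q'$, I obtain two lower-semicontinuous extended-real functions $F_Q,F_{Q'}$ bounded below by $-C$ and $-C'$ respectively. Then $F_{Q+Q'}=F_Q+F_{Q'}$ on all of $\IHH$ (with the usual convention $a+\infty=\infty$), so $F_{Q+Q'}$ is lower-semicontinuous as the sum of two lower-semicontinuous functions bounded below. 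Applying the equivalence in the other direction yields that $Q+Q'$ is closed.

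Alternatively, if one prefers an approach that avoids the l.s.c. criterion, one can argue with the shifted nonnegative forms $\tilde Q:=Q+C\langle\cdot,\cdot\rangle\geq 0$ and $\tilde Q':=Q'+C'\langle\cdot,\cdot\rangle\geq 0$: given a sequence $(f_n)\subset\dom(Q+Q')$ with $f_n\to f$ in $\IHH$ and $(Q+Q')(f_n-f_m,f_n-f_m)\to 0$, one writes
\[
(\tilde Q+\tilde Q')(f_n-f_m,f_n-f_m)\to 0,
\]
and since both summands on the left are nonnegative, each must vanish. Then $f_n\underset{Q}{\longrightarrow}f$ and $f_n\underset{Q'}{\longrightarrow}f$, and the closedness of $Q$ and $Q'$ separately forces $f\in\dom(Q)\cap\dom(Q')=\dom(Q+Q')$, which is the required closedness. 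The only slightly delicate step in either approach is the use of positivity of the shifted forms (equivalently, of the extended functions being bounded below) to pass from information about the sum to information about each summand; this is the sole place the semiboundedness hypothesis is genuinely needed for closedness, and it causes no real obstacle.
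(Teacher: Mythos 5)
Your proof is correct. The paper states this proposition without proof (it is one of the facts referred to Section VI of Kato's book), so there is no argument to compare against; both of your routes work. The lower-semicontinuity argument is sound because $F_Q$ and $F_{Q'}$ take values in $(-\infty,\infty]$ and are bounded below, so $F_Q+F_{Q'}$ is well-defined and lower-semicontinuous, and it coincides with $F_{Q+Q'}$ on all of $\IHH$; the direct argument via the shifted nonnegative forms $\tilde Q$, $\tilde Q'$ is equally valid, the only point to note being that $\|f_n-f_m\|^2\to 0$ (which holds since $(f_n)$ converges in $\IHH$) so that convergence of $(Q+Q')(f_n-f_m,f_n-f_m)$ to zero does pass to each nonnegative summand.
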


The following notions will be convenient:

\begin{Definitione}\label{bounded} Let $Q$ be symmetric. If $\dom(Q)\subset \dom(Q')$, then $Q'$ is called
\begin{itemize}
	\item \emph{$Q$-bounded with bound $<1$}, if there exist constants $\delta\in [0,1)$, $A\in [0,\infty)$ such that
	\begin{align}\label{aopq}
	|Q'(f,f)|\leq A\left\|f\right\|^2+ \delta Q(f,f)\quad\text{ for every $f\in\dom(Q)$},
	\end{align}
	\item \emph{infinitesimally $Q$-bounded}, if for every $\delta\in [0,\infty)$ there exists a constant $A=A_{\delta}\in [0,\infty)$ with (\ref{aopq}).
\end{itemize}
\end{Definitione}

The next result from perturbation theory is the famous KLMN (Kato-Lax-Lions-Milgram-Nelson) theorem (see for example Satz 4.16 and its proof in \cite{weidmann2}):

\begin{Theoreme}\label{klmn} Let $Q$ be semibounded and closed, and let $Q'$ be symmetric and $Q$-bounded with bound $<1$. Then $Q+Q'$ is semibounded and closed on its natural domain $\dom(Q)\cap\dom(Q')=\dom(Q)$. Moreover, every form core of $Q$ is also one of $Q+Q'$, and for every constant $c\geq 0$ with $Q\geq -c$ and every $A,\delta$ as in (\ref{aopq}) one has the explicit lower bound 
$$
Q+Q\rq{}\geq -(1-\delta)c-A.
$$
\end{Theoreme}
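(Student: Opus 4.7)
The plan is to reduce the whole statement to the equivalence of two form norms on $\dom(Q)$. The starting point is the standard trick of shifting: fix $c\geq 0$ with $Q\geq -c$ and set $\tilde Q:=Q+c\|\cdot\|^2$, so $\tilde Q\geq 0$ on $\dom(Q)$. Rewriting the boundedness hypothesis $|Q'(f,f)|\leq A\|f\|^2+\delta Q(f,f)$ in terms of $\tilde Q$ gives
\[
|Q'(f,f)|\leq (A-\delta c)\|f\|^2+\delta\,\tilde Q(f,f)\quad\text{for all } f\in\dom(Q),
\]
which in particular forces $A-\delta c\geq 0$ (take $f$ with $\tilde Q(f,f)=0$ but $f\neq 0$, or argue from non-negativity of the right-hand side for all $f$). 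From this I read off
\[
(Q+Q')(f,f)\geq (1-\delta)\tilde Q(f,f)-\big((1-\delta)c+A\big)\|f\|^2\geq -\big((1-\delta)c+A\big)\|f\|^2,
\]
giving semiboundedness with the announced constant.

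Next, I would prove that the natural form-scalar product $\langle\cdot,\cdot\rangle_{Q+Q',c'}$ for the form $Q+Q'$ (using any admissible shift $c'\geq (1-\delta)c+A$, cf.\ (\ref{qap})) is equivalent on $\dom(Q)$ to the scalar product $\langle\cdot,\cdot\rangle_{Q,c}$. The upper bound
\[
(Q+Q')(f,f)+(c'+1)\|f\|^2\leq (1+\delta)\tilde Q(f,f)+(A+c'+1-\delta c)\|f\|^2
\]
is immediate from the boundedness hypothesis, and the lower bound is the semiboundedness computation above combined with $(1-\delta)>0$. Both bounds together yield constants $K_1,K_2>0$ with
\[
K_1\|f\|_{Q,c}^2\leq \|f\|_{Q+Q',c'}^2\leq K_2\|f\|_{Q,c}^2,\qquad f\in\dom(Q).
\]

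With this equivalence in hand the remaining assertions are short. Closedness of $Q+Q'$ on $\dom(Q)$: since $Q$ is closed, $(\dom(Q),\langle\cdot,\cdot\rangle_{Q,c})$ is a Hilbert space; by the equivalence so is $(\dom(Q),\langle\cdot,\cdot\rangle_{Q+Q',c'})$, which by the characterization recalled before (\ref{qap}) is exactly the closedness of $Q+Q'$ on $\dom(Q)$. The natural-domain statement $\dom(Q)\cap\dom(Q')=\dom(Q)$ is given by the hypothesis $\dom(Q)\subset\dom(Q')$ built into $Q$-boundedness. Finally, a form core $D\subset\dom(Q)$ of $Q$ is by definition dense in $(\dom(Q),\|\cdot\|_{Q,c})$; by the norm equivalence it is also dense in $(\dom(Q),\|\cdot\|_{Q+Q',c'})$, hence a form core of $Q+Q'$.

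The main (minor) obstacle is the bookkeeping of the shift: one has to be careful that the constants chosen to pass from $Q$ to $\tilde Q$ and back are compatible with the factor $(1-\delta)$ so that the final lower bound comes out to exactly $-(1-\delta)c-A$, and that the equivalence of norms is proved with strictly positive constants on both sides (this is where $\delta<1$ is used crucially). Everything else is routine manipulation of sesquilinear forms.
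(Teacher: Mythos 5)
Your proof is correct and follows essentially the paper's own (very brief) argument: the bound with $\delta<1$ makes $\left\|\bullet\right\|_{Q,c}$ and $\left\|\bullet\right\|_{Q+Q',c'}$ equivalent on $\dom(Q)$ for a suitable shift $c'$, which yields closedness and the core property at once, and the explicit lower bound is exactly the computation you display. The only blemish is the aside that the hypothesis \emph{forces} $A-\delta c\geq 0$ — this need not hold (there may be no nonzero $f$ with $\tilde Q(f,f)=0$, and nonnegativity of the right-hand side does not pin down the sign of the coefficient of $\|f\|^2$) — but you never actually use that claim, so the argument stands.
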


\begin{proof} The proof is actually very simple: Based on the  assumption on $Q\rq{}$, one immediately finds that the norms $\left\|\bullet\right\|_{Q,C}$ and $\left\|\bullet\right\|_{Q+Q\rq{},C}$ are equivalent for $C>0$ large enough, which proves that $Q+Q'$ is closed and also has the core property. The lower bound is also seen immediately.
\end{proof}

Using the spectral calculus one defines:

\begin{Definitione} Given a self-adjoint operator $S$ in $\IHH$, the (densely defined and symmetric) sesquilinear form $Q_S$ in $\IHH$ given by $\dom(Q_S):=\dom(\sqrt{|S|})$ and
$$
Q_S(f_1,f_2):=\left\langle\sqrt{|S|}f_1, \sqrt{|S|}f_2\right\rangle
$$
is called the \emph{form associated with $S$}.
\end{Definitione}

The following fundamental result links the world of densely defined, semibounded, closed forms with that of semibounded self-adjoint operators (cf. Theorem VIII.15 in \cite{reed1} for this exact formulation):

\begin{Theoreme}\label{kaq1} For every self-adjoint semibounded operator $S$ in $\IHH$, the form $Q_S$ is densely defined, semibounded and closed. Conversely, for every densely defined, closed and semibounded sesquilinear form $Q$ in $\IHH$, there exists precisely one self-adjoint semibounded operator $S_Q$ in $\IHH$ such that $Q=Q_{S_Q}$. The operator $S_Q$ will be called \emph{the operator associated with $Q$}.
\end{Theoreme}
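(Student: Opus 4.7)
The plan is to handle the two directions of the equivalence separately, with the second being the substantive one.

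\textbf{Forward direction.} Given self-adjoint $S \geq -C$ with $C\geq 0$, I would pass to $\tilde{S} := S + (C+1)\mathrm{id}_{\IHH}$, which is self-adjoint with $\sigma(\tilde S) \subset [1,\infty)$, and apply the spectral calculus to obtain the positive self-adjoint square root $\sqrt{\tilde S}$. Since $\dom(S) = \dom(\tilde S) \subset \dom(\sqrt{\tilde S})$ is dense in $\IHH$, the form domain $\dom(Q_S) = \dom(\sqrt{|S|}) = \dom(\sqrt{\tilde S})$ is dense, so $Q_S$ is densely defined. The identity $Q_S(f,f) + (C+1)\left\|f\right\|^2 = \left\|\sqrt{\tilde S}f\right\|^2$ immediately yields $Q_S \geq -C$; moreover the scalar product $\left\langle\cdot,\cdot\right\rangle_{Q_S,C}$ from (\ref{qap}) coincides with the graph inner product of $\sqrt{\tilde S}$, which makes $\dom(\sqrt{\tilde S})$ a Hilbert space since $\sqrt{\tilde S}$ is closed, proving closedness of $Q_S$.

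\textbf{Reverse direction: construction of $S_Q$.} Given a densely defined, closed, semibounded form $Q \geq -C$, the inner product (\ref{qap}) makes $\mathcal{H}_Q := (\dom(Q), \left\langle\cdot,\cdot\right\rangle_{Q,C})$ a Hilbert space whose embedding $\iota : \mathcal{H}_Q \hookrightarrow \IHH$ is norm-decreasing. For each $g \in \IHH$ the antilinear functional $f \mapsto \left\langle g, f\right\rangle$ on $\mathcal{H}_Q$ has norm bounded by $\left\|g\right\|$, so Riesz representation inside $\mathcal{H}_Q$ produces a unique $Tg \in \mathcal{H}_Q$ with
\[
Q(Tg,f) + (C+1)\left\langle Tg,f\right\rangle = \left\langle g,f\right\rangle \quad \text{for all } f \in \dom(Q).
\]
I would then verify in turn that $T \in \ILL(\IHH)$ with $\left\|T\right\| \leq 1$, that $T$ is self-adjoint (by symmetry of $Q$), that $T$ is injective (since $\dom(Q)$ is dense in $\IHH$), and that $\mathrm{Ran}(T)$ is dense in $\mathcal{H}_Q$: if $f \in \mathcal{H}_Q$ satisfies $\left\langle f, Tg\right\rangle_{Q,C} = 0$ for every $g \in \IHH$, the defining property of $T$ gives $\left\langle g, f\right\rangle = 0$ for every $g$, forcing $f = 0$. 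Set $S_Q := T^{-1} - (C+1)\mathrm{id}_{\IHH}$ on $\dom(S_Q) := \mathrm{Ran}(T)$. Because $T$ is bounded, self-adjoint and injective on $\IHH$, the inverse $T^{-1}$ is a densely defined self-adjoint operator, and putting $h = Tg$ in the defining relation yields $\left\langle T^{-1}h, h\right\rangle = Q(h,h) + (C+1)\left\|h\right\|^2 \geq \left\|h\right\|^2$, whence $T^{-1} \geq \mathrm{id}$ and $S_Q \geq -C$.

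\textbf{Matching the forms and uniqueness.} For $h = Tg \in \dom(S_Q)$ and any $f \in \dom(Q)$, the defining relation gives $Q(h,f) = \left\langle g, f\right\rangle - (C+1)\left\langle h,f\right\rangle = \left\langle S_Q h, f\right\rangle$, and applying the spectral calculus to $\sqrt{S_Q + (C+1)\mathrm{id}}$ shows that $Q_{S_Q}$ and $Q$ coincide on $\dom(S_Q)$. Combined with the density of $\mathrm{Ran}(T) = \dom(S_Q)$ in $\mathcal{H}_Q$ and closedness of both forms, this upgrades to $Q_{S_Q} = Q$ as forms on $\dom(Q) = \dom(\sqrt{S_Q + (C+1)\mathrm{id}})$. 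For uniqueness, if $S$ and $S'$ are both associated with $Q$, then $\left\langle Sf,g\right\rangle = Q(f,g) = \left\langle S'f,g\right\rangle$ for $f \in \dom(S)$, $g \in \dom(Q)$, and maximality of self-adjoint operators forces $\dom(S) = \dom(S')$ and $S = S'$.

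\textbf{Main obstacle.} The principal difficulty is to prove that $\mathrm{Ran}(T) = \dom(S_Q)$ is dense in the form Hilbert space $\mathcal{H}_Q$, not merely in $\IHH$. This is exactly what guarantees $\dom(Q_{S_Q}) = \dom(Q)$ rather than a proper closed subspace of it, and the clean route is the orthogonality argument above, which uses in an essential way that $T$ is defined against \emph{all} of $\IHH$ rather than against $\mathcal{H}_Q$ alone.
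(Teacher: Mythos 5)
Your proof is correct and is exactly the standard Friedrichs/Lax--Milgram representation argument (a bounded self-adjoint $T$ obtained from the Riesz representation in the form Hilbert space $\mathcal{H}_Q$, then $S_Q:=T^{-1}-(C+1)\mathrm{id}_{\IHH}$), which is precisely the proof of Theorem VIII.15 in Reed--Simon that the paper cites in lieu of giving an argument. The only step to tighten is uniqueness: for $f\in\dom(S)$ one first notes that $g\mapsto Q(f,g)$ is bounded in the $\IHH$-norm and represented by $Sf$, so that $Q=Q_{S'}$ forces $f\in\dom(S')$ with $S'f=Sf$, i.e.\ $S\subset S'$; only then does maximality of self-adjoint operators give $S=S'$ (as written, the expression $\left\langle S'f,g\right\rangle$ is not yet meaningful for $f\in\dom(S)$).
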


The correspondence $S\mapsto Q_S$ has the following additional properties:

\begin{Theoreme}\label{kaq2} Let $Q$ be densely defined, closed and semibounded. Then:
\begin{itemize}
\item $S_Q$ is the uniquely determined self-adjoint and semibounded operator in $\IHH$ such that $\dom(S_Q)\subset \dom(Q)$ and
$$
\left\langle S_Qf_1,f_2\right\rangle=Q(f_1,f_2)\>\text{ for all $f_1\in\dom(S_Q)$, $f_2\in \dom(Q)$.}
$$
\item $\dom(S_Q)$ is a core of $Q$; some $f_1\in\dom(Q)$ is in $\dom(S_Q)$, if and only if there exists $f_2\in \IHH$ and a core $D$ of $Q$ with
		$$
		Q(f_1,f_3)=\left\langle f_2,f_3\right\rangle\quad\text{ for all $f_3\in D$, }
		$$
and then $S_Qf_1=f_2$.
%\item For every $C\geq 0$ with (\ref{uap}) one has %$\dom(Q)=\dom (\sqrt{H+C})=$ with 
%\begin{align*}
%Q(f_1,f_2)=\left\langle \sqrt{H+C} f_1,\sqrt{H+C}f_2  %\right\rangle - C\left\langle f_1,f_1\right\rangle
%\end{align*}
%for all $f_1,f_1\in \dom (\sqrt{H+C})=\dom (Q)$.
\item One has 
\begin{align*}
&\dom(Q)=\left\lbrace  h\in \IHH:\> \lim_{t\to 0+}\left\langle \f{h-\mathrm{e}^{-t S_Q}h}{t},h\right\rangle<\infty \right\rbrace , \\
&Q(h,h)= \lim_{t\to 0+}\left\langle \f{h-\mathrm{e}^{-t S_Q}h}{t},h\right\rangle.
\end{align*}
\item One has
\begin{align*}
\min \sigma(S_Q)&=\inf\{Q(f,f):\>f\in\dom(Q),\>\left\|f\right\|=1\}\\
&=\inf\{ \left\langle S_Qf,f\right\rangle:\>f\in\dom(S_Q),\>\left\|f\right\|=1\}.
\end{align*}
\end{itemize}
\end{Theoreme}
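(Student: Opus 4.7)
The plan is to leverage Theorem \ref{kaq1} to get $S_Q$ for free and then systematically verify the four bullet points using the spectral calculus of $S_Q$ (notably Proposition \ref{comu} for square roots) together with a Yosida-type approximation to run the core argument. Throughout, after a translation $Q \leadsto Q + c\langle\cdot,\cdot\rangle$ (which corresponds to $S_Q \leadsto S_Q + c$ and preserves all assertions), we may assume $S_Q \geq 0$, so $|S_Q| = S_Q$ and $Q(f_1,f_2) = \langle \sqrt{S_Q}f_1, \sqrt{S_Q}f_2\rangle$ on $\dom(Q) = \dom(\sqrt{S_Q})$.

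For the first bullet, $\dom(S_Q)\subset\dom(\sqrt{S_Q}) = \dom(Q)$ is immediate from the spectral calculus. For $f_1\in\dom(S_Q)$, Proposition \ref{comu} applied to $T = \sqrt{S_Q}$ with $\phi(\lambda) = \mathbf{1}$ on $\sigma(S_Q)$ gives $\sqrt{S_Q}f_1\in\dom(\sqrt{S_Q})$ and $\sqrt{S_Q}(\sqrt{S_Q}f_1) = S_Q f_1$, whence for every $f_2\in\dom(Q)$ the self-adjointness of $\sqrt{S_Q}$ yields $Q(f_1,f_2) = \langle\sqrt{S_Q}f_1,\sqrt{S_Q}f_2\rangle = \langle S_Q f_1,f_2\rangle$. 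For uniqueness, if $S'$ is any self-adjoint semibounded operator with $\dom(S')\subset\dom(Q)$ and the same form identity, then for $f_1\in\dom(S')$ and $f_3\in\dom(S_Q)$ the symmetry of $Q$ and the identity just proved give $\langle S' f_1, f_3\rangle = Q(f_1,f_3) = \overline{Q(f_3,f_1)} = \overline{\langle S_Q f_3, f_1\rangle} = \langle f_1, S_Q f_3\rangle$; this places $f_1$ in $\dom(S_Q^*) = \dom(S_Q)$ with $S_Q f_1 = S' f_1$, so $S'\subset S_Q$ and hence $S' = S_Q$ by self-adjointness.

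For the second bullet, pick $c>0$ large so that $S_Q + c \geq 1$ and, given $f\in\dom(Q)$, define the Yosida approximants $f_n := n(S_Q + c + n)^{-1}f\in\dom(S_Q)$. Spectral calculus shows both $f_n\to f$ in $\IHH$ and $\sqrt{S_Q}f_n\to\sqrt{S_Q}f$ in $\IHH$ by dominated convergence on the spectral measure (since $n/(\lambda + c + n)\to 1$ boundedly and $\sqrt{\lambda}\,n/(\lambda + c + n)\to\sqrt{\lambda}$ pointwise, dominated by $\sqrt{\lambda}\in L^2(d\mu_f)$); this is exactly convergence of $f_n\to f$ in the Hilbert space norm \eqref{qap}, so $\dom(S_Q)$ is a core. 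For the characterization, the ``only if'' direction is bullet one applied with $D = \dom(Q)$ and $f_2 = S_Q f_1$. Conversely, if $f_1\in\dom(Q)$, $f_2\in\IHH$ and a core $D$ satisfy $Q(f_1,f_3) = \langle f_2,f_3\rangle$ for $f_3\in D$, then Cauchy--Schwarz in the $\langle\cdot,\cdot\rangle_{Q,C}$-norm together with density of $D$ extends the identity to all $f_3\in\dom(Q)$; taking $f_3\in\dom(S_Q)$ and invoking bullet one gives $\langle f_1, S_Q f_3\rangle = \langle f_2,f_3\rangle$ for all $f_3\in\dom(S_Q)$, so $f_1\in\dom(S_Q^*) = \dom(S_Q)$ with $S_Q f_1 = f_2$.

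For the third bullet, writing $\mu_h$ for the spectral measure of $h$ with respect to $S_Q$, the spectral calculus gives
\[
\left\langle \frac{h - \mathrm{e}^{-tS_Q}h}{t}, h\right\rangle = \int_{[0,\infty)} \frac{1 - \mathrm{e}^{-t\lambda}}{t}\, d\mu_h(\lambda),
\]
and the integrand increases monotonically to $\lambda$ as $t\to 0+$; monotone convergence gives a finite limit iff $\int\lambda\,d\mu_h(\lambda)<\infty$, i.e.\ iff $h\in\dom(\sqrt{S_Q}) = \dom(Q)$, with value $\|\sqrt{S_Q}h\|^2 = Q(h,h)$. The fourth bullet follows by the same formula: for $f\in\dom(S_Q)$ with $\|f\| = 1$, $\langle S_Q f,f\rangle = \int \lambda\,d\mu_f(\lambda)$, and taking $f$ in the range of spectral projections onto small neighbourhoods of $\min\sigma(S_Q)$ shows the infimum equals $\min\sigma(S_Q)$; the corresponding equality with the infimum over $\dom(Q)$ follows from $Q(f,f) = \int\lambda\,d\mu_f(\lambda) \geq \min\sigma(S_Q)$ on $\dom(Q)$ together with $\dom(S_Q)\subset\dom(Q)$. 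The main technical hurdle is the $Q$-norm convergence of the Yosida approximants in bullet two; the rest is essentially dominated/monotone convergence against the spectral measure plus bookkeeping with the adjoint.
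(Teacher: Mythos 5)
Your proof is correct, but it takes a more self-contained route than the paper for the first two bullets. The paper simply cites Kato's first representation theorem (Theorem 2.1 in Ch.\ VI of \cite{kato1}) for the existence/uniqueness statement and the core characterization, and only carries out the spectral-calculus computations for the heat-semigroup characterization of $\dom(Q)$ and for $\min\sigma(S_Q)$ --- your treatment of those last two bullets (monotone convergence of $t^{-1}(1-\mathrm{e}^{-t\lambda})\nearrow\lambda$ against $\left\langle P_{S_Q}(\Id\lambda)h,h\right\rangle$, and localizing with spectral projections near $\min\sigma(S_Q)$) is essentially identical to the paper's. What you do differently is to reprove Kato's representation theorem from the spectral calculus of $S_Q$: the identity $Q(f_1,f_2)=\langle S_Qf_1,f_2\rangle$ via $\sqrt{S_Q}\sqrt{S_Q}f_1=S_Qf_1$, uniqueness via the adjoint, and the core property via Yosida approximants $n(S_Q+c+n)^{-1}f$ with dominated convergence in the form norm. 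This buys a proof that needs nothing beyond Theorem \ref{also} and Theorem \ref{kaq1}, at the cost of a somewhat longer argument; both routes are legitimate, and your Yosida argument is exactly the standard proof of the core statement.

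Two small points. First, your appeal to Proposition \ref{comu} with $\phi\equiv 1$ does not literally apply, since that proposition requires $\sup_{\lambda\in[0,\infty)}|\sqrt{\lambda}\,\phi(\lambda)|<\infty$, which fails for $\phi\equiv 1$; but you do not need it --- the fact that $f_1\in\dom(S_Q)$ implies $\sqrt{S_Q}f_1\in\dom(\sqrt{S_Q})$ with $\sqrt{S_Q}\big(\sqrt{S_Q}f_1\big)=S_Qf_1$ is exactly Theorem \ref{also} (iv) applied to $\phi=\phi'=\sqrt{\bullet}$. Second, the opening reduction ``$Q\leadsto Q+c\langle\bullet,\bullet\rangle$ corresponds to $S_Q\leadsto S_Q+c$'' is the standard convention and is how one should read the theorem, but note that it sits slightly uneasily with the paper's literal definition $Q_S(f_1,f_2)=\langle\sqrt{|S|}f_1,\sqrt{|S|}f_2\rangle$ (which uses $|S|$ rather than $S+C$); this is a defect of the paper's definition rather than of your argument, and none of your subsequent steps depend on it.
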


\begin{proof} The first assertion follows from Theorem 2.1 in \cite{kato1}.\\
 For the asserted heat semigroup characterization, just note that 
\begin{align*}
&\left\langle \f{f-\mathrm{e}^{-t S_Q}f}{t},f\right\rangle=\int^{\infty}_{\min\sigma(S_Q)}\f{1-\mathrm{e}^{-t\lambda }}{t} \left\langle P_{S_Q}(\Id \lambda)f,f\right\rangle\>\>\text{ for all $f\in\IHH$}, \\
&\dom(Q)=\dom\Big(\sqrt{|S_Q|}\Big)\\
&\quad\quad \quad\quad=\left\{  h\in \IHH:\>\>\int^{\infty}_{\min\sigma(S_Q)}|\lambda|\left\langle P_{S_Q}(\Id \lambda)h,h\right\rangle<\infty \right\},\\
&Q(h,h)=\int^{\infty}_{\min\sigma(S_Q)}|\lambda|\left\langle P_{S_Q}(\Id \lambda)h,h\right\rangle,\>\>h\in\dom(Q).
\end{align*}
In particular, the limit of $\left\langle \f{f-\mathrm{e}^{-t S_Q}f}{t},f\right\rangle$ as $t\to 0+$ always exists as an element of $[\min\sigma(S_Q),\infty]$, and it is finite if and only if $f\in\dom(Q)$. In this case, the limit is $Q(f,f)$.\\
The formula for $\min \sigma(S_Q)$ also follows easily from the spectral calculus (cf. Satz 8.27 in \cite{weidmann2}).
\end{proof}

\begin{Notatione}\label{notti} If $Q$, $Q\rq{}$ are symmetric, we write $Q\geq Q\rq{}$, if and only if $\dom(Q)\subset \dom(Q\rq{})$ and $Q(f,f)\geq Q\rq{}(f,f)$ for all $f\in \dom(Q)$.
\end{Notatione}

The Friedrichs extension of a semibounded operator can be defined as follows:

\begin{Examplee}\label{friedrichs} Let $S\geq -C$ be a symmetric (in particular, a densely defined) and semibounded operator in $\IHH$. Then the form $(f_1,f_2)\mapsto\left\langle Sf_1,f_2\right\rangle$ with domain of definition $\dom(S)$ is closable, and of course the closure $\tilde{Q}_S$ of that form is densely defined and semibounded. The operator $S_F$ associated with $\tilde{Q}_S$ is called the \emph{Friedrichs realization of $S$}. The operator $S_F$ can also be characterized as follows: $S_F$ is the uniquely determined self-adjoint semibounded extension of $S$ with domain of definition $\subset \dom(\tilde{Q}_S)$. Let $\IMM_C(S)$ denote the class of all self-adjoint extensions of $S$ which are $\geq -C$. Thus we have $S_F\in \IMM_C(S)$, and in addition the following maximality property holds: 
$$
T\in \IMM_C(S)\quad\Rightarrow \quad Q_T\leq \tilde{Q}_S.
$$
In particular, $S_F$ has the smallest bottom of spectrum $\min \sigma(S_F)$ among all operators in $\IMM_C(S)$. This is Krein\rq{}s famous result on the characterization of semibounded extensions \cite{krein2} \cite{krein1}.
 \end{Examplee}

\section{Strong convergence results for semigroups}

\subsection{Semigroup convergence from convergence on a core}

The following result is probably the most elementary convergence result for infinite-dimensional self-adjoint semigroups (cf. Theorem VIII.25 and Theorem VIII.20 in \cite{reed1}): 

\begin{Theoreme}\label{core} Let $S$ and $S_n$, $n\in\IN$, be self-adjoint semibounded operators in a complex separable Hilbert space $\IHH$, and assume that there exists a subspace $D\subset \IHH$ which is a common core for $S$ and $S_n$ for all $n$, such that $S_nf \to S f$ as $n\to\infty$ for all $f\in D$. Then for all $t\geq 0$ and all $f\in\IHH$ one has $\mathrm{e}^{-tS_n}f \to \mathrm{e}^{-tS}f $ as $n\to\infty$.
\end{Theoreme}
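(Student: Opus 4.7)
The plan is to proceed in two conceptually separate stages: first establish strong resolvent convergence, then pass to the semigroup via a functional-calculus / Stone--Weierstrass argument. As a preliminary reduction, I will pick a constant $C \geq 0$ such that $S + C \geq 1$ and $S_n + C \geq 1$ for every $n$ (a uniform lower bound implicit in the intended applications), set $T := S + C$ and $T_n := S_n + C$, and observe that $D$ remains a common core with $T_n f \to T f$ on $D$. The payoff is that $\sigma(T), \sigma(T_n) \subset [1,\infty)$, which gives the crucial uniform operator bounds $\|T_n^{-1}\| \leq 1$ and $\|\mathrm{e}^{-tT_n}\| \leq 1$; since $\mathrm{e}^{-tS_n} = \mathrm{e}^{tC}\mathrm{e}^{-tT_n}$, it is then enough to prove $\mathrm{e}^{-tT_n}f \to \mathrm{e}^{-tT}f$ for every $f \in \IHH$.

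The first main step will be to show $T_n^{-1}g \to T^{-1}g$ in norm for every $g \in \IHH$. Surjectivity of $T$ lets me write $g = Tf$ with $f \in \dom(T)$, and the identity
\[
T_n^{-1} g - T^{-1}g = T_n^{-1}(T - T_n)f
\]
combined with $\|T_n^{-1}\| \leq 1$ gives convergence immediately whenever $f \in D$. For a general $f \in \dom(T)$, the common-core hypothesis supplies $(f_k) \subset D$ converging to $f$ in the graph norm of $T$, and a standard $3\varepsilon$-estimate using the uniform resolvent bounds closes the argument; this transfer from $D$ to $\dom(T)$ is the only place where the core property is really exploited, and it will be the main technical obstacle. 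Strong convergence of every resolvent power $T_n^{-k} \to T^{-k}$ then follows at once from the telescoping identity
\[
T_n^{-k} - T^{-k} = \sum_{j=0}^{k-1} T_n^{-j}\bigl(T_n^{-1}-T^{-1}\bigr)T^{-(k-1-j)}
\]
and the uniform bounds on resolvent powers.

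The second step will bridge resolvent powers and the semigroup via Stone--Weierstrass. The set $\mathcal{A}$ of $\phi \in C([1,\infty])$ (one-point compactification) for which $\phi(T_n)f \to \phi(T)f$ strongly contains every polynomial in $\lambda^{-1}$ by the previous step; these form a unital subalgebra of $C([1,\infty])$ that separates points, hence by Stone--Weierstrass are uniformly dense. They therefore approximate the function $\lambda \mapsto \mathrm{e}^{-t\lambda}$ (which lies in $C_0([1,\infty))$ and extends continuously to $0$ at $\infty$ for $t > 0$; the case $t = 0$ is trivial) uniformly on $[1,\infty)$. For any polynomial approximant $\phi_\varepsilon$ with $\|\phi_\varepsilon - \mathrm{e}^{-t\cdot}\|_\infty < \varepsilon$, the spectral theorem together with the spectral inclusions $\sigma(T), \sigma(T_n) \subset [1,\infty)$ gives $\|\phi_\varepsilon(T_n) - \mathrm{e}^{-tT_n}\| \leq \varepsilon$ and similarly for $T$; a triangle inequality, combined with the strong convergence $\phi_\varepsilon(T_n)f \to \phi_\varepsilon(T)f$ that holds for each fixed $\varepsilon$ because $\phi_\varepsilon \in \mathcal{A}$, then completes the proof.
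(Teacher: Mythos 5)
Your argument is correct, and it is essentially the standard route that the paper itself merely cites (Reed--Simon, Theorems VIII.25 and VIII.20): the common-core hypothesis plus the uniform resolvent bound gives strong resolvent convergence, and a Stone--Weierstrass argument on the one-point compactification transfers this to any function that is continuous there --- in the semibounded setting, polynomials in $\lambda^{-1}$ on $[1,\infty]$ play the role of Reed--Simon's algebra generated by $(\lambda\pm i)^{-1}$. All the individual steps (the identity $T_n^{-1}g-T^{-1}g=T_n^{-1}(T-T_n)f$ for $f\in D$, the $3\varepsilon$-extension to $\dom(T)$ via graph-norm density, the telescoping for powers, and the uniform spectral estimate $\|\phi_\varepsilon(T_n)-\mathrm{e}^{-tT_n}\|\leq\sup_{[1,\infty)}|\phi_\varepsilon-\mathrm{e}^{-t\cdot}|$) check out.

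One point deserves emphasis: the constant $C$ with $S_n+C\geq 1$ for \emph{all} $n$, which you introduce as ``implicit in the intended applications,'' is not merely a convenience --- the theorem is false without it. For instance, on $\ell^2$ take $S=0$, $S_n=-nP_n$ with $P_n$ the projection onto the $n$-th coordinate, and $D$ the finitely supported sequences: all hypotheses as literally stated hold, yet $\mathrm{e}^{-tS_n}f\not\to f$ for $f=(1/n)_n$ and $t>0$. So you were right to make the uniform lower bound explicit; it is exactly what guarantees $\|\mathrm{e}^{-tT_n}\|\leq 1$ and $\|T_n^{-j}\|\leq 1$, on which both of your stages rest. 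This is a defect of the statement as written (and of the folklore version), not of your proof.
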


\subsection{Monotone convergence of sesquilinear forms}

We record the following two classical results concerning the monotone convergence of sequences of sesquilinear forms here (cf. Theorem 3.1 and Theorem 4.2 in \cite{sim}). First for increasing sequences:

\begin{Theoreme}\label{monn} Let $ Q_1\leq Q_2\leq\dots$ be a sequence of densely defined, closed and semibounded sesquilinear forms on a common complex separable Hilbert space $\IHH$. Assume that
\begin{align}\label{domi}
\left\{f\in\bigcap_{n}\dom(Q_n):\sup_n Q_n(f,f)<\infty\right\}\subset \IHH
\end{align}
is dense. Then $Q(f_1,f_2):=\lim_n Q_n(f_1,f_2)$ (polarization!) with the domain as given by (\ref{domi}) is a closed, semibounded sesquilinear form $Q$ in $\IHH$, and with $S_n$ the operator corresponding to $Q_n$ and $S$ the operator corresponding to $Q$, one has
$$
\mathrm{e}^{-t S_n}f\to \mathrm{e}^{-t S}f\>\>\text{  as $n\to\infty$, for all $t\geq 0$ and all $f\in\IHH$.}
$$ 
\end{Theoreme}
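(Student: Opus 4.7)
The plan is to verify that $Q$ is a densely defined, semibounded, closed symmetric sesquilinear form on $\IHH$, then to upgrade form convergence to strong resolvent convergence of the associated operators $S_n \to S$, and finally to transfer this into strong semigroup convergence by the Trotter--Kato principle already invoked in the proof of Theorem \ref{core}.

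For the first step, fix $C \geq 0$ with $Q_1 \geq -C$; the monotonicity hypothesis (read through Notation \ref{notti}, so that the domains $\dom(Q_n)$ are decreasing while the values $Q_n(f,f)$ are increasing) gives $Q_n \geq -C$ on $\dom(Q_n) \subset \dom(Q_1)$ for every $n$. Extend each $Q_n$ to a functional $\widetilde{Q}_n : \IHH \to [-C,\infty]$ by setting $\widetilde{Q}_n(f) = Q_n(f,f)$ on $\dom(Q_n)$ and $+\infty$ elsewhere. By the lower-semicontinuity characterization of closedness recalled in Section \ref{sopq}, each $\widetilde{Q}_n$ is lower semicontinuous on $\IHH$, and by construction the $\widetilde{Q}_n$ increase pointwise to the functional $\widetilde{Q}$ equal to $Q(f,f)$ on the prescribed domain $\dom(Q)$ and to $+\infty$ off it. Since pointwise suprema of l.s.c.\ functions are l.s.c., $\widetilde{Q}$ is l.s.c.; using the parallelogram identity together with the uniform lower bound, $\dom(Q)$ is seen to be a linear subspace on which polarization produces a symmetric sesquilinear form $Q$, which is then closed and semibounded, and densely defined by hypothesis.

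For the second step, pick $\lambda > C$ and, for $g \in \IHH$, set $u_n := (S_n + \lambda)^{-1} g$ and $u := (S + \lambda)^{-1} g$. By Theorem \ref{kaq2}, $u_n$ is the unique minimizer over $\dom(Q_n)$ of the strictly convex coercive functional
\begin{align*}
\Phi_n(v) := Q_n(v,v) + \lambda \|v\|^2 - 2\Re\langle g,v\rangle,
\end{align*}
and $u$ is the unique minimizer of the analogously defined $\Phi$ over $\dom(Q)$. Because $\dom(Q) \subset \dom(Q_n)$ and $\widetilde{Q}_n \leq \widetilde{Q}$ pointwise, one has $\Phi_n(u_n) \leq \Phi_n(u) \leq \Phi(u)$, and completing the square yields the uniform bound $\|u_n\| \leq \|g\|/(\lambda - C)$. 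Extracting a weakly convergent subsequence $u_{n_k} \rightharpoonup u^*$, the weak lower semicontinuity of the convex l.s.c.\ functional $\widetilde{Q}$ forces $u^* \in \dom(Q)$ with $\Phi(u^*) \leq \liminf_k \Phi_{n_k}(u_{n_k}) \leq \Phi(u)$, so $u^* = u$ by uniqueness of the minimizer and the whole sequence $u_n$ converges weakly to $u$. Strong convergence $u_n \to u$ then follows from the coercivity identity
\begin{align*}
\Phi_n(u_n) - \Phi_n(u) \;=\; Q_n(u_n - u, u_n - u) + \lambda \|u_n - u\|^2 \;\geq\; \lambda \|u_n - u\|^2
\end{align*}
combined with $\Phi_n(u) \to \Phi(u)$ (monotone convergence of $Q_n(u,u)$) and $\Phi_n(u_n) \to \Phi(u)$ (sandwiched between $\Phi_n(u)$ and $\Phi(u)$).

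Having strong resolvent convergence at the single point $-\lambda$, the classical Trotter--Kato argument used in the proof of Theorem \ref{core} upgrades it to $\phi(S_n) \to \phi(S)$ strongly for every bounded continuous $\phi$ on $\IR$ whose restriction to $[-C,\infty)$ vanishes at infinity; this covers $\phi(x) = \mathrm{e}^{-tx}$ for each $t \geq 0$ since the spectra of $S_n$ and $S$ all lie in $[-C,\infty)$, giving the desired strong semigroup convergence. The main obstacle I anticipate is precisely the identification of the weak subsequential limit $u^*$ with $u$ in the resolvent step: because the domains $\dom(Q_n)$ are shrinking rather than growing, one cannot simply test $u_{n_k}$ against a fixed dense subspace, and all the information must be extracted from the joint monotone l.s.c.\ structure of the extended functionals $\widetilde{Q}_n$ built in step one. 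Once that weak-to-strong passage is in place, the remainder is bookkeeping.
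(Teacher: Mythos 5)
The paper does not actually prove Theorem \ref{monn}: it is quoted from Simon's paper \cite{sim} (Theorems 3.1 and 4.2), whose standard argument for the increasing case runs through operator monotonicity of resolvents --- since $Q_n\leq Q_{n+1}$, the nonnegative bounded operators $(S_n+\lambda)^{-1}$ form a decreasing sequence, hence converge strongly to some nonnegative $T$, which is then identified with $(S+\lambda)^{-1}$ via the closedness of the limit form. Your route is genuinely different: a direct variational (Gamma-convergence) argument that identifies $(S_n+\lambda)^{-1}g$ as the minimizer of the coercive functional $\Phi_n$. Your first step (closedness of $Q$ from the pointwise supremum of the lower semicontinuous extended functionals $\widetilde Q_n$, linearity of $\dom(Q)$ via the parallelogram law, existence of the polarized limit) is correct, and the final passage from strong resolvent convergence at one real point $-\lambda<-C$ to strong convergence of $\mathrm{e}^{-tS_n}$ is the standard Reed--Simon/Trotter--Kato argument, legitimate here because all spectra lie in $[-C,\infty)$. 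What your approach buys is independence from the operator-monotonicity calculus; what it costs is extra care in the weak-compactness step.

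That step is the one place your write-up is not yet airtight, and you correctly flagged it. The inequality $\Phi(u^{*})\leq\liminf_k\Phi_{n_k}(u_{n_k})$ does \emph{not} follow from weak lower semicontinuity of the limit functional alone: weak l.s.c.\ of $\Phi$ gives $\Phi(u^{*})\leq\liminf_k\Phi(u_{n_k})$, and $\Phi\geq\Phi_{n_k}$ points the wrong way. The correct argument combines monotonicity with weak l.s.c.\ of each \emph{fixed} $\Phi_m$: for $n_k\geq m$ one has $\Phi_{n_k}(u_{n_k})\geq\Phi_m(u_{n_k})$, hence $\liminf_k\Phi_{n_k}(u_{n_k})\geq\Phi_m(u^{*})$ for every $m$, and $\sup_m\Phi_m=\Phi$ pointwise by the very definition of $\dom(Q)$. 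Note also that $\widetilde Q_m$ itself need not be weakly l.s.c.\ (the term $-C\|\cdot\|^2$ is weakly upper semicontinuous); what is convex, norm-closed and hence weakly l.s.c.\ is $\widetilde Q_m+\lambda\|\cdot\|^2$ with $\lambda>C$. Two further cosmetic slips: the coercivity identity should read $\Phi_n(u)-\Phi_n(u_n)=Q_n(u-u_n,u-u_n)+\lambda\|u-u_n\|^2\geq(\lambda-C)\|u-u_n\|^2$ (your sign is reversed and the constant is $\lambda-C$, not $\lambda$), and $\Phi_n(u_n)\to\Phi(u)$ is not a pure sandwich --- the upper bound $\Phi_n(u_n)\leq\Phi_n(u)\leq\Phi(u)$ must be paired with the liminf inequality above applied to the full sequence $u_n\rightharpoonup u$. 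With these repairs the proof is complete.
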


The situation for decreasing sequences is a little more subtle, since one has to \emph{assume} the closability of the limit form. (In fact, one can drop closability, in which case one gets convergence to the operator corresponding to the closure of the \emph{regular part} of $Q$, with $Q$ as below; we will not need this subtle generalization.)

\begin{Theoreme}\label{monn2} Let $Q_1\geq Q_2\geq\dots$ be a sequence of densely defined, closed and semibounded sesquilinear forms on a common complex Hilbert space $\IHH$. Assume that the sesquilinear form in $\IHH$ given by
$$
Q(f_1,f_2):=\lim_n Q_n(f_1,f_2),\quad\dom(Q):=\bigcup_n \dom(Q_n) 
$$
is closable. Then $Q$ is automatically closed (it is obviously densely defined and semibounded), and with $S_n$ the operator corresponding to $Q_n$ and $S$ the operator corresponding to $Q$, one has
$$
\mathrm{e}^{-t S_n}f\to \mathrm{e}^{-t S}f\text{ in $\IHH$ as $n\to\infty$, for all $t\geq 0$, and all $f\in\IHH$.}
$$ 
\end{Theoreme}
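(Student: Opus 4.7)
The plan is a monotone convergence argument for resolvents in the Kato--Simon spirit, transported to semigroups via Trotter--Kato, carried out in four steps.

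\textbf{Uniform lower bound.} Since $Q$ is closable with closure $\overline{Q}$ closed, the latter is semibounded, say $\overline{Q} \geq -C$. The hypothesis $Q_n \geq Q_{n+1}$ includes (by Notation \ref{notti}) the domain nesting $\dom(Q_1) \subset \dom(Q_2) \subset \cdots \subset \dom(\overline{Q})$, and on this union $Q_n(f,f) \downarrow Q(f,f) = \overline{Q}(f,f) \geq -C\|f\|^2$. Hence $Q_n \geq -C$ for every $n$, so $S_n \geq -C$ and $\|(S_n + \lambda)^{-1}\| \leq 1/(\lambda - C)$ for each $\lambda > C$.

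\textbf{Monotone convergence of resolvents.} By L\"{o}wner--Heinz operator monotonicity of $x \mapsto -(x+\lambda)^{-1}$, the form inequality $Q_n \geq Q_{n+1}$ translates to $R_n(\lambda) := (S_n + \lambda)^{-1} \leq R_{n+1}(\lambda)$ as positive bounded self-adjoint operators (note that the analogous monotonicity does not hold for the semigroups themselves, as easy $2 \times 2$ examples show). The monotone norm-bounded sequence therefore converges strongly to a bounded positive self-adjoint operator $R(\lambda)$, and passing to the limit in the first resolvent identity makes $\{R(\lambda)\}_{\lambda > C}$ a pseudo-resolvent.

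\textbf{Identification with the resolvent of $S$ (main obstacle).} This is the crucial step, and the only place where the closability hypothesis is indispensable; without it one would recover only the closure of Simon's ``regular part'' of $Q$. Given $g \in \IHH$, set $h_n := R_n(\lambda) g \in \dom(S_n) \subset \dom(Q)$, which converges in $\IHH$ to $h := R(\lambda) g$. The form-characterisation of $R_n(\lambda)$ gives
\begin{align*}
Q_n(h_n, k) + \lambda \langle h_n, k\rangle = \langle g, k\rangle \quad (k \in \dom(Q_n)),
\end{align*}
whence $k = h_n$ yields the uniform bound $Q(h_n, h_n) \leq Q_n(h_n, h_n) \leq \|g\|^2/(\lambda - C)$. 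The lower-semicontinuity characterisation of closed semibounded forms then places $h \in \dom(\overline{Q})$ with $\overline{Q}(h,h) \leq \liminf_n Q(h_n, h_n)$. Passing to the limit $n \to \infty$ in the above form identity with a fixed $k \in \dom(Q)$, and exploiting $Q_n(h,k) \to \overline{Q}(h,k)$ after approximating $h \in \dom(\overline{Q})$ in the form norm by elements of the core $\dom(Q)$, one obtains $\overline{Q}(h, k) + \lambda\langle h, k\rangle = \langle g, k\rangle$ for all $k \in \dom(Q)$. This identifies $R(\lambda) = (\overline{S} + \lambda)^{-1}$, where $\overline{S}$ is the operator associated with $\overline{Q}$ via Theorem \ref{kaq1}; in particular $R(\lambda)$ is injective, $Q = \overline{Q}$ is already closed, and $S = \overline{S}$.

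\textbf{Conclusion.} Strong resolvent convergence $R_n(\lambda) \to (S+\lambda)^{-1}$ then implies the asserted strong semigroup convergence $\mathrm{e}^{-tS_n} \to \mathrm{e}^{-tS}$ for every $t \geq 0$ by the Trotter--Kato theorem.
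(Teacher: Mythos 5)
Your overall strategy -- uniform lower bound, monotone strong convergence of the resolvents, identification of the limit via the closability hypothesis, and passage from resolvents to semigroups -- is the standard route (essentially Kato's Theorem VIII.3.11, which is also what underlies the two results of Simon that the text cites in lieu of giving a proof). Steps 1, 2 and 4 are fine, as is the first half of Step 3: the uniform bound $Q(h_n,h_n)\leq\|g\|^2/(\lambda-C)$ and the lower-semicontinuity argument placing $h$ in $\dom(\overline{Q})$ are correct. The gap sits exactly at the point you yourself flag as the main obstacle. What must be shown is $Q_n(h_n,k)\to\overline{Q}(h,k)$ for fixed $k\in\dom(Q)$, and your justification -- ``$Q_n(h,k)\to\overline{Q}(h,k)$ after approximating $h$ in the form norm by core elements'' -- does not parse: $h$ need not lie in any $\dom(Q_n)$, so $Q_n(h,k)$ is undefined, and more importantly you only know $h_n\to h$ in $\IHH$, not in any form norm, so you cannot trade $h_n$ for $h$ inside $Q_n$. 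The standard repair needs two ingredients you omit: (i) write $Q_n(h_n,k)=Q(h_n,k)+(Q_n-Q)(h_n,k)$ and kill the second term by the Cauchy--Schwarz inequality for the nonnegative form $Q_n-Q$ on $\dom(Q_n)$, using $(Q_n-Q)(k,k)\to 0$ together with the uniform bound on $(Q_n-Q)(h_n,h_n)$; (ii) observe that $(h_n)$ is bounded in the Hilbert space $\big(\dom(\overline{Q}),\langle\cdot,\cdot\rangle_{\overline{Q},C}\big)$, hence converges weakly there, and the weak limit must be $h$ because the embedding into $\IHH$ is continuous; this yields $Q(h_n,k)=\overline{Q}(h_n,k)\to\overline{Q}(h,k)$. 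With these two points the identification $R(\lambda)=(\overline{S}+\lambda)^{-1}$ is complete and the rest of your argument goes through.

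Separately, your parenthetical deduction ``in particular $R(\lambda)$ is injective, $Q=\overline{Q}$ is already closed'' is a non sequitur: identifying the limiting pseudo-resolvent with $(\overline{S}+\lambda)^{-1}$ says nothing about whether $\dom(Q)=\dom(\overline{Q})$. In fact no argument can close this gap, because the assertion is false in general: take $\IHH=L^2(0,1)$ and $Q_n(f,f)=n^{-1}\int_0^1|f'|^2$ with $\dom(Q_n)=W^{1,2}(0,1)$; this is a decreasing sequence of closed nonnegative forms whose limit is the zero form on $W^{1,2}(0,1)$, which is closable (its closure is the zero form on all of $L^2(0,1)$) but not closed. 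The convergence assertion survives -- one must simply read $S$ as the operator associated with $\overline{Q}$, which is exactly what Simon's Theorems 3.2 and 4.2 deliver -- so this particular defect lies in the statement as printed rather than only in your argument; but you should not present the closedness of $Q$ as a consequence of your resolvent computation.
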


This follows from Theorem 3.2 and Theorem 4.2 in \cite{sim}.

\section{Abstract Kato-Simon inequalities}

Let $E\to M$ be a smooth metric $\IC$-vector bundle over a smooth Riemannian manifold $M$. We recall our previous convention that $\left\langle\bullet,\bullet\right\rangle$ (respectively $\left\|\bullet\right\|$) denotes the various $\IL^2$-scalar products (respectively norms), whereas $\left(\bullet,\bullet\right)$ (respectively $\left|\bullet\right|$) denotes the various fiberwise taken finite-dimensional scalar products (respectively norms). The Riemannian volume measure is denoted by $\mu$. The following result has been shown by Hess/Schrader/Uhlenbrock in \cite{hess}:

\begin{Theoreme}\label{hsi} Let $S$ be a self-adjoint nonnegative operator in $\Gamma_{\IL^2}(M,E)$, and let $T$ be a self-adjoint nonnegative operator in $\IL^2(M)$. Then the following statements are equivalent:\\
(i) For all $f\in\Gamma_{\IL^2}(M,E)$ and all $t\geq  0$, one has
$$
|\mathrm{e}^{-t S} f|\leq \mathrm{e}^{-t T}|f|\quad\text{ $\mu$-a.e.} 
$$
(ii) There exists an operator core $D$ for $S$ such that for all $\lambda>0$, $f_1 \in D$,  $h\in \IL^2_{\geq 0}(M)$, there exists an $f_2 \in\Gamma_{\IL^2}(M,E)$ with the following properties:
\begin{itemize}
\item $|f_2|=(T+\lambda)^{-1}h$ $\mu$-a.e.
\item $\left\langle f_1,f_2\right\rangle=\left\langle|f_1|,|f_2|\right\rangle$
\item $\Re  \left\langle Sf_1,f_2\right\rangle\geq \left\langle|f_1|,T|f_2|\right\rangle.$
\end{itemize}
\end{Theoreme}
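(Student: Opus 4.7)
The strategy is to prove the two implications by passing through the resolvent. The key intermediate statement is the \emph{resolvent domination}
\begin{align*}
\big| (S+\lambda)^{-1} g \big| \leq (T+\lambda)^{-1} |g|\quad \text{$\mu$-a.e.,}\quad g\in\Gamma_{\IL^2}(M,E),\ \lambda>0,
\end{align*}
which will serve as a bridge between the pointwise semigroup inequality and the form/operator inequality encoded in (ii).

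For (i) $\Rightarrow$ (ii), I would take $D:=\dom(S)$ and, given $f_1\in D$ and $0\leq h\in \IL^2(M)$, define the candidate
\begin{align*}
f_2:=\sigma\cdot(T+\lambda)^{-1}h,\qquad \sigma(x):=\begin{cases} f_1(x)/|f_1(x)|,& f_1(x)\ne 0,\\ 0,& \text{else.}\end{cases}
\end{align*}
Since $(T+\lambda)^{-1}h\geq 0$ $\mu$-a.e.\ by positivity preservation of $\mathrm{e}^{-tT}$ (inherited from (i) applied to scalars through restriction, or equivalently from the fact that $\mathrm{e}^{-tT}|f|\geq|\mathrm{e}^{-tS}f|\geq 0$), one has $|f_2|=(T+\lambda)^{-1}h$, and the pointwise identity $(f_1,f_2)=|f_1|\cdot(T+\lambda)^{-1}h=(|f_1|,|f_2|)$ gives $\langle f_1,f_2\rangle=\langle|f_1|,|f_2|\rangle$. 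For the third property, I would use the generator characterization
\begin{align*}
\Re\langle Sf_1,f_2\rangle-\langle|f_1|,T|f_2|\rangle=\lim_{t\to 0+}\tfrac{1}{t}\Big(\langle\mathrm{e}^{-tT}|f_1|,|f_2|\rangle-\Re\langle\mathrm{e}^{-tS}f_1,f_2\rangle\Big),
\end{align*}
where the equality exploits $\langle f_1,f_2\rangle=\langle|f_1|,|f_2|\rangle$; a pointwise Cauchy-Schwarz combined with (i) bounds $\Re\langle\mathrm{e}^{-tS}f_1,f_2\rangle\leq \int|\mathrm{e}^{-tS}f_1|\cdot|f_2|\,\Id\mu\leq\langle\mathrm{e}^{-tT}|f_1|,|f_2|\rangle$, and the limit is therefore $\geq 0$.

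For (ii) $\Rightarrow$ (i), I would first derive the resolvent domination. Given $g\in\Gamma_{\IL^2}(M,E)$ and $0\leq h\in\IL^2(M)$, set $f_1:=(S+\lambda)^{-1}g$ and, using the core property, approximate $f_1$ in the graph norm of $S$ by a sequence in $D$; for each approximant apply (ii) to produce $f_2$ and combine the three properties:
\begin{align*}
\langle|f_1|,h\rangle&=\langle|f_1|,(T+\lambda)|f_2|\rangle\\
&=\langle|f_1|,T|f_2|\rangle+\lambda\langle|f_1|,|f_2|\rangle\\
&\leq \Re\langle Sf_1,f_2\rangle+\lambda\Re\langle f_1,f_2\rangle\\
&=\Re\langle g,f_2\rangle\leq\langle|g|,|f_2|\rangle=\langle(T+\lambda)^{-1}|g|,h\rangle.
\end{align*}
Since $h\geq 0$ is arbitrary, this yields $|f_1|\leq(T+\lambda)^{-1}|g|$ pointwise $\mu$-a.e. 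Iterating gives $|(S+\lambda)^{-n}g|\leq(T+\lambda)^{-n}|g|$ for every $n\in\IN_{\geq 1}$, and the Euler exponential formula $\mathrm{e}^{-tS}=\lim_n(I+tS/n)^{-n}$ (valid for nonnegative self-adjoint operators) then delivers (i) pointwise $\mu$-a.e., after extracting an a.e.\ convergent subsequence.

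The main obstacle is the approximation step in (ii) $\Rightarrow$ (i): the chain of inequalities above has to be carried out for $f_1=(S+\lambda)^{-1}g$, which a priori sits only in $\dom(S)$ rather than in the prescribed core $D$. One must replace $f_1$ by a sequence $(f_{1,n})\subset D$ with $f_{1,n}\to f_1$ and $Sf_{1,n}\to Sf_1$, choose corresponding $f_{2,n}$ by (ii) (with $h$ fixed), and then pass to the limit — but the $f_{2,n}$ depend on $f_{1,n}$ through the pointwise sign, so uniform $\IL^2$-control of $|f_{2,n}|=(T+\lambda)^{-1}h$ has to be leveraged to take the limit in the two real-pairing identities and in the inequality. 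A clean way is to test the chain against $h$ directly: after integrating, each term is a continuous functional of $f_{1,n}$ in the graph norm, and the inequality survives the passage to the limit, bypassing any need to identify a limiting $f_2$.
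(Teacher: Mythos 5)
The paper states this theorem without proof, citing Hess--Schrader--Uhlenbrock, so there is no internal argument to compare against; what you give is the standard resolvent-based proof of that result, and its overall architecture is sound. The direction (ii) $\Rightarrow$ (i) is essentially complete: the chain of (in)equalities, the graph-norm approximation tested against a fixed $h\geq 0$ (which, as you say, avoids having to produce a limiting $f_2$), the iteration to $|(S+\lambda)^{-n}g|\leq (T+\lambda)^{-n}|g|$, and the Euler product formula all work. One small point worth making explicit: the iteration uses that $(T+\lambda)^{-1}$ is positivity preserving and hence monotone on nonnegative functions; this is itself a consequence of your first step applied to $g=u\,e_1$ with $u\geq 0$ and $e_1$ a global Borel orthonormal frame section as in Remark \ref{global} (the same device is needed to justify that every $0\leq u\in\IL^2(M)$ is of the form $|f|$ when you argue that $\mathrm{e}^{-tT}$ is positivity preserving in the other direction).

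The one genuine defect is in (i) $\Rightarrow$ (ii): your candidate $f_2=\sigma\cdot(T+\lambda)^{-1}h$ vanishes on the set $\{f_1=0\}$, so $|f_2|=1_{\{f_1\neq 0\}}(T+\lambda)^{-1}h$, which is not $(T+\lambda)^{-1}h$ unless the latter vanishes a.e.\ where $f_1$ does. Thus the first bullet of (ii) fails as stated, and, worse, $1_{\{f_1\neq 0\}}(T+\lambda)^{-1}h$ need not lie in $\dom(T)$, so the pairing $\left\langle|f_1|,T|f_2|\right\rangle$ in the third bullet is not even defined for your $f_2$. The repair is one line: with $u:=(T+\lambda)^{-1}h\geq 0$ and $e_1$ a global Borel orthonormal frame section, set $f_2:=\sigma\,u+1_{\{f_1=0\}}\,u\,e_1$. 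Then $|f_2|=u\in\dom(T)$, the pointwise identity $(f_1,f_2)=|f_1|\,u$ is unaffected (both sides vanish where $f_1=0$), and your generator computation for the third bullet --- which correctly uses $f_1\in\dom(S)$, $|f_2|\in\dom(T)$, the second bullet to convert $\Re\left\langle f_1,f_2\right\rangle$ into $\left\langle|f_1|,|f_2|\right\rangle$, and the pointwise Cauchy--Schwarz estimate $\Re(\mathrm{e}^{-tS}f_1,f_2)\leq|\mathrm{e}^{-tS}f_1|\,|f_2|\leq(\mathrm{e}^{-tT}|f_1|)\,|f_2|$ --- goes through verbatim. With that correction the proof is complete.
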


The following simple observation, which can also be found in \cite{hess}, is sometimes useful:

\begin{Remarke}\label{dap} Let $A$ be a bounded operator in $\Gamma_{\IL^2}(M,E)$, and let $B$ be a bounded operator in $\IL^2(M)$. Then the following statements are equivalent:
\begin{itemize}
\item For all $f\in\Gamma_{\IL^2}(M,E)$, one has
$$
|A f|\leq B|f|\quad\text{ $\mu$-a.e.} 
$$
\item For all $f_1,f_2\in\Gamma_{\IL^2}(M,E)$, one has
$$
\left\langle A f_1,f_2\right\rangle\leq \left\langle B|f_1|,B|f_2|\right\rangle\quad\text{ $\mu$-a.e.} .
$$
\end{itemize}
\end{Remarke}

Note that Theorem \ref{hsi} is in fact entirely measure theoretic in the sense that one could replace $M$ by an arbitrary sigma-finite measure space. In this case, $E\to M$ could be taken to be any \lq\lq{}measurable metric $\IC$-vector bundle\rq\rq{} (with the canonically induced $\IL^2$-spaces). In the trivial bundle case $E=M\times \IC\to\IC$, the implication (ii) $\Rightarrow$ (i) had already been shown by B. Simon in \cite{simon1}, who also conjectured the implication (i) $\Rightarrow$ (ii) therein (and independently gave a proof of (i) $\Rightarrow$ (ii) for this special case in \cite{simon2}). For further abstract results that are in the spirit of Theorem \ref{hsi}, we refer the reader to I. Shigekawa's paper \cite{shig}, and the results by E.M. Ouhabaz \cite{ouhab1,ouhab2}. For results of this type for operators on Banach lattices, we refer the reader to W. Arendt's paper \cite{arendt}.

\section{Weak-to-strong differentiability theorem}

\begin{Theoreme}\label{weaki} Let $M$ be a smooth $m$-manifold, let $U\subset M$ be open, let $\IHH$ be a Hilbert space, and let $k\in\IN_{\geq 1}$. Then every map
$$
\Psi:U\longrightarrow \IHH
$$  
that is $C^k$ in the weak sense is automatically $C^{k-1}$ in the norm sense.
\end{Theoreme}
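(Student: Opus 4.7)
Since both hypothesis and conclusion are local and chart-invariant, I will first reduce to the case where $U$ is an open subset of $\IR^m$ by fixing a smooth chart around an arbitrary point, and then work on a compact neighborhood $K \subset U$ of that point.

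My overall strategy will be to realise each weak partial derivative of order $\leq k$ as a genuine element of $\IHH$ (via the uniform boundedness principle and Riesz representation), obtain locally uniform norm bounds on these vectors (again by uniform boundedness), and finally use weak Taylor expansions to promote weak to norm differentiability inductively. Concretely, for every multi-index $\alpha$ with $|\alpha|\leq k$ and every $x\in U$, the linear functional $f \mapsto \partial^\alpha \langle f, \Psi(\bullet)\rangle(x)$ is the pointwise limit of finite linear combinations of evaluation functionals $f \mapsto \langle f, \Psi(y)\rangle$ (arising from iterated difference quotients), so Banach--Steinhaus yields its boundedness and Riesz then delivers a unique vector $\partial^\alpha_w\Psi(x) \in \IHH$ representing it. A second Banach--Steinhaus argument, applied to the weakly bounded family $\{\partial^\alpha_w\Psi(x) : x \in K\}$ (weakly bounded because each $x \mapsto \partial^\alpha \langle f, \Psi(\bullet)\rangle(x)$ is continuous and hence bounded on $K$), will furnish the local norm bound $C_\alpha(K) := \sup_{x \in K}\|\partial^\alpha_w\Psi(x)\| < \infty$ for every $|\alpha|\leq k$.

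I will then induct on $j = 0, 1, \ldots, k-1$ to show that $\Psi$ is norm $C^j$ with weak and norm partial derivatives coinciding up to order $j$. The base case $j = 0$ will follow from the weak fundamental theorem of calculus, which gives
\[
\langle f, \Psi(y) - \Psi(x)\rangle = \sum_{i=1}^m (y_i - x_i) \int_0^1 \langle f, \partial^{e_i}_w\Psi(x + t(y-x))\rangle\, dt,
\]
whence upon taking $\sup_{\|f\| \leq 1}$ one obtains $\|\Psi(y) - \Psi(x)\| \leq |y-x| \sum_i C_{e_i}(K)$. For the inductive step I will apply the second-order weak Taylor formula to a fixed weak derivative $\partial^\alpha_w \Psi$ with $|\alpha| = j-1$:
\[
\bigl\langle f,\, \partial^\alpha_w\Psi(x + t e_i) - \partial^\alpha_w\Psi(x) - t\, \partial^{\alpha+e_i}_w\Psi(x)\bigr\rangle = \int_0^t (t-s)\,\langle f, \partial^{\alpha + 2e_i}_w\Psi(x+s e_i)\rangle\, ds;
\]
taking $\sup_{\|f\| \leq 1}$ and using the bound $C_{\alpha + 2e_i}(K) < \infty$, which is available precisely because $|\alpha + 2e_i| \leq k$, will yield an $O(t^2)$ remainder, hence norm differentiability with derivative $\partial^{\alpha+e_i}_w\Psi$. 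Norm continuity of this derivative will then follow by applying the base-case argument to $\partial^{\alpha+e_i}_w\Psi$ itself, which is weakly $C^{k-j}$ with $k-j \geq 1$.

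The principal obstacle is conceptual rather than computational: weak $C^k$ regularity is a priori only a family of statements about the scalar functions $\langle f, \Psi(\bullet)\rangle$, so the weak partial derivatives initially live merely as linear functionals rather than as elements of $\IHH$. The uniform boundedness principle is indispensable in converting pointwise scalar information into norm control on $\IHH$-valued objects. The unavoidable loss of one order of differentiability, i.e.\ weakly $C^k$ implying only norm $C^{k-1}$, is explained by the fact that the Taylor-remainder estimate needed to upgrade a pointwise weak derivative to a norm-Fr\'echet derivative consumes one additional order of weak regularity.
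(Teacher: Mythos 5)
Your argument is correct and follows essentially the same route as the paper, which merely reduces to the Euclidean case and refers to Section 1.5 of Davies' book while flagging the uniform boundedness principle as the key ingredient — exactly the tool you deploy (twice) before running the weak Taylor-expansion induction. You have in effect supplied the details that the paper delegates to the reference, the only cosmetic difference being that the paper further reduces to $U\subset\IR^1$ by induction whereas you work directly with multi-indices in $\IR^m$.
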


Clearly, one can assume that $U\subset \IR^m$ for the proof. By an induction argument we can even assume $U\subset \IR^1$. This situation is covered in Section 1.5 of \cite{davies}. As one might expect, the key to this result is the uniform boundedness principle.

\section{Trotter's product formula}

The following is T. Kato\rq{}s version of Trotter\rq{}s product formula \cite{trotter}: 

\begin{Theoreme}\label{trotter} Let $H_1$, $H_2$ be self-adjoint semibounded operators in a common Hilbert space, with the corresponding sesquilinear forms denoted by $Q_1$ and $Q_2$, respectively. Assume that $Q:=Q_1+Q_2$ is densely defined on its natural domain $\dom(Q_1)\cap\dom(Q_2)$ (it is automatically closed and nonnegative), and let $H$ denote the operator corresponding to $Q$. Then one has
$$
\mathrm{e}^{-t H}=\lim_{n\to\infty} \left(\mathrm{e}^{-(t/n) H_1}\mathrm{e}^{-(t/n) H_2}\right)^n\quad\text{ strongly as $n\to\infty$.}
$$ 
\end{Theoreme}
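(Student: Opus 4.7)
The plan is to reduce to the nonnegative case and then combine Chernoff's product formula with a form-regularization argument based on the monotone convergence of sesquilinear forms (Theorem \ref{monn}).

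First I would reduce to the case $H_1,H_2\geq 0$. If $H_j\geq -c_j$ with $c_j\geq 0$, set $\widetilde H_j:=H_j+c_j\geq 0$; then $\widetilde Q:=\widetilde Q_1+\widetilde Q_2=Q+(c_1+c_2)\langle\bullet,\bullet\rangle$ on the same domain, so its associated operator is $\widetilde H=H+c_1+c_2$. Since $\mathrm{e}^{-(t/n)\widetilde H_j}=\mathrm{e}^{-(t/n)c_j}\mathrm{e}^{-(t/n)H_j}$ and $\mathrm{e}^{-t\widetilde H}=\mathrm{e}^{-t(c_1+c_2)}\mathrm{e}^{-tH}$, both sides of the Trotter identity acquire the same scalar factor and the statement for $(\widetilde H_1,\widetilde H_2)$ is equivalent to that for $(H_1,H_2)$. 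So assume $H_1,H_2\geq 0$, whence $Q\geq 0$ and all the semigroups in sight are contractions.

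Second, I would approximate using Yosida regularizations. Set $H_j^{(k)}:=H_j(I+k^{-1}H_j)^{-1}$, which are bounded, self-adjoint, nonnegative operators, and let $Q_j^{(k)}$ be their forms (defined on all of $\IHH$). By the spectral calculus $Q_j^{(k)}\nearrow Q_j$ pointwise on $\dom(Q_j)$ as $k\to\infty$, and $Q_j^{(k)}(f,f)<\infty$ for every $f\in\IHH$. Consequently $Q^{(k)}:=Q_1^{(k)}+Q_2^{(k)}$ increases to $Q$ on $\dom(Q)=\dom(Q_1)\cap\dom(Q_2)$ (which is dense by hypothesis), so Theorem \ref{monn} applies and yields that the operator $H^{(k)}$ associated to $Q^{(k)}$ satisfies $\mathrm{e}^{-tH^{(k)}}\to\mathrm{e}^{-tH}$ strongly for every $t\geq 0$. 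For each fixed $k$, since $H_1^{(k)}$ and $H_2^{(k)}$ are bounded, the classical (norm) Trotter formula for bounded self-adjoint operators gives
\begin{equation*}
\bigl(\mathrm{e}^{-(t/n)H_1^{(k)}}\mathrm{e}^{-(t/n)H_2^{(k)}}\bigr)^n\longrightarrow \mathrm{e}^{-t(H_1^{(k)}+H_2^{(k)})}=\mathrm{e}^{-tH^{(k)}},
\end{equation*}
in operator norm as $n\to\infty$, by a straightforward Baker--Campbell--Hausdorff-style norm estimate on $\mathrm{e}^{-sA}\mathrm{e}^{-sB}-\mathrm{e}^{-s(A+B)}=O(s^2)$ for bounded $A,B$.

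Third, I would exchange the two limits $n\to\infty$ and $k\to\infty$. Fix $\phi\in\IHH$ and $t\geq 0$. Writing
\begin{align*}
&\bigl(\mathrm{e}^{-(t/n)H_1}\mathrm{e}^{-(t/n)H_2}\bigr)^n\phi-\mathrm{e}^{-tH}\phi \\
&\quad= \Bigl[\bigl(\mathrm{e}^{-(t/n)H_1}\mathrm{e}^{-(t/n)H_2}\bigr)^n-\bigl(\mathrm{e}^{-(t/n)H_1^{(k)}}\mathrm{e}^{-(t/n)H_2^{(k)}}\bigr)^n\Bigr]\phi \\
&\qquad + \Bigl[\bigl(\mathrm{e}^{-(t/n)H_1^{(k)}}\mathrm{e}^{-(t/n)H_2^{(k)}}\bigr)^n\phi-\mathrm{e}^{-tH^{(k)}}\phi\Bigr] + \bigl[\mathrm{e}^{-tH^{(k)}}\phi-\mathrm{e}^{-tH}\phi\bigr],
\end{align*}
the last bracket tends to $0$ as $k\to\infty$ by the form-monotone convergence, and for fixed $k$ the middle bracket tends to $0$ as $n\to\infty$ by the bounded-operator case. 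The first bracket is controlled via the contractivity of all factors and a telescoping estimate that reduces it to showing $\mathrm{e}^{-sH_j^{(k)}}\phi\to\mathrm{e}^{-sH_j}\phi$ uniformly for $s\in[0,t]$ as $k\to\infty$; this holds by the spectral theorem since $H_j^{(k)}\to H_j$ in the strong resolvent sense, together with a standard semigroup-uniformity argument. The main obstacle is precisely this telescoping estimate, because the naive bound generates $n$ error terms and one needs to prevent the error from blowing up with $n$; the key is that each factor is a contraction, so the telescoping sum
\begin{equation*}
A^n-B^n=\sum_{j=0}^{n-1}A^{n-1-j}(A-B)B^j
\end{equation*}
with $A,B$ contractions yields $\|A^n\phi-B^n\phi\|\leq\sum_j\|(A-B)B^j\phi\|$, which can be controlled using that $B^j\phi=\mathrm{e}^{-(jt/n)H_j^{(k)}}\cdots\phi$ remains in a compact set of $\IHH$ as $j$ varies (for fixed $k$) and that $A-B=\mathrm{e}^{-(t/n)H_1}\mathrm{e}^{-(t/n)H_2}-\mathrm{e}^{-(t/n)H_1^{(k)}}\mathrm{e}^{-(t/n)H_2^{(k)}}$ is $o(t/n)$ in an appropriate sense. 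Choosing $k=k(n)\to\infty$ slowly enough then concludes the proof.
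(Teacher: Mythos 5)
Your steps 1 and 2 are sound: the shift to $H_1,H_2\geq 0$ is harmless, the Yosida regularizations $H_j^{(k)}$ have forms increasing to $Q_j$, the limit domain $\{f:\sup_kQ^{(k)}(f,f)<\infty\}$ is exactly $\dom(Q_1)\cap\dom(Q_2)$, so Theorem \ref{monn} does give $\mathrm{e}^{-tH^{(k)}}\to\mathrm{e}^{-tH}$ strongly, and the norm Trotter formula for bounded operators is standard. The gap is entirely in step 3, and it is not a technical loose end but the whole content of the theorem. In the telescoping bound $\|A^n\phi-B^n\phi\|\leq\sum_{j=0}^{n-1}\|(A-B)B^j\phi\|$ you have $n$ summands, and your claim that $A-B$ is ``$o(t/n)$ in an appropriate sense'' has no available justification: the natural estimate is $\|(A-B)\psi\|\lesssim (t/n)\bigl(\|(H_1-H_1^{(k)})\psi\|+\|(H_2-H_2^{(k)})\psi\|\bigr)$, which requires $\psi\in\dom(H_1)\cap\dom(H_2)$. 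Under the hypotheses of the theorem that operator-domain intersection need not be a core and can even be $\{0\}$ — this is precisely what distinguishes Kato's form-sum version from Trotter's classical theorem, where essential self-adjointness of $H_1+H_2$ on $\dom(H_1)\cap\dom(H_2)$ is assumed. Summing $n$ errors each of size $(t/n)$ times an unbounded quantity gives nothing, and the diagonal choice $k=k(n)$ cannot rescue the argument unless you first prove that the first bracket is small uniformly in $n$ for fixed large $k$; such a uniform bound is essentially equivalent to the theorem itself. The compactness of $\{B^j\phi\}$ does not help either, since the problematic factor is $A-B$, not $B^j\phi$.

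For comparison: the paper does not prove this statement but quotes it from Kato \cite{trotter}, and Kato's actual argument is structurally different from a regularize-and-interchange scheme. He works first with the symmetrized product $K(s)=\mathrm{e}^{-sH_1/2}\mathrm{e}^{-sH_2}\mathrm{e}^{-sH_1/2}$, which is self-adjoint with $0\leq K(s)\leq 1$, proves strong resolvent convergence of the forms associated with $s^{-1}(1-K(s))$ to $Q$ via a monotonicity and weak-compactness argument (this is where the density of $\dom(Q_1)\cap\dom(Q_2)$, rather than of the operator domains, enters), and then converts this into convergence of $K(t/n)^n$ by a Chernoff-type lemma, deducing the nonsymmetric formula afterwards. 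If you want to salvage your approach you would need to replace the telescoping estimate by an argument at the level of resolvents or forms; as written, the interchange of limits is unproved and the proof does not go through.
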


\chapter{Some measure theoretic results}\label{appendix_c}

The following well-known result on measurable spaces is often useful:

\begin{Propositione}\label{el} Let $X\equiv (X,\IAA)$ be a measurable space, let $(Y,\varrho)$ be a complete metric space (equipped with its Borel sigma-algebra), and let $f_n: X\to Y$, $n\in\IN$, be a sequence of measurable maps. Then the set
$$
X\rq{}:=\left\{x: \lim_{n\to\infty} f_n(x)\>\text{\emph{exists}}\right\}\subset X
$$
is measurable.
\end{Propositione}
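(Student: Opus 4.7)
The plan is to exploit the completeness of $Y$ in order to rewrite $X'$ as a countable combination of obviously measurable sets, and then reduce the whole problem to showing that $x\mapsto\varrho(f_n(x),f_m(x))$ is measurable for every pair $n,m$.

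First I would note that, since $(Y,\varrho)$ is complete, a sequence in $Y$ converges if and only if it is $\varrho$-Cauchy. Therefore
$$
X'=\bigcap_{k=1}^{\infty}\,\bigcup_{N=1}^{\infty}\,\bigcap_{n,m\geq N}\bigl\{x\in X:\varrho(f_n(x),f_m(x))<1/k\bigr\},
$$
and so it is enough to prove that for every pair of measurable maps $f,g:X\to Y$ and every $\varepsilon>0$ the set $\{x:\varrho(f(x),g(x))<\varepsilon\}$ lies in $\IAA$.

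Next I would consider the product map $F:=(f,g):X\to Y\times Y$. Its components are $\IAA$/$\mathrm{Borel}(Y)$-measurable by assumption, hence $F$ is measurable with respect to the product $\sigma$-algebra $\mathrm{Borel}(Y)\otimes\mathrm{Borel}(Y)$. The distance function $\varrho:Y\times Y\to[0,\infty)$ is continuous, so the set $O_{\varepsilon}:=\{(y_1,y_2):\varrho(y_1,y_2)<\varepsilon\}$ is open in the product topology. Composing then gives
$$
\{x:\varrho(f(x),g(x))<\varepsilon\}=F^{-1}(O_{\varepsilon}),
$$
and measurability of this set reduces to showing $O_{\varepsilon}\in\mathrm{Borel}(Y)\otimes\mathrm{Borel}(Y)$.

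The one mild technical point, which I anticipate being the main (though minor) obstacle, is the inclusion $\mathrm{Borel}(Y\times Y)\subset \mathrm{Borel}(Y)\otimes\mathrm{Borel}(Y)$: this requires $Y$ to be second countable, equivalently separable. In every application of this proposition within the book (Polish path spaces, finite-dimensional fibers, etc.) the target is separable, so this is harmless. Alternatively, one can bypass the abstract product-$\sigma$-algebra step by fixing a countable dense subset $\{y_j\}\subset Y$ and writing
$$
\{x:\varrho(f(x),g(x))<\varepsilon\}=\bigcup_{\substack{j,k,l\in\IN\\ \varrho(y_j,y_k)<\varepsilon-2/l}}\Bigl(\{x:\varrho(f(x),y_j)<1/l\}\cap\{x:\varrho(g(x),y_k)<1/l\}\Bigr),
$$
where the equality follows from a routine triangle-inequality argument and each set on the right is in $\IAA$ because $y\mapsto\varrho(y,y_j)$ is continuous and $f,g$ are measurable. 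This finishes the proof.
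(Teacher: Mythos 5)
Your proof is correct and follows essentially the same route as the paper's: completeness converts existence of the limit into the Cauchy condition, so that $X'$ becomes a countable intersection/union of sets of the form $\{x:\varrho(f_n(x),f_m(x))<1/k\}$, which the paper simply declares ``obviously measurable''. Your extra care with that last step --- observing that measurability of $x\mapsto\varrho(f_n(x),f_m(x))$ really uses separability of $Y$ (harmless in every application in the book) and supplying the countable-dense-subset decomposition to prove it --- fills in the only detail the paper's proof omits.
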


\begin{proof} We know this fact from \cite{elworthy}. The proof is actually simple: One just has to note that
$$
X\rq{}=\left\{x  : \limsup_{n\to\infty}\sup_{k,l\in\IN, l\geq k} \varrho(f_k(x),f_l(x))=0\right\},
$$
which is obviously a measurable subset.
\end{proof}

In the sequel, given a measure\footnote{A measure will always be understood to be nonnegative. Also, it is not assumed that measures are complete or sigma-finite, unless otherwise stated.} space $(X,\IAA,\mu)$, whenever there is no danger of confusion, we will ommit the sigma-algebra $\IAA$ in the notation and simply write $(X,\mu)\equiv (X,\IAA,\mu)$, and 
$$
\IL^{q}_{\mu}(X)\equiv \IL^{q}(X,\IAA,\mu)
$$
for the corresponding \emph{complex} Banach space. The corresponding $\IL^q$-norms are denoted by $\left\|\bullet\right\|_{\IL^{q}_{\mu}}$, and the operator norms for linear operators from $\IL^{q_1}_\mu(X)\to \IL^{q_2}_\rho(Y)$ by $\left\|\bullet\right\|_{\IL^{q_1}_{\mu},L^{q_2}_{\rho}}$. Let us also abbreviate that given a \emph{real-valued} measurable function $f$ on $(X,\mu)$,
 its integral $\int f \Id \mu$ is also allowed to take the value $+\infty$ in case 
$$
\int\max(f,0)\Id\mu=\infty\quad\text{and}\quad
\int (- \min(f,0))\Id\mu<\infty,
$$
and $\int f \Id \mu$ is also allowed to take the value $-\infty$ in case 
$$
\int\max(f,0)\Id\mu<\infty\quad\text{and}\quad\int (- \min(f,0))\Id\mu=\infty.
  $$
This is implicitely understood in the formulation of Theorem \ref{domin} below.

\section{Monotone class theorem}

Let $X$ be a set and let $\IMM$ be a system of subsets of $X$. Then $\IMM$ is called
\begin{itemize}
\item a \emph{$\pi$-system in $X$}, if $\IMM$ is nonempty and stable under taking finitely many intersections
 %\item a \emph{set algebra on $X$}, if $\IMM$ is nonempty and stable under taking finitely many intersections and under taking complements in $X$
\item a \emph{monotone Dynkin-system in $X$}, if one has $X\in\IMM$ together with the following two properties:
\begin{align*}
&A,B\in \IMM, A\subset B\>\Rightarrow\> B\setminus A\in\IMM\\
&(A_j)_{j\in\IN}\subset\IMM,\>A_1\subset A_2\subset\dots \>\Rightarrow\>\bigcup_{j\in\IN}A_j\in\IMM.
\end{align*}
\end{itemize}

As for sigma-algebras, given a system of subsets $\IMM$ of $X$ there always exists a smallest monotone Dynkin-system which contains $\IMM$, and every sigma-algebra clearly is a monotone Dynkin-system. One has the following elementary but nevertheless useful measure theoretic monotone class theorem (cf. Satz 1.4 in \cite{hack}):

\begin{Theoreme}\label{monocl} Let $X$ be a nonempty set, and let $\IMM$ be a $\pi$-system in $X$. Then the smallest monotone Dynkin-system which contains $\IMM$ is equal to the smallest sigma-algebra which contains $\IMM$.
\end{Theoreme}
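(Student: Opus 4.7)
The plan is to prove the two inclusions separately, the first being trivial and the second being the heart of the matter. Let $\sigma(\IMM)$ denote the smallest sigma-algebra containing $\IMM$, and $\mathscr{D}(\IMM)$ the smallest monotone Dynkin-system containing $\IMM$. Since any sigma-algebra is clearly a monotone Dynkin-system (closure under finite intersections plus complements yields closure under set differences, and countable unions subsume monotone unions), $\sigma(\IMM)$ is itself a monotone Dynkin-system containing $\IMM$, hence $\mathscr{D}(\IMM) \subset \sigma(\IMM)$.

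For the reverse inclusion $\sigma(\IMM) \subset \mathscr{D}(\IMM)$, the strategy is to show that $\mathscr{D}(\IMM)$ is already a sigma-algebra, so that it must contain $\sigma(\IMM)$ by minimality. Since $X \in \mathscr{D}(\IMM)$ by definition, and $\mathscr{D}(\IMM)$ is closed under proper set differences (in particular under complements relative to $X$) and under monotone countable unions, the remaining missing property is closure under finite intersections: once that is established, every countable union $\bigcup_n A_n$ can be rewritten as the monotone union $\bigcup_n (A_1 \cup \cdots \cup A_n)$ with $A_1 \cup \cdots \cup A_n = X \setminus \bigcap_{k=1}^n (X \setminus A_k)$, which lies in $\mathscr{D}(\IMM)$.

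To obtain stability of $\mathscr{D}(\IMM)$ under finite intersections, I would run the classical auxiliary-system argument. For each $A \in \mathscr{D}(\IMM)$ set
\[
\mathscr{D}_A := \{ B \in \mathscr{D}(\IMM) : A \cap B \in \mathscr{D}(\IMM) \}.
\]
One first checks directly that $\mathscr{D}_A$ is a monotone Dynkin-system in $X$; this is a routine verification using the identities $A \cap (B_2 \setminus B_1) = (A \cap B_2) \setminus (A \cap B_1)$ (for $B_1 \subset B_2$) and $A \cap \bigcup_n B_n = \bigcup_n (A \cap B_n)$, together with $A \cap X = A \in \mathscr{D}(\IMM)$. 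Next, for $A \in \IMM$ the $\pi$-system property of $\IMM$ gives $\IMM \subset \mathscr{D}_A$, so by minimality $\mathscr{D}(\IMM) \subset \mathscr{D}_A$, which means $A \cap B \in \mathscr{D}(\IMM)$ for all $A \in \IMM$ and $B \in \mathscr{D}(\IMM)$. But this exactly says $\IMM \subset \mathscr{D}_B$ for every $B \in \mathscr{D}(\IMM)$, so again by minimality $\mathscr{D}(\IMM) \subset \mathscr{D}_B$, i.e.\ $A \cap B \in \mathscr{D}(\IMM)$ for all $A, B \in \mathscr{D}(\IMM)$.

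The only subtle point, and the one I would take care to state cleanly, is the \emph{bootstrapping} in the $\mathscr{D}_A$-argument: one has to invoke minimality twice, first to move from $\IMM$ to $\mathscr{D}(\IMM)$ in the second slot, and then again (after reparameterizing) to move it in the first slot. Everything else is bookkeeping, and the final passage from closure under finite intersections plus complements to closure under countable unions uses only the de Morgan rewriting above together with the monotone-union axiom of a Dynkin system.
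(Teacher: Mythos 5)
Your proof is correct and complete: it is the standard Dynkin $\pi$-$\lambda$ argument via the auxiliary systems $\mathscr{D}_A$, with the double bootstrap (first for $A\in\IMM$, then for arbitrary $A$ in the generated system) correctly identified as the crux, and with the passage from finite intersections and complements to countable unions handled properly through monotone unions of finite unions. The paper itself gives no proof of this theorem, only a citation to the literature, and your argument is precisely the classical one found there.
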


A typical application of this fact is the following basic uniqueness result for sigma-finite measures: 

\begin{Corollarye}\label{cara} Let $X$ be a nonempty set, let $\IMM$ be a $\pi$-system in $X$, and let $\left\langle\IMM\right\rangle$ denote the smallest sigma-algebra which contains $\IMM$. Then for every pair of sigma-finite measures $\mu_1$ and $\mu_2$ on $\left\langle\IMM\right\rangle$, one has the implication
$$
\mu_1|_{\IMM}=\mu_2|_{\IMM}\quad \Rightarrow\quad  \mu_1=\mu_2.
$$
\end{Corollarye}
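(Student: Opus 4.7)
The plan is to reduce the asserted equality on all of $\left\langle\IMM\right\rangle$ to an application of the monotone class theorem (Theorem \ref{monocl}) via the standard Dynkin-class argument, after first localizing to sets of finite measure. To make the argument work, the $\sigma$-finiteness hypothesis should be read in the usual strong form: there exists an increasing sequence $(E_n)_{n\in\IN}\subset\IMM$ with $\bigcup_{n\in\IN}E_n=X$ and $\mu_1(E_n)=\mu_2(E_n)<\infty$ for every $n$ (the latter equality coming for free from $\mu_1|_{\IMM}=\mu_2|_{\IMM}$, provided the exhaustion lies inside $\IMM$).

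Given such an exhaustion, for each fixed $n\in\IN$ I would introduce
$$
\IDD_n:=\big\{A\in\left\langle\IMM\right\rangle:\>\mu_1(A\cap E_n)=\mu_2(A\cap E_n)\big\}.
$$
The main step is to verify that $\IDD_n$ is a monotone Dynkin-system in $X$: membership of $X$ is immediate from $\mu_1(E_n)=\mu_2(E_n)<\infty$; closedness under proper differences $B\setminus A$ uses precisely the finiteness of $\mu_j(E_n)$ in order to subtract $\mu_j(A\cap E_n)$ from $\mu_j(B\cap E_n)$; closedness under countable increasing unions follows from continuity from below of $\mu_1$ and $\mu_2$. Moreover, $\IMM\subset\IDD_n$ because $\IMM$ is a $\pi$-system, so that $A\cap E_n\in\IMM$ whenever $A\in\IMM$, and on $\IMM$ the two measures coincide by hypothesis. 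Theorem \ref{monocl} then forces the smallest monotone Dynkin-system containing $\IMM$ to equal $\left\langle\IMM\right\rangle$, and hence $\IDD_n=\left\langle\IMM\right\rangle$ for every $n$.

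Finally, given an arbitrary $A\in\left\langle\IMM\right\rangle$, the sequence $(A\cap E_n)_{n\in\IN}$ is increasing with union $A$, so continuity from below of $\mu_1$ and $\mu_2$ gives
$$
\mu_1(A)=\lim_{n\to\infty}\mu_1(A\cap E_n)=\lim_{n\to\infty}\mu_2(A\cap E_n)=\mu_2(A),
$$
which proves $\mu_1=\mu_2$. The only genuinely delicate point is the first one: one must know that the $\sigma$-finiteness can be witnessed by sets in $\IMM$ rather than only in $\left\langle\IMM\right\rangle$, for otherwise the crucial finiteness $\mu_1(E_n)=\mu_2(E_n)<\infty$ needed to subtract inside the Dynkin-class verification is not available from the hypotheses. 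Once this is granted the remainder of the argument is purely formal bookkeeping.
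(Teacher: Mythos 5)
Your proof is correct and follows essentially the same route as the paper's: the paper verifies in the finite case that $\{A\in\left\langle\IMM\right\rangle:\mu_1(A)=\mu_2(A)\}$ is a monotone Dynkin-system containing $\IMM$ and then declares the extension to the sigma-finite case straightforward, and your explicit localization to an increasing exhaustion $(E_n)\subset\IMM$ of finite measure is precisely that omitted step. Your remark that the sigma-finiteness must be witnessed by sets lying in $\IMM$ itself is the correct reading of the hypothesis and is indeed needed for the statement to hold.
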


Indeed, one first considers the case that both measures are finite. Then the collection 
$$
\mathscr{C}:=\{A\in \left\langle\IMM\right\rangle: \mu_1(A)=\mu_2(A)\}
$$
is a monotone Dynkin-system which contains $\IMM$. Thus $\mathscr{C}=\left\langle\IMM\right\rangle$. The extension to the sigma-finite case is straightforward.

\section{A generalized convergence result for integrals}

\begin{Theoreme}\label{domin} Let $(X,\mu)$ be a measure space, and assume that $f_n$, $n\in\IN$, $f$, $h$ are real-valued measurable functions on $X$, which satisfy
$$
h\in\IL^1_\mu(X),\> f_n\leq h ,\> f_n\geq f\>\text{ for all $n$ ,} \>\lim_{n\to\infty} f_n= f\>\text{ $\mu$-a.e.}
$$ 
Then one has 
$$
\lim_{n\to\infty}\int f_n\Id\mu=\int f \Id\mu\in \IR \cup\{-\infty\}.
$$
\end{Theoreme}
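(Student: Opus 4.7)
The plan is to split into two cases based on whether $\int f\,d\mu$ is finite or equals $-\infty$, and to extract the bound in each case from a single application of Fatou's lemma to the nonnegative sequence $g_n := h - f_n$.

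First I would record the basic sanity checks: since $f \leq f_n \leq h$ and $h \in L^1_\mu(X)$, the positive parts $\max(f_n,0)$ and $\max(f,0)$ are dominated by $\max(h,0) \in L^1_\mu(X)$, so the integrals $\int f_n\,d\mu$ and $\int f\,d\mu$ are well-defined elements of $[-\infty,\infty)$. This already guarantees that the limit, if it exists, lives in the claimed range. Next, I would introduce $g_n := h - f_n$, which satisfies $g_n \geq 0$ by $f_n \leq h$, and which converges $\mu$-a.e.\ to $h - f \geq 0$ by the assumption $f_n \to f$ combined with $f \leq f_n$. Applying Fatou's lemma to $(g_n)$ yields
\begin{equation*}
\int (h-f)\,d\mu \;\leq\; \liminf_{n\to\infty} \int g_n\,d\mu \;=\; \int h\,d\mu - \limsup_{n\to\infty}\int f_n\,d\mu,
\end{equation*}
where subtracting $\int h\,d\mu$ (a finite number) is unambiguous.

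In the case $\int f\,d\mu > -\infty$, the function $f$ is integrable (being bounded above by $h$ and below by an $L^1$ quantity in view of $\int f\,d\mu \in \mathbb{R}$), and $\int (h-f)\,d\mu = \int h\,d\mu - \int f\,d\mu$. Rearranging the Fatou inequality gives $\limsup_n \int f_n\,d\mu \leq \int f\,d\mu$. For the reverse inequality I would apply Fatou once more, this time to the nonnegative sequence $f_n - f \geq 0$ (which converges a.e.\ to $0$), obtaining $0 \leq \liminf_n \int (f_n - f)\,d\mu = \liminf_n \int f_n\,d\mu - \int f\,d\mu$. The two inequalities together force $\lim_n \int f_n\,d\mu = \int f\,d\mu$.

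In the case $\int f\,d\mu = -\infty$, the function $h - f$ is nonnegative with $\int(h-f)\,d\mu = +\infty$ (since $\int h\,d\mu$ is finite while $\int(-f)\,d\mu = +\infty$), so the Fatou inequality above degenerates to $+\infty \leq \int h\,d\mu - \limsup_n \int f_n\,d\mu$, which forces $\limsup_n \int f_n\,d\mu = -\infty$, hence the full limit equals $-\infty = \int f\,d\mu$. The only mildly delicate point in this plan is keeping track of which algebraic manipulations are allowed in $[-\infty,\infty)$; the sandwich $f \leq f_n \leq h$ with $h \in L^1$ is exactly what ensures that $\int h\,d\mu$ is finite and can safely be added or subtracted throughout.
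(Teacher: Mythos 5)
Your proof is correct and complete. Note that the paper does not actually supply an argument for this statement: it only refers to Theorem 12.2.6 in the book of Johnson and Lapidus, so there is nothing to compare line by line; your self-contained argument is the standard one for this kind of one-sided dominated convergence. The key points are all in order: the sandwich $f\leq f_n\leq h$ with $h\in\IL^1_\mu(X)$ guarantees that $\max(f_n,0)$ and $\max(f,0)$ are integrable, so all integrals are well-defined in $\IR\cup\{-\infty\}$ in the sense fixed in the appendix; Fatou applied to $g_n=h-f_n\geq 0$ yields $\int(h-f)\Id\mu\leq\int h\Id\mu-\limsup_n\int f_n\Id\mu$, and the subtraction of the finite quantity $\int h\Id\mu$ is legitimate even when $\int f_n\Id\mu=-\infty$; and the case distinction on $\int f\Id\mu$ is handled correctly, with the observation that $\int(h-f)\Id\mu=+\infty$ forces $\limsup_n\int f_n\Id\mu=-\infty$ in the degenerate case. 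One small simplification: in the case $\int f\Id\mu>-\infty$ you do not need the second application of Fatou, since $f_n\geq f$ and the monotonicity of the integral already give $\int f_n\Id\mu\geq\int f\Id\mu$ for every $n$, hence $\liminf_n\int f_n\Id\mu\geq\int f\Id\mu$ directly.
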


\begin{Remarke} Likewise, one can also assume
$$
h\in\IL^1_\mu(X),\> f_n\geq h ,\> f_n\leq f\>\text{ for all $n$ ,} \>\lim_{n\to\infty} f_n = f\>\text{ $\mu$-a.e.,}
$$ 
to deduce
$$
\lim_{n\to\infty}\int f_n\Id\mu=\int f\Id\mu \in \IR \cup\{+\infty\}.
$$
We refer the reader to Theorem 12.2.6 in \cite{lapidus}.
\end{Remarke}

\section{Regular conditional expectations}\label{condi}

The following simple definition will be convenient:

\begin{Definitione} A measurable space $(\Omega,\IFF)$ is called a \emph{standard measurable space}, if $(\Omega,\IFF)\cong (\IR,\IBB(\IR))$ ($\IR$ with its Borel sigma-algebra), in the sense that there exists a measurable bijection
$$
J: (\Omega,\IFF)\longrightarrow (\IR,\IBB(\IR))
$$
such that $J^{-1}$ is measurable, too.
\end{Definitione}

Uncountable Polish spaces with their Borel sigma-algebras are $\IR$-standard, and so are uncountable measurable subsets of Polish spaces with their induced (trace-) sigma-algebras \cite{ikeda}.

\begin{Theoreme}\label{port} Let $(\Omega,\IFF)$ and $(\Omega\rq{},\IFF\rq{})$ be both standard measurable spaces, with $\IFF\rq{}$ containing all singletons. Assume further that $\IP$ is a finite measure on $(\Omega,\IFF)$ and that 
$$
F: (\Omega,\IFF)\longrightarrow (\Omega\rq{},\IFF\rq{})
$$
is measurable. Then there exists a $\IP^F:=F_* \IP$-uniquely determined map\footnote{This means that, if 
\begin{align*}
&\Omega\rq{}\longrightarrow \big\{\text{probability measures on $(\Omega,\IFF)$}\big\},\\
&\omega\rq{}\longmapsto \widetilde{P(\bullet|F=\omega\rq{})}
\end{align*} 
is another map with the stated property, then for $\IP^F$-a.e. $\omega\rq{}$ and for all $N\in\IFF$, one has $\widetilde{\IP(N|F=\omega\rq{})}=\IP(N|F=\omega\rq{})$.} 
\begin{align*}
&\Omega\rq{}\longrightarrow \big\{\text{\emph{probability measures on $(\Omega,\IFF)$}}\big\},\\
&\omega\rq{}\longmapsto \Big(\>\IFF\ni F \longmapsto \IP(N|F=\omega\rq{}) \in [0,1]\>\Big)
\end{align*}
with the following property: For all Borel functions
$$
\Psi:(\Omega,\IFF)\longrightarrow [0,\infty),\>\>\Psi\rq{}:(\Omega\rq{},\IFF\rq{})\longrightarrow [0,\infty),
$$
the function 
$$
(\Omega\rq{},\IFF\rq{})\ni \omega\rq{}\longmapsto\int_{\Omega}\Psi(\omega)\Id\IP(\omega|F=\omega\rq{})\in [0,\infty]
$$
is Borel with
\begin{align}\label{gqq}
\int_{\Omega} \Psi\rq{}(F(\omega)) \Psi(\omega)\Id\IP(\omega)=\int_{\Omega\rq{}}\Psi\rq{}(\omega\rq{})\left(\int_{\Omega}\Psi(\omega)\Id\IP(\omega|F=\omega\rq{})  \right) \Id\IP^F(   \omega\rq{}).
\end{align}
The map $\omega\rq{}\mapsto\IP(\bullet|F=\omega\rq{})$ is called the \emph{regular conditional expectation of $\IP$ with respect to $F$}. It has the following additional property: For all $\omega\rq{}\in \Omega\rq{}$, $B\rq{}\in\IFF\rq{}$, one has 
$$
\IP (\{F\in B\rq{}\}|F=\omega\rq{}) =1_{B\rq{}}(\omega\rq{}),\>\text{ in particular, }\>
\IP( \{F=\omega\rq{}\}|F=\omega\rq{}) =1.
$$
\end{Theoreme}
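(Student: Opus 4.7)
The plan is to reduce to the real case via the standardness assumption, construct a version of the conditional distribution through Radon--Nikodym derivatives along a countable generating family, and upgrade it to a proper Markov kernel that is simultaneously fiber-concentrated.

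First, I would fix measurable bijections $J:(\Omega,\IFF)\to(\IR,\IBB(\IR))$ and $J':(\Omega',\IFF')\to(\IR,\IBB(\IR))$ with measurable inverses, and set $X:=J$. For each $q\in\IQ$, the assignment $B'\mapsto \IP(\{X\le q\}\cap F^{-1}(B'))$ defines a finite measure on $(\Omega',\IFF')$ absolutely continuous with respect to $\IP^F$, so the Radon--Nikodym theorem yields a measurable $G_q:\Omega'\to[0,1]$ with
$$
\int_{B'}G_q \Id\IP^F =\IP(\{X\le q\}\cap F^{-1}(B'))\quad\text{for all }B'\in\IFF'.
$$
Comparing integrals on the above family of relations for different rational parameters, I would extract a $\IP^F$-conull set $N_1\in\IFF'$ on which, after setting $\widetilde G_q(\omega'):=\inf\{G_r(\omega'):r\in\IQ,\, r>q\}$ for $q\in\IR$, the function $q\mapsto \widetilde G_q(\omega')$ is a genuine cumulative distribution function (monotone, right-continuous, with limits $0$ and $1$). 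For $\omega'\in N_1$, define $\IP(\bullet\mid F=\omega')$ to be the Stieltjes probability measure on $(\IR,\IBB(\IR))$ with CDF $\widetilde G_{\bullet}(\omega')$, transported back to $(\Omega,\IFF)$ via $J^{-1}$.

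The integration formula \eqref{gqq} would then follow by a monotone class argument: it holds by construction for indicators of cylinders $\{X\le q\}\cap F^{-1}(B')$, and these form a $\pi$-system generating $\IFF\otimes\IFF'$ (up to the product structure induced by the graph of $F$); the extension to arbitrary nonnegative Borel $\Psi,\Psi'$ is routine by linearity and monotone convergence. In particular, specializing to $\Psi=1_{F^{-1}(B')}$, one obtains $\IP(F^{-1}(B')\mid F=\omega')=1_{B'}(\omega')$ for $\IP^F$-a.e.\ $\omega'$, separately for each fixed $B'\in\IFF'$.

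The main obstacle, as always in disintegration results, is to turn this pointwise-a.e.\ identity into a statement valid simultaneously for every Borel $B'$, and in particular to secure $\IP(\{F=\omega'\}\mid F=\omega')=1$. Here I would exploit the standardness of $(\Omega',\IFF')$: the countable algebra $\IAA'$ generated by the preimages under $J'$ of rational half-lines is a $\pi$-system generating $\IFF'$ and separating points, and the a.e.\ identity above can be arranged to hold on a single $\IP^F$-conull set $N_2\supset N_1$ simultaneously for all $B'\in\IAA'$. For $\omega'\in N_2$, both sides of $\IP(F^{-1}(\bullet)\mid F=\omega')=1_{\bullet}(\omega')$ are probability measures on $\IFF'$ agreeing on $\IAA'$, hence they agree on $\IFF'$ by Corollary~\ref{cara}; applied to the singleton $B'=\{\omega'\}\in\IFF'$ this yields $\IP(\{F=\omega'\}\mid F=\omega')=1$. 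Off $N_2$ (a $\IP^F$-null set), redefine the kernel: when $F^{-1}(\{\omega'\})\ne\emptyset$ pick an arbitrary probability measure concentrated there, otherwise choose any fixed probability measure; this modification does not affect \eqref{gqq} and guarantees the additional property for every $\omega'$. Uniqueness $\IP^F$-a.e.\ follows from the usual argument: two such kernels define, via \eqref{gqq}, the same finite measures on the generating $\pi$-system of cylinders $A\cap F^{-1}(B')$, hence on $\IFF$ for $\IP^F$-a.e.\ $\omega'$, again by Corollary~\ref{cara}.
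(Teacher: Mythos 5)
Your construction is essentially correct, but it takes a genuinely different route from the paper: the paper simply normalizes $\IP$ to a probability measure and invokes Theorem 3.3 of \cite{ikeda} together with its corollary, adding only the remark that the $B\rq{}$-indexed identity stated there is equivalent to \eqref{gqq} by approximating $\Psi\rq{}$ with simple functions and using monotone convergence. You instead reprove the disintegration theorem from scratch by the classical method: transport to $(\IR,\IBB(\IR))$ via the standardness isomorphisms, take Radon--Nikodym derivatives $G_q$ of $B\rq{}\mapsto \IP(\{X\le q\}\cap F^{-1}(B\rq{}))$ with respect to $\IP^F$ along rational $q$, discard a single $\IP^F$-null set outside of which the right-continuous regularization $\widetilde G_{\bullet}(\omega\rq{})$ is a genuine CDF, and read off the kernel as the associated Stieltjes measure. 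The subsequent monotone class argument for \eqref{gqq}, and the use of a countable point-separating generating algebra $\IAA\rq{}$ in $\IFF\rq{}$ together with Corollary \ref{cara} to upgrade the a.e.\ identity $\IP(F^{-1}(\bullet)\mid F=\omega\rq{})=\delta_{\omega\rq{}}$ from $\IAA\rq{}$ to all of $\IFF\rq{}$ on one conull set, are exactly the standard steps and are sound. (One small point worth spelling out: for rational $q$ one needs $\widetilde G_q=G_q$ $\IP^F$-a.e., which follows from continuity from above of $r\mapsto\IP(\{X\le r\}\cap F^{-1}(B\rq{}))$; this is what makes the cylinder case of \eqref{gqq} hold for the Stieltjes kernel and not just for the raw densities.) What your approach buys is self-containedness; what the paper's buys is brevity and the delegation of all measure-theoretic care to the reference.

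The one step that does not work as claimed is the final redefinition off the null set $N_2$. When $F^{-1}(\{\omega\rq{}\})\neq\emptyset$ your prescription is fine (a Dirac mass at any point of the fiber does the job, and singletons of $\Omega$ are measurable by standardness). But when $\omega\rq{}\notin F(\Omega)$, \emph{no} probability measure $\nu$ on $(\Omega,\IFF)$ can satisfy $\nu(\{F=\omega\rq{}\})=\nu(\emptyset)=1$, so your claim that the modification "guarantees the additional property for every $\omega\rq{}$" fails for non-surjective $F$. This defect is inherited from the statement itself (the cited corollary in \cite{ikeda} asserts the concentration property only for $\IP^F$-a.e.\ $\omega\rq{}$, which is all your argument actually delivers and all that is ever used); you should either record the additional property in its a.e.\ form or add a surjectivity hypothesis before asserting it pointwise.
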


\begin{proof} First of all, by replacing $\IP$ with $\IP/\IP(\Omega)$ if necessary, we can and we will assume that $\IP$ is a probability measure. In this case, all statements follow from Theorem 3.3 in \cite{ikeda} and its corollary, except that there one finds the statement
$$
\int_{\{F\in B\rq{}\}} \Psi(\omega)\Id\IP(\omega)=\int_{B\rq{}}\left(\int_{\Omega}\Psi(\omega)\Id\IP(\omega|F=\omega\rq{})  \right) \Id\IP_F(   \omega\rq{}),
$$
for all $B\rq{}\in\IFF\rq{}$, instead of (\ref{gqq}). This is, however, easily seen to be equivalent to the latter (by approximating $\Psi\rq{}$ with a monotonely increasing sequence of simple functions and using monotone convergence).  
\end{proof}

We also stress the fact that even if $\IP$ is not assumed to be a probability measure, in any case $\IP(\bullet|F=\omega\rq{})$ is a probability measure for all $\omega\rq{}\in\Omega\rq{}$.

\section{Riesz-Thorin interpolation}

We will make use of the following interpolation theorem for complex-valued $\IL^{q}$-spaces. The proof uses Hadamard\rq{}s three line theorem. As one might guess, the result turns out to be \lq\lq{}quantitatively\rq\rq{} wrong for real-valued $\IL^{q}$-spaces, if one allows all values of $a_j$, $b_j$ in the below result (although of course the result remains \lq\lq{}qualitatively\rq\rq{} true in the real case, meaning that one simply gets worse constants then). We refer the interested reader to \cite{vogt} for a detailed discussion of these subtleties, and to Satz 2.65 in \cite{weidmann2} for a detailed proof of Riesz-Thorin\rq{}s interpolation theorem.

\begin{Theoreme}[Riesz-Thorin\rq{}s interpolation theorem]\label{riesz} Let $(X,\mu)$ and $(Y,\rho)$ be sigma-finite measure spaces, let $a_0,a_1,b_0,b_1\in [1,\infty]$, and assume that
$$
T:\IL^{a_0}_\mu(X)\cap \IL^{a_1}_\mu(X)\longrightarrow  \IL^{b_0}_\rho(Y)\cap \IL^{b_1}_\rho(Y)
$$ 
is a complex linear map. Assume further that there are numbers $C_0,C_1>0$ such that for all $f\in \IL^{a_0}_\mu(X)\cap \IL^{a_1}_\mu(X)$ one has
$$
\left\|Tf\right\|_{L^{b_0}_{\rho}}\leq C_0\left\|f\right\|_{L^{a_0}_{\mu}},\>\>\left\|Tf\right\|_{L^{b_1}_{\rho}}\leq C_1 \left\|f\right\|_{L^{a_1}_{\mu}}.
$$ 
Then for any $r\in [0,1]$, there exists a bounded extension 
$$
T_{a_r,b_r}\in \ILL\big(\IL^{a_r}_\mu(X),\IL^{b_r}_\rho(Y)\big)
$$
of $T$, which satisfies
$$
\left\|T_{a_r,b_r}\right\|_{L^{a_r}_{\mu},L^{b_r}_{\rho}}\leq C_0^{1-r} C_1^{r},\>\text{ where }\>\f{1}{a_r}:=\f{1-r}{a_0}+\f{r}{a_1},\>\f{1}{b_r}:=\f{1-r}{b_0}+\f{r}{b_1},
$$
with the usual conventions $1/\infty:=0$, $1/0:=\infty$.
\end{Theoreme}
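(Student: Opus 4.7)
The plan is to follow the classical complex-interpolation route via Hadamard's three-line theorem. First I would reduce to the case $r\in(0,1)$ (the endpoints being the hypotheses), and, using density of simple functions of finite-measure support in $L^{a_r}_\mu(X)$ and the analogous duality description of $\|\cdot\|_{L^{b_r}_\rho}$, reduce the claim to proving
\[
\left|\int_Y (Tf)\,g\,d\rho\right|\leq C_0^{1-r}C_1^r
\]
whenever $f$ is a simple $X$-function with $\|f\|_{L^{a_r}_\mu}=1$ and $g$ is a simple $Y$-function with $\|g\|_{L^{b_r^*}_\rho}=1$, where $1/b_r+1/b_r^*=1$. Write $f=\sum_j \alpha_j e^{i\varphi_j}\mathbf{1}_{A_j}$ and $g=\sum_k \beta_k e^{i\psi_k}\mathbf{1}_{B_k}$ with $\alpha_j,\beta_k>0$ and pairwise disjoint sets of finite measure.

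Next I would introduce the analytic family. For $z$ in the closed strip $S=\{0\leq\Re z\leq 1\}$ define
\[
P(z)=\frac{1-z}{a_0}+\frac{z}{a_1},\qquad Q(z)=\frac{1-z}{b_0^*}+\frac{z}{b_1^*},
\]
and set
\[
f_z=\sum_j \alpha_j^{P(z)/P(r)}e^{i\varphi_j}\mathbf{1}_{A_j},\qquad g_z=\sum_k \beta_k^{Q(z)/Q(r)}e^{i\psi_k}\mathbf{1}_{B_k},
\]
with the usual conventions when an exponent is $\infty$ (the relevant reciprocal vanishes and the power degenerates to a constant-modulus factor). By construction $f_r=f$, $g_r=g$, and a direct computation shows
\[
\|f_{it}\|_{L^{a_0}_\mu}=1,\quad \|f_{1+it}\|_{L^{a_1}_\mu}=1,\quad \|g_{it}\|_{L^{b_0^*}_\rho}=1,\quad \|g_{1+it}\|_{L^{b_1^*}_\rho}=1,
\]
using $\|f\|_{L^{a_r}_\mu}=\|g\|_{L^{b_r^*}_\rho}=1$. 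Then I would define
\[
F(z):=\int_Y (Tf_z)\,g_z\,d\rho,
\]
and verify that $F$ is holomorphic on the open strip $0<\Re z<1$ and continuous and bounded on $S$. This is straightforward because $f_z$ and $g_z$ are finite linear combinations of $\mathbf{1}_{A_j}, \mathbf{1}_{B_k}$ with coefficients that are entire in $z$ and uniformly bounded on $S$; in particular both families lie in $L^{a_0}_\mu\cap L^{a_1}_\mu$ and $L^{b_0^*}_\rho\cap L^{b_1^*}_\rho$ respectively, so the linearity of $T$ reduces $F$ to a finite sum of entire functions of $z$ times the constants $\int_Y (T\mathbf{1}_{A_j})\mathbf{1}_{B_k}\,d\rho$. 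The hypothesis on $T$ combined with H\"older's inequality then gives
\[
|F(it)|\leq \|Tf_{it}\|_{L^{b_0}_\rho}\|g_{it}\|_{L^{b_0^*}_\rho}\leq C_0,\qquad |F(1+it)|\leq C_1
\]
for every $t\in\mathbb{R}$.

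Applying Hadamard's three-line theorem on $S$ (the mild exponential growth bound $|F(z)|\leq Me^{\tau|\Im z|}$ required to invoke it is automatic, since $f_z$ and $g_z$ are finite sums whose moduli are dominated uniformly in $\Im z$) yields $|F(r)|\leq C_0^{1-r}C_1^r$, which is the desired estimate. The bounded extension $T_{a_r,b_r}$ of $T$ to all of $L^{a_r}_\mu(X)$ is then obtained by a density argument from the first step. The main obstacle will be the careful bookkeeping of the endpoint exponent cases, i.e.\ when some of $a_0,a_1,b_0,b_1$ equals $1$ or $\infty$: the dual representation of the $L^{b_r}_\rho$-norm by pairing against simple functions in $L^{b_r^*}_\rho$ is immediate when $b_r<\infty$, but when $b_r=\infty$ one has to argue directly, either by treating $T^*$ on the dual side via the natural isometric embedding or by restricting to finite-measure support and passing to a limit; similar care is needed when $a_r\in\{1,\infty\}$ to justify approximation by simple functions in the right topology, and one must check that the construction of $f_z,g_z$ remains well posed in all such degenerate cases.
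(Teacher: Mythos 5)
The paper does not actually prove this theorem: it is quoted as a known result, with the remark that the proof uses Hadamard's three-line theorem and a reference to Satz 2.65 in \cite{weidmann2} for the details. Your proposal is precisely that standard complex-interpolation argument (simple-function reduction, the analytic families $f_z$, $g_z$ normalized on the boundary lines, H\"older plus the endpoint hypotheses, three-line theorem, then duality and density), and it is correct, including your identification of the genuine bookkeeping issues in the degenerate cases $a_r=\infty$ (which forces $a_0=a_1=\infty$) and $b_r\in\{1,\infty\}$.
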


\section{Pitt\rq{}s Theorem}

Finally, let us record the following result by L.D. Pitt \cite{pitt} on the stability of the compactness of linear operators (whose proof is surpringly complicated!):

\begin{Theoreme}\label{pittts} Let $(X,\mu)$, $(Y,\rho)$ be measure spaces, let $1<p_1<\infty$, $1\leq p_2\leq \infty$ and let 
$$
S,T\in \ILL\big(\IL^{p_1}_\mu(X), \IL^{p_2}_\rho(Y)\big)
$$ 
be bounded operators such that $S$ is positivity-preserving and such that for any $f\in\IL^{p_1}_\mu(X)$ one has $|Tf|\leq S|f|$ $\rho$-a.e. Then, if $S$ is a compact operator, $T$ is also a compact operator.
\end{Theoreme}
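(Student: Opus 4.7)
The plan is to show that $T$ sends every bounded sequence in $\IL^{p_1}_\mu(X)$ to a sequence admitting a norm-convergent subsequence in $\IL^{p_2}_\rho(Y)$. Fix a bounded sequence $(f_n) \subset \IL^{p_1}_\mu(X)$. Since $1 < p_1 < \infty$, the space $\IL^{p_1}_\mu(X)$ is reflexive, so after passing to a subsequence I may assume $f_n \rightharpoonup f$ weakly for some $f \in \IL^{p_1}_\mu(X)$. By linearity of $T$ and the hypothesis $|T(\cdot)| \leq S|\cdot|$, for all $n,m$ one has the pointwise domination
$$
|Tf_n - Tf_m| = |T(f_n - f_m)| \leq S|f_n - f_m|\quad\text{$\rho$-a.e.,}
$$
so once I can extract a subsequence along which $S|f_n - f_m| \to 0$ in $\IL^{p_2}_\rho(Y)$ (as $n,m \to \infty$), the same will be true for $Tf_n - Tf_m$, proving that $(Tf_n)$ is Cauchy in $\IL^{p_2}_\rho(Y)$.

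The first approximation comes from compactness of $S$. Since $(|f_n|)$ is bounded in $\IL^{p_1}_\mu(X)$, compactness of $S$ gives, after another subsequence extraction, norm convergence $S|f_n| \to g$ for some $g \in \IL^{p_2}_\rho(Y)$; passing to yet another subsequence we also arrange $|f_n| \rightharpoonup \varphi$ weakly in $\IL^{p_1}_\mu(X)$, and then boundedness of $S$ forces $g = S\varphi$. If one had $\varphi = |f|$ and moreover $|f_n| \to |f|$ strongly, then using positivity of $S$ and the triangle inequality
$$
S|f_n - f_m| \leq S|f_n| + S|f_m| \xrightarrow[n,m\to\infty]{} 2S|f|,
$$
together with analogous statements with signs, one could close the argument by a standard lattice manipulation.

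The main obstacle, and the actual substance of Pitt's original proof, is precisely that weak convergence $f_n \rightharpoonup f$ does not imply weak, let alone strong, convergence of $|f_n|$ to $|f|$: a portion of the mass of $(f_n - f)$ may concentrate on sets of small measure or escape to infinity, producing a component of $(|f_n|)$ whose weak limit $\varphi$ strictly dominates $|f|$. To handle this one invokes a Kadec–Pe\l{}czy\'nski-type decomposition: after one more subsequence extraction, write $f_n - f = u_n + v_n$ where $u_n \to 0$ $\mu$-a.e.\ with a uniform $\IL^{p_1}_\mu$-bound (so that $S|u_n| \to 0$ in $\IL^{p_2}_\rho(Y)$ by compactness of $S$ combined with dominated convergence, applied to the relatively compact set $\{S|u_n|\}$), while $(v_n)$ has essentially disjoint supports shrinking in $\mu$-measure.

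The hard part will be the disjointly supported piece $(v_n)$: one must show $S|v_n| \to 0$ in $\IL^{p_2}_\rho(Y)$. Here positivity preservation of $S$ and its compactness are both essential. Using that $(|v_n|)$ is bounded and, being disjointly supported up to a small error, converges weakly to $0$ in $\IL^{p_1}_\mu(X)$, the compactness of $S$ forces $S|v_n| \to 0$ strongly in $\IL^{p_2}_\rho(Y)$ (a compact operator sends weakly null sequences to norm-null ones). Assembling the three pieces, $S|f_n - f_m| \leq S|u_n| + S|u_m| + S|v_n| + S|v_m| \to 0$ along the extracted subsequence, which by the domination above yields the Cauchy property for $(Tf_n)$ and hence compactness of $T$.
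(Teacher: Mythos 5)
The paper itself offers no proof of this theorem: it is quoted from Pitt's paper \cite{pitt} with the explicit remark that the proof is ``surprisingly complicated'', so your proposal can only be judged against the mathematics, and there it has a fatal gap. Your entire strategy rests on extracting a subsequence along which $S|f_n-f_m|\to 0$ in $\IL^{p_2}_\rho(Y)$, and this is false in general. Take $X=Y=[0,1]$ with Lebesgue measure and let $f_n=r_n$ be the Rademacher functions, which are weakly null in $\IL^{p_1}[0,1]$. Then $|r_n-r_m|=2\cdot 1_{\{r_n\neq r_m\}}=1-r_nr_m$, and since the products $r_nr_m$ ($n\neq m$) are orthonormal and hence weakly null, $|r_n-r_m|$ converges \emph{weakly to the constant function} $1$ as $n,m\to\infty$. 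If $S$ is compact and positivity preserving with $S1\neq 0$ (e.g.\ the rank-one operator $f\mapsto (\int f\,d\mu)\cdot 1$), compactness upgrades this to norm convergence $S|r_n-r_m|\to S1\neq 0$, so no subsequence can make $S|f_n-f_m|$ tend to $0$. The conclusion of the theorem is of course still true; the point is that the pointwise domination $|Tf_n-Tf_m|\leq S|f_n-f_m|$ alone cannot force $(Tf_n)$ to be Cauchy, which is precisely why Pitt's argument has to work harder than taking absolute values of differences.

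The same example exposes the error in your decomposition. A bounded weakly null sequence in $\IL^{p_1}_\mu(X)$, $1<p_1<\infty$, does \emph{not} in general split into a $\mu$-a.e.\ null part plus an essentially disjointly supported part: the Rademacher sequence is uniformly bounded (hence $\IL^{p_1}$-equi-integrable, with no disjointly supported component to extract), yet no subsequence converges a.e.\ to $0$, since $|r_n|\equiv 1$. The subsequence splitting lemma you are implicitly invoking yields an equi-integrable piece plus a disjointly supported piece; the equi-integrable piece is weakly null but may oscillate without converging a.e.\ or in norm, and for it $S|u_n|$ need not tend to $0$ (again $S|r_n|=S1$). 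Your handling of the two pieces you do consider is essentially sound --- disjointly supported and bounded implies weakly null, and a compact operator maps weakly null sequences to norm-null ones --- but the oscillatory component, which is the actual source of difficulty in Pitt's theorem, is absent from your analysis and cannot be controlled by estimating $S$ applied to moduli of differences.
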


\end{document}